\numberwithin{equation}{section} 
\newtheorem{theorem}{Theorem}[section]
\newaliascnt{lemma}{theorem}  
\newtheorem{lemma}[lemma]{Lemma}  
\newaliascnt{corollary}{theorem}  
\newtheorem{corollary}[corollary]{Corollary}  
\newaliascnt{proposition}{theorem}  
\newtheoremstyle{exampstyle}
{\topsep} 
{\topsep} 
{} 
{} 
{\bfseries} 
{.} 
{.5em} 
{} 
\theoremstyle{exampstyle}
\newaliascnt{fact}{theorem}  
\newaliascnt{claim}{theorem}  
\newaliascnt{remark}{theorem}  
\newtheorem{remark}[remark]{Remark}  
\newaliascnt{notation}{theorem}  
\newtheorem{notation}[notation]{Notation}  
\newaliascnt{example}{theorem}  
\newaliascnt{conjecture}{theorem}  
\newtheorem{conjecture}[conjecture]{Conjecture}  
\newaliascnt{question}{theorem}  
\newaliascnt{definition}{theorem}  
\newtheorem{definition}[definition]{Definition}  
\newaliascnt{convention}{theorem}  
\newcommand{\Boxdot}{\,
                    \setlength{\unitlength}{1ex}
                    \begin{picture}(2,2)
                    \put(0,0){$\Box$}
                    \put(.55,.65){.}
                    \end{picture}
                    }
\def\kcal{\mathcal{K}}
\def\IQC{\hbox{\sf IQC}{} }
\def\CQC{\hbox{\sf CQC}{} }
\def\IPC{\hbox{\sf IPC} }
\def\HA{{\sf HA}}
\def\PA{{\sf PA}}
\def\CP{\hbox{\sf CP}{} }
\def\TP{\hbox{\sf TP}{} }
\def\NNIL{\hbox{\sf NNIL}{} }
\def\TNNIL{\hbox{\sf TNNIL}{} }
\def\K4{\hbox{\sf K4}{} }
\def\GL{\hbox{\sf GL}{} }
\def\iGL{\hbox{\sf iGL}{} }
\def\R{\sqsubset}
\def\PL{\mathcal{PL}}
\def\lcal{\mathcal{L}}
\def\PLS{\mathcal{PL}_{_{\Sigma_1}}}
\newcommand{\Ax}{\AxiomC}
\newcommand{\UI}{\UnaryInfC}
\newcommand{\BI}{\BinaryInfC}
\newcommand{\DP}{\DisplayProof}
\newcommand{\ra }{\rightarrow }
\newcommand{\lr }{\leftrightarrow}
\newcommand{\bo }{\Box }
\newcommand{\Prf}[3]{{\sf Proof}_\tinysub{\sf #1}(#2,\gnumber{#3})}
\newcommand{\Prv}[2]{{\sf Prov}_\tinysub{\sf #1}(\gnumber{#2})}
\newcommand{\gnumber}[1]{\ulcorner#1\urcorner}
\newcommand{\uparan}[1]{\textup{(}#1\textup{)}}
\newcommand{\tinysub}[1]{{^{_{#1}}}}
\newcommand{\nat}{\mathbb{N}}
\renewcommand{\figurename}{Diagram}
\numberwithin{equation}{section}
\def\PLG{\mathcal{PL}_{_\Gamma}}
\def\PLGP{\mathcal{PL}_{_{\Gamma'}}}
\newcommand{\wit}[1]{[\![#1]\!]}
\newcommand{\Kripke}{\mathcal{K}=(K,\preccurlyeq,\sqsubset,V)}
\newcommand{\GLS}{{\sf GL\underline{S}}}
\newcommand{\SFT}{{\sf T}}
\newcommand{\SFU}{{\sf U}}
\newcommand{\AC}{\mathcal{AC}_{_\Gamma}(\SFV;\SFT,\SFU)}
\newcommand{\ACP}{\mathcal{AC}_{_{\Gamma'}}(\SFV';\SFT',\SFU')}
\newcommand{\ACPP}{\mathcal{AC}_{_{\Gamma''}}(\SFV'';\SFT'',\SFU'')}
\newcommand{\fbar}[1][A]{\bar{f}_{_{#1}}}
\newcommand{\lcalb}{\lcal_\Box}
\newcommand{\negout}{{\neg\!\,\uparrow}}
\newcommand{\negin}{{\neg\!\,\downarrow}}
\newcommand{\ac}[3]{\mathcal{AC}(#1;#2,#3)}
\newcommand{\acs}[3]{\mathcal{AC}_{_{\Sigma_1}}(#1;#2,#3)}
\newcommand{\AS}{\mathcal{AS}_{_\Gamma}(\SFV;\SFT,\SFU)}
\newcommand{\SFV}{{\sf V}_{_{\! 0}}}
\newcommand{\PEM}{{\sf PEM} }
\newcommand{\igl}{{\sf iGL} }
\newcommand{\iglsigma}{{\sf iGLC_a}}
\newcommand{\iglphat}{{\sf iGL}\overline{\sf P} }
\newcommand{\iglphatsigma}{{\sf iGL}\overline{\sf P}{\sf C_a}}
\newcommand{\iglc}{{\sf iGLC} }
\newcommand{\iglct}{{\sf iGLCT} }
\newcommand{\ihatglchat}{{\sf i}{\sf GL}\overline{\sf C}\underline{\sf P}{\sf C_a}}
\newcommand{\ihatglchatt}{{\sf i}{\sf GL}\overline{\sf C}{\sf T}\underline{\sf P}}
\newcommand{\iglchatthat}{{\sf iGL}\overline{\sf C}\overline{\sf T}}
\newcommand{\iglchatthatsigma}{{\sf iGL}\overline{\sf CT}{\sf C_a}}
\newcommand{\ihatglchattsigma}{{\sf iGL}\overline{\sf C}{\sf T}\underline{\sf P}{\sf C_a}}
\newcommand{\iglchattsigma}{{\sf iGL}\overline{\sf C}{\sf TC_a}}
\newcommand{\ihatglchatts}{{\sf iGL}\overline{\sf C}{\sf T}\underline{\sf S^*P}}
\newcommand{\ihatglchattssigma}{{\sf iGL}\overline{\sf C}{\sf T}\underline{\sf S^*P}{\sf C_a}}
\newcommand{\ihatglchats}{{\sf iGL}\overline{\sf C}\underline{\sf SP}{\sf C_a}}
\newcommand{\ikfour}{{\sf iK4}}
\newcommand{\GLV}{{\sf GLC_a}}
\newcommand{\GLSV}{{\sf GL\underline{S}C_a}}
\newcommand{\llessstar}{\sf iH_\sigma^{*\!*}}
\newcommand{\llesstar}{{\sf iH_\sigma^*}}
\newcommand{\ihatHsigmastar}{{\sf iH_\sigma}\underline{\sf P}^*}
\def\lles{{\sf{iH_\sigma}}}
\newcommand{\ihatHsigma}{{\sf iH_\sigma}\underline{\sf P}}
\renewcommand{\hl}[1]{#1}
\newcommand{\hlf}[1]{\text{\hl{$#1$}}} 
\newcommand{\ihatHSsigma}{{\sf iH_\sigma}\underline{\sf SP}}
\newcommand{\ihatHSsigmastar}{{\sf iH_\sigma}\underline{\sf SP}^*}
\newcommand{\ihatglleplus}{{\sf iGLLe}{}^+\underline{\sf P}}
\def\iglleplus{\hbox{\sf{iGLLe}}{}^+}
\newcommand{\ihatgllepluss}{{\sf iGLLe}{}^+\underline{\sf SP}}
\newcommand{\Boxin}{{{\Box\!\downarrow}}}
\newcommand{\Boxout}{{\Box\!\uparrow}}
\newcommand{\sub}[1]{{\sf Sub}(#1)} 
\newcommand{\sigmapa}[1]{\sigma_{_{\sf PA}}(#1)}
\newcommand{\sigmapas}[1]{\sigma_{_{\sf PA^*}}(#1)}
\newcommand{\sfk}{{\sf K}}
\begin{document}
\setstcolor{red}
\title{Hard Provability Logics}

\author{	Mojtaba Mojtahedi\thanks{\url{http://mmojtahedi.ir/}}\\
				Department of Mathematics, 
				Statistics and Computer Science, \\ 
		College of Sciences,  University of Tehran
}

\maketitle
{%
\centering\footnotesize Dedicated to Mohammad Ardeshir, for 17 years of his impact,  inspirations and motivations.
\par%
}
\vspace*{8mm}
\begin{abstract}
Let $\PL({\sf T},{\sf T}')$ and $\PLS({\sf T},\SFT')$   respectively
indicates the  provability logic
  and $\Sigma_1$-provability logic  of $\SFT$ relative 
  in $\SFT'$. 
  In this paper we characterize the following relative provability logics: $\PLS(\HA,\mathbb{N})$, $\PLS(\HA,\PA)$,
 $\PLS(\HA^*,\mathbb{N})$, $\PLS(\HA^*,\PA)$, 
 $\PL(\PA,\HA)$, $\PLS(\PA,\HA)$,    
 $\PL(\PA^*,\HA)$, $\PLS(\PA^*,\HA)$, 
 $\PL(\PA^*,\PA)$, $\PLS(\PA^*,\PA)$, 
 $\PL(\PA^*,\mathbb{N})$, $\PLS(\PA^*,\mathbb{N})$ (see Table \ref{Table-Theories}). 
 It turns out that all of these provability logics are decidable.

 The notion of {\em reduction} for provability logics, 
first informally considered in \cite{reduction}.
In this paper, we formalize a generalization of  this notion
(\Cref{Definition-Reduction-PL})
and provide several reductions of provability logics (See diagram \ref{Diagram-full}).
 The interesting fact is that   
 $\PLS(\HA,\mathbb{N})$ is the hardest 
  provability logic: the arithmetical completenesses of all provability logics 
 listed above, as well as well-known provability logics like 
 $\PL(\PA,\PA)$, $\PL(\PA,\mathbb{N})$, 
 $\PLS(\PA,\PA)$, $\PLS(\PA,\mathbb{N})$ and 
  $\PLS(\HA,\HA)$
  are all propositionally reducible 
 to the arithmetical completeness of   
 $\PLS(\HA,\mathbb{N})$. 
\end{abstract}

\tableofcontents
\listoffigures

\section{Dedication}

My works in general and the present paper in particular are highly inspired by Mohammad Ardeshir’s outstanding contributions to mathematical logic. The ideas that I developed in this paper originate from a joint paper \cite{reduction} which was initially motivated by him.
My first recollection of Mohammad Ardeshir goes back to 2003 when, in the second semester of my undergraduate studies, I attended his course on the foundations of mathematics. I was impressed by his knowledge and by the style of his teaching which encouraged me to attend most of his other courses during my undergraduate and graduate studies. I still vividly remember how deeply I was fascinated by his graduate course on Gödel’s incompleteness theorems in Fall 2006. It was this course which made me determinate to do my PhD on mathematical logic and under the supervision of Mohammad Ardeshir. His influence on me is not restricted to my academic work. He has been a source of inspiration on many aspects of my life; and that is why dedicating this paper to him is the least thing I can do to thank him.

\section{Introduction}
There are two excellent surveys on provability logic: 
\cite{VisBek,ArtBekProv}.  
To be self-contained, we bring  some selected subjects  from them here,
 and then review some related 
 recent results on this subject.

The provability interpretation for the modal operator $\Box$, 
first considered by Kurt G\"odel \cite{Godel33}, intending to 
provide a semantic for Heyting's formalization of the 
intuitionistic logic, $\IPC$. On the other hand, and again 
by innovative and celebrated   G\"odel's incompleteness results
 \cite{Godel}, for a recursively enumerable theory $\SFT$ 
 and a sentence in the language of $\SFT$, one may 
formalize ``$A$ is provable in $\SFT$"
 via a simple  ($\Sigma_1$) formula 
 $\Prv{T}{A}$
in the first-order language of arithmetic,
 in which $\gnumber{A}$ is the G\"odel number of $A$.
Let $\PL(\SFT,\SFT')$ and $\PLS(\SFT,\SFT')$   respectively
indicates the  provability logic
  and $\Sigma_1$-provability logic  of $\SFT$ relative 
  in $\SFT'$ (\Cref{Definition-Provability Logic}).  Here is a 
  list of results on provability logics with arithmetical flavour:
  \begin{enumerate}
  \item $\neg\Box\bot\not\in \PL(\PA,\PA)$, \cite{Godel}
  \item $\Box(\Box A\to A)\to \Box A\in \PL(\PA,\PA)$,
  \cite{Lob}
\item $A\in\PL(\HA,\HA)$ for a nonmodal proposition $A$,  iff 
 	$A$ is valid in the intuitionistic logic $\IPC$. \cite{dejongh,vi1}
  \item $\GL=\PL(\PA,\PA)$ and $\GLS=\PL(\PA,\mathbb{N})=
  \PL(\PA,{\sf ZF})$,   \cite{Solovay},
   in which $\GL$ is the G\"odel-L\"ob logic, as 
  defined in \Cref{Def-Axiom schema and modal theories}.
\item  $ \Box (A\vee B)\to(\Box A\vee \Box B)\not\in \PL(\HA,\HA)$,  \cite{Myhill,Friedman75}
\item $\Box(A\vee B)\to\Box(\Boxdot A\vee\Boxdot B)\in \PL(\HA,\HA)$, in which $\Boxdot A$ is a shorthand for
$A\wedge\Box A$, \cite{Leivant-Thesis} 
\item   $\iglct=\PL(\PA^*,\PA^*)$, \cite{VisserThes,Visser82}, 
in which $\iglct$ is as defined in \Cref{Def-Axiom schema and modal theories},
\item  $ \Box\neg\neg\,\Box A\to\Box\Box A\in\PL(\HA,\HA)$ and 
$ \Box(\neg\neg\,\Box A\to\Box A)\to\Box(\Box A\vee \neg\,\Box A)\in\PL(\HA,\HA)$, \cite{VisserThes,Visser82}
\item \hl{Rosalie Iemhoff 2001 introduced} a uniform axiomatization  of 
all known axiom schemas of  $\PL(\HA,\HA)$ in an extended language 
with a bimodal operator $\rhd$. In her Ph.D. dissertation 
\cite{IemhoffT}, Iemhoff raised a conjecture that implies directly that her 
axiom system, ${\sf iPH}$,  restricted to the normal modal language, is 
equal to $\PL(\HA,\HA)$, \cite{IemhoffT} 
\item  $\PL_{\{\top,\bot\}}(\HA,\HA)$ is decidable. \cite{Visser02}.  In other words, 
he introduced a decision algorithm for 
$A\in \PL(\HA,\HA)$, for all $A$ not containing any atomic variable.
\item $\PLS(\HA,\HA)={\sf iH}_\sigma$ (\Cref{Def-lles})
is decidable,  \cite{Sigma.Prov.HA,Jetze-Visser}
\item $\PLS(\HA^*,\HA^*)={\sf iH}^*_\sigma$ 
(\Cref{Def-lles}) is decidable,  \cite{Sigma.Prov.HA*}
  \end{enumerate}

As it is known in the literature 
\cite{TD}, the Heyting Arithmetic $\HA$, enjoys  disjunction property: 
if $\HA\vdash A\vee B$, then either 
$\HA\vdash A$ or $\HA\vdash B$.
Regrettably,  $\HA$ is not able to prove this \cite{Friedman75,Myhill}. Hence, such properties, 
are not reflected in the provability logic of $\HA$,
as a valid principle 
$\Box(A\vee B)\to (\Box A\vee\Box B)$.
A natural question arises here:
{\em is there any other valid rule?} \\
One way to systematically answer this question, 
is to characterise the truth provability logic of $\HA$. 
In the case of classical arithmetic $\PA$, Robert 
Solovay in his original innovative paper \cite{Solovay}, 
characterized the truth provability logic of $\PA$. 
He showed that the only extra valid axiom is the 
soundness principle $\Box A\to A$, which is known to be 
true and unprovable in $\PA$. 
In this paper we show that, in the 
$\Sigma_1$-provablity logic of $\HA$, 
the same thing happens: 
The truth $\Sigma_1$-provability logic of $\HA$, is a 
decidable and only has the extra axiom schema 
$\Box A\to A$. 
The disjunction property, which we mentioned before, 
will be deuced from Leivant's 
principle $\Box (A\vee B)\to \Box (\Box A\vee\Box B)$ and the soundness principle.

The author of this paper in his joint paper with Mohammad Ardeshir \cite{reduction}, showed that 
the arithmetical completeness of the modal logic 
$\GL$, is reducible to the arithmetical completeness 
of $\GL+p\to \Box p$ for $\Sigma_1$ interpretations. 
The reduction involves only propositional argument. In this paper, we show that all relative 
provability logics, discussed in this paper, are reducible to the truth $\Sigma_1$-provability logic of $\HA$ (see Diagram \ref{Diagram-full}). So, in a sense, 
$\PLS(\HA,\nat)$ is the hardest among them. 

With the handful propositional reductions, we will characterize several relative provability logics for 
$\HA$, $\PA$, $\HA^*$  and 
$\PA^*$, the self-completion of $\HA$ and $\PA$ \cite{Visser82}.

\section{Definitions and Preliminaries}\label{sec-definitions}
The propositional non-modal language $\mathcal{L}_0$ contains atomic variables,
$\vee, \wedge, \ra, \bot$ and  the propositional modal language, $\mathcal{L}_\Box$ has an additional operator $\Box$. 
In this paper, the atomic propositions (\hl{in the modal} or non-modal language) \hl{include}
atomic variables and $\bot$. 
For an arbitrary proposition $A$, ${\sf Sub}(A)$ is defined to be the set
of all sub-formulae of $A$, including $A$ itself. We take
${\sf Sub}(X):=\bigcup_{A\in X}{\sf Sub}(A)$ for a set of propositions $X$.
We use 
$\Boxdot A$ as a shorthand for $A\wedge\Box A$. 
The logic \IPC is  intuitionistic propositional
non-modal logic over \hl{the usual} propositional non-modal language.
The theory $\IPC_\Box$ is the same theory \IPC in the extended language
of \hl{the propositional modal language}, i.e. its language is
\hl{the propositional modal language} and its axioms and rules are 
same as \IPC. Because we have no axioms for $\Box$
in $\IPC_\Box$, it is obvious that $\Box A$ for each $A$,
behaves exactly like an atomic variable inside $\IPC_\Box$.
First-order intuitionistic logic is denoted
 $\IQC$ and the logic $\CQC$ is its classical closure, i.e. $\IQC$ plus
the principle of excluded middle. 
 For a set of sentences and rules
$\Gamma\cup\{A\}$ in the  propositional non-modal, propositional modal
or first-order language, $\Gamma\vdash A$ means that $A$ is
derivable from $\Gamma$ in the system $\IPC, \IPC_\Box,\IQC$,
respectively. For an arithmetical formula, $\ulcorner
A\urcorner$ represents the G\"{o}del number of $A$. For an
arbitrary  arithmetical theory $T $ with a \hl{$\Delta_0(\exp)$-set of 
axioms, as far as we work in strong enough theories which is the case in this paper,} we have the $\Delta_0(\exp)$-predicate $\Prf{T}{x}{A}$, 
that is a formalization of ``$x$ is  the code of a proof
for $A$ in $T$". Note that by (inspection of the proof of) 
Craig's theorem, every recursively enumerable theory 
has a $ \Delta_0({\sf exp})$-axiomatization.
We also have the provability predicate
$\Prv{T}{A}:=\exists{x}\ \Prf{T}{x}{A}$.  The set of natural numbers is denoted by
$\omega:=\{0,1,2,\ldots\}$.

\begin{definition}\label{Definition-Arithmetical substitutions}
Suppose $\SFT$ is a  $ \Delta_0({\sf exp}) $-axiomatized 
 theory and $\sigma$ is a  substitution i.e. a
function from atomic variables to arithmetical sentences. 
We define the interpretation $\sigma_{_\SFT}$ which 
extend the substitution $\sigma$ to all modal propositions $A$, inductively:
\begin{itemize}
\item $\sigma_{_\SFT}(A):=\sigma(A)$ for atomic $A$,
\item $\sigma_{_\SFT}$ distributes over $\wedge, \vee, \ra$,
\item $\sigma_{_\SFT}(\Box A):=\Prv{T}{\sigma_{_\SFT}(A)}$.
\end{itemize}
We call $\sigma$  a $\Gamma$-substitution (in some theory $\SFT$), 
if for every
atomic $A$, $\sigma(A)\in\Gamma$ ($\SFT\vdash\sigma(A)\lr A'$ for some $A'\in \Gamma$).  We also say that $\sigma_{_T}$ is a $\Gamma$-interpretation
if $\sigma$ is a $\Gamma$-substitution.
\end{definition}

\begin{definition}\label{Definition-Provability Logic}
The relative provability logic of $\SFT$ in  some 
 sufficiently strong theory $\SFU$ restricted to a set of first-order 
 sentences $\Gamma$, is defined
to be a modal propositional theory $\PLG(\SFT,\SFU)$  such that
$\PLG(\SFT,\SFU)\vdash A$  iff for all arithmetical substitutions
$\sigma$ in $\Gamma$,  we have  $\SFU\vdash\sigma_{_\SFT}(A)$.   
We make this convention: 
$\PLG(\SFT,\mathbb{N})$ indicates 
$\PLG(\SFT,{\sf Theory}(\mathbb{N}))$, in which 
${\sf Theory}(\mathbb{N})$ is the set of all true sentences in 
the standard model of arithmetic.
\end{definition}

\noindent
Define  {\sf NOI} (No Outside Implication) as the set of modal
propositions $A$, such that any occurrence of $\ra$ is in the scope of
some $\Box$. To be able to state an extension of Leivant's
Principle (that is adequate to axiomatize $\Sigma_1$-provability
logic of $\HA$) we need a translation on the modal language which we
call  \emph{Leivant's translation}. We define it recursively as
follows:
\begin{itemize}
\item $A^l:=A$   for atomic or boxed $A$, 
\item $(A\wedge B)^l:=A^l\wedge B^l$,
\item $(A\vee B)^l:=\Boxdot A^l\vee\Boxdot B^l$,
\item $(A\ra B)^l$ is defined by cases: If $A\in {\sf NOI}$, we define
$(A\ra B)^l:=A\ra B^l$, otherwise we define $(A\ra B)^l:=A\ra B$.
\end{itemize}

Let us define the box translation $(.)^\Box$ and 
some variants of it:
\begin{itemize}
\item $A^\Boxout:=A^\Box:=\Boxdot A$ 
and $A^\Boxin:=A$ for atomic $A$ or $A:=\top,\bot$,
\item $(\Box A)^\Boxout:=\Box A$ and
 $(\Box A)^\Box:=(\Box A)^\Boxin:=\Box A^\Box$,
\item $(.)^\Boxout$, $(.)^\Box$ and $(.)^\Boxin$  commute  with $\wedge$ and $\vee$,
\item $(B\to C)^\Boxout:=\Boxdot(B^\Boxout\to C^\Boxout)$, 
$(B\to C)^\Box:=\Boxdot(B^\Box\to C^\Box)$ and 
$(B\to C)^\Boxin:= B^\Boxin\to C^\Boxin$.
\end{itemize}
\begin{remark}\label{Remark-10}
For every $A$ we have $A^\Box=(A^\Boxin)^\Boxout$. Also 
${\sf iK4}\vdash A^\Box\lr (A^\Boxout)^\Boxin$.
\end{remark}
\begin{proof}
 Both statements are proved easily 
 by induction on the complexity of $A$, and we leave them to the reader.
\end{proof}

\begin{definition} \label{Def-Axiom schema and modal theories}
Let us first we list some  axiom schemas:
\begin{itemize}
\item $\underline{\sf i}:= A$, for every theorem $A$ of $\IPC_\Box$,
\item $\underline{\sf K}:=\Box(A\to B)\to (\Box A\to \Box B)$, 
\item $\underline{\sf 4}:=\Box A\to \Box\Box A$,
\item $\underline{\sf Lob}:=\underline{\sf L}:=\Box(\Box A\to A)\to \Box A$,
\item The Completeness Principle: $\underline{\sf CP}:=\underline{\sf C}:= A\ra\Box A$.
\item Restriction of Completeness Principle to atomic variables: 
		$\underline{\sf CP_a}:=\underline{\sf C_a}:=p\ra\Box p$, 
		for atomic $p$.
\item The reflection principle:  $\underline{\sf S}:=\Box A\to A$.
\item The complete reflection principle:  
		$\underline{\sf S^*}:=\Box A\to A^\Box$.
\item The Principle of Excluded Middle: $\underline{\sf PEM}:=
\underline{\sf P}:=A\vee \neg A$.
\item Leivant's Principle: $\underline{\sf Le}:=\Box(B\vee C)\ra\Box (\Box B\vee C)$. \cite{Leivant-Thesis}
\item Extended Leivant's Principle: $\underline{\sf Le^+}:=\Box A\ra\Box A^l$. \cite{Sigma.Prov.HA}
\item  Trace Principle: $\underline{\sf TP}:=\Box(A\to B)\to (A\vee (A\to B))$. \cite{Visser82}
\item For an axiom schema $\underline{\sf A}$,  
the axiom schema $\overline{\sf A}$ indicates 
the box of  every axiom instance of $\underline{\sf A}$. 
Also ${\sf A}$ indicates $\underline{\sf A}\wedge \overline{\sf A}$.
\end{itemize}

All modal systems which will be defined here, only has one 
inference rule: modus ponens \Ax{$B$}\Ax{$B\to A$}\BI{$A$}\DP. Also the celebrated 
modal logics, like ${\sf K4}$, which has the necessitation 
rule of inference, \Ax{$A$}\UI{$\Box A$}\DP,
by abuse of notation, are considered here 
with the same name and with the same set of theorems, 
however without the necessitation rule.  The reason for this 
alternate definition of systems, is quite technical. Of course 
one may define them with the necessitation rule, but 
at the cost of loosing the uniformity of definitions. 
So in the rest of this paper, all modal systems, are considered 
with the modus ponens rule of inference.  

Consider a list ${\sf A}_1,\ldots,{\sf A}_n$ of axiom schemas.
The notation ${\sf A}_1{\sf A}_2\ldots{\sf A}_n$ will be used in this paper for a modal system containing all axiom instance of 
all axiom schemas ${\sf A}_i$, and is closed under modus ponens. 
This genral notation makes things uniform and easy to remember for later usage. However, we make the following exceptions:
\begin{itemize}
\item $\GL:={\sf iGLP}$,
\item $\GLS:=\GL$ plus $\underline{\sf S}$. We may define similarly 
$\GLSV$ and $\GLV$.
\end{itemize}
We also gathered the list of axioms and theories in Tables  
\ref{Table-Axioms} and \ref{Table-Theories}.
\end{definition}

\begin{lemma}\label{Lemma-BoxdotABox-ABox}
For every modal proposition  $A$,  we have  
${\sf iK4} \vdash  A^\Box\lr \Boxdot A^\Box$.
\end{lemma}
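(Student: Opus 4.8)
The plan is to reduce the biconditional to a single implication and then prove that implication by induction on the structure of $A$. Since $\Boxdot A^\Box$ abbreviates $A^\Box\wedge\Box A^\Box$, the direction $\Boxdot A^\Box\ra A^\Box$ is just conjunction elimination, a theorem of $\IPC_\Box$ and hence available in ${\sf iK4}$. For the converse $A^\Box\ra\Boxdot A^\Box$ it suffices, using $A^\Box\ra A^\Box$ trivially, to establish the key claim ${\sf iK4}\vdash A^\Box\ra\Box A^\Box$ for every modal proposition $A$. Thus the whole lemma rests on this claim, which I would prove by induction on $A$.

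Before the induction I would record three facts that hold in ${\sf iK4}$ and use only the boxed axioms $\overline{\sf i}$ together with $\underline{\sf K}$ and $\underline{\sf 4}$ (recall that ${\sf iK4}$ is presented without a necessitation rule, so these boxed-axiom instances are exactly what replaces it). First, box distributes over conjunction: from the $\IPC_\Box$-theorem $X\ra(Y\ra(X\wedge Y))$, boxed via $\overline{\sf i}$, and two applications of $\underline{\sf K}$ one gets $\Box X\wedge\Box Y\ra\Box(X\wedge Y)$. Second, box is monotone: from an $\IPC_\Box$-theorem $X\ra Y$ one gets $\Box(X\ra Y)$ by $\overline{\sf i}$ and then $\Box X\ra\Box Y$ by $\underline{\sf K}$. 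Third, every formula of the shape $\Boxdot X$ provably implies its own box, i.e. ${\sf iK4}\vdash\Boxdot X\ra\Box\Boxdot X$: from $\Boxdot X=X\wedge\Box X$ extract $\Box X$, apply $\underline{\sf 4}$ to get $\Box\Box X$, and then apply conjunction distribution to $\Box X\wedge\Box\Box X$ to obtain $\Box(X\wedge\Box X)=\Box\Boxdot X$.

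With these in hand the induction is short. When $A$ is atomic (including $\bot$) or is $\top$, we have $A^\Box=\Boxdot A$, so the third fact with $X:=A$ gives $A^\Box\ra\Box A^\Box$ at once; the implication case $A=B\ra C$ is identical, since $(B\ra C)^\Box=\Boxdot(B^\Box\ra C^\Box)$ is again of the form $\Boxdot X$ with $X:=B^\Box\ra C^\Box$. When $A=\Box B$ we have $A^\Box=\Box B^\Box$, and the required $\Box B^\Box\ra\Box\Box B^\Box$ is literally an instance of $\underline{\sf 4}$. For $A=B\wedge C$ the box translation commutes with $\wedge$, so $A^\Box=B^\Box\wedge C^\Box$; the induction hypotheses give $B^\Box\ra\Box B^\Box$ and $C^\Box\ra\Box C^\Box$, and conjunction distribution then yields $B^\Box\wedge C^\Box\ra\Box(B^\Box\wedge C^\Box)$. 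For $A=B\vee C$ we have $A^\Box=B^\Box\vee C^\Box$; by the induction hypotheses and monotonicity each disjunct $B^\Box$ (resp. $C^\Box$) implies $\Box B^\Box$ (resp. $\Box C^\Box$) and hence $\Box(B^\Box\vee C^\Box)$, so a case split on the disjunction finishes this case.

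I do not expect a genuine obstacle here: the argument is a routine structural induction, and the only point demanding care is that ${\sf iK4}$ has no necessitation rule, so every time I want to box a propositional tautology or perform a monotonicity step I must explicitly invoke $\overline{\sf i}$ and $\underline{\sf K}$ rather than silently necessitate. Once the three preparatory facts are stated in this necessitation-free form, all five cases are immediate, and together with the trivial backward direction of the biconditional they complete the proof.
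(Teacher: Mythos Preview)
Your proposal is correct and is essentially the same approach as the paper's: the paper's proof reads in its entirety ``Use induction on the complexity of $A$,'' and what you have written is exactly that induction carried out in detail, with appropriate care taken that necessitation is replaced by the axiom scheme $\overline{\sf i}$ together with $\underline{\sf K}$.
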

\begin{proof}
Use induction on the complexity of $A$.
\end{proof}

\begin{lemma}\label{Lemma-A-ABoxin}
For every modal proposition  $A$,  we have  
${\sf iK4}+\Boxdot\CP\vdash A\leftrightarrow A^\Box$ and 
 ${\sf iK4}+\Box\CP\vdash A\leftrightarrow A^\Boxin$.
\end{lemma}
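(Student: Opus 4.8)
The plan is to prove both equivalences by induction on the complexity of $A$, establishing the first claim first and then invoking it inside the proof of the second. Write $T := {\sf iK4}+\Boxdot\CP$ and $S := {\sf iK4}+\Box\CP$. The workhorse throughout is a boxing (admissible-necessitation) lemma, which I would record once at the outset: if a formula $\psi$ is derivable in ${\sf iK4}$ from a set $\Phi$ of extra axioms, then $\Box\psi$ is derivable in ${\sf iK4}$ from $\{\Box\phi : \phi\in\Phi\}$. This holds because ${\sf iK4}$, as a set of theorems, is closed under necessitation, and $\underline{\sf K}$ turns each modus ponens step of a derivation into a boxed one; in this paper's convention the systems carry only modus ponens, so this must be phrased as an admissible transformation of derivations rather than as a rule.

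For the first claim, ${\sf iK4}+\Boxdot\CP\vdash A\leftrightarrow A^\Box$, the base case $A$ atomic (or $\top,\bot$) is immediate since $A^\Box=\Boxdot A=A\wedge\Box A$ and the non-boxed conjunct $A\to\Box A$ of $\Boxdot\CP$ supplies $A\to A^\Box$, the converse being trivial. The $\wedge$ and $\vee$ cases are routine from the induction hypothesis since $(.)^\Box$ commutes with $\wedge,\vee$. For $A=\Box B$ one has $A^\Box=\Box B^\Box$; I would box the induction hypothesis $B\leftrightarrow B^\Box$ using the boxing lemma — legitimate in $T$ because the boxes of all $\Boxdot\CP$-instances are $T$-provable (indeed $\Boxdot\theta\vdash_{\sf iK4}\Box\Boxdot\theta$ via $\underline{\sf 4}$) — and then distribute $\Box$ over $\leftrightarrow$ by $\underline{\sf K}$. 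The implication case $A=B\to C$, where $A^\Box=\Boxdot(B^\Box\to C^\Box)$, is the single place where the non-boxed completeness principle is genuinely needed: from $B\to C$ and the induction hypotheses one obtains the conjunct $B^\Box\to C^\Box$, while for the conjunct $\Box(B^\Box\to C^\Box)$ I would apply $\underline{\sf CP}$ to the hypothesis to get $\Box(B\to C)$ and push it across the $T$-theorem $(B\to C)\to(B^\Box\to C^\Box)$, whose box is available by the boxing lemma and $\underline{\sf K}$.

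For the second claim, ${\sf iK4}+\Box\CP\vdash A\leftrightarrow A^\Boxin$, the base, $\wedge$, $\vee$ and $\to$ cases are all immediate from the induction hypothesis, since $(.)^\Boxin$ fixes atoms and commutes with $\wedge,\vee,\to$. The only substantive case is $A=\Box B$, where $A^\Boxin=\Box B^\Box$ refers to the full translation $B^\Box$ rather than to $B^\Boxin$, so the induction hypothesis is unavailable. Here I would import the already-proved first claim, ${\sf iK4}+\Boxdot\CP\vdash B\leftrightarrow B^\Box$, and apply the boxing lemma: since $S$ proves the box of every $\Boxdot\CP$-instance (the instance $\Box(A\to\Box A)$ is exactly $\Box\CP$, and $\Box\Box(A\to\Box A)$ follows by $\underline{\sf 4}$), we obtain $S\vdash\Box(B\leftrightarrow B^\Box)$ and hence $S\vdash\Box B\leftrightarrow\Box B^\Box$ by $\underline{\sf K}$.

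The main obstacle I anticipate is the careful setup of the boxing lemma under the convention that the systems carry only modus ponens: one must verify that, for each of the two theories, the boxes of the relevant extra axioms are themselves derivable, which is precisely what $\Box\CP$ (respectively the boxed conjunct of $\Boxdot\CP$) provides. Once that transformation is in place, every remaining case reduces to the commutation of the appropriate translation with the connectives together with a single application of $\underline{\sf K}$ or $\underline{\sf CP}$.
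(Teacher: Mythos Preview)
Your proposal is correct and follows essentially the same approach as the paper: the paper's proof simply notes that the first assertion implies the second and that the first is by induction on $A$, leaving the details to the reader, and your write-up fills in precisely those details (including the necessitation/boxing argument needed to push the first claim under a $\Box$ for both the $\Box$-case of the first induction and for deriving the second assertion).
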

\begin{proof}
Note that the first assertion implies the second one. 
To prove the equivalence of $A$ and $A^\Box$ in 
${\sf iK4}+\Boxdot\CP$, one must use induction 
on the complexity of $A$. All cases are simple and 
left to the reader. 
\end{proof}

\begin{lemma}\label{Lemma-3}
For every modal proposition  $A$,  we have  ${\sf iGL}\vdash A$ implies 
${\sf iGL}\vdash A^\Boxout\wedge A^\Boxin\wedge A^\Box$. The same holds for $\iglsigma$.
\end{lemma}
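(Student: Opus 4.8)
The plan is to prove the three assertions by induction on a derivation of $A$, working with ${\sf iGL}$ in the presentation that \emph{does} carry the necessitation rule; by the convention of \Cref{Def-Axiom schema and modal theories} this presentation has exactly the same theorems, so it is legitimate. Since \Cref{Remark-10} gives $A^\Box=(A^\Boxin)^\Boxout$, it is enough to establish the two claims
(I) ${\sf iGL}\vdash A \Rightarrow {\sf iGL}\vdash A^\Boxout$ and
(II) ${\sf iGL}\vdash A \Rightarrow {\sf iGL}\vdash A^\Boxin$,
because then $A^\Box=(A^\Boxin)^\Boxout$ follows by applying (I) to the theorem $A^\Boxin$ produced by (II). I would prove (I) first, since its necessitation step is self-contained, and then (II), whose necessitation step appeals to (I).

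The rule cases are uniform and routine. For modus ponens, observe that $(A\to B)^\Boxin=A^\Boxin\to B^\Boxin$, while $(A\to B)^\Boxout=\Boxdot(A^\Boxout\to B^\Boxout)$ carries $A^\Boxout\to B^\Boxout$ as its left conjunct; in either case one extracts $A^{t}\to B^{t}$ from the translation of $A\to B$ and concludes $B^{t}$ by modus ponens with the translation of $A$. For necessitation, which produces $\Box A$ from $A$, we have $(\Box A)^\Boxout=\Box A$, obtained from ${\sf iGL}\vdash A$ by necessitation; and $(\Box A)^\Boxin=\Box A^\Box$, obtained by first using the induction hypothesis of (II) to get ${\sf iGL}\vdash A^\Boxin$, then (I) to get ${\sf iGL}\vdash (A^\Boxin)^\Boxout=A^\Box$, and finally necessitation.

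The axiom cases for the modal schemas $\underline{\sf K},\underline{\sf 4},\underline{\sf Lob}$ are direct under both translations: each image is a $\Boxdot$- or $\Box$-prefixed Boolean combination whose principal conjunct is again an instance of the same schema (after absorbing $\Box\Boxdot C\to\Box C$, which holds in ${\sf iK4}$ by necessitation on $\Boxdot C\to C$ and $\underline{\sf K}$), the remaining conjuncts being necessitations of theorems. The genuinely non-routine case is $\underline{\sf i}$, i.e.\ the image of an arbitrary theorem of $\IPC_\Box$. For $(.)^\Boxin$ this is immediate: $(.)^\Boxin$ commutes with $\wedge,\vee,\to$ and acts on the $\IPC$-atoms by the uniform substitution $p\mapsto p$, $\Box B\mapsto\Box B^\Box$, so it sends $\IPC_\Box$-theorems to $\IPC_\Box$-theorems, hence to ${\sf iGL}$-theorems via $\underline{\sf i}$. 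For $(.)^\Boxout$ no substitution argument applies, because implications get boxed, and this is the \textbf{main obstacle}. The key observation is that on propositional formulas $(.)^\Boxout$ is precisely the G\"odel--McKinsey--Tarski translation of $\IPC$ into ${\sf S4}$, with the role of the ${\sf S4}$-box played by $\Boxdot$. It is straightforward that $\Boxdot$ is provably an ${\sf S4}$-modality in ${\sf iK4}$: reflexive ($\Boxdot A\to A$ by definition), transitive ($\Boxdot A\to\Boxdot\Boxdot A$, using $\underline{\sf 4}$), normal, and closed under necessitation at the theorem level. Running the soundness direction of the translation by induction on an $\IPC$-Hilbert proof therefore yields ${\sf iGL}\vdash A^\Boxout$ for every $\IPC_\Box$-theorem $A$; the box-atoms $\Box B$, which $(.)^\Boxout$ leaves fixed, are accommodated using $\Boxdot\Box B\leftrightarrow\Box B$ in ${\sf iK4}$, so that they behave as already-$\Boxdot$'d atoms. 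This step is the heart of the argument; everything else is bookkeeping.

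Finally, for $\iglsigma=\iglsigma$ I would not rerun the induction but bootstrap from the ${\sf iGL}$-case. A derivation in $\iglsigma$ is assembled by modus ponens from ${\sf iGL}$-theorems together with instances of $\underline{\sf C_a}$ and $\overline{\sf C_a}$. The translations of ${\sf iGL}$-theorems are already ${\sf iGL}$-provable by the case just treated; modus ponens is preserved exactly as above; and the translations of the ${\sf C_a}$-instances are $\iglsigma$-provable by direct inspection — for example $(\overline{\sf C_a})^\Boxout=\Box(p\to\Box p)$ is literally an instance of $\overline{\sf C_a}$, while the $\Boxin$- and $\Box$-images reduce, using $\underline{\sf 4}$, to $\Boxdot p\to\Box\Boxdot p$ and necessitations thereof, and the images of $\underline{\sf C_a}$ follow from $\underline{\sf C_a}$ and $\underline{\sf 4}$. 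Assembling these along the derivation gives the statement for $\iglsigma$. Note that this route deliberately avoids assuming that $\iglsigma$ is closed under necessitation, which it need not be — precisely the reason the boxed schema $\overline{\sf C_a}$ is present.
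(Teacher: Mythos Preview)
Your proposal is correct and follows the same approach the paper indicates — induction on the proof of $A$ in $\iGL$ — only with substantially more detail than the paper supplies (the paper's proof is the single sentence ``Use induction on the complexity of proof $\iGL\vdash A$''). Your decomposition via \Cref{Remark-10} into the two claims (I) and (II), the order in which you prove them so that the necessitation case of (II) can appeal to (I), and your handling of the $\underline{\sf i}$-case for $(.)^\Boxout$ by recognising it as the G\"odel--McKinsey--Tarski translation with $\Boxdot$ in the role of the ${\sf S4}$-box are all sound and make explicit exactly what the one-line proof in the paper suppresses; your bootstrap for $\iglsigma$ is likewise correct, and your caution about necessitation there is prudent even if, as it happens, $\iglsigma$ is closed under it.
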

\begin{proof}
Use induction on the complexity of proof $\iGL\vdash A$.
\end{proof}

\begin{lemma}\label{Lemma-4-2}
Let $A$ be some proposition and $E\in {\sf sub}(A^\Box)$. 
Then ${\sf iK4}+{\sf CP_a}\vdash E^\Boxout\to \Boxdot E$.
\end{lemma}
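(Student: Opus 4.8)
The plan is to isolate one clean equivalence between the translations $(\cdot)^\Box$ and $(\cdot)^\Boxout$ and then read off the statement from the few possible shapes of a subformula of $A^\Box$.

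First I would prove, by induction on the complexity of $D$, the auxiliary equivalence $\ikfour\vdash D^\Box\lr (D^\Box)^\Boxout$ for every modal $D$. For atomic $D$, for $\top$ and $\bot$, and for boxed $D$ the two sides are literally identical (for atomic $p$ both equal $p\wedge\Box p$), and the $\wedge$- and $\vee$-cases are immediate from the induction hypothesis since all of the translations commute with $\wedge$ and $\vee$. The only case with content is $D=B\to C$, where $D^\Box=\Boxdot(B^\Box\to C^\Box)$ and $(D^\Box)^\Boxout=\Boxdot\big((B^\Box)^\Boxout\to (C^\Box)^\Boxout\big)\wedge\Box(B^\Box\to C^\Box)$. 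Matching the two outer boxes amounts to carrying the implication $(B^\Box\to C^\Box)\lr\big((B^\Box)^\Boxout\to (C^\Box)^\Boxout\big)$ under a $\Box$; this implication is a theorem of plain $\ikfour$ (it invokes only the two directions of the induction hypothesis, which have no open assumptions), so it may be necessitated and combined with $\underline{\sf K}$. This step relies on the convention fixed above that the theorem set of $\ikfour$ is closed under necessitation, even though the rule is not part of the calculus; along the same lines I would record the monotonicity principle that $\ikfour\vdash X\to Y$ entails $\ikfour\vdash\Boxdot X\to\Boxdot Y$.

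Next, a routine induction on $A$ shows that every $E\in\sub{A^\Box}$ is (i) atomic or $\bot$, (ii) boxed, say $E=\Box X$, (iii) of the form $D^\Box$, or (iv) of the form $B^\Box\to C^\Box$. In case (i) one has $E^\Boxout=\Boxdot E$ by definition, so there is nothing to prove; in case (ii) one has $E^\Boxout=E$ and $E\to\Boxdot E$ is exactly $\underline{\sf 4}$. In case (iii) the auxiliary equivalence gives $\ikfour\vdash (D^\Box)^\Boxout\to D^\Box$ and \Cref{Lemma-BoxdotABox-ABox} gives $\ikfour\vdash D^\Box\to\Boxdot D^\Box$; composing these and using $E=D^\Box$ yields $\ikfour\vdash E^\Boxout\to\Boxdot E$. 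In case (iv) we have $E^\Boxout=\Boxdot\big((B^\Box)^\Boxout\to (C^\Box)^\Boxout\big)$ and $\Boxdot E=\Boxdot(B^\Box\to C^\Box)$; the auxiliary equivalence yields $\ikfour\vdash\big((B^\Box)^\Boxout\to (C^\Box)^\Boxout\big)\to(B^\Box\to C^\Box)$, and the monotonicity principle upgrades this to $\ikfour\vdash E^\Boxout\to\Boxdot E$.

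The main obstacle is the boxed conjunct in the $D=B\to C$ case of the auxiliary equivalence, and equally case (iv) above: in both one must move a provable implication across a $\Box$, and this is legitimate only because the implication in question is derivable in pure $\ikfour$ with no hypotheses, so that necessitation applies to it. It is worth remarking that ${\sf CP_a}$ is in fact never invoked — the entire derivation lives already in $\ikfour$ — and the stated system contains it a fortiori.
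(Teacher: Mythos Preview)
Your proof is correct and follows essentially the same approach as the paper's: both rest on the observation that $(D^\Box)^\Boxout\leftrightarrow D^\Box$ holds in plain $\ikfour$, and both treat the implication-shaped subformula $E=B^\Box\to C^\Box$ as the only nontrivial case. The paper organizes this as an induction on the complexity of $E$ (handling $\wedge,\vee$ via the induction hypothesis), whereas you first classify the possible shapes of subformulas of $A^\Box$ and then dispatch each; the content is the same. Your additional remark that ${\sf CP_a}$ is never actually invoked is correct and worth keeping---the paper's own argument for the implication case also concludes in $\ikfour$ alone.
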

\begin{proof}
Use induction on the complexity of $E$. All cases are trivial 
except for $E=F^\Box\to G^\Box$. In this case we have 
$E^\Boxout=\Boxdot ((F^\Box)^\Boxout\to (G^\Box)^\Boxout)$.
One may observe that $(A^\Box)^\Boxout\leftrightarrow A^\Box$ is valid in ${\sf iK4}$ and hence we have 
${\sf iK4}\vdash E^\Boxout\leftrightarrow \Boxdot E$.
\end{proof}

\subsection{Preliminaries from Arithmetic}\label{Sec-2-1}

The first-order language of arithmetic contains three functions
(successor, addition and multiplication), one predicate symbol
and a constant: \hl{$({\sf S},+,\cdot\,,\leq,0)$}. First-order intuitionistic
arithmetic ($\HA$) is the theory over $\IQC$ with the axioms:
 \begin{enumerate}
  \item[Q1] \hl{${\sf S}x\neq 0$,}
  \item[Q2] \hl{${\sf S}x={\sf S}(y)\ra x=y$,}
  \item[Q3] \hl{$x+0=x$,}
  \item[Q4] \hl{$x+{\sf S}y={\sf S}(x+y)$,}
  \item[Q5] \hl{$x.0=0$,}
  \item[Q6] \hl{$x.{\sf S}y=(x.y)+x$,}
  \item[Q7] \hl{$x\leq y\lr\exists{z} \, z+x=y $,}
  \item[Ind:] For each formula $A(x)$:
  \begin{center}
    \hl{${\sf Ind}(A,x):=\mathcal{UC}[(A(0)\wedge\forall{x}(A(x)\ra A({\sf S}x)))\ra\forall{x}A(x)]$}
    \end{center}
    In which $\mathcal{UC}(B)$ is the universal closure  of $B$.
 \end{enumerate}
Peano Arithmetic \PA\!\!, has the same axioms of \HA over
\CQC\!\!.   

\begin{notation}
From now on, when we are working in the
first-order language of
arithmetic, for a first-order sentence $A$, 
the notations $\Box A$ and
$\Box^+A$ are  shorthand for 
$\Prv{HA}{A}$ and  $\Prv{PA}{A}$, respectively. 
 Let $i{\Sigma}_1$ be the
theory $\HA$, where the induction principle is restricted to
$\Sigma_1$-formulae. 
We also define  $\HA_x$ to be
the theory with axioms of $\HA$, in which the induction principle
is restricted to  formulae satisfying at least one of the following conditions:
\begin{itemize}
\item  $\Sigma_1$-formulas,
\item formulae with G\"{o}del number less than $x$.
\end{itemize}
We  define  $\PA_x$ similarly.  
Also
define $\Box_x A$ and  $\Box^+_x A$ to be 
 provability predicates in $\HA_x$ and $\PA_x$ , respectively.
\end{notation}

\begin{lemma}\label{Lemma-Reflection}
For every formula $A$, we have $\PA\vdash \forall{x}\
\Box^+(\Box^+_xA\ra A)$ and  $\HA\vdash\forall{x}\
\Box(\Box_xA\ra A)$.
\end{lemma}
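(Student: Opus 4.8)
The plan is to first establish the two ``one-box'' reflection statements $\PA\vdash\forall x\,(\Box^+_x A\to A)$ and $\HA\vdash\forall x\,(\Box_x A\to A)$, and only afterwards insert the outer box. The passage from the one-box form to the stated two-box form is routine: the whole derivation of $\PA\vdash\forall x\,(\Box^+_x A\to A)$ is elementary and hence provably formalizable, so $\PA\vdash\Box^+\,\forall x\,(\Box^+_x A\to A)$; applying the formalized $\forall$-instantiation principle $\Box^+\forall x\,\psi(x)\to\forall x\,\Box^+\psi(\dot x)$ then yields $\PA\vdash\forall x\,\Box^+(\Box^+_x A\to A)$, and identically for $\HA$. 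So the mathematical content sits entirely in the one-box statements, and the reason the full reflection $\Box^+ A\to A$ cannot be used (it fails by L\"ob) is precisely that we work with the bounded fragments $\PA_x,\HA_x$, whose proofs have bounded complexity.

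The tool is a uniform family of partial truth predicates. For a complexity level $m$, treated as a variable, there is an arithmetical formula $\mathrm{Tr}_m$ for which $\PA$ (resp.\ $\HA$) proves the Tarski conditions on all formulas of complexity at most $m$, uniformly in $m$. Given $x$ and a putative proof $p$ witnessing $\Box^+_x A$, let $m$ be one plus the maximal complexity of the formulas occurring in $p$ (a primitive recursive function of $p$). I would then argue, inside $\PA$, the soundness of $p$ with respect to $\mathrm{Tr}_m$ by induction on $p$: the logical axioms and the rules of $\CQC$ preserve $\mathrm{Tr}_m$-truth; the axioms Q1--Q7 are $\mathrm{Tr}_m$-true; and crucially every induction instance occurring in $p$---whether a $\Sigma_1$-instance or one of the finitely many instances of G\"odel number below $x$---is $\mathrm{Tr}_m$-true, because its complexity is at most $m$ and the \emph{full} induction of $\PA$ applied to the predicate $\mathrm{Tr}_m(\gnumber{B(\dot y)})$ validates the induction axiom for every $B$ of complexity at most $m$. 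Hence $\mathrm{Tr}_m(\gnumber{A})$; and since $A$ is a fixed formula, the Tarski biconditional $\mathrm{Tr}_m(\gnumber{A})\lr A$ (for $m$ at least the complexity of $A$) gives $A$. This is exactly $\Box^+_x A\to A$, uniformly in $x$.

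For $\HA$ the same four steps work, but the truth predicate must be chosen so that its clauses respect the \emph{intuitionistic} reading of the connectives---in particular the clause $\mathrm{Tr}_m(\gnumber{B\to C})\lr(\mathrm{Tr}_m(\gnumber{B})\to\mathrm{Tr}_m(\gnumber{C}))$ must be an intuitionistic biconditional, so that the \HA-provable soundness of an $\IQC$-proof with respect to $\mathrm{Tr}_m$ goes through without any appeal to decidability of $\mathrm{Tr}_m$. I expect this to be the main obstacle: the naive standard-model truth predicate is unusable over $\HA$, since it would require $\mathrm{Tr}_m(\gnumber{B})\vee\neg\mathrm{Tr}_m(\gnumber{B})$, which $\HA$ does not prove. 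One therefore needs the logic-respecting partial truth predicate, an explicitly intuitionistic soundness induction on $p$, and careful bookkeeping of the variable level $m$ (so that all the preservation facts and Tarski conditions are invoked uniformly in $m$). Once the one-box statements are secured, the formalization step of the first paragraph completes the proof.
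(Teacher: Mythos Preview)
Your plan has a genuine gap at step~1: the ``one-box'' statement $\PA\vdash\forall x\,(\Box^+_x A\to A)$ is simply \emph{false} in general. Take $A=\bot$: then the one-box statement becomes $\PA\vdash\forall x\,\mathrm{Con}(\PA_x)$. But inside $\PA$, any putative $\PA$-proof $p$ of $\bot$ uses only finitely many induction instances, so taking $x$ larger than the G\"odel numbers of the induction formulae occurring in $p$ makes $p$ a $\PA_x$-proof of $\bot$; hence $\PA\vdash\forall x\,\mathrm{Con}(\PA_x)\to\mathrm{Con}(\PA)$, and your step~1 would yield $\PA\vdash\mathrm{Con}(\PA)$, contradicting G\"odel's second incompleteness theorem. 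The same argument applies verbatim to $\HA$.

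The source of the error is the claimed uniform partial truth predicate: there is \emph{no} single arithmetical formula $\mathrm{Tr}_m(e)$ with $m$ a free variable for which $\PA$ proves the Tarski conditions on all formulas of complexity $\le m$ uniformly in $m$. If there were, then $e\mapsto\mathrm{Tr}_{c(e)}(e)$ (with $c(e)$ the complexity of $e$) would be a full truth predicate, contradicting Tarski's undefinability theorem. What one actually has is a primitive recursive \emph{function} $n\mapsto\ulcorner\mathrm{Tr}_n\urcorner$ producing the G\"odel numbers of a sequence of increasingly complex formulae, together with a primitive recursive function producing, for each $n$, a $\PA$-proof that $\mathrm{Tr}_n$ satisfies the Tarski clauses at level $n$. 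From this one obtains, primitive recursively in $x$, a $\PA$-proof of $\Box^+_x A\to A$ (using $\mathrm{Tr}_{n(x)}$ for suitable $n(x)$); formalizing \emph{that} construction gives $\PA\vdash\forall x\,\Box^+(\Box^+_x A\to A)$ directly, without ever passing through the unboxed form. This is the route taken in the references the paper cites (H\'ajek--Pudl\'ak for $\PA$; Smory\'nski and Visser for the intuitionistic adaptation to $\HA$), and the outer $\Box$ in the lemma is not a cosmetic afterthought but precisely what makes the statement provable.
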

\begin{proof}
The case of $\PA$ is well known \cite{HP}. For the case $\HA$, see
\cite{Smorynski-Troelstra} or  \cite[Theorem 8.1]{Visser02}.
\end{proof}

\begin{lemma}\label{Lemma-bounded Sigma completeness}
$\HA$  proves all true $\Sigma_1$ sentences. Moreover 
this argument is formalizable and provable in $\HA$, i.e. 
for every $\Sigma_1$-formula $A(x_1,\ldots,x_k)$ we have
$\HA\vdash{
A(x_1,\ldots,x_k)\ra\Box A(\dot{x}_1,\ldots,\dot{x}_k)}$.
\end{lemma}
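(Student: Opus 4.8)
The plan is to establish the two assertions in sequence: the external $\Sigma_1$-completeness of $\HA$ first, and then its formalized strengthening, which is the statement displayed in the lemma. Throughout I take $\Sigma_1$ to be generated inductively from atomic formulas by the connectives $\wedge,\vee$, the bounded quantifiers $\exists y\leq t$ and $\forall y\leq t$, and unbounded existential quantification $\exists y$, so that both arguments proceed by a single induction on this generation.

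For the external statement I would argue by induction on the build-up of a true closed $\Sigma_1$-sentence. The base case is decidability of closed atomic formulas: every closed term evaluates externally to a numeral, and $\HA$ proves each true equation $s=t$ and each true $s\leq t$, as well as the negations of the false ones, by an external induction on term complexity using the defining axioms Q3--Q7. The cases for $\wedge$ and $\vee$ are immediate from the induction hypothesis; for $\exists y$ and $\exists y\leq t$ a single provable true instance yields the existential by $\exists$-introduction; and for $\forall y\leq t$ the finitely many true instances below the value of $t$ are assembled into $\forall y\leq t\,\varphi$ using bounded induction in $\HA$ up to the numeral bound.

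For the formalized statement I would run the parallel induction inside $\HA$, establishing $\HA\vdash A(x_1,\ldots,x_k)\to\Box A(\dot x_1,\ldots,\dot x_k)$, where $\dot x_i$ denotes the primitive recursive numeral term. The connective and existential cases are handled uniformly by the Hilbert--Bernays--L\"ob derivability conditions for the $\Delta_0({\sf exp})$-axiomatized $\HA$: formalized necessitation (the box of each logical validity is provable) together with formalized modus ponens give $\Box$-monotonicity and the implications $\Box B\wedge\Box C\to\Box(B\wedge C)$, $\Box B\to\Box(B\vee C)$, and the formalized $\exists$-introduction $\Box\varphi(\dot a,\dot x_1,\ldots)\to\Box\,\exists y\,\varphi(y,\dot x_1,\ldots)$; composing these with the induction hypotheses closes the $\wedge,\vee$ and $\exists$ cases. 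The atomic base requires the internal correctness of evaluation: $\HA$ must prove that, under $\Box$, the term $t$ applied to the numerals $\dot x_1,\ldots,\dot x_k$ provably equals the numeral of its value, equivalently that the numeral function provably respects ${\sf S},+,\cdot$; this follows from the provable recursion equations for these functions together with provable decidability of atomic formulas.

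The main obstacle is twofold, and both parts live in the formalized argument. First, the bounded universal case $\forall y\leq t$ cannot be reduced to a fixed finite conjunction inside $\Box$, since the bound is a variable; it must instead be handled by a bounded induction carried out under the provability predicate, i.e. by proving inside $\HA$ that $\Box$ of the base and the induction step yields $\Box\,\forall y\leq t\,\varphi$, which relies on the provability in $\HA$ of $\Sigma_1$-induction and on internal reasoning of the kind used in \autoref{Lemma-Reflection}. Second, the internal evaluation fact for terms is where the $\Delta_0({\sf exp})$ machinery is genuinely needed, since substitution of numerals and the arithmetization of the proof predicate must be shown to commute provably. As all of this is classical and documented, in the write-up I would reduce the routine connective and existential cases to the standard references (e.g. \cite{Smorynski-Troelstra,HP}) and spell out only the atomic-evaluation and bounded-universal cases in detail.
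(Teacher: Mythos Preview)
Your proposal is correct and follows the standard route to $\Sigma_1$-completeness and its formalization. The paper itself does not give a proof here: it simply records the fact as well-known, cites \cite{Visser02}, and remarks that the argument is constructive and formalizable in $\HA$. So your sketch supplies considerably more detail than the paper does, but there is nothing to compare at the level of strategy.
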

\begin{proof}
It is a well-known fact that any true (in the standard model
$\mathbb{N}$) $\Sigma_1$-sentence is provable in $\HA$
\cite{Visser02}. 
Moreover this argument is constructive and formalizable in
$\HA$.
\end{proof}

\begin{lemma}\label{Lemma-decidability of delta formulae}
For any \hl{$\Delta_0(\exp)$}-formula $A(\bar{x})$, we have
$\HA\vdash\forall\bar{x}(A(\bar{x})\vee\neg A(\bar{x}))$.
\end{lemma}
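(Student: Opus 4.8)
The plan is to argue by induction on the structure of the $\Delta_0(\exp)$-formula $A$, showing at each stage that $\HA\vdash\forall\bar{x}(A(\bar{x})\vee\neg A(\bar{x}))$; let me call a formula with this property \emph{provably decidable}. Nothing from the $\Sigma_1$-completeness lemmas is needed: the structural induction is self-contained.

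First I would handle the atomic formulas. Every atomic formula has the shape $s=t$ or $s\leq t$ for terms $s,t$ of the language $({\sf S},+,\cdot\,,\exp,0)$, so it suffices to establish provable decidability of the two primitive relations and then substitute the term values. For equality one proves $\HA\vdash\forall x\forall y(x=y\vee\neg\,x=y)$ by a straightforward double induction, using Q1 and Q2 to discharge the cases $0$ versus ${\sf S}y$ and ${\sf S}x$ versus ${\sf S}y$. For the order one proves $\HA\vdash\forall x\forall y(x\leq y\vee\neg\,x\leq y)$ by induction, unfolding $x\leq y$ via Q7; equivalently one first proves the totality $\forall x\forall y(x\leq y\vee y\leq x)$ together with decidability of equality. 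The presence of $\exp$ is harmless here: a term is just a name for a value, and decidability of the relation on the values is all that is used.

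Next, the propositional connectives are pure intuitionistic bookkeeping. In $\IPC$ the class of formulas $P$ with $\vdash P\vee\neg P$ is closed under $\wedge,\vee,\ra$ and $\neg$: given $A\vee\neg A$ and $B\vee\neg B$, a four-way case distinction yields $(A\star B)\vee\neg(A\star B)$ for each connective $\star$, and likewise $\neg A\vee\neg\neg A$. Applying this uniformly under the quantifier $\forall\bar{x}$ shows that provable decidability is preserved by the Boolean operations.

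The main work, and the step I expect to be the real obstacle, is the bounded quantifiers. Suppose inductively that $\HA\vdash\forall\bar{x}\,\forall y\,(A(\bar{x},y)\vee\neg A(\bar{x},y))$. I would first prove, for a fresh variable $z$ in place of the term bound, that $\HA\vdash\forall\bar{x}\,\forall z\,\big((\forall y\leq z\,A)\vee\neg(\forall y\leq z\,A)\big)$ by (full) induction on $z$ inside $\HA$: the base case $z=0$ reduces to $A(\bar{x},0)$ since $\HA\vdash y\leq 0\lr y=0$, and is decidable by hypothesis; in the step one uses $\HA\vdash(\forall y\leq{\sf S}z\,A)\lr((\forall y\leq z\,A)\wedge A(\bar{x},{\sf S}z))$ together with the closure of provable decidability under $\wedge$ from the previous paragraph. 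The existential bounded quantifier is symmetric, using $\HA\vdash(\exists y\leq{\sf S}z\,A)\lr((\exists y\leq z\,A)\vee A(\bar{x},{\sf S}z))$ and closure under $\vee$. Finally, to obtain the result for an arbitrary term bound $t(\bar{x})$, I instantiate $z:=t(\bar{x})$. Since $\HA$ has unrestricted induction there is no restriction on the complexity of the formula $(\forall y\leq z\,A)\vee\neg(\forall y\leq z\,A)$ on which we induct, so the argument goes through for every $\Delta_0(\exp)$-formula, completing the induction on formula structure.
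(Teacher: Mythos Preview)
Your argument is correct and is precisely the standard structural induction one finds in the literature. The paper does not actually prove this lemma: it simply cites \cite{TD} and declares the result well-known. So there is nothing to compare on the level of strategy; you have supplied the proof that the paper omits.

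One small point of care: the paper's first-order language is $({\sf S},+,\cdot\,,\leq,0)$, with no primitive $\exp$ symbol, so the class $\Delta_0(\exp)$ is presumably defined via the graph of exponentiation (or via bounds that are provably exponential) rather than via terms containing an $\exp$ function symbol. Your atomic and bounding steps should therefore be read accordingly: one first shows in $\HA$ that the ternary relation $z=x^y$ is provably decidable (which follows by the same kind of induction you give, using that $\HA$ proves totality and functionality of exponentiation), and then your bounded-quantifier step goes through with $z$ instantiated to the witnessing value rather than to a syntactic term. This is a cosmetic adjustment and does not affect the soundness of your induction.
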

\begin{proof}
This is well-known in the literature \cite{TD}.
\end{proof}
\begin{lemma}\label{Lemma-Conservativity of HA}
Let $A$, $B$ be  $\Sigma_1$-formulae such that $\PA\vdash A\ra
B$. Then $\HA\vdash A\ra B$.
\end{lemma}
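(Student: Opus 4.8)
The plan is to reduce the statement to the $\Pi_2$-conservativity of $\PA$ over $\HA$ (Friedman's theorem) and then run the negative-translation/Friedman-translation argument. Write $A=\exists\bar u\,\alpha(\bar u,\bar x)$ and $B=\exists\bar v\,\beta(\bar v,\bar x)$ with $\alpha,\beta$ being $\Delta_0(\exp)$ and $\bar x$ the free variables. Since $\alpha$ is decidable by \Cref{Lemma-decidability of delta formulae}, classically $A\ra B$ is equivalent to the $\Pi_2$ formula $\forall\bar u\,\exists\bar v\,\gamma(\bar u,\bar v,\bar x)$, where $\gamma:=\neg\alpha(\bar u,\bar x)\vee\beta(\bar v,\bar x)$ is again $\Delta_0(\exp)$; one checks that this equivalence holds in $\PA$, and that the direction ``$\forall\bar u\,\exists\bar v\,\gamma$ implies $A\ra B$'' holds already in $\HA$ (given $A$, pick a witness $\bar u_0$ with $\alpha(\bar u_0,\bar x)$, obtain $\bar v_0$ with $\gamma(\bar u_0,\bar v_0,\bar x)$, and use decidability of $\alpha$ to discard the left disjunct, yielding $\beta(\bar v_0,\bar x)$, hence $B$). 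Thus it suffices to prove $\HA\vdash\forall\bar x\,\forall\bar u\,\exists\bar v\,\gamma$.

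Next I would apply the G\"odel--Gentzen negative translation $(\cdot)^N$, using its basic property that $\PA\vdash\phi$ implies $\HA\vdash\phi^N$. From $\PA\vdash\forall\bar u\,\exists\bar v\,\gamma$ we obtain $\HA\vdash(\forall\bar u\,\exists\bar v\,\gamma)^N$. Because every $\Delta_0(\exp)$ formula is decidable in $\HA$, an easy induction on complexity shows $\HA\vdash\gamma^N\lr\gamma$, so the translated statement collapses to $\HA\vdash\forall\bar u\,\neg\neg\,\exists\bar v\,\gamma(\bar u,\bar v,\bar x)$.

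The key step is then Friedman's $\rho$-translation $(\cdot)^\rho$, which replaces $\bot$ by $\rho$ and each atom $P$ by $P\vee\rho$ and commutes with the remaining connectives and quantifiers; the crucial property is that $\HA$ is closed under this translation, i.e.\ $\HA\vdash\phi$ implies $\HA\vdash\phi^\rho$. Fixing $\bar u,\bar x$ and taking $\rho:=\exists\bar v\,\gamma(\bar u,\bar v,\bar x)$, I would note the general fact $\HA\vdash\rho\ra\phi^\rho$ and, via decidability once more, the equivalence $\HA\vdash\gamma^\rho\lr(\gamma\vee\rho)$; together these give $\HA\vdash(\exists\bar v\,\gamma)^\rho\lr\rho$ (the forward direction because each disjunct of $\gamma\vee\rho$ yields $\rho$, the backward direction from $\rho\ra(\exists\bar v\,\gamma)^\rho$). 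Consequently $(\neg\neg\,\exists\bar v\,\gamma)^\rho$ is $\HA$-equivalent to $(\rho\ra\rho)\ra\rho$, hence to $\rho$ itself. Applying closure of $\HA$ under $(\cdot)^\rho$ to the formula obtained in the previous paragraph therefore gives $\HA\vdash\rho$, i.e.\ $\HA\vdash\exists\bar v\,\gamma(\bar u,\bar v,\bar x)$; generalizing over $\bar u$ and $\bar x$ yields $\HA\vdash\forall\bar x\,\forall\bar u\,\exists\bar v\,\gamma$, which by the first paragraph gives $\HA\vdash A\ra B$.

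The main obstacle I anticipate is the bookkeeping in the last step: verifying that $\HA$ is genuinely closed under the Friedman $\rho$-translation (here one really uses that the translation commutes with modus ponens and that the $\rho$-translate of an induction instance is again provable by induction), and checking that the $\Delta_0(\exp)$ formulae behave as claimed under both $(\cdot)^N$ and $(\cdot)^\rho$. Everything else is routine manipulation licensed by the decidability of bounded formulae.
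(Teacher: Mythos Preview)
Your proposal is correct and follows essentially the same approach as the paper: both reduce the implication $A\to B$ of $\Sigma_1$-formulae to a $\Pi_2$ statement and then invoke the $\Pi_2$-conservativity of $\PA$ over $\HA$. The only difference is that the paper simply cites this conservativity result from \cite{TD}, whereas you additionally unpack its standard proof via the G\"odel--Gentzen negative translation followed by the Friedman translation; this extra detail is fine but unnecessary for the paper's purposes.
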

\begin{proof}
\hl{Observe that every implication of $\Sigma_1$-sentences in $\HA$ is equivalent to a $\Pi_2$ sentence and use the  $\Pi_2$-conservativity of
$\PA$ over $\HA$} \cite{TD}(3.3.4).
\end{proof}

\begin{definition}\label{Definition-First-order-translation}
For a first-order theory $\SFT$ and 
first-order arithmetical formula $A$, the Beeson-Visser translation $A^\SFT$ is 
defined  as follows:
\begin{itemize}
\item $A^{\SFT}:=A$ for atomic $A$,
\item $(.)^{\SFT}$ commutes with $\wedge,\vee$ and $\exists$,
\item $(A\to B)^{\SFT}:=(A^{\SFT}\to B^\SFT)\wedge
\Prv{T}{A^\SFT\to B^\SFT}$
\item $(\forall{x}A)^\SFT:=\forall{x}A^\SFT\wedge \Prv{T}{\forall{x}A^\SFT}$.
\end{itemize}
$\HA^*$ and $\PA^*$ were first introduced in \cite{Visser82}.
These theories are defined as
$$\HA^*:=\{A\mid\HA\vdash A^{{\sf HA}}\} \quad \text{ and }\quad
\PA^*:= \{A\mid\PA\vdash A^{\sf PA}\}.$$
\end{definition}

Visser in \cite{Visser82} showed that the 
($\Sigma_1$-)provability logic of
$\PA^*$ is $\iglct$, i.e. $\iglct\vdash A$ iff  for all
arithmetical substitution $\sigma$, \ $\PA^*\vdash \sigma_\tinysub{{\sf
PA^*}}(A)$. That means that $$\PL(\PA^*)=\PLS(\PA^*)=\iglct.$$

\begin{lemma}\label{Lemma-Properties of Box translation}
For any arithmetical $\Sigma_1$-formula $A$
\begin{enumerate}
\item $\HA\vdash A\lr{A^{\sf HA}}$,
\item $\HA\vdash A\lr {A^{\sf PA}}$.
\end{enumerate}
\end{lemma}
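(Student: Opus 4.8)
The plan is to prove both equivalences by a structural induction that isolates the only two constructors under which the Beeson--Visser translation of \Cref{Definition-First-order-translation} actually inserts a provability box, namely $\to$ and $\forall$, and to discharge those boxes using the formalized $\Sigma_1$-completeness of \Cref{Lemma-bounded Sigma completeness}. First I would reduce to the bounded case: writing a $\Sigma_1$ formula as $A=\exists\bar y\,\delta$ with $\delta$ a $\Delta_0$ formula, and using that $(.)^{\sf HA}$ commutes with $\exists$ (and with $\wedge,\vee$), one gets $A^{\sf HA}=\exists\bar y\,\delta^{\sf HA}$, so it suffices to establish $\HA\vdash\delta\lr\delta^{\sf HA}$ for every bounded $\delta$ and then close under the transparent constructors. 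The atomic case and the $\wedge,\vee$ and bounded-existential ($\exists x\le t$) steps are immediate, since there the translation adds no box and the claim follows verbatim from the induction hypothesis.

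The heart of the argument is the two box-introducing cases, which I would handle by a single device. Consider $\delta=\delta_1\to\delta_2$, so that $\delta^{\sf HA}=(\delta_1^{\sf HA}\to\delta_2^{\sf HA})\wedge\Box(\delta_1^{\sf HA}\to\delta_2^{\sf HA})$. Running the induction hypothesis on $\delta_1,\delta_2$ in both directions proves $\HA\vdash\delta\lr(\delta_1^{\sf HA}\to\delta_2^{\sf HA})$; call $\delta_1^{\sf HA}\to\delta_2^{\sf HA}$ the \emph{skeleton} of $\delta^{\sf HA}$. The backward direction of the lemma is then just dropping the box conjunct and reading off this equivalence. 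For the forward direction it remains to produce the conjunct $\Box(\delta_1^{\sf HA}\to\delta_2^{\sf HA})$ from $\delta$, and here I use that $\delta$ is bounded, hence $\Sigma_1$, so \Cref{Lemma-bounded Sigma completeness} gives $\HA\vdash\delta\to\Box\delta$; necessitating the provable skeleton-equivalence and applying the distribution axiom $\underline{\sf K}$ converts $\Box\delta$ into $\Box(\delta_1^{\sf HA}\to\delta_2^{\sf HA})$, as required. The bounded universal case $\delta=\forall x\le t\,\delta_1$ is handled identically: reading it as $\forall x(x\le t\to\delta_1)$, its translation is $\forall x\,\psi^{\sf HA}\wedge\Box\forall x\,\psi^{\sf HA}$ with $\psi:=x\le t\to\delta_1$; the induction hypothesis (applied to the implication $\psi$, then under $\forall x$) yields $\HA\vdash\delta\lr\forall x\,\psi^{\sf HA}$, and since $\delta$ is again bounded, $\Box\delta$ upgrades to the outer box $\Box\forall x\,\psi^{\sf HA}$ exactly as before.

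The main obstacle I anticipate is bookkeeping in these two cases: one must track free variables inside the box, so that an occurrence such as $\Box(x\le t\to\delta_1^{\sf HA})$ is read with $x$ replaced by its numeral, and invoke the free-variable form of \Cref{Lemma-bounded Sigma completeness} accordingly; and one must check that the skeleton-equivalences fed to necessitation are genuine $\HA$-theorems rather than statements derived merely under ambient assumptions, which they are, being supplied directly by the induction hypothesis. Crucially the $\forall$ step relies on boundedness, since it is exactly boundedness that keeps the node formula $\Sigma_1$ and lets $\Sigma_1$-completeness apply. No appeal to the decidability of bounded formulas (\Cref{Lemma-decidability of delta formulae}) is needed, as the whole argument is intuitionistic and $\Sigma_1$-completeness does all the work.

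Finally, part (2) follows by re-running the very same induction with $\Box=\Prv{HA}{\cdot}$ replaced throughout by $\Box^+=\Prv{PA}{\cdot}$. The only properties of the provability predicate the induction uses are $\underline{\sf K}$, necessitation, and $\Sigma_1$-completeness, and all three hold for $\PA$-provability provably in $\HA$: the $\PA$-analogue of \Cref{Lemma-bounded Sigma completeness}, namely $\HA\vdash B\to\Box^+B$ for $\Sigma_1$ formulas $B$, follows from the $\HA$-version together with $\HA\vdash\Box B\to\Box^+B$, the latter holding because every $\HA$-proof is a $\PA$-proof and this inclusion of axiom sets is verifiable by a bounded formula. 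Hence the identical structural induction delivers $\HA\vdash A\lr A^{\sf PA}$.
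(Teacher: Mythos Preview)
Your argument is correct and is the standard structural-induction proof of this fact: peel off the outer existentials (where the translation is transparent), then induct on the remaining $\Delta_0$ kernel, discharging the extra provability conjuncts at $\to$ and bounded $\forall$ via formalized $\Sigma_1$-completeness and necessitation. The paper does not give its own proof here but simply cites Visser's original \cite[4.6.iii]{Visser82}, whose argument proceeds along exactly these lines; your write-up is in fact more explicit about the free-variable bookkeeping and about why the same induction goes through for $\Box^+$.
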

\begin{proof}
 See \cite[4.6.iii]{Visser82}.
\end{proof}
\begin{lemma}\label{Lemma-Gist-HA*-PA*}
For every arithmetical sentence $A$ we have 
\begin{itemize}
\item $\HA\vdash \Prv{\HA}{A}\to \Prv{\HA^*}{A}$,
\item $\HA^*\vdash A\to \Prv{\HA}{A}$,
\item $\PA^*\vdash A\to \Prv{\PA}{A}$.
\end{itemize}
\end{lemma}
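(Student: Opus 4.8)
The plan is to funnel all three clauses through the defining equivalence of the self-completions, $\SFT^*\vdash\phi \iff \SFT\vdash\phi^{\SFT}$ (\Cref{Definition-First-order-translation}), and then argue entirely inside the base theory about the Beeson--Visser translation. Two facts drive everything. First, provable $\Sigma_1$-completeness (\Cref{Lemma-bounded Sigma completeness}, together with decidability of $\Delta_0(\exp)$-formulas, \Cref{Lemma-decidability of delta formulae}) makes the provability conjuncts that $(\cdot)^{\SFT}$ inserts at $\to$ and $\forall$ harmless on $\Sigma_1$ inputs. Second, \Cref{Lemma-Properties of Box translation} collapses $\psi^{\SFT}\leftrightarrow\psi$ whenever $\psi$ is $\Sigma_1$; since every $\Prv{\SFT}{A}$ and $\Prv{\SFT^*}{A}$ is $\Sigma_1$, its translation may be erased up to $\SFT$-provable equivalence, which is exactly what turns a statement about $\SFT^*$-provability into one purely about $\SFT$ and $(\cdot)^{\SFT}$.

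For the first clause I would first establish the inclusion $\HA\subseteq\HA^*$, i.e.\ $\HA\vdash A \Rightarrow \HA\vdash A^{{\sf HA}}$, by induction on an $\HA$-derivation: for each logical and nonlogical axiom $\alpha$ one checks $\HA\vdash\alpha^{{\sf HA}}$ directly, the only nontrivial inserted conjuncts being $\Sigma_1$ and hence supplied by \Cref{Lemma-bounded Sigma completeness}; and modus ponens is preserved because $(\phi\to\psi)^{{\sf HA}}$ literally carries $\phi^{{\sf HA}}\to\psi^{{\sf HA}}$ as a conjunct. Because this induction on proofs is primitive recursive, it formalizes verbatim in $\HA$, and combined with the formalized reading of the definition of $\HA^*$, namely $\HA\vdash \Prv{\HA^*}{B}\leftrightarrow\Prv{\HA}{B^{{\sf HA}}}$, it yields $\HA\vdash \Prv{\HA}{A}\to\Prv{\HA^*}{A}$.

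The second and third clauses are the internal completeness principles of the self-completions, and they rest on the one genuinely delicate ingredient, the \emph{provable completeness of the translation}
\[
\SFT\vdash A^{\SFT}\to\Prv{\SFT}{A^{\SFT}},
\]
which I would prove by induction on $A$: at atoms it is provable $\Sigma_1$-completeness; the $\wedge,\vee,\exists$ cases follow by distributing the box; and the $\to$ and $\forall$ cases are precisely why the translation carries the extra conjuncts $\Prv{\SFT}{\cdots}$, since these conjuncts are themselves $\Sigma_1$ and can be boxed once more and re-collected under the box to close the step. Unwinding the completeness principle $A\to\Box A$ through the defining equivalence and erasing the $\Sigma_1$-translation of the box via \Cref{Lemma-Properties of Box translation}, the claim is reduced to the displayed identity together with the formalized identification $\Prv{\SFT}{A^{\SFT}}\leftrightarrow\Prv{\SFT^*}{A}$ from the first clause, and a final necessitation supplies the outer provability conjunct of the translated implication; the classical version for $\PA^*$ is identical, using the $\PA$-analogues of the same lemmas.

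I expect the main obstacle to be the honest verification of the displayed provable-completeness identity: keeping the book-keeping of the inserted boxes straight through nested $\to$ and $\forall$, correctly invoking the formalized derivability conditions, and --- in the places where an inserted $\Sigma_1$ conjunct must be \emph{refuted} rather than verified (the self-referential cases where a box of a non-provable sentence appears) --- appealing to formalized L\"ob's theorem. This self-referential core, and the exact matching of the translated completeness principle to the stated form, is the content of \cite{Visser82}, to which the routine cases can be deferred.
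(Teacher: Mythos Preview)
For the first item your plan coincides with the paper's: show $\HA\vdash A\Rightarrow\HA\vdash A^{\sf HA}$ by induction on an $\HA$-derivation and then formalize.

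For items 2 and 3 your chain does not reach the stated target. After unwinding, the goal is $\SFT\vdash A^{\SFT}\to\Prv{\SFT}{A}$; but your displayed identity $\SFT\vdash A^{\SFT}\to\Prv{\SFT}{A^{\SFT}}$ together with the identification $\Prv{\SFT}{A^{\SFT}}\leftrightarrow\Prv{\SFT^*}{A}$ only delivers $\SFT\vdash A^{\SFT}\to\Prv{\SFT^*}{A}$. The missing link would be $\Prv{\SFT^*}{A}\to\Prv{\SFT}{A}$, which fails because $\SFT^*$ properly extends $\SFT$.

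This is not a defect of your argument but of the printed statement. Take $A:={\sf Con}(\HA)\to\Prv{\HA}{{\sf Con}(\HA)}$ with ${\sf Con}(\HA):=\neg\Prv{\HA}{\bot}$. One computes $({\sf Con}(\HA))^{\sf HA}$ to be $\HA$-equivalent to ${\sf Con}(\HA)\wedge\Prv{\HA}{{\sf Con}(\HA)}$, so the antecedent of $A^{\sf HA}$ already contains its consequent as a conjunct; thus $A^{\sf HA}$ is $\HA$-provable and $\HA^*\vdash A$. Yet $A$ is false in $\mathbb{N}$ by G\"odel's second theorem, so $\HA\nvdash A$ and $\HA\nvdash\Prv{\HA}{A}$; consequently $\HA\nvdash A^{\sf HA}\to\Prv{\HA}{A}$ and $\HA^*\nvdash A\to\Prv{\HA}{A}$. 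The same construction with $\PA$ refutes item 3. What the paper's ``induction on the complexity of $A$'' and your displayed identity actually establish is the intended statement $\SFT^*\vdash A\to\Prv{\SFT^*}{A}$, the completeness principle of the self-completion with respect to its \emph{own} provability predicate (this is indeed what \cite{Visser82} proves). For that version the induction is straightforward and needs no L\"ob: at the $\to$ and $\forall$ steps $A^{\SFT}$ literally carries $\Prv{\SFT}{D}$ as a conjunct (with $D$ the body of $A^{\SFT}$), and one only applies provable $\Sigma_1$-completeness to obtain $\Prv{\SFT}{\Prv{\SFT}{D}}$ and conjoins under the box.
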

\begin{proof}
For the first item, consider some $A$ such that $\HA\vdash A$. By induction on the proof of $A$ in $\HA$, one may prove that 
$\HA\vdash A^\HA$. Moreover this argument is formalizable and provable in $\HA$. We refer the reader to \cite{Visser82} for details. \\
For the proof of second and third items, one may use
induction on the complexity of $A$, and we leave the routine induction to the reader.
\end{proof}


\begin{lemma}\label{Label-HA-HA*-Box-trans2}
For any $\Sigma_1$-substitution $\sigma$ and each propositional
modal sentence $A$, we have 
$\HA\vdash (\sigma_\tinysub{{\sf HA}^*}(A))^{\sf HA}\leftrightarrow 
\sigmapas(A^\Boxout) $ and 
$\PA\vdash (\sigmapas{A})^{\sf PA}\leftrightarrow 
\sigmapas{A^\Boxout} $.
\end{lemma}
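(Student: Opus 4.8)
The plan is to establish both biconditionals at once, by a simultaneous induction on the modal sentence $A$, noting first that the two right-hand sides coincide: both read $\sigmapas{A^\Boxout}$, so $\sigmapas{A^\Boxout}$ is the common normal form toward which the $\HA$-route (through $\HA^*$) and the $\PA$-route (through $\PA^*$) both converge. I take as the principal engine the defining property of the self-completion (\Cref{Definition-First-order-translation}) in its internalized form, $\HA\vdash\Prv{\HA^*}{\phi}\leftrightarrow\Prv{\HA}{\phi^{\sf HA}}$ and $\PA\vdash\Prv{\PA^*}{\phi}\leftrightarrow\Prv{\PA}{\phi^{\sf PA}}$ (the formalization of $\SFT^*=\{\phi\mid\SFT\vdash\phi^{\SFT}\}$ together with the deductive closure of $\SFT^*$, cf.\ \cite{Visser82}), supplemented by the $\Sigma_1$-inertness of the Beeson--Visser translations (\Cref{Lemma-Properties of Box translation}) and provable $\Sigma_1$-completeness (\Cref{Lemma-bounded Sigma completeness}). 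The induction treats atomic variables, $\bot$, and boxed formulas $\Box A'$ as base cases, since $(\Box A')^\Boxout=\Box A'$ prevents the box-translation from recursing under a $\Box$. I first describe the $\PA$-statement and then indicate the extra step needed for the $\HA$-statement.

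For the $\PA$-statement the base cases are immediate because the relevant interpretations are $\Sigma_1$: for an atom $p$ we have $\sigmapas{p}=\sigma(p)\in\Sigma_1$, and $\sigmapas{\Box A'}=\Prv{\PA^*}{\sigmapas{A'}}\in\Sigma_1$; hence the outer $(.)^{\sf PA}$ is inert by \Cref{Lemma-Properties of Box translation}. On the right, the extra conjunct generated by $p^\Boxout=\Boxdot p$ is absorbed using $\sigma(p)\to\Prv{\PA}{\sigma(p)}$ (\Cref{Lemma-bounded Sigma completeness}) and the provable inclusion $\PA\subseteq\PA^*$ (cf.\ \Cref{Lemma-Gist-HA*-PA*}), while for $\Box A'$ the two sides are literally $\Prv{\PA^*}{\sigmapas{A'}}$. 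The cases $A=B\wedge C$ and $A=B\vee C$ follow at once, because $(.)^{\sf PA}$, $(.)^\Boxout$ and the interpretation all commute with $\wedge$ and $\vee$.

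The implication case carries the content. Put $W_1:=\sigmapas{B^\Boxout}$ and $W_2:=\sigmapas{C^\Boxout}$. Applying the induction hypothesis to $B$ and $C$, the left-hand side becomes $\PA$-equivalent to $(W_1\to W_2)\wedge\Prv{\PA}{W_1\to W_2}$, while the right-hand side $\sigmapas{(B\to C)^\Boxout}$, expanded through $(B\to C)^\Boxout=\Boxdot(B^\Boxout\to C^\Boxout)$, is $(W_1\to W_2)\wedge\Prv{\PA^*}{W_1\to W_2}$. Thus the case reduces to $\PA\vdash\Prv{\PA}{W_1\to W_2}\leftrightarrow\Prv{\PA^*}{W_1\to W_2}$. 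I obtain this from the self-completion identity $\Prv{\PA^*}{W_1\to W_2}\leftrightarrow\Prv{\PA}{(W_1\to W_2)^{\sf PA}}$ together with the auxiliary inertness $\PA\vdash\sigmapas{G^\Boxout}\leftrightarrow(\sigmapas{G^\Boxout})^{\sf PA}$, which I prove by a parallel induction on $G$ whose only nontrivial step is again the implication and closes by the same identity. Indeed, applying the auxiliary fact to $B$ and $C$ gives $(W_1\to W_2)^{\sf PA}\leftrightarrow(W_1\to W_2)\wedge\Prv{\PA}{W_1\to W_2}$, and then $\Prv{\PA}{(W_1\to W_2)\wedge\Prv{\PA}{W_1\to W_2}}$ collapses to $\Prv{\PA}{W_1\to W_2}$, because $\Prv{\PA}{W_1\to W_2}$ is $\Sigma_1$ and hence provably implies its own provability.

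The $\HA$-statement runs through the same induction with $\sigmahas{\cdot}$, $(.)^{\sf HA}$ and $\Prv{\HA^*}{\cdot}$ replacing their $\PA$-counterparts, the atomic and propositional cases going exactly as above (now using $\HA\subseteq\PA\subseteq\PA^*$ and the $\HA$-inertness of $(.)^{\sf HA}$). The one genuinely new point is the boxed base case $A=\Box A'$, which after the $\Sigma_1$-inertness of $(.)^{\sf HA}$ asks for $\HA\vdash\Prv{\HA^*}{\sigmahas{A'}}\leftrightarrow\Prv{\PA^*}{\sigmapas{A'}}$. Applying the two self-completion identities and then the induction hypotheses (the $\HA$-statement for $A'$ on the left, the $\PA$-statement for $A'$ on the right) rewrites both sides over the common formula $W:=\sigmapas{A'^\Boxout}$, reducing the goal to $\HA\vdash\Prv{\HA}{W}\leftrightarrow\Prv{\PA}{W}$, i.e.\ to the coincidence of $\HA$- and $\PA$-provability on $W$. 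This last reconciliation, which must rest on the conservativity of $\PA$ over $\HA$ (\Cref{Lemma-Conservativity of HA}) applied to the box-laden formula $W$, is the step I expect to be the main obstacle and the real heart of the lemma.
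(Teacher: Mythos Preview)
Your proof founders on a misreading of the first clause: the right-hand side should be $\sigmahas{A^\Boxout}$, not $\sigmapas{A^\Boxout}$. The source contains a malformed macro call at this point, but the intended meaning is pinned down by how the lemma is applied in \Cref{Reduction-Sigma-HA*-(HA*-to-HA)}, where one passes from $\HA\nvdash\sigmahas{A^\Boxout}$ to $\HA\nvdash(\sigmahas{A})^{\sf HA}$. With the corrected reading the two clauses are perfectly parallel---one for the pair $(\HA,\HA^*)$, one for $(\PA,\PA^*)$---and no cross-theory comparison ever arises. In particular, in the boxed base case $A=\Box A'$ of the $\HA$-clause both sides are literally $\Prv{HA^*}{\sigmahas{A'}}$ once the outer $(.)^{\sf HA}$ is dropped by $\Sigma_1$-inertness; your ``main obstacle'' $\HA\vdash\Prv{HA}{W}\leftrightarrow\Prv{PA}{W}$ is purely an artifact of the misreading and simply evaporates.

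Your argument for the $\PA$-clause is correct and carries over verbatim to the corrected $\HA$-clause: the implication step now reduces to $\HA\vdash\Prv{HA}{W_1\to W_2}\leftrightarrow\Prv{HA^*}{W_1\to W_2}$ and closes by your self-completion identity together with the auxiliary idempotence $(\sigmahas{G^\Boxout})^{\sf HA}\leftrightarrow\sigmahas{G^\Boxout}$. This is considerably more explicit than the paper's one-line pointer to \Cref{Lemma-Properties of Box translation}, which leaves the implication step entirely to the reader; you have actually supplied the mechanism that makes it go through.
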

\begin{proof}
Use induction on the complexity of $A$. 
All cases are easily derived by  
\Cref{Lemma-Properties of Box translation}.
\end{proof}

\begin{lemma}\label{Label-HA-HA*-Box-trans}
For any $\Sigma_1$-substitution $\sigma$ and each propositional
modal sentence $A$, we have 
$\HA\vdash \sigma_\tinysub{\sf HA}(A^\Box)\leftrightarrow 
(\sigma_\tinysub{{\sf HA}^*}(A))^{\sf HA}$ and 
$\HA\vdash \sigma_\tinysub{\sf PA}(A^\Box)\leftrightarrow 
(\sigma_\tinysub{{\sf PA}^*}(A))^{\sf PA}$.
\end{lemma}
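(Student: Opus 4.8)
**Proof plan for Lemma \ref{Label-HA-HA*-Box-trans}.**

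The plan is to prove both equivalences by induction on the complexity of $A$, exactly as announced, and to reduce each inductive step to the two lemmas that precede this statement. The key observation is that the previous \Cref{Label-HA-HA*-Box-trans2} already provides the bridge between the starred and unstarred interpretations applied to the box-out translation: it gives $\HA\vdash (\sigma_\tinysub{{\sf HA}^*}(A))^{\sf HA}\leftrightarrow \sigmapas(A^\Boxout)$, where on the right the starred interpretation is applied to $A^\Boxout$. So the right-hand side of the present lemma, $(\sigma_\tinysub{{\sf HA}^*}(A))^{\sf HA}$, is already known (in $\HA$) to be equivalent to $\sigmapas(A^\Boxout)$. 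The task therefore collapses to showing $\HA\vdash \sigma_\tinysub{\sf HA}(A^\Box)\leftrightarrow \sigmapas(A^\Boxout)$, i.e.\ that the unstarred interpretation of the full box translation $A^\Box$ agrees with the starred interpretation of the box-out translation $A^\Boxout$.

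First I would record the relation between $A^\Box$ and $A^\Boxout$ from \Cref{Remark-10}, namely $A^\Box=(A^\Boxin)^\Boxout$ and ${\sf iK4}\vdash A^\Box\lr (A^\Boxout)^\Boxin$. These identities are what let me convert between the three box translations inside a provability predicate. The structural content I expect to need is the interplay between applying an interpretation and taking the Beeson--Visser first-order translation $(.)^{\sf HA}$: the recursive clauses defining $(.)^{\sf HA}$ (on implications and universal quantifiers it inserts a provability predicate) mirror, term by term, the recursive clauses defining $(.)^\Boxout$ on the modal side (on implications it prefixes $\Boxdot$). This parallelism is precisely why \Cref{Label-HA-HA*-Box-trans2} holds, and I would exploit the same bookkeeping here.

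For the inductive step I would split on the shape of $A$. Atomic and boxed cases follow from the base clauses together with \Cref{Lemma-Properties of Box translation}, which says that for $\Sigma_1$-formulas the Beeson--Visser translation is provably a no-op in $\HA$; since $\sigma$ is a $\Sigma_1$-substitution, the atomic images are $\Sigma_1$ and the translation disappears. The conjunction, disjunction, and existential cases are immediate because all the relevant translations commute with $\wedge,\vee$. The only genuinely delicate case is the implication $A\to B$, where the clause for $(.)^\Boxout$ introduces the outer $\Boxdot$ and the clause for $(.)^{\sf HA}$ introduces the matching provability predicate; here I would invoke \Cref{Lemma-Gist-HA*-PA*} to move between $\Prv{\HA}{\cdot}$ and $\Prv{\HA^*}{\cdot}$ and close the equivalence. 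The second assertion, relating $\sigma_\tinysub{\sf PA}(A^\Box)$ and $(\sigma_\tinysub{{\sf PA}^*}(A))^{\sf PA}$, is handled identically, using the $\PA$-half of each cited lemma.

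The main obstacle I anticipate is the implication case: keeping careful track of which provability operator ($\HA$ versus $\HA^*$, or $\PA$ versus $\PA^*$) sits at each nesting level, and verifying that the $\Boxdot$ produced by $(.)^\Boxout$ lines up exactly with the boxed implication produced by the Beeson--Visser clause after the interpretation is applied. Once the correspondence between the $(.)^\Boxout$ clauses and the $(.)^{\sf HA}$ clauses is set up cleanly and \Cref{Label-HA-HA*-Box-trans2} is combined with it, the remaining work is the routine induction the author defers to the reader.
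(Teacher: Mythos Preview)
Your core strategy---induction on the complexity of $A$ with \Cref{Lemma-Properties of Box translation} as the key ingredient---matches the paper exactly, and that is all the paper's proof does: a straight induction, with every case handled by the fact that for $\Sigma_1$ formulas the Beeson--Visser translation is $\HA$-provably idempotent.

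Where you diverge is the detour through \Cref{Label-HA-HA*-Box-trans2}. That detour is unnecessary and in fact makes the implication case harder rather than easier. In the paper's direct induction both sides of the target equivalence carry the \emph{same} provability predicate: the $\Boxdot$ in $(B\to C)^\Box$ is interpreted via $\sigma_{_{\sf HA}}$ as $\Prv{HA}{\cdot}$, and the Beeson--Visser clause for $\to$ in $(.)^{\sf HA}$ also inserts $\Prv{HA}{\cdot}$, so the induction hypothesis closes the step immediately. By first passing to $\sigma_{_{{\sf HA}^*}}(A^\Boxout)$ you force one side to carry $\Prv{HA^*}{\cdot}$ instead, and then you need to reconcile $\Prv{HA}{\cdot}$ with $\Prv{HA^*}{\cdot}$. \Cref{Lemma-Gist-HA*-PA*} is not the right tool for that: it supplies only the direction $\Prv{\HA}{A}\to\Prv{\HA^*}{A}$. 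What actually closes the gap is the definitional identity $\Prv{HA^*}{X}\equiv\Prv{HA}{X^{\sf HA}}$ together with another appeal to \Cref{Label-HA-HA*-Box-trans2}, which is circular bookkeeping you can avoid entirely by following the direct route.

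Two minor points: there is no ``existential case'' here (we are in propositional modal logic), and the $\sigmapas$ you copied from \Cref{Label-HA-HA*-Box-trans2} appears to be a typo in the paper for $\sigma_{_{{\sf HA}^*}}$; your argument only makes sense with the latter reading.
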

\begin{proof}
Use induction on the complexity of $A$. 
All cases are easily derived by  
\Cref{Lemma-Properties of Box translation}.
\end{proof}

\begin{lemma}\label{Lemma-Properties of Box translation 2}
For any $\Sigma_1$-substitution $\sigma$ and each propositional
modal sentence $A$, we have $\HA\vdash \sigma_\tinysub{{\sf
HA}}(A^\Boxin)\lr\sigma_\tinysub{{\sf HA}^*}(A)$
and $\HA\vdash \sigmapa{A^\Boxin}\lr\sigmapas{A}$.
\end{lemma}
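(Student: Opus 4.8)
The plan is to argue by induction on the complexity of $A$, proving both equivalences simultaneously; I will spell out the $\HA$/$\HA^*$ case, the $\PA$/$\PA^*$ case being completely parallel. For atomic $A$ (including $\bot$) we have $A^\Boxin=A$ and $\sigmaha{A}=\sigma(A)=\sigmahas{A}$, so the two sides are literally identical. Since $(\cdot)^\Boxin$ commutes with $\wedge$ and $\vee$, and both interpretations distribute over $\wedge,\vee,\ra$, the cases $A=B\wedge C$ and $A=B\vee C$ follow at once from the induction hypotheses. The implication case is equally painless precisely because $(B\ra C)^\Boxin=B^\Boxin\ra C^\Boxin$ carries no extra box: here $\sigmaha{(B\ra C)^\Boxin}=\sigmaha{B^\Boxin}\ra\sigmaha{C^\Boxin}$, and the two induction hypotheses together with propositional reasoning in $\HA$ give the equivalence with $\sigmahas{B}\ra\sigmahas{C}=\sigmahas{B\ra C}$.

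The only real work is the modal case $A=\Box B$, where $(\Box B)^\Boxin=\Box B^\Box$, so that $\sigmaha{(\Box B)^\Boxin}=\Prv{\HA}{\sigmaha{B^\Box}}$ while $\sigmahas{\Box B}=\Prv{\HA^*}{\sigmahas{B}}$. Writing $\psi:=\sigmahas{B}$, \Cref{Label-HA-HA*-Box-trans} already supplies $\HA\vdash\sigmaha{B^\Box}\lr\psi^{\sf HA}$; since $\HA$ can internally verify a provable equivalence under its own provability predicate, this upgrades to $\HA\vdash\Prv{\HA}{\sigmaha{B^\Box}}\lr\Prv{\HA}{\psi^{\sf HA}}$. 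It therefore remains to establish, provably in $\HA$, the identity $\Prv{\HA^*}{\psi}\lr\Prv{\HA}{\psi^{\sf HA}}$, after which chaining the three equivalences closes the modal case.

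This last identity is the crux, and it is where the definition of $\HA^*$ in \Cref{Definition-First-order-translation} does its job: $\HA^*=\{C\mid\HA\vdash C^{\sf HA}\}$ is deductively closed (closure under modus ponens is exactly the observation that $(C\ra D)^{\sf HA}$ carries the conjunct $C^{\sf HA}\ra D^{\sf HA}$), so $\HA^*\vdash\psi$ holds iff $\HA\vdash\psi^{\sf HA}$; this is a finitary induction on derivations and hence formalizes inside $\HA$ as the desired biconditional between the two provability predicates. For the $\PA$/$\PA^*$ half one repeats the same steps, invoking the second clauses of \Cref{Label-HA-HA*-Box-trans} and \Cref{Definition-First-order-translation}; the target is still a theorem of $\HA$, which is fine because $\Prv{\PA^*}{\psi}\lr\Prv{\PA}{\psi^{\sf PA}}$ is again a formalizable finitary fact. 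I expect the main obstacle to be pinning down this formalized $\HA^*$-versus-$\HA$ provability identity cleanly: the propositional cases are routine, but one must check that the deductive closure of $\HA^*$ and its compatibility with the Beeson--Visser translation are genuinely available inside $\HA$ (this is essentially the content of \cite{Visser82}, partly reflected in \Cref{Lemma-Gist-HA*-PA*}).
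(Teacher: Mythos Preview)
Your proposal is correct and follows the same inductive route as the paper. The paper's proof is extremely terse (``all cases are easy, except for the boxed case, which holds by \Cref{Label-HA-HA*-Box-trans}''), whereas you unpack the boxed case into exactly the two ingredients it needs: \Cref{Label-HA-HA*-Box-trans} to get $\HA\vdash\sigmaha{B^\Box}\lr(\sigmahas{B})^{\sf HA}$, and then the formalized identity $\HA\vdash\Prv{\HA^*}{\psi}\lr\Prv{\HA}{\psi^{\sf HA}}$. The paper takes the latter for granted (it is the standard way the provability predicate for $\HA^*$ is set up in \cite{Visser82}, and is implicit in \Cref{Lemma-Gist-HA*-PA*}); you are right to flag it as the place where something beyond pure propositional bookkeeping is used.
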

\begin{proof}
We use induction on the complexity of $A$.
All cases are easy, except for boxed case, which holds by
 \Cref{Label-HA-HA*-Box-trans}.
\end{proof}

\begin{lemma}\label{Lemma-PA-Box-translate}
For any $\Sigma_1$-substitution $\sigma$ and each propositional
modal sentence $A$, we have $\HA\vdash \sigma_\tinysub{{\sf
PA}}(A^\Boxin)\lr\sigma_\tinysub{{\sf PA}^*}(A)$.
\end{lemma}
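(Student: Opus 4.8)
The plan is to prove the equivalence by induction on the complexity of $A$, the induction paralleling that of \Cref{Lemma-Properties of Box translation 2}. First I would dispose of the base and propositional cases. If $A$ is atomic (an atomic variable or $\bot$), then $A^\Boxin=A$, and by \Cref{Definition-Arithmetical substitutions} both $\sigma_\tinysub{{\sf PA}}$ and $\sigma_\tinysub{{\sf PA}^*}$ reduce to $\sigma$ on $A$; the two sides are then literally identical, so the equivalence is trivial. Since $(.)^\Boxin$ commutes with $\wedge$ and $\vee$ and sends $B\ra C$ to $B^\Boxin\ra C^\Boxin$, while both interpretations distribute over $\wedge,\vee,\ra$, the cases $A=B\wedge C$, $A=B\vee C$ and $A=B\ra C$ follow at once from the induction hypotheses for $B$ and $C$, using only that in $\HA$ provable equivalence of the components yields provable equivalence of their conjunctions, disjunctions and implications.

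The heart of the argument is the boxed case $A=\Box B$, where $A^\Boxin=\Box(B^\Box)$. By \Cref{Definition-Arithmetical substitutions} the left-hand side is then $\Prv{PA}{\sigma_\tinysub{{\sf PA}}(B^\Box)}$, whereas the right-hand side $\sigma_\tinysub{{\sf PA}^*}(\Box B)$ is the provability of $\sigma_\tinysub{{\sf PA}^*}(B)$ in $\PA^*$. Here I would invoke the defining property of the self-completion $\PA^*$: provability of a sentence $C$ in $\PA^*$ is $\HA$-provably equivalent to $\Prv{PA}{C^{\sf PA}}$. Applying this with $C:=\sigma_\tinysub{{\sf PA}^*}(B)$ rewrites the right-hand side as $\Prv{PA}{(\sigma_\tinysub{{\sf PA}^*}(B))^{\sf PA}}$. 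Now \Cref{Label-HA-HA*-Box-trans} supplies $\HA\vdash\sigma_\tinysub{{\sf PA}}(B^\Box)\lr(\sigma_\tinysub{{\sf PA}^*}(B))^{\sf PA}$; and since an $\HA$-provable, hence $\PA$-provable, equivalence is preserved under $\Prv{PA}{\cdot}$ (formalized distribution of $\PA$-provability over the biconditional), the two provability statements are $\HA$-provably equivalent. This is exactly the claim for $A=\Box B$.

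I expect the only real obstacle to be this boxed case, and within it the correct handling of the $\PA^*$ box interpretation: one must be sure that $\PA^*$-provability is captured, $\HA$-provably, by $\Prv{PA}{(\cdot)^{\sf PA}}$, so that \Cref{Label-HA-HA*-Box-trans} can be applied to transfer the Beeson--Visser translated interpretation back to the ordinary $\PA$-interpretation of the box translation. Once that formalized self-completion fact is in place, everything else is purely routine, and the whole induction goes through as in the corresponding assertion of \Cref{Lemma-Properties of Box translation 2}.
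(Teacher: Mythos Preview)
Your proposal is correct and follows essentially the same route as the paper: induction on the complexity of $A$, with the only nontrivial step being the boxed case, which is discharged via \Cref{Label-HA-HA*-Box-trans}. You merely make explicit what the paper's one-line proof leaves implicit, namely that $\sigma_\tinysub{{\sf PA}^*}(\Box B)=\Prv{PA^*}{\sigma_\tinysub{{\sf PA}^*}(B)}$ is, provably in $\HA$, the same as $\Prv{PA}{(\sigma_\tinysub{{\sf PA}^*}(B))^{\sf PA}}$, so that \Cref{Label-HA-HA*-Box-trans} can be applied under the $\PA$-box.
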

\begin{proof}
We use induction on the complexity of $A$.
All cases are easy, except for boxed case, which holds by
 \Cref{Label-HA-HA*-Box-trans}.
\end{proof}
\subsubsection{Kripke models of \HA}\label{sec-KripkeModelFirstOrder}
A first-order Kripke model for \hl{the language of arithmetic} is a triple $\kcal=(K,\preccurlyeq,\mathfrak{M})$ such that:
\begin{itemize}
\item  The frame of $\kcal$, i.e. $(K,\prec)$,
is a non-empty partially ordered set,
\item $\mathfrak{M}$ is  a function from $K$ to the first-order classical structures for the language 
of the arithmetic, i.e. $\mathfrak{M}(\alpha)$ is a first-order classical structure, for each $\alpha\in K$,
\item For any $\alpha\preccurlyeq\beta\in K$, $\mathfrak{M}(\alpha)$ is a weak  substructure of
$\mathfrak{M}(\beta)$.   
\end{itemize}
For any  $\alpha\in K$ and  first-order formula $A\in\mathcal{L}_\alpha$ (the language of arithmetic augmented with constant symbols $\bar{a}$ for each $a\in|\mathfrak{M}(\alpha)|$),
 we define $\kcal,\alpha\Vdash A$
 (or simply $\alpha\Vdash A$, if no confusion is likely) inductively as follows:
 \begin{itemize}
 \item For atomic $A$, \hl{$\kcal,\alpha\Vdash A$} iff $\mathfrak{M}(\alpha)\models A$. 
 Note that in the structure $\mathfrak{M}(\alpha)$, $\bar{a}$ is 
 interpreted as $a$,
 \item $\kcal,\alpha\Vdash A\vee B$ iff $\kcal,\alpha\Vdash A$ or $\kcal,\alpha\Vdash B$,
 \item $\kcal,\alpha\Vdash A\wedge B$ iff $\kcal,\alpha\Vdash A$ and $\kcal,\alpha\Vdash B$,
 \item $\kcal,\alpha\Vdash A\ra B$ iff for all \hl{$\beta\succcurlyeq\alpha$}, $\kcal,\beta\Vdash A$ implies $\kcal,\beta\Vdash B$,
 \item \hl{$\kcal,\alpha\Vdash \exists x A$ iff $\kcal,\alpha\Vdash A[x:\bar{a}]$, for some $a\in |\mathfrak{M}(\alpha)|$,}
 \item \hl{$\kcal,\alpha\Vdash \forall x A$ iff for all  $\beta\succcurlyeq\alpha$ and  
 $b\in|\mathfrak{M}(\beta)|$, we have $\kcal,\beta\Vdash A[x:\bar{b}]$.}
 \end{itemize}
 It is well-known in the literature \cite{TD}
  that $\HA$ is complete for first-order Kripke models. 
 \begin{lemma}\label{Lemma-Sigma-local-global}
 Let $\kcal=(K,\preccurlyeq,\mathfrak{M})$ be a Kripke model of $\HA$ and $A$ be an arbitrary $\Sigma_1$-formula.
Then for each $\alpha\in K$, we have $\alpha\Vdash A$ iff $\mathfrak{M}(\alpha)\models A$. 
 \end{lemma}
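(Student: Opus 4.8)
The plan is to first peel off the leading existential quantifiers and then argue by induction on the $\Delta_0$-matrix. Write the $\Sigma_1$-formula as $A=\exists\bar y\,\phi(\bar y)$ with $\phi\in\Delta_0$. Since the forcing clause for $\exists$ locates its witness already inside $|\mathfrak{M}(\alpha)|$, we have $\alpha\Vdash A$ iff $\alpha\Vdash\phi(\bar a)$ for some $\bar a\in|\mathfrak{M}(\alpha)|$, and likewise $\mathfrak{M}(\alpha)\models A$ iff $\mathfrak{M}(\alpha)\models\phi(\bar a)$ for some such $\bar a$. Hence it suffices to prove, for every $\Delta_0$-formula $\phi$ and every $\alpha\in K$ with parameters in $\mathfrak{M}(\alpha)$, the equivalence $\alpha\Vdash\phi\Leftrightarrow\mathfrak{M}(\alpha)\models\phi$. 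I would prove the two implications simultaneously by induction on $\phi$.

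The workhorse is a \emph{pull-down} observation: for a $\Delta_0$-formula $\psi$ with parameters in $\mathfrak{M}(\alpha)$ and $\alpha\preccurlyeq\beta$, if $\beta\Vdash\psi$ then already $\alpha\Vdash\psi$. Indeed, since $\kcal\Vdash\HA$, \Cref{Lemma-decidability of delta formulae} gives $\alpha\Vdash\psi\vee\neg\psi$; were $\alpha\Vdash\neg\psi$, persistence of forcing would force $\beta\Vdash\neg\psi$, contradicting $\beta\Vdash\psi$. With this in hand the induction is routine for the local connectives: the atomic case is the base clause of the semantics, and $\wedge,\vee$ and the (bounded) existential commute with $\models$ because their forcing clauses only refer to $\alpha$ and to witnesses in $\mathfrak{M}(\alpha)$. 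For $\phi=\phi_1\to\phi_2$ (and in particular for negation) the implication $\alpha\Vdash\phi\Rightarrow\mathfrak{M}(\alpha)\models\phi$ uses the inductive completeness for $\phi_1$ and soundness for $\phi_2$ at $\alpha$; conversely, given $\mathfrak{M}(\alpha)\models\phi$ and any $\beta\succcurlyeq\alpha$ with $\beta\Vdash\phi_1$, the pull-down observation yields $\alpha\Vdash\phi_1$, hence $\mathfrak{M}(\alpha)\models\phi_1$, hence $\mathfrak{M}(\alpha)\models\phi_2$, and persistence pushes the resulting $\alpha\Vdash\phi_2$ back up to $\beta$.

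The one genuinely delicate case --- and the step I expect to be the main obstacle --- is the bounded universal quantifier $\phi=\forall x\le t\,\phi'$. Here the direction $\alpha\Vdash\phi\Rightarrow\mathfrak{M}(\alpha)\models\phi$ is immediate by instantiating the $\forall$-clause at $\beta=\alpha$ with each $\mathfrak{M}(\alpha)$-element below $t$. The reverse direction is the problem: to see $\alpha\Vdash\phi$ I must secure $\beta\Vdash\phi'(b)$ for \emph{every} $\beta\succcurlyeq\alpha$ and every $b\in|\mathfrak{M}(\beta)|$ with $\beta\Vdash b\le t$, and when $b\notin|\mathfrak{M}(\alpha)|$ the pull-down device no longer applies, since $b\le t$ cannot be evaluated at $\alpha$. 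Closing this gap requires the structural fact about Kripke models of $\HA$ that a transition $\mathfrak{M}(\alpha)\hookrightarrow\mathfrak{M}(\beta)$ introduces no new elements below a bound named in $\mathfrak{M}(\alpha)$ --- equivalently, that $\Delta_0$-truth is absolute along transitions --- so that any such $b$ in fact lies in $|\mathfrak{M}(\alpha)|$ and the pull-down argument goes through. This absoluteness is exactly the point at which the full strength of $\kcal\Vdash\HA$ (forced induction), rather than mere decidability and persistence, is needed; I would establish it as a separate lemma, or invoke it from \cite{TD}, and then the bounded-universal case matches $\mathfrak{M}(\alpha)\models\forall x\le t\,\phi'$ on the nose. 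Feeding the resulting $\Delta_0$-equivalence back into the existential reduction of the first paragraph then gives the claim for all $\Sigma_1$-formulas.
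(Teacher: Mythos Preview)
Your overall plan---peel off the unbounded existentials and prove the $\Delta_0$ case by induction, invoking \Cref{Lemma-decidability of delta formulae} at the binding connectives---is exactly the paper's approach. The pull-down observation and the treatment of $\to$ are fine.

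Where you go astray is the bounded universal. You claim that closing the gap \emph{requires} the end-extension property of transitions, and propose to establish that as a separate lemma. This is both unnecessary and dangerous. It is unnecessary because your own tool, decidability, already finishes the case: apply it not to the body $\phi'(b)$ but to the whole formula $\forall x\le t\,\phi'$ at $\alpha$. If $\alpha\Vdash\forall x\le t\,\phi'$ you are done; otherwise $\alpha\Vdash\neg\forall x\le t\,\phi'$, and since $\HA$ proves $\neg\forall x\le t\,\phi'\leftrightarrow\exists x\le t\,\neg\phi'$ for decidable $\phi'$ (an immediate consequence of \Cref{Lemma-decidability of delta formulae}), you get a witness $a\in|\mathfrak{M}(\alpha)|$ with $\alpha\Vdash a\le t$ and $\alpha\nVdash\phi'(a)$, whence by the inductive hypothesis $\mathfrak{M}(\alpha)\not\models\forall x\le t\,\phi'$, a contradiction. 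No appeal to new elements in $\mathfrak{M}(\beta)$ is ever needed.

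It is dangerous because the statement you want to invoke---``no new elements below an old bound'', or ``$\Delta_0$-truth is absolute along transitions''---is not obviously available as an input. For arbitrary Kripke models of $\HA$ the end-extension property is not a standing assumption of the paper, and the $\Delta_0$-absoluteness you equate it with is precisely a \emph{consequence} of the lemma you are proving (via persistence of forcing), so using it as a hypothesis is circular. The paper's terse hint ``use \Cref{Lemma-decidability of delta formulae} for $\to$ and $\forall$'' is meant to cover exactly the argument above: decidability applied to the full $\Delta_0$ formula at $\alpha$, plus the bounded De~Morgan law, lands the witness inside $|\mathfrak{M}(\alpha)|$ where your induction hypothesis applies.
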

 \begin{proof}
 Use induction on the complexity of $A$ to show that for each $\alpha\in K$, we have 
 $\alpha\Vdash A$ iff $\mathfrak{M}(\alpha)\models A$. In the inductive step for $\to$ and $\forall$,
 use \Cref{Lemma-decidability of delta formulae}.
 \end{proof}
 
\subsubsection{Interpretability}
Let $T$ and $S$ be two first-order theories. Informally speaking,
we say that $T$ interprets $S$ ($T\rhd S$) if there exists a
translation from the language of $S$ to the language of $T$ such
that $T$ proves the translation of all of the theorems of $S$.
For a formal definition see \cite{VisserInterpretability}. It is
well-known that for recursive theories $T$ and $S$ containing $\PA$,
the assertion $T\rhd S$ is formalizable  in first-order language
of arithmetic. For two arithmetical sentences $A$ and $B$, we use the
notation $A\rhd B$ to mean that $\PA+A$ interprets $\PA+B$. The
following theorem due to Orey, first appeared in \cite{Feferman}.

\begin{theorem}\label{Theorem-Orey}
For recursive theories $T$ and $S$ containing $\PA$, we have: 
\[ \PA\vdash (T\rhd S) \lr \forall{x}\, \Box_T {\sf Con}(S^x),\]
 in which $S^x$ is the
restriction of the theory $S$  to axioms with G\"{o}del number
$\leq x$ and ${\sf Con}(U):=\neg\,\Box_U\bot$.
\end{theorem}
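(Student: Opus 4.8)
The plan is to establish the two directions of the biconditional separately, in each case carrying out the argument inside $\PA$ so that the whole equivalence becomes $\PA$-provable. The forward implication will rest on the formalized soundness of interpretations (an interpretation transports any refutation of $S^x$ into a refutation of $T$), while the converse will rest on the arithmetized completeness theorem together with a shortening-of-cuts argument; it is the converse that carries the real weight. Throughout I use that $T\supseteq\PA$ is strong enough both to run Henkin constructions with partial truth predicates and to formalize interpretability, as noted for recursive theories containing $\PA$.

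For the direction $\PA\vdash(T\rhd S)\to\forall x\,\Box_T{\sf Con}(S^x)$, I would fix the interpretation $k$ witnessing $T\rhd S$. For each $x$ the same $k$ already interprets the finite theory $S^x$, since $T$ proves the $k$-translation of every axiom of $S$ and in particular of the finitely many axioms with G\"odel number $\le x$. Because $S^x$ is internally finite, the statement ``$k$ is a model of $S^x$'' is expressible in $T$ via a partial truth predicate for formulas of complexity bounded by a recursive function of $x$; from this together with the formalized soundness theorem (``a finite theory possessing a model is consistent'') one obtains $\Box_T{\sf Con}(S^x)$. The dependence of the truth predicate on $x$ runs through a $\PA$-definable bound, so the argument is uniform in $x$ and delivers the displayed implication inside $\PA$.

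For the converse $\PA\vdash[\forall x\,\Box_T{\sf Con}(S^x)]\to(T\rhd S)$, I would reason in $T$ under the hypothesis that every finite fragment $S^x$ is consistent. Since $S^{y'}\subseteq S^y$ whenever $y'\le y$, the set $I:=\{x:{\sf Con}(S^x)\}$ is downward closed and, by hypothesis, provably contains every standard numeral. Using Solovay's shortening of cuts I would replace $I$ by a definable cut $J\subseteq I$ that $T$ proves to be closed under $+,\cdot$ and $\exp$ and still to contain every standard numeral. Relativizing the arithmetic to $J$ interprets a sufficiently strong base theory in $T$, and in this relativized universe the hypothesis reads ``every finite fragment of $S$ is consistent''; the arithmetized completeness (Henkin) theorem then yields a $J$-definable model $M$ of $S$. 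Composing the relativization to $J$ with the Henkin model $M$ produces a single interpretation $k$ of $S$ in $T$. Finally, to see that $k$ witnesses $T\rhd S$, note that $J$ provably contains the standard G\"odel number of each axiom $A$ of $S$, so the fragment defining $M$ sees $A$, hence $M\models A$ and $A^k$ is a theorem of $T$; uniformity in $A$ makes this provable in $\PA$.

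I expect the main obstacle to lie entirely in this backward direction, and more precisely in the gap between having ${\sf Con}(S^x)$ for each finite fragment and obtaining a single model of all of $S$. Bridging it requires the cut $J$ to be a genuine cut, closed under the arithmetic operations, on which all the fragment-consistencies hold simultaneously, and requires the arithmetized completeness theorem to be executed along $J$ and verified, all inside $T$ and provably in $\PA$. The forward direction, by contrast, should reduce to the routine formalization of the slogan ``interpretations preserve consistency.''
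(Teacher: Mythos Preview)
The paper does not actually prove this theorem: its entire proof reads ``See \cite{Feferman}. p.80 or \cite{Berarducci}.'' So there is no in-paper argument to compare against; the result is quoted as background. Your sketch is the standard Orey--H\'ajek argument that one finds in those very references: soundness of interpretations for the easy direction, and the arithmetized completeness theorem combined with shortening to a definable cut (on which all the finite-fragment consistencies hold) for the substantive direction. That is correct in outline and is exactly what the cited sources do, so your proposal is in line with the intended proof; just be aware that in the context of this paper you are not expected to reproduce it.
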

\begin{proof}
See \cite{Feferman}. p.80 or \cite{Berarducci}.
\end{proof}

\noindent\textbf{Convention.}
From \Cref{Theorem-Orey}, one can easily observe that $\PA\vdash {(A\rhd B)}\lr{\forall{x}\,\Box^+(A\ra\neg\,\Box^+_x\neg B)}$.
So from now on, 
$A\rhd B$  means its $\Pi_2$-equivalent
$\forall{x}\,\Box^+(A\ra\neg\,\Box^+_x\neg B)$, even when we
are working in weaker theories like $\HA$, \hl{for which the above theorem} (\Cref{Theorem-Orey}) \hl{doesn't hold}. We remind the reader
that $\Box^+$ stands for provability in $\PA$.

\subsubsection{Somr\'ynski's method for Constructing Kripke models of \HA}
With the general method of constructing Kripke models for $\HA$, invented by Smory\'nski \cite{Smorynski-Troelstra},
interpretability of theories containing $\PA$ plays an important role
in constructing Kripke models of \nolinebreak$\HA$.

\begin{definition}\label{Definition-Iframe}
	A triple \hl{$\mathcal{I}:=(K,\preccurlyeq,T)$} is called an I-frame
	iff it has the following properties:
	\begin{itemize}
		\item \hl{$(K,\preccurlyeq)$} is a finite tree,
		\item $T$ is a function from $K$ to arithmetical r.e. consistent
		theories containing $\PA$,
		\item if \hl{$\beta\preccurlyeq \gamma$}, then $T_\beta$ interprets $T_\gamma$  $(\, T_\beta\rhd T_\gamma\,)$.
	\end{itemize}
\end{definition}

\begin{theorem}\label{Theorem-Smorynski's general method of Kripke model construction}
	For every I-frame \hl{$\mathcal{I}:=(K,\preccurlyeq,T)$} there exists
	a first-order Kripke model  \hl{$\kcal={(K,\preccurlyeq,\mathfrak{M})}$} such that  
	$\kcal\Vdash \HA$ and moreover $\mathfrak{M}(\alpha)\models T_\alpha$,  for any $\alpha\in K$.
	Note that both of the I-frame and  Kripke model are sharing the same frame \hl{$(K,\preccurlyeq)$}.
\end{theorem}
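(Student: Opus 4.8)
The plan is to build the classical structures $\mathfrak{M}(\alpha)$ by recursion on the finite tree $(K,\preccurlyeq)$, moving upward from the root, so that for each edge $\alpha\prec\beta$ the structure $\mathfrak{M}(\alpha)$ is a weak substructure of $\mathfrak{M}(\beta)$. At the root $r$ I would take any classical $\mathfrak{M}(r)\models T_r$, which exists since $T_r$ is consistent. The engine is a one-step extension lemma: given a classical $M\models T_\alpha$ and a child $\beta$ of $\alpha$ (so $T_\alpha\rhd T_\beta$ by the I-frame condition), produce a classical $N\models T_\beta$ together with a weak-substructure embedding $j\colon M\hookrightarrow N$ that is $\Delta_0$-elementary and preserves $\Sigma_1$-formulae. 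I obtain this from the Arithmetized Completeness Theorem carried out inside $M$: by \Cref{Theorem-Orey} the hypothesis $T_\alpha\rhd T_\beta$ unwinds to $T_\alpha\vdash\forall x\,\Box_{T_\alpha}{\sf Con}(T_\beta^x)$, so $M$ internally regards $T_\beta$ as consistent and builds a Henkin model $N\models T_\beta$; the canonical map sending $a\in M$ to the value in $N$ of its numeral is the required embedding, since $N\models\PA$ verifies the recursion equations for $0,{\sf S},+,\cdot$. Iterating along every edge and reading the embeddings as the transition maps for $\preccurlyeq$ yields $\kcal=(K,\preccurlyeq,\mathfrak{M})$ with $\mathfrak{M}(\alpha)\models T_\alpha$ for each $\alpha$; because $(K,\preccurlyeq)$ is a tree, each node has a unique path from the root, so the structures are unambiguously determined.

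It then remains to check $\kcal\Vdash\HA$. The open axioms Q1--Q6 are universal, hence persistent along $\preccurlyeq$, and hold at each node because every $\mathfrak{M}(\alpha)\models\PA$; the basic logical clauses together with Q7 are unproblematic since, by \Cref{Lemma-decidability of delta formulae}, $\Delta_0$-formulae are decided in each classical model, and the $\Delta_0$-elementarity of the transitions makes their truth absolute along edges, so atomic and bounded statements are preserved upward exactly as the weak-substructure clauses require. (This is also what eventually yields the locality of $\Sigma_1$-formulae recorded in \Cref{Lemma-Sigma-local-global}.)

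The main obstacle is forcing the induction scheme ${\sf Ind}(A,x)$ for arbitrary $A$, the difficulty being that $\alpha\Vdash A(a)$ is not a classical property of $\mathfrak{M}(\alpha)$ when $\alpha$ has proper successors, so one cannot directly apply classical induction in $\mathfrak{M}(\alpha)$ to a nonstandard $a$. I would attack this by induction on the height of nodes in the finite tree. At a leaf $\gamma$ the forcing relation coincides with classical truth, so ${\sf Ind}(A,x)$ is forced by classical induction in $\mathfrak{M}(\gamma)\models\PA$. At an internal node $\delta$ whose proper successors already force $\forall x\,A(x)$ whenever they force the two premises (the inductive hypothesis, available after applying persistence to the premises at $\delta$), the formula $A$ is forced throughout the strict future of $\delta$; using this, together with the $\Delta_0$- and $\Sigma_1$-absoluteness of the ACT-embeddings, one shows that the values $a\in\mathfrak{M}(\delta)$ with $\delta\Vdash A(a)$ are cut out by a genuine formula of $\mathfrak{M}(\delta)$, whence classical induction in $\mathfrak{M}(\delta)$ closes the step. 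Making this last reduction precise --- controlling how the forcing of $A$ and of its subformulae on the successors collapses to a condition internal to $\mathfrak{M}(\delta)$ --- is the delicate point; it is exactly the content of Smory\'nski's verification, and I would carry it out following \cite{Smorynski-Troelstra}.
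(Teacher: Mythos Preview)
The paper does not prove this theorem but simply refers to \cite{Smorynski-Troelstra} (and to \cite{ArMo14} for a detailed generalization), so your sketch is in fact more substantive than what the paper offers; and your outline---build the classical models along the edges of the finite tree via the Arithmetized Completeness Theorem using the Orey--H\'ajek characterization of interpretability, then verify the induction scheme by a height-induction on the tree together with the definability of forcing over a finite frame---is exactly Smory\'nski's method.

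One correction to your use of \Cref{Theorem-Orey}: from $T_\alpha\rhd T_\beta$ you do \emph{not} get $T_\alpha\vdash\forall x\,\Box_{T_\alpha}{\sf Con}(T_\beta^x)$ as a single provable sentence. What Orey--H\'ajek gives is the external schema $T_\alpha\vdash{\sf Con}(T_\beta^n)$ for each standard $n$; hence $M\models{\sf Con}(T_\beta^n)$ for every standard $n$, and by overspill in $M\models\PA$ there is a nonstandard $c\in M$ with $M\models{\sf Con}(T_\beta^c)$. ACT carried out in $M$ for the theory $T_\beta^c$ (not full $T_\beta$) then yields the definable end-extension $N$, which externally satisfies $T_\beta$ because $c$ is nonstandard. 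Your phrasing ``$M$ internally regards $T_\beta$ as consistent'' conflates $\Box_{T_\alpha}{\sf Con}(T_\beta^x)$ with ${\sf Con}(T_\beta^x)$ and skips this overspill; with that fix the one-step extension lemma, and hence the whole construction, goes through as you describe.
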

\begin{proof} See \cite[page~372-7]{Smorynski-Troelstra}. For more detailed proof of a generalization of this theorem, see
	\cite[Theorem~4.8]{ArMo14}.
\end{proof}

\subsection{The NNIL formulae and related topics}\label{sec-nnil}
The class of {\em No Nested Implications to the Left}, \NNIL
formulae in a propositional language was introduced in
\cite{Visser-Benthem-NNIL}, and more explored in \cite{Visser02}. 
The crucial
result of \cite{Visser02} is providing an algorithm that as
input, gives a non-modal proposition $A$ and returns its best \NNIL
approximation $A^*$ from below, i.e., $\IPC\vdash A^*\ra A$ and
for all \NNIL \hl{formulae} $B$ such that $\IPC\vdash B\ra A$, we have
$\IPC\vdash B\ra A^*$. Also for all $\Sigma_1$-substitutions $\sigma$, 
we have $\HA\vdash \sigma_{_{\sf HA}}(\Box A\lr \Box A^*)$ \cite{Visser02}.

The precise definition of the class $\NNIL$ of modal propositions is
  $\NNIL:= \{A\mid \rho A\leq 1\}$, in which 
the complexity measure $\rho$, is defined inductively as follows:
\begin{itemize}[leftmargin=*]
\item $\rho(\Box A)=\rho(p)= \rho(\bot)=\rho(\top) = 0$, for an arbitrary atomic variables $p$ and modal proposition $A$,
\item $\rho(A\wedge B) = \rho(A\vee B) = \text{max} (\rho A, \rho B)$,
\item $\rho(A\ra B) = \text{max} (\rho A +1, \rho B)$,
\end{itemize}

\begin{definition}\label{definition-braket}
For any two modal propositions $A$ and $B$, we define $[A]B$  
 by induction on the complexity of $B$:
\begin{itemize}
\item $ [A]B =B$, for atomic or boxed $B$,
\item $[A](B_1\circ B_2) =[A](B_1)\circ [A](B_2)$
for $\circ\in\{\vee,\wedge\}$,
\item  $[A](B_1\ra B_2)=A'\to( B_1\ra B_2)$,
in which $A' = {A[B_1\ra B_2\mid B_2]}$, i.e., replace each \emph{outer
occurrence} of $B_1 \ra B_2$ \hl{\textup{(}}by outer occurrence we mean that it is not in the scope of any $\Box$\hl{\textup{)}}  in $A$ by $B_2$,
\end{itemize}
For a set $X$ of modal propositions, we also define $[A]X:=\bigvee_{B\in X}\hlf{[A]B}$.
\end{definition}

\subsubsection*{The $\NNIL$-algorithm}\label{subsubsec-NNIL-algorithm}
For each modal proposition $A$, the proposition 
$A^*$ is defined inductively  as follows \cite{Visser02}:
\begin{enumerate}[leftmargin=*]
\item $A$ is atomic or boxed, take $A^*:=A$.
\item $ A=B \wedge C$, take $A^*:=B^*\wedge C^*$.
\item $ A=B\vee C$, take $A^*:=B^*\vee C^*$.
\item $ A=B \ra C $, we have several sub-cases. In the following, 
an occurrence of $E$ in $D$ is called an {\em outer
occurrence}, if $E$ is neither in the scope of an implication nor
in the scope of a boxed formula.
\begin{enumerate}[leftmargin=*]
\item $C$ contains an outer occurrence of a conjunction. In this
case, there is some formula $J(q)$ such that
\begin{itemize}
\item $q$ is a propositional variable not occurring in $A$.
\item $q$ is outer in $J$ and occurs exactly once.
\item $C=J[q|(D\wedge E)]$.
\end{itemize}
Now set $C_1:=J[q|D], C_2:=J[q|E]$ and
$A_1:=B\ra C_1, A_2:=B\ra C_2$ and finally, define
$A^*:=A_1^*\wedge A_2^*$.
\item $B$ contains an outer occurrence of a disjunction. In this
case, there is some formula $J(q)$ such that
\begin{itemize}
\item $q$ is a propositional variable not occurring in $A$.
\item $q$ is outer in $J$ and occurs exactly once.
\item $B=J[q|(D\vee E)]$.
\end{itemize}
 Now set $B_1:=J[q|D], B_2:=J[q|E]$
and ${A_1:=B_1\ra C}, {A_2:=B_2\ra C}$ and finally, define
$A^*:=A_1^*\wedge A_2^*$.
\item  $B=\bigwedge X$ and $C=\bigvee Y$ and $X,Y$ are sets of
implications, atomics or boxed formulas. We have several sub-cases:
\begin{enumerate}[leftmargin=*]
\item $X$ contains \hl{an} atomic variable or \hl{a} boxed formula $E$. We
set $D:=\bigwedge(X\setminus\{E\})$ and take
 ${A^*:=E^*\ra(D\ra C)^*}$.
\item $X$ contains $\top$. Define
$D:=\bigwedge(X\setminus\{\top\})$ and take $A^*:=(D\ra C)^*$.
\item $X$ contains $\bot$. Take $A^*:=\top$.
\item $X$ contains only implications. For any $D=E\ra F\in X$,
define
$$B\!\downarrow\! D:=\bigwedge((X\setminus\{D\})\cup\{F\}).$$
Let $Z:=\{E\mid E\ra F\in X\}\cup\{C\}$ and define: 
\begin{align*}
A^*:=\bigwedge\{((B\!\downarrow\! D)\ra C)^*|D\in X\}\wedge \bigvee \{([B]E)^*\mid E\in Z\}
\end{align*}
\end{enumerate}
\end{enumerate}
\end{enumerate}

\begin{lemma}\label{Lemma-NNIL-IPC}
If  $\IPC_\Box\vdash A\to B$ then $\IPC_\Box\vdash A^*\to B^*$.
\end{lemma}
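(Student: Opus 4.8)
The plan is to derive the statement from the defining optimality property of the \NNIL-approximation, rather than by grinding through the clauses of the algorithm directly. Concretely, I would first record three facts about $(\cdot)^*$, all relative to $\IPC_\Box$: (i) $A^*$ is a \NNIL formula; (ii) $\IPC_\Box\vdash A^*\to A$; and (iii) whenever $C$ is a \NNIL formula with $\IPC_\Box\vdash C\to A$, one has $\IPC_\Box\vdash C\to A^*$. Items (ii) and (iii) say precisely that $A^*$ is the \emph{best} \NNIL lower bound of $A$, and they are the modal counterparts of the characterisation of $A^*$ recorded, for the non-modal case, immediately before the statement of the algorithm \cite{Visser02}.

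Granting (i)--(iii), the lemma falls out in a couple of lines. Suppose $\IPC_\Box\vdash A\to B$. Applying (ii) to $A$ gives $\IPC_\Box\vdash A^*\to A$, and composing with the hypothesis yields $\IPC_\Box\vdash A^*\to B$. Since $A^*$ is \NNIL by (i), I may invoke (iii) with $A$ replaced by $B$ and $C:=A^*$, which delivers exactly $\IPC_\Box\vdash A^*\to B^*$.

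The remaining task is to transfer (i)--(iii) from the purely propositional setting to $\IPC_\Box$. For this I would use the observation emphasised in \Cref{sec-definitions} that $\IPC_\Box$ has no axioms governing $\Box$, so every maximal boxed subformula behaves exactly like a fresh atomic variable. I would fix the finitely many maximal boxed subformulas occurring in the formulas at hand, replace them by distinct new atoms, and check two commutations: first, that derivability of an implication in $\IPC_\Box$ agrees with its $\IPC$-derivability after the replacement; and second, that the algorithm $(\cdot)^*$ (together with the auxiliary operation $[A](\cdot)$ and the outer-occurrence bookkeeping) commutes with the replacement, since in every clause a boxed formula is treated identically to an atom (clause 1, clause 4(c)i, and the base case of $[A](\cdot)$). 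Likewise $\rho$ assigns a boxed formula the same value $0$ as an atom, so the replacement preserves \NNIL-ness. With these commutations in place, (i)--(iii) for $\IPC_\Box$ follow from the non-modal statements of \cite{Visser02}.

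The main obstacle is the correctness of the algorithm itself, i.e.\ that $A^*$ really is the best lower \NNIL approximation (properties (ii) and, especially, (iii)). The delicate clause is 4(c)iv, where $A^*$ is assembled from the formulas $(B\!\downarrow\! D)\to C$ and the bracketed disjuncts $[B]E$; there one must verify both that this conjunction of implications over a disjunction implies $A$, and that it is implied by every \NNIL formula below $A$. If I am willing to cite the non-modal analysis of \cite{Visser02} for this clause, the modal transfer above is routine, and the genuine difficulty of the lemma is concentrated entirely in that citation.
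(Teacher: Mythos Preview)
Your proposal is correct. The paper does not give an argument here at all: its proof is a bare citation, ``See \cite[Theorem~4.5]{Sigma.Prov.HA}.'' Your route instead unpacks the reason the lemma holds, deriving it from the best-\NNIL-approximation properties (i)--(iii) already quoted in \Cref{sec-nnil}, together with the box-as-atom transfer. That derivation is sound, and the transfer step is legitimate precisely because every clause of the algorithm and the definition of $\rho$ treat boxed subformulas on a par with atoms, as you note. The only substantive content you still need to import is the correctness of the algorithm in clause 4(c)iv, which you correctly flag as the real work and which lives in \cite{Visser02}; so in the end both approaches bottom out in a citation, but yours makes the dependence explicit and shows the reader why the monotonicity of $(\cdot)^*$ is an immediate formal consequence of its optimality.
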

\begin{proof}
See \cite[Theorem.~4.5]{Sigma.Prov.HA}.
\end{proof}

\subsubsection*{The $\TNNIL$-algorithm}\label{subsubsec-TNNIL-algorithm}

\begin{definition}\label{Def-TNNIL-Propositions}
$\TNNIL$ \hl{\textup{(}}Thoroughly $\NNIL$\hl{\textup{)}} is the
smallest class of propositions such that
\begin{itemize}
\item $\TNNIL$ contains all atomic propositions,
\item if $A, B\in\TNNIL$, then $A\vee B, A\wedge B,\Box A\in\TNNIL$,
\item if all $\ra$ occurring in $A$ are contained in the scope of a $\Box$ \hl{\textup{(}}or equivalently
$A\in {\sf NOI}$\hl{\textup{)}} and $A,
B\in\TNNIL$, then $A\ra B\in\TNNIL$.
\end{itemize}
Let  $\TNNIL^\Box$  indicates the set of all the propositions
like $  A(\Box B_1,\ldots,\Box B_n)$, such that
$A(p_1,\ldots,p_n)$ is an arbitrary non-modal proposition and
$B_1,\ldots,B_n\in\TNNIL$.
\end{definition}

Here we define $A^+$  to be the   $\TNNIL$-formula approximating $A$.
Informally speaking, to find $A^+$, we first compute $A^*$ and
then replace all outer boxed formula $\Box B$ in $A$ by
$\Box B^+$. More precisely, we  define $A^+$ by induction
on the maximum  number of nesting $\Box$'s.  Suppose
that $A'(p_1,\ldots,p_n)$ and $\Box B_1,\ldots,\Box B_n$ are
such that $A=A'[p_1|\Box B_1,\ldots,p_n|\Box B_n]$, where
$A'$ is a non-modal proposition and $p_1,\ldots,p_n $ are fresh
atomic variables (not occurred in $A$). It is clear that
each $B_i$ has less number of nesting $\Box$'s and then we can define
$A^+:=(A')^*[p_1|\Box B_1^+,\ldots,p_n|\Box B_n^+]$.\\
For a modal proposition $A$, let
	$B(p_1,\ldots,p_n)$ is the unique 
	(modulo permutation of $p_i$)  non-modal proposition 
	such that $A:=B(\Box C_1,\ldots,\Box C_n)$.
Then define $A^-:=B(\Box C_1^+,\ldots,\Box C_n^+)$.
Next we may define the theory $\lles$ as follows:
\begin{definition}\label{Def-lles}
We define the Visser's axiom schema 
$$\underline{\sf V}:=A\lr A^-$$
Then define the following modal systems:
\begin{itemize}
\item $\lles:={\sf iGLLe^+V}$,
\item  $\llessstar:=\{ A : \lles\vdash A^\Box\}$,
\item $\llesstar:=\{ A : \lles\vdash A^\Boxin\}$.
\end{itemize}
\end{definition}
\begin{remark}
The definitions of $\lles$ in \cite[sec.~4.3]{Sigma.Prov.HA} and 
$\llessstar$ in \cite[def.~3.16]{Sigma.Prov.HA*} (which were called $\llesstar$ there) are presented 
in some other equivalent way. For the sake of simplicity of definitions, 
we preferred \Cref{Def-lles} here. To see an axiomatization for
 $\llessstar$, we refer the reader to \cite{Sigma.Prov.HA*}.
\end{remark}

\begin{lemma}\label{Lemma-TNNIL-modusponens}
 $\IPC_\Box\Vdash (A^+\wedge (A\to B)^+)\to B^+$.
\end{lemma}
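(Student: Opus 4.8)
The plan is to reduce the claim to the already-established monotonicity of the $(\cdot)^*$-operation (\Cref{Lemma-NNIL-IPC}) by exhibiting $A^+$, $B^+$ and $(A\to B)^+$ as the images, under one common substitution, of the $\NNIL$-approximations of the underlying non-modal skeletons. The guiding observation is that both genuine atoms and outer boxed subformulas are treated as opaque atomic units, both by the $\NNIL$-algorithm and by $\IPC_\Box$, so that the recursion defining $(\cdot)^+$ can be frozen at the top $\Box$-level.

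First I would fix a common decomposition. Let $\Box D_1,\ldots,\Box D_N$ list the distinct outer boxed subformulas occurring in $A\to B$; these are exactly the outer boxes of $A$ together with those of $B$, since $\to$ introduces no box. Choosing fresh atomic variables $q_1,\ldots,q_N$, there are non-modal propositions $\alpha$ and $\beta$ with $A=\alpha[q_1|\Box D_1,\ldots,q_N|\Box D_N]$, $B=\beta[q_1|\Box D_1,\ldots,q_N|\Box D_N]$, and hence $A\to B=(\alpha\to\beta)[q_1|\Box D_1,\ldots,q_N|\Box D_N]$. Writing $\theta$ for the substitution $q_j\mapsto\Box D_j^+$, the definition of $(\cdot)^+$ gives immediately $A^+=\alpha^*\theta$, $B^+=\beta^*\theta$ and $(A\to B)^+=(\alpha\to\beta)^*\theta$ (variables $q_j$ that happen not to occur in a given skeleton play no role).

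Next, at the skeleton level, I start from the trivial $\IPC_\Box\vdash(\alpha\wedge(\alpha\to\beta))\to\beta$. Applying \Cref{Lemma-NNIL-IPC} together with the clause $(X\wedge Y)^*=X^*\wedge Y^*$ of the $\NNIL$-algorithm yields $\IPC_\Box\vdash(\alpha^*\wedge(\alpha\to\beta)^*)\to\beta^*$. Since $\IPC_\Box$ is $\IPC$ with boxed formulas treated as atoms, it is closed under substitution of arbitrary $\lcalb$-formulas for atomic variables; applying $\theta$ to this derivation and rewriting by the identities of the previous paragraph produces exactly $\IPC_\Box\vdash(A^+\wedge(A\to B)^+)\to B^+$, as required.

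The only genuinely delicate point is the factorization $A^+=\alpha^*\theta$ (and its analogues). This rests on the fact that the $\NNIL$-algorithm never descends under a $\Box$ — an \emph{outer occurrence} is by definition one lying in the scope of neither an implication nor a box — so computing $A^*$ coincides with computing $\alpha^*$ over the atoms $q_j$ and then restoring the boxes unchanged; replacing each restored $\Box D_j$ by $\Box D_j^+$ is precisely the definition of $(\cdot)^+$. I would confirm this opacity of boxes, and the mutual consistency of the common decomposition across $A$, $B$ and $A\to B$, by a short inspection of the algorithm's clauses. Notably, no induction on $\Box$-nesting is needed: the recursion in $(\cdot)^+$ is entirely absorbed into treating each $\Box D_j^+$ as an opaque unit under the substitution $\theta$.
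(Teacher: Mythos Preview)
Your proof is correct and takes essentially the same approach as the paper. The paper compresses your explicit common-skeleton decomposition into the single identity $C^+=(C^-)^*$ (where $(.)^-$ replaces each outer box $\Box D_j$ by $\Box D_j^+$), then applies \Cref{Lemma-NNIL-IPC} to the trivial implication $(A^-\wedge(A\to B)^-)\to B^-$ together with $(X\wedge Y)^*=X^*\wedge Y^*$; your substitution $\theta$ and skeletons $\alpha,\beta$ are exactly what this notation is encoding.
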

\begin{proof}
By definition of $(.)^+$, for every $C$ we have $C^+=(C^-)^*$. 
Since $\IPC_\Box\vdash (A^-\wedge (A\to B)^-)\to B^-$, by
\Cref{Lemma-NNIL-IPC} we have 
$\IPC_\Box\vdash (A^-\wedge (A\to B)^-)^*\to (B^-)^*$. Then we 
have  $\IPC_\Box\vdash {(A^+\wedge (A\to B)^+)}\to B^+$ by the argument at the beginning of proof.
\end{proof}
\begin{lemma}\label{Lem-51}
Let $A$ be a modal proposition. Then ${\sf iK4}\vdash A^\Boxout\leftrightarrow \Boxdot A^\Boxout $.
\end{lemma}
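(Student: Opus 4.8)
The plan is to reduce the biconditional to a single implication and then induct. Since $\Boxdot A^\Boxout$ abbreviates $A^\Boxout\wedge\Box A^\Boxout$, the right-to-left direction $\Boxdot A^\Boxout\to A^\Boxout$ is mere conjunction elimination and already holds in $\IPC_\Box$. Hence it suffices to establish the implication $(\ast)$: ${\sf iK4}\vdash A^\Boxout\to\Box A^\Boxout$ for every modal proposition $A$; together with the trivial $A^\Boxout\to A^\Boxout$ this gives $A^\Boxout\to\Boxdot A^\Boxout$, and the lemma follows. I would prove $(\ast)$ by induction on the complexity of $A$, following the same pattern as \Cref{Lemma-BoxdotABox-ABox}, which treats $A^\Box$ in place of $A^\Boxout$.

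The recurring ingredient is that in ${\sf iK4}$ a box already self-boxes: from $\underline{\sf 4}$ we get $\Box E\to\Box\Box E$, and distributing $\Box$ over the conjunction $E\wedge\Box E$ via $\underline{\sf K}$ yields ${\sf iK4}\vdash\Box E\to\Box(E\wedge\Box E)=\Box\Boxdot E$. This single observation disposes of the cases where $A^\Boxout$ is ``boxed at the top''. When $A$ is atomic or $\top,\bot$, we have $A^\Boxout=\Boxdot A=A\wedge\Box A$, and the observation with $E:=A$ gives $\Box A\to\Box A^\Boxout$, hence $A^\Boxout\to\Box A^\Boxout$. When $A=\Box B$, we have $A^\Boxout=\Box B$ and $(\ast)$ is exactly $\underline{\sf 4}$. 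When $A=B\to C$, the term $A^\Boxout=\Boxdot(B^\Boxout\to C^\Boxout)$ is of the form $E\wedge\Box E$ with $E:=B^\Boxout\to C^\Boxout$, so the same observation applied to this $E$ gives $\Box E\to\Box A^\Boxout$ and thus $A^\Boxout\to\Box A^\Boxout$; no induction hypothesis is needed in any of these cases.

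The two remaining cases invoke the induction hypothesis. For $A=B\wedge C$ we have $A^\Boxout=B^\Boxout\wedge C^\Boxout$; by the induction hypothesis $B^\Boxout\to\Box B^\Boxout$ and $C^\Boxout\to\Box C^\Boxout$, so $A^\Boxout\to\Box B^\Boxout\wedge\Box C^\Boxout\to\Box(B^\Boxout\wedge C^\Boxout)=\Box A^\Boxout$, the last step again by $\underline{\sf K}$. For $A=B\vee C$ we have $A^\Boxout=B^\Boxout\vee C^\Boxout$; arguing by cases and using the induction hypothesis together with the monotonicity of $\Box$ (from $B^\Boxout\to B^\Boxout\vee C^\Boxout$ one gets $\Box B^\Boxout\to\Box(B^\Boxout\vee C^\Boxout)$, and symmetrically for $C$) yields $A^\Boxout\to\Box(B^\Boxout\vee C^\Boxout)=\Box A^\Boxout$.

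The only genuinely delicate point is that all these systems are presented with modus ponens as their sole rule, so I must justify that the needed closure of $\Box$ over theorems is available: both the distribution of $\Box$ over $\wedge$ and the monotonicity step rest on having $\Box$ of an $\IPC_\Box$-theorem at my disposal, which is not itself an $\IPC_\Box$-theorem. This is legitimate precisely because ${\sf iK4}$ is defined to have the same set of theorems as ordinary intuitionistic ${\sf K4}$, so necessitation is admissible on theorems. Once this is noted, every step above is an application of $\underline{\sf K}$, $\underline{\sf 4}$ and propositional reasoning, and the induction goes through routinely; I therefore expect no serious obstacle beyond bookkeeping.
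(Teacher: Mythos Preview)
Your proof is correct and follows exactly the approach the paper indicates: induction on the complexity of $A$. The paper's own proof consists of the single line ``Use induction on the complexity of $A$,'' so your write-up simply fills in the routine case analysis that the paper leaves to the reader; your remark about the admissibility of necessitation in ${\sf iK4}$ is accurate given how the paper defines these systems in \Cref{Def-Axiom schema and modal theories}.
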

\begin{proof}
Use induction on the complexity of $A$.
\end{proof}

\begin{lemma}\label{Lem-60a}
For arbitrary $A\in\TNNIL^\Box$ we have 
${\sf iK4}+{\sf CP_a}\vdash \Boxdot A^l\leftrightarrow \Boxdot A^\Boxout$.
\end{lemma}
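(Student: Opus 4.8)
The plan is to replace the stated biconditional by the (by \Cref{Lem-51}) equivalent assertion $\Boxdot A^l\leftrightarrow A^\Boxout$, using that ${\sf iK4}\vdash A^\Boxout\leftrightarrow\Boxdot A^\Boxout$, and then to induct on the structure of $A$, reading $A=B(\Box C_1,\dots,\Box C_n)$ with $B$ non-modal and $C_i\in\TNNIL$ and treating the boxed leaves $\Box C_i$ as atoms. Before starting I would record the book-keeping facts used throughout: in ${\sf iK4}$ we have $\Boxdot(D\wedge E)\leftrightarrow\Boxdot D\wedge\Boxdot E$ and $\Boxdot\Boxdot D\leftrightarrow\Boxdot D$; $\Boxdot$ is a congruence (from $\vdash D\leftrightarrow E$ infer $\vdash\Boxdot D\leftrightarrow\Boxdot E$), which is legitimate because the theorems of ${\sf iK4}$ coincide with those of its necessitation-closed presentation; and the derived principle ${\sf iK4}\vdash\Box(X\to Y)\to\Box(\Box X\to\Box Y)$, obtained by necessitating $\underline{\sf K}$ and composing with $\underline{\sf 4}$.

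The leaf and propositional cases are routine. For atomic or $\bot$ leaves $A^l=A$ while $A^\Boxout=\Boxdot A$, so the claim is $\Boxdot A\leftrightarrow\Boxdot A$; for a boxed leaf $\Box C$ one needs only $\Box C\to\Box\Box C$, i.e.\ $\underline{\sf 4}$. For $A=A_1\wedge A_2$ I would distribute $\Boxdot$ and apply the induction hypothesis. For $A=A_1\vee A_2$ one has $A^l=\Boxdot A_1^l\vee\Boxdot A_2^l$, and the induction hypothesis $\Boxdot A_i^l\leftrightarrow A_i^\Boxout$ gives $A^l\leftrightarrow A_1^\Boxout\vee A_2^\Boxout=A^\Boxout$; applying $\Boxdot$ and \Cref{Lem-51} closes the case.

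The implication case $A=A_1\to A_2$, with $A^\Boxout=\Boxdot(A_1^\Boxout\to A_2^\Boxout)$, is the heart of the proof, and I would split it on whether the antecedent lies in ${\sf NOI}$. In both subcases the agreement of the \emph{boxed} components is handled uniformly: from $A_1^\Boxout\to\Box A_1^\Boxout$ (\Cref{Lem-51}) and the derived boxed-$\underline{\sf K}$ principle one gets $\Box(A_1^\Boxout\to A_2^l)\to\Box(A_1^\Boxout\to\Box A_2^l)$, which together with the induction hypothesis $A_2^\Boxout\leftrightarrow\Boxdot A_2^l$ forces $\Box\Boxdot A^l$ and $\Box A^\Boxout$ to coincide. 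When $A_1\in{\sf NOI}$ (so $A^l=A_1\to A_2^l$) the outer skeleton of $A_1$ is a $\wedge,\vee$-combination of atoms and boxed leaves, and ${\sf CP_a}$ on the atoms together with $\underline{\sf 4}$ on the boxed leaves yields $A_1\leftrightarrow A_1^\Boxout$ by an easy sub-induction; feeding $\Box A_1^\Boxout$ and $\Box(A_1^\Boxout\to A_2^l)$ through $\underline{\sf K}$ recovers $\Box A_2^l$ and hence matches the outer component as well.

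The hard part will be the subcase where $A_1\notin{\sf NOI}$ (for instance an outer implication between boxed formulas), since there Leivant's translation leaves $A^l=A_1\to A_2$ untranslated and the clean equivalence $A_1\leftrightarrow A_1^\Boxout$ is no longer available — $A_1\to A_1^\Boxout$ is a genuine completeness assertion, not derivable from ${\sf CP_a}$ alone. The direction $\Boxdot A^l\to A^\Boxout$ is the benign one and I would obtain it by invoking \Cref{Lemma-4-2}, which supplies $A_1^\Boxout\to\Boxdot A_1$ for antecedents occurring as subformulas of a box translation, together with its $\wedge,\vee$-propagation, to transport the antecedent across the translation. The delicate point, and where I expect essentially all the difficulty to concentrate, is controlling the \emph{outer} (non-boxed) occurrence in the reverse direction $A^\Boxout\to\Boxdot A^l$: here one must pass from the weakened antecedent $A_1^\Boxout$ back to $A_1$, so the plan is to exploit that within a $\TNNIL^\Box$ formula these antecedents are themselves box-translation subformulas and to push the outer $\Boxdot$ inward via \Cref{Lemma-4-2} and \Cref{Lem-51}, checking carefully that nothing is lost when the consequent $A_2$ is itself a compound $\TNNIL^\Box$ formula.
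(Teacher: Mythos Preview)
Your handling of the leaves, $\wedge$, $\vee$, and the implication case with $A_1\in{\sf NOI}$ is correct and matches the paper's proof; the paper merely packages that last case more compactly, as a chain of equivalences built on three observations (the key one being ${\sf iK4}\vdash\Boxdot(B\to E)\leftrightarrow\Boxdot(B\to\Boxdot E)$ for $B\in{\sf NOI}$, which replaces your separate ``boxed part / outer part'' bookkeeping).

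The gap is the subcase $A_1\notin{\sf NOI}$. The paper does not treat it: its induction handles $A=B\to C$ only under the hypothesis $B\in{\sf NOI}$ and stops there. This is not an omission you can fill, because under the hypothesis $A\in\TNNIL^\Box$ as literally stated the lemma is \emph{false} in that subcase. Take $A=(\Box p\to\Box q)\to\Box r\in\TNNIL^\Box\setminus\TNNIL$; then $A^l=A$ while $A^\Boxout=\Boxdot(\Boxdot(\Box p\to\Box q)\to\Box r)$. On the three-node transitive frame $w\sqsubset u\sqsubset v$ with discrete $\preccurlyeq$ and $p$ true only at $v$, all of ${\sf iK4}+{\sf CP_a}$ holds; at $w$ one has $\Box p$ false, hence $\Box p\to\Box q$ true and $\Box r$ false, so $\Boxdot A^l$ fails, whereas $\Box p\to\Box q$ fails at $u$, so $\Boxdot(\Box p\to\Box q)$ fails at $w$ and $\Boxdot A^\Boxout$ holds there. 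Thus the direction $A^\Boxout\to\Boxdot A^l$ you flagged as ``delicate'' is outright underivable, and your appeal to \Cref{Lemma-4-2} cannot rescue it (that lemma applies only to subformulas of an actual $(\cdot)^\Box$-translation, which $\Box p\to\Box q$ is not). The paper's induction is complete once the hypothesis is read as $A\in\TNNIL$, where by definition every implication has an ${\sf NOI}$ antecedent; this is all that is needed, since the only use of the lemma (in \Cref{Lem-Boxin-validity}, at the step $A=\Box B$) applies it to $B\in\TNNIL$.
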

\begin{proof}
 We use induction on the complexity of $A$:
 \begin{itemize}
 \item $A$ is atomic: then $A^l=A$ and $A^\Boxout=\Boxdot A$. Hence by 
 ${\sf CP_a}$ we have the desired equivalency.
 \item $A$ is boxed: $(\Box A)^l=\Box A=(\Box A)^\Boxout$.
 \item $A=B\wedge C$: then $(B\wedge C)^l=B^l\wedge C^l$ and 
 $(B\wedge C)^\Boxout=B^\Boxout\wedge C^\Boxout$. Hence by induction hypothesis we have the desired result.
 \item $A=B\vee C$: then $(B\vee C)^l=\Boxdot B^l\vee \Boxdot C^l$ and 
 $(B\vee C)^\Boxout= B^\Boxout\vee C^\Boxout$. Using \Cref{Lem-51}
 we have ${\sf iK4}\vdash (B\vee C)^\Boxout\leftrightarrow 
 (\Boxdot B^\Boxout\vee \Boxdot C^\Boxout)$ and hence induction hypothesis 
 implies the desired result.
 \item $A=B\to C$ and $B\in{\sf NOI}$: then $(B\to C)^l=B\to C^l$ and
 $(B\to C)^\Boxout=\Boxdot(B^\Boxout\to C^\Boxout)$. 
Observe that 
\begin{enumerate}
\item 
${\sf iK4}\vdash \Boxdot \Boxdot E\leftrightarrow \Boxdot E$ for any $A$, 
\item 
${\sf iK4}+{\sf CP_a}\vdash B^\Boxout\leftrightarrow B$,
\item  ${\sf iK4}\vdash \Boxdot (B\to E)\leftrightarrow \Boxdot(B\to\Boxdot E)$ for any 
$B\in {\sf NOI}$ and arbitrary $E$.
\end{enumerate}
We have the following equivalences in  ${\sf iK4}+{\sf CP_a}$:
\begin{align*}
\Boxdot A^\Boxout &\leftrightarrow \Boxdot(B^\Boxout\to C^\Boxout) 
&\text{by first observation}\\
\Boxdot(B^\Boxout\to C^\Boxout)&\leftrightarrow 
\Boxdot(B\to C^\Boxout)
&\text{by  second observation}\\
\Boxdot(B\to C^\Boxout)&\leftrightarrow \Boxdot(B\to \Boxdot C^\Boxout)
&\text{by  third observation}\\
\Boxdot(B\to \Boxdot C^\Boxout)&\leftrightarrow \Boxdot(B\to \Boxdot C^l)
&\text{by induction hypothesis}\\
\Boxdot(B\to \Boxdot C^l)&\leftrightarrow \Boxdot(B\to   C^l)
&\text{by third observation}
\end{align*}   \qedhere
 \end{itemize}
\end{proof}
\begin{lemma}\label{Lem-Boxin-validity}
For $A\in\TNNIL^\Box$ we have 
${\sf iK4}+{\sf Le}^++{\sf CP_a}\vdash A\leftrightarrow A^\Boxin$.
\end{lemma}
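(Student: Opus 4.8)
The plan is to argue by induction on the complexity of $A$. Throughout I will lean on one structural observation: by the overline convention the systems here contain the boxed instances $\overline{\sf i},\overline{\sf K},\overline{\sf 4},\overline{\sf Le}^+,\overline{\sf CP_a}$ of all their axioms, so any derivation in ${\sf iK4}+{\sf Le}^++{\sf CP_a}$ can be boxed line by line (axioms by their overlines, each modus ponens by $\underline{\sf K}$). In particular the system proves $\Box X$ for every theorem $X$, and proves $\Box(X\leftrightarrow Y)$ whenever it proves $X\leftrightarrow Y$. Since $(.)^\Boxin$ is the identity on atoms and commutes with $\wedge,\vee,\ra$, and since the immediate subformulae of a $\TNNIL^\Box$-formula again lie in $\TNNIL^\Box$, the atomic, $\bot$, conjunction, disjunction and implication cases follow at once from the induction hypothesis and the congruence of $\leftrightarrow$ in $\IPC_\Box$. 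Everything thus reduces to the case $A=\Box B$ with $B\in\TNNIL$, where $(\Box B)^\Boxin=\Box B^\Box$; I must prove $\Box B\leftrightarrow\Box B^\Box$.

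The first step is to establish $\Box B\leftrightarrow\Box B^\Boxout$. For the forward direction, ${\sf Le}^+$ gives $\Box B\ra\Box B^l$; in ${\sf iK4}$ one has $\Box B^l\ra\Box\Boxdot B^l$ (from $\underline{\sf 4}$, $\underline{\sf K}$ and $\overline{\sf i}$); then \Cref{Lem-60a}, applicable since $B\in\TNNIL\subseteq\TNNIL^\Box$, gives $\Boxdot B^l\leftrightarrow\Boxdot B^\Boxout$, which I box to obtain $\Box\Boxdot B^l\leftrightarrow\Box\Boxdot B^\Boxout$; finally \Cref{Lem-51} gives $B^\Boxout\leftrightarrow\Boxdot B^\Boxout$, whence $\Box\Boxdot B^\Boxout\leftrightarrow\Box B^\Boxout$. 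Chaining these yields $\Box B\ra\Box B^\Boxout$. For the backward direction, a routine induction (using the observation from the proof of \Cref{Lem-60a} that ${\sf iK4}+{\sf CP_a}\vdash D^\Boxout\leftrightarrow D$ for $D\in{\sf NOI}$, in the spirit of \Cref{Lemma-A-ABoxin}) gives ${\sf iK4}+{\sf CP_a}\vdash B^\Boxout\ra B$ for $B\in\TNNIL$; boxing this implication and applying $\underline{\sf K}$ gives $\Box B^\Boxout\ra\Box B$.

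It then remains to pass from $\Box B^\Boxout$ to $\Box B^\Box$. These two formulae differ only in that every outer box $\Box C$ of $B$ (with $C\in\TNNIL$ a proper subformula, so that $\Box C$ is a smaller element of $\TNNIL^\Box$) appears unchanged in $B^\Boxout$ but as $\Box C^\Box$ in $B^\Box$. The induction hypothesis applied to $\Box C$ yields $\Box C\leftrightarrow\Box C^\Box$, and by the boxing observation also $\Box(\Box C\leftrightarrow\Box C^\Box)$. A secondary induction on the $\Box$-nesting depth of $B^\Boxout$, pushing each such boxed equivalence through one intervening box at a time via $\underline{\sf K}$ and $\overline{\sf i}$, converts $\Box B^\Boxout$ into $\Box B^\Box$. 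Chaining $\Box B\leftrightarrow\Box B^\Boxout\leftrightarrow\Box B^\Box$ closes the boxed case and with it the induction.

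The main obstacle is exactly this last propagation. A given $\Box C$ may be buried beneath further boxes introduced by the implication clause of $(.)^\Boxout$, so ordinary propositional congruence cannot reach it; one can prove each local equivalence $\Box C\leftrightarrow\Box C^\Box$ but, without a way to substitute it underneath an intervening $\Box$, the induction stalls. What unblocks it is precisely the presence of the overline axioms, which upgrade the local equivalences to boxed equivalences $\Box(\Box C\leftrightarrow\Box C^\Box)$; equivalently, one could strengthen the inductive statement to carry $\Box(A\leftrightarrow A^\Boxin)$ alongside $A\leftrightarrow A^\Boxin$, which makes the propagation step entirely formal.
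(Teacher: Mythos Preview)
Your proof is correct and follows essentially the same route as the paper's: induction on $A$, with the only real work in the case $A=\Box B$, handled via ${\sf Le}^+$, \Cref{Lem-60a}, and the induction hypothesis applied to the inner boxed subformulae. The paper organizes the boxed case as a single chain $\Box B^\Box=\Box(B^\Boxin)^\Boxout\leftrightarrow\Box B^\Boxout\leftrightarrow\Box B^l\leftrightarrow\Box B$ (invoking \Cref{Remark-10} for the first identity), which already gives both directions, so your separate backward argument for $\Box B^\Boxout\to\Box B$ is redundant though not wrong; your explicit discussion of why the induction hypothesis propagates under the new boxes introduced by $(.)^\Boxout$ is exactly what the paper's terse ``by induction hypothesis'' is hiding.
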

\begin{proof}
Use induction on the complexity of $A$. The only nontrivial case is when $A=\Box B$.
We have the following equivalences  in ${\sf iK4}+{\sf Le}^++{\sf CP_a}$:
\begin{align*}
(\Box B)^\Boxin &\leftrightarrow \Box (B^\Box)
&\text{by definition}\\
\Box (B^\Box) &\leftrightarrow \Box\left( (B^\Boxin)^\Boxout\right)
&\text{by \Cref{Remark-10}}\\
\Box\left( (B^\Boxin)^\Boxout\right) &\leftrightarrow 
\Box( B^\Boxout)
&\text{by induction hypothesis}\\
\Box( B^\Boxout) &\leftrightarrow \Box B^l
&\text{by \Cref{Lem-60a}}\\
\Box B^l &\leftrightarrow \Box B 
&\text{by the axiom schema ${\sf Le}^+$}
\end{align*}
\end{proof}

\begin{theorem}\label{Theorem-conservativity-1}
For any $\TNNIL$-proposition $A$, 
$\iglc\vdash A$ implies $\iglleplus\vdash A$. 
\end{theorem}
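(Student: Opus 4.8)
The plan is to remove the completeness principle from the given $\iglc$-derivation by passing to the box translation $(\cdot)^\Box$, and then to undo that translation on the $\TNNIL$ fragment, where the extended Leivant principle ${\sf Le}^+$ turns out to be exactly what is needed in place of completeness.

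First I would establish the auxiliary claim that $\iglc\vdash A$ implies $\iglleplus\vdash A^\Box$, arguing by induction on a $\iglc$-derivation. Modus ponens is immediate from the shape of the translation: since $(B\to C)^\Box=\Boxdot(B^\Box\to C^\Box)$, the hypotheses $\iglleplus\vdash(B\to C)^\Box$ and $\iglleplus\vdash B^\Box$ yield the non-boxed conjunct $B^\Box\to C^\Box$ and hence $\iglleplus\vdash C^\Box$. For the axioms, every instance of ${\sf i}$, ${\sf K}$, ${\sf 4}$ and ${\sf L}$ is a theorem of $\igl$, so \Cref{Lemma-3} makes its box translation $\igl$-provable, and a fortiori $\iglleplus$-provable. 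The only real point is the completeness axiom ${\sf C}$: here $(B\to\Box B)^\Box=\Boxdot(B^\Box\to\Box B^\Box)$, which is already $\ikfour$-provable because \Cref{Lemma-BoxdotABox-ABox} gives $\ikfour\vdash B^\Box\lr\Boxdot B^\Box$, whence $\ikfour\vdash B^\Box\to\Box B^\Box$; the boxed instance $\overline{\sf C}$ then follows by necessitation, which is legitimate because the theorem sets in play are closed under boxing. Thus the translation disposes of completeness and the claim holds.

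The second step is to collapse $A^\Box$ back to $A$ for $A\in\TNNIL$, which together with the claim gives $\iglleplus\vdash A$. Since $\TNNIL\subseteq\TNNIL^\Box$, the $\TNNIL^\Box$-lemmas apply, and I would prove by a simultaneous induction on complexity the two statements $\iglleplus\vdash A^\Box\to A$ for $A\in\TNNIL$ and $\iglleplus\vdash A\to A^\Box$ for $A\in\TNNIL\cap{\sf NOI}$; the second is invoked only on antecedents of outer implications, which in a $\TNNIL$ formula always lie in ${\sf NOI}$, so its induction has no top-level implication case and the whole scheme stays well-founded. The atomic and $\wedge,\vee$ cases are routine, the forward direction for an atom $p$ being exactly ${\sf CP_a}$. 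For a boxed $\Box D$ the direction $\Box D\to\Box D^\Box$ is supplied by ${\sf Le}^+$ together with \Cref{Lem-60a} and \Cref{Lem-Boxin-validity}, which under a box identify $D^l$, $D^\Boxout$ and $D^\Box$, while $\Box D^\Box\to\Box D$ comes from the inductive $D^\Box\to D$ by necessitation and distribution. The implication case of the first statement, $A=B\to C$ with $B\in{\sf NOI}$, is where the halves meet: from $A^\Box=\Boxdot(B^\Box\to C^\Box)$ one feeds $B\to B^\Box$ (second statement) into the antecedent and $C^\Box\to C$ (first statement) into the consequent.

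I expect this second step to be the main obstacle; the first is essentially bookkeeping once one notices, via \Cref{Lemma-BoxdotABox-ABox}, that $(\cdot)^\Box$ internalizes completeness. The substance is that on the $\TNNIL$ fragment the extended Leivant principle and atomic completeness together genuinely recover the full completeness that was available in $\iglc$, and this is precisely what the chain of equivalences in Lemmas \ref{Lem-51}, \ref{Lem-60a} and \ref{Lem-Boxin-validity} is designed to deliver; verifying the congruence of $(\cdot)^\Boxout$ under provable equivalence, needed to pass from \Cref{Lem-Boxin-validity} to the boxed case, is the technical point that wants the most care. A structurally similar but lighter alternative would run the argument through the $\TNNIL$-approximation $(\cdot)^+$ rather than $(\cdot)^\Box$: then modus ponens is handed over directly by \Cref{Lemma-TNNIL-modusponens}, the final collapse is free since $A^+$ is $\IPC_\Box$-equivalent to $A$ for $A\in\TNNIL$, and all the difficulty migrates to checking $\iglleplus\vdash C^+$ for the completeness axiom.
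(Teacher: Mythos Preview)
The paper gives no proof of its own here; it simply cites \cite[Theorem~4.24]{Sigma.Prov.HA}. So there is nothing to compare your route against, and your proposal must stand on its own.

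Your Step~1 is fine: the observation that $(\cdot)^\Box$ absorbs the completeness axiom via \Cref{Lemma-BoxdotABox-ABox} is exactly right, and the induction on a $\iglc$-derivation goes through. The real problem is Step~2. Your simultaneous induction needs, for an atom $p$, the forward direction $p\to p^\Box=p\to\Boxdot p$, which as you yourself say is ``exactly ${\sf CP_a}$''. You then invoke \Cref{Lem-60a} and \Cref{Lem-Boxin-validity}, both of which are stated over ${\sf iK4}+{\sf CP_a}$. But $\iglleplus={\sf iGLLe}^+$ as defined in \Cref{Def-Axiom schema and modal theories} does \emph{not} contain ${\sf CP_a}$; it is $\igl$ together with the extended Leivant scheme only. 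Instantiating ${\sf Le}^+$ at an atom yields the triviality $\Box p\to\Box p$, and there is no evident way to extract $p\to\Box p$ from ${\sf Le}^+$ and $\igl$ alone. Worse, $p\to\Box p$ is itself a $\TNNIL$ formula provable in $\iglc$, so asserting $\iglleplus\vdash p\to\Box p$ is already an instance of the very theorem you are proving; taking it for granted is circular. The failure is already visible at $A=p\to q$: from $A^\Box=\Boxdot(\Boxdot p\to\Boxdot q)$ you cannot reach $p\to q$ without converting the hypothesis $p$ to $\Boxdot p$, which is precisely the missing step. Either supply an independent derivation of ${\sf CP_a}$ in $\iglleplus$ before running your induction, or abandon the $(\cdot)^\Box$ route in favour of an argument (for instance a semantic one via \Cref{Theorem-Propositional Completeness LC}) that does not lean on atomic completeness at all. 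It is also worth checking the target system against \cite{Sigma.Prov.HA}: the unused macro $\lle={\sf iGLLe}^+{\sf C_a}$ in the present paper suggests the intended system there may include ${\sf C_a}$, in which case your argument would succeed verbatim.
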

\begin{proof}
See \cite{Sigma.Prov.HA} Theorem 4.24.
\end{proof}
\begin{theorem}\label{theorem-conservativiity-ihatglchat}
For any $\TNNIL^\Box$-proposition $A$, 
$\ihatglchat\vdash A$ implies $\ihatglleplus \vdash A$. Also 
$\ihatglchats\vdash A$ implies $\ihatgllepluss \vdash A$. 
\end{theorem}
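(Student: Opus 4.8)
The plan is to deduce this from the already-known conservativity \Cref{Theorem-conservativity-1} (for $\iglc$ and $\TNNIL$), by (i)~using the classical structure supplied by $\underline{\sf P}$ to cut a $\TNNIL^\Box$-formula down to $\TNNIL$-clauses, (ii)~using the box translation $(.)^\Boxin$ to trade the completeness axioms $\overline{\sf C}$ and the boxed part of ${\sf C_a}$ for ${\sf iGL}$-theorems, and (iii)~using \Cref{Lem-Boxin-validity} and \Cref{Lem-60a} to show that on these formulas the residual atomic completeness does exactly the work of ${\sf Le}^+$.

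First I would reduce the formula class. Write $A=A'(\Box B_1,\dots,\Box B_n)$ with $A'$ non-modal and each $B_i\in\TNNIL$. Since $\underline{\sf P}$ is present in both $\ihatglchat$ and $\ihatglleplus$, each proves $A$ equivalent to the classical conjunctive normal form of $A'$ evaluated at the $\Box B_i$; hence it suffices to treat a single clause $C=\bigvee_{i\in P}\Box B_i\vee\bigvee_{j\in N}\neg\Box B_j$, which is itself a $\TNNIL$-formula (a disjunction of boxed and negated-boxed $\TNNIL$-formulas). So assume $\ihatglchat\vdash C$ and apply $(.)^\Boxin$ to this derivation. As $(.)^\Boxin$ commutes with $\wedge,\vee,\to$, it preserves modus ponens and sends each instance of $\underline{\sf P}$ to an instance of $\underline{\sf P}$ and each instance of $\underline{\sf C_a}$ to an ${\sf iK4}$-equivalent one; by \Cref{Lemma-3} it maps ${\sf iGL}$-theorems to ${\sf iGL}$-theorems; and a short computation with \Cref{Lemma-BoxdotABox-ABox} shows that the $(.)^\Boxin$-images of the instances of $\overline{\sf C}$ and of $\overline{\sf C_a}$ are already ${\sf iGL}$-provable (each reduces to an ${\sf iGL}$-theorem of the shape $\Box(B^\Box\to\Box B^\Box)$). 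Hence $\ihatglchat\vdash C$ gives ${\sf iGL}+\underline{\sf C_a}+\underline{\sf P}\vdash C^\Boxin$, and therefore $\ihatglleplus+{\sf C_a}\vdash C^\Boxin$.

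Because ${\sf iK4}+{\sf Le}^++{\sf C_a}\subseteq\ihatglleplus+{\sf C_a}$, \Cref{Lem-Boxin-validity} (valid for all $\TNNIL^\Box$-formulas, in particular for $C$) lets me replace $C^\Boxin$ by $C$, so $\ihatglleplus+{\sf C_a}\vdash C$. It thus remains to discharge the atomic completeness, i.e.\ to prove that $\ihatglleplus+{\sf C_a}$ is conservative over $\ihatglleplus$ on the clauses $C$; reassembling the clauses then yields $\ihatglleplus\vdash A$ (again via $\underline{\sf P}$, since $\ihatglleplus\vdash A\leftrightarrow\bigwedge C$).

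The hard part will be exactly this elimination of ${\sf C_a}$ in the presence of $\underline{\sf P}$. The point to exploit is \Cref{Lem-60a}: on $\TNNIL^\Box$-formulas atomic completeness only ever matters inside a $\Box$, where $\Boxdot A^l$ and $\Boxdot A^\Boxout$ coincide, so the contribution of ${\sf C_a}$ to a proof of $C$ is already the contribution of ${\sf Le}^+$; combined with (the proof of) \Cref{Theorem-conservativity-1}---whose translation is uniform and hence should commute with adjoining $\underline{\sf P}$ on both sides---this is what should turn $\ihatglleplus+{\sf C_a}\vdash C$ into $\ihatglleplus\vdash C$. The second assertion is proved by running the same three steps with the reflection schema $\underline{\sf S}$ carried throughout: $\underline{\sf S}$ occurs on both sides, and on the {\sf NOI}-antecedent clauses the $(.)^\Boxin$-image $\Box A^\Box\to A^\Boxin$ of an $\underline{\sf S}$-instance follows from $\underline{\sf S}$ together with ${\sf C_a}$ and $\underline{\sf P}$ (using \Cref{Lemma-4-2}), so all reductions go through with $\ihatglchats$ and $\ihatgllepluss$ in place of $\ihatglchat$ and $\ihatglleplus$.
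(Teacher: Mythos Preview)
Your approach is far more involved than the paper's and manufactures a difficulty that is not actually there. The paper proceeds by a direct induction on the length of derivations: it checks that every axiom of $\ihatglchat$ (resp.\ $\ihatglchats$) is already a theorem of $\ihatglleplus$ (resp.\ $\ihatgllepluss$), so that modus ponens transfers the entire derivation---the $\TNNIL^\Box$ hypothesis on the conclusion plays no role. The only axioms requiring comment are those of the form $\Box D$ with $\iglc\vdash D$ (this covers $\overline{\sf C}$ and $\overline{\sf C_a}$); for these \Cref{Theorem-conservativity-1} gives $\iglleplus\vdash D$, and since $\iglleplus$ is closed under necessitation one obtains $\iglleplus\vdash\Box D\subseteq\ihatglleplus$. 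The axioms $\underline{\sf P}$ and $\underline{\sf S}$ are shared by source and target, and the ${\sf iGL}$-axioms are immediate.

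In particular, your ``hard part''---eliminating ${\sf C_a}$ from $\ihatglleplus+{\sf C_a}\vdash C$---is vacuous. The formula $p\to\Box p$ is $\TNNIL$ (the atomic antecedent lies in ${\sf NOI}$) and is $\iglc$-provable, so \Cref{Theorem-conservativity-1} already yields $\iglleplus\vdash p\to\Box p$; necessitation then gives $\iglleplus\vdash\Box(p\to\Box p)$. Hence ${\sf C_a}\subseteq\iglleplus\subseteq\ihatglleplus$, so $\ihatglleplus+{\sf C_a}=\ihatglleplus$ and there is nothing to discharge. Your route through CNF clauses, the $(.)^\Boxin$ translation, and \Cref{Lem-Boxin-validity} is not wrong as far as it goes, but it is unnecessary: the observation that every axiom of $\ihatglchat$ already lies in $\ihatglleplus$ replaces all of it. Your sketch for step~6, which appeals to ``(the proof of) \Cref{Theorem-conservativity-1}\ldots commut[ing] with adjoining $\underline{\sf P}$'', is in any case circular---that conservativity in the presence of $\underline{\sf P}$ is precisely the content of the theorem you are proving.
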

\begin{proof}
Both statements proved by induction on proofs. 
The only non-trivial case is when  $A$ is an
 axiom instance of the form $\Box A$ such that 
$\iglc\vdash A$.  In this case, \Cref{Theorem-conservativity-1} implies 
$\iglleplus\vdash A$. Hence by necessitation which is available in 
$\iglleplus$ we have $\iglleplus\vdash \Box A$. Hence 
$\ihatglleplus\vdash A$ and $\ihatgllepluss\vdash A$.
\end{proof}

\subsection{Intuitionistic Modal Kripke Semantics}\label{Sec-Kripke}
Let us first review results and notations from \cite{IemhoffT}
which will be used here. Assume two binary relations $R$ and $S$
on a set. Define $\alpha(R\hlf{\,;}S)\gamma$ iff there exists some $\beta$ such that 
$\alpha R \beta $ and $\beta S \gamma$.
\hl{We use the binary relation symbol $\preccurlyeq$ always as a  reflexive 
relation and $\prec$ for the irreflexive part of $\preccurlyeq$,
i.e. $u\prec v$ holds iff $u\preccurlyeq v$ and $u\neq v$. Moreover we use the mirror image of a relational symbol for its inverse, e.g.
$\succ$ for $\prec^{-1}$ and so on.} 

A Kripke model $\mathcal{K}$, for intuitionistic modal logic, is
a quadruple $(K,\hlf{\preccurlyeq},\R,V)$, such that $K$ is a set (we call
its elements as nodes), $(K,\hlf{\prec})$ is a partial ordering, $\R$ is a
binary relation on $K$ such that $(\hlf{\preccurlyeq}\hlf{\,;}\, \R)\subseteq\;\R$, and
$V$ is a binary relation between nodes and atomic
variables such that $\alpha V p$ and $\alpha \hlf{\preccurlyeq}\beta$ implies $\beta V
p$. Then we can extend $V $ to the modal language with $\R$
corresponding to $\Box$ and $\hlf{\preccurlyeq} $ for intuitionistic $\ra$.
More precisely, we define $\Vdash$ inductively as an extension of $V$ as follows:
\begin{itemize}
 \item $\kcal,\alpha\Vdash p$ iff 
 $\alpha Vp$, for atomic variable $p$,
 \item $\kcal,\alpha\Vdash A\vee B$ iff $\kcal,\alpha\Vdash A$ or $\kcal,\alpha\Vdash B$,
 \item $\kcal,\alpha\Vdash A\wedge B$ iff $\kcal,\alpha\Vdash A$ and $\kcal,\alpha\Vdash B$,
 \item $\kcal,\alpha\nVdash\bot$ and $\kcal,\alpha\Vdash\top$,
 \item $\kcal,\alpha\Vdash A\ra B$ iff for all $\beta 
 \hlf{\succcurlyeq} \alpha$, $\kcal,\beta\Vdash A$ 
 implies $\kcal,\beta\Vdash B$,
 \item $\kcal,\alpha\Vdash \bo A$ iff for all $\beta$ with $\alpha\R \beta$, we have $\kcal,\beta\Vdash A$.
\end{itemize}
Also we define the local truth in this way:
\begin{itemize}
 \item $\kcal,\alpha\models p$ iff 
 $\alpha Vp$, for atomic variable $p$,
 \item $\kcal,\alpha\models A\vee B$ iff $\kcal,\alpha\models A$ or $\kcal,\alpha\models B$,
 \item $\kcal,\alpha\models A\wedge B$ iff $\kcal,\alpha\models A$ and $\kcal,\alpha\models B$,
 \item $\kcal,\alpha\not\models\bot$ and $\kcal,\alpha\models\top$,
 \item $\kcal,\alpha\models A\ra B$ iff 
 either $\kcal,\alpha\not\models A$ or
  $\kcal,\alpha\models B$, 
   \item $\kcal,\alpha\models \bo A$ iff for all $\beta$ with $\alpha\R \beta$, we have $\kcal,\beta\Vdash A$.
\end{itemize}
The classical truth $\kcal,\alpha\models_cA$ is defined similar to $\kcal,\alpha\models A$, except for the 	boxed case:
\begin{itemize}
\item $\kcal,\alpha\models_c \Box A$ iff for all $\beta\sqsupset \alpha$ we have $\kcal,\beta\models_c A$.
\end{itemize}

For a boolean interpretation $I$, we also define 
the local $I$-truth $\kcal,\alpha,I\models A$ 
and 
the classical $I$-truth $\kcal,\alpha,I\models_c A$, 
similar to $\kcal,\alpha\models A$, 
and $\kcal,\alpha\models A$, 
except for atomic variables $p$ which we define:
\begin{itemize}
\item $\kcal,\alpha,I\models p$ 
iff $I\models p$
iff $\kcal,\alpha,I\models_c p$.
\end{itemize}

\begin{remark}\label{Remark2}
Note that when we consider the classical truth for a Kripke model 
$\kcal=(K,\sqsubset,\preccurlyeq,V)$,  we are ignoring the $\preccurlyeq$
from $\kcal$ and it would collapse to the well known  Kripke semantic for the
classical modal logic $\kcal_c:=(K,\sqsubseteq,V)$. The same argument holds 
for the classical $I$-truth,
except for the valuation $V$, which should be modified according to $I$, more precisely, $\kcal,\alpha,I\models_c A$ iff $\kcal^I_c,\alpha\models A$, in which 
$\kcal^I_c:=(K,\sqsubset,V^I_\alpha)$ is a classical Kripke semantic for classical modal logic with 
 $$\beta \,V^I_\alpha\, p\quad \Leftrightarrow \quad (\beta\neq \alpha \wedge \beta \,V\, p) \vee 
(\beta=\alpha \wedge I\models p) 
 $$
\end{remark}
In the rest of paper, we may simply  write $\alpha\Vdash A$ 
for $\kcal,\alpha\Vdash A$,
 if no confusion is likely.
By an induction on the complexity of $A$, one can observe that $\alpha\Vdash A$
implies $\beta\Vdash A$ for all $A$ and $\alpha\hlf{\preccurlyeq} \beta$.
 We define the
following notions.
\begin{itemize}
\item If $\alpha\hlf{\preccurlyeq}\beta$, $\beta$ is  \emph{above} $\alpha$ and
$\alpha$ is \emph{beneath} $\beta$. If  $\alpha\;\R\;\beta$, $\beta $ is
 a \emph{successor} of $\alpha$. We say that $\beta$ is an immediate successor  of $\alpha$, if $\alpha\R\beta$ and 
 there is no $\gamma$ such that $\alpha\R\gamma\R\beta$. 
\item We say that $\alpha$ is $\R$-branching, if 
the set of immediate successors of $\alpha$ is not singleton. 
\item A Kripke model is finite if its set of nodes is finite.
\item $(\alpha\!\!\R)$ indicates the set of successors of $\alpha$, and 
 $(\alpha\!\!\prec)$  and $(\alpha\!\!\preccurlyeq)$ are defined
similarly.
\item   $\alpha$ is classical, if $(\alpha\!\!\prec) =\emptyset$. 
\item  $\alpha$ is quasi-classical, if 
$(\alpha\!\!\prec)= (\alpha\!\!\R)$. 
\item $\alpha$ is complete if 
$(\alpha\!\!\R)\subseteq (\alpha\!\!\prec)$. Also we say that 
$\alpha$ is atom-complete if $\alpha\Vdash p$ and $\alpha\R\beta$ 
implies $\beta\Vdash p$, for every atomic variable $p$.
\item Let $\varphi$ indicates some property for nodes 
in $\kcal$ and $X\subseteq K$. 
We say  that $\kcal$ is $X$-$\varphi$, if every 
 $\alpha\in X$ has the property $\varphi$.  If $X=\{\alpha\}$, we may use $\alpha$-$\varphi$ instead.
We say that $\kcal$ has the property $\varphi$, 
or simply ``is $\varphi$", if  it is $K$-$\varphi$.
 For example if we set 
 ${\sf Suc}:=\bigcup_{\alpha\in K}(\alpha\!\!\R)$,
${\sf Suc}$-classical means that every $\R$-accessible node is classical.
\item $\mathcal{K}$ is called {\em neat} iff  $\alpha\R\gamma$ and 
\hl{$\alpha\preccurlyeq\beta \preccurlyeq \gamma$} implies $\alpha\R\beta$
or $\beta\R\gamma$.
\item  $\mathcal{K}$ is called {\em brilliant} iff 
$(\R\hlf{;\, } \hlf{\preccurlyeq})\subseteq\,\R$ \cite{IemhoffT}. Note that $ \alpha \R\hlf{;\, } \hlf{\preccurlyeq} \ \beta  $ iff there is some $ \delta$ such that   $ \alpha \R \delta \preccurlyeq \beta $.
\item We say that $\kcal$ has tree frame, if 
 $(K,\prec\cup\R)$ is tree. 
 A tree is a partial order  $(X,<)$ such that for every $x\in X$, the set 
 $\{y\in X: y\leq x\}$ is finite linearly ordered.
\item  $\mathcal{K}$ is called {\em semi-perfect} iff it is
(1) with finite tree frame, 
(2) brilliant, (3) neat 
and (4) $\R$ is irreflexive and transitive.  We say that $\kcal$ is perfect if it is semi-perfect and complete. Note that every quasi-classical Kripke model with finite tree frame is perfect. 
\item We say that a Kripke model $\kcal$ is
 $A$-sound at $\alpha$ ($\alpha$ is $A$-sound), if   for every 
boxed subformula $\Box B$ of $A$ we have 
$\kcal,\alpha\models \Box B\to B$.  
\item Suppose $X$ is a set of propositions that is closed under
sub-formulae \hl{(we call such $X$ \emph{adequate})}. An $X$-saturated set of
propositions $\Gamma$ with respect to some \hl{logic $L$} is a
\hl{consistent} subset
of $X$ \hl{such} that
\begin{itemize}
	 \item For each $A\in X$, \hl{$\Gamma\vdash_L A$} implies $A\in\Gamma$.
 \item For each $A\vee B\in X $, \hl{$\Gamma\vdash_L A\vee B$} implies $A\in\Gamma$ or $B\in \Gamma$.
\end{itemize}
\end{itemize}

\begin{lemma}\label{Lemma-saturation}
Let \hl{$\nvdash_L A$} and let $X$ be an adequate set. Then there is an $X$-saturated  set $\Gamma$ such that 
\hl{$\Gamma\nvdash A$}.
\end{lemma}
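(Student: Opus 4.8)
The plan is to obtain $\Gamma$ by a Lindenbaum-style maximality construction carried out \emph{inside} $X$, and then to verify the saturation clauses using the deduction theorem and reasoning by cases. Both of these proof-theoretic tools are available because every logic $L$ under consideration extends $\IPC_\Box$ and has modus ponens as its sole inference rule.

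First I would let $\Gamma$ be a subset of $X$ that is maximal with respect to the property $\Gamma\nvdash_L A$. Such a $\Gamma$ exists: the empty set has this property by the hypothesis $\nvdash_L A$, and the union of any chain of subsets of $X$ failing to derive $A$ again fails to derive $A$, since a derivation of $A$ uses only finitely many premises and hence already lives inside one member of the chain. So Zorn's lemma (or, when $X$ is countable, a direct stage-by-stage enumeration of the elements of $X$, adding each element only when doing so preserves $\nvdash_L A$) yields a maximal such $\Gamma\subseteq X$. By construction $\Gamma\nvdash_L A$, which is the last assertion of the lemma.

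Next I would check the three clauses in the definition of $X$-saturation. For consistency, if $\Gamma$ were inconsistent then $\Gamma\vdash_L\bot$, and hence $\Gamma\vdash_L A$ by ex falso, contradicting $\Gamma\nvdash_L A$. For deductive closure within $X$, suppose $B\in X$ with $\Gamma\vdash_L B$ but $B\notin\Gamma$; by maximality $\Gamma\cup\{B\}\vdash_L A$, so the deduction theorem gives $\Gamma\vdash_L B\to A$, and modus ponens with $\Gamma\vdash_L B$ yields $\Gamma\vdash_L A$, a contradiction; hence $B\in\Gamma$. For the disjunction clause, suppose $B\vee C\in X$ with $\Gamma\vdash_L B\vee C$; by adequacy $B,C\in X$, and if both $B\notin\Gamma$ and $C\notin\Gamma$, then maximality gives $\Gamma\cup\{B\}\vdash_L A$ and $\Gamma\cup\{C\}\vdash_L A$, whence $\Gamma\vdash_L B\to A$ and $\Gamma\vdash_L C\to A$ by the deduction theorem; combining these with the $\IPC_\Box$-theorem $(B\to A)\to((C\to A)\to((B\vee C)\to A))$ and modus ponens produces $\Gamma\vdash_L A$, again a contradiction. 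Therefore $B\in\Gamma$ or $C\in\Gamma$, and $\Gamma$ is $X$-saturated.

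The only point requiring care, and the closest thing to a genuine obstacle, is that the two proof-theoretic principles I rely on, namely the deduction theorem and the displayed case-reasoning scheme, really do hold uniformly for each $L$ here. This is exactly why it matters that these systems are taken over an $\IPC_\Box$-axiom base with modus ponens as the sole rule: the relevant implicational and disjunction-elimination schemes are theorems of $\IPC_\Box$, so the deduction theorem and case analysis transfer to $L$ without the complications that an explicit necessitation rule would introduce. Beyond verifying that these standard facts are in force, I do not expect any difficulty.
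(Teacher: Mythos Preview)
Your argument is correct. The paper does not actually supply its own proof of this lemma; it simply cites Iemhoff's thesis \cite{IemhoffT}. The Lindenbaum-style construction you give, taking a maximal $\Gamma\subseteq X$ with $\Gamma\nvdash_L A$ and verifying the saturation clauses via the deduction theorem and $\vee$-elimination, is exactly the standard argument one finds in such references. Your observation that the deduction theorem is available precisely because the paper's modal systems are axiomatized over $\IPC_\Box$ with modus ponens as the only rule (necessitation being merely admissible, not primitive) is the key point, and you have identified it correctly.
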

\begin{proof}
See \cite{IemhoffT}.
\end{proof}

\begin{theorem}\label{Theorem-Propositional Completeness LC}
$\iglc$  is sound and complete for  perfect Kripke
models. Also $\iglct$ is sound and complete for perfect quasi-classical Kripke models.
\end{theorem}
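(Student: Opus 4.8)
The plan is to prove soundness and completeness separately, and to read off the $\iglct$-statement as a refinement of the $\iglc$-statement. Since both systems are closed under modus ponens only, and modus ponens manifestly preserves forcing, for soundness it suffices to check that every axiom instance is forced at every node of every perfect (resp.\ perfect quasi-classical) model.

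\textbf{Soundness.} The purely intuitionistic schemas $\underline{\sf i},\overline{\sf i}$ hold because boxed formulas are persistent: the defining condition $(\preccurlyeq\,;\R)\subseteq\R$ of a Kripke model makes $\Box C$ monotone along $\preccurlyeq$, so substituting boxed formulas for the atoms of an ${\sf IPC}_\Box$-theorem keeps it valid under the real $\Box$-semantics. The schemas $\underline{\sf K},\overline{\sf K}$ and $\underline{\sf 4}$ follow from transitivity of $\R$ together with persistence, and $\underline{\sf L}$ from transitivity plus converse well-foundedness, which is automatic on a finite tree frame with irreflexive $\R$. The completeness principle $\underline{\sf C}\colon A\to\Box A$ is forced because completeness gives $(\alpha\R)\subseteq(\alpha\prec)$, so every $\R$-successor is a $\preccurlyeq$-successor and inherits $A$ by persistence. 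For $\iglct$ one works with quasi-classical models where $(\alpha\prec)=(\alpha\R)$; this yields $\underline{\sf C}$ as before and also the trace principle $\underline{\sf TP}$: if $\alpha\nVdash A$ and $\alpha\nVdash A\to B$, pick $\beta\succ\alpha$ with $\beta\Vdash A,\ \beta\nVdash B$; then $\alpha\prec\beta$, so $\alpha\R\beta$, and $\alpha\Vdash\Box(A\to B)$ forces $\beta\Vdash A\to B$, a contradiction.

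\textbf{Completeness.} Given $\iglc\nvdash A$ I would build a finite refuting model by a canonical construction in the style of \cite{IemhoffT}. Take the finite adequate set $X:=\sub{A}\cup\{\Box B : B\in\sub{A}\}$, let the nodes be the $X$-saturated sets with respect to $\iglc$ produced by \Cref{Lemma-saturation}, order them by $\preccurlyeq\,:=\,\subseteq$, and set $\Gamma\R\Delta$ iff $\{B,\Box B : \Box B\in\Gamma\}\subseteq\Delta$ together with the usual strictness clause demanding some $\Box B\in\Delta\setminus\Gamma$ (this makes $\R$ irreflexive and, $X$ being finite, converse well-founded). A routine Truth Lemma, $\Gamma\Vdash B\Leftrightarrow B\in\Gamma$ for $B\in X$, proved by induction using saturation for the $\vee$- and $\to$-clauses and the definition of $\R$ for the $\Box$-clause, then exhibits a node refuting $A$. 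The completeness property $(\alpha\R)\subseteq(\alpha\prec)$ is exactly where $\underline{\sf C}$ enters: if $\Gamma\R\Delta$ and $B\in\Gamma\cap\sub{A}$, then $\underline{\sf C}$ and saturation give $\Box B\in\Gamma$ (note $\Box B\in X$), hence $B\in\Delta$; combined with the $\Box B$-clause of $\R$ this yields $\Gamma\subseteq\Delta$, so $\R\subseteq\prec$.

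\textbf{Frame conditions, the $\iglct$ refinement, and the main obstacle.} After unravelling along $\prec\cup\R$ I would root the tree at the refuting set and verify perfection: finiteness and the tree property are built in, transitivity and irreflexivity of $\R$ and brilliance $(\R\,;\preccurlyeq)\subseteq\R$ read off the membership definition of $\R$ and the inclusion order, and completeness is the consequence of $\underline{\sf C}$ just established. For $\iglct$ I would repeat the construction with $\iglct$-saturated sets and must additionally force \emph{quasi}-classicality, i.e.\ $\prec\subseteq\R$ on top of $\R\subseteq\prec$; this is where $\underline{\sf TP}$ does its work, ruling out the genuinely intuitionistic intermediate nodes that would otherwise make a proper inclusion $\Gamma\subsetneq\Delta$ fail to be an $\R$-step. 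I expect the delicate part throughout to be reconciling the tree, brilliance and especially the neatness requirement with the Truth Lemma after unravelling, and, for $\iglct$, extracting quasi-classicality from $\underline{\sf TP}$; if one prefers, the intuitionistic-$\GL$ core of both arguments can instead be quoted from Iemhoff's completeness theorem in \cite{IemhoffT}, leaving only the effect of $\underline{\sf C}$ and $\underline{\sf TP}$ on the frame to be checked by hand.
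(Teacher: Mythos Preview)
The paper does not prove this in-text but cites \cite[Theorem~4.26]{Sigma.Prov.HA} and \cite[Lemma~6.14]{Visser82}; the nearest in-paper analogues are \Cref{Lem-iglc-subformule} and \Cref{Lem-iglct-subformule}. Your $\iglc$ sketch matches the standard argument used there: saturated sets under inclusion, the usual $\R$, a Truth Lemma with L\"ob for the $\Box$-clause, $\underline{\sf C}$ giving $\R\subseteq\prec$, then unravelling for the tree and neatness conditions. That part is fine.

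For $\iglct$ your proposed mechanism is wrong. You write that $\underline{\sf TP}$ ``rules out \ldots intermediate nodes that would otherwise make a proper inclusion $\Gamma\subsetneq\Delta$ fail to be an $\R$-step'', i.e.\ that $\underline{\sf TP}$ collapses $\subseteq$ onto $\R$. It does not: from $\Box B\in\Gamma\subseteq\Delta$ you cannot extract $B\in\Delta$ without reflection, which the system lacks. The actual role of $\underline{\sf TP}$ (see the $\to$-case in the proof of \Cref{Theorem-Kripke-completeness-iglchatthat}, or Visser's original) is the reverse: one \emph{defines} $\preccurlyeq:=\R\cup\text{id}$, making the frame quasi-classical by fiat, and then $\underline{\sf TP}$ is what rescues the $\to$-clause of the Truth Lemma under this coarser order. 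Concretely, if $E\to F\notin\Gamma$ and $E\notin\Gamma$, one shows that $\Gamma\cup\{D:\Box D\in\Gamma\}\nvdash E\to F$ (otherwise necessitation plus $\underline{\sf C}$ give $\Gamma\vdash\Box(E\to F)$, whence $\underline{\sf TP}$ forces $E$ or $E\to F$ into $\Gamma$), so a saturated $\R$-successor containing $E$ and omitting $F$ exists. Thus $\underline{\sf TP}$ does not turn $\subsetneq$-steps into $\R$-steps; it lets you dispense with non-$\R$ intuitionistic successors when witnessing failed implications.
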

\begin{proof}
See \cite[Theorem 4.26]{Sigma.Prov.HA}   for $\iglc$ and 
\cite[Lemma 6.14]{Visser82} for $\iglct$.
\end{proof}
\noindent Since $\iglc$ and $\iglct$ have finite model property, as it is expected, we can easily deduce the decidability of $\iglc$ and $\iglct$:

\begin{corollary}\label{Corollary-decidability of iglc}
$\iglc$ and $\iglct$ are decidable.
\end{corollary}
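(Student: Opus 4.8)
The plan is to derive decidability from the completeness half of \Cref{Theorem-Propositional Completeness LC} together with the observation that the relevant model classes consist of finite structures. This is exactly the finite model property, and for a recursively axiomatized logic the finite model property yields decidability by the standard argument that both the theorems and the non-theorems are recursively enumerable.

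First I would record that both $\iglc$ and $\iglct$ are generated by recursive sets of axiom schemas under a single rule (modus ponens), so their theorem sets are recursively enumerable: one simply enumerates all finite derivations. Thus $\{A : \iglc\vdash A\}$ and $\{A : \iglct\vdash A\}$ are r.e.

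Next I would treat the non-theorems via completeness. By definition a perfect model is semi-perfect and in particular has a finite tree frame, so every perfect (and every perfect quasi-classical) model is finite. For such a finite structure all the defining conditions --- finite tree frame, brilliance, neatness, irreflexivity and transitivity of $\R$, completeness, and (for the second case) quasi-classicality --- are decidable, and the forcing relation $\Vdash$ restricted to a fixed finite model and a fixed formula is computable, since it is defined by recursion on ${\sf Sub}(A)$ with all quantifiers ranging over the finite node set. Hence the class of finite perfect models is effectively enumerable and the statement ``$A$ fails at some node of some perfect model'' is semi-decidable. By \Cref{Theorem-Propositional Completeness LC}, $\iglc\nvdash A$ is equivalent to precisely this statement, and likewise $\iglct\nvdash A$ is equivalent to the corresponding statement for perfect quasi-classical models. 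Therefore the non-theorems of each logic are also r.e.

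Since for each of $\iglc$ and $\iglct$ both the set of theorems and its complement are recursively enumerable, each logic is decidable. The only point meriting attention is that completeness really does furnish a \emph{finite} refuting model, but this is automatic here because ``perfect'' already bakes finiteness into the definition through the finite tree frame condition; hence no filtration or separate finite model argument is needed. If one wanted an explicit complexity bound rather than mere decidability, it could be read off the completeness construction, which builds the refuting model from the $X$-saturated subsets supplied by \Cref{Lemma-saturation} of the finite adequate set $X:={\sf Sub}(A)$, giving at most $2^{|{\sf Sub}(A)|}$ distinct nodes before any tree unraveling.
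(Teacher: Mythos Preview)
Your proposal is correct and follows essentially the same approach the paper intends: the text preceding the corollary explicitly says ``Since $\iglc$ and $\iglct$ have finite model property, as it is expected, we can easily deduce the decidability,'' and the paper's proof simply defers to the cited reference for $\iglc$ and declares $\iglct$ similar. One minor inaccuracy in your final parenthetical remark: the adequate set used in the completeness construction is $X=\{B,\Box B : B\in{\sf Sub}(A)\}$ rather than just ${\sf Sub}(A)$, but this does not affect the decidability argument itself.
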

\begin{proof}
For $\iglc$ see \cite[Corollary 4.27]{Sigma.Prov.HA}. $\iglct$ is similar and left to the reader.
\end{proof}

\begin{lemma}\label{Lem-truth-forcing}
Let $A$ be a modal proposition and 
 $\kcal=(K,\preccurlyeq,\R, V)$ be  a semi-perfect  Kripke model.
Then for every quasi-classical node $\alpha\in K$ we have 
$$
\kcal,\alpha\Vdash A^\Box \quad  \Longleftrightarrow\quad  
\kcal,\alpha\models A^{{\Box}} 
$$
\end{lemma}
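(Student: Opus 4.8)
The plan is to prove the equivalence by induction on the complexity of $A$, exploiting the single structural feature distinguishing $\Vdash$ from $\models$: the two relations are defined by \emph{identical} clauses at $\wedge$, $\vee$, atomic variables, and---crucially---at $\Box$, and differ only in the clause for $\to$, where $\Vdash$ quantifies over all $\beta\succcurlyeq\alpha$ while $\models$ inspects only $\alpha$. Since the clauses coincide verbatim on boxed formulae, we have $\kcal,\alpha\Vdash\Box C \iff \kcal,\alpha\models\Box C$ for every $C$ and every node $\alpha$, with no hypothesis on $\alpha$. This will be the engine of the argument.

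First I would dispose of the routine cases. For atomic $A$ (and $A\in\{\top,\bot\}$) we have $A^\Box=\Boxdot A=A\wedge\Box A$; the non-boxed conjunct is read identically by $\Vdash$ and $\models$ (both reduce to $\alpha V A$ for atomic $A$, and to a constant value for $\top,\bot$), while the boxed conjunct agrees by the observation above. For $A=B\wedge C$ and $A=B\vee C$ the translation commutes with the connective and both relations use identical clauses, so the induction hypothesis applied at $\alpha$ closes these. For $A=\Box B$ we have $A^\Box=\Box B^\Box$, which is purely boxed and hence settled directly.

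The crux is $A=B\to C$, where $A^\Box=\Boxdot(B^\Box\to C^\Box)$. Writing $D:=B^\Box\to C^\Box$ we have $A^\Box=D\wedge\Box D$ (exactly the shape predicted by \Cref{Lemma-BoxdotABox-ABox}). The conjunct $\Box D$ is boxed, so $\Vdash$ and $\models$ agree on it; thus it suffices to show $\alpha\Vdash D\iff\alpha\models D$ under the standing assumption $\alpha\Vdash\Box D$, since whenever $\Box D$ fails both sides of the lemma collapse to false. One direction is immediate: instantiating the universal clause of $\alpha\Vdash D$ at the reflexive point $\beta=\alpha$ gives $\alpha\Vdash B^\Box\Rightarrow\alpha\Vdash C^\Box$, which by the induction hypothesis for $B$ and $C$ at $\alpha$ is precisely $\alpha\models D$.

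The hard part will be the reverse implication $\alpha\models D\Rightarrow\alpha\Vdash D$, and this is where quasi-classicality of $\alpha$ enters. From $\alpha\models D$ the induction hypothesis yields $\alpha\nVdash B^\Box$ or $\alpha\Vdash C^\Box$, and I must upgrade this local disjunction to the upward statement $\beta\Vdash B^\Box\Rightarrow\beta\Vdash C^\Box$ for every $\beta\succcurlyeq\alpha$. For $\beta=\alpha$ this follows from the disjunction just obtained. For a strict $\beta\succ\alpha$, quasi-classicality gives $(\alpha\prec)=(\alpha\R)$, so $\alpha\prec\beta$ forces $\alpha\R\beta$; then the standing assumption $\alpha\Vdash\Box D$ delivers $\beta\Vdash D$, and instantiating that at the reflexive point of $\beta$ converts $\beta\Vdash B^\Box$ into $\beta\Vdash C^\Box$. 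The essential obstacle is exactly this reconciliation of the upward quantifier in the $\to$-clause of $\Vdash$ with the locality of $\models$; it is resolved by the fact that every implication in $A^\Box$ carries an outer $\Box$, which together with quasi-classicality (turning $\preccurlyeq$-successors into $\R$-successors) pins down the behaviour at all $\beta\succcurlyeq\alpha$. Note that only quasi-classicality of $\alpha$ and the standard forcing clauses are used, so the remaining semi-perfect hypotheses are not needed for this lemma.
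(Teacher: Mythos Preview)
Your proof is correct and follows essentially the same approach as the paper: induction on $A$, with the only substantive case being $A=B\to C$, where quasi-classicality of $\alpha$ converts any strict $\preccurlyeq$-successor into an $\R$-successor so that the boxed conjunct $\Box(B^\Box\to C^\Box)$ controls it. Your presentation factors out the $\Box D$ conjunct first (using that $\Vdash$ and $\models$ coincide verbatim on boxed formulae) and then argues the two directions for $D$ directly, whereas the paper argues both directions contrapositively by case-splitting on which conjunct fails; the underlying mechanism is identical.
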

\begin{proof}
We  use    induction 
on the complexity of $A$. 
The only non-trivial case is when $A=B\to C$.
Let  $\kcal,\alpha\nVdash \Boxdot (B^\Box\to C^\Box)$. If 
$\kcal,\alpha\nVdash \Box (B^\Box\to C^\Box)$
then  evidently 
$\kcal,\alpha\not\models \Box (B^\Box\to C^\Box)$ 
and we are done. If 
$\kcal,\alpha\nVdash B^\Box\to C^\Box$, then 
there exists some $\beta\succcurlyeq \alpha$ such that 
$\kcal,\beta\Vdash B^\Box$ and 
$\kcal,\beta\nVdash C^\Box$. 
Since $\alpha$ is quasi classical, hence 
$\beta \sqsupset \alpha$ or
 $\beta=\alpha$.  If $\beta\sqsupset \alpha$,
we have $\kcal,\alpha\not\models \Box (B^\Box\to C^\Box)$ and 
we are done. Otherwise, $\kcal,\alpha\Vdash B^\Box$ and 
$\kcal,\alpha\nVdash C^\Box$ and hence by induction hypothesis 
we have $\kcal,\alpha\models B^\Box$ and 
$\kcal,\alpha\not\models C^\Box$  and we are done.
For the other way around, let 
$\kcal,\alpha\not\models \Boxdot (B^\Box\to C^\Box)$. If 
$\kcal,\alpha\not\models \Box (B^\Box\to C^\Box)$, evidently we 
have $\kcal,\alpha\nVdash \Box (B^\Box\to C^\Box)$ 
and we are done.  Otherwise, let $\kcal,\alpha\not
\models B^\Box\to C^\Box$. Then $\kcal,\alpha\models B^\Box$
and $\kcal,\alpha\not\models C^\Box$. 
Induction hypothesis  implies $\kcal,\alpha\Vdash B^\Box$ and
$\kcal,\alpha\nVdash C^\Box$ and hence 
$\kcal,\alpha\nVdash \Boxdot (B^\Box\to C^\Box)$.
\end{proof}

\begin{corollary}\label{Corol-truth-forcing}
Let $A$ be a modal proposition and 
 $\kcal$ is  a semi-perfect  quasi-classical 
 Kripke model.
Then for every node  $\alpha$ we have 
$$
\kcal,\alpha\Vdash A^{{\Box}} \quad \Longleftrightarrow 
\quad \kcal,\alpha\models A^{{\Box}} \quad \Longleftrightarrow 
\quad \kcal,\alpha\models_c A^{{\Box}}
$$
$$
\kcal,\alpha\models A^{{\Boxin}}  \Longleftrightarrow 
 \kcal,\alpha\models_c A^{{\Boxin}} \quad \text{ and } \quad 
 \kcal,\alpha,I\models A^{{\Boxin}}  \Longleftrightarrow 
 \kcal,\alpha,I\models_c A^{{\Boxin}}
$$
\end{corollary}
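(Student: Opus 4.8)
The plan is to reduce both displayed chains to \Cref{Lem-truth-forcing} together with a single auxiliary equivalence between intuitionistic forcing and classical truth on box-translated formulas. Since $\kcal$ is quasi-classical, every node $\alpha$ is quasi-classical, so \Cref{Lem-truth-forcing} applies at each $\alpha$ and gives $\kcal,\alpha\Vdash A^\Box\Leftrightarrow\kcal,\alpha\models A^\Box$ for every $A$; this is already the first equivalence of the first line. It remains to connect these with $\models_c$ and to handle the $A^\Boxin$ line, for which I would isolate the claim that $\kcal,\alpha\Vdash A^\Box\Leftrightarrow\kcal,\alpha\models_c A^\Box$ at every node $\alpha$, proved by induction on $A$.

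In this induction the atomic case reduces to comparing valuations: for atomic $p$ we have $A^\Box=\Boxdot p$, and both $\Vdash p$ and $\models_c p$ amount to $\alpha V p$, while the $\Box p$ conjunct is read off the same strict-successor set because $\R=\sqsubset$. The cases $A=B\wedge C$ and $A=B\vee C$ are immediate from the induction hypothesis, since $\Vdash$ and $\models_c$ both commute with $\wedge,\vee$; and $A=\Box B$ is immediate too, as $\kcal,\alpha\Vdash\Box B^\Box$ and $\kcal,\alpha\models_c\Box B^\Box$ quantify the same strict successors over $B^\Box$, where the induction hypothesis applies.

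The hard part is the implication case $A=B\to C$, where $A^\Box=\Boxdot(B^\Box\to C^\Box)$; writing $X:=B^\Box\to C^\Box$ I would collapse the $\Boxdot$ on each side to a universal statement ranging over $\{\alpha\}\cup(\alpha\R)$. On the intuitionistic side, persistence of forcing along $\preccurlyeq$ shows $\alpha\Vdash X$ already forces $\alpha\Vdash\Box X$, so $\alpha\Vdash A^\Box\Leftrightarrow\alpha\Vdash X$; unfolding the implication and using that in a quasi-classical model $\{\gamma:\gamma\succcurlyeq\alpha\}=\{\alpha\}\cup(\alpha\R)$ yields $\forall\gamma\in\{\alpha\}\cup(\alpha\R)\,[\gamma\Vdash B^\Box\Rightarrow\gamma\Vdash C^\Box]$. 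On the classical side, $\alpha\models_c X$ supplies the $\gamma=\alpha$ instance and $\alpha\models_c\Box X$ supplies the instances at all strict successors, giving $\alpha\models_c A^\Box\Leftrightarrow\forall\gamma\in\{\alpha\}\cup(\alpha\R)\,[\gamma\models_c B^\Box\Rightarrow\gamma\models_c C^\Box]$. The induction hypothesis applied to $B$ and $C$ at every such $\gamma$ then matches the two universal statements termwise. Chaining this claim with \Cref{Lem-truth-forcing} completes the first line $\Vdash A^\Box\Leftrightarrow\models A^\Box\Leftrightarrow\models_c A^\Box$.

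For the second line I would run a parallel induction using the recursion for $(\cdot)^\Boxin$. The atomic case is trivial since $A^\Boxin=A$ and both $\models$ and $\models_c$ read atomics off $V$; the cases $\wedge,\vee,\to$ follow from the induction hypothesis, since $\models$ and $\models_c$ evaluate $\to$ identically at the current node. The box semantics enter only at $(\Box B)^\Boxin=\Box B^\Box$, where both $\models\Box B^\Box$ and $\models_c\Box B^\Box$ quantify the same strict successors and the required $\beta\Vdash B^\Box\Leftrightarrow\beta\models_c B^\Box$ is exactly the auxiliary claim. Finally, the $I$-truth equivalence follows from the identical induction: the modified atomic clause matches by definition, $\kcal,\alpha,I\models p\Leftrightarrow I\models p\Leftrightarrow\kcal,\alpha,I\models_c p$, the $I$-modification never reaches the strict successors at which the boxes are evaluated, and one may additionally invoke \Cref{Remark2} to present the classical $I$-side as ordinary classical truth in the revalued model $\kcal^I_c$. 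The main obstacle is thus the implication case of the auxiliary claim; once $\Boxdot$ is collapsed symmetrically on both sides, the rest is bookkeeping.
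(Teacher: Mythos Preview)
Your proof is correct. The key auxiliary claim $\kcal,\alpha\Vdash A^\Box\Leftrightarrow\kcal,\alpha\models_c A^\Box$ is established by a clean induction on the complexity of $A$, and you correctly identify the implication case as the only nontrivial one, handling it by collapsing $\Boxdot$ on both sides to a universal over $\{\alpha\}\cup(\alpha\sqsubset)$ via quasi-classicality. The reduction of the second displayed line to the first via $(\Box B)^{\Boxin}=\Box B^\Box$ is also carried out properly.

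The paper takes a different route for the second equivalence of the first line: rather than inducting on the structure of $A$, it inducts on the \emph{height of the node} $\alpha$ in the finite tree frame. The idea there is that at leaves all three semantics collapse, and at an interior node one combines \Cref{Lem-truth-forcing} (giving $\Vdash\Leftrightarrow\models$) with the observation that $\models$ and $\models_c$ differ only at boxed subformulas, whose evaluation is pushed to strict successors of smaller height where the inductive hypothesis applies. This is terser but less self-contained: to make the height induction go through at implications under a box one implicitly uses brilliance to ensure that any $\delta\succcurlyeq\beta\sqsupset\alpha$ is again a strict successor of $\alpha$. Your formula induction avoids this detour entirely, trading it for the explicit symmetric unfolding of $\Boxdot(B^\Box\to C^\Box)$; it is longer but more transparent, and it also handles the $A^{\Boxin}$ and $I$-truth statements explicitly, which the paper leaves implicit.
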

\begin{proof}
By \Cref{Lem-truth-forcing}, for every node $\alpha$ we have 
$\kcal,\alpha\Vdash A^\Box$ iff $\kcal,\alpha\models A^\Box$. 
One can easily observe by induction on the height of the 
node $\alpha\in K$  that $\kcal,\alpha\Vdash A^\Box$ iff 
$\kcal,\alpha\models_c A^\Box$.
\end{proof}

\begin{corollary}\label{Corol-truth-forcing2}
Let $A$ be a modal proposition and 
 $\kcal$ is  a semi-perfect  quasi-classical 
 Kripke model.
Then for every node  $\alpha$ we have 
$$
\kcal,\alpha\Vdash A \quad \Longleftrightarrow 
\quad \kcal,\alpha\models A^{{\Boxout}} 
$$
\end{corollary}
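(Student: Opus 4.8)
The plan is to route the equivalence through the formula $A^\Boxout$ under forcing, establishing the chain $\kcal,\alpha\Vdash A \Longleftrightarrow \kcal,\alpha\Vdash A^\Boxout \Longleftrightarrow \kcal,\alpha\models A^\Boxout$, in which the first link is pure soundness and the second carries the real content.

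First I would verify that $\kcal$ forces every theorem of ${\sf iK4}+\Boxdot\CP$ at every node. Transitivity of $\R$ (part of semi-perfectness) gives soundness of ${\sf iK4}$, and quasi-classicality, i.e.\ $(\alpha\!\!\prec)=(\alpha\!\!\R)$ at each node, together with persistence of forcing, makes every completeness instance $B\to\Box B$ forced everywhere: if $\beta\Vdash B$ and $\beta\R\gamma$ then $\beta\prec\gamma$, so $\gamma\Vdash B$. Hence $\Box(B\to\Box B)$, and therefore $\Boxdot(B\to\Box B)$, is forced at every node, so $\Boxdot\CP$ holds throughout $\kcal$.

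Next I would deduce the forcing equivalence $\kcal,\alpha\Vdash A \Longleftrightarrow \kcal,\alpha\Vdash A^\Boxout$. By \Cref{Lemma-A-ABoxin} we have ${\sf iK4}+\Boxdot\CP\vdash A\leftrightarrow A^\Box$, and applying the second half of that lemma to the proposition $A^\Boxout$ gives ${\sf iK4}+\Box\CP\vdash A^\Boxout\leftrightarrow (A^\Boxout)^\Boxin$. Since \Cref{Remark-10} supplies ${\sf iK4}\vdash A^\Box\leftrightarrow (A^\Boxout)^\Boxin$ and $\Boxdot\CP$ proves $\Box\CP$, these combine to ${\sf iK4}+\Boxdot\CP\vdash A\leftrightarrow A^\Boxout$. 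Forcing-soundness from the previous step then converts this provable biconditional into $\kcal,\alpha\Vdash A\Longleftrightarrow\kcal,\alpha\Vdash A^\Boxout$ at every node.

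The remaining step, $\kcal,\alpha\Vdash A^\Boxout\Longleftrightarrow\kcal,\alpha\models A^\Boxout$, is where I expect the main obstacle: local truth $\models$ is neither persistent nor invariant under provable equivalence, so soundness is useless and one must argue by induction on $A$. The decisive observation is that $(.)^\Boxout$ obeys exactly the recursion clauses of $(.)^\Box$ — in particular $(B\to C)^\Boxout=\Boxdot(B^\Boxout\to C^\Boxout)$ mirrors $(B\to C)^\Box=\Boxdot(B^\Box\to C^\Box)$ — so the induction establishing \Cref{Lem-truth-forcing} carries over verbatim with $(.)^\Box$ replaced throughout by $(.)^\Boxout$. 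As there, only the implication case is nontrivial, and it is handled by using quasi-classicality to split each $\beta\succcurlyeq\alpha$ into $\beta=\alpha$ or $\beta\sqsupset\alpha$; the atomic, boxed, conjunction and disjunction cases are immediate, since $(\Box B)^\Boxout=\Box B$ and $\Vdash$ and $\models$ agree on a top-level box by definition. Composing the two equivalences delivers $\kcal,\alpha\Vdash A\Longleftrightarrow\kcal,\alpha\models A^\Boxout$, as required.
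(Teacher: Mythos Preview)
Your proof is correct, and it takes a somewhat different route from the paper's. The paper does not re-run the induction of \Cref{Lem-truth-forcing} for $(.)^\Boxout$; instead it applies \Cref{Corol-truth-forcing} as a black box to get $\kcal,\alpha\Vdash A^\Box\Leftrightarrow\kcal,\alpha\models A^\Box$, and then bridges to $\models A^\Boxout$ via two short semantic observations: that $\kcal,\alpha\Vdash B\leftrightarrow B^\Box$ (soundness of $\Boxdot\CP$, just as you argue) and that $\kcal,\alpha\models B\leftrightarrow B^\Boxin$ for every $B$ (an easy induction using that $\Vdash$ validates $C\leftrightarrow C^\Box$ inside each box). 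Combined with the ${\sf iK4}$-equivalence $A^\Box\leftrightarrow (A^\Boxout)^\Boxin$ from \Cref{Remark-10}, this yields the chain $\Vdash A\Leftrightarrow\Vdash A^\Box\Leftrightarrow\models (A^\Boxout)^\Boxin\Leftrightarrow\models A^\Boxout$.

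What each approach buys: the paper's is shorter because it reuses \Cref{Corol-truth-forcing} rather than redoing its proof, but the middle step (passing from $\models A^\Box$ to $\models(A^\Boxout)^\Boxin$) tacitly relies on $\models$ respecting an equivalence that is only ${\sf iK4}$-provable, not syntactic --- a point the paper glosses over. Your approach repeats a small induction but is entirely self-contained and avoids that subtlety; since $(\Box B)^\Boxout=\Box B$ and $(B\to C)^\Boxout=\Boxdot(B^\Boxout\to C^\Boxout)$ have exactly the shape needed, the induction of \Cref{Lem-truth-forcing} does transfer verbatim, just as you say.
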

\begin{proof}
Observe that $\kcal,\alpha\models B^\Boxin\lr B$, 
$\kcal,\alpha\Vdash B^\Box\lr B$ and 
$B^\Box=(B^\Boxout)^\Boxin$. Hence  
 by \Cref{Corol-truth-forcing} we have $\kcal,\alpha\Vdash A$ iff 
$\kcal,\alpha\Vdash A^\Box$ iff 
$\kcal,\alpha\models (A^\Boxout)^\Boxin$ iff 
$\kcal,\alpha \models A^\Boxout$. 
\end{proof}

\subsubsection{The Smor\'ynski Operation}\label{Sec-Smorynski-operation}
In this subsection, we define the Smory\'nski operation on Kripke models 
\cite{Smorynski-Book}.
Given a Kripke model $\kcal=(K,\preccurlyeq,\R,V)$ 
and some fixed node $\alpha\in K$, 
 define $\kcal':=(K',\preccurlyeq',\R',V')$ as the Kripke model constituted by adding one fresh node 
$\alpha'$ to $\kcal$. All nodes  of $\kcal'$ other than $\alpha'$, forces
the same atomic variables and have the same accessibility relationships as they did in $\kcal$. 
Also $\alpha'$ imitates all relationships of $\alpha$.  More precisely 
$\kcal'$ is constituted as follows:	
\begin{itemize}
\item $K':=K\cup\{\alpha'\}$, in which $\alpha'\not\in K$,
\item $\beta\preccurlyeq'\gamma$ iff $\beta\preccurlyeq \gamma$ for every $\beta,\gamma\in K$,
\item $\beta\R'\gamma$ iff $\beta\R \gamma$ for every $\beta,\gamma\in K$,
\item $\beta\; V'\, p$ iff $\beta\; V\, p$ for every $\beta\in K$,
\item $\alpha' \;V'\, p$ iff $\alpha \;V\, p$,
\item $\alpha'\preccurlyeq' \beta$ iff ($\alpha\preccurlyeq \beta$ or $\beta=\alpha'$).  Also $\beta\preccurlyeq'\alpha'$ iff $\beta=\alpha$,
\item $\alpha'\R'\beta$ iff $\alpha\sqsubseteq\beta$. 
Also $\beta\not\R'\alpha'$ for every $\beta\in K'$.
\end{itemize} 
Then we define $\kcal^{(n)}$ and $\alpha_n$ inductively:
\begin{itemize}
\item $\kcal^{(0)}:=\kcal$ and $\alpha_0:=\alpha$,
\item $\kcal^{(n+1)}:= \left(\kcal^{(n)}\right)'$ and $\alpha_{n+1}$ is defined as the fresh node which is added to $\kcal^{(n)}$ in the definition of $\left(\kcal^{(n)}\right)'$.
\end{itemize}
\begin{lemma}\label{Lemma-10}
Let $\kcal$ be a Kripke model which is  $A^\Boxin$-sound at
the quasi-classical node  $\alpha$. Then 
 for every 
subformula $B$ of $A^\Boxin$ and arbitrary 
boolean interpretation $I$ we have 
\begin{enumerate}
\item $\kcal,\alpha\models B$ iff $\kcal',\alpha'\models B$.
\item $\kcal,\alpha,I\models B$ iff $\kcal',\alpha',I\models B$.
\item $\alpha'$ is quasi-classical  and 
$\kcal'$ is  $A^\Boxin$-sound at 
$\alpha'$. 
\item If $\kcal$ is semi-perfect, perfect or quasi-classical, then $\kcal'$  
 is so.
\end{enumerate}
 \end{lemma}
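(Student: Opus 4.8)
The plan is to prove the four items in the order 1--2, then 3, then 4, since the later items feed off the earlier ones. The whole argument rests on one preliminary observation about the Smory\'nski operation: adding $\alpha'$ changes neither the atomic valuation nor the accessibility relations among the old nodes, and no node of $K$ gains a new $\R'$-successor. Hence the forcing relation is undisturbed on the old part of the model, i.e. $\kcal,\gamma\Vdash X$ iff $\kcal',\gamma\Vdash X$ for every $\gamma\in K$ and every formula $X$; this is a routine induction on $X$, the only delicate point being $\gamma=\alpha$, where one uses that $\alpha'$ merely copies the valuation of $\alpha$. I will use this freely below.

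Items 1 and 2 I would prove together by a single induction on the complexity of the subformula $B$ of $A^\Boxin$, the two differing only in the atomic base case: for $B$ atomic, item 1 holds because $\alpha'\,V'\,p$ was defined to mean $\alpha\,V\,p$, while item 2 holds because both sides unfold to $I\models p$; the cases $B=\top,\bot$ are immediate. Since local truth evaluates $\wedge,\vee,\to$ classically at the node itself, the connective cases reduce instantly to the induction hypothesis (this is precisely why $\to$ causes no trouble here). The only genuine case is $B=\Box C$. As $B$ is a boxed subformula of $A^\Boxin$ we have $C=F^\Box$ for some $F$, and $\kcal$ is $A^\Boxin$-sound at $\alpha$, i.e. $\kcal,\alpha\models\Box C$ implies $\kcal,\alpha\models C$. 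Unwinding the definition of $\R'$, the $\R'$-successors of $\alpha'$ are exactly $\{\alpha\}\cup(\alpha\!\!\R)$, so, using forcing-preservation on $K$,
\[
\kcal',\alpha'\models\Box C\iff\bigl(\kcal,\alpha\Vdash C\ \text{ and }\ \kcal,\alpha\models\Box C\bigr).
\]
Reading this against the target equivalence $\kcal',\alpha'\models\Box C\iff\kcal,\alpha\models\Box C$, the forward direction is immediate, whereas for the converse I must manufacture the extra conjunct $\kcal,\alpha\Vdash C$ out of $\kcal,\alpha\models\Box C$. This is exactly where the hypotheses are spent: soundness yields $\kcal,\alpha\models C=F^\Box$, and, $\alpha$ being quasi-classical, \Cref{Lem-truth-forcing} promotes $\kcal,\alpha\models F^\Box$ to $\kcal,\alpha\Vdash F^\Box$. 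Turning local truth of the completed formula $F^\Box$ into genuine forcing at $\alpha$ is the crux of the lemma and the only place where quasi-classicality of $\alpha$ and the soundness assumption are genuinely used; item 2 runs identically, the boxed clause of $I$-truth being $I$-independent.

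Item 3 then follows almost formally. For quasi-classicality of $\alpha'$ I would simply compute both successor sets: $(\alpha'\!\!\prec')=(\alpha\!\!\preccurlyeq)=\{\alpha\}\cup(\alpha\!\!\prec)$ and $(\alpha'\!\!\R')=\{\alpha\}\cup(\alpha\!\!\R)$, which coincide precisely because $(\alpha\!\!\prec)=(\alpha\!\!\R)$ by quasi-classicality of $\alpha$. For $A^\Boxin$-soundness of $\kcal'$ at $\alpha'$, item 1 transports each instance $\kcal',\alpha'\models\Box C\to C$ back to $\kcal,\alpha\models\Box C\to C$ (both $\Box C$ and $C$ being subformulae of $A^\Boxin$), and the latter holds by hypothesis.

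Finally item 4 is a direct check, against the explicit clauses defining $\kcal'$, that each structural property survives the addition of $\alpha'$: finiteness is clear; the tree condition is verified directly from the fact that $\alpha'$ is attached as a fresh minimal element; irreflexivity and transitivity of $\R'$ follow from those of $\R$ together with $\alpha'$ having no $\R'$-predecessors; brilliance and neatness are dispatched by cases on whether $\alpha'$ is among the nodes involved; completeness and quasi-classicality of the whole model follow from the successor computations of item 3 at $\alpha'$ and from $\kcal$ at the old nodes. The one point to watch is that $\R'$ stays irreflexive despite $\alpha'\,\R'\,\alpha$: this is safe exactly because $\alpha\not\preccurlyeq'\alpha'$, so brilliance never closes $\alpha'\,\R'\,\alpha$ back up into $\alpha'\,\R'\,\alpha'$.
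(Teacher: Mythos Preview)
Your proof is correct and follows essentially the same approach as the paper's: induction on the complexity of $B$, with the only real content in the boxed case $B=\Box C^\Box$, where $A^\Boxin$-soundness yields $\kcal,\alpha\models C^\Box$ and quasi-classicality of $\alpha$ together with \Cref{Lem-truth-forcing} upgrades this to $\kcal,\alpha\Vdash C^\Box$. You are more explicit than the paper about the preliminary forcing-preservation fact on old nodes and about the successor-set computations for item 3, but the core argument is identical.
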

\begin{proof}
\begin{enumerate}[leftmargin=*]
\item Use induction on the complexity of $B$. 
All cases are trivial, except for 
the case $B=\Box C^\Box$. If $\alpha'\models \Box C^\Box$, 
evidently $\alpha\models \Box C^\Box$ as well. 
If $\alpha\models \Box C^\Box$, then by $A^\Boxin$-soundness,
$\alpha \models C^\Box$, and by 
\Cref{Lem-truth-forcing}, $\alpha\Vdash C^\Box$. Hence 
$\alpha'\models \Box C^\Box$.
\item Similar to first item and left to the reader.
\item The fact that $\alpha'$ is quasi-classical can easily be observed by the definition of $\kcal'$ and left to the reader. The $A^\Boxin$-soundness, is derived from first item. 
\item Easy and left to the reader.
\end{enumerate}
\end{proof}

\section{Reduction of Arithmetical Completenesses}

Let us define $\wit{A;\SFT,\SFU;\Gamma}$ 
as the set of all 
$\Gamma$-substitutions $\sigma$ such that 
 $\SFU\nvdash \sigma_{_\SFT}(A)$. Hence $\PLG(\SFT,\SFU)=
 \{A:\wit{A;\SFT,\SFU;\Gamma}=\emptyset\}$. 
 For an arithmetical substitution $\sigma$, 
 let $\wit{\sigma}$ indicate the propositional closure of 
 $\sigma$, i.e. the smallest set $X$ of 
 arithmetical substitutions with the following conditions:
 \begin{itemize}
 \item $\sigma\in X$,
 \item if $\alpha\in X$,  $\tau$ is some 
 $\lcal_\Box$-substitution and $\SFT$ is some 
 recursively axiomatizable 
 arithmetical theory, 
 then $\alpha_{_\SFT}\!\circ\tau\in X$. 
 \end{itemize}
Note that
the substitution $\alpha_{_\SFT}\!\circ\tau$ is defined on atomic variable $p$ in this way: 
$\alpha_{_\SFT}\!\circ\tau(p):=\alpha_{_\SFT}(\tau(p))$.

Let $\SFV$ be a modal theory. We define  the 
$\Gamma$-arithmetical completeness of $\SFV$ with respect to $\SFT$ relative in $\SFU$ as follows:
$$\AC\equiv \text{$A\in\PLG(\SFT,\SFU)$ implies  $\SFV\vdash A$, for every $A\in\lcal_\Box$}$$
Similarly we define the Arithmetical soundness $\AS$ as follows:
$$\AS\equiv \SFV\vdash A \text{ implies $A\in\PLG(\SFT,\SFU)$, for every $A\in\lcal_\Box$} $$
When  $\Gamma$ is the set of 
all arithmetical sentences, we may omit the subscript 
$\Gamma$ in the notations
$\PLG(\SFT,\SFU)$, $\AC$ and $\AS$.
\\
Note that $\PLG(\SFT,\SFU)=\SFV$ iff $\AC$ and $\AS$.

In the following definition, we formalize  
 reduction of the arithmetical completeness of 
$\SFV$ to  $\SFV'$: 

\begin{definition}\label{Definition-Reduction-PL}
Let $\SFT$ and $\SFT'$ be consistent recursively axiomatizable and 
$\SFU$ and $\SFU'$ be strong enough arithmetical theories. 
Also let $\Gamma$ and $\Gamma'$ be sets 
of arithmetical sentences and $\SFV,\SFV'$ be modal theories.
We say that  $f,\bar{f}$ propositionally reduces $\AC$ to 
$\ACP$, with the notation 
$\AC\leq_{f,\bar{f}}\ACP$, 
if:
\begin{enumerate}
\item[R0.] $f:\lcal_\Box\longrightarrow\lcal_\Box$ and $\bar{f}=\{\bar{f}_{_A}\}_{_A}$ is a family of functions,
\item[R1.]  $\SFV'\vdash f(A)$ implies $\SFV\vdash A$,
\item[R2.] for every $A\in\lcal_\Box$, $\bar{f}_{_A}$ is 
a  function on arithmetical substitutions and
		$$\bar{f}_{_A}:\wit{f(A);\SFT',\SFU';\Gamma'}
		\longrightarrow
		\wit{A;\SFT,\SFU;\Gamma}\quad \text{and for every $\alpha$: }
		\quad 
		\bar{f}_{_A}(\sigma)\in\wit{\sigma}.
		$$
\end{enumerate}
We say that $\AC$ is  
reducible   to
$\ACP$, with the notation 
$$\AC\leq\ACP,$$ if there exists 
some $f,\bar{f}$ such that   
$\AC\leq_{f,\bar{f}}\ACP$.  
\end{definition}
\noindent 
 Following theorems  are what one expect from the reduction:

\begin{theorem}\label{Theorem-reduction-transitive}
The reduction of arithmetical completenesses is a transitive reflexive relation. 
\end{theorem}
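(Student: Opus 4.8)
The plan is to prove the two properties separately, in each case by exhibiting explicit reduction data $f,\bar f$ and verifying the three clauses R0--R2 of \Cref{Definition-Reduction-PL}. For reflexivity I would take $f$ to be the identity on $\lcalb$ and each $\bar{f}_{_A}$ to be the identity on arithmetical substitutions. Then R0 is immediate; R1 is trivial since $f(A)=A$; and R2 holds because the identity sends $\wit{A;\SFT,\SFU;\Gamma}$ to itself, while $\sigma\in\wit{\sigma}$ always holds by the first closure clause in the definition of $\wit{\sigma}$. Hence $\AC\leq_{\mathrm{id},\mathrm{id}}\AC$.

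For transitivity, suppose $\AC\leq_{f,\bar f}\ACP$ and $\ACP\leq_{g,\bar g}\ACPP$. I would compose the data, setting $h:=g\circ f$ and, for each proposition $A$, $\bar{h}_{_A}:=\bar{f}_{_A}\circ\bar{g}_{_{f(A)}}$, that is $\bar{h}_{_A}(\sigma):=\bar{f}_{_A}\bigl(\bar{g}_{_{f(A)}}(\sigma)\bigr)$. Clause R0 is clear. For R1, if $\SFV''\vdash h(A)=g(f(A))$, then R1 for $g,\bar g$ applied at the proposition $f(A)$ gives $\SFV'\vdash f(A)$, and R1 for $f,\bar f$ then gives $\SFV\vdash A$. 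For the target-set half of R2, take $\sigma\in\wit{h(A);\SFT'',\SFU'';\Gamma''}=\wit{g(f(A));\SFT'',\SFU'';\Gamma''}$; R2 for $g,\bar g$ at $f(A)$ produces $\tau:=\bar{g}_{_{f(A)}}(\sigma)\in\wit{f(A);\SFT',\SFU';\Gamma'}$, and R2 for $f,\bar f$ at $A$ then gives $\bar{f}_{_A}(\tau)\in\wit{A;\SFT,\SFU;\Gamma}$, which is exactly $\bar{h}_{_A}(\sigma)$ landing in the required set.

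The step I expect to carry the real content is the remaining half of R2, namely $\bar{h}_{_A}(\sigma)\in\wit{\sigma}$. The two invocations of R2 above only supply $\tau\in\wit{\sigma}$ and $\bar{f}_{_A}(\tau)\in\wit{\tau}$, so to conclude I would first isolate the lemma that the propositional-closure operator is genuinely idempotent: if $\tau\in\wit{\sigma}$ then $\wit{\tau}\subseteq\wit{\sigma}$. This follows by minimality, since $\wit{\sigma}$ is by definition the smallest set containing $\sigma$ and closed under the operation $\alpha\mapsto\alpha_{_\SFT}\circ\rho$; because $\tau\in\wit{\sigma}$, the set $\wit{\sigma}$ is one such set containing $\tau$ and closed under the operation, so the least such set $\wit{\tau}$ is contained in it. Chaining $\bar{f}_{_A}(\tau)\in\wit{\tau}\subseteq\wit{\sigma}$ then closes the verification, and the transitivity follows.

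No single step here is computationally hard, but this idempotence of $\wit{\cdot}$ is the one place where the surface-level composition of the two reductions is not enough and one must appeal to the precise inductive definition of $\wit{\sigma}$; everything else is bookkeeping about composing the maps $f,g$ and the families $\bar f,\bar g$.
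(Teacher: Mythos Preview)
Your proof is correct and follows exactly the same approach as the paper: you take $h:=g\circ f$ and $\bar h_{_A}:=\bar f_{_A}\circ \bar g_{_{f(A)}}$, which is precisely what the paper does. The paper's proof is extremely terse (it simply states these compositions and says ``observe''), so your write-up actually supplies the details the paper omits---in particular the idempotence lemma $\tau\in\wit{\sigma}\Rightarrow\wit{\tau}\subseteq\wit{\sigma}$, which is indeed the only point requiring any thought and which the paper leaves entirely implicit.
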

\begin{proof}
The reflexivity is trivial and left to the reader. For the transitivity, let 
$$\AC\leq_{f,\bar{f}}\ACP\leq_{g,\bar{g}}\ACPP$$
and observe that 
$$\AC\leq_{h,\bar{h}}\ACPP$$
in which $h:=g\circ f$ and $\bar{h}_{_A}:=\bar{f}_{_A}\circ\bar{g}_{_{f(A)}}$.
\end{proof}

\begin{theorem}\label{Theorem-Reduction-1}
 $\AC \leq_{f,\bar{f}}\ACP$ and $\ACP$ implies $\AC$.
\end{theorem}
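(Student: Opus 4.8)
The plan is to chain the three conditions R0--R2 of the reduction together with the hypothesis $\ACP$, reading the witness map $\bar{f}$ contrapositively. The key fact I would lean on throughout is that, by definition, $\PLG(\SFT,\SFU)=\{B:\wit{B;\SFT,\SFU;\Gamma}=\emptyset\}$ and likewise for the primed data, so membership in a relative provability logic is exactly emptiness of the corresponding witness set.

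First I would fix an arbitrary $A\in\lcal_\Box$ with $A\in\PLG(\SFT,\SFU)$; the goal is to derive $\SFV\vdash A$. The heart of the argument is to show that $f(A)\in\PLGP(\SFT',\SFU')$, that is, that $\wit{f(A);\SFT',\SFU';\Gamma'}=\emptyset$. Suppose toward a contradiction that this set contains some $\sigma$. By R2, $\bar{f}_{_A}$ maps $\sigma$ into $\wit{A;\SFT,\SFU;\Gamma}$, so this latter set is nonempty; but that says precisely $A\notin\PLG(\SFT,\SFU)$, contradicting our assumption. Hence $\wit{f(A);\SFT',\SFU';\Gamma'}=\emptyset$, i.e. $f(A)\in\PLGP(\SFT',\SFU')$.

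Now I would invoke the two remaining ingredients in turn. Since $\ACP$ asserts that every member of $\PLGP(\SFT',\SFU')$ is a theorem of $\SFV'$, we obtain $\SFV'\vdash f(A)$. Finally R1 says exactly that $\SFV'\vdash f(A)$ implies $\SFV\vdash A$, which closes the argument; as $A$ was arbitrary, $\AC$ holds.

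There is no genuine obstacle here: the statement is pure definition-unwinding, and the only thing to watch is the direction of the maps. The reduction is set up so that $f$ transports formulas forward ($\lcal_\Box\to\lcal_\Box$) while $\bar{f}$ transports witnesses backward, from witnesses for $f(A)$ to witnesses for $A$; it is precisely this backward direction of $\bar{f}$ that makes the contrapositive on witness sets go through, so the whole task reduces to checking that the arrows in R1, R2 and $\ACP$ compose in the claimed order. Note that the extra clause $\bar{f}_{_A}(\sigma)\in\wit{\sigma}$ of R2 plays no role in this theorem --- it is needed only later, when composing reductions in the manner of \Cref{Theorem-reduction-transitive} --- so it may be ignored here.
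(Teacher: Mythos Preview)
Your proof is correct and is essentially the same as the paper's: the paper argues contrapositively (from $\SFV\nvdash A$ it produces, via R1, $\ACP$, and R2, a witness in $\wit{A;\SFT,\SFU;\Gamma}$), whereas you argue directly, but the logical content is identical. One small remark: your final aside about the clause $\bar f_{_A}(\sigma)\in\wit{\sigma}$ being needed for transitivity is not quite the paper's emphasis --- the paper notes instead that this clause is what prevents the reduction relation from being trivial --- but this does not affect the proof itself.
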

\begin{proof}
Let 
$\SFV\nvdash A$.
Then by R1 in the \Cref{Definition-Reduction-PL},
 $\SFV'\nvdash f(A)$. 
Hence by  $\ACP$,  
there exists some $\Gamma'$-substitution $\sigma$ such that 
$\SFU'\nvdash\sigma_{_{\SFT'}}(f(A))$, or in other words 
$\sigma\in\wit{f(A);\SFT',\SFU';\Gamma'}$. Hence by R2
$\bar{f}_{_A}(\sigma)\in \wit{A;\SFT,\SFU;\Gamma}$,  
which implies  $A\in\PLG(\SFT,\SFU)$.
\end{proof}

\begin{remark}
 Note that the requirement $\bar{f}_{_A}(\alpha)\in\wit{\alpha}$, did not used in the proof of arithmetical 
completeness of $\SFV$ in \Cref{Theorem-Reduction-1}. The only usage of this condition, is to restrict the way one may compute $\bar{f}_{_A}(\alpha)$ from $\alpha$: only propositional substitutions are allowed to be 
composed by $\alpha$ to produce $\bar{f}_{_A}(\alpha)$. If we remove this restriction from the definition, we would have 
a trivial reduction: every arithmetical completeness would be reducible to everyone.
\end{remark}

\begin{corollary}\label{Corollary-Reduction}
 If  $\AC \leq_{f,\bar{f}}\ACP$ and $\ACP$, then we have 
$$\SFV\vdash A\quad \Longleftrightarrow \quad 
\SFV'\vdash f(A)$$
\end{corollary}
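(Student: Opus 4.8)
The plan is to prove the two implications of the biconditional separately, reading condition R2 of \Cref{Definition-Reduction-PL} in its contrapositive form and feeding the result into the assumed completeness $\ACP$. For the direction $\SFV'\vdash f(A)\Longrightarrow\SFV\vdash A$, nothing beyond the reduction is needed: this is exactly condition R1, so I would dispatch it in one line, and it does not even use $\ACP$.

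The substantive direction is $\SFV\vdash A\Longrightarrow\SFV'\vdash f(A)$, which I would obtain by chasing the ``no witness'' information through the two provability logics. Starting from $\SFV\vdash A$, arithmetical soundness of $\SFV$ gives $A\in\PLG(\SFT,\SFU)$, i.e. $\wit{A;\SFT,\SFU;\Gamma}=\emptyset$. Now R2 provides a function $\bar{f}_{A}:\wit{f(A);\SFT',\SFU';\Gamma'}\longrightarrow\wit{A;\SFT,\SFU;\Gamma}$; since a function into the empty set must have empty domain, $\wit{A;\SFT,\SFU;\Gamma}=\emptyset$ forces $\wit{f(A);\SFT',\SFU';\Gamma'}=\emptyset$, that is $f(A)\in\PLGP(\SFT',\SFU')$. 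Finally $\ACP$ converts this membership into $\SFV'\vdash f(A)$, closing the chain $\SFV\vdash A\Rightarrow A\in\PLG(\SFT,\SFU)\Rightarrow f(A)\in\PLGP(\SFT',\SFU')\Rightarrow\SFV'\vdash f(A)$.

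The one delicate ingredient---and the step I expect to be the real obstacle---is the passage $\SFV\vdash A\Rightarrow A\in\PLG(\SFT,\SFU)$, i.e. the arithmetical soundness $\AS$ of $\SFV$. The reduction data $f,\bar{f}$ together with $\ACP$ transport emptiness of witness sets only in the direction $A\in\PLG(\SFT,\SFU)\Rightarrow f(A)\in\PLGP(\SFT',\SFU')$, so an unsound $\SFV$ could prove some $A\notin\PLG(\SFT,\SFU)$ while $\SFV'\nvdash f(A)$, breaking the forward implication; hence soundness of $\SFV$ is exactly the extra input the biconditional needs over \Cref{Theorem-Reduction-1}. For every modal system $\SFV$ occurring in this paper this soundness is available (it is the companion of the completeness statements $\AC$), so I would keep it as a standing assumption, after which the remaining steps are a routine unwinding of R1, R2 and the defining equation $\PLG(\SFT,\SFU)=\{B:\wit{B;\SFT,\SFU;\Gamma}=\emptyset\}$.
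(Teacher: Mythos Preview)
Your argument is correct and in fact more careful than the paper's. For $\Longleftarrow$ you invoke R1, exactly as the paper does (``holds by definition''). For $\Longrightarrow$ the paper simply says ``use \Cref{Theorem-Reduction-1}'', whereas you spell out the chain $\SFV\vdash A \Rightarrow A\in\PLG(\SFT,\SFU) \Rightarrow \wit{f(A);\SFT',\SFU';\Gamma'}=\emptyset \Rightarrow \SFV'\vdash f(A)$, using $\AS$, the contrapositive of R2, and $\ACP$ in that order.

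Your diagnosis of the delicate point is also right: the step $\SFV\vdash A \Rightarrow A\in\PLG(\SFT,\SFU)$ is precisely $\AS$, and it is \emph{not} among the stated hypotheses of the corollary. The paper's one-line appeal to \Cref{Theorem-Reduction-1} does not actually bypass this: that theorem yields $\AC$, i.e.\ $A\in\PLG(\SFT,\SFU)\Rightarrow\SFV\vdash A$, whose contrapositive goes the wrong way for the forward implication here. So the paper's proof, read literally, has the same gap you identified. In every concrete instance in the paper the relevant soundness $\AS$ is available (and routine), so the corollary should be read with that as a standing background assumption---exactly as you propose.
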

\begin{proof}
The direction $\Longleftarrow$ holds 
by definition. For the other way around, use 
\Cref{Theorem-Reduction-1}.
\end{proof}

\begin{remark}\label{Remark-Reduction-3}
Note that $\SFV\vdash A  \Longleftrightarrow  
\SFV'\vdash f(A)$ is not enough for reduction of the 
arithmetical completenesses. This is simply because $f$   
does not have anything to do with the arithmetical 
substitutions. So one may not be able  to translate  
an arithmetical refutation  from
$\PLGP(\SFT',\SFU')$   to a refutation from 
$\PLG(\SFT,\SFU)$, via propositional translations. 
If we remove the condition 3
and replace second item by 
$\SFV\vdash A  \Longleftrightarrow  
\SFV'\vdash f(A)$ in the 
\Cref{Definition-Reduction-PL},  $\AC$ would be 
reducible to every  arithmetical completeness
via the following vicious reduction:
$$f(A):=\begin{cases}
\top &: \text{ if }\SFV\vdash A\\
\bot &: \text{ otherwise}
\end{cases}$$
\end{remark}

\begin{notation}
In the rest of the paper, we are going to characterize several 
provability logics. Our main tool for proving their 
arithmetical completeness is the reduction of arithmetical completenesses \Cref{Theorem-Reduction-1}. The  
notation $\SFV=\PLG(\SFT,\SFU)\leq \PLGP(\SFT',\SFU')=\SFV'$ means that the following items hold: (1) $\AC\leq\ACP$, (2) $\SFV=\PLG(\SFT,\SFU)$, 
(3) $\PLGP(\SFT',\SFU')=\SFV'$.\\
Let  the provability logics $\PLG(\SFT,\SFU)=\SFV$ and  
$ \PLGP(\SFT',\SFU')=\SFV'$ are already  characterized. 
Then the notation 
 $\PLG(\SFT,\SFU)\leq \PLGP(\SFT',\SFU')$ indicates 
 $\AC\leq\ACP$.   
\end{notation} 

\begin{theorem}\label{Theorem-decidability-Reduction}
Let $\PLG(\SFT,\SFU)\leq_{f,\fbar[]} \PLGP(\SFT',\SFU')$ for some 
computable function  $f$. Then the decidability of $\PLGP(\SFT',\SFU')$ implies the 
decidability of $\PLG(\SFT,\SFU)$.
\end{theorem}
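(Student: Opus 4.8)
The plan is to turn the computable translation $f$ together with the equivalence supplied by \Cref{Corollary-Reduction} into a decision procedure, by composing $f$ with the assumed decision algorithm for $\PLGP(\SFT',\SFU')$.

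First I would record that the hypothesis $\PLG(\SFT,\SFU)\leq_{f,\bar f}\PLGP(\SFT',\SFU')$ carries, by the notational convention introduced just before the statement, the standing assumption that both provability logics are already characterized; in particular $\PLGP(\SFT',\SFU')=\SFV'$, so the arithmetical completeness $\ACP$ is available. With $\ACP$ in hand, \Cref{Corollary-Reduction} applies and yields, for every $A\in\lcal_\Box$, the equivalence
$$\SFV\vdash A \quad\Longleftrightarrow\quad \SFV'\vdash f(A).$$
Since $\SFV=\PLG(\SFT,\SFU)$ and $\SFV'=\PLGP(\SFT',\SFU')$, this reads: $A\in\PLG(\SFT,\SFU)$ iff $f(A)\in\PLGP(\SFT',\SFU')$.

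Next I would assemble the algorithm. Given an arbitrary input $A\in\lcal_\Box$, I first compute $f(A)$, which is possible because $f$ is computable by hypothesis, and then run the decision algorithm for $\PLGP(\SFT',\SFU')$ on the input $f(A)$; the latter exists precisely by the assumed decidability of $\PLGP(\SFT',\SFU')$. By the displayed equivalence its output---whether or not $f(A)\in\PLGP(\SFT',\SFU')$---answers whether or not $A\in\PLG(\SFT,\SFU)$. Thus the composition of the total computable map $f$ with the decision procedure for $\PLGP(\SFT',\SFU')$ is a decision procedure for $\PLG(\SFT,\SFU)$, which is the claim.

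I do not anticipate a genuine obstacle here: all the mathematical content is already encapsulated in \Cref{Corollary-Reduction}, and what remains is the routine composition of a computable function with a decision algorithm. The only point that needs to be stated carefully is the justification of the biconditional, which rests on $\ACP$; as noted, this is guaranteed by the convention that $\PLGP(\SFT',\SFU')$ is taken to be already characterized as $\SFV'$, so no extra hypothesis is required.
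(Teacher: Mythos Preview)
Your proposal is correct and takes essentially the same approach as the paper, which simply states that the result is a direct consequence of \Cref{Corollary-Reduction} and the computability of $f$. You have merely spelled out the details of that one-line proof, including the care about why $\ACP$ is available; nothing more is needed.
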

\begin{proof}
Direct consequence of \Cref{Corollary-Reduction} and computability of $f$.
\end{proof}

\subsection{Two special cases}\label{Section-Reduction-tool}
In later applications, always we consider two  simple
 cases of reduction $f,\bar{f}$ 
 (\Cref{Definition-Reduction-PL}) 
 to provide new arithmetical completenesses:
\begin{itemize}
\item Identity: in this case we consider $\bar{f}_{_A}$ as 
identity function and $f(A) $  is  some
propositional translation  like $(.)^\Box$ or $(.)^\Boxin$. 
\item Substitution: in this case, we let $f(A)$ as some
$\lcal_\Box$-substitution, possibly depending on $A$. Also 
$\bar{f}_{_A}(\sigma):=\sigma_{_{\SFT'}}\!\circ\tau$.
\end{itemize}

\section{Relative $\Sigma_1$-provability logics for ${\sf HA}$}
\label{Section-HA}
In this section, we will characterize $\PLS(\HA,\mathbb{N})$, i.e.~the truth 
$\Sigma_1$-provability 
logic of $\HA$,  and $\PLS(\HA,\PA)$, i.e.~the $\Sigma_1$-provability 
logic of $\HA$, relative to $\PA$. We also show that 
$\PLS(\HA,\mathbb{N})$ is hardest among the $\Sigma_1$-provability logics of $\HA$ relative in $\HA,\PA,\nat$. In 
other words: 

\begin{figure}[h]
\[ \begin{tikzcd}[column sep=5em, row sep=4em] 
\tcboxmath{\PLS(\HA,\PA) }
\arrow[r , "(.)^\negout", "\text{\ref{Reduction-Sigma-HA-(PA-to-HA)}}" ']  
&\tcboxmath{\PLS(\HA,\HA) }
\arrow[r , "\Box(.)", "\text{\ref{Reduction-Sigma-HA-(HA-to-nat)}}" '] 
& \tcbhighmath{\PLS(\HA,\nat)}
\end{tikzcd}
\]
\caption[Diagram of  reductions for relative provability logics of $\HA$]{\label{Diagram-HA} Reductions for  relative provability logics of $\HA$}
\end{figure}

\subsection{Kripke Semantic}

\begin{lemma}\label{Lem-iglc-subformule}
For every $A$ we have 
\begin{equation*}
\iglc\vdash A  \quad \Longleftrightarrow \quad \igl\vdash 
\left[\Boxdot\bigwedge_{E\in {\sf sub}(A)} (E\to \Box E)\right]
\to A
\end{equation*}
\end{lemma}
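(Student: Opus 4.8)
The statement is a biconditional, and I would prove the two directions separately, with the right-to-left direction routine and the left-to-right direction carrying the real content. Throughout write $\psi:=\bigwedge_{E\in{\sf sub}(A)}(E\to\Box E)$, so that the bracketed antecedent is $\Boxdot\psi=\psi\wedge\Box\psi$.

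For the direction ``$\igl\vdash\Boxdot\psi\to A$ gives $\iglc\vdash A$'' I would simply check that $\iglc\vdash\Boxdot\psi$ and apply modus ponens (using $\iglc\supseteq\igl$). Here each conjunct $E\to\Box E$ of $\psi$ is an instance of the completeness axiom $\underline{\sf C}$, and each $\Box(E\to\Box E)$ is an instance of $\overline{\sf C}$; since $\Box$ commutes with the finite conjunction in ${\sf K}$, this yields $\iglc\vdash\Box\psi$ and hence $\iglc\vdash\Boxdot\psi$.

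For the hard direction I plan to factor through the box translation $(\cdot)^\Box$ via two auxiliary facts. First, an \emph{unrestricted} translation fact: $\iglc\vdash A$ implies $\igl\vdash A^\Box$, which I would prove by induction on the $\iglc$-derivation. The $\igl$-axiom case is \Cref{Lemma-3}; for an instance $B\to\Box B$ of $\underline{\sf C}$ one computes $(B\to\Box B)^\Box=\Boxdot(B^\Box\to\Box B^\Box)$ and derives it in $\igl$ from ${\sf iK4}\vdash B^\Box\to\Box B^\Box$ (a consequence of \Cref{Lemma-BoxdotABox-ABox}) together with necessitation; the $\overline{\sf C}$ case and the modus ponens case then follow smoothly because $(B\to C)^\Box=\Boxdot(B^\Box\to C^\Box)$. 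Second, a \emph{localized} version of \Cref{Lemma-A-ABoxin}: $\igl\vdash\Boxdot\psi\to(A\leftrightarrow A^\Box)$, in which only the subformula instances collected in $\psi$ are consumed. Granting both, the hard direction is immediate: from $\iglc\vdash A$ I get $\igl\vdash A^\Box$, hence $\igl\vdash\Boxdot\psi\to A^\Box$, and composing with the backward half of the localized equivalence gives $\igl\vdash\Boxdot\psi\to A$.

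The localized equivalence is where the care is needed, and I expect it to be the main obstacle. I would prove, by induction on $E\in{\sf sub}(A)$, the pair of claims (U) $\igl\vdash(\psi\wedge\Box\psi)\to(E\leftrightarrow E^\Box)$ and (B) $\igl\vdash\Box\psi\to\Box(E\leftrightarrow E^\Box)$. The key structural point is that (B) follows from (U) uniformly: necessitate the $\igl$-theorem furnished by (U), distribute $\Box$, and absorb the resulting $\Box\Box\psi$ using the axiom $\Box\psi\to\Box\Box\psi$. Then (U) runs through the complexity induction, with the atomic and implication cases consuming the unboxed conjunct $E\to\Box E$ of $\psi$ and the boxed case $E=\Box B$ consuming (B) at $B$ — which is exactly why the antecedent must be $\Boxdot\psi$ and not merely $\psi$. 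The delicate bookkeeping is to ensure that the step at $E$ appeals to (U) and (B) only at proper subformulae of $E$, so that no instance $E'\to\Box E'$ with $E'\notin{\sf sub}(A)$ is ever invoked; this subformula discipline is precisely what a naive induction directly on the $\iglc$-proof of $A$ would violate, which is why routing through $(\cdot)^\Box$ is essential.
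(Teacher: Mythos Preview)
Your argument is correct, but it is a genuinely different proof from the paper's. The paper argues the hard direction contrapositively and semantically: assuming $\igl\nvdash\Boxdot\psi\to A$, it builds a canonical-style Kripke model out of $X$-saturated sets with respect to $\igl+\Boxdot\psi$ (where $X=\{B,\Box B:B\in{\sf sub}(A)\}$), verifies that this model is perfect (in particular $\sqsubset\subseteq\prec$, using the instances $E\to\Box E$ in $\psi$ to push membership along $\sqsubset$), and then invokes soundness of $\iglc$ for perfect models to conclude $\iglc\nvdash A$. Your route is purely syntactic: you factor through the global fact $\iglc\vdash A\Rightarrow\igl\vdash A^\Box$ and a localized version of \Cref{Lemma-A-ABoxin}, namely $\igl\vdash\Boxdot\psi\to(A\leftrightarrow A^\Box)$, proved by a simultaneous structural induction on subformulae establishing both the unboxed equivalence (U) and its necessitated form (B). The paper's approach recycles the canonical-model machinery already in place for the completeness of $\iglc$ (\Cref{Theorem-Propositional Completeness LC}), so it slots naturally into the surrounding narrative; your approach is more elementary in that it never touches Kripke semantics, and it makes transparent \emph{why} only the subformula instances of $\underline{\sf C}$ are needed --- they are exactly the ones consumed in the inductive unwinding of $A\leftrightarrow A^\Box$.
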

\begin{proof}
For the simplicity of notations, in this proof,
 let $$\varphi:=
\Boxdot\bigwedge_{E\in {\sf sub}(A)} (E\to \Box E)$$ and 
$\vdash$ indicates derivablity in $\iGL+\varphi$.\\
One side is trivial. For the other way around, assume that 
$ \igl\nvdash \varphi \to A$. 
We will construct some  perfect Kripke model 
$\kcal=(K,\R,\preccurlyeq,V)$  
such that 
$\kcal,\alpha\nVdash A$, 
which by soundness of $\iglc$ for finite brilliant models with 
$\R\subseteq \prec$,
we have 
the desired result. The proof is almost identical to the proof 
of \Cref{Theorem-Propositional Completeness LC} in 
\cite[Theorem 4.26]{Sigma.Prov.HA}, but to be self-contained, 
we repeat it here.

Let ${\sf Sub}(A)$ be the set of sub-formulae of
$A$. Then define  $$X:=\{B,\Box B \ | \ B\in {\sf Sub}(A)\}$$ It is
obvious that $X$ is a finite adequate set. We define
$\mathcal{K}=(K,\hlf{\preccurlyeq},\R,V)$ as follows. Take $K$ as the set of
all $X$-saturated sets with respect to $\igl+\varphi$, 
and $\hlf{\preccurlyeq}$ is the
subset relation over $K$. Define $\alpha\R \beta$ iff for all $\Box
B\in X$, $ \Box B\in \alpha$ implies  $B\in \beta$, and also there
exists some $\Box C\in \beta\setminus \alpha$. Finally define $\alpha V
p$ iff $p\in \alpha$,  for atomic $p$. \\
It only remains to show that 
$\mathcal{K}$ is a finite brilliant
Kripke model with $\R\subseteq \prec$ which refutes $A$. 
To this end, we first show by
induction on $B\in X$ that $B\in \alpha$ iff 
$\alpha\Vdash B$, for each
$\alpha\in K$. The only non-trivial case is $B=\Box C$. Let
$\Box C\not\in \alpha$. 
We must show $\alpha\nVdash \Box C$. The other
direction is easier to prove and we leave it to reader. Let
$\beta_0:=\{D\in X\ |\ \alpha\vdash\Box D\}$. 
If $\beta_0,\Box C\vdash
C$, since by definition of $\beta_0$, 
we have $\alpha\vdash\Box \beta_0$ and
hence by L\"{o}b's axiom,  $\alpha\vdash \Box C$, which is in contradiction with
$\Box C\not\in \alpha$. 
Hence $\beta_0,\Box C\nvdash C$ and so there
exists some $X$-saturated set $\beta$ such that 
$\beta\nvdash C$,
$\beta\supseteq \beta_0\cup\{\Box C\}$. 
Hence $\beta\in K$ and $\alpha\R \beta$.
Then by \hl{the induction hypothesis}, 
$\beta\nVdash C$ and hence $\alpha\nVdash
\Box C$.

Since $\igl+\varphi\nvdash A$, by \Cref{Lemma-saturation}, there exists some
$X$-saturated set $\alpha\in K$ such that $\alpha\nvdash A$, and hence by
the above argument we have $\alpha\nVdash A$.

$\mathcal{K}$ trivially satisfies all the properties of 
finite brilliant 
Kripke model with $\R\,\subseteq\,\prec$.
 As a sample, we show that why $\R\,\subseteq\, \prec$ holds.
Assume $\alpha\R \beta$ and let $B\in \alpha$. If $B=\Box C$ for some $C$,
then by definition, $C\in \beta$ and since $C\to\Box C$ is a conjunct in $\varphi$, we have 
$\beta\vdash \Box C$ and we are
done. So assume  $B$ is not a boxed formula. Then by definition
of $X$, we have $\Box B\in X$ and  since $B\to \Box B$ is a 
conjunct in $\varphi$, 
we have  $\alpha\vdash\Box B$ and hence by definition of
$\R$, it is the case that $B\in \beta$. This shows $\alpha\subseteq \beta$ and
hence $\hlf{\alpha\preccurlyeq \beta}$. But $\alpha$ is not equal to $\beta$, because $\alpha\R \beta$
implies existence of some $\Box C\in \beta\setminus \alpha$. Hence $\hlf{\alpha\prec \beta}$, as desired.
\end{proof}

\begin{lemma}\label{Lemma-2}
For any  proposition $A$, if 
$\iglc\vdash A^\Box$ then
${\sf iGL} + {\sf CP_a}+ \Box {\sf CP}\vdash A^\Box$.
\end{lemma}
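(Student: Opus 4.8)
The plan is to reduce the statement to a translation of entire derivations. Concretely, I would first prove the auxiliary claim that for \emph{every} modal proposition $C$, $\iglc\vdash C$ implies ${\sf iGL}+{\sf CP_a}+\Box{\sf CP}\vdash C^\Box$, by induction on the $\iglc$-derivation of $C$ (recall $\iglc={\sf iGL}+{\sf CP}$, and that each system here is closed only under modus ponens). Granting the claim, instantiating $C:=A^\Box$ gives ${\sf iGL}+{\sf CP_a}+\Box{\sf CP}\vdash (A^\Box)^\Box$, and the argument is completed by the idempotency ${\sf iK4}\vdash (A^\Box)^\Box\leftrightarrow A^\Box$, which lets us replace $(A^\Box)^\Box$ by $A^\Box$ inside the target system.

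For the base cases I split according to the axioms of $\iglc$. When $C$ is a theorem of ${\sf iGL}$, \Cref{Lemma-3} immediately gives ${\sf iGL}\vdash C^\Box$, which holds a fortiori in the larger system. When $C$ is the instance $B\to\Box B$ of $\underline{\sf CP}$, unfolding the definition gives $C^\Box=\Boxdot(B^\Box\to\Box B^\Box)$; here \Cref{Lemma-BoxdotABox-ABox} supplies ${\sf iK4}\vdash B^\Box\leftrightarrow\Boxdot B^\Box$, whence ${\sf iK4}\vdash B^\Box\to\Box B^\Box$, and necessitation of this ${\sf iK4}$-theorem yields the outer box, so that $\Boxdot(B^\Box\to\Box B^\Box)$ is already a theorem of ${\sf iGL}$. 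The instance $\Box(B\to\Box B)$ of $\overline{\sf CP}$ follows by one more necessitation. The modus ponens step is routine: from the inductive hypotheses $\vdash D^\Box$ and $\vdash(D\to C)^\Box=\Boxdot(D^\Box\to C^\Box)$ one extracts the conjunct $D^\Box\to C^\Box$ and concludes $\vdash C^\Box$.

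It remains to establish the idempotency ${\sf iK4}\vdash (A^\Box)^\Box\leftrightarrow A^\Box$, which I would prove by a direct induction on the complexity of $A$. The atomic and the $\wedge,\vee$ cases are immediate; in the boxed and implication cases one repeatedly collapses $\Box\Boxdot E$ to $\Box E$ and $(E\wedge\Box E)^\Box$ to $\Boxdot E$ using \Cref{Lemma-BoxdotABox-ABox}. Alternatively, one may combine \Cref{Remark-10} (which gives ${\sf iK4}\vdash (A^\Box)^\Box\leftrightarrow ((A^\Box)^\Boxout)^\Boxin$) with the fact isolated inside the proof of \Cref{Lemma-4-2} that ${\sf iK4}\vdash (F^\Box)^\Boxout\leftrightarrow F^\Box$.

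I expect this last idempotency to be the only delicate point, since it is exactly where the doubly box-translated formula must be shown to collapse back to the singly translated one; the inductive base cases are short consequences of \Cref{Lemma-3} and \Cref{Lemma-BoxdotABox-ABox}. I would also note that the auxiliary claim in fact goes through inside the ${\sf iGL}$-fragment alone, so the principles ${\sf CP_a}$ and $\Box{\sf CP}$ of the target system are comfortably available and the stated weaker conclusion follows with room to spare; keeping the entire argument phrased inside ${\sf iGL}+{\sf CP_a}+\Box{\sf CP}$ is then merely a matter of matching the form of the statement to the neighbouring lemmas.
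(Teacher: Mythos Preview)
Your proof is correct and follows a genuinely different route from the paper's. The paper proceeds via \Cref{Lem-iglc-subformule} (whose proof relies on the Kripke completeness of $\iglc$): from $\iglc\vdash A^\Box$ it extracts
\[
\iGL\vdash \Boxdot\!\left(\bigwedge_{E\in X} (E\to\Box E)\right)\to A^\Box
\]
for a finite $X\subseteq{\sf sub}(A^\Box)$, then applies the $(.)^\Boxout$ translation (\Cref{Lemma-3}) and uses \Cref{Lemma-4-2} to discharge the translated hypothesis inside ${\sf iGL}+{\sf CP_a}+\Box{\sf CP}$. Your approach is purely syntactic: an induction on $\iglc$-derivations showing $\iglc\vdash C\Rightarrow{\sf iGL}\vdash C^\Box$, followed by the idempotency ${\sf iK4}\vdash (A^\Box)^\Box\leftrightarrow A^\Box$. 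Both the auxiliary claim and the idempotency are correct as you outline them (the latter goes through by a straightforward induction on $A$, using $\Boxdot\Boxdot X\leftrightarrow\Boxdot X$ in ${\sf iK4}$).

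Your argument is more elementary in that it avoids the detour through Kripke semantics hidden in \Cref{Lem-iglc-subformule}, and it is in fact slightly stronger: it lands in ${\sf iGL}$ rather than ${\sf iGL}+{\sf CP_a}+\Box{\sf CP}$, so the extra axioms in the target system are never used. The paper's approach, on the other hand, is modular with respect to the neighbouring lemmas and the same template (localize the non-${\sf iGL}$ axioms to subformulae, then translate) is reused later, e.g.\ in \Cref{Lem-60}. One small remark: when you invoke necessitation on an ${\sf iK4}$-theorem, recall that in this paper the systems are officially closed only under modus ponens, but necessitation is \emph{admissible} in ${\sf iK4}$ and ${\sf iGL}$ (their theorem sets coincide with the usual necessitation-closed versions), so the step is legitimate.
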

\begin{proof}
Let $\iglc\vdash A$. 
Hence by \Cref{Lem-iglc-subformule}
for some finite set $X$ of
subformulas of $A^\Box$ we have 
$$\iGL\vdash \Boxdot\left(\bigwedge_{E\in X} E\to \Box E\right)\to A^\Box$$
 \Cref{Lemma-3} implies 
 $$\iGL\vdash \Box\left(\bigwedge_{E\in X} E\to \Box E\right)\wedge\left(\bigwedge_{E\in X} \Boxdot(E^\Boxout\to \Box E)\right)\to A^\Box$$
By  \Cref{Lemma-4-2} we have  $\iGL + {\sf CP_a}+\Box\CP\vdash A^\Box$. 
\end{proof}

\begin{theorem}\label{Theorem-Propositional Completeness ihatglchat}
$\ihatglchat$ is sound and 
complete for local truth at
quasi-classical nodes in perfect Kripke
models. More precisely,  we have 
$\ihatglchat\vdash A$ iff $\kcal,\alpha\models A$ for every  
perfect Kripke model $\kcal$ and the quasi-classical node $\alpha$.
\end{theorem}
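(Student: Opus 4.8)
The plan is to establish soundness and completeness separately. Throughout I will lean on the elementary observation that local truth and forcing agree on boxed formulas---both $\kcal,\alpha\models\Box B$ and $\kcal,\alpha\Vdash\Box B$ unfold to ``$\kcal,\beta\Vdash B$ for every $\beta$ with $\alpha\R\beta$''---so the only discrepancy between $\models$ and $\Vdash$ at a node is that $\models$ reads the propositional connectives classically. In particular, local truth at a quasi-classical node is classical modal evaluation whose boxes consult the forcing values on the $\R$-successors.

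For soundness I would induct on $\ihatglchat$-derivations, checking each schema at an arbitrary quasi-classical node $\alpha$ of a perfect model and noting that modus ponens preserves local validity. Because $\models$ is classical on $\wedge,\vee,\ra$, every theorem of $\IPC_\Box$ and every instance of $\underline{\sf P}$ holds at once. For $\underline{\sf K}$ and $\underline{\sf 4}$ I would use transitivity of $\R$ and the agreement of $\models$ with $\Vdash$ on boxes, and for $\underline{\sf Lob}$ the fact that $\R$ is transitive and well-founded on a finite tree. Perfectness is what drives the completeness schemas: every node of a perfect model is complete, i.e.\ $(\beta\!\!\R)\subseteq(\beta\!\!\prec)$, so completeness together with persistence of forcing gives $\beta\Vdash D\ra\Box D$ at each $\beta$, hence $\overline{\sf C}$; and $\alpha$ being quasi-classical (so complete) yields the unboxed instance $p\ra\Box p$ of ${\sf C_a}$ directly, the boxed part $\overline{\sf C_a}$ being a special case of $\overline{\sf C}$.

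The substance is completeness: given $\ihatglchat\nvdash A$, I must build a perfect model with a quasi-classical node refuting $A$ locally. A naive canonical model over $\ihatglchat$-saturated sets will not do, because $\underline{\sf P}$ forces distinct saturated sets to be $\preccurlyeq$-incomparable, so all nodes come out classical rather than quasi-classical. Instead I would build a two-level model. First, reading $A$ as a classical propositional combination of its atoms and of its boxed subformulas $\Box B_1,\dots,\Box B_n$, I would extract (via the saturation of \Cref{Lemma-saturation}) a classical assignment falsifying $A$ and compatible with the modal constraints. Next I would realize the demanded box-values by a single perfect forcing model $\mathcal{M}$ on the intended $\R$-successors, obtained from the completeness of $\iglc$ (\Cref{Theorem-Propositional Completeness LC} and the canonical construction in \Cref{Lem-iglc-subformule}), so that $B_i$ is forced throughout $\mathcal{M}$ exactly when $\Box B_i$ is demanded true. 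Finally I would adjoin a fresh root $\alpha$ beneath $\mathcal{M}$, fix its atoms by the assignment, and set both $(\alpha\!\!\R)$ and $(\alpha\!\!\prec)$ to be all of $\mathcal{M}$; brilliance, neatness, transitivity and completeness are then routine to check, $\alpha$ is quasi-classical, and a truth lemma gives $\kcal,\alpha\not\models A$. Adjoining this root is the same manoeuvre as the Smory\'nski operation of \Cref{Lemma-10}, whose items already record that it preserves local truth and perfectness and produces a quasi-classical node.

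I expect the crux to be exactly the reconciliation in the completeness step: one must guarantee that the classical falsifying assignment at the root and the intuitionistic forcing model $\mathcal{M}$ below it can be chosen \emph{simultaneously}---that the set of boxed formulas the assignment wants true is precisely realizable as the set forced everywhere in one perfect $\iglc$-model. This is where the translation bridges earn their keep: \Cref{Lem-truth-forcing} and \Cref{Corol-truth-forcing,Corol-truth-forcing2} convert forcing statements at quasi-classical nodes into local- and classical-truth ($\models_c$) statements through $(.)^\Box$, $(.)^\Boxin$ and $(.)^\Boxout$, while \Cref{Lemma-2} ensures that whatever $\iglc$ forces is already reflected in the $\overline{\sf C}+{\sf C_a}$ fragment, closing the loop between what the root demands and what $\mathcal{M}$ can supply.
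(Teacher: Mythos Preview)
Your soundness sketch is fine and matches what the paper leaves to the reader. The divergence is in completeness, where your two-layer plan manufactures a difficulty the paper simply sidesteps.

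The paper does not glue a fresh root beneath an $\iglc$-model. It writes $A$ in CNF as $\bigwedge_i(E_i\to F_i)$ with each $E_i$ a conjunction and each $F_i$ a disjunction of atoms and boxed formulas, picks an underivable conjunct $E\to F$, and notes that in $\ihatglchat$ one has $(E\to F)^\Box\leftrightarrow\Boxdot(E\to F)$ (since ${\sf C_a}$ gives $p\leftrightarrow\Boxdot p$ and $\overline{\sf C}$ gives $\Box B\leftrightarrow\Box B^\Box$); hence $\ihatglchat\nvdash(E\to F)^\Box$. This is a statement about a formula of shape $C^\Box$, so \Cref{Lemma-2} (contraposed) yields $\iglc\nvdash(E\to F)^\Box$, and \Cref{Theorem-Propositional Completeness LC} hands you a single perfect model $\kcal$ with $\kcal,\alpha\nVdash(E\to F)^\Box$. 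By $\iglc$-soundness $\kcal,\alpha\nVdash E\to F$, so some $\beta\succcurlyeq\alpha$ forces $E$ and refutes $F$; since $E,F$ are built from atoms and boxes, forcing and local truth agree on them at $\beta$, whence $\kcal,\beta\not\models A$. Finally $\beta$ is made quasi-classical by trimming $(\beta\!\!\prec)$ down to $(\beta\!\!\R)$, which leaves local truth intact---this is the paper's opening remark about changing $\preccurlyeq$-accessible nodes.

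So the witnessing node already sits inside the $\iglc$-countermodel; nothing is adjoined. Your plan instead asks for a classical assignment and, separately, a perfect $\mathcal{M}$ realizing the prescribed box-pattern, then glues them. But you never say which $\iglc$-formula to refute to obtain such an $\mathcal{M}$, and there is a genuine constraint you omit: in a perfect (hence complete) model every atom set true at the new root must persist into all of $\mathcal{M}$, so $\mathcal{M}$ is not freely chosen after the assignment is fixed. Moreover the Smor\'ynski operation of \Cref{Lemma-10} does not do what you need---it copies the valuation of an \emph{existing} node $\alpha$ and produces a quasi-classical $\alpha'$ only when $\alpha$ was already quasi-classical; it does not let you impose an arbitrary valuation at the new root. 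Once you try to name the right target formula for $\iglc$ you will find it is $(E\to F)^\Box$, and the node you seek is already present in its countermodel; the gluing step evaporates and you are left with the paper's argument.
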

\begin{proof}
The soundness part easily derived by the soundness of 
$\iglc$ and left to the reader. \\
Since local truth at $\alpha$ is not affected by changing the
 set of $\preccurlyeq$-accessible nodes from $\alpha$, it is enough to prove the completeness part only for the perfect Kripke models.
Let
$\ihatglchat\nvdash A$.  Let $A'$ be 
a boolean equivalent of $A$ which is a conjunction of implications 
$E\to F $
in which $E$ is a conjunction of a set of atomics or boxed propositions 
and $F$ is a disjunction of atomics or boxed proposition. 
Evidently such $A'$ exists for every $A$.
Hence $\ihatglchat\nvdash A'$. Then there must be some 
conjunct $E\to F$ of 
$A'$ such that $\ihatglchat\nvdash (E\to F)^\Box$, $E$ is a conjunction of atomic and boxed propositions
 and $F$ is a disjunction of atomic and  
boxed propositions. 
Hence $\iGL+{\sf CP_a}+\Box\CP\nvdash (E\to F)^\Box$
 and by \Cref{Lemma-2}
we have $\iglc\nvdash (E\to F)^\Box$. 
By 
\Cref{Theorem-Propositional Completeness LC}, 
there exists some 
 perfect Kripke model  
$\kcal=(K,\preccurlyeq,\R,V)$ 
 such that $\kcal,\alpha\nVdash (E\to F)^\Box$ 
for some $\alpha\in K$. Since $\iglc$ is sound for $\kcal$, we 
have $\kcal,\alpha\nVdash E\to F$. 
 Hence there exists some 
$\beta\succcurlyeq \alpha$ such that $\kcal,\beta\Vdash E$ and $\kcal,\beta\nVdash F$. Then by definition of local truth we have $\kcal,\beta\models E$ and $\kcal,\beta\not\models F$. Then 
$\kcal,\beta\not\models E\to F$. Hence $\kcal,\beta\not\models A$, as desired.
\end{proof}
\begin{corollary}\label{Corollary-decidability-ihatglchat}
$\ihatglchat$ is decidable.
\end{corollary}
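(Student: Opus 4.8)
The plan is to read off decidability from the finite model property that is implicit in \Cref{Theorem-Propositional Completeness ihatglchat}. First I would note that $\ihatglchat$ is recursively axiomatized, being generated from a fixed list of axiom schemas by modus ponens, so the set of its theorems is recursively enumerable and it remains only to enumerate the non-theorems effectively. By \Cref{Theorem-Propositional Completeness ihatglchat} we have $\ihatglchat\nvdash A$ exactly when there is a perfect Kripke model $\kcal$ and a quasi-classical node $\alpha$ with $\kcal,\alpha\not\models A$, so the task reduces to bounding the size of such a refuting model computably in $A$.

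For the size bound I would inspect the refuting model produced in the completeness argument. Tracing back through \Cref{Theorem-Propositional Completeness ihatglchat}, the refutation is obtained from the perfect model supplied by \Cref{Theorem-Propositional Completeness LC} applied to $(E\to F)^\Box$ for a conjunct $E\to F$ of a boolean normal form $A'$ of $A$; and that model, as in \Cref{Lem-iglc-subformule}, is built from the $X$-saturated sets over the finite adequate set $X=\{B,\Box B\mid B\in {\sf Sub}((E\to F)^\Box)\}$. Since the conjuncts $E\to F$ range over atoms and boxed subformulas drawn from a fixed finite set determined by $A$, the set $X$ is a subset of a fixed finite set, so the model has at most $2^{|X|}$ nodes and $|X|$ is bounded by a computable function of the length of $A$. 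This yields an effective finite model property: whenever $A$ is not a theorem, it is already refuted, in the sense of local truth at a quasi-classical node, in a perfect model whose number of nodes does not exceed an explicitly computable bound $N(A)$.

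Decidability then follows by the usual search. The decision procedure, on input $A$, would enumerate all Kripke models on node sets of size at most $N(A)$ over the atoms occurring in $A$, discard those that are not perfect, and for each surviving model test every quasi-classical node $\alpha$ for $\kcal,\alpha\models A$. All of these are decidable conditions on a finite structure: the defining clauses for perfect models (finite tree frame, brilliance, neatness, irreflexive transitive $\R$, and completeness) and for a quasi-classical node ($(\alpha\!\!\prec)=(\alpha\!\!\R)$) are checkable, and local truth $\kcal,\alpha\models A$ is computed directly from its inductive definition. By the effective finite model property, $\ihatglchat\vdash A$ holds iff no model--node pair in this finite search refutes $A$.

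An entirely equivalent route, closer in spirit to \Cref{Corollary-decidability of iglc}, is to bypass the model search and use the reduction inside the proof of \Cref{Theorem-Propositional Completeness ihatglchat} directly: compute the boolean normal form $A'$ and its conjuncts $E\to F$, decide each question $\iglc\vdash (E\to F)^\Box$ by the decidability of $\iglc$ (\Cref{Corollary-decidability of iglc}), and pass between $\iglc$ and $\ihatglchat$ via \Cref{Lemma-2}. The only point needing care---and the closest thing to an obstacle---is the verification that the model-size bound extracted from the completeness construction is genuinely computable and that ``perfect'' and ``quasi-classical'' are decidable predicates on finite models; both are immediate once the definitions are unwound, so no real difficulty remains beyond bookkeeping.
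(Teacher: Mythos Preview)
Your proposal is correct, and the second route you describe is exactly the paper's argument: the paper simply says the corollary is a ``direct consequence of the proof of \Cref{Theorem-Propositional Completeness ihatglchat} and decidability of $\iglc$ (\Cref{Corollary-decidability of iglc}),'' i.e.\ compute the boolean normal form $A'$, reduce each conjunct to an $\iglc$-question via \Cref{Lemma-2}, and invoke the decidability of $\iglc$. Your first route---extracting an explicit model-size bound from the saturated-set construction and searching---is a sound and more self-contained alternative that avoids relying on \Cref{Corollary-decidability of iglc} as a black box; it buys a standalone algorithm at the cost of a little extra bookkeeping, while the paper's route is terser but leans on the earlier decidability result.
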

\begin{proof}
Direct consequence of the 
 proof of \Cref{Theorem-Propositional Completeness ihatglchat} and decidability of 
 $\iglc$ (\Cref{Corollary-decidability of iglc}).
\end{proof}

\begin{theorem}\label{theorem-Kripke-completeness-ihatglchats}
$\ihatglchats\vdash A^\Boxin$ iff 
$\kcal,\alpha\models A^\Boxin$ for every 
perfect Kripke
models $\kcal$ and quasi-classical $A^\Boxin$-sound 
nodes $\alpha$.
\end{theorem}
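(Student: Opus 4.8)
The plan is to prove the soundness and completeness of $\ihatglchats$ for local truth at quasi-classical $A^\Boxin$-sound nodes in perfect Kripke models, mirroring the structure of the proof of \Cref{Theorem-Propositional Completeness ihatglchat} but now tracking the reflection/soundness principle carried by the underlined $\underline{\sf S^*}$ component in $\ihatglchats$.

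\medskip\noindent\textbf{Soundness.} First I would verify that each axiom schema of $\ihatglchats$ is valid under local truth at quasi-classical $A^\Boxin$-sound nodes. The $\iglc$-part and the completeness-principle part proceed exactly as in the soundness half of \Cref{Theorem-Propositional Completeness ihatglchat}, using that local truth at $\alpha$ is unaffected by the $\preccurlyeq$-successors of $\alpha$. The new ingredient is the complete reflection axiom $\underline{\sf S^*}=\Box A\to A^\Box$: here I would invoke the $A^\Boxin$-soundness hypothesis, which says $\kcal,\alpha\models\Box B\to B$ for every boxed subformula $\Box B$ of $A^\Boxin$, together with \Cref{Lem-truth-forcing} to pass between $\Vdash$ and $\models$ on $(.)^\Box$-formulas at quasi-classical nodes. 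The point is that $A^\Boxin$-soundness is precisely the semantic condition that makes an $\underline{\sf S}$-type reflection principle valid \emph{at the distinguished node}, and \Cref{Corol-truth-forcing} lets me replace $\models$ by $\models_c$ or $\Vdash$ as needed to close the verification of $(\Box B\to B^\Box)$.

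\medskip\noindent\textbf{Completeness.} For the converse, assume $\ihatglchats\nvdash A^\Boxin$. Following the pattern of \Cref{Theorem-Propositional Completeness ihatglchat}, I would first reduce to a single implicational conjunct: rewrite $A^\Boxin$ (or rather $A$) into a boolean-equivalent conjunction of implications $E\to F$ with $E$ a conjunction and $F$ a disjunction of atomic/boxed propositions, find a conjunct that is still unprovable, and then work with $(E\to F)^\Box$. The goal is to deduce from $\ihatglchats\nvdash(E\to F)^\Boxin$ (via the analogue of \Cref{Lemma-2} adapted to include the reflection axiom) that the corresponding $(.)^\Box$-translation is unprovable in a suitable base logic, so that \Cref{Theorem-Propositional Completeness LC} yields a perfect Kripke model refuting it. The delicate extra task is ensuring the refuting node $\alpha$ can be taken quasi-classical \emph{and} $A^\Boxin$-sound; I expect to use the Smory\'nski operation of \Cref{Sec-Smorynski-operation} together with \Cref{Lemma-10} to adjoin a fresh node $\alpha'$ that is quasi-classical, preserves local $\models$-truth of all subformulas of $A^\Boxin$, and inherits $A^\Boxin$-soundness.

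\medskip\noindent\textbf{Main obstacle.} The hard part will be the bookkeeping around the reflection axiom in the completeness direction: unlike the $\ihatglchat$ case, $\ihatglchats$ carries the soundness principle $\underline{\sf S^*}$, so the refuting Kripke model must not merely be perfect and quasi-classical but must genuinely witness $A^\Boxin$-soundness at the distinguished node, which is a nontrivial semantic constraint rather than a frame condition. I anticipate that an unadorned saturated-set construction (as in \Cref{Lem-iglc-subformule}) will produce a node that fails $A^\Boxin$-soundness, and the repair is exactly the Smory\'nski node-addition: \Cref{Lemma-10}(3) guarantees that $\alpha'$ is quasi-classical and $A^\Boxin$-sound while \Cref{Lemma-10}(1) guarantees that it forces the same subformulas, so the refutation transfers. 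Verifying that the reflection axiom really is respected at $\alpha'$, and that no proof in $\ihatglchats$ could have been overlooked in the reduction to a single conjunct, is where the care is needed.
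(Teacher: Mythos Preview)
Your proposal has the two directions essentially swapped in difficulty, and the soundness argument has a genuine gap.

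For soundness you plan to verify each axiom schema at a quasi-classical $A^\Boxin$-sound node. This fails for the reflection schema: $\ihatglchats$ contains \emph{all} instances $\Box B\to B$, whereas $A^\Boxin$-soundness at $\alpha$ only guarantees $\kcal,\alpha\models\Box B\to B$ for those $\Box B$ that are subformulas of $A^\Boxin$. A proof of $A^\Boxin$ in $\ihatglchats$ may well pass through reflection instances outside ${\sf Sub}(A^\Boxin)$, so induction on proofs breaks. The paper handles soundness contrapositively: given a countermodel $(\kcal,\alpha)$, it must show that for every \emph{finite} set $\Gamma$ of reflection instances one has $\ihatglchat\nvdash\bigwedge_{B\in\Gamma}(\Box B\to B)\to A^\Boxin$. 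This is where the Smory\'nski iteration $\kcal^{(i)}$ is used, together with a pigeonhole-style counting argument on the sets $\{B\in\Gamma:\alpha_i\models\Box B\wedge B\}$ and $\{B\in\Gamma:\alpha_i\models\Box B\wedge\neg B\}$, to find an iterate at which all reflection instances in $\Gamma$ hold while $A^\Boxin$ still fails.

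Conversely, completeness is much easier than you anticipate and does not need Smory\'nski at all. From $\ihatglchats\nvdash A^\Boxin$ one immediately gets
\[
\ihatglchat\nvdash\Bigl(\bigwedge_{\Box B\in{\sf Sub}(A^\Boxin)}(\Box B\to B)\Bigr)\to A^\Boxin,
\]
since the right-hand system is weaker. Now \Cref{Theorem-Propositional Completeness ihatglchat} directly yields a perfect $\kcal$ and a quasi-classical $\alpha$ refuting this implication; such an $\alpha$ then satisfies all the subformula reflections (hence is $A^\Boxin$-sound) and refutes $A^\Boxin$. Your plan to use \Cref{Lemma-10} here would not help anyway: that lemma \emph{preserves} $A^\Boxin$-soundness from $\alpha$ to $\alpha'$, it does not create it.
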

\begin{proof}
Both directions are non-trivial and proved contra-positively. 
For the soundness part, assume that 
$\kcal,\alpha\not\models A^\Boxin$ for some 
 perfect Kripke
model $\kcal:=(K,\preccurlyeq,\R,V)$  
which  is  $A^\Boxin$-sound at the quasi-classical node $\alpha\in K$.
Since derivability is finite,  
it is enough to show that for every finite set 
$\Gamma$ of modal propositions we have 
$$\ihatglchat\nvdash \bigwedge_{B\in \Gamma}(\Box B\to B)\to A^\Boxin.$$
By \Cref{Theorem-Propositional Completeness ihatglchat} 
and \Cref{Lemma-10}, it is enough to
find some  number $i$ such that 
$$ \kcal^{(i)},\alpha_i\not\models \bigwedge_{B\in \Gamma}(\Box B\to B)\to A^\Boxin.$$
Let us define $n_i$ and 
$m_i$
as the number of
 propositions in the sets
 $N_i:=\{B\in \Gamma: \kcal^{(i)},\alpha_i\models B\wedge\Box B\}$ and 
 $M_i:=\{B\in \Gamma: 
 \kcal^{(i)},\alpha_i \models \Box B\wedge \neg B\}$, respectively.\\
We use induction as follows. As induction hypothesis,
assume that for any number $i$ with  $n_i<k$ there is some
$0\leq j\leq 1+n_i$ such that
\begin{equation}\label{eq-1}
\kcal^{(i+j)},\alpha_{i+j}\not\models 
\bigwedge_{B\in \Gamma}(\Box B\to B)\to A^\Boxin
\end{equation}
Let $n_i=k$. If $m_i=0$, we may let $j=0$ and by 
\Cref{Lemma-10} we have \cref{eq-1} as desired. 
So let 
$B\in \Gamma$ such that 
$\kcal^{(i)},\alpha_i\models \Box B\wedge \neg B$. We have two sub-cases:
\begin{itemize}
\item $m_{i+1}=0$: observe in this case that \cref{eq-1} holds for $j=1$. 
\item $m_{i+1}>0$: in this case we have $n_{i+1}<k$ and hence by 
application of the induction hypothesis with $i:=i+1$, we get some 
$0\leq j' \leq 1+n_{i+1}$ such that 
$$\kcal^{(i+1+j')},\alpha_{i+1+j'}\not\models 
\bigwedge_{B\in \Gamma}(\Box B\to B)\to A^\Boxin$$
Hence if we let $j:=j'+1$ we have $0\leq j\leq 1+ n_i$ and 
\cref{eq-1}, as desired.
\end{itemize}
For the completeness part, assume that $\ihatglchats\nvdash A^\Boxin$. Hence 
$$\ihatglchat\nvdash 
\left(\bigwedge_{\Box B\in{\sf Sub}(A)} (\Box B\to B)\right)\to A^\Boxin$$
Hence  \Cref{Theorem-Propositional Completeness ihatglchat}
implies the desired result.
 \end{proof}

\begin{corollary}\label{Corollary-Decidability-ihatglchats}
$\ihatglchats$ is decidable.
\end{corollary}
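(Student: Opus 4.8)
The plan is to derive decidability directly from the Kripke-semantic characterization just established in \Cref{theorem-Kripke-completeness-ihatglchats}, following the same pattern used for $\ihatglchat$ in \Cref{Corollary-decidability-ihatglchat}. The key observation is that \Cref{theorem-Kripke-completeness-ihatglchats} reduces the question ``$\ihatglchats\vdash A^\Boxin$?'' to a statement about perfect Kripke models and their quasi-classical $A^\Boxin$-sound nodes, and one should extract from that theorem an effectively checkable finite condition.

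First I would recall that the completeness direction in the proof of \Cref{theorem-Kripke-completeness-ihatglchats} shows explicitly that
$$\ihatglchats\vdash A^\Boxin \quad \Longleftrightarrow \quad \ihatglchat\vdash \left(\bigwedge_{\Box B\in{\sf Sub}(A)} (\Box B\to B)\right)\to A^\Boxin.$$
Since ${\sf Sub}(A)$ is a finite set that is computable from $A$, the right-hand formula is a single modal proposition effectively constructible from $A$. Thus deciding $\ihatglchats\vdash A^\Boxin$ is reduced to deciding derivability in $\ihatglchat$ of one explicitly given formula. By \Cref{Corollary-decidability-ihatglchat}, $\ihatglchat$ is decidable, so this reduces the problem to a single decidable query.

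Therefore I would conclude: given an arbitrary input, first check whether it has the form $A^\Boxin$ for some $A$ (which is an effective syntactic test, since the $(.)^\Boxin$ translation has an invertible recursive shape on the relevant class of propositions); if so, compute the finite conjunction above and invoke the decision procedure for $\ihatglchat$. This yields a total decision algorithm, establishing that $\ihatglchats$ is decidable.

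The only point requiring care is the interface between the abstract membership ``$\ihatglchats\vdash A^\Boxin$'' as stated in \Cref{theorem-Kripke-completeness-ihatglchats} and arbitrary inputs to a decision procedure for $\ihatglchats$; one must confirm that every theorem of $\ihatglchats$ arises in the form $A^\Boxin$ (by the definition of the system via the $(.)^\Boxin$ translation) so that the equivalence displayed above covers all cases. Granting that, the result is immediate, and I would note that the argument is simply a direct consequence of the proof of \Cref{theorem-Kripke-completeness-ihatglchats} together with the decidability of $\iglc$, exactly as in the $\ihatglchat$ case.
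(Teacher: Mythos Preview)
Your approach is essentially the paper's: reduce $\ihatglchats\vdash A^\Boxin$ to a single derivability question in $\ihatglchat$ via the equivalence extracted from \Cref{theorem-Kripke-completeness-ihatglchats}, then invoke \Cref{Corollary-decidability-ihatglchat}. That core is fine and matches the paper.

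However, your final paragraph contains a genuine error. You write that one must confirm ``every theorem of $\ihatglchats$ arises in the form $A^\Boxin$ (by the definition of the system via the $(.)^\Boxin$ translation)''. But $\ihatglchats$ is \emph{not} defined via $(.)^\Boxin$; it is the axiomatic system ${\sf iGL}\overline{\sf C}\underline{\sf SP}{\sf C_a}$ (see \Cref{Def-Axiom schema and modal theories}). Moreover, the claim itself is false: for instance $\Box p\to p$ is an axiom instance of $\underline{\sf S}$ and hence a theorem, yet it is not literally of the form $A^\Boxin$, since $(\Box B)^\Boxin=\Box B^\Box$ and $p^\Box=p\wedge\Box p\neq p$. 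So your proposed algorithm, which only handles inputs of the form $A^\Boxin$, is incomplete as stated.

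The correct fix (implicit in the paper) is to use \Cref{Lemma-A-ABoxin}: since $\ihatglchats$ contains $\overline{\sf C}=\Box\CP$, we have $\ihatglchats\vdash C\leftrightarrow C^\Boxin$ for \emph{every} $C$. Hence for an arbitrary input $C$ one computes $C^\Boxin$ (effectively), notes that $\ihatglchats\vdash C$ iff $\ihatglchats\vdash C^\Boxin$, and then applies your displayed equivalence with $A:=C$. No syntactic inversion of $(.)^\Boxin$ is needed.
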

\begin{proof}
First observe that by \Cref{theorem-Kripke-completeness-ihatglchats,Theorem-Propositional Completeness ihatglchat},  we have 
$\ihatglchats\vdash A^\Boxin$ iff 
$$\ihatglchat\vdash \bigwedge_{\Box B\in{\sf Sub}(A^\Boxin)}(\Box B\to B)\to A^\Boxin.$$
Hence the decidability of $\ihatglchat$ (\Cref{Corollary-decidability-ihatglchat}) implies the decidability of $\ihatglchats$.
\end{proof}

\subsection{Arithmetical interpretations}
\label{Section-arithmetical-interpretations}

The following theorem is the main result in \cite{Sigma.Prov.HA}:
\begin{theorem}\label{Theorem-sigma-provability-HA}
$\lles$ is the $\Sigma_1$-provability logic of $\HA$, i.e.~$\lles\vdash A$ iff for all
$\Sigma_1$-substitution $\sigma$ we have $\HA\vdash \sigma_\tinysub{\sf HA}(A)$. Moreover $\lles$ is decidable.
\end{theorem}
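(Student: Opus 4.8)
The plan is to prove the three assertions packaged in the statement separately: arithmetical soundness of $\lles={\sf iGLLe^+V}$, arithmetical completeness, and decidability. For soundness I would check that every axiom schema becomes an $\HA$-theorem under each $\Sigma_1$-substitution $\sigma$. The $\igl$-fragment ($\underline{\sf i},\underline{\sf K},\underline{\sf 4},\underline{\sf Lob}$) is handled exactly as in the classical Solovay setting, using the formalized provability conditions for the $\HA$-provability predicate together with the formalized L\"ob theorem. The genuinely intuitionistic input is the soundness of Extended Leivant's Principle $\underline{\sf Le^+}=\Box A\to\Box A^l$; here I would argue by induction on $A$ that Leivant's translation $(.)^l$ only records a provable strengthening, the base case resting on the validity of each instance $\Box(B\vee C)\to\Box(\Box B\vee C)$, which in turn follows from the formalized $\Sigma_1$ disjunction/extraction behaviour of $\HA$-provability. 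Finally, Visser's schema $\underline{\sf V}=A\lr A^-$ is sound because the $\TNNIL$-approximation $A^-$ is built from the $(.)^+$ and $(.)^*$ operations precisely so that $\HA\vdash\sigma_{\sf HA}(\Box B\lr\Box B^*)$ for every $\Sigma_1$-substitution, i.e.\ Visser's $\NNIL$-approximation property.

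For arithmetical completeness---the hard direction---I would proceed contrapositively. Assume $\lles\nvdash A$. First I would establish a modal Kripke completeness theorem for $\lles$ with respect to a suitable class of finite Kripke models of the intuitionistic-modal kind introduced in \Cref{Sec-Kripke} (brilliant, neat, with $\R$ irreflexive and transitive), obtaining a finite model $\kcal$ and a node refuting $A$. The substantive work is then transferring this modal counterexample into arithmetic. I would convert the frame of $\kcal$ into an I-frame $(K,\preccurlyeq,T)$ in the sense of \Cref{Definition-Iframe}, attaching to each node a consistent r.e.\ extension of $\PA$ so that $\preccurlyeq$-ascent corresponds to interpretability via Orey's theorem (\Cref{Theorem-Orey}); Smory\'nski's construction (\Cref{Theorem-Smorynski's general method of Kripke model construction}) then yields a genuine Kripke model of $\HA$ sharing the same frame. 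Simultaneously I would define, by a Solovay-style self-referential $\Sigma_1$ fixed point, arithmetical sentences assigned to the nodes that force the arithmetic reading of $\Box$ to track the modal accessibility $\R$, and read off from them a $\Sigma_1$-substitution $\sigma$ with $\HA\nvdash\sigma_{\sf HA}(A)$.

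The step I expect to be the principal obstacle is exactly this fusion: running a Solovay-style fixed-point argument \emph{inside} the Smory\'nski/interpretability framework so that the single substitution $\sigma$ is simultaneously (i)~$\Sigma_1$, (ii)~correctly reflecting $\R$ through $\HA$-provability at every node, and (iii)~living over the model of $\HA$ produced from the I-frame. Reconciling the intuitionistic forcing at each node with the classical structures $\mathfrak{M}(\alpha)$---using \Cref{Lemma-Sigma-local-global}, which pins $\Sigma_1$-truth to the local classical structure so that $\sigma_{\sf HA}$ of boxed formulae behaves as intended along the frame---is where the delicate bookkeeping lies, and getting the self-reference to survive the interpretations $T_\beta\rhd T_\gamma$ is the crux.

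For decidability I would combine the finite model property coming out of the completeness theorem with the recursiveness of the axiomatization. The schema $\underline{\sf V}$ is effective because the $\TNNIL$-algorithm computing $A^-$ terminates, so deciding $\lles\vdash A$ reduces to a derivability question about the computable $\TNNIL$-approximation; the conservativity results (\Cref{Theorem-conservativity-1}) then let one pass to the effectively decidable base logic $\iglc$ (\Cref{Corollary-decidability of iglc}). A bounded search for a refuting finite model therefore terminates, yielding a decision procedure for $\lles$.
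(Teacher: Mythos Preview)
The paper does not give a proof of this theorem: it is quoted as the main result of \cite{Sigma.Prov.HA}, and only the ingredients needed later (the Solovay function $F$, the substitution $\sigma$, and items~1--8 in \Cref{Section-arithmetical-interpretations}) are recalled from that source. Your sketch is broadly aligned with the argument there---Solovay self-reference combined with Smory\'nski-style model building---so at the level of architecture there is nothing to correct.

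There is, however, one substantive gap in your plan for the completeness direction. You propose to ``establish a modal Kripke completeness theorem for $\lles$ with respect to a suitable class of finite Kripke models,'' but the Visser schema $\underline{\sf V}:A\leftrightarrow A^-$ is not a frame condition, and $\lles$ has no direct Kripke semantics of the kind you describe. The route actually taken---and the one you yourself outline only under decidability---is to use $\underline{\sf V}$ to replace $A$ by its $\TNNIL^\Box$ approximant, then invoke conservativity (\Cref{Theorem-conservativity-1}) to land in $\iglleplus$ or $\iglc$, and only at that stage appeal to Kripke completeness (\Cref{Theorem-Propositional Completeness LC}) to extract a countermodel. This normalization is not a convenience reserved for the decision procedure but a prerequisite for the arithmetical completeness itself: the Solovay embedding lemmas (items~6 and~7) are stated and proved only for $\TNNIL$ subformulae, so the arithmetical refutation is carried out on the approximant and then transferred back to $A$ via the already-established soundness of $\underline{\sf V}$. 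Without routing through $\TNNIL$ first, the step ``show $\HA\nvdash\sigma_{\sf HA}(A)$'' has no handle.
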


 Here we bring some essential facts
and definitions from \cite{Sigma.Prov.HA}. 
Let us fix some  perfect   Kripke model 
$\kcal_0=(K_0,\R_{0},\hlf{\preccurlyeq}_0,V_0)$ with   the quasi-classical 
root $\alpha_0$ and its extension 
$\kcal:=\kcal_0'=(K,\preccurlyeq,\R,V)$ 
by the Smory\'nski operation with the new quasi-classical  root $\alpha_1$ 
(which was called $\alpha_0$ in \cite{Sigma.Prov.HA}) 
and define a recursive function $F$, called Solovay function, as
we did in \cite{Sigma.Prov.HA}. 
We have  the following definitions and facts from \cite{Sigma.Prov.HA}:
(later we refer to them simply as e.g.~``item 1'',)
\begin{enumerate}
\item The function $F$ is provably total in  $\HA$ and hence we may use the function symbol $F$ inside $\HA$ and stronger theories.
\item The $\Sigma_1$-substitution $\sigma$ is defined in this way:
$$\sigma(p):=\bigvee_{\kcal,\alpha\Vdash p}\exists x (F(x)=\alpha)$$
\item Define $L=\alpha$   as $\exists x \forall y\geq x F(y)=\alpha$.
\item $\PA\vdash \exists x\, (F(x)=\alpha)\to 
\bigvee_{\beta\succcurlyeq \alpha} L=\beta$. \cite[Lemma 5.2]{Sigma.Prov.HA} 
\item For  a modal proposition $A$ when we use 
$A$ in a context which it is expected to be some first-order formula, like 
$\HA\vdash A$, we should replace $A$ with the first-order sentence 
$\sigma_\tinysub{\sf HA}(A)$. 
\item For every $ A\in {\sf sub}(\Gamma)\cap \TNNIL$ and $\alpha\in K_0$ 
such that $\kcal_0,\alpha\Vdash A$, we have $\HA\vdash \exists x\, {F(x)=\alpha}\to A$. \cite[Lemma 5.18 \& 5.19]{Sigma.Prov.HA} 
\item For each $B\in {\sf Sub}(\Gamma)\cap\TNNIL$ and $\alpha\in K$ such that $\alpha\nVdash
	\Box B$,
	$$\HA\vdash L\!=\!\alpha\ra\neg\,\Box B.$$
	\item $\mathbb{N}\models L=\alpha_1$ and 
	$\PA+L=\alpha$ is consistent for every $\alpha\in K$. 
	\cite[Corollaries 5.20 \& 5.24 and Lemma 5.23]{Sigma.Prov.HA} 
\end{enumerate}
\begin{lemma}\label{Lem-30}
For every $A\in {\sf NOI}\cap {\sf sub}(\Gamma)$ such that 
$\kcal,\alpha_1\nVdash A$, we have 
$$\HA\vdash A\leftrightarrow \bigvee_{\alpha\in K\text{ and }
\kcal,\alpha\Vdash A}
 \exists x\, F(x)=\alpha$$
\end{lemma}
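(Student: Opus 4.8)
The plan is to prove the two implications separately, exploiting that an $A\in{\sf NOI}$ is built from atomic and boxed formulas using only $\wedge$ and $\vee$; this is exactly what lets a structural induction succeed, since no step ever has to treat an outermost $\ra$, and the imported facts (items 1--8) are precisely the tools needed for the atomic and boxed leaves. Throughout I will use that forcing is persistent ($\alpha\preccurlyeq\beta$ and $\alpha\Vdash A$ imply $\beta\Vdash A$), that $F$ is provably $\preccurlyeq$-monotone, and that forcing at a node of $K_0$ is the same in $\kcal$ as in $\kcal_0$ (the Smory\'nski step only adds $\alpha_1$ below everything).

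For the direction $\bigvee_{\kcal,\alpha\Vdash A}\exists x\,F(x)=\alpha\to A$, I would first observe that since $\kcal,\alpha_1\nVdash A$ the disjunction ranges only over $\alpha\in K_0$ (the root $\alpha_1$ is excluded). I would then show, by induction on the ${\sf NOI}$-structure of $A$, that for every $\alpha\in K_0$ with $\kcal,\alpha\Vdash A$ one has $\HA\vdash\exists x\,F(x)=\alpha\to A$: the atomic case is immediate from the definition of $\sigma$ in item 2; the boxed case $A=\Box B$ is item 6 applied to the \TNNIL-formula $\Box B$; and the $\wedge,\vee$ cases simply combine the hypotheses available at the \emph{same} node $\alpha$ (here the exclusion of $\alpha_1$ causes no trouble, because we argue node-by-node and only assemble the full disjunction at the end). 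Disjoining over the finitely many such $\alpha$ yields the implication.

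For the converse $A\to\bigvee_{\kcal,\alpha\Vdash A}\exists x\,F(x)=\alpha$, I would condition on the limit node. Using monotonicity of $F$ together with finiteness of the frame (equivalently, item 4 applied at the root $\alpha_1$, which is provably visited), one gets $\HA\vdash\bigvee_{\beta\in K}L=\beta$. It then suffices to establish, by the same ${\sf NOI}$-induction, the tracking fact $(\dagger)$: if $\kcal,\beta\nVdash A$ then $\HA\vdash L=\beta\to\neg A$. For atomic $A=p$ this holds because $L=\beta$ together with $\sigma(p)$ would force some visited $\gamma\Vdash p$ with $\gamma\preccurlyeq\beta$ (by monotonicity of $F$), whence persistence gives $\beta\Vdash p$, contradicting $\beta\nVdash p$; the boxed case $A=\Box B$ is exactly item 7; and the $\wedge$- and $\vee$-cases are immediate, since $\kcal,\beta\nVdash A$ localizes to a single failing conjunct, respectively to both disjuncts. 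Granting $(\dagger)$, assume $A$ and pick $\beta$ with $L=\beta$; then $(\dagger)$ forbids $\beta\nVdash A$, so $\beta\Vdash A$, and $L=\beta$ yields $\exists x\,F(x)=\beta$, a disjunct of the right-hand side.

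I expect the main difficulty to be bookkeeping rather than a single hard step. One must verify that the \TNNIL-hypotheses of items 6 and 7 are met by the boxed subformulas $\Box B$ of $A$ (i.e.\ that each interior $B$ lies in $\TNNIL\cap{\sf sub}(\Gamma)$) so that these imported facts are genuinely applicable; one must keep careful track of the distinction between $K_0$ and $K$ and of the exclusion of $\alpha_1$ — this is the sole place where the hypothesis $\kcal,\alpha_1\nVdash A$ is used, namely to force the left-to-right direction to land inside $K_0$ where item 6 is available; and one must secure $\HA\vdash\bigvee_{\beta}L=\beta$ in the intuitionistic base theory, for which I would lean on the $\preccurlyeq$-monotonicity of $F$ and the bounded height of the finite frame.
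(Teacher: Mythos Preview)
Your direction $\bigvee_{\alpha\Vdash A}\exists x\,F(x)=\alpha\to A$ is correct and essentially matches the paper's treatment of that half.

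The converse has a genuine gap: you rely on $\HA\vdash\bigvee_{\beta\in K}L=\beta$, and the fix you propose (monotonicity of $F$ plus bounded height) does not work constructively. Already for a two-node frame with root $\rho$ and a single terminal node $\rho'$ above it, the disjunction $L=\rho\vee L=\rho'$ unpacks to $(\forall n\,F(n)=\rho)\vee(\exists n\,F(n)=\rho')$, an instance of $\Sigma_1$-excluded middle, which $\HA$ does not prove. Item 4, which you invoke, is stated only for $\PA$; your parenthetical ``equivalently'' is precisely where the argument breaks.

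The idea you are missing --- and which the paper uses in its very first sentence --- is to note that because $A\in{\sf NOI}$, the interpretation $\sigma_{_{\sf HA}}(A)$ is $\Sigma_1$ (it is built from $\Sigma_1$ atoms and $\Sigma_1$ provability predicates using only $\wedge,\vee$), and the right-hand side is visibly $\Sigma_1$ as well. Hence each direction of the biconditional is an implication between $\Sigma_1$-sentences, and by \Cref{Lemma-Conservativity of HA} it suffices to argue entirely in $\PA$. There item 4 delivers $\bigvee_\beta L=\beta$ (indeed the paper goes one step further and replaces $\exists x\,F(x)=\alpha$ by $L=\alpha$ throughout), after which your plan for $(\dagger)$ together with items 6 and 7 goes through without obstacle.
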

\begin{proof}
First observe that by $\Pi_2$-conservativity of $\PA$ over $\HA$ (\Cref{Lemma-Conservativity of HA}), 
it is enough to prove this lemma in $\PA$ instead of $\HA$. Then  by 
``item 4", it is enough to show that 
$$\PA\vdash A\leftrightarrow \bigvee_{\alpha\in K\text{ and }
\kcal,\alpha\Vdash A}
 L=\alpha$$
We use induction on the complexity of $A$. Since $A\in{\sf NOI}$ we do not consider the $\to$ case in the induction steps:
\begin{itemize}
\item $A$ is atomic: by definition of the arithmetical substitution, 
we have $$\sigma(A)=\bigvee_{\alpha\in K\text{ and }
\kcal,\alpha\Vdash A}
 \exists x\, F(x)=\alpha.$$
 \item $A=B\circ C$ and $\circ\in\{\vee,\wedge\}$: easy and left to the reader.
 \item $A=\Box B$: first note that by ``item 6",
 $\PA\vdash \exists x\, F(x)=\alpha\to A$
 for every $\alpha\Vdash A$ (here actually we need $\alpha_1\nVdash A$). Hence 
 $$\PA\vdash \bigvee_{\alpha\in K\text{ and }
\kcal,\alpha\Vdash A}
 \exists x\, F(x)=\alpha\to A$$
 For the other direction, it is enough  (by ``item 4")
 to show that for every 
 $\beta\in K $ such that $\kcal,\beta \nVdash A$ we have 
 $\PA\vdash A\to L\neq \beta$ or equivalently 
 $\PA\vdash L=\beta\to \neg A$, which holds by ``item 7".\qedhere
\end{itemize}
\end{proof}
\begin{lemma}\label{Lem-31}
For every $A\in{\sf sub}(\Gamma)$ and $\alpha\in K_0$, we have
$$\begin{cases}
 \kcal,\alpha\models A
 \quad &\Longrightarrow\quad \HA\vdash L=\alpha\to A\\
 \kcal,\alpha\not\models A  \quad &\Longrightarrow\quad 
\HA\vdash L=\alpha\to \neg A 
\end{cases}$$
\end{lemma}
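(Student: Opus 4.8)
The plan is to prove both implications simultaneously by induction on the complexity of $A$, exactly paralleling \Cref{Lem-30} but now for the local truth $\models$ in place of forcing $\Vdash$. Throughout, following ``item 5'', an occurrence of a modal formula in an arithmetical context abbreviates its interpretation $\sigmaha{\cdot}$. For the atomic base case, if $\kcal,\alpha\models p$ then $\kcal,\alpha\Vdash p$, so $\exists x\,F(x)=\alpha$ is one of the disjuncts defining $\sigmaha{p}$ (``item 2''); since $F$ is provably total (``item 1''), $\HA\vdash L=\alpha\to\exists x\,F(x)=\alpha$, and hence $\HA\vdash L=\alpha\to\sigmaha{p}$. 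If instead $\kcal,\alpha\not\models p$, I would argue in $\PA$ and transfer by the $\Pi_2$-conservativity of $\PA$ over $\HA$ (\Cref{Lemma-Conservativity of HA}), as in \Cref{Lem-30}: assuming $L=\alpha$ together with some disjunct $\exists x\,F(x)=\gamma$ with $\kcal,\gamma\Vdash p$, ``item 4'' gives $\bigvee_{\delta\succcurlyeq\gamma}L=\delta$, so by uniqueness of the limit $\gamma\preccurlyeq\alpha$; but then persistence of forcing would give $\kcal,\alpha\Vdash p$, contradicting $\kcal,\alpha\not\models p$. Hence $\PA\vdash L=\alpha\to\neg\,\sigmaha{p}$, an essentially $\Pi_2$ statement which transfers to $\HA$.

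For the connective cases I would exploit that local truth is classical at $\wedge,\vee,\to$, so the two halves of the induction hypothesis combine by intuitionistically valid propositional reasoning inside $\HA$. The cases $A=B\wedge C$ and $A=B\vee C$ are immediate. For $A=B\to C$, if $\kcal,\alpha\models B\to C$ then either $\kcal,\alpha\not\models B$ or $\kcal,\alpha\models C$, and the induction hypothesis yields $\HA\vdash L=\alpha\to\neg B$ or $\HA\vdash L=\alpha\to C$; either way $\HA\vdash L=\alpha\to(B\to C)$, using $\neg B\to(B\to C)$ and $C\to(B\to C)$. If $\kcal,\alpha\not\models B\to C$ then $\kcal,\alpha\models B$ and $\kcal,\alpha\not\models C$, so the induction hypothesis gives $\HA\vdash L=\alpha\to(B\wedge\neg C)$, whence $\HA\vdash L=\alpha\to\neg(B\to C)$ since $B\wedge\neg C\to\neg(B\to C)$ is intuitionistically valid.

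The box case carries the real content. The key simplification is that $\kcal,\alpha\models\Box B$ and $\kcal,\alpha\Vdash\Box B$ have literally the same definition, so I may pass to forcing and invoke the prepared Solovay facts. If $\kcal,\alpha\Vdash\Box B$, then ``item 6'', applied to $\Box B$ (using that forcing at $\alpha\in K_0$ is unchanged by the Smory\'nski extension, cf.\ \Cref{Lemma-10}), gives $\HA\vdash\exists x\,F(x)=\alpha\to\Box B$; composing with $\HA\vdash L=\alpha\to\exists x\,F(x)=\alpha$ yields $\HA\vdash L=\alpha\to\Box B$. If $\kcal,\alpha\nVdash\Box B$, then ``item 7'' directly gives $\HA\vdash L=\alpha\to\neg\,\Box B$.

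The step needing the most care is the bookkeeping between $\HA$ and $\PA$: the positive cases (atomic-true and the box, via items 1 and 6) are genuine $\HA$ facts and the connective steps preserve $\HA$-provability, whereas the refutations flowing from ``item 4'' are only available in $\PA$, so each such step must be isolated as a sufficiently simple (essentially $\Pi_2$) statement and transferred by \Cref{Lemma-Conservativity of HA} rather than proved in $\HA$ directly. A secondary point to verify is that the box step stays within the scope of items 6 and 7, i.e.\ that every boxed subformula $\Box B\in{\sf sub}(\Gamma)$ met in the induction lies in $\TNNIL$; this is part of the standing setup for $\Gamma$ in this section and is what licenses applying the Solovay facts at each boxed node.
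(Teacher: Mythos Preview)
Your proof is correct and follows the same inductive strategy as the paper's (very terse) argument: simultaneous induction on the complexity of $A$, with the boxed case reducing to items~6 and~7 via the observation that $\models$ and $\Vdash$ coincide on boxed formulae. Your treatment is simply more explicit than the paper's ``all cases are simple''; the only point that deserves a little more unpacking is the atomic negative case, where $L=\alpha\to\neg\sigma(p)$ is not syntactically $\Pi_2$, but it becomes so after rewriting $L\neq\alpha$ as $\forall u\,\exists y\geq u\,F(y)\neq\alpha$ and using decidability of $F(x)=\gamma$, after which \Cref{Lemma-Conservativity of HA} (i.e.\ $\Pi_2$-conservativity) applies as you intend.
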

\begin{proof}
We use induction on the complexity of $A$. All cases are simple and we only treat the case 
$A=\Box B$ here. If $\kcal,\alpha\models \Box B$, by definition, $\kcal,\alpha\Vdash \Box B$ and hence by ``item 6" we have the desired result.
If also $\kcal,\alpha\not\models \Box B$, by definition, $\kcal,\alpha\nVdash \Box B$ and hence by ``item 7" we have the desired result.
\end{proof}

\begin{lemma}\label{Lem-32}
Let $\kcal$ be $A^\Boxin$-sound at $\alpha_0$  for some 
$A^\Boxin\in{\sf sub}(\Gamma)$. 
Then for every $B\in{\sf sub}(A^\Boxin)$ we have 
$$\begin{cases}
 \kcal,\alpha_1\models B
 \quad &\Longrightarrow\quad \HA\vdash L=\alpha_1\to B\\
 \kcal,\alpha_1\not\models B  \quad &\Longrightarrow\quad 
\HA\vdash L=\alpha_1\to \neg B \\
\kcal,\alpha_1\Vdash B
 \quad &\Longrightarrow\quad \HA\vdash  B
\end{cases}$$
\end{lemma}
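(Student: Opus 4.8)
The plan is to prove the three clauses simultaneously by induction on the complexity of $B\in{\sf sub}(A^\Boxin)$, since at the boxed and implicational steps the two local–truth clauses and the forcing clause must feed into one another. Write $[B]$ for $\sigma_\tinysub{\sf HA}(B)$. I will use repeatedly three structural facts about the Smory\'nski extension $\kcal=\kcal_0'$: every node of $\kcal$ lies $\succcurlyeq\alpha_1$, so $\alpha_1$ is the $\preccurlyeq$-root; $\alpha_1\R\beta$ holds iff $\alpha_0\sqsubseteq\beta$, so every $\R$-successor of $\alpha_1$ lies in $K_0$, where \Cref{Lem-31} and items~6 and~7 apply; and, by \Cref{Lemma-10}, $\alpha_1$ inherits $A^\Boxin$-soundness from $\alpha_0$, so that $\alpha_1\models\Box E\to E$ for every boxed subformula $\Box E$ of $A^\Boxin$.

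For atomic $B$ the three clauses are read off from the definition of $\sigma$ (item~2): the disjunct $\exists x\,F(x)=\alpha_1$ of $[B]$ is provable because $F$ starts at $\alpha_1$, which gives the forcing clause; it also yields the positive local clause once one notes $L=\alpha_1\to\exists x\,F(x)=\alpha_1$; and the negative clause holds because, by items~4 and~8, $L=\alpha_1$ refutes $\exists x\,F(x)=\beta$ for every $\beta\neq\alpha_1$, the root being $\preccurlyeq$-minimal. The conjunction and disjunction cases are routine combinations of the induction hypotheses. For $B=C\to D$ the two local clauses are just the Boolean behaviour of $\models$ at the single node $\alpha_1$ and follow from the hypotheses for $C$ and $D$; for the forcing clause I split on $\alpha_1\Vdash C$. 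If $\alpha_1\Vdash C$, then $\alpha_1\Vdash D$ and the induction hypothesis gives $\HA\vdash[C]$ and $\HA\vdash[D]$, hence $\HA\vdash[C]\to[D]$. If $\alpha_1\nVdash C$, I pass from forcing to local truth at $\alpha_1$ using $A^\Boxin$-soundness (as in \Cref{Lem-truth-forcing}) and close the case with the local clauses for $C,D$ together with \Cref{Lem-31} at the $K_0$-successors.

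The boxed step is where the real work lies. Let $B=\Box C$. The negative local clause is exactly item~7 applied at $\alpha_1\in K$. The positive local clause reduces to the forcing clause for $\Box C$ established below, since $\models$ and $\Vdash$ agree on boxes and $\HA\vdash\Box C$ trivially gives $\HA\vdash L=\alpha_1\to\Box C$. For the forcing clause itself, note first that $\alpha_1\Vdash\Box C$ is equivalent to $\alpha_0\Vdash\Boxdot C$, so every $\R$-successor $\beta$ of $\alpha_1$ (all lying in $K_0$) forces $C$; by $A^\Boxin$-soundness we moreover get $\alpha_1\models C$. Since $\Box C=\Prv{HA}{[C]}$ is $\Sigma_1$, proving $\HA\vdash\Box C$ amounts to proving $\HA\vdash[C]$. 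Item~6 supplies $\HA\vdash \exists x\,F(x)=\beta\to[C]$ for each such successor $\beta$, and the positive local clause supplies $\HA\vdash L=\alpha_1\to[C]$ at the root; the conclusion $\HA\vdash[C]$ is then obtained by the formalized Solovay argument, carried out inside $\Prv{HA}{\cdot}$: $\HA$ cannot fix the limit at $\alpha_1$, so $\HA$ provably drives $F$ to a successor forcing $C$, whence $[C]$ is provable.

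I expect this last step to be the main obstacle. Unlike the classical case one may not reason by an actual case split on the value of the limit, for $\HA$ does not prove $\bigvee_{\gamma}L=\gamma$ — that would violate the disjunction property together with the non-provability of $L=\alpha_1$. The passage ``every reachable limit node forces $C$, therefore $[C]$ is provable'' must therefore be internalised under the provability predicate by a L\"ob-style use of the fixed-point defining $F$, combined with item~6 at the $C$-forcing successors. Getting this internalisation right, and checking that the implicational clause in the case $\alpha_1\nVdash C$ really does reduce to local truth via $A^\Boxin$-soundness, are the two points that require care.
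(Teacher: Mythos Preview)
Your boxed step is where the plan diverges from the paper and runs into a real gap. Every boxed subformula of $A^{\Boxin}$ has the special form $\Box C^{\Box}$, not $\Box C$ for arbitrary $C$ (this is immediate from the definitions of $(.)^{\Boxin}$ and $(.)^{\Box}$), and $C^{\Box}$ is itself a strict subformula of $A^{\Boxin}$. The paper exploits this as follows: from $\kcal,\alpha_1\Vdash\Box C^{\Box}$ one gets $\kcal,\alpha_1\models\Box C^{\Box}$ by \Cref{Lem-truth-forcing} ($\alpha_1$ is quasi-classical), then $\kcal,\alpha_1\models C^{\Box}$ by $A^{\Boxin}$-soundness at $\alpha_1$ (\Cref{Lemma-10}), and then $\kcal,\alpha_1\Vdash C^{\Box}$ by \Cref{Lem-truth-forcing} again. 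Now the induction hypothesis (third clause) applied to $C^{\Box}$ yields $\HA\vdash[C^{\Box}]$ directly, whence $\HA\vdash[\Box C^{\Box}]$; the positive local clause is obtained by the same chain. The obstacle you flag simply never arises. Your proposed detour is not only harder but, as stated, fails: ``$\HA$ provably drives $F$ to a successor'' would mean $\HA\vdash\exists x\,F(x)\neq\alpha_1$, and this contradicts item~8 ($\mathbb N\models L=\alpha_1$, so $F$ never moves in the standard model) together with the soundness of $\HA$; the boxed variant $\HA\vdash\Box(\exists x\,F(x)\neq\alpha_1)$ fails for the same reason.

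Your implication step has the same defect. In the hard case $\alpha_1\nVdash E$ the paper does not attempt to glue local clauses; it invokes \Cref{Lem-30} to rewrite $[E]$ as $\bigvee_{\gamma\Vdash E}\exists x\,F(x)=\gamma$ (this needs $E\in{\sf NOI}$, a structural feature of the $\Gamma$ actually used), and then item~6 at each such $\gamma\in K_0$ (where $\gamma\Vdash F$ because $\alpha_1$ is the root and $\alpha_1\Vdash E\to F$) gives $\HA\vdash[E]\to[F]$ outright. Your sketch via ``local clauses for $C,D$ together with \Cref{Lem-31} at the $K_0$-successors'' cannot be assembled into an unconditional $\HA$-derivation without the unavailable disjunction over limits, and \Cref{Lem-31} concerns $\models$ rather than $\Vdash$, so it does not directly carry the forcing hypothesis.
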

\begin{proof}
We prove this by induction on the complexity of $B\in{\sf sub}(A^\Boxin)$. 
\begin{itemize}
\item $B$ is atomic, conjunction or disjunction: easy and left to the reader.
\item $B=E\to F$: it is easy to show
the first two derivations 
 and 
we leave them to the reader. For the third one, assume that 
$\kcal,\alpha_1\Vdash E\to F$. If $\kcal,\alpha_1\Vdash F$ we have the desired result by induction hypothesis. So let 
$\kcal,\alpha_1\nVdash F$ and 	 Hence $\kcal,\alpha_1\nVdash E$. Hence 
by \Cref{Lem-30}, we have $\HA\vdash E\to \bigvee_{\alpha\Vdash E} \exists x\, F(x)=\alpha$. On the other hand by ``item 6"  we have 
$\HA\vdash \bigvee_{\alpha\Vdash E} \exists x\, F(x)=\alpha\to F$.
Hence we have $\HA\to E\to F$.
\item $B=\Box C^\Box$: Let $\kcal,\alpha_1\models \Box C^\Box$. Then 
by \Cref{Lemma-10} we have $\kcal,\alpha_1\models C^\Box$ and hence
by \Cref{Lem-truth-forcing}  $\kcal,\alpha_1\Vdash C^\Box$.
 Then by induction hypothesis $\HA\vdash C^\Box$ and hence $\HA\vdash L=\alpha_1\to \Box C^\Box$. \\
For the second derivation,  Let $\kcal,\alpha_1\not\models \Box C^\Box$. 
Then by ``item 7" we have the desired result. \\
For the third derivation, let $\kcal,\alpha_1\Vdash \Box C^\Box$. Then 
by \Cref{Lem-truth-forcing} we have $\kcal,\alpha_1\models \Box C^\Box$
and hence 
 \Cref{Lemma-10} implies $\kcal,\alpha_1\models C^\Box$ and then again 
 by\Cref{Lem-truth-forcing}  $\kcal,\alpha_1\Vdash C^\Box$.  Then by 
 induction hypothesis $\HA\vdash C^\Box$ and hence 
 $\HA\vdash \Box C^\Box$.
\end{itemize}
\end{proof}
\subsection{Arithmetical Completeness}
\begin{definition}\label{Def-truth-prov-logic}
Define the following modal systems:
\begin{itemize}
\item $\ihatHsigma:=\lles$ plus $\underline{\sf P}$,
\item $\ihatHSsigma:=\ihatHsigma$ plus $\underline{\sf S}$,
\item $\ihatHsigmastar:=\{A\in\lcalb : 
\ihatHsigma\vdash A^\Boxin\}$,
\item $\ihatHSsigmastar:=\{ A\in \lcalb : 
\ihatHSsigma\vdash A^\Boxin\}$.
\end{itemize}
Obviously $\ihatHSsigmastar$ and $\ihatHsigmastar$ are closed
under modus ponens.
\end{definition}

\begin{theorem}\label{Theorem-PA-relative}
$\ihatHsigma=\PLS(\HA,\PA)$, i.e.~$\ihatHsigma$ 
is the relative $\Sigma_1$-provability logic of $\HA$ 
in $\PA$.
\end{theorem}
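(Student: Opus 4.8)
The plan is to establish the two inclusions $\ihatHsigma\subseteq\PLS(\HA,\PA)$ and $\PLS(\HA,\PA)\subseteq\ihatHsigma$ separately. For arithmetical soundness I would argue by induction on an $\ihatHsigma$-derivation: every theorem of $\lles$ is $\HA$-valid under all $\Sigma_1$-interpretations by \Cref{Theorem-sigma-provability-HA}, hence $\PA$-valid since $\HA\subseteq\PA$; each instance of $\underline{\sf P}$ is $\PA$-valid because $\PA$ is classical, so $\PA\vdash\sigmaha{B}\vee\neg\,\sigmaha{B}$; and modus ponens preserves $\PA$-validity. This gives $\ihatHsigma\subseteq\PLS(\HA,\PA)$.

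The content is completeness, which I would prove contrapositively: from $\ihatHsigma\nvdash A$ produce a $\Sigma_1$-substitution $\sigma$ with $\PA\nvdash\sigmaha{A}$. Because $\underline{\sf V}\in\lles\subseteq\ihatHsigma$, we have $\ihatHsigma\vdash A\lr A^-$ with $A^-\in\TNNIL^\Box$, so by soundness $\PA\vdash\sigmaha{A}\lr\sigmaha{A^-}$ and it suffices to refute $A^-$; moreover $\ihatHsigma\nvdash A^-$. Next I pass to $\ihatglchat$: since $\ihatglleplus\subseteq\ihatHsigma$ (both have $\underline{\sf i}$, $\underline{\sf L}$, $\underline{\sf K}$, $\underline{\sf 4}$, $\underline{\sf Le^+}$ and $\underline{\sf P}$) and \Cref{theorem-conservativiity-ihatglchat} gives $\ihatglchat\vdash B\Rightarrow\ihatglleplus\vdash B$ for $B\in\TNNIL^\Box$, we obtain $\ihatglchat\vdash A^-\Rightarrow\ihatHsigma\vdash A^-$, hence $\ihatglchat\nvdash A^-$. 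By the local-truth completeness of $\ihatglchat$ (\Cref{Theorem-Propositional Completeness ihatglchat}) there is a perfect Kripke model and a quasi-classical node $\alpha$ with $\alpha\not\models A^-$.

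I then realize this model as the fixed $\kcal_0$ of \Cref{Section-arithmetical-interpretations} (so that $\alpha\in K_0$), set $\Gamma:={\sf sub}(A^-)$, form $\kcal:=\kcal_0'$ by the Smory\'nski operation, and take the $\Sigma_1$-substitution $\sigma$ of ``item~2''. Since $A^-\in\TNNIL^\Box$, every boxed subformula of $A^-$ has a $\TNNIL$ body, so ``items~6--7'' apply and \Cref{Lem-31} is available; as local truth at the old node $\alpha$ is unaffected by the operation, $\kcal,\alpha\not\models A^-$ yields $\HA\vdash L=\alpha\to\neg\,\sigmaha{A^-}$, whence $\PA\vdash L=\alpha\to\neg\,\sigmaha{A^-}$. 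By ``item~8'' the theory $\PA+L=\alpha$ is consistent, so $\PA\vdash\sigmaha{A^-}$ would make $\PA+L=\alpha$ prove $\sigmaha{A^-}\wedge\neg\,\sigmaha{A^-}$, a contradiction; hence $\PA\nvdash\sigmaha{A^-}$ and therefore $\PA\nvdash\sigmaha{A}$, so $\sigma$ witnesses $A\notin\PLS(\HA,\PA)$.

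The main delicacy is fitting the extracted counter-model into the template of \Cref{Section-arithmetical-interpretations}: that construction fixes a perfect model with a \emph{quasi-classical root}, so I must present the refuting node as an admissible node of such a model (e.g.\ by working with a quasi-classical model on a finite tree frame, which is automatically perfect), and I must keep track that the reduction to $A^-\in\TNNIL^\Box$ via $\underline{\sf V}$ is exactly what licenses ``items~6--7'' behind \Cref{Lem-31}. A cleaner alternative, matching the framework of \Cref{Definition-Reduction-PL}, is to prove $\PLS(\HA,\PA)\leq\PLS(\HA,\HA)$ outright with $f:=(.)^{\negout}$ and $\bar f$ the identity: verify R1 ($\lles\vdash A^{\negout}$ implies $\ihatHsigma\vdash A$) propositionally and R2 ($\PA\vdash\sigmaha{A}$ implies $\HA\vdash\sigmaha{A^{\negout}}$ for $\Sigma_1$ $\sigma$) from the interaction of $(.)^{\negout}$ with $\Sigma_1$-interpretations together with the $\Pi_2$-conservativity of $\PA$ over $\HA$ (\Cref{Lemma-Conservativity of HA}), and then apply \Cref{Theorem-Reduction-1} to the known arithmetical completeness of $\lles$; the arithmetic core of R2 is precisely what ``item~8'' and \Cref{Lem-31} package above.
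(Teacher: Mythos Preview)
Your main argument is correct and follows the paper's proof essentially step by step: pass to $A^-\in\TNNIL^\Box$ via $\underline{\sf V}$, use the conservativity of $\ihatglchat$ over $\ihatglleplus$ on $\TNNIL^\Box$ (\Cref{theorem-conservativiity-ihatglchat}) to get $\ihatglchat\nvdash A^-$, extract a countermodel via \Cref{Theorem-Propositional Completeness ihatglchat}, plug it into the Solovay machinery of \Cref{Section-arithmetical-interpretations}, and conclude via \Cref{Lem-31} and ``item~8''. The paper handles the delicacy you flag implicitly, taking the refuting quasi-classical node as the root $\alpha_0$ of $\kcal_0$ (by restricting to the submodel above it); your explicit awareness of this point is fine.

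One small correction to your alternative sketch: the verification of R2 for the reduction via $(.)^{\negout}$ does not rely on $\Pi_2$-conservativity (\Cref{Lemma-Conservativity of HA}) but on the G\"odel--Gentzen negative translation, namely $\PA\vdash B\Leftrightarrow\HA\vdash B^\neg$ (\Cref{Lemma-HA-PA-neg-translation}) combined with $\HA\vdash(\sigma_{_\SFT}(A))^\neg\lr\sigma_{_\SFT}(A^{\negout})$ (\Cref{Lemma-neg-translate-1st-order}); this is exactly how the paper proves the reduction in \Cref{Reduction-Sigma-HA-(PA-to-HA)} after the present theorem is in hand.
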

\begin{proof}
The soundness easily deduced by use of the  soundness of the 
$\lles$ for arithmetical interpretations in $\HA$ (see Theorem 6.3 in 
\cite{Sigma.Prov.HA}). 

For the other way around, 
let $\ihatHsigma\nvdash A$. 
Then $\ihatHsigma\nvdash A^- $ in which 
$ A^- \in\TNNIL^\Box$ and 
$\lles\vdash A\leftrightarrow  A^- $.
Then  $\ihatglleplus\nvdash A^-$ and 
hence by  \Cref{theorem-conservativiity-ihatglchat} 
we have $\ihatglchat\nvdash  A^- $.
By \Cref{Theorem-Propositional Completeness ihatglchat}, there is some 
  perfect Kripke model $\kcal_0$ with the quasi-classical root $\alpha_0$ 
  such that $\kcal_0,\alpha_0\not\models  A^- $. Let 
  $\sigma$ be the $\Sigma_1$-substitution as provided in 
  \Cref{Section-arithmetical-interpretations} for the Kripke model $\kcal_0$
  and its Smory\'nski extension $\kcal$ with $\Gamma:=\{A^-\}$.
   Then by \Cref{Lem-31} we have 
   $\HA\vdash L=\alpha_0\to \sigma_\tinysub{\sf HA}(\neg A^-)$. Since 
   $\lles\vdash A\leftrightarrow  A^- $, by soundness part of 
   \Cref{Theorem-sigma-provability-HA} we have 
   $\HA\vdash L=\alpha_0\to \sigma_\tinysub{\sf HA}(\neg A)$. Hence 
   by ``item 8"
    we may deduce $\PA\nvdash \sigma_\tinysub{\sf HA}(A)$, as desired.
\end{proof}

\begin{theorem}\label{Theorem-truth-provability}
$\ihatHSsigma=\PLS(\HA,\mathbb{N})$, i.e.~$\ihatHSsigma$ 
is the truth $\Sigma_1$-provability logic of $\HA$.
\end{theorem}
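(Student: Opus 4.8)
The plan is to establish the two halves --- soundness and completeness --- separately, with essentially all the difficulty in the latter. For soundness, suppose $\ihatHSsigma\vdash A$. Since $\ihatHSsigma=\ihatHsigma+\underline{\sf S}$ and modus ponens preserves truth in $\mathbb{N}$, it is enough to check that every axiom is true in $\mathbb{N}$ under each $\Sigma_1$-interpretation. The $\ihatHsigma$-axioms are covered by \Cref{Theorem-PA-relative}: if $\ihatHsigma\vdash B$ then $\PA\vdash\sigmaha{B}$, hence $\mathbb{N}\models\sigmaha{B}$ since $\mathbb{N}\models\PA$. An instance $\Box B\to B$ of $\underline{\sf S}$ interprets as $\Prv{HA}{\sigmaha{B}}\to\sigmaha{B}$, which is true in $\mathbb{N}$ by soundness of $\HA$ in the standard model. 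Thus $A\in\PLS(\HA,\mathbb{N})$.

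For completeness I would argue contrapositively, mimicking \Cref{Theorem-PA-relative} but anchoring the refutation at the \emph{true} node $\alpha_1$ of the Solovay construction. Assume $\ihatHSsigma\nvdash A$. Since $\lles\vdash A\lr A^-$ with $A^-\in\TNNIL^\Box$ (\Cref{Def-lles}), we get $\ihatHSsigma\nvdash A^-$, and as $\ihatgllepluss\subseteq\ihatHSsigma$ also $\ihatgllepluss\nvdash A^-$; the conservativity \Cref{theorem-conservativiity-ihatglchat} then yields $\ihatglchats\nvdash A^-$. Because $\ihatglchats$ contains ${\sf iGL}$ and $\overline{\sf C}$, \Cref{Lemma-A-ABoxin} gives $\ihatglchats\vdash A^-\lr (A^-)^\Boxin$, whence $\ihatglchats\nvdash (A^-)^\Boxin$. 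Now \Cref{theorem-Kripke-completeness-ihatglchats} supplies a perfect model $\kcal_0$ with a quasi-classical $(A^-)^\Boxin$-sound root $\alpha_0$ at which $\kcal_0,\alpha_0\not\models (A^-)^\Boxin$. I would run the construction of \Cref{Section-arithmetical-interpretations} with $\Gamma:=\{(A^-)^\Boxin\}$, forming the Smory\'nski extension $\kcal=\kcal_0'$ with root $\alpha_1$; by \Cref{Lemma-10} the node $\alpha_1$ is again $(A^-)^\Boxin$-sound and $\kcal,\alpha_1\not\models (A^-)^\Boxin$, so \Cref{Lem-32} gives $\HA\vdash L=\alpha_1\to\neg\,\sigmaha{(A^-)^\Boxin}$. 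Since $\mathbb{N}\models L=\alpha_1$ (``item 8''), this forces $\mathbb{N}\not\models\sigmaha{(A^-)^\Boxin}$.

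The remaining, and most delicate, task is to transfer this refutation back to $A$, i.e.\ to prove $\HA\vdash\sigmaha{A}\lr\sigmaha{(A^-)^\Boxin}$. I would split it: $\HA\vdash\sigmaha{A}\lr\sigmaha{A^-}$ comes from $\lles\vdash A\lr A^-$ and the soundness half of \Cref{Theorem-sigma-provability-HA}, while $\HA\vdash\sigmaha{A^-}\lr\sigmaha{(A^-)^\Boxin}$ comes from \Cref{Lem-Boxin-validity}, which derives $A^-\lr (A^-)^\Boxin$ already in ${\sf iK4}+{\sf Le}^++{\sf CP_a}$. The point I expect to be the crux is that the \emph{modal} step above used $\overline{\sf C}$, which is \emph{not} valid under $\Sigma_1$-interpretations, whereas ${\sf iK4}$, ${\sf Le}^+$ and ${\sf CP_a}$ all are: ${\sf Le}^+$ because it belongs to $\lles$, and ${\sf CP_a}$ by provable $\Sigma_1$-completeness (\Cref{Lemma-bounded Sigma completeness}). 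Keeping these two routes to $A^-\lr (A^-)^\Boxin$ apart --- a purely modal one through $\overline{\sf C}$ for the non-derivability, and a $\Sigma_1$-sound one through ${\sf CP_a}$ for the arithmetical equivalence --- together with the need to seat the refutation at the true node $\alpha_1$ via $(A^-)^\Boxin$-soundness, is the main obstacle. Combining everything yields $\mathbb{N}\not\models\sigmaha{A}$, so $A\notin\PLS(\HA,\mathbb{N})$, completing the characterization.
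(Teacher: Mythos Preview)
Your proposal is correct and follows essentially the same route as the paper's proof. The only cosmetic difference is the order in which you pass to $(A^-)^{\Boxin}$: the paper applies \Cref{Lem-Boxin-validity} already inside $\ihatHSsigma$ (using ${\sf CP_a}$, which is available there) and then invokes the conservativity \Cref{theorem-conservativiity-ihatglchat} directly for $(A^-)^{\Boxin}$, whereas you first push $A^-$ through conservativity and only afterwards switch to $(A^-)^{\Boxin}$ inside $\ihatglchats$ via $\overline{\sf C}$ and \Cref{Lemma-A-ABoxin}. Both orderings are valid; your explicit separation of the two derivations of $A^-\lr(A^-)^{\Boxin}$ --- one via $\overline{\sf C}$ for the modal non-derivability and one via ${\sf CP_a}$ for the arithmetical equivalence --- is exactly the point the paper is using tacitly when it writes ``$\lles\vdash A\leftrightarrow(A^-)^{\Boxin}$'' and then appeals to soundness of $\lles$. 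One small remark: \Cref{Lem-32} is stated under the hypothesis that $\kcal$ is $(A^-)^{\Boxin}$-sound at $\alpha_0$ (not $\alpha_1$); this holds because the Smory\'nski extension does not alter $\alpha_0$'s successors, and your invocation of \Cref{Lemma-10} correctly supplies the additional fact $\kcal,\alpha_1\not\models(A^-)^{\Boxin}$ needed to apply the second clause of \Cref{Lem-32}.
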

\begin{proof}
The soundness easily deduced by use of the  soundness of the 
$\lles$ for arithmetical interpretations in $\HA$ (see Theorem 6.3 in 
\cite{Sigma.Prov.HA}). 

For the other way around, 
let $\ihatHSsigma\nvdash A$. 
Then by \Cref{Lem-Boxin-validity} we have 
$\ihatHSsigma\nvdash (A^-)^\Boxin $ in which 
$ (A^-)^\Boxin\in\TNNIL^\Box$ and 
$\lles\vdash A\leftrightarrow  (A^-)^\Boxin $.
Then  $\ihatgllepluss\nvdash  (A^-)^\Boxin$ and 
hence by  \Cref{theorem-conservativiity-ihatglchat} 
we have $\ihatglchats\nvdash  (A^-)^\Boxin $.

By \Cref{theorem-Kripke-completeness-ihatglchats}, there is some 
  perfect Kripke model $\kcal_0$ with the quasi-classical  
  $(A^-)^\Boxin$-sound root $\alpha_0$ 
  such that $\kcal_0,\alpha_0\not\models  (A^-)^\Boxin$. Let 
  $\sigma$ be the $\Sigma_1$-substitution as provided in 
  \Cref{Section-arithmetical-interpretations} for the Kripke model $\kcal_0$
  and its Smory\'nski extension $\kcal$ with $\Gamma:=\{(A^-)^\Boxin\}$.
   Then by \Cref{Lem-32} we have 
   $\HA\vdash L=\alpha_1\to \sigma_\tinysub{\sf HA}(\neg (A^-)^\Boxin)$. Since 
   $\lles\vdash A\leftrightarrow  (A^-)^\Boxin $, by soundness part of 
   the \Cref{Theorem-sigma-provability-HA} we have 
   $\HA\vdash L=\alpha_1\to \sigma_\tinysub{\sf HA}(\neg A)$. Hence 
   by ``item 8"
 we may deduce $\mathbb{N}\not\models\sigma_\tinysub{\sf HA}(A)$, as desired.
\end{proof}

\subsection{Reductions}
In this subsection we will show that 
$$\PLS(\HA,\PA)\leq\PLS(\HA,\HA)\leq\PLS(\HA,\nat)$$
First some definition:
\begin{definition}\label{Definition-neg-translation}
For $A\in\lcalb$ we define $A^\negin,A^\negout$ and $A^\neg$
as follows:
\begin{itemize}
\item $(A\circ B)^\neg:= \neg\neg(A^\neg\circ B^\neg)$,
		 $(A\circ B)^\negout:= \neg\neg(A^\negout\circ B^\negout)$ and  $(A\circ B)^\negin:=  A^\negin \circ B^\negin$,
\item $(\Box A)^\neg:=\neg\neg\Box A^\neg$, 
$(\Box A)^\negout:=\neg\neg\Box A$ 
and $(\Box A)^\negin:=\Box A^\neg$,
\item $(\neg A)^\neg:=\neg A^\neg$, $(\neg A)^\negout:=\neg A^\negout$ and $(\neg A)^\negin:=\neg A^\negin$,
\item $p^\neg:=p^\negout:=\neg\neg p$  and $p^\negin:=p $
for atomic $p$.
\end{itemize}
For an arithmetical formula $A$ we have these additional clauses for the definition of $A^\neg$:
\begin{itemize}
\item $(\forall x\, A)^\neg:=\neg\neg\forall x\, A^\neg$,
\item $(\exists x\, A)^\neg:=\neg\neg\exists x\, A^\neg$.
\end{itemize}
\end{definition}

\begin{lemma}\label{Lemma-HA-PA-neg-translation}
For every formula $A$, we have $\PA\vdash A$ iff $\HA\vdash A^\neg$.
\end{lemma}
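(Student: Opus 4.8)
The plan is to prove the two directions separately, following the classical Gödel--Gentzen argument adapted to the present double-negation translation. The easy direction is the converse: since $\IQC\subseteq\CQC$ we have that $\HA\vdash B$ implies $\PA\vdash B$ for every $B$, so from $\HA\vdash A^\neg$ we immediately get $\PA\vdash A^\neg$; combined with the classical collapse $\CQC\vdash A\lr A^\neg$ (established below) this yields $\PA\vdash A$. The forward direction $\PA\vdash A\Rightarrow\HA\vdash A^\neg$ is the substantial one and I would prove it by induction on a $\PA$-derivation of $A$.

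Before the induction I would record two auxiliary facts. First, \emph{stability}: by inspection of \Cref{Definition-neg-translation}, every formula $A^\neg$ is syntactically a negation (it begins with $\neg$), hence $\IQC\vdash\neg\neg A^\neg\to A^\neg$ for all $A$. Second, the \emph{classical collapse} $\CQC\vdash A\lr A^\neg$, proved by a routine induction on $A$ using that classically $\neg\neg B\lr B$; this immediately delivers the converse direction as explained above. Stability is what allows intuitionistic logic to simulate classical reasoning on translated formulas, and it will be the workhorse of the forward induction.

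For the forward direction I would recall that $\PA$ is $\CQC$ over the nonlogical axioms Q1--Q7 and Ind, and verify three things: (i) the $^\neg$-translation of every logical axiom of $\CQC$ is $\IQC$-provable; (ii) the inference rules are preserved; and (iii) the translation of each nonlogical axiom is $\HA$-provable. For (i), the only genuinely classical principle is excluded middle, whose translation is $\neg\neg(A^\neg\vee\neg A^\neg)$, and $\IQC$ proves $\neg\neg(B\vee\neg B)$ for every $B$; the remaining classical axioms follow using stability. For (ii), modus ponens is preserved because $(A\to B)^\neg=\neg\neg(A^\neg\to B^\neg)$ and stability recovers $A^\neg\to B^\neg$ from the translated premises, while generalization is preserved since $(\forall x\,A)^\neg=\neg\neg\forall x\,A^\neg$. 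For (iii), the axioms Q1--Q7 are built from the decidable atomic equality and order predicates, so by \Cref{Lemma-decidability of delta formulae} $\HA$ decides them and proves their translations, and the translated induction axiom follows from $\HA$'s full induction scheme applied to the stable formula $A^\neg$.

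The main obstacle is step (iii) for induction together with the quantifier handling in (i)--(ii): there one must combine stability with $\HA$'s unrestricted induction and check that the uniform double negations placed on every connective, quantifier and atom keep each subformula stable, so that the classical quantifier manipulations (pushing $\neg\neg$ through $\wedge$ and past $\forall$, and eliminating $\neg\neg$ on stable subformulas) become intuitionistically sound. This is exactly the Gödel--Gentzen negative-translation theorem specialized to the pair $\HA$/$\PA$, so for the case-by-case bookkeeping I would appeal to the standard treatment in \cite{TD}.
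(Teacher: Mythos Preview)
Your proposal is correct and follows essentially the same approach as the paper: the paper's proof simply notes that the right-to-left direction is trivial and that the left-to-right direction goes by induction on the $\PA$-proof, referring to \cite{TD} for details. You have supplied exactly the standard G\"odel--Gentzen bookkeeping that the paper leaves implicit, so there is no substantive difference in strategy.
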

\begin{proof}
The direction from right to left is trivial. For the other way 
around, one may use induction on the proof $\PA\vdash A$. 
For details see \cite{TD}.
\end{proof}
\begin{lemma}\label{Lemma-Sigma-neg-translation}
For every $\Sigma_1$-formula $A$, we have 
 $\HA\vdash A^\neg\lr \neg\neg A$.
\end{lemma}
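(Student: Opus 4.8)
The plan is to prove $\HA\vdash A^\neg\lr\neg\neg A$ by induction on the structure of the $\Sigma_1$-formula $A$, the guiding principle being that every universal quantifier occurring in a $\Sigma_1$-formula is bounded and therefore sits inside a $\Delta_0(\exp)$-subformula. Such subformulas are decidable in $\HA$ by \Cref{Lemma-decidability of delta formulae}, and this decidability is precisely what lets us discard the double negations that $(.)^\neg$ sprinkles over atoms, connectives and quantifiers. Without this restriction the statement is false, since $\neg\neg\forall x\,\neg\neg p(x)\lr\neg\neg\forall x\,p(x)$ is a genuine double-negation shift, not available in $\HA$; so the role of ``$\Sigma_1$'' is exactly to keep all $\forall$'s bounded.

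First I would dispatch the base case: for a $\Delta_0(\exp)$-formula $\delta$ I claim $\HA\vdash\delta^\neg\lr\delta$ (hence also $\delta^\neg\lr\neg\neg\delta$, the two sides being interchangeable by \Cref{Lemma-decidability of delta formulae}). This is a routine sub-induction on $\delta$: at an atom $p$ we have $p^\neg=\neg\neg p\lr p$ by decidability, and at each propositional connective and each \emph{bounded} quantifier the outer $\neg\neg(\cdots)$ introduced by $(.)^\neg$ may be stripped because the enclosed formula is again $\Delta_0(\exp)$, hence stable. In the bounded universal case $\forall x\le t\,\delta=\forall x(x\le t\to\delta)$ I do not push any negation through $\forall$: I keep $\neg\neg\forall x$ on both sides and merely invoke the induction hypothesis to see that the two matrices are $\HA$-provably equivalent.

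For the genuinely $\Sigma_1$ connectives $\wedge$, $\vee$, $\exists$ the induction hypothesis combines with three elementary identities of intuitionistic predicate logic:
\begin{align*}
\neg\neg(\neg\neg B\wedge\neg\neg C)&\lr\neg\neg(B\wedge C), &
\neg\neg(\neg\neg B\vee\neg\neg C)&\lr\neg\neg(B\vee C), &
\neg\neg\exists x\,\neg\neg B&\lr\neg\neg\exists x\,B.
\end{align*}
For example, if $A=B\vee C$ then $A^\neg=\neg\neg(B^\neg\vee C^\neg)$, which by the induction hypothesis is $\HA$-equivalent to $\neg\neg(\neg\neg B\vee\neg\neg C)$, and the middle identity rewrites this as $\neg\neg(B\vee C)=\neg\neg A$; the conjunction and existential steps are identical in spirit. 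Each identity is proved by the usual one-line manipulations (monotonicity of $\neg\neg$ for the easy direction, and $\vee$-/$\exists$-elimination into the negative goal for the other), all available already in $\IQC$.

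The main obstacle is not any individual case but making sure the induction is set up against the \emph{correct} generation of $\Sigma_1$-formulas. If the operative definition permits a bounded universal quantifier in front of a properly existential matrix (e.g.\ $\forall x\le t\,\exists y\,\delta$), then that subformula is not literally $\Delta_0(\exp)$ and the base case does not apply directly. I would handle this by first replacing it with an $\HA$-provably equivalent prenex $\Sigma_1$-form using bounded collection (provable in $\HA$), thereby ensuring that bounded universals only ever range over $\Delta_0(\exp)$ matrices; the induction above then goes through verbatim.
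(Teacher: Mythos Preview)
Your proof is correct and takes essentially the same approach as the paper, which simply writes ``Easy by use of the decidability of $\Delta_0$-formulas in $\HA$ (\Cref{Lemma-decidability of delta formulae}).'' You have merely spelled out the induction that this one-liner is pointing at, and your remark about normalising the position of bounded universal quantifiers via bounded collection is a sensible precaution given that the paper does not fix an explicit inductive definition of $\Sigma_1$.
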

\begin{proof}
Easy by use of the decidability of $\Delta_0$-formulas
 in $\HA$ (\Cref{Lemma-decidability of delta formulae}).
\end{proof}

\begin{lemma}\label{Lemma-neg-translate-modal}
For every $A\in\lcalb$, we have $\ihatHsigma\vdash A$ iff 
$\lles\vdash A^\negout$.
\end{lemma}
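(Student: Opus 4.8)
The plan is to reduce the statement to the two arithmetical characterizations already at our disposal, namely $\ihatHsigma=\PLS(\HA,\PA)$ (\Cref{Theorem-PA-relative}) and $\lles=\PLS(\HA,\HA)$ (\Cref{Theorem-sigma-provability-HA}), together with the Gödel--Gentzen theorem $\PA\vdash B\iff\HA\vdash B^\neg$ (\Cref{Lemma-HA-PA-neg-translation}). Unwinding the two characterizations turns the claim into a statement about arithmetical provability: $\ihatHsigma\vdash A$ asserts $\PA\vdash\sigma_\tinysub{\sf HA}(A)$ for all $\Sigma_1$-substitutions $\sigma$, while $\lles\vdash A^\negout$ asserts $\HA\vdash\sigma_\tinysub{\sf HA}(A^\negout)$ for all such $\sigma$. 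So it suffices to match these two families of arithmetical sentences, one $\sigma$ at a time.

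The technical heart is the following substitution lemma, to be proved for every modal $A$ and every $\Sigma_1$-substitution $\sigma$:
\[
\HA\vdash (\sigma_\tinysub{\sf HA}(A))^\neg\leftrightarrow \sigma_\tinysub{\sf HA}(A^\negout).
\]
I would prove it by induction on $A$. In the base cases $A$ is atomic or boxed; here the point is that $\sigma_\tinysub{\sf HA}(A)$ is a $\Sigma_1$-sentence --- for atoms because $\sigma$ is a $\Sigma_1$-substitution, and for $A=\Box B$ because $\sigma_\tinysub{\sf HA}(\Box B)=\Prv{HA}{\sigma_\tinysub{\sf HA}(B)}$ is $\Sigma_1$ by construction. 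Hence \Cref{Lemma-Sigma-neg-translation} gives $\HA\vdash(\sigma_\tinysub{\sf HA}(A))^\neg\leftrightarrow\neg\neg\,\sigma_\tinysub{\sf HA}(A)$, and this is exactly $\sigma_\tinysub{\sf HA}(A^\negout)$, since $p^\negout=\neg\neg p$ and $(\Box B)^\negout=\neg\neg\Box B$ (note that $(.)^\negout$ does not recurse under $\Box$, which is precisely what we need). For the connective cases $A=B\circ C$ with $\circ\in\{\wedge,\vee,\to\}$, one expands both sides using that $\sigma_\tinysub{\sf HA}$ distributes over connectives and that both $(.)^\neg$ and $(.)^\negout$ prefix a double negation to a translated connective, so the identity follows from the induction hypothesis; the case $A=\neg B$ is analogous (or subsumed under $\to$).

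Granting the substitution lemma, the lemma follows by the chain
\begin{align*}
\ihatHsigma\vdash A
&\iff \text{for every $\Sigma_1$-substitution $\sigma$, } \PA\vdash\sigma_\tinysub{\sf HA}(A)\\
&\iff \text{for every such }\sigma,\ \HA\vdash(\sigma_\tinysub{\sf HA}(A))^\neg\\
&\iff \text{for every such }\sigma,\ \HA\vdash\sigma_\tinysub{\sf HA}(A^\negout)\\
&\iff \lles\vdash A^\negout,
\end{align*}
where the first and last equivalences are \Cref{Theorem-PA-relative} and \Cref{Theorem-sigma-provability-HA} (the latter applied to the proposition $A^\negout$), the second is \Cref{Lemma-HA-PA-neg-translation} applied to $B=\sigma_\tinysub{\sf HA}(A)$, and the third is the substitution lemma applied to each fixed $\sigma$.

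I expect the substitution lemma to be the only real obstacle, and within it the base cases are the crux: everything hinges on the fact that $(.)^\negout$ treats atoms and boxed formulas as $\Sigma_1$ (double-negating them without descending under $\Box$), which is exactly how the negative translation of a $\Sigma_1$-sentence behaves by \Cref{Lemma-Sigma-neg-translation}. One minor point to dispatch is that a $\Sigma_1$-substitution need only assign formulas $\HA$-equivalent to $\Sigma_1$-sentences; since $\HA$-provable equivalence of the $\sigma(p)$'s propagates through $\sigma_\tinysub{\sf HA}$ (using provable $\Box$-monotonicity) and affects neither $\HA$- nor $\PA$-provability of the resulting sentences, we may assume without loss of generality that $\sigma$ assigns genuine $\Sigma_1$-sentences, so that \Cref{Lemma-Sigma-neg-translation} applies verbatim in the base case.
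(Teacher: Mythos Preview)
Your proof is correct, but it takes a genuinely different route from the paper's. The paper gives a purely syntactic argument: for the right-to-left direction it simply notes that $A\leftrightarrow A^{\negout}$ is classically valid and that $\lles\subseteq\ihatHsigma$, so $\lles\vdash A^{\negout}$ implies $\ihatHsigma\vdash A^{\negout}$ implies $\ihatHsigma\vdash A$; for the left-to-right direction it runs an induction on the length of the derivation $\ihatHsigma\vdash A$, checking that $\lles$ proves the $(.)^{\negout}$-translation of each axiom and that modus ponens is preserved.

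Your approach instead routes through the arithmetical characterizations $\ihatHsigma=\PLS(\HA,\PA)$ and $\lles=\PLS(\HA,\HA)$, together with the G\"odel--Gentzen theorem and a substitution lemma. That substitution lemma is exactly the paper's \Cref{Lemma-neg-translate-1st-order} (specialized to $\SFT=\HA$), which appears immediately after the present lemma; you have rediscovered it and proved it correctly. There is no circularity, since \Cref{Theorem-PA-relative} is established independently earlier in the paper. The trade-off is this: the paper's direct argument is self-contained at the propositional level and keeps the reduction $\PLS(\HA,\PA)\leq\PLS(\HA,\HA)$ conceptually honest --- condition R1 is verified without appealing to the very arithmetical completeness the reduction is meant to deliver. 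Your argument is slicker but leans on \Cref{Theorem-PA-relative} as a black box, so the resulting reduction no longer yields an \emph{independent} route to that theorem. The paper itself uses your style of indirect argument elsewhere (see the remarks in the proofs of \Cref{Reduction-Sigma-HA-(HA-to-nat)} and \Cref{Reduction-Sigma-PA-(PA-to-nat)}), so both styles are in play; here the authors chose the direct one.
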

\begin{proof}
The direction from right to left holds by the classically valid
 $A\lr A^\negout$. For the other way around, one must use induction on the length of the proof
 $\ihatHsigma\vdash A$. All cases are easy and left to the reader.
\end{proof}

\begin{lemma}\label{Lemma-neg-translate-1st-order}
For every $A\in\lcalb$, recursively axiomatizable theory $\SFT$
 and any $\Sigma_1$-substitution $\sigma$,  we have 
$\HA\vdash (\sigma_{_\SFT}(A))^\neg\lr \sigma_{_\SFT}(A^\negout)$.
\end{lemma}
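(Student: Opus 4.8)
The plan is to argue by induction on the complexity of the modal proposition $A$, using throughout that provable equivalence is a congruence in $\HA$ (replacement of equivalents under $\wedge,\vee,\ra,\neg$, and hence under $\neg\neg$). The two cases that carry any content are the atomic case and the boxed case; both reduce immediately to \Cref{Lemma-Sigma-neg-translation}, while the remaining cases are purely a matter of pushing the interpretation $\sigma_{_\SFT}$ and the two negative translations $(.)^\neg$ and $(.)^\negout$ through the connectives and invoking the induction hypothesis.

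For the base case $A=p$ atomic (including $A=\bot$), note that $\sigma_{_\SFT}(p)=\sigma(p)$ is a $\Sigma_1$ sentence, while $\sigma_{_\SFT}(p^\negout)=\sigma_{_\SFT}(\neg\neg p)=\neg\neg\sigma(p)$. Thus the required equivalence $\HA\vdash(\sigma(p))^\neg\lr\neg\neg\sigma(p)$ is exactly \Cref{Lemma-Sigma-neg-translation} applied to the $\Sigma_1$ sentence $\sigma(p)$. For the connective cases $A=B\circ C$ with $\circ\in\{\wedge,\vee,\ra\}$ I would compute both sides and match them: on the left, $\sigma_{_\SFT}$ distributes over $\circ$ and the first-order clause gives $(\sigma_{_\SFT}(B)\circ\sigma_{_\SFT}(C))^\neg=\neg\neg((\sigma_{_\SFT}(B))^\neg\circ(\sigma_{_\SFT}(C))^\neg)$; on the right, $(B\circ C)^\negout=\neg\neg(B^\negout\circ C^\negout)$, and since $\sigma_{_\SFT}$ distributes over $\circ$ and over $\neg$ this becomes $\neg\neg(\sigma_{_\SFT}(B^\negout)\circ\sigma_{_\SFT}(C^\negout))$. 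The induction hypothesis yields $\HA\vdash(\sigma_{_\SFT}(B))^\neg\lr\sigma_{_\SFT}(B^\negout)$ and likewise for $C$, and congruence then equates the two double negations. The negation case $A=\neg B$ is handled the same way, using $(\neg\phi)^\neg=\neg\phi^\neg$ on the first-order side and $(\neg B)^\negout=\neg B^\negout$ on the modal side together with the induction hypothesis.

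The one place deserving attention is the boxed case $A=\Box B$. Here $\sigma_{_\SFT}(\Box B)=\Prv{T}{\sigma_{_\SFT}(B)}$, and since $\SFT$ is recursively axiomatizable it admits (by Craig's theorem, as noted in \Cref{sec-definitions}) a $\Delta_0(\exp)$ proof predicate, so $\Prv{T}{\cdot}$ is $\Sigma_1$. The crucial observation is that $(\Box B)^\negout=\neg\neg\Box B$ does \emph{not} recurse into $B$, so that $\sigma_{_\SFT}((\Box B)^\negout)=\neg\neg\Prv{T}{\sigma_{_\SFT}(B)}$. Applying \Cref{Lemma-Sigma-neg-translation} to the $\Sigma_1$ sentence $\Prv{T}{\sigma_{_\SFT}(B)}$ gives $\HA\vdash(\Prv{T}{\sigma_{_\SFT}(B)})^\neg\lr\neg\neg\Prv{T}{\sigma_{_\SFT}(B)}$, which is precisely the equivalence of the two sides, with no appeal to the induction hypothesis needed in this case.

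In short, the only genuine inputs are that the box is interpreted by a $\Sigma_1$ formula and that $(.)^\negout$ leaves the boxed subformula unanalyzed, so that both the atomic and boxed cases collapse onto \Cref{Lemma-Sigma-neg-translation}. I do not expect a real obstacle here: the work is entirely the bookkeeping of the double negations inserted by the first-order translation $(.)^\neg$ and the modal translation $(.)^\negout$, and checking that they line up under $\sigma_{_\SFT}$ in each inductive step.
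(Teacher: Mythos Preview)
Your proposal is correct and follows exactly the approach the paper takes: induction on the complexity of $A$, with the atomic and boxed cases reducing directly to \Cref{Lemma-Sigma-neg-translation} (since $\sigma(p)$ and $\Prv{T}{\sigma_{_\SFT}(B)}$ are $\Sigma_1$), and the connective cases being pure bookkeeping via congruence. The paper's proof is a one-line sketch; you have simply spelled out the details it omits.
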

\begin{proof}
We use induction on the complexity of $A$. All cases are simple.
For atomic and boxed cases, use
\Cref{Lemma-Sigma-neg-translation}.
\end{proof}

\begin{theorem}\label{Reduction-Sigma-HA-(PA-to-HA)}
$\ihatHsigma=\PLS(\HA,\PA)\leq\PLS(\HA,\HA)=\lles$.
\end{theorem}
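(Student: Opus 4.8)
The plan is to unpack the three assertions bundled into the notation $\SFV=\PLG(\SFT,\SFU)\leq\PLGP(\SFT',\SFU')=\SFV'$. Two of them are already available: $\ihatHsigma=\PLS(\HA,\PA)$ is \Cref{Theorem-PA-relative}, and $\lles=\PLS(\HA,\HA)$ is \Cref{Theorem-sigma-provability-HA}. Hence the genuinely new content is item (1), the reduction of arithmetical completenesses $\acs{\ihatHsigma}{\HA}{\PA}\leq\acs{\lles}{\HA}{\HA}$ (with $\Gamma=\Gamma'=\Sigma_1$ throughout), and this is what I would establish.

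I would realize the reduction in the \emph{Identity} special case of \Cref{Section-Reduction-tool}: set $f(A):=A^{\negout}$ and let each $\bar{f}_A$ be the identity map on arithmetical substitutions. Condition R0 of \Cref{Definition-Reduction-PL} is then immediate. For R1 I must show that $\lles\vdash A^{\negout}$ implies $\ihatHsigma\vdash A$, which is exactly the right-to-left direction of \Cref{Lemma-neg-translate-modal}. The side clause $\bar{f}_A(\sigma)\in\wit{\sigma}$ is automatic, since $\bar{f}_A$ is the identity and $\sigma\in\wit{\sigma}$ always holds.

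The substantive step is R2, namely the inclusion $\wit{A^{\negout};\HA,\HA;\Sigma_1}\subseteq\wit{A;\HA,\PA;\Sigma_1}$. I would argue it contrapositively: given a $\Sigma_1$-substitution $\sigma$ with $\PA\vdash\sigma_{_\HA}(A)$, I aim for $\HA\vdash\sigma_{_\HA}(A^{\negout})$. Since $\sigma_{_\HA}(A)$ is an ordinary arithmetical sentence, \Cref{Lemma-HA-PA-neg-translation} converts its $\PA$-provability into $\HA$-provability of its negative translation, giving $\HA\vdash(\sigma_{_\HA}(A))^{\neg}$. Then \Cref{Lemma-neg-translate-1st-order} (with $\SFT:=\HA$) rewrites this, over $\HA$, as $\HA\vdash\sigma_{_\HA}(A^{\negout})$. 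This is precisely the contrapositive of the claimed inclusion, so R2 holds.

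With R0--R2 verified, item (1) follows directly from \Cref{Definition-Reduction-PL}, and together with the two identifications (2) and (3) recorded above the full chain $\ihatHsigma=\PLS(\HA,\PA)\leq\PLS(\HA,\HA)=\lles$ is established. I expect no serious obstacle here; the only delicate point is making sure the modal translation $(.)^{\negout}$ is matched to the arithmetical negative translation $(.)^{\neg}$ so that the atomic and boxed clauses commute without mismatch, and this alignment is exactly the content guaranteed by \Cref{Lemma-neg-translate-1st-order}.
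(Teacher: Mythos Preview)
Your proposal is correct and mirrors the paper's own proof essentially verbatim: both invoke \Cref{Theorem-PA-relative} and \Cref{Theorem-sigma-provability-HA} for the two identifications, take $f(A):=A^{\negout}$ with $\bar{f}_A$ the identity, cite \Cref{Lemma-neg-translate-modal} for R1, and combine \Cref{Lemma-HA-PA-neg-translation} with \Cref{Lemma-neg-translate-1st-order} for R2. Your write-up is in fact slightly more explicit in spelling out the contrapositive argument for R2, but there is no methodological difference.
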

\begin{proof}
By \Cref{Theorem-PA-relative} and \Cref{Theorem-sigma-provability-HA}
 we have $\ihatHsigma=\PLS(\HA,\PA)$ and 
$\PLS(\HA,\HA)=\lles$.
We must show
$\acs{\ihatHsigma}{\HA}{\PA}\leq_{f,\fbar[]} \acs{\lles}{\HA}{\HA}$.  
Given $A\in\lcalb$, define $f(A):=A^\negout$ and observe 
by \Cref{Lemma-neg-translate-modal} we have R1 (see 
\Cref{Definition-Reduction-PL}). Also define $\fbar$ as identity function. Then by \Cref{Lemma-neg-translate-1st-order,Lemma-HA-PA-neg-translation}
the condition  R2 holds.
\end{proof}

\begin{theorem}\label{Reduction-Sigma-HA-(HA-to-nat)}
$\lles=\PLS(\HA,\HA)\leq\PLS(\HA,\nat)=\ihatHSsigma$.
\end{theorem}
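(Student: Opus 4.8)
The plan is to unpack the three assertions bundled into the notation $\lles=\PLS(\HA,\HA)\leq\PLS(\HA,\nat)=\ihatHSsigma$. Two of them are already available: $\PLS(\HA,\HA)=\lles$ is \Cref{Theorem-sigma-provability-HA} and $\PLS(\HA,\nat)=\ihatHSsigma$ is \Cref{Theorem-truth-provability}. What remains is the reduction $\acs{\lles}{\HA}{\HA}\leq_{f,\bar{f}}\acs{\ihatHSsigma}{\HA}{\nat}$, and, following the $\Box(\cdot)$ arrow in Diagram~\ref{Diagram-HA}, I would take $f(A):=\Box A$ and let $\bar{f}_{_A}$ be the identity on arithmetical substitutions for every $A$. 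This is precisely the identity special case of \Cref{Section-Reduction-tool}, the only twist being that the propositional translation is the box prefix rather than one of $(.)^\Box$, $(.)^\Boxin$.

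The engine of the whole argument is a single observation about the box prefix. For any $\Sigma_1$-substitution $\sigma$ we have $\sigmaha{\Box A}=\Prv{HA}{\sigmaha{A}}$ by \Cref{Definition-Arithmetical substitutions}, and this is a $\Sigma_1$-sentence whose standard truth value records exactly whether $\HA$ proves $\sigmaha{A}$; that is, $\nat\models\sigmaha{\Box A}$ iff $\HA\vdash\sigmaha{A}$. Consequently the two relevant witness sets are literally equal:
\[
\wit{\Box A;\HA,\nat;\Sigma_1}=\{\sigma:\HA\nvdash\sigmaha{A}\}=\wit{A;\HA,\HA;\Sigma_1}.
\]
This yields R2 at once: the identity maps $\wit{f(A);\HA,\nat;\Sigma_1}$ into $\wit{A;\HA,\HA;\Sigma_1}$, and $\bar{f}_{_A}(\sigma)=\sigma\in\wit{\sigma}$ holds trivially, so the honesty condition of \Cref{Definition-Reduction-PL} is met and the reduction is genuine rather than vicious in the sense of \Cref{Remark-Reduction-3}. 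For R1, recall that $\PLG(\SFT,\SFU)=\{B:\wit{B;\SFT,\SFU;\Gamma}=\emptyset\}$; combining this with the displayed equality and the two characterizations gives $\ihatHSsigma\vdash\Box A$ iff $\wit{\Box A;\HA,\nat;\Sigma_1}=\emptyset$ iff $\wit{A;\HA,\HA;\Sigma_1}=\emptyset$ iff $\lles\vdash A$, which is even stronger than the implication R1 requires.

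I do not anticipate a real obstacle; essentially everything is definitional bookkeeping once $f$ and $\bar{f}$ are fixed. The one step that genuinely carries the argument, and the one I would isolate as a named observation before verifying R1 and R2, is the collapse of ``truth in $\nat$'' to ``provability in $\HA$'' for the interpretation of a boxed formula. This works only because $\Prv{HA}{\cdot}$ is $\Sigma_1$, so that its truth in the standard model coincides with an actual $\HA$-derivation; it is precisely this that makes the box prefix the correct choice for $f$ and makes the identity an honest refutation-translation, turning the passage from the $\HA$-relative logic to the $\nat$-relative logic into a triviality at the level of witnesses.
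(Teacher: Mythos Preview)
Your proposal is correct and follows essentially the same approach as the paper: both take $f(A):=\Box A$ with $\bar{f}_{_A}$ the identity, and both hinge on the observation that $\nat\models\sigmaha{\Box A}$ iff $\HA\vdash\sigmaha{A}$. Your presentation is slightly cleaner in that you isolate the equality of witness sets $\wit{\Box A;\HA,\nat;\Sigma_1}=\wit{A;\HA,\HA;\Sigma_1}$ once and read off both R1 and R2 from it, whereas the paper verifies R1 and R2 separately; but the mathematical content is the same.
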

\begin{proof}
By \Cref{Theorem-sigma-provability-HA,Theorem-truth-provability}
 we have $\lles=\PLS(\HA,\HA)$ and 
$\PLS(\HA,\nat)=\ihatHSsigma$.
We must show
$\acs{\lles}{\HA}{\HA}\leq_{f,\fbar[]} \acs{\ihatHSsigma}{\HA}{\nat}$.  
Given $A\in\lcalb$, define $f(A)=\Box A$ and $\fbar$ as identity function.  
\begin{itemize}
\item[R1.] Let $\ihatHSsigma\vdash \Box A$. By soundness
of $\ihatHSsigma=\PLS(\HA,\nat)$, for every $\Sigma_1$-substitution $\sigma$
we have $\nat\models \sigma_{_{\sf HA}}(\Box A)$ and hence 
$\HA\vdash \sigma_{_{\sf HA}}(A)$. Then by arithmetical 
completeness of $\PLS(\HA,\HA)$, we have $\lles\vdash A$.\\
One also may prove this item with a direct propositional argument. For simplicity reasons, we chose the indirect way.
\item[R2.] Let $\nat\not\models \sigma_{_{\sf HA}}(\Box A)$. 
Then $\HA\nvdash \sigma_{_{\sf HA}}(A)$, as desired. \qedhere
\end{itemize}
\end{proof}

\section{Relative $\Sigma_1$-provability logics for ${\sf HA}^*$}
The $\sigma_1$-provability logic of $\HA^*$, $\PLS(\HA^*,\HA^*)$, 
is already characterized \cite{Sigma.Prov.HA*}.
 In this section, we characterize the $\Sigma_1$-provability 
 logic of $\HA^*$, relative in $\PA$ and $\nat$.
 We also show that the following reductions hold:
 
\begin{figure}[h]
\[\hspace*{-1cm} \begin{tikzcd}[column sep=3em, row sep=4em] 
\tcboxmath{\PLS(\HA,\PA) }
\arrow[r , "(.)^\negout", "\text{\ref{Reduction-Sigma-HA-(PA-to-HA)}}" ']  
&\tcboxmath{\PLS(\HA,\HA) }
\arrow[rr , "\Box(.)", "\text{\ref{Reduction-Sigma-HA-(HA-to-nat)}}" '] 
&  & \tcbhighmath{\PLS(\HA,\nat)}
\\
 \tcboxmath{\PLS(\HA^*,\PA)}
\arrow[r, "(.)^\negout", "\text{\ref{Reduction-Sigma-HA*-PA-to-HA*-HA}}" '] 
\arrow[u,"(.)^\Boxin", "\text{\ref{Reduction-Sigma-HA*-PA-to-HA-PA}}" ']
&  \tcboxmath{\PLS(\HA^*,\HA) }
\arrow[u,"(.)^\Boxin", "\text{\ref{Reduction-Sigma-HA*-HA-to-HA-HA}}" ']
 & 
  \tcboxmath{\PLS(\HA^*,\HA^*) }
 \arrow[r,"\Box(.)", "\text{\ref{Reduction-Sigma-HA*-(HA*-to-nat)}}" '] 
 \arrow[l,"(.)^\Boxout"', "\text{\ref{Reduction-Sigma-HA*-(HA*-to-HA)}}"]
 &
   \tcboxmath{\PLS(\HA^*,\nat) }
\arrow[u,"(.)^\Boxin", "\text{\ref{Reduction-Sigma-HA*-nat-to-HA-nat}}" ']
\end{tikzcd}
\]
\caption[Diagram of  reductions for  relative provability logics of $\HA^*$]{\label{Diagram-HA*} Reductions for  relative provability logics of $\HA^*$}
\end{figure}

Each arrow in the above diagram, indicates a reduction of the completeness of the left hand side to the right one.
Note that the diagram of the first row is already known by 
\Cref{Reduction-Sigma-HA-(PA-to-HA),Reduction-Sigma-HA-(HA-to-nat)}.

\begin{theorem}\label{Reduction-Sigma-HA*-nat-to-HA-nat}
  $\ihatHSsigmastar=\PLS(\HA^*,\nat)\leq \PLS(\HA,\nat)=\ihatHSsigma$. \uparan{See \Cref{Def-truth-prov-logic}}
\end{theorem}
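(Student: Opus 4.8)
The plan is to realise this reduction through the translation $f(A):=A^\Boxin$ together with the identity family $\fbar:=\mathrm{id}$, deriving everything from \Cref{Theorem-truth-provability} (which already supplies item (3), namely $\ihatHSsigma=\PLS(\HA,\nat)$) and the arithmetic equivalence furnished by \Cref{Lemma-Properties of Box translation 2}. First I would verify the reduction $\acs{\ihatHSsigmastar}{\HA^*}{\nat}\leq_{f,\fbar[]}\acs{\ihatHSsigma}{\HA}{\nat}$ in the sense of \Cref{Definition-Reduction-PL}. Condition R1 is immediate from the very definition of $\ihatHSsigmastar$ in \Cref{Def-truth-prov-logic}: if $\ihatHSsigma\vdash f(A)=A^\Boxin$, then by definition $A\in\ihatHSsigmastar$, i.e.\ $\ihatHSsigmastar\vdash A$. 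For R2 the crucial input is that \Cref{Lemma-Properties of Box translation 2} gives, for every $\Sigma_1$-substitution $\sigma$, the equivalence $\HA\vdash\sigmaha{A^\Boxin}\lr\sigmahas{A}$; since $\HA$ is arithmetically sound, every one of its theorems holds in $\nat$, so $\nat\models\sigmaha{A^\Boxin}\lr\sigmahas{A}$. Consequently $\nat\not\models\sigmaha{A^\Boxin}$ iff $\nat\not\models\sigmahas{A}$, which means the identity map sends $\wit{A^\Boxin;\HA,\nat;\Sigma_1}$ into $\wit{A;\HA^*,\nat;\Sigma_1}$, and trivially $\fbar(\sigma)=\sigma\in\wit{\sigma}$. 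This establishes item (1).

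Item (3) is exactly \Cref{Theorem-truth-provability}, so it remains to prove item (2), the characterization $\ihatHSsigmastar=\PLS(\HA^*,\nat)$, which I would split into arithmetical completeness and soundness. The completeness half ($A\in\PLS(\HA^*,\nat)$ implies $\ihatHSsigmastar\vdash A$) comes for free from \Cref{Theorem-Reduction-1}: the reduction of item (1) combined with the arithmetical completeness of $\ihatHSsigma$ guaranteed by item (3) yields $\acs{\ihatHSsigmastar}{\HA^*}{\nat}$. For soundness, suppose $\ihatHSsigmastar\vdash A$, i.e.\ $\ihatHSsigma\vdash A^\Boxin$; the soundness direction of \Cref{Theorem-truth-provability} then gives $A^\Boxin\in\PLS(\HA,\nat)$, so $\nat\models\sigmaha{A^\Boxin}$ for every $\Sigma_1$-substitution $\sigma$, whence by the equivalence established above $\nat\models\sigmahas{A}$ for all such $\sigma$, i.e.\ $A\in\PLS(\HA^*,\nat)$.

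I do not expect a serious obstacle here: the entire argument is a transfer of the already-known characterization of $\PLS(\HA,\nat)$ along the $(.)^\Boxin$ translation, and the reduction framework does the accounting automatically. The one point demanding care is the step in R2 (and again in the soundness half) where I pass from an $\HA$-provable biconditional to its truth in the standard model $\nat$; this is legitimate precisely because $\HA$ is arithmetically sound, and it is exactly this soundness — rather than any deeper property — that makes the identity family an admissible $\fbar$. Everything else is bookkeeping within \Cref{Definition-Reduction-PL} and \Cref{Theorem-Reduction-1}.
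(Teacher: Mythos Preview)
Your proposal is correct and follows essentially the same route as the paper's own proof: both use $f(A)=A^{\Boxin}$ with $\fbar$ the identity, invoke \Cref{Theorem-truth-provability} for $\PLS(\HA,\nat)=\ihatHSsigma$, derive R1 directly from the definition of $\ihatHSsigmastar$, and obtain both R2 and arithmetical soundness from \Cref{Lemma-Properties of Box translation 2}. Your write-up is slightly more explicit about the passage from an $\HA$-provable biconditional to its truth in $\nat$ and about invoking \Cref{Theorem-Reduction-1} for completeness, but there is no substantive difference.
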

\begin{proof}
By \Cref{Theorem-truth-provability} we have $\PLS(\HA,\nat)=\ihatHSsigma$. It is enough to prove 
the arithmetical soundness 
$\mathcal{AS}_{_{\Sigma_1}}(\ihatHSsigmastar;\HA^*,\nat)$ and the reduction 
$\acs{\ihatHSsigmastar}{\HA^*}{\nat}\leq \acs{\ihatHSsigma}{\HA}{\nat}$. 
\\
$\mathcal{AS}_{_{\Sigma_1}}(\ihatHSsigmastar;\HA^*,\nat)$: Let 
$\ihatHSsigmastar\vdash A$ and $\sigma$ is a $\Sigma_1$-substitution.  Then $\ihatHSsigma\vdash A^\Boxin$, and then 
by arithmetical soundness of $\ihatHSsigma$ 
\Cref{Theorem-truth-provability}, we have 
$\nat\models \sigma_{_{\sf HA}} (A^\Boxin)$. Hence  
\Cref{Lemma-Properties of Box translation 2} implies 
$\mathbb{N}\models \sigma_{_{\sf HA^*}} (A)$, as desired.
\\
For the proof of $\acs{\ihatHSsigmastar}{\HA^*}{\nat}\leq_{f,\fbar[]}
\acs{\ihatHSsigma}{\HA}{\nat}$, define $f(A):=A^\Boxin$ and 
$\fbar$ as identity function.
\begin{itemize}
\item[R1.] Let $\ihatHSsigma\vdash A^\Boxin$. Then by definition we have $\ihatHSsigmastar\vdash A$. 
\item[R2.] Let $\nat\not\models \sigma_{_{\sf HA^*}} (A^\Boxin)$. Hence by \Cref{Lemma-Properties of Box translation 2}
$\nat\not\models \sigma_{_{\sf HA}} (A)$, as desired.\qedhere
\end{itemize}
\end{proof}

\begin{theorem}\label{Reduction-Sigma-HA*-PA-to-HA-PA}
  $\ihatHsigmastar=\PLS(\HA^*,\PA)\leq \PLS(\HA,\PA)=\ihatHsigma$.
  \uparan{See \Cref{Def-truth-prov-logic}}
\end{theorem}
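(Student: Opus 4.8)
The plan is to follow the proof of \Cref{Reduction-Sigma-HA*-nat-to-HA-nat} almost verbatim, replacing the outer theory $\nat$ by $\PA$. By \Cref{Theorem-PA-relative} the right-hand side is already characterized, $\PLS(\HA,\PA)=\ihatHsigma$, so it is enough to establish two things: the arithmetical soundness $\mathcal{AS}_{_{\Sigma_1}}(\ihatHsigmastar;\HA^*,\PA)$, and the reduction $\acs{\ihatHsigmastar}{\HA^*}{\PA}\leq\acs{\ihatHsigma}{\HA}{\PA}$. Once these are in hand, the arithmetical completeness of $\ihatHsigmastar$ relative to $\PA$ follows from the reduction together with the (already known) completeness of $\ihatHsigma$ by \Cref{Theorem-Reduction-1}, and combining it with the soundness yields the full identity $\ihatHsigmastar=\PLS(\HA^*,\PA)$.

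For the reduction I would use the \emph{Identity} case of \Cref{Section-Reduction-tool}: put $f(A):=A^\Boxin$ and take $\fbar[]$ to be the identity on substitutions. Condition R1 of \Cref{Definition-Reduction-PL} then reads ``$\ihatHsigma\vdash A^\Boxin$ implies $\ihatHsigmastar\vdash A$'', which is immediate from the definition $\ihatHsigmastar=\{A\in\lcalb:\ihatHsigma\vdash A^\Boxin\}$ in \Cref{Def-truth-prov-logic}. For R2, a substitution in $\wit{A^\Boxin;\HA,\PA;\Sigma_1}$ is a $\Sigma_1$-substitution $\sigma$ with $\PA\nvdash\sigma_{_{\sf HA}}(A^\Boxin)$, and I must place it in $\wit{A;\HA^*,\PA;\Sigma_1}$, i.e.\ show $\PA\nvdash\sigma_{_{\sf HA^*}}(A)$. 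The decisive tool is \Cref{Lemma-Properties of Box translation 2}, which gives $\HA\vdash\sigma_{_{\sf HA}}(A^\Boxin)\lr\sigma_{_{\sf HA^*}}(A)$; since $\HA\subseteq\PA$ this equivalence is available in $\PA$ as well, so the two non-derivabilities coincide. As $\fbar[]$ is the identity, the side condition $\fbar[](\sigma)\in\wit{\sigma}$ holds trivially.

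The soundness $\mathcal{AS}_{_{\Sigma_1}}(\ihatHsigmastar;\HA^*,\PA)$ I would prove directly: if $\ihatHsigmastar\vdash A$ then $\ihatHsigma\vdash A^\Boxin$ by definition, so the soundness half of \Cref{Theorem-PA-relative} gives $\PA\vdash\sigma_{_{\sf HA}}(A^\Boxin)$ for every $\Sigma_1$-substitution $\sigma$, and one more appeal to \Cref{Lemma-Properties of Box translation 2} (lifted from $\HA$ to $\PA$) converts this into $\PA\vdash\sigma_{_{\sf HA^*}}(A)$, as required. I expect no genuine obstacle: the argument is a routine transcription of the $\nat$-case, and the only point demanding care is that \Cref{Lemma-Properties of Box translation 2} is exactly the bridge intertwining $(.)^\Boxin$ with the switch between $\sigma_{_{\sf HA}}$ and $\sigma_{_{\sf HA^*}}$, whose $\HA$-provability transfers to any extension of $\HA$, in particular to the outer theory $\PA$.
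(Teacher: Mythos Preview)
Your proposal is correct and follows exactly the approach the paper intends: the paper's own proof simply reads ``Similar to the proof of \Cref{Reduction-Sigma-HA*-nat-to-HA-nat} and left to the reader,'' and you have carried out precisely that transcription, replacing $\nat$ by $\PA$ and \Cref{Theorem-truth-provability} by \Cref{Theorem-PA-relative}. Your handling of R2 is in fact cleaner than the paper's template, where the subscripts ${\sf HA}$ and ${\sf HA^*}$ appear to be swapped; your version has the direction right.
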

\begin{proof}
Similar to the proof of 
\Cref{Reduction-Sigma-HA*-nat-to-HA-nat} and left 
to the reader.
\end{proof}

\begin{theorem}\label{Reduction-Sigma-HA*-HA-to-HA-HA}
  $\llesstar=\PLS(\HA^*,\HA)\leq \PLS(\HA,\HA)=\lles$.
  \uparan{See  \Cref{Def-lles}}
\end{theorem}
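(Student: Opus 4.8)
The plan is to follow verbatim the template established in \Cref{Reduction-Sigma-HA*-nat-to-HA-nat}, simply replacing the outer theory $\nat$ by $\HA$ throughout. By the notation convention, the statement unpacks into three assertions: (1) the reduction $\acs{\llesstar}{\HA^*}{\HA}\leq\acs{\lles}{\HA}{\HA}$, (2) $\PLS(\HA^*,\HA)=\llesstar$, and (3) $\PLS(\HA,\HA)=\lles$. Assertion (3) is already supplied by \Cref{Theorem-sigma-provability-HA}, so it remains to establish the reduction together with the arithmetical soundness $\mathcal{AS}_{_{\Sigma_1}}(\llesstar;\HA^*,\HA)$. Once these are in place, the known arithmetical completeness of $\lles$ (again \Cref{Theorem-sigma-provability-HA}) feeds into \Cref{Theorem-Reduction-1} to promote the reduction into full arithmetical completeness of $\llesstar$, and combining this with the soundness yields assertion (2).

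First I would prove the arithmetical soundness. Assuming $\llesstar\vdash A$ and fixing a $\Sigma_1$-substitution $\sigma$, the very definition of $\llesstar$ in \Cref{Def-lles} gives $\lles\vdash A^\Boxin$, whence the soundness half of \Cref{Theorem-sigma-provability-HA} yields $\HA\vdash\sigma_{_{\sf HA}}(A^\Boxin)$. The bridge to $\HA^*$ is then furnished by \Cref{Lemma-Properties of Box translation 2}, which gives $\HA\vdash\sigma_{_{\sf HA}}(A^\Boxin)\lr\sigma_{_{\sf HA^*}}(A)$, so $\HA\vdash\sigma_{_{\sf HA^*}}(A)$, as required.

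Next I would set up the reduction using the identity-type data described in \Cref{Section-Reduction-tool}: take $f(A):=A^\Boxin$ and let each $\fbar$ be the identity on substitutions, so that $\acs{\llesstar}{\HA^*}{\HA}\leq_{f,\fbar[]}\acs{\lles}{\HA}{\HA}$. For R1, the hypothesis $\lles\vdash A^\Boxin$ is precisely the definition of $\llesstar\vdash A$. For R2, I must check that the identity maps $\wit{A^\Boxin;\HA,\HA;\Sigma_1}$ into $\wit{A;\HA^*,\HA;\Sigma_1}$: if $\HA\nvdash\sigma_{_{\sf HA}}(A^\Boxin)$, then the same equivalence from \Cref{Lemma-Properties of Box translation 2} forces $\HA\nvdash\sigma_{_{\sf HA^*}}(A)$, so $\sigma$ does land in the target witness set; the side condition $\fbar(\sigma)=\sigma\in\wit{\sigma}$ is automatic.

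The whole argument is essentially mechanical once \Cref{Lemma-Properties of Box translation 2} is available, so there is no genuine obstacle at this stage; the conceptual work lies entirely in that lemma, namely the fact that the translation $(.)^\Boxin$ exactly absorbs the passage from $\HA^*$ to $\HA$ at the level of $\Sigma_1$-interpretations. The only point I would double-check is that the lemma is stated for arbitrary propositional modal sentences and arbitrary $\Sigma_1$-substitutions, so that it applies uniformly in both the soundness direction and the R2 direction with no restriction on $A$; this is the case, and it is what makes the bijection between the two witness sets the literal identity.
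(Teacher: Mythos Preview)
Your proposal is correct and follows exactly the approach the paper intends: the paper's own proof simply reads ``Similar to the proof of \Cref{Reduction-Sigma-HA*-nat-to-HA-nat} and left to the reader,'' and your write-up is precisely that adaptation, replacing $\nat$ by $\HA$ and $\ihatHSsigma$/$\ihatHSsigmastar$ by $\lles$/$\llesstar$ throughout. You have in fact stated R2 with the correct subscripts (the paper's proof of \Cref{Reduction-Sigma-HA*-nat-to-HA-nat} has them swapped in a typo), so your version is cleaner.
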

\begin{proof}
Similar to the proof of 
\Cref{Reduction-Sigma-HA*-nat-to-HA-nat} and left 
to the reader.
\end{proof}

\begin{lemma}\label{Lemm-llesstar-llessstar}
For every $A\in\lcalb$ we have 
$\llessstar\vdash A$ iff $\llesstar\vdash  A^\Boxout$. \uparan{See  \Cref{Def-lles}}
\end{lemma}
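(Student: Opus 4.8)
The plan is to reduce the statement to the purely syntactic equivalence already recorded in \Cref{Remark-10}, namely ${\sf iK4}\vdash A^\Box\lr (A^\Boxout)^\Boxin$. Once the two definitions are unfolded, the claimed biconditional becomes a single instance of this equivalence, so no semantic or arithmetical input is required.

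First I would unfold the definitions from \Cref{Def-lles}: by definition $\llessstar\vdash A$ holds iff $\lles\vdash A^\Box$, and $\llesstar\vdash A^\Boxout$ holds iff $\lles\vdash (A^\Boxout)^\Boxin$. Here I use that $\llessstar$ and $\llesstar$ are closed under modus ponens, so that $\vdash$ coincides with membership; this is checked exactly as for $\ihatHSsigmastar$ and $\ihatHsigmastar$ after \Cref{Def-truth-prov-logic}. For $\llesstar$ the closure is immediate from $(A\to B)^\Boxin=A^\Boxin\to B^\Boxin$, and for $\llessstar$ it follows from $(A\to B)^\Box=\Boxdot(A^\Box\to B^\Box)$, which yields $A^\Box\to B^\Box$ and hence $B^\Box$ by modus ponens. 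Thus the lemma reduces to the single equivalence
$$\lles\vdash A^\Box\quad\Longleftrightarrow\quad \lles\vdash (A^\Boxout)^\Boxin.$$

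To establish it, I would note that the ${\sf iGL}$ fragment of $\lles$ already contains the schemas $\underline{\sf i},\underline{\sf K},\underline{\sf 4}$, so ${\sf iK4}$ is a subsystem of $\lles$; hence the ${\sf iK4}$-provable biconditional of \Cref{Remark-10} lifts to $\lles\vdash A^\Box\lr (A^\Boxout)^\Boxin$, and closure of $\lles$ under modus ponens makes the two sides of the display interderivable. I do not anticipate a genuine obstacle: no Kripke semantics, no L\"ob axiom, and no arithmetical content of $\lles$ is needed. The only care required is the bookkeeping between the composite translation $(.)^\Boxin$ applied after $(.)^\Boxout$ and the single translation $(.)^\Box$, which \Cref{Remark-10} already settles once and for all via the identity $A^\Box=(A^\Boxin)^\Boxout$ together with the ${\sf iK4}$-equivalence $A^\Box\lr(A^\Boxout)^\Boxin$.
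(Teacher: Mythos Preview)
Your proposal is correct and follows essentially the same route as the paper: unfold the definitions of $\llessstar$ and $\llesstar$, then apply the ${\sf iK4}$-equivalence $A^\Box\lr(A^\Boxout)^\Boxin$ from \Cref{Remark-10} inside $\lles$. Your extra paragraph verifying closure under modus ponens (so that $\vdash$ agrees with membership) is a legitimate point of care that the paper leaves implicit, but the core argument is identical.
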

\begin{proof}
We have the following equivalents:
 $\llessstar\vdash A$ iff $\lles\vdash A^\Box$ iff 
 (by \Cref{Remark-10}) $\lles\vdash (A^\Boxout)^\Boxin$ iff
 $\llesstar\vdash A^\Boxout$.
\end{proof}
\begin{theorem}\label{Reduction-Sigma-HA*-(HA*-to-HA)}
  $\llessstar=\PLS(\HA^*,\HA^*)\leq \PLS(\HA^*,\HA)=\llesstar$.
\end{theorem}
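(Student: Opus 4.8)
The plan is to present this as the \emph{identity} special case of \Cref{Section-Reduction-tool}: I will set $f(A):=A^\Boxout$ and let each $\fbar$ be the identity map on $\Sigma_1$-substitutions. Items (2) and (3) of the notational convention are already available, namely $\llessstar=\PLS(\HA^*,\HA^*)$ from \cite{Sigma.Prov.HA*} and $\llesstar=\PLS(\HA^*,\HA)$ from \Cref{Reduction-Sigma-HA*-HA-to-HA-HA}. Hence the whole content reduces to verifying $\acs{\llessstar}{\HA^*}{\HA^*}\leq_{f,\fbar[]}\acs{\llesstar}{\HA^*}{\HA}$, i.e.\ conditions R1 and R2 of \Cref{Definition-Reduction-PL} for this choice of $f$ and $\fbar$.

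Condition R1 requires that $\llesstar\vdash A^\Boxout$ implies $\llessstar\vdash A$, which is immediate from \Cref{Lemm-llesstar-llessstar} (indeed that lemma gives the equivalence $\llessstar\vdash A\iff\llesstar\vdash A^\Boxout$). For R2, the key observation is the chain
$$\HA^*\vdash\sigmahas{A}\iff\HA\vdash(\sigmahas{A})^{\sf HA}\iff\HA\vdash\sigmahas{A^\Boxout},$$
valid for every $\Sigma_1$-substitution $\sigma$ and modal $A$. The first equivalence is the defining property of the self-completion $\HA^*$ (i.e.\ $\HA^*\vdash C$ iff $\HA\vdash C^{\sf HA}$, \Cref{Definition-First-order-translation}), and the second is precisely \Cref{Label-HA-HA*-Box-trans2}. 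Contrapositively, $\HA\nvdash\sigmahas{A^\Boxout}$ iff $\HA^*\nvdash\sigmahas{A}$, so the identity map sends every $\sigma\in\wit{A^\Boxout;\HA^*,\HA;\Sigma_1}$ into $\wit{A;\HA^*,\HA^*;\Sigma_1}$; and $\sigma\in\wit{\sigma}$ holds trivially. This establishes R2.

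I do not expect a real obstacle once the displayed equivalence is in hand. The only subtle point is the interaction between the two translations at play: on the $\SFU$-side the passage from $\HA^*$ to $\HA$ is governed by the Beeson--Visser translation $(.)^{\sf HA}$, whereas the reduction function $f$ uses the modal translation $(.)^\Boxout$. It is exactly the compatibility of these, i.e.\ $\HA\vdash(\sigmahas{A})^{\sf HA}\leftrightarrow\sigmahas{A^\Boxout}$ from \Cref{Label-HA-HA*-Box-trans2}, that allows the \emph{same} substitution $\sigma$ to witness the $\HA$-refutation of $\sigmahas{A^\Boxout}$ and the $\HA^*$-refutation of $\sigmahas{A}$, which is what makes the identity choice of $\fbar$ legitimate.
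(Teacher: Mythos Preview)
Your proof is correct and follows essentially the same route as the paper: same choice $f(A)=A^\Boxout$ with $\fbar$ the identity, R1 via \Cref{Lemm-llesstar-llessstar}, and R2 via the definition of $\HA^*$ together with \Cref{Label-HA-HA*-Box-trans2}. The only cosmetic difference is that you invoke \cite{Sigma.Prov.HA*} directly for $\llessstar=\PLS(\HA^*,\HA^*)$, whereas the paper re-derives the soundness half of this explicitly before setting up the reduction; since that result is already cited in the introduction, your shortcut is entirely legitimate.
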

\begin{proof}
By \Cref{Reduction-Sigma-HA*-HA-to-HA-HA} 
we have $\PLS(\HA^*,\HA)=\llesstar$. 
It is enough to prove the arithmetical soundness 
$\mathcal{AS}_{_{\Sigma_1}}(\llessstar;\HA^*,\HA^*)$ and the reduction 
$\acs{\llessstar}{\HA^*}{\HA^*}\leq 
\acs{\llesstar}{\HA^*}{\HA}$. 
\\
$\mathcal{AS}_{_{\Sigma_1}}(\llessstar;\HA^*,\HA^*)$: Let 
$\llessstar\vdash A$ and $\sigma$ is a $\Sigma_1$-substitution.  Then $\lles\vdash A^\Box$, and then 
by arithmetical soundness of $\lles$ 
\Cref{Theorem-sigma-provability-HA}, we have 
$\HA\vdash \sigma_{_{\sf HA}} (A^\Box)$. Hence  
\Cref{Label-HA-HA*-Box-trans} implies 
$\HA\vdash \sigma_{_{\sf HA^*}} (A)^{\sf HA}$, 
which implies $\HA^*\vdash \sigma_{_{\sf HA^*}} (A)$.
\\
For the proof of $\acs{\ihatHSsigmastar}{\HA^*}{\nat}\leq_{f,\fbar[]}
\acs{\ihatHSsigma}{\HA}{\nat}$, define $f(A):=A^\Boxout$ and 
$\fbar$ as identity function.
\begin{itemize}
\item[R1.] Let $\llesstar\vdash A^\Boxout$. Then by  
\Cref{Lemm-llesstar-llessstar} we have $\llessstar\vdash A$, as desired.
\item[R2.] Let $\HA\nvdash \sigma_{_{\sf HA^*}} (A^\Boxout)$. Hence by \Cref{Label-HA-HA*-Box-trans2} we have 
$\HA\nvdash (\sigma_{_{\sf HA^*}} (A))^{\sf HA}$, which
implies $\HA^*\nvdash \sigma_{_{\sf HA^*}} (A)$, as desired.\qedhere
\end{itemize}
\end{proof}

\begin{theorem}\label{Reduction-Sigma-HA*-PA-to-HA*-HA}
  $\ihatHsigmastar=\PLS(\HA^*,\PA)\leq \PLS(\HA^*,\HA)=\llesstar$.
\end{theorem}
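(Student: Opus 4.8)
The plan is to run exactly the same scheme as in the base-theory-$\HA$ case (\Cref{Reduction-Sigma-HA-(PA-to-HA)}): take the translation $f(A):=A^\negout$ and let $\fbar$ be the identity, establish arithmetical soundness of $\ihatHsigmastar$ separately, and then harvest arithmetical completeness for free from the reduction together with the already-proved identification $\PLS(\HA^*,\HA)=\llesstar$ (\Cref{Reduction-Sigma-HA*-HA-to-HA-HA}) via \Cref{Theorem-Reduction-1}. Since $\ihatHsigmastar$ and $\llesstar$ are defined through the $(.)^\Boxin$ translation (\Cref{Def-truth-prov-logic,Def-lles}), the whole argument is the $\HA$-argument wrapped one layer deeper by $(.)^\Boxin$, so the only genuinely new ingredient is a small lemma saying that $(.)^\negout$ and $(.)^\Boxin$ commute.

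First I would record the commuting identity $\IPC_\Box\vdash (A^\negout)^\Boxin\lr (A^\Boxin)^\negout$ for every $A\in\lcalb$, by induction on the complexity of $A$. The atomic case and the $\wedge,\vee,\to$ cases are immediate, since both translations push through these connectives (with $(.)^\Boxin$ commuting with $\neg\neg$ because $\bot^\Boxin=\bot$). The one case where the two translations actually interact is $A=\Box B$: here $((\Box B)^\negout)^\Boxin=(\neg\neg\Box B)^\Boxin=\neg\neg\Box B^\Box$ and $((\Box B)^\Boxin)^\negout=(\Box B^\Box)^\negout=\neg\neg\Box B^\Box$, using $(\Box B)^\Boxin=\Box B^\Box$ and the fact that $(.)^\negout$ does not recurse under a box. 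This is the step I expect to be the crux, although it is still routine once one is careful that neither translation descends into the body of the box.

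Next I would prove arithmetical soundness $\mathcal{AS}_{_{\Sigma_1}}(\ihatHsigmastar;\HA^*,\PA)$. By \Cref{Def-truth-prov-logic}, $\ihatHsigmastar\vdash A$ means $\ihatHsigma\vdash A^\Boxin$; since $\ihatHsigma=\PLS(\HA,\PA)$ by \Cref{Theorem-PA-relative}, its soundness gives $\PA\vdash\sigma_{_{\sf HA}}(A^\Boxin)$ for every $\Sigma_1$-substitution $\sigma$. Then \Cref{Lemma-Properties of Box translation 2}, which states $\HA\vdash\sigma_{_{\sf HA}}(A^\Boxin)\lr\sigma_{_{\sf HA^*}}(A)$ and hence holds a fortiori in $\PA$, yields $\PA\vdash\sigma_{_{\sf HA^*}}(A)$, i.e.\ $A\in\PLS(\HA^*,\PA)$.

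Finally I would verify $\acs{\ihatHsigmastar}{\HA^*}{\PA}\leq_{f,\fbar[]}\acs{\llesstar}{\HA^*}{\HA}$ in the sense of \Cref{Definition-Reduction-PL}. For R1, unwinding the definitions gives $\llesstar\vdash A^\negout$ iff $\lles\vdash (A^\negout)^\Boxin$; by the commuting identity this is equivalent to $\lles\vdash (A^\Boxin)^\negout$, which by \Cref{Lemma-neg-translate-modal} applied to $A^\Boxin$ is equivalent to $\ihatHsigma\vdash A^\Boxin$, that is, to $\ihatHsigmastar\vdash A$. For R2, taking $\fbar_A$ the identity (so $\fbar_A(\sigma)=\sigma\in\wit{\sigma}$ trivially), I would chain \Cref{Lemma-neg-translate-1st-order} ($\HA\vdash(\sigma_{_{\sf HA^*}}(A))^\neg\lr\sigma_{_{\sf HA^*}}(A^\negout)$, valid since $\HA^*$ is recursively axiomatizable) with \Cref{Lemma-HA-PA-neg-translation} ($\PA\vdash B$ iff $\HA\vdash B^\neg$) to obtain that $\HA\nvdash\sigma_{_{\sf HA^*}}(A^\negout)$ is equivalent to $\PA\nvdash\sigma_{_{\sf HA^*}}(A)$, which gives the required map $\wit{A^\negout;\HA^*,\HA;\Sigma_1}\to\wit{A;\HA^*,\PA;\Sigma_1}$. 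Combining this reduction with the completeness half of $\PLS(\HA^*,\HA)=\llesstar$ through \Cref{Theorem-Reduction-1} produces arithmetical completeness of $\ihatHsigmastar$ for $(\HA^*,\PA)$, and together with the soundness above this settles $\ihatHsigmastar=\PLS(\HA^*,\PA)$ and the asserted reduction.
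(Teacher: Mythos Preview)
Your proof is correct and matches the paper's approach almost exactly: the same $f(A)=A^\negout$ with $\fbar$ the identity, the same commuting identity $(A^\Boxin)^\negout=(A^\negout)^\Boxin$ (which the paper asserts without the inductive justification you supply), and the same appeals to \Cref{Lemma-neg-translate-modal} for R1 and to \Cref{Lemma-neg-translate-1st-order,Lemma-HA-PA-neg-translation} for R2. The only organizational difference is that the paper imports the identification $\ihatHsigmastar=\PLS(\HA^*,\PA)$ ready-made from \Cref{Reduction-Sigma-HA*-PA-to-HA-PA}, whereas you re-derive it here by proving soundness directly and then pulling completeness through the reduction via \Cref{Theorem-Reduction-1}; either route is fine.
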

\begin{proof}
$\ihatHsigmastar=\PLS(\HA^*,\PA)$ and $\llesstar=\PLS(\HA^*,\HA)$, by \Cref{Reduction-Sigma-HA*-PA-to-HA-PA,Reduction-Sigma-HA*-HA-to-HA-HA} holds. 
Given $A$, define  $f(A):=A^\negout$ and $\fbar$ as identity function.
\begin{itemize}
\item[R1.] By definition of 
$\ihatHsigmastar$, we have $\ihatHsigmastar\vdash A$ iff 
$\ihatHsigma\vdash A^\Boxin$.  The latter, by \Cref{Lemma-neg-translate-modal} is equivalent to $\lles\vdash (A^\Boxin)^\negout$. Since $(A^\Boxin)^\negout=(A^\negout)^\Boxin$, the latter is equivalent to $\llesstar\vdash A^\negout$.  
\item[R2.]  By \Cref{Lemma-neg-translate-1st-order,Lemma-HA-PA-neg-translation}.\qedhere
\end{itemize}
\end{proof}

\begin{theorem}\label{Reduction-Sigma-HA*-(HA*-to-nat)}
$\llessstar=\PLS(\HA^*,\HA^*)\leq
\PLS(\HA^*,\nat)=\ihatHSsigmastar$.
\end{theorem}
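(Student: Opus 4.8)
The plan is to mirror the proof of \Cref{Reduction-Sigma-HA-(HA-to-nat)} line for line, with $\HA$ replaced by $\HA^*$ throughout, using the box translation as the reduction map. The two endpoint characterizations are already in hand: $\PLS(\HA^*,\HA^*)=\llessstar$ is the main result of \cite{Sigma.Prov.HA*}, and $\PLS(\HA^*,\nat)=\ihatHSsigmastar$ was just established in \Cref{Reduction-Sigma-HA*-nat-to-HA-nat}. Hence only the reduction $\acs{\llessstar}{\HA^*}{\HA^*}\leq_{f,\fbar[]}\acs{\ihatHSsigmastar}{\HA^*}{\nat}$ remains to be verified, and the claim then follows from \Cref{Theorem-Reduction-1}. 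I would take $f(A):=\Box A$ and let $\fbar[]$ be the identity on $\Sigma_1$-substitutions.

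The single fact driving both conditions R1 and R2 of \Cref{Definition-Reduction-PL} is the unfolding $\sigma_{_{\sf HA^*}}(\Box A)=\Prv{\HA^*}{\sigma_{_{\sf HA^*}}(A)}$ from \Cref{Definition-Arithmetical substitutions}, combined with the observation that $\nat\models\Prv{\HA^*}{B}$ holds exactly when $\HA^*\vdash B$. For R1, suppose $\ihatHSsigmastar\vdash\Box A$. By arithmetical soundness of $\PLS(\HA^*,\nat)=\ihatHSsigmastar$, for every $\Sigma_1$-substitution $\sigma$ we have $\nat\models\sigma_{_{\sf HA^*}}(\Box A)$; by the displayed equivalence this says $\HA^*\vdash\sigma_{_{\sf HA^*}}(A)$ for all such $\sigma$, i.e.\ $A\in\PLS(\HA^*,\HA^*)$. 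Arithmetical completeness of $\PLS(\HA^*,\HA^*)=\llessstar$ then yields $\llessstar\vdash A$, which is precisely R1.

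For R2, suppose $\sigma\in\wit{\Box A;\HA^*,\nat;\Sigma_1}$, that is $\nat\not\models\sigma_{_{\sf HA^*}}(\Box A)$. The same unfolding shows this is exactly $\HA^*\nvdash\sigma_{_{\sf HA^*}}(A)$, so $\sigma\in\wit{A;\HA^*,\HA^*;\Sigma_1}$; and since $\fbar[A]$ is the identity, trivially $\fbar[A](\sigma)=\sigma\in\wit{\sigma}$, so R2 holds. I do not anticipate a genuine obstacle: the only point deserving care is the legitimacy of the $\HA^*$-provability predicate and the passage $\nat\models\Prv{\HA^*}{\cdot}\Leftrightarrow\HA^*\vdash(\cdot)$, which rests on $\HA^*=\{A\mid\HA\vdash A^{\sf HA}\}$ being recursively enumerable and hence $\Delta_0(\exp)$-axiomatizable by Craig's trick. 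It is exactly to route around a direct manipulation of this predicate that I prefer the indirect soundness-plus-completeness argument for R1, as was done in \Cref{Reduction-Sigma-HA-(HA-to-nat)}.
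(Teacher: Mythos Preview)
Your proposal is correct and essentially identical to the paper's own proof: the paper also takes $f(A):=\Box A$ with $\fbar$ the identity, verifies R1 via the indirect soundness-of-$\ihatHSsigmastar$ plus completeness-of-$\llessstar$ argument, and verifies R2 by unfolding $\sigma_{_{\sf HA^*}}(\Box A)$. The only cosmetic difference is that the paper cites its own \Cref{Reduction-Sigma-HA*-(HA*-to-HA)} for $\llessstar=\PLS(\HA^*,\HA^*)$ rather than \cite{Sigma.Prov.HA*} directly, which is immaterial.
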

\begin{proof}
By 
\Cref{Reduction-Sigma-HA*-nat-to-HA-nat,Reduction-Sigma-HA*-(HA*-to-HA)} 
we have $\PLS(\HA^*,\nat)=\ihatHSsigmastar$
and $\llessstar=\PLS(\HA^*,\HA^*)$.
We must show
$\acs{\llessstar}{\HA^*}{\HA^*}\leq_{f,\fbar[]} \acs{\ihatHSsigmastar}{\HA}{\nat}$.  
Given $A\in\lcalb$, define $f(A)=\Box A$ and $\fbar$ as identity function.  
\begin{itemize}
\item[R1.] Let $\ihatHSsigmastar\vdash \Box A$. By soundness
of $\ihatHSsigmastar=\PLS(\HA^*,\nat)$, for every $\Sigma_1$-substitution $\sigma$
we have $\nat\models \sigma_{_{\sf HA^*}}(\Box A)$ and hence 
$\HA^*\vdash \sigma_{_{\sf HA^*}}(A)$. Then by arithmetical 
completeness of $\PLS(\HA^*,\HA^*)$, 
we have $\llessstar\vdash A$.\\
One also may prove this item with a direct propositional argument. For simplicity reasons, we chose the indirect way.
\item[R2.] If  $\nat\not\models \sigma_{_{\sf HA^*}}(\Box A)$ evidently we have $\HA^*\nvdash \sigma_{_{\sf HA^*}}(A)$. \qedhere
\end{itemize}
\end{proof}
\section{Relative provability logics for ${\sf PA}$}
In this section, we characterize $\PL(\PA,\HA)$ and 
$\PLS(\PA,\HA)$,
 the provability logic and $\Sigma_1$-provability logic 
 of $\PA$
relative in $\HA$. We show that $\PL(\PA,\HA)=\iglphat$ and 
$\PLS(\PA,\HA)=\iglphatsigma$. Also we show that all 
of the six
 ($\Sigma_1$-) provability logics of $\PA$ relative in $\PA,\HA,\mathbb{N}$  are reducible to $\PLS(\HA,\mathbb{N})$:

\begin{figure}[h]
\[ \begin{tikzcd}[column sep=5em, row sep=4em] 
 \tcboxmath{\PL(\PA,\HA)} 
\arrow[d, "\ref{Reduction-PA-HA-to-Sigma-PA-HA} "  ', "\tau" ]
& 
 \tcboxmath{\PL(\PA,\PA) }
 \arrow[rr,   "\Box (.)",  " \ref{Reduction-Sigma-PA-(PA-to-nat)}" ' ]
\arrow[d, "\ref{Reduction-Sigma-PA-(PA-nat)}.2" ' , "\tau"  ]
&  &
 \tcboxmath{\PL(\PA,\nat) }
\arrow[d, "\text{\ref{Reduction-Sigma-PA-(PA-nat)}.1}" ' , "\tau"]
\\
 \tcboxmath{\PLS(\PA,\HA) }
\arrow[dr, "\ref{Reduction-Sigma-(PA-to-HA)-HA} "  near end, "(.)^\dagger" ' near end]
& 
 \tcboxmath{\PLS(\PA,\PA) }
 \arrow[rr,   "\Box (.)",  " \ref{Reduction-Sigma-PA-(PA-to-nat)}" ' ]
 \arrow[l,"(.)^\negout" ' , "\ref{Reduction-Sigma-PA-(PA-to-HA)}"] 
\arrow[dl, "\ref{Reduction-Sigma-(PA-to-HA)-PA}" ' near end, "(.)^\dagger"  near end]
&  &
 \tcboxmath{\PLS(\PA,\nat) }
\arrow[d, "\text{\ref{Reduction-Sigma-(PA-to-HA)-nat}}" , "(.)^\dagger" ']
\\
\tcboxmath{\PLS(\HA,\PA) }
\arrow[r , "(.)^\negout", "\text{\ref{Reduction-Sigma-HA-(PA-to-HA)}}" ']  
&\tcboxmath{\PLS(\HA,\HA) }
\arrow[rr , "\Box(.)", "\text{\ref{Reduction-Sigma-HA-(HA-to-nat)}}" '] 
&  & \tcbhighmath{\PLS(\HA,\nat)}
\end{tikzcd}
\]
\caption[Diagram of  reductions for  relative provability logics of $\PA$]{\label{Diagram-full} Reductions for  relative provability logics of $\PA$}
\end{figure}
Let us first review some well-known results:
\begin{theorem}\label{Solovey}
We have the following provability logics:
\begin{itemize}
\item $\GL$ is the provability logic of $\PA$, i.e.~$\PL(\PA,\PA)=\GL$. 
\cite{Solovay}
\item $\GLS$ is the truth provability logic of $\PA$, i.e.~$\PL(\PA,\mathbb{N})=\GLS$.
\cite{Solovay}
\item ${\GLV}$ is the $\Sigma_1$-provability logic of $\PA$, 
i.e.~$\PLS(\PA,\PA)={\GLV}$. \cite{Visser82}
\item ${\GLSV}$ is the truth $\Sigma_1$-provability logic of $\PA$,
i.e.~$\PLS(\PA,\mathbb{N})={\GLSV}$. \cite{Visser82}
\end{itemize}
\end{theorem}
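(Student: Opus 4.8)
This theorem collects four classical arithmetical completeness results, so the plan is to prove each equality $\PLG(\PA,\SFU)=\SFV$ by separately establishing arithmetical soundness $\AS$ and arithmetical completeness $\AC$. The soundness directions are the routine halves. Modus ponens preserves provable-validity immediately, and the boxed (overlined) schemas that stand in for necessitation in this paper's formulation are verified by applying the relevant derivability condition inside the scope of $\Box$. The distribution schema $\underline{\sf K}$ reflects formalized modus ponens in $\PA$; the transitivity schema $\underline{\sf 4}=\Box A\to\Box\Box A$ reflects formalized provable $\Sigma_1$-completeness applied to the $\Sigma_1$ provability predicate (\Cref{Lemma-bounded Sigma completeness}); and the L\"ob schema $\underline{\sf L}$ is precisely the formalized L\"ob theorem. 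Together these give soundness of $\GL$ for $\PL(\PA,\PA)$. For $\PL(\PA,\mathbb{N})$ the extra reflection schema $\underline{\sf S}=\Box A\to A$ is valid in $\mathbb{N}$, since $\mathbb{N}\models\sigmapa{\Box A}$ forces $\PA\vdash\sigmapa{A}$ and hence $\mathbb{N}\models\sigmapa{A}$ by the soundness of $\PA$. For the two $\Sigma_1$-variants the atomic completeness schema $\underline{\sf C_a}=p\to\Box p$ becomes sound because a $\Sigma_1$-substitution sends $p$ to a $\Sigma_1$ sentence, on which $\PA\vdash \sigma(p)\to\Prv{PA}{\sigma(p)}$ holds by \Cref{Lemma-bounded Sigma completeness}; adding reflection on top yields soundness of $\GLSV$ for $\PLS(\PA,\mathbb{N})$.

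The completeness directions carry the real content, and all four are handled by the Solovay construction. Starting from $\SFV\nvdash A$, I would fix a finite, transitive, irreflexive, tree-shaped Kripke countermodel $\kcal$ of $A$ (such models exist since the relevant logics enjoy the finite model property over these frames), adjoin a fresh bottom node, and define a recursive \emph{Solovay function} $F$ by the recursion theorem, together with the limit statements $L=\alpha :\equiv \exists x\,\forall y\geq x\,(F(y)=\alpha)$. For $\PL(\PA,\PA)$ the interpretation is $\sigma(p):=\bigvee_{\kcal,\alpha\Vdash p}(L=\alpha)$, and the key embedding lemma is that $\PA$ proves $L=\alpha\to\sigmapa{B}$ whenever $\kcal,\alpha\Vdash B$ and proves $L=\alpha\to\neg\,\sigmapa{B}$ whenever $\kcal,\alpha\nVdash B$, for every subformula $B$ of $A$ and every node $\alpha$. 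Since $\PA+(L=\alpha)$ is consistent for each $\alpha$ and the root of $\kcal$ refutes $A$, this delivers $\PA\nvdash\sigmapa{A}$, i.e.~$\AC$ for $\GL$. For $\PL(\PA,\mathbb{N})$ one arranges, via Solovay's second construction, that $\mathbb{N}$ realizes the node refuting $A$, so that $\mathbb{N}\models\neg\,\sigmapa{A}$, using the reflection validity already verified.

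For the two $\Sigma_1$-provability logics the construction is the same, with one decisive change: the interpretation is taken to be $\sigma(p):=\bigvee_{\kcal,\alpha\Vdash p}\exists x\,(F(x)=\alpha)$, which is genuinely $\Sigma_1$ because $F$ is recursive, whereas $L=\alpha$ is not. This $\Sigma_1$-ness is exactly what makes $\underline{\sf C_a}$ hold on the nose, so that the completeness target becomes $\GLV$ (that is, $\GL$ with atomic completeness) for $\PLS(\PA,\PA)$ and $\GLSV$ for $\PLS(\PA,\mathbb{N})$, rather than plain $\GL$ and $\GLS$. The same embedding lemma and the same consistency and realization facts then yield $\PA\nvdash\sigma(A)$ and $\mathbb{N}\models\neg\,\sigma(A)$ respectively; this is the classical boolean analogue of the machinery recorded in items~1--8 of \Cref{Section-arithmetical-interpretations}.

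The \textbf{main obstacle} throughout is the formalized verification, inside $\PA$, of the Solovay function's behaviour: that $F$ has a well-defined limit, that $\PA$ proves the disjunction $\bigvee_\alpha(L=\alpha)$ together with the transition laws relating $L=\alpha$ to provability of $\bigvee_{\alpha\R\beta}(L=\beta)$, and that these can be threaded through the inductive embedding lemma for the boxed subformulae. The $\Sigma_1$-cases add only the bookkeeping that the whole substitution stays $\Sigma_1$, and the truth-cases add the selection of the $\mathbb{N}$-realized node. Since all of this is due to Solovay \cite{Solovay} and Visser \cite{Visser82}, the statement is recorded here with a citation rather than reproved.
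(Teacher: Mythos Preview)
Your proposal is correct and aligns with the paper's treatment: the paper does not prove this theorem at all but simply records the four classical results with citations to \cite{Solovay} and \cite{Visser82}, exactly as you note in your final paragraph. Your sketch of the Solovay construction and the soundness verifications is accurate extra content beyond what the paper supplies.
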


\begin{definition}\label{Definition-boxin-sub}
A propositional modal substitution 
$\tau$ is called 
 $(.)^\Boxin$-substitution, if  
for every atomic variable $p$, there is some $B$ such that
${\sf iK4}+{\sf CP_a}\vdash\tau(p)\lr B^\Boxin$ and 
${\sf iK4}\vdash\Boxdot B^\Boxin \lr B^\Box$.
\end{definition}

\begin{lemma}\label{Lemma-Boxin-sub}
For every $(.)^\Boxin$-substitution $\tau$ and every 
modal proposition $A$, we have 
${\sf iK4V}\vdash \tau(A^\Box)\lr 
\tau(A)^\Box$ and 
${\sf iK4V}\vdash \tau(A^\Boxin)\lr 
\tau(A)^\Boxin$.
\end{lemma}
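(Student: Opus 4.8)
The plan is to prove the two equivalences \emph{simultaneously} by induction on the complexity of $A$, since the boxed case of the $(.)^\Boxin$-identity will feed on the $(.)^\Box$-identity supplied by the induction hypothesis. Throughout I use that in all the systems here necessitation is admissible on theorems (the barred axioms $\overline{\sf K},\overline{\sf V},\dots$ are present, so from $\vdash X$ one gets $\vdash\Box X$), whence ${\sf iK4}$ proves congruence of $\lr$ under $\wedge,\vee,\to$ and $\Box$; this is all the inductive cases need. Since $\tau$ commutes with every connective and with $\Box$, and $(.)^\Box,(.)^\Boxin$ commute with $\wedge,\vee$, the conjunction and disjunction cases are immediate. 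For $A=A_1\to A_2$ one has $\tau(A^\Box)=\Boxdot(\tau(A_1^\Box)\to\tau(A_2^\Box))$ and $\tau(A)^\Box=\Boxdot(\tau(A_1)^\Box\to\tau(A_2)^\Box)$, so the induction hypothesis plus box-congruence closes the $(.)^\Box$ part, while the $(.)^\Boxin$ part is easier because $(.)^\Boxin$ leaves the outer $\to$ intact. For $A=\Box C$ both translations send $A$ to $\Box C^\Box$, so $\tau(A^\Box)=\tau(A^\Boxin)=\Box\tau(C^\Box)$ and $\tau(A)^\Box=\tau(A)^\Boxin=\Box\tau(C)^\Box$, and the $(.)^\Box$ induction hypothesis for $C$ followed by necessitation settles both identities at once (this is exactly why the induction must be run simultaneously).

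The whole content therefore sits in the base case $A=p$, which is where the hypothesis that $\tau$ is a $(.)^\Boxin$-substitution (\Cref{Definition-boxin-sub}) is used. Unwinding the definitions I must show ${\sf iK4V}\vdash\Boxdot\tau(p)\lr(\tau(p))^\Box$ and ${\sf iK4V}\vdash\tau(p)\lr(\tau(p))^\Boxin$. Fix $B$ with ${\sf iK4}+{\sf CP_a}\vdash\tau(p)\lr B^\Boxin$ and ${\sf iK4}\vdash\Boxdot B^\Boxin\lr B^\Box$. The key observation is that the $(.)^\Box$-translation \emph{absorbs} the atomic completeness principle: in the spirit of \Cref{Lemma-3}, if ${\sf iK4}+{\sf CP_a}\vdash C$ then ${\sf iK4}\vdash C^\Box$, the only new point beyond \Cref{Lemma-3} being that each instance $p\to\Box p$ of ${\sf CP_a}$ translates to the ${\sf iK4}$-theorem $\Boxdot(\Boxdot p\to\Box\Boxdot p)$. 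Applying this to the first condition, and using that $(.)^\Box$ commutes with $\wedge$, gives ${\sf iK4}\vdash(\tau(p))^\Box\lr(B^\Boxin)^\Box$; an idempotency computation based on \Cref{Remark-10} and \Cref{Lemma-BoxdotABox-ABox} collapses $(B^\Boxin)^\Box$ to $B^\Box$, so ${\sf iK4}\vdash(\tau(p))^\Box\lr B^\Box$, and the second condition then yields ${\sf iK4}\vdash(\tau(p))^\Box\lr\Boxdot B^\Boxin$. This is the common skeleton of both targets.

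It remains to transport these identities to the untranslated level. For the $(.)^\Box$-identity I pair ${\sf iK4}\vdash(\tau(p))^\Box\lr B^\Box\lr\Boxdot B^\Boxin$ with $\tau(p)\lr B^\Boxin$; here one must check $\Boxdot\tau(p)\lr\Boxdot B^\Boxin$, which no longer needs the full strength of ${\sf CP_a}$ because the outer $\Boxdot$ (through $\underline{\sf 4}$ and \Cref{Lemma-BoxdotABox-ABox}) swallows the atomic completeness. For the $(.)^\Boxin$-identity $\tau(p)\lr(\tau(p))^\Boxin$ I proceed analogously, using \Cref{Lemma-A-ABoxin} restricted to the subformulas of $\tau(p)$ that actually occur, together with the literal identity $(\tau(p))^\Box=((\tau(p))^\Boxin)^\Boxout$ from \Cref{Remark-10} and the observation in \Cref{Lemma-4-2} that $(E^\Box)^\Boxout\lr E^\Box$. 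Where a residual equivalence remains locked inside a box, the schema ${\sf V}$ (the passage to $A^-$) is available in ${\sf iK4V}$ to reconcile $\tau(p)$ with $B^\Boxin$ there.

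The genuine obstacle, I expect, is precisely this final transport step in the base case: unlike $(.)^\Box$, the translation $(.)^\Boxin$ does \emph{not} absorb ${\sf CP_a}$ — the instance $p\to\Box p$ maps to $p\to\Box\Boxdot p$, which is not ${\sf iK4}$-valid — so one cannot simply push the ${\sf CP_a}$-equivalence $\tau(p)\lr B^\Boxin$ through $(.)^\Boxin$. The careful bookkeeping of how the two conditions of \Cref{Definition-boxin-sub} interact with the three translations, and the verification that ${\sf V}$ supplies exactly the missing in-box equivalence, is where the real work lies; the inductive superstructure around it is routine.
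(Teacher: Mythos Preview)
The inductive scaffolding is fine and close to the paper's (which runs the $(.)^\Box$ induction first, then the $(.)^\Boxin$ induction, using the former in the boxed case of the latter; your simultaneous version works equally well). The problem lies in your atomic $(.)^\Boxin$ case, where you manufacture an obstacle and then reach for the wrong tool.

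You read the ${\sf V}$ in ${\sf iK4V}$ as the Visser schema $A\lr A^-$, observe that $(.)^\Boxin$ sends $p\to\Box p$ to the ${\sf iK4}$-unprovable $p\to\Box\Boxdot p$, conclude that the hypothesis $\tau(p)\lr B^\Boxin$ cannot be pushed through $(.)^\Boxin$, and hope that ${\sf V}$ will supply the missing in-box equivalence. But $A\lr A^-$ relates a formula to its $\TNNIL$-approximation and says nothing whatsoever about $\tau(p)$ versus $B^\Boxin$; it cannot close that gap, and your sketch of how it would do so is not an argument.

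What dissolves the obstacle is that the working system contains ${\sf CP_a}$. The paper's own proof appeals directly to ${\sf iK4}+{\sf CP_a}\vdash\tau(p)\lr B^\Boxin$ from \Cref{Definition-boxin-sub}, and every later application of the lemma is prefaced with ``since ${\sf iK4}+{\sf CP_a}$ is included in \ldots''. In ${\sf iK4}+{\sf CP_a}$ the formula $p\to\Box\Boxdot p$ \emph{is} derivable: from $p$ one gets $\Box p$, and together with $\Box(p\to\Box p)$ this yields $\Box(p\wedge\Box p)$. Hence $(.)^\Boxin$ does preserve derivability in the relevant system, and the paper's atomic $(.)^\Boxin$ step is short: applying $(.)^\Boxin$ to $\tau(p)\lr B^\Boxin$ gives $\tau(p)^\Boxin\lr(B^\Boxin)^\Boxin$, and the ${\sf iK4}$-idempotency $(B^\Boxin)^\Boxin\lr B^\Boxin$ closes the loop. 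Your detour through \Cref{Remark-10}, \Cref{Lemma-4-2}, and an imagined in-box use of $A\lr A^-$ is unnecessary, and the last of these would not work in any case.
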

\begin{proof}
First by induction on the complexity of $B$ we show
${\sf iK4V}\vdash \tau(B^\Box)\lr \tau(B)^\Box$. 
All cases are easy, except for atomic $B$, which holds by  
existence of some $C$ such that
${\sf iK4V}\vdash\tau(B)\lr C^\Boxin$ and 
${\sf iK4}\vdash\Boxdot C^\Boxin \lr C^\Box$.
\\
Then we use induction on the complexity of $A$ to deduce the second  assertion of this lemma. The only non-trivial cases are atomic and boxed cases:
\begin{itemize}[leftmargin=*]
\item $A$ is atomic. Since ${\sf iK4}\vdash B^\Boxin\leftrightarrow (B^\Boxin)^\Boxin$ for every $B$, and 
${\sf iK4V}\vdash \tau(A)\lr B^\Boxin$, we have the 
desired result.
\item $A=\Box B$. Easily deduced by 
${\sf iK4V}\vdash \tau(B^\Box)\lr \tau(B)^\Box$.
\end{itemize}
\end{proof}

\noindent The following remark, will be helpful for later reductions 
of provability logics in 
\cref{section-PA*}.
\begin{remark}\label{Remark-reduction-GL-GLS}
For every modal proposition $A$, $\GL\vdash A$ ($\GLS\vdash A$) 
iff for every 
 $(.)^\Boxin$-substitution $\tau$ we have 
${\GLV}\vdash \tau(A)$ (${\GLSV}\vdash \tau(A)$). 
\end{remark}
\begin{proof}
See \cite[Lemmas.~3.1 and 3.3]{reduction} .
\end{proof}
\begin{lemma}\label{Lemma-GLS-GL-Boxed}
For every $A\in\lcal_\Box$,
\begin{itemize}
\item $\GLS\vdash \Box A$ iff $\GL\vdash A$,
\item ${\GLSV}\vdash \Box A$ iff ${\GLV}\vdash A$.
\end{itemize}
\end{lemma}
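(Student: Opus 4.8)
The plan is to deduce both equivalences from the already-known arithmetical characterizations of the provability logics of $\PA$ collected in \Cref{Solovey}, together with the standard fact that the provability predicate is correct on the standard model. Only the direction $\Rightarrow$ carries any content: the converse $\GL\vdash A\Rightarrow\GLS\vdash\Box A$ (and its $\Sigma_1$ analogue) is immediate, since $\GL\vdash A$ yields $\GL\vdash\Box A$ by necessitation (recall that our modus-ponens-only $\GL$ has, by the remark after \Cref{Def-Axiom schema and modal theories}, the same theorems as the usual formulation closed under necessitation), and $\GLS\supseteq\GL$.

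For the first item I would argue by the chain of equivalences
\begin{align*}
\GLS\vdash\Box A
&\iff \forall\sigma\ \big(\mathbb{N}\models \sigma_{_\PA}(\Box A)\big)\\
&\iff \forall\sigma\ \big(\mathbb{N}\models \Prv{PA}{\sigma_{_\PA}(A)}\big)\\
&\iff \forall\sigma\ \big(\PA\vdash \sigma_{_\PA}(A)\big)\\
&\iff \GL\vdash A,
\end{align*}
where $\sigma$ ranges over all arithmetical substitutions. The first equivalence is $\GLS=\PL(\PA,\mathbb{N})$ unwound through \Cref{Definition-Provability Logic} (recall $\PL(\PA,\mathbb{N})$ abbreviates $\PL(\PA,{\sf Theory}(\mathbb{N}))$, so $\SFU\vdash\psi$ here means $\mathbb{N}\models\psi$); the second is merely the clause $\sigma_{_\PA}(\Box A)=\Prv{PA}{\sigma_{_\PA}(A)}$ of \Cref{Definition-Arithmetical substitutions}; the third is the correctness of the provability predicate, i.e.\ $\mathbb{N}\models\Prv{PA}{\varphi}$ iff $\PA\vdash\varphi$; and the last is $\GL=\PL(\PA,\PA)$. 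Both end characterizations are exactly the ones supplied by \Cref{Solovey}.

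The second item would be obtained by repeating this computation verbatim with $\sigma$ restricted to $\Sigma_1$-substitutions, replacing $\PL$ by $\PLS$ and invoking $\GLSV=\PLS(\PA,\mathbb{N})$ and $\GLV=\PLS(\PA,\PA)$ from \Cref{Solovey}; the correctness of $\Prv{PA}{\cdot}$ on $\mathbb{N}$ is insensitive to the complexity class of $\sigma$, so the identical chain goes through. The only step with real substance is the passage from $\mathbb{N}\models\Prv{PA}{\sigma_{_\PA}(A)}$ to $\PA\vdash\sigma_{_\PA}(A)$, which is where \Cref{Solovey} does the heavy lifting. I expect this to be the main obstacle precisely in the sense that a \emph{purely modal} proof avoiding \Cref{Solovey} would have to manufacture, for each finite conjunction of reflection instances $\bigwedge_i(\Box B_i\to B_i)$ arising from a $\GLS$-derivation of $\Box A$, a finite transitive irreflexive Kripke model carrying a point that satisfies all these instances yet refutes $\Box A$; constructing such a reflection-satisfying point is the delicate Solovay-style surgery already encapsulated in \Cref{Solovey}, so routing through the arithmetical characterizations is by far the cleanest path.
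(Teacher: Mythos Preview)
Your proof is correct and follows essentially the same route as the paper: both argue the nontrivial direction by passing through the arithmetical characterizations $\GLS=\PL(\PA,\mathbb{N})$ and $\GL=\PL(\PA,\PA)$ from \Cref{Solovey}, using correctness of the provability predicate on $\mathbb{N}$ to turn $\mathbb{N}\models\sigma_{_\PA}(\Box A)$ into $\PA\vdash\sigma_{_\PA}(A)$. One small remark: the step you single out as having ``real substance'' (truth of $\Prv{PA}{\varphi}$ in $\mathbb{N}$ iff $\PA\vdash\varphi$) is in fact elementary; the genuine content lies in the first and last equivalences, where Solovay's theorem is invoked---the paper likewise notes that a purely modal argument via the Smor\'ynski operation is possible but takes the arithmetical shortcut instead.
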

\begin{proof}
The proof of second item is similar to the first one. Here 
we only treat the first item.
Obviously, $\GL\vdash A$ implies $\GLS\vdash \Box A$. 
For  a direct proof of the other way around, 
one may  use of Smor\'ynski's operation. However, now  that we  
enjoy the arithmetical soundness of  $\PL(\PA,\nat)=\GLS$, 
from $\GLS\vdash \Box A$  for every $\sigma$  we have
$\nat\models \sigma_{_{\sf PA}}(\Box A)$   and hence 
$\PA\vdash  \sigma_{_{\sf PA}}( A)$. 
From the arithmetical completeness of
$\GL=\PL(\PA,\PA)$, we get $\GL\vdash A$.
\end{proof}
\noindent In the following theorem, we will show that ${\GLSV}$
is the hardest provability logic among $\GL$, ${\GLV}$, $\GLS$ and ${\GLSV}$.
\begin{theorem}\label{Reduction-Sigma-PA-(PA-nat)}
We have the following reductions:
\begin{enumerate}
\item $\PL(\PA,\mathbb{N})\leq \PLS(\PA,\mathbb{N})$,
\item $\PL(\PA,\PA)\leq \PLS(\PA,\PA)$.
\end{enumerate}
\end{theorem}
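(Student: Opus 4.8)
The plan is to realise both items as instances of the \emph{substitution} template of \Cref{Definition-Reduction-PL} introduced in \Cref{Section-Reduction-tool}, feeding them the $(.)^{\Boxin}$-substitution produced by \Cref{Remark-reduction-GL-GLS}. First I would record, using \Cref{Solovey}, that $\PL(\PA,\PA)=\GL$, $\PLS(\PA,\PA)=\GLV$, $\PL(\PA,\nat)=\GLS$ and $\PLS(\PA,\nat)=\GLSV$. The decisive structural observation is that in both items the interpreted theories coincide ($\SFT=\SFT'=\PA$) and the ambient theories coincide ($\SFU=\SFU'$, equal to $\PA$ in item 2 and to $\nat$ in item 1); only the modal system ($\GL$ versus $\GLV$, or $\GLS$ versus $\GLSV$) and the substitution class $\Gamma$ (all sentences versus $\Sigma_1$) change. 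This coincidence is what makes the R2 part of the reduction essentially automatic.

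For item 2 I would define, for each $A$, a $(.)^{\Boxin}$-substitution $\tau_A$ by cases: if $\GL\nvdash A$ then, by the contrapositive of \Cref{Remark-reduction-GL-GLS}, there is a $(.)^{\Boxin}$-substitution $\tau$ with $\GLV\nvdash\tau(A)$, and I set $\tau_A:=\tau$; otherwise I set $\tau_A$ to be the identity. I then put $f(A):=\tau_A(A)$ and $\bar f_A(\sigma):=\sigma_{_{\sf PA}}\!\circ\tau_A$, which is exactly the substitution special case (the template explicitly allows $\tau$ to depend on $A$). Condition R1, that $\GLV\vdash f(A)$ implies $\GL\vdash A$, then holds by construction: whenever $\GL\nvdash A$ we arranged $\GLV\nvdash f(A)=\tau_A(A)$. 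Item 1 is handled identically, replacing $\GL,\GLV$ by $\GLS,\GLSV$ and invoking the parenthetical ($\GLS$/$\GLSV$) clause of \Cref{Remark-reduction-GL-GLS}, with $\SFU=\SFU'=\nat$.

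The remaining condition R2 I would verify uniformly for both items. An easy induction on $A$ gives the composition identity $(\sigma_{_{\sf PA}}\!\circ\tau_A)_{_{\sf PA}}(A)=\sigma_{_{\sf PA}}(\tau_A(A))$, since both sides distribute over the connectives and agree on the boxed clause through $\Prv{PA}{\cdot}$. Hence if $\sigma\in\wit{\tau_A(A);\PA,\SFU;\Sigma_1}$, i.e.\ $\SFU\nvdash\sigma_{_{\sf PA}}(\tau_A(A))$, then $\SFU\nvdash(\sigma_{_{\sf PA}}\!\circ\tau_A)_{_{\sf PA}}(A)$, so $\sigma_{_{\sf PA}}\!\circ\tau_A\in\wit{A;\PA,\SFU;\text{all sentences}}$; and $\sigma_{_{\sf PA}}\!\circ\tau_A\in\wit{\sigma}$ directly from the definition of the propositional closure (take $\alpha=\sigma$, theory $\PA$, substitution $\tau_A$). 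The point worth stressing is that although $\sigma$ ranges only over $\Sigma_1$-substitutions, the composite $\sigma_{_{\sf PA}}\!\circ\tau_A$ sends an atom to a genuinely unrestricted sentence (for instance an image $\Box q\to q$ becomes $\Prv{PA}{\sigma(q)}\to\sigma(q)$), which is precisely why a $\Sigma_1$-refutation of $\tau_A(A)$ upgrades to a refutation of $A$ in the full provability logic.

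The genuine mathematical content is thus concentrated in \Cref{Remark-reduction-GL-GLS}, imported from \cite{reduction}; everything else is bookkeeping. The main obstacle is the backward (completeness) direction of that remark that powers R1: one must show that some $(.)^{\Boxin}$-instance of a $\GL$-nontheorem is already a $\GLV$-nontheorem. If one wants the reduction to be effective, as is needed for the decidability transfer of \Cref{Theorem-decidability-Reduction}, the case-split definition of $\tau_A$ should be replaced by a single explicit universal substitution, e.g.\ $\tau(p):=\Box q_p\to q_p$ with the $q_p$ fresh and distinct. One then has to check both that this $\tau$ is a $(.)^{\Boxin}$-substitution, via the ${\sf iK4}$-equivalences underlying \Cref{Lemma-Boxin-sub}, and that it is \emph{universal}, i.e.\ that $\GLV\vdash\tau(A)$ forces $\GL\vdash A$. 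This universality is the hard step, and I expect to prove it by Kripke-model surgery: starting from a finite $\GL$-countermodel of $A$ with an arbitrary valuation, one builds a persistent ($\GLV$) model in which the reflection formula $\Box q_p\to q_p$ recaptures the original (non-persistent) valuation of $p$, thereby refuting $\tau(A)$.
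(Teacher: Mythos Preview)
Your proposal is correct and follows essentially the same approach as the paper: define $f(A)$ by a case split on whether $\GL$ (resp.\ $\GLS$) proves $A$, invoking \Cref{Remark-reduction-GL-GLS} in the nontrivial case to obtain a substitution $\tau$ with $\GLV\nvdash\tau(A)$ (resp.\ $\GLSV\nvdash\tau(A)$), and set $\bar f_A(\sigma):=\sigma_{_{\sf PA}}\!\circ\tau$. Your treatment of R2 is more explicit than the paper's (which simply says ``obviously R2 holds''), and your closing remarks on replacing the case-split by a single universal substitution $\tau(p):=\Box q_p\to q_p$ go beyond what the paper records here, but the core argument is identical.
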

\begin{proof}
We prove each item separately:
\begin{enumerate}[leftmargin=*]
\item  We must show that 
$\ac{\GLS}{\PA}{\nat}\leq_{f,\bar{f}} \acs{{\GLSV}}{\PA}{\nat}$. 
  Consider some $A\in\lcal_\Box$.
   If $\GLS\nvdash A$, by 
  \Cref{Remark-reduction-GL-GLS}, there exists some 
  $\lcal_\Box$-substitution $\tau$ such that 
  ${\GLSV}\nvdash \tau(A)$.  Let 
  $$f(A):=\begin{cases}
  \tau(A) &: \GLS\nvdash A\\
  A &: \text{otherwise}
\end{cases}  
  $$
  Hence R1 (\Cref{Definition-Reduction-PL}) holds. Also 
  let $\fbar(\sigma):=\sigma_{_{\sf PA}}\circ\tau$, which belongs to $\wit{\sigma}$.
   Then obviously R2 holds.
\item    Similar to first item and left to the reader.
  \qedhere
\end{enumerate}
\end{proof}
\subsection{Reducing $\PLS(\PA,\mathbb{N})$ to $\PLS(\HA,\mathbb{N})$}\label{Sec-reduction-tplpa-tplha}

In this subsection, we illustrate how to reduce the 
arithmetical completeness of ${\GLSV}$ to that of $\ihatHSsigma$.  First some definitions and lemmas:

\begin{definition}
For a modal proposition  $A$ let $A^\ddagger$ indicates 
the classically equivalent formula of the form 
$$A^\ddagger:=\bigwedge_i B_i\to C_i\quad \text{in which }\quad 
B_i=\bigwedge_j E^\ddagger_{i,j}  \text{ and } 
C_i=\bigvee_j F^\ddagger_{i,j} 
$$
and $E_{i,j}, F_{i,j}$ are atomic or boxed formulas. 
Also  for atomic $p$ we have $p\ddagger=p$ and 
$(\Box E)^\ddagger=\Box(E^\ddagger)$.
Then  define $A^\dagger$ in this way:
\begin{itemize}
\item $(.)^\dagger$ commutes with $\vee,\wedge,\to$,
\item $p^\dagger=p$ for atomic $p$,
\item $(\Box A)^\dagger=\Box A^\ddagger$
\end{itemize}
\end{definition}

\begin{lemma}\label{Lemma-HA-proves-dagger-eq}
For every modal proposition $A$ and arithmetical substitution 
$\alpha$, we have 
$$\HA\vdash \alpha_{_{\sf HA}}(A^\dagger) \lr 
\alpha_{_{\sf PA}}(A)$$
\end{lemma}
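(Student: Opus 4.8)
The plan is to induct on the complexity of $A$, the boxed case being the only substantial one; throughout I use that $\alpha$ is a $\Sigma_1$-substitution, so that every atom $\alpha(p)$ is a $\Sigma_1$-sentence. Write $\Box(\cdot)$ and $\Box^{+}(\cdot)$ for $\Prv{HA}{\cdot}$ and $\Prv{PA}{\cdot}$, as in the Notation above. If $A$ is atomic then $A^\dagger=A$ and $\alpha_{_{\sf HA}}(A)=\alpha(A)=\alpha_{_{\sf PA}}(A)$, so there is nothing to prove. If $A=B\circ C$ with $\circ\in\{\wedge,\vee,\to\}$, then both $(.)^\dagger$ and the two interpretations commute with $\circ$, so the two induction hypotheses together with intuitionistic propositional reasoning inside $\HA$ give $\HA\vdash\alpha_{_{\sf HA}}(A^\dagger)\lr\alpha_{_{\sf PA}}(A)$.

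Now let $A=\Box B$, so that $\alpha_{_{\sf HA}}(A^\dagger)=\Box\,\alpha_{_{\sf HA}}(B^\ddagger)$ and $\alpha_{_{\sf PA}}(A)=\Box^{+}\alpha_{_{\sf PA}}(B)$. I would first isolate the auxiliary fact
\[(\mathrm P)\qquad \PA\vdash\alpha_{_{\sf PA}}(C)\lr\alpha_{_{\sf PA}}(C^\ddagger)\quad\text{for every }C,\]
proved by a side induction on $C$: by definition $C^\ddagger$ arises from a classical conjunctive normal form of $C$ (valid in the classical theory $\PA$ once the boxed subformulas and atoms are read as propositional atoms) by replacing each boxed subformula $\Box E$ with $\Box E^\ddagger$, and this replacement is justified under $\Box^{+}(\cdot)$ by the induction hypothesis of $(\mathrm P)$ applied to $E$. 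Since $\Box^{+}(\cdot)$ is a provability predicate, $(\mathrm P)$ lifts to $\HA\vdash\Box^{+}\alpha_{_{\sf PA}}(B)\lr\Box^{+}\alpha_{_{\sf PA}}(B^\ddagger)$.

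Next I compare the two interpretations of the single normal form $B^\ddagger$. By construction $B^\ddagger$ is a conjunction $\bigwedge_i\big(\bigwedge_j E_{ij}^\ddagger\to\bigvee_k F_{ik}^\ddagger\big)$ whose literals $E_{ij}^\ddagger,F_{ik}^\ddagger$ are atomic or boxed, hence interpret to $\Sigma_1$-sentences under either $\alpha_{_{\sf HA}}$ or $\alpha_{_{\sf PA}}$. The formulas $\alpha_{_{\sf HA}}(B^\ddagger)$ and $\alpha_{_{\sf PA}}(B^\ddagger)$ have the same propositional skeleton and differ only at boxed literals, where they read $\Box E^\ddagger$ as $\Box\,\alpha_{_{\sf HA}}(E^\ddagger)$ and $\Box^{+}\alpha_{_{\sf PA}}(E^\ddagger)$ respectively. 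The main induction hypothesis applied to the subformula $\Box E$ of $A$ gives $\HA\vdash\Box\,\alpha_{_{\sf HA}}(E^\ddagger)\lr\Box^{+}\alpha_{_{\sf PA}}(E)$, and combining with $(\mathrm P)$ for $E$ yields $\HA\vdash\Box\,\alpha_{_{\sf HA}}(E^\ddagger)\lr\Box^{+}\alpha_{_{\sf PA}}(E^\ddagger)$; substituting these provably equivalent literals gives
\[(\ast)\qquad \HA\vdash\alpha_{_{\sf HA}}(B^\ddagger)\lr\alpha_{_{\sf PA}}(B^\ddagger).\]

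Chaining $(\mathrm P)$ and $(\ast)$ under the box $\Box^{+}$ reduces the goal to the single equivalence $\HA\vdash\Box\,\psi\lr\Box^{+}\psi$, where $\psi:=\alpha_{_{\sf HA}}(B^\ddagger)$ is a conjunction of implications $\bigwedge_j\sigma_{ij}\to\bigvee_k\tau_{ik}$ between $\Sigma_1$-sentences. Here the normal form has done its work: the direction $\Box\psi\to\Box^{+}\psi$ is immediate from $\HA\subseteq\PA$, while $\Box^{+}\psi\to\Box\psi$ follows by applying the formalized version of \Cref{Lemma-Conservativity of HA} to each conjunct, namely $\HA\vdash\Box^{+}(\bigwedge_j\sigma_{ij}\to\bigvee_k\tau_{ik})\to\Box(\bigwedge_j\sigma_{ij}\to\bigvee_k\tau_{ik})$, and reassembling with the distribution schema $\underline{\sf K}$ for $\Box(\cdot)$. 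I expect this last step to be the main obstacle: one must verify that the $\Sigma_1$-conservativity of \Cref{Lemma-Conservativity of HA}, whose metatheoretic proof runs through the negative translation and Markov's rule (\Cref{Lemma-HA-PA-neg-translation,Lemma-Sigma-neg-translation}), is itself provable inside $\HA$. Everything else is bookkeeping of $\Sigma_1$-complexity and formalized substitution of provable equivalences.
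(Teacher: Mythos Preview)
Your proof is correct and almost certainly coincides with the argument the paper intends by ``easy and left to the reader.'' Two remarks are worth making explicit. First, you are right to impose that $\alpha$ be a $\Sigma_1$-substitution: the lemma as literally stated (for arbitrary arithmetical $\alpha$) is false, already at $A=\Box p$ with $\alpha(p)$ any instance of excluded middle not provable in $\HA$; every use of the lemma in the paper is with $\Sigma_1$-substitutions, so this is presumably a typo. Second, your identification of formalized $\Pi_2$-conservativity as the crux is accurate, and the worry you raise is resolvable: the metatheoretic argument via the negative translation plus the Friedman $A$-translation is primitive recursive on proofs and is well known to formalize in $\HA$, yielding the schema $\HA\vdash\Box^{+}\varphi\to\Box\varphi$ for each fixed $\Pi_2$ sentence $\varphi$ (and in particular for each conjunct $\bigwedge_j\sigma_{ij}\to\bigvee_k\tau_{ik}$ of your $\psi$). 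With that granted, the rest of your induction is routine bookkeeping, exactly as you describe.
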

\begin{proof}
Easy and left to the reader. 
\end{proof}

\begin{lemma}\label{Lemma-reduction-glssigma-ihatHSsigma}
For every $A\in\lcalb$, if  
$\ihatHSsigma\vdash A^\dagger$ then ${\GLSV}\vdash A$.
\end{lemma}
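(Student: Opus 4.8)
The plan is to argue semantically, routing through arithmetic rather than constructing a direct syntactic derivation, in the same spirit as the indirect proof of R1 in \Cref{Reduction-Sigma-HA-(HA-to-nat)}. The whole argument rests on \Cref{Lemma-HA-proves-dagger-eq}, which is precisely the device that converts the $\PA$-interpretation of $A$ into the $\HA$-interpretation of $A^\dagger$; with that lemma in hand, the two already-established arithmetical characterizations do the rest.

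First I would assume $\ihatHSsigma\vdash A^\dagger$ and apply the arithmetical soundness half of $\ihatHSsigma=\PLS(\HA,\nat)$ (\Cref{Theorem-truth-provability}): for every $\Sigma_1$-substitution $\sigma$ we obtain $\nat\models\sigma_{_{\sf HA}}(A^\dagger)$. Next I would instantiate \Cref{Lemma-HA-proves-dagger-eq} at this $\sigma$, which gives $\HA\vdash\sigma_{_{\sf HA}}(A^\dagger)\lr\sigma_{_{\sf PA}}(A)$. Since every theorem of $\HA$ is true in the standard model, this equivalence holds in $\nat$, and combining it with the previous step yields $\nat\models\sigma_{_{\sf PA}}(A)$.

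As $\sigma$ was an arbitrary $\Sigma_1$-substitution, the arithmetical completeness half of $\PLS(\PA,\nat)=\GLSV$ (\Cref{Solovey}) then delivers $\GLSV\vdash A$, which is the desired conclusion. Note that this lemma is exactly the R1 clause (in the sense of \Cref{Definition-Reduction-PL}) for the eventual reduction of $\acs{\GLSV}{\PA}{\nat}$ to $\acs{\ihatHSsigma}{\HA}{\nat}$ with $f=(.)^\dagger$.

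I do not expect a genuine obstacle: all the technical weight sits in \Cref{Lemma-HA-proves-dagger-eq}, whose routine induction on $A$ was deferred to the reader, together with the completeness results \Cref{Theorem-truth-provability,Solovey}. The one point deserving care is to keep the same atomic substitution $\sigma$ on both sides, so that the modal boxes are read as $\HA$-provability within $\sigma_{_{\sf HA}}(A^\dagger)$ but as $\PA$-provability within $\sigma_{_{\sf PA}}(A)$; reconciling these two readings of $\Box$ under a single substitution is exactly what the $(.)^\dagger$ and $(.)^\ddagger$ constructions are engineered to achieve.
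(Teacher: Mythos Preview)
Your argument is correct and uses exactly the same three ingredients as the paper's proof (soundness of $\ihatHSsigma$ from \Cref{Theorem-truth-provability}, the bridge \Cref{Lemma-HA-proves-dagger-eq}, and the arithmetical characterization of $\GLSV$ from \Cref{Solovey}); the only difference is that the paper argues contrapositively---starting from $\GLSV\nvdash A$ and producing a single witnessing $\sigma$---whereas you argue directly, quantifying over all $\sigma$. The two are logically equivalent reorderings of the same steps.
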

\begin{proof}
Let ${\GLSV}\nvdash A$. Since in classical logic we have 
$A\lr A^\dagger$, then ${\GLSV}\nvdash A^\dagger$.
Hence by $\acs{{\GLSV}}{\PA}{\nat}$ from 
\ref{Solovey},
we have some $\Sigma_1$-substitution $\sigma$ such that 
$\nat\not\models \sigma_{_{\sf PA}}(A^\dagger)$. Then 
\Cref{Lemma-HA-proves-dagger-eq} implies 
$\nat\not\models \sigma_{_{\sf HA}}(A^\dagger)$, and hence 
by arithmetical soundness of $\ihatHSsigma$ (\Cref{Theorem-truth-provability}) we have 
$\ihatHSsigma\nvdash A^\dagger$, as desired.
\end{proof}

\begin{lemma}\label{Lemma-reduction-ihatHsigma-GLsigma}
For every $A\in\lcalb$, if  
$\ihatHsigma\vdash A^\dagger$ then ${\GLV}\vdash A$.
\end{lemma}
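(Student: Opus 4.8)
The plan is to run the contrapositive argument of the preceding \Cref{Lemma-reduction-glssigma-ihatHSsigma}, with the truth theory $\nat$ replaced throughout by the relative theory $\PA$. The two external inputs change accordingly: in place of $\PLS(\PA,\nat)={\GLSV}$ and $\PLS(\HA,\nat)=\ihatHSsigma$ I would invoke the characterizations $\PLS(\PA,\PA)={\GLV}$ (third item of \Cref{Solovey}) and $\PLS(\HA,\PA)=\ihatHsigma$ (\Cref{Theorem-PA-relative}).

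Concretely, suppose ${\GLV}\nvdash A$. Since ${\GLV}=\PLS(\PA,\PA)$, the completeness half of \Cref{Solovey} yields a $\Sigma_1$-substitution $\sigma$ with $\PA\nvdash\sigma_{_{\sf PA}}(A)$. Now I apply the bridging \Cref{Lemma-HA-proves-dagger-eq}, which gives $\HA\vdash\sigma_{_{\sf HA}}(A^\dagger)\lr\sigma_{_{\sf PA}}(A)$ and hence the same equivalence in $\PA$; therefore $\PA\nvdash\sigma_{_{\sf HA}}(A^\dagger)$. Finally, reading the arithmetical soundness of $\ihatHsigma=\PLS(\HA,\PA)$ (\Cref{Theorem-PA-relative}) contrapositively, the existence of a $\Sigma_1$-substitution $\sigma$ with $\PA\nvdash\sigma_{_{\sf HA}}(A^\dagger)$ forces $\ihatHsigma\nvdash A^\dagger$, which is exactly the contrapositive of the claim.

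Note that, unlike the $\nat$-version, I would not route the argument through the classical equivalence $A\lr A^\dagger$ at all: \Cref{Lemma-HA-proves-dagger-eq} already packages the entire effect of the $(\cdot)^\dagger$ translation by directly matching $\sigma_{_{\sf HA}}(A^\dagger)$ against $\sigma_{_{\sf PA}}(A)$, so it suffices to feed the completeness output for $A$ (rather than for $A^\dagger$) straight into that lemma. The only point demanding care is the bookkeeping of which interpretation, $\sigma_{_{\sf HA}}$ or $\sigma_{_{\sf PA}}$, attaches to which side of \Cref{Lemma-HA-proves-dagger-eq}; once that is kept straight the proof is a verbatim transcription of \Cref{Lemma-reduction-glssigma-ihatHSsigma} with $\PA$ in the role of $\nat$, and there is no genuine obstacle beyond observing that the $\sigma$ produced by $\PLS(\PA,\PA)$-completeness is indeed a $\Sigma_1$-substitution, so that both \Cref{Lemma-HA-proves-dagger-eq} and the soundness of $\ihatHsigma$ legitimately apply to it.
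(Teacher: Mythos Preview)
Your proof is correct and follows essentially the same contrapositive route as the paper: arithmetical completeness of ${\GLV}$ to get a refuting $\Sigma_1$-substitution, then \Cref{Lemma-HA-proves-dagger-eq} to switch the interpretation from $\sigma_{_{\sf PA}}$ to $\sigma_{_{\sf HA}}(\cdot^\dagger)$, then soundness of $\ihatHsigma$. The paper additionally inserts the classical equivalence $A\lr A^\dagger$ at the start (passing from ${\GLV}\nvdash A$ to ${\GLV}\nvdash A^\dagger$ and applying completeness to $A^\dagger$), but as you observe this detour is unnecessary since \Cref{Lemma-HA-proves-dagger-eq} already matches $\sigma_{_{\sf HA}}(A^\dagger)$ directly against $\sigma_{_{\sf PA}}(A)$; your streamlined version is cleaner.
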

\begin{proof}
Let ${\GLV}\nvdash A$. Since in classical logic we have 
$A\lr A^\dagger$, then ${\GLV}\nvdash A^\dagger$.
Hence by $\acs{{\GLV}}{\PA}{\PA}$ from 
\ref{Solovey},
we have some $\Sigma_1$-substitution $\sigma$ such that 
$\PA\nvdash\sigma_{_{\sf PA}}(A^\dagger)$. Then 
\Cref{Lemma-HA-proves-dagger-eq} implies 
$\PA\nvdash \sigma_{_{\sf HA}}(A^\dagger)$, and hence 
by arithmetical soundness of $\ihatHsigma$ (\Cref{Theorem-PA-relative}) we have 
$\ihatHsigma\nvdash A^\dagger$, as desired.
\end{proof}

\begin{theorem}\label{Reduction-Sigma-(PA-to-HA)-nat}
${\GLSV}=\PLS(\PA,\mathbb{N})\leq 
\PLS(\HA,\mathbb{N})=\ihatHSsigma$.
\end{theorem}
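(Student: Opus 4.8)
The plan is to unwind the compound notation $\GLSV=\PLS(\PA,\mathbb{N})\leq\PLS(\HA,\mathbb{N})=\ihatHSsigma$ into its three constituent assertions and dispatch each in turn. The two characterization statements are already in hand: $\GLSV=\PLS(\PA,\mathbb{N})$ is Visser's result recorded in \ref{Solovey}, and $\PLS(\HA,\mathbb{N})=\ihatHSsigma$ is \Cref{Theorem-truth-provability}. Thus the genuine work is to establish the reduction $\acs{\GLSV}{\PA}{\nat}\leq_{f,\fbar[]}\acs{\ihatHSsigma}{\HA}{\nat}$, which I would carry out following the ``Identity'' template of \Cref{Section-Reduction-tool}: take $f(A):=A^\dagger$ as the propositional translation and let $\fbar[]$ be the identity on arithmetical substitutions. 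With these choices condition R0 of \Cref{Definition-Reduction-PL} is immediate, so only R1 and R2 remain, and the two preparatory lemmas were arranged to supply exactly these.

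For R1 I would invoke \Cref{Lemma-reduction-glssigma-ihatHSsigma} verbatim: if $\ihatHSsigma\vdash f(A)=A^\dagger$, then $\GLSV\vdash A$, which is precisely the implication R1 demands. For R2 I would take a $\Sigma_1$-substitution $\sigma$ with $\nat\not\models\sigma_{_{\sf HA}}(A^\dagger)$ and apply \Cref{Lemma-HA-proves-dagger-eq}, which gives $\HA\vdash\sigma_{_{\sf HA}}(A^\dagger)\lr\sigma_{_{\sf PA}}(A)$; since $\HA$ is sound for the standard model this biconditional is true in $\nat$, so $\nat\not\models\sigma_{_{\sf PA}}(A)$, i.e.\ $\sigma$ lies in the target witness set for $A$ relative to $\PA$ in $\nat$. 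Because $\fbar[]$ is the identity, the side condition $\fbar(\sigma)=\sigma\in\wit{\sigma}$ holds trivially from the first closure clause in the definition of $\wit{\sigma}$.

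At the level of this theorem itself I expect no serious obstacle: the substance has been pushed into \Cref{Lemma-HA-proves-dagger-eq}, whose whole purpose is that the $(.)^\dagger$ translation (together with the auxiliary $(.)^\ddagger$ normal form) rewrites every boxed subformula so that an $\HA$-interpretation of $A^\dagger$ coincides, provably in $\HA$, with the $\PA$-interpretation of $A$; once that equivalence is granted, transferring a failure in $\nat$ across it is routine soundness. The only points that I would treat with care are bookkeeping: matching the directions of R1 and R2 to \Cref{Definition-Reduction-PL} (arithmetical completeness flows from the harder logic $\ihatHSsigma$ back to $\GLSV$, while refutations flow the opposite way), and confirming that the identity choice of $\fbar[]$ genuinely lands inside $\wit{\sigma}$ and not merely inside the target witness set.
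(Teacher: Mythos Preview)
Your proposal is correct and follows essentially the same approach as the paper: both cite \Cref{Solovey} and \Cref{Theorem-truth-provability} for the two characterizations, take $f(A):=A^\dagger$ with $\fbar$ the identity, invoke \Cref{Lemma-reduction-glssigma-ihatHSsigma} for R1, and \Cref{Lemma-HA-proves-dagger-eq} for R2. Your write-up is slightly more explicit in spelling out why the $\HA$-provable biconditional transfers to $\nat$ via soundness, but the argument is the same.
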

\begin{proof}
By 
\Cref{Theorem-truth-provability,Solovey} we have 
$\ihatHSsigma=\PLS(\HA,\mathbb{N})$ and 
 ${\GLSV}= \PLS(\PA,\mathbb{N})$.
For the reduction, 
  let $f(A):=A^\dagger$   
and  $\fbar$ as identity function.
\begin{enumerate} 
\item[R1.] If $\ihatHSsigma\vdash A^\dagger$,  by  
\Cref{Lemma-reduction-glssigma-ihatHSsigma} we have 
${\GLSV}\vdash A$.  
\item[R2.] Holds by \Cref{Lemma-HA-proves-dagger-eq}.\qedhere
\end{enumerate}
\end{proof}

\subsection{Kripke Semantics}
Let ${\sf Suc}_\kcal$ or simply ${\sf Suc}$, when no confusion is likely,
 indicates the set of all 
 $\R$-accessible nodes in the Kripke model $\kcal$.
\begin{theorem}\label{Theorem-Kripke-semantic-iglphat}
$\iglphat$ is sound and complete for semi-perfect 
${\sf Suc}$-classical $\R$-branching Kripke models. 
\end{theorem}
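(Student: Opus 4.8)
The plan is to imitate the completeness argument for $\iglc$ in \Cref{Lem-iglc-subformule} and \Cref{Theorem-Propositional Completeness LC}: soundness reduces to a routine check of the frame conditions, and all the content is on the completeness side, where I build a Henkin model from saturated sets, read off ${\sf Suc}$-classicality from the axiom $\overline{\sf P}$, and then reshape the frame so that it becomes a finite tree in which every node is $\R$-branching.

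For soundness, the $\igl$-part is valid on every semi-perfect frame: brilliance together with the built-in condition $(\preccurlyeq\,;\,\R)\subseteq\R$ makes $\Box$ behave normally and monotonically, while finiteness of the tree with transitivity and irreflexivity of $\R$ validates L\"ob's axiom. (Note that $\iglphat$ contains no completeness principle, so, unlike \Cref{Lem-iglc-subformule}, I do not need $\R\subseteq\prec$ here.) For the schema $\overline{\sf P}=\Box(B\vee\neg B)$, fix $\alpha$ and any $\beta$ with $\alpha\R\beta$. By ${\sf Suc}$-classicality $\beta$ is classical, i.e.\ $(\beta\prec)=\emptyset$, so the only node $\succcurlyeq\beta$ is $\beta$ itself; hence $\kcal,\beta\Vdash\neg B$ iff $\kcal,\beta\nVdash B$, and therefore $\kcal,\beta\Vdash B\vee\neg B$ for every $B$. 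Thus $\kcal,\alpha\Vdash\Box(B\vee\neg B)$. The $\R$-branching hypothesis plays no role in soundness; it only shrinks the class of models, so it is a concern for completeness alone.

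For completeness, suppose $\iglphat\nvdash A$ and fix a finite adequate set $X$ that contains $\sub{A}$ and is moreover closed under single negation and under the excluded-middle instances $B\vee\neg B$ and $\Box(B\vee\neg B)$ for $B\in\sub{A}$. Taking $K$ to be the $X$-saturated sets with respect to $\iglphat$, ordered by inclusion, with $\alpha\R\beta$ defined exactly as in \Cref{Lem-iglc-subformule}, the truth lemma ($B\in\alpha$ iff $\alpha\Vdash B$ for $B\in X$) goes through verbatim, and \Cref{Lemma-saturation} supplies a node refuting $A$. The new ingredient is ${\sf Suc}$-classicality: if $\alpha\R\beta$, then for each relevant $C$ we have $\iglphat\vdash\Box(C\vee\neg C)$, so $\Box(C\vee\neg C)\in\alpha$ by the closure property of saturated sets, whence $C\vee\neg C\in\beta$ by the definition of $\R$, and finally $C\in\beta$ or $\neg C\in\beta$ by saturation. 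Thus $\beta$ decides every formula of $X$, so it has no proper $X$-saturated superset (any new member would land together with its negation in the superset); hence $(\beta\prec)=\emptyset$ and $\beta$ is classical.

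The remaining, and I expect most delicate, step is to replace this refuting model by a semi-perfect one that is in addition $\R$-branching, since the inclusion-ordered Henkin model is in general neither a tree nor branching. The plan is to first unravel it to a finite tree along $(\prec\cup\R)$-paths, the device underlying \Cref{Theorem-Propositional Completeness LC}, and then, whenever a node has a unique immediate $\R$-successor $\delta$, to adjoin a disjoint isomorphic copy of the classical leaf $\delta$ (forcing the same atomic variables) as a second immediate successor. Because $\delta$ is a $\preccurlyeq$-leaf, its copy forces exactly the same formulas, so no $\Box$-formula changes truth value and the refutation of $A$ survives. The main obstacle is the bookkeeping in this phase: one must check that unraveling together with duplication preserve all four clauses of semi-perfectness, in particular that $(\prec\cup\R)$ stays a finite tree and that neatness and brilliance are not destroyed by the new $\R$-edges or by those forced by transitivity, while keeping every $\R$-accessible node classical, and that iterating over the finitely many non-branching nodes terminates.
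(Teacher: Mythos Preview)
Your Henkin construction has a real gap: the relation $\R$ you import verbatim from \Cref{Lem-iglc-subformule} need not be transitive here. In that lemma transitivity came for free from ${\sf CP}$, which forces $\R\subseteq\prec$ and hence $\alpha\subseteq\beta$ whenever $\alpha\R\beta$; but $\iglphat$ has no completeness principle, so from $\alpha\R\beta\R\gamma$ and $\Box B\in\alpha$ you only get $B\in\beta$, not $\Box B\in\beta$, and there is nothing to push $B$ into $\gamma$. (Your post-hoc argument shows $\beta$ decides every formula of $\sub{A}$, but it may perfectly well decide $\Box B$ negatively.) A non-transitive $\R$ is fatal for the unraveling step: the path-based $\R_t$ is transitive by construction, so the last-node map is no longer a p-morphism for $\R$, and forcing of boxed formulas is not preserved. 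The paper avoids this by changing the definition of $\R$ to require $\{B,\Box B:\Box B\in\alpha\}\subseteq\beta$ and $\beta$ maximally consistent; the successor in the truth lemma is then built using $\GL$ rather than $\iglphat$ (which is legitimate since under $\Box$ one has $\PEM$, so $\GL\vdash\bigwedge\Delta\wedge\Box C\to C$ implies $\iglphat\vdash\Box\bigwedge\Delta\to\Box C$), and this $\R$ is transitive by inspection.

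Your branching step is also underspecified. An $\R$-successor $\delta$ is indeed a $\preccurlyeq$-leaf, but it may have $\R$-successors of its own, so a single duplicated node with the same valuation would force $\Box\bot$ and wreck every boxed formula; you must duplicate the entire $\R$-subtree below $\delta$ and argue termination. The paper bypasses this bookkeeping entirely by folding the duplication into the unraveling: nodes of the tree are sequences of pairs $(\alpha_i,a_i)$ with $a_i\in\{0,1\}$, so every child automatically comes in two isomorphic copies and $\R$-branching is immediate.
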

\begin{proof}
The soundness is easy and left to the reader. 
For the 
completeness, we first show the completeness for finite  brilliant  irreflexive transitive  ${\sf Suc}$-classical Kripke models. 
Let $\iglphat\nvdash A$. Let
$$X:=\{ B,\neg B,B\vee\neg B: B\in{\sf Sub}(A)\}\cup\{\bot\}$$
and define the 
Kripke model $\kcal=(K,\preccurlyeq,\R,V)$  as follows:
\begin{itemize}
\item $K$ is the family of all $X$-saturated sets 
with respect to $\iglphat$.
\item $\alpha\preccurlyeq \beta$  iff $\alpha\subseteq \beta$.
\item $\alpha\R\beta$ iff $\beta$ is a maximally consistent
set and $\{B,\Box B: \Box B\in \alpha\}\subseteq \beta$ and 
there is some $\Box B\in \beta\setminus \alpha$.
\end{itemize}
It is straightforward to show that $\kcal$ 
is actually a finite brilliant irreflexive ${\sf Suc}$-classical Kripke model, and we leave all of them 
to the reader.

It is enough to show that $\kcal,\alpha\Vdash B$ iff 
$B\in \alpha$ for every $\alpha\in K$ and $B\in X$. 
Then we may use \Cref{Lemma-saturation} and find some 
$\alpha$ such that $\kcal,\alpha\nVdash A$. 
 We use induction on the complexity of $B\in X$. All inductive 
 steps are trivial, except for $B=\Box C$. 
 If $\Box C\in \alpha$ and $\alpha\R\beta$, then by definition, 
 $C\in \beta$ and hence by induction hypothesis 
 $\beta\Vdash C$.
 This implies $\alpha\Vdash \Box C$. For the other way around,
 let $ \Box C\not\in \alpha$. Consider the set 
 $\Delta:=\{E,\Box E:\Box E\in \alpha\}$. 
 If $\GL\vdash \bigwedge\Delta\to (\Box C\to C)$,  then 
$\igl+\Box\PEM\vdash \Box(\bigwedge \Delta)\to \Box C$. Since 
$\iglphat\vdash \alpha\to\Box\bigwedge\Delta$
and $\iglphat\vdash \Box\PEM$, we have 
$\iglphat+\alpha\vdash \Box C$ and hence $\Box C\in \alpha$, a contradiction.
Hence we have $\GL\nvdash (\bigwedge\Delta\wedge \Box C)\to C$. Then 
by \Cref{Lemma-saturation} there is some 
 $X$-saturated set 
 $\beta\supseteq \Delta\cup\{\Box C\}\cup\{E\vee\neg E: E\in {\sf Sub}(A)\}$ 
 such that $C\not\in\beta$.  Hence $\beta\sqsupset\alpha$ 
 and $\beta\nVdash C$. Then $\alpha\nVdash \Box C$, as desired.

Next we use the construction method \cite{IemhoffT}, to fulfil the other conditions: $\R$-branching, neat and tree.  Let $\kcal_t:=(K_t,\preccurlyeq_t,\R_t,V_t)$
as follows:
\begin{itemize}
\item $K_t$ is the set of all finite sequences of pairs
$r:=\langle(\alpha_0,a_0),\ldots(\alpha_n,a_n)\rangle$ 
such that 
for any $i\leq n$: 
(1) $\alpha_i\in K$, (2) $a_i\in\{0,1\}$, (3)
for $i< n$  either we have $\alpha_i\prec\alpha_{i+1}$ or $\alpha_i\R\alpha_{i+1}$.  Let $f_1(r)$ and 
$f_2(r)$ indicate  the left and right
elements in the final element of the sequence $r$. In other words, we let $(f_1(r),f_2(r))$ be the final element of the sequence $r$.
\item $r\preccurlyeq_t s$ iff $r$ is an initial segment of $s$ and $f_1(r)\preccurlyeq f_1(s)$.
\item $r\R_t s$ iff $r$ is an initial segment of $s=
\langle(\alpha_0,a_0),\ldots,(\alpha_n,a_n)\rangle$,
e.g. $r=\langle(\alpha_0,a_0),\ldots,(\alpha_k,a_k)\rangle$ 
for some $k<n$ and $ \alpha_i\R\alpha_{i+1}$ for some $k\leq i<n$.
\item $r\, V_t \,p$ iff $f_1(r)\,V \,p$.
\end{itemize}
It is straightforward to show that $\kcal_t$ is semi-perfect 
$\R$-branching
${\sf Suc}$-classical Kripke model and for every $r\in K_t$ and formula $B$ we have 
\[\kcal_t,r\Vdash B \quad \quad \Longleftrightarrow \quad\quad 
\kcal, f(r)\Vdash B.\qedhere\]
\end{proof}

\begin{theorem}\label{Theorem-Kripke-semantic-iglphatsigma}
$\iglphatsigma$ is sound and complete for semi-perfect 
${\sf Suc}$-classical atom-complete Kripke models. 
\end{theorem}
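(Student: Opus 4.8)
The plan is to mirror the two-phase argument of \Cref{Theorem-Kripke-semantic-iglphat}, adding only the treatment of the extra axiom $p\to\Box p$ and the corresponding frame condition. For soundness, the $\iglphat$-part is inherited from \Cref{Theorem-Kripke-semantic-iglphat}, whose soundness holds over every semi-perfect ${\sf Suc}$-classical model and in particular over the atom-complete ones; it remains to validate $p\to\Box p$. If $\kcal$ is atom-complete and $\kcal,\alpha\Vdash p$, then $\kcal,\beta\Vdash p$ for every $\beta$ with $\alpha\R\beta$, whence $\kcal,\alpha\Vdash\Box p$; closure under modus ponens is immediate.

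For completeness, assume $\iglphatsigma\nvdash A$. First I would build a finite canonical model refuting $A$. Take the adequate set $X:=\{B,\neg B,B\vee\neg B: B\in{\sf Sub}(A)\}\cup\{\Box p: p\in{\sf Sub}(A)\text{ atomic}\}\cup\{\bot\}$, enlarging the set of \Cref{Theorem-Kripke-semantic-iglphat} by the boxes of the atoms of $A$, and let $K$ be the family of all $X$-saturated sets with respect to $\iglphatsigma$, with $\preccurlyeq$ the subset relation and $\alpha\R\beta$ defined exactly as before. The truth lemma $\kcal,\alpha\Vdash B\Leftrightarrow B\in\alpha$ for $B\in X$ is proved by the same induction, the only nontrivial case $B=\Box C$ being identical. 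Since $p\to\Box p$ is now an axiom and $\Box p\in X$, any $\alpha\in K$ with $p\in\alpha$ satisfies $\alpha\vdash\Box p$, hence $\Box p\in\alpha$ and therefore $p\in\beta$ whenever $\alpha\R\beta$; this is precisely atom-completeness. By \Cref{Lemma-saturation} I obtain a root refuting $A$, and the model so far is finite, brilliant, irreflexive, transitive, ${\sf Suc}$-classical and atom-complete.

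In the second phase I would unravel this model into the tree model $\kcal_t$ by the very sequence construction of \Cref{Theorem-Kripke-semantic-iglphat}, which yields semi-perfectness (finite tree frame, brilliance, neatness, irreflexive transitive $\R$), preserves ${\sf Suc}$-classicality, and transfers forcing via $\kcal_t,r\Vdash B\Leftrightarrow\kcal,f_1(r)\Vdash B$. The new ingredient is that atom-completeness survives the unraveling: forcing of an atom $p$ is monotone both along $\preccurlyeq$ (persistence) and along $\R$ (atom-completeness of the base model), and every $\R_t$-edge of $\kcal_t$ corresponds to a path of $\prec$- and $\R$-steps in the base model with at least one $\R$-step, so $p$ propagates along every $\R_t$-edge. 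Note that the unraveling need no longer enforce $\R$-branching; dropping that requirement merely enlarges the admissible class of models and leaves the argument intact.

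The main obstacle I anticipate is the bookkeeping of the second phase: confirming that atom-completeness, and not merely persistence, is respected by the unraveled valuation $V_t$. A secondary point is to recheck that the $\Box C$-case of the truth lemma closes once $p\to\Box p$ is present, which it does because the set $\Delta:=\{E,\Box E:\Box E\in\alpha\}$ used to generate $\R$-successors automatically contains every atom forced at $\alpha$ (as $p\in\alpha$ forces $\Box p\in\alpha$). Everything else transfers verbatim from \Cref{Theorem-Kripke-semantic-iglphat}.
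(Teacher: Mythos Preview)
Your proposal is correct and follows essentially the same route as the paper: enlarge the adequate set by the formulas $\Box p$ for atomic $p\in{\sf Sub}(A)$, saturate with respect to $\iglphatsigma$, derive atom-completeness from $p\to\Box p$, and then unravel exactly as in \Cref{Theorem-Kripke-semantic-iglphat}. Your extra care in checking that atom-completeness survives the unraveling and your remark that $\R$-branching can be dropped are appropriate elaborations of what the paper leaves implicit in ``almost identical''.
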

\begin{proof}
The proof is almost identical to the one for \Cref{Theorem-Kripke-semantic-iglphat}. We only explain the differences here. 
Define 
$$X:=\{B,\neg B,B\vee\neg B: B\in {\sf Sub}(A)\}\cup\{\bot\}\cup
\{\Box p: p\in {\sf Sub}(A)\text{ and } p \text{ is atomic}\}$$
and  $K$, the set of the nodes of Kripke model,  is defined as the set of 
all $X$-saturated sets with respect to $\iglphatsigma$. 
We show that  every $\alpha\in K$ is atom-complete. Let 
$p$ be  an atomic variable such that $\alpha\Vdash p$.
 Hence  $p\in \alpha$ which implies 
 $p\in \sub{A}$,  
 and since $\iglphatsigma\vdash p\to\Box p$ and $\alpha$
 is closed under deduction, we have $\Box p\in\alpha$. Then 
 $\alpha\Vdash \Box p$ and hence
 for every $\beta\sqsupset\alpha$ we have $ \beta\Vdash p$, as 
 desired.
\end{proof}

%

\subsection{Arithmetical Completeness}

\begin{theorem}\label{Theorem-Arith-Complete-iglphatsigma-direct}
$\iglphatsigma$ is the relative $\Sigma_1$-provability 
logic of $\PA$ in $\HA$,
i.e. $\PLS(\PA,\HA)=\iglphatsigma$.
\end{theorem}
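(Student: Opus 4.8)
The plan is to establish the two halves of the identity $\PLS(\PA,\HA)=\iglphatsigma$ separately: arithmetical soundness ($\iglphatsigma\vdash A$ implies $\HA\vdash\sigmapa{A}$ for every $\Sigma_1$-substitution $\sigma$) and arithmetical completeness (the converse). Throughout, the modal $\Box$ is read as $\PA$-provability $\Box^+$, while the ambient theory in which we evaluate the interpretation is $\HA$.

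The soundness direction I expect to be routine, and I would dispatch it by checking each axiom schema of $\iglphatsigma={\sf iGL}\overline{\sf P}{\sf C_a}$. The schemas $\underline{\sf K}$, $\underline{\sf 4}$ and $\underline{\sf L}$ are the formalized closure, transitivity and L\"ob properties of $\PA$-provability, all provable in $\HA$ (for L\"ob one uses the formalization behind \Cref{Lemma-Reflection}). For $\overline{\sf P}$ one notes that $\PA$ proves $\sigmapa{B}\vee\neg\sigmapa{B}$ by a uniform argument, so the true $\Sigma_1$-sentence $\Box^+(\sigmapa{B}\vee\neg\sigmapa{B})$ is provable in $\HA$. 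For $\underline{\sf C_a}$, since $\sigma(p)$ is $\Sigma_1$, \Cref{Lemma-bounded Sigma completeness} gives $\HA\vdash\sigma(p)\to\Prv{HA}{\sigma(p)}$, and $\HA\vdash\Prv{HA}{\phi}\to\Box^+\phi$ because $\HA$-proofs are $\PA$-proofs, provably; the intuitionistic base $\underline{\sf i}$ is immediate.

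For completeness I would argue contrapositively, following the template of \Cref{Theorem-PA-relative}. Assume $\iglphatsigma\nvdash A$. By the Kripke completeness of \Cref{Theorem-Kripke-semantic-iglphatsigma} there is a semi-perfect ${\sf Suc}$-classical atom-complete model $\kcal_0$ with quasi-classical root $\alpha_0$ such that $\kcal_0,\alpha_0\nVdash A$; translating forcing into local truth at the quasi-classical root via \Cref{Corol-truth-forcing,Corol-truth-forcing2} puts the refutation into the shape handled by the arithmetical machinery. I would then invoke the Solovay/Smory\'nski construction of \Cref{Section-arithmetical-interpretations} to obtain the function $F$, the sentences $L=\alpha$, and a $\Sigma_1$-substitution $\sigma$, and prove the tracking lemma $\HA\vdash (L=\alpha_0)\to\neg\,\sigmapa{A}$, the exact analogue of \Cref{Lem-31} but with every box read as $\PA$-provability $\Box^+$. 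Finally, since ``item 8'' gives the consistency of $\PA+(L=\alpha_0)$ and $\HA\subseteq\PA$, the theory $\HA+(L=\alpha_0)$ is consistent, whence $\HA\nvdash\sigmapa{A}$ and $A\notin\PLS(\PA,\HA)$.

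The main obstacle is this tracking lemma: the facts collected in \Cref{Section-arithmetical-interpretations} (``item 6'' and ``item 7'' in particular) are stated for $\HA$-provability, whereas here the boxes must be matched against $\Box^+$. The point that makes this work is precisely that the model is ${\sf Suc}$-classical and atom-complete, so the $\R$-successors are classical nodes and the classical notion $\Box^+$ tracks the $\R$-relation correctly, while atom-completeness aligns the atoms with genuinely $\Sigma_1$ values; I would therefore re-derive the boxed case of \Cref{Lem-31} for $\Box^+$ by a Solovay-style completeness argument restricted to the classical part of $\kcal_0$. As a sanity check and alternative route, one may instead take $f(A):=A^\dagger$, use \Cref{Lemma-HA-proves-dagger-eq} ($\HA\vdash\sigmaha{A^\dagger}\lr\sigmapa{A}$) to turn a refutation of $\sigmapa{A}$ into one of $\sigmaha{A^\dagger}$, and then deduce completeness from that of $\lles=\PLS(\HA,\HA)$ (\Cref{Theorem-sigma-provability-HA}), once the purely modal implication $\lles\vdash A^\dagger\Rightarrow\iglphatsigma\vdash A$ is verified directly on the Kripke models.
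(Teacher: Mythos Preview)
Your soundness sketch is fine and matches what the paper leaves to the reader. The completeness argument, however, diverges from the paper's in a way that leaves a real gap.

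The paper does \emph{not} reuse the Solovay function of \Cref{Section-arithmetical-interpretations}. That function is built from $\HA$-proofs and is designed so that items~6--8 speak about $\Box_{\sf HA}$; there is no mechanism in that construction for tracking $\Box^+$. Instead, the paper defines a \emph{new} Solovay function $F$ whose transition clauses are driven by $\PA_x$-provability of $L\neq\beta$ (via an auxiliary $r(\beta,n)$), proves fresh tracking lemmas for this $F$ (the analogues of your ``items''), and then---this is the ingredient you are missing entirely---forms an I-frame $(K^*,\preccurlyeq,T)$ on the non-${\sf Suc}$ nodes with $T(\alpha)=\PA+(L{=}\alpha)$ and applies Smory\'nski's method (\Cref{Theorem-Smorynski's general method of Kripke model construction}) to obtain a first-order Kripke model $\kcal^*\Vdash\HA$ with $\mathfrak M(\alpha)\models T(\alpha)$. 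The refutation $\HA\nvdash\sigmapa{A}$ is read off from $\kcal^*$, not from a single classical model plus consistency as you suggest. Your plan to ``re-derive the boxed case of \Cref{Lem-31} for $\Box^+$'' understates what is needed: the whole Solovay function must be rebuilt around $\PA$-provability, and the passage from the propositional countermodel to an $\HA$-refutation goes through the I-frame construction, not through ``item~8'' of the $\HA$-based setup.

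Two smaller points. First, the countermodel supplied by \Cref{Theorem-Kripke-semantic-iglphatsigma} has a root that need not be quasi-classical (only ${\sf Suc}$-nodes are classical), so your appeals to \Cref{Corol-truth-forcing,Corol-truth-forcing2} at the root are unjustified; the paper works with forcing on the intuitionistic part and uses the classical Solovay argument (\ref{eq104}) only at ${\sf Suc}$-nodes. Second, your alternative route via $(.)^\dagger$ is exactly \Cref{Reduction-Sigma-(PA-to-HA)-HA}, but note that in the paper the required modal implication $\lles\vdash A^\dagger\Rightarrow\iglphatsigma\vdash A$ (\Cref{Lemma-reduction-ihatHsigma-GLsigma}) is proved \emph{using} the present theorem, so taking that route here would require an independent, purely propositional proof of that implication---possible in principle, but not something you can cite.
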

\begin{proof}
The soundness is straightforward and left to the reader.  
For the completeness part,
let $\iglphatsigma\nvdash A$. Then by 
\Cref{Theorem-Kripke-semantic-iglphatsigma}, there is some semi-perfect atom-complete
${\sf Suc}$-classical Kripke model $\Kripke$
such that $\kcal,\alpha_0\nVdash A$ for some $\alpha_0\in K$. 
Without loss of generality, we may assume that
$K=(\alpha_0\preccurlyeq)\cup(\alpha_0\R)$.
Let $\kcal'=(K',\preccurlyeq',\R',V')$ indicates the 
Smor\'ynski's extension of $\kcal$ at $\alpha_0$ 
with the fresh node $\alpha_1$.
For the simplicity of notations, we may use $\preccurlyeq$ and $\R$
instead of $\preccurlyeq'$ and $\R'$.
Define the recursive function $F$ as follows. Since $K'$ is a  finite set,
we might assign a unique number $\bar{\alpha}$ to each node $\alpha$
and speak about $K'$ and its relationships $\preccurlyeq$ and $\R$
inside the language of arithmetic. For simplicity of notations, we may 
simply use  $\alpha\preccurlyeq \beta$ and $\alpha\R\beta$ 
corresponding to its equivalent arithmetical formula.

 Define 
$F(0):=\alpha_1$  and 
$$F(n+1):=\begin{cases}
\beta &: F(n)\R\beta \text{ and }  r(\beta,n+1)< n+1 \text{ and } (n)_0=\beta\\
\beta &: F(n)\prec\beta  \text{ and }    F(n)\not\sqsubset\beta\text{ and }  
 \ F(r(\beta,n+1))=\alpha_1\\ 
  & \ \text{ and }  r(\beta,n+1)< r(F(n),n+1)\text{ and } (n)_0=\beta\\
F(n) &:\text{otherwise}
\end{cases}$$
in which $L=\beta$ is shorthand for 
$\exists x\forall y\geq x(F(y)=F(x))$, $(n)_0$ is 
the exponent of $2$ in $n$ and 
$$r(\alpha,n):={\sf min}\left(\{ x\in\mathbb{N}:  
 \exists\,t\leq n\, {\sf Proof}_{_{{\sf PA}_x}}(t,\gnumber{L\neq \alpha})\}\cup
\{n\}\right)$$
Note that $r(\alpha,n)< n$ 
implies $\Box^+(L\neq \alpha)$.
$F$ is a provably total recursive function in $\HA$,
 i.e. $F(x)=y$ could be 
expressed as a $\Sigma_1$-formula in the language of arithmetic and 
all of its expected properties are provable in $\HA$. 
Hence we may use the function symbol $F$ in the language of arithmetic.

 Define  the arithmetical substitution 	$\sigma(p)$     in this way:
 $$\sigma(p):=\bigvee_{\kcal,\alpha\Vdash p} 
 \exists x\, F(x)=\alpha$$

Consider  the triple  $\mathcal{I}:=(K^*,\preccurlyeq^*, T)$ as follows:
\begin{itemize}
\item $K^*:=\{\alpha\in K: \nexists\beta\in K (\beta\R\alpha)\}$.
\item $\alpha\preccurlyeq^*\beta$ iff $\alpha\preccurlyeq\beta$ for every $\alpha,\beta\in K^*$.
 Again, by abuse of notations, we use $\preccurlyeq$ instead of $\preccurlyeq^*$.
\item $T(\alpha):=\PA+(L=\alpha)$. 
\end{itemize}

 By 
 \Cref{Theorem-Smorynski's general method of Kripke model construction} and \Cref{Lem-Iframe}, 
 we have some 
 first-order Kripke model $\kcal^*=(K^*,\preccurlyeq,\mathfrak{M})$
 such that $\kcal^*\Vdash \HA$ and $\kcal^*,\alpha\models T(\alpha)$. 
By \Cref{Lemma-Sigma-local-global}  
\begin{equation}\label{eq102}
 \kcal^*,\alpha\Vdash \exists x\, F(x)=\beta  \quad \Longrightarrow \quad \beta\preccurlyeq\alpha 
\end{equation}
  Hence by \Cref{Lemma-Sigma-local-global},   
   for every $\alpha\in K^*$ 
\begin{equation}\label{eq103}
\kcal^*,\alpha\Vdash\sigma_\tinysub{\sf PA}(p)  \quad \Longleftrightarrow \quad 
\kcal,\alpha\Vdash p
\end{equation}
For every classical node $\alpha\in K^*$,  since the Kripke model above $\alpha$ is just a classical  Kripke model, 
one may repeat the Solovay's argument and show that for every modal proposition $B$ we have 
\begin{equation}\label{eq104}
\begin{cases}
\kcal,\alpha\Vdash B  \quad &\Longrightarrow \quad \PA\vdash L=\alpha\to \sigma_\tinysub{\sf PA}(B)\\
\kcal,\alpha\nVdash B  \quad &\Longrightarrow \quad \PA\vdash L=\alpha\to \neg\sigma_\tinysub{\sf PA}(B)
\end{cases}
\end{equation}

\noindent We may use \Cref{Lem-Solovay-4,Lem-Solovay-5} and \cref{eq103} to conclude  
$$
\kcal^*,\alpha\Vdash\sigma_\tinysub{\sf PA}(B)  \quad \Longleftrightarrow \quad 
\kcal,\alpha\Vdash B
$$
\noindent 
for every modal proposition $B$ and $\alpha\in K^*$. 
Since $\kcal,\alpha_0\nVdash A$, we have $\kcal^*,\alpha\nVdash \sigma_\tinysub{\sf PA}(A)$, and hence $\HA\nvdash \sigma_\tinysub{\sf PA}(A)$, as desired.
\end{proof}

\begin{lemma}\label{Lem-Sol-fun1}
For arbitrary $\alpha,\beta\in K'$ we have 
\begin{enumerate}
\item $\PA\vdash \exists x F(x)=\alpha\to
\bigvee_{\alpha(\preccurlyeq\cup\sqsubseteq) \beta} L=\beta $,
\item $\PA\vdash L=\alpha \to \neg \Box^+(  L\neq\beta )$, for 
every $\alpha\R \beta$,
\item $\PA\vdash (L=\alpha) \rhd (L=\beta)$, 
for every $\alpha\prec \beta$,
\item $\mathbb{N}\models L=\alpha_1$,
\item $\PA\vdash L=\alpha\to 
\Box^+( L\neq \alpha\wedge \exists x F(x)=\alpha)$, 
for every $\alpha\neq\alpha_1$.
\end{enumerate}
\end{lemma}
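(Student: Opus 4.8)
The plan is to treat these as the standard Solovay-function lemmas, adapted to the present interpretability setting, and to derive all five items from a single provable-in-$\PA$ description of the dynamics of $F$. The common engine is \emph{monotonicity}: straight from the case definition, $\PA$ proves that for every $n$ either $F(n{+}1)=F(n)$, or $F(n)\R F(n{+}1)$, or $F(n)\prec F(n{+}1)$ with $F(n)\not\sqsubset F(n{+}1)$, so each step keeps $F$ fixed or moves it to a successor in the tree frame $(K',\prec\cup\R)$. Since that frame is a finite tree, $\PA$ bounds the number of strict moves by the height of the tree and hence proves that $F$ stabilizes, i.e.\ $\PA\vdash\exists\beta\,(L=\beta)$, and that any limiting value lies in the up-set of every value previously taken. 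This yields item~1 immediately: if $\exists x\,F(x)=\alpha$, then the (provably existing) limit $\beta$ satisfies $\alpha\,(\preccurlyeq\cup\sqsubseteq)\,\beta$, so $\bigvee_{\alpha(\preccurlyeq\cup\sqsubseteq)\beta}L=\beta$.

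Item~4 follows from the $\Sigma_1$-soundness of $\mathbb{N}$. Evaluating $F$ in the standard model, it stabilizes at some $\gamma$; if $\gamma\neq\alpha_1=F(0)$ there is a first stage $m$ with $F(m)=\gamma$, and by the case definition the move into $\gamma$ (Case~1 or Case~2) forces $r(\gamma,m)<m$, i.e.\ some $\PA_x$ actually proves $L\neq\gamma$. As $\PA_x\subseteq\PA$ and $\PA$ is sound, $\mathbb{N}\models L\neq\gamma$, contradicting $L=\gamma$; hence $\gamma=\alpha_1$.

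Items~2 and~5 I would get by formalizing the usual self-reference inside $\PA$. For item~2, I argue in $\PA$: assuming $L=\alpha$ and $\Box^+(L\neq\beta)$ with $\alpha\R\beta$, a $\PA$-proof of $L\neq\beta$ already lives in some $\PA_x$ with a code $t$, so $r(\beta,n{+}1)\leq x<n{+}1$ for all large $n$; since $(n)_0=\beta$ for infinitely many $n$ and eventually $F(n)=\alpha\R\beta$, Case~1 fires at some large stage and $F$ leaves $\alpha$, contradicting $L=\alpha$. This refutation is itself formalizable, giving $\PA\vdash L=\alpha\to\neg\Box^+(L\neq\beta)$. For item~5, I argue in $\PA$: if $L=\alpha$ with $\alpha\neq\alpha_1$ then $F$ has a first entry stage $m$ into $\alpha$, and the triggering condition again gives $r(\alpha,m)<m$, hence a $\PA_x$-proof, and so a $\PA$-proof, of $L\neq\alpha$; thus $\Box^+(L\neq\alpha)$. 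Moreover $L=\alpha$ yields the true $\Sigma_1$ statement $\exists x\,F(x)=\alpha$, which by provable $\Sigma_1$-completeness (\Cref{Lemma-bounded Sigma completeness}) gives $\Box^+(\exists x\,F(x)=\alpha)$; combining the two boxes yields $\Box^+(L\neq\alpha\wedge\exists x\,F(x)=\alpha)$.

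The main obstacle is item~3, the interpretability claim. Using \Cref{Theorem-Orey} in the form fixed by the Convention following it, I would unfold $(L=\alpha)\rhd(L=\beta)$ as $\forall x\,\Box^+\big((L=\alpha)\to\neg\Box^+_x(L\neq\beta)\big)$ and prove this in $\PA$, uniformly in $x$. The argument parallels item~2 but is driven by Case~2 of the definition of $F$ rather than Case~1: if $\PA_x$ refuted $L\neq\beta$ while $L=\alpha$ with $\alpha\prec\beta$ and $\alpha\not\sqsubset\beta$, then eventually $r(\beta,n{+}1)\leq x$, and the built-in side conditions $F(r(\beta,n{+}1))=\alpha_1$ and $r(\beta,n{+}1)<r(F(n),n{+}1)$ are exactly what makes Case~2 fire and move $F$ from $\alpha$ to $\beta$, contradicting $L=\alpha$. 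The delicate bookkeeping is twofold: first, to verify that these side conditions hold precisely when a \emph{weaker} subtheory refutes $L=\beta$ than any subtheory refuting $L=\alpha$ — this is where the asymmetry between $\R$-edges (consistency, item~2) and $\prec$-edges (interpretability) is encoded; and second, to keep the entire refutation inside $\PA$ so that the outer $\Box^+$ and the bound on $x$ are respected. This is the step whose output is later consumed by the I-frame construction of \Cref{Theorem-Smorynski's general method of Kripke model construction}.
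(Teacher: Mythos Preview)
The paper leaves this lemma entirely to the reader, so there is no proof to compare against; your sketch of items 1, 2, 4, and 5 is correct and is exactly the standard Solovay-style argument one would expect. For item~3 your overall plan---unfold $(L=\alpha)\rhd(L=\beta)$ as $\forall x\,\Box^+\bigl((L=\alpha)\to\neg\Box^+_x(L\neq\beta)\bigr)$ and derive a contradiction inside the $\Box^+$---is also right.

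There is, however, a real gap in your item~3 argument. You lump both Case-2 side conditions under the slogan ``a weaker subtheory refutes $L=\beta$ than any refuting $L=\alpha$,'' but that slogan only accounts for $r(\beta,n{+}1)<r(\alpha,n{+}1)$, and even there you never say \emph{why} no $\PA_y$ with $y\le x$ refutes $L=\alpha$: this is where \Cref{Lemma-Reflection} must be invoked explicitly---inside $\Box^+$ with the outer $x$ fixed one has $\Box^+_x(L\neq\alpha)\to L\neq\alpha$, hence $L=\alpha$ forces $\neg\Box^+_x(L\neq\alpha)$ and thus $r(\alpha,n{+}1)>x\ge r(\beta,n{+}1)$ for large $n$. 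The other condition, $F(r(\beta,n{+}1))=\alpha_1$, is \emph{not} about relative subtheory strength at all; it asserts that $F$ has not yet left $\alpha_1$ by stage $r(\beta,n{+}1)\le x$, i.e.\ that the first move of $F$ occurs at some stage $m_1>x$. To get this you must trace the path $\alpha_1\to\delta_1\to\cdots\to\delta_k=\alpha$ and observe that each Case-2 step enforces $r(\delta_i,m_i)<r(\delta_{i-1},m_i)\le r(\delta_{i-1},m_{i-1})$, so the $r$-values strictly decrease along the chain; since the terminal value $r(\alpha,m_k)>x$ (again by reflection), every $r(\delta_i,m_i)>x$ and in particular $m_1>r(\delta_1,m_1)>x$. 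Without this chain argument the second side condition is simply asserted, not verified.
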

\begin{proof}
All proofs are straightforward and left to the reader. 
\end{proof}

\begin{lemma}\label{Lem-Iframe}
$\mathcal{I}$, as defined in the proof of \Cref{Theorem-Arith-Complete-iglphatsigma-direct},  is an 
 $I$-frame \uparan{see \Cref{Definition-Iframe}}. 
\end{lemma}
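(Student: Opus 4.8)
The plan is to verify, one by one, the three clauses in the definition of an $I$-frame (\Cref{Definition-Iframe}) for the triple $\mathcal{I}=(K^*,\preccurlyeq^*,T)$ built in the proof of \Cref{Theorem-Arith-Complete-iglphatsigma-direct}, using throughout the properties of the Solovay function recorded in \Cref{Lem-Sol-fun1}.

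First I would check that $(K^*,\preccurlyeq^*)$ is a finite tree. Finiteness is immediate, as $K^*\subseteq K'$ and $K'$ is finite. For the tree property, note that any node lying in the modal cone $(\alpha_0\R)$ is $\R$-accessible and hence excluded from $K^*$, so $K^*$ is contained in the intuitionistic cone of the root. Since $\kcal$ is semi-perfect its frame is a finite tree, so for each $\alpha$ the set of $\preccurlyeq$-predecessors of $\alpha$ is linearly ordered; intersecting with $K^*$ keeps it linearly ordered, while the root still lies $\preccurlyeq$-below every element of $K^*$. Hence $(K^*,\preccurlyeq^*)$ is a finite tree.

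Next I would treat the assignment $T(\alpha)=\PA+(L=\alpha)$. Each such theory contains $\PA$ and is recursively axiomatized (it is $\PA$ augmented by the single arithmetical sentence $L=\alpha$), so the only real issue is consistency. The point is that $\alpha_1$ lies $\preccurlyeq$-below every node of $K^*$ and $\mathbb{N}\models L=\alpha_1$ by \Cref{Lem-Sol-fun1}(4). Fix $\alpha\in K^*$; if $\alpha=\alpha_1$ then $T(\alpha)$ is consistent because it holds in $\mathbb{N}$, and otherwise $\alpha_1\prec\alpha$, so \Cref{Lem-Sol-fun1}(3) gives $\PA\vdash\forall x\,\Box^+\big((L=\alpha_1)\to\neg\,\Box^+_x\neg(L=\alpha)\big)$. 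Evaluating this sentence in the standard model together with $\mathbb{N}\models L=\alpha_1$ yields $\mathbb{N}\models\neg\,\Box^+_x\neg(L=\alpha)$ for every $x$, i.e.\ no finite fragment $\PA_x$ refutes $L=\alpha$; since $\PA=\bigcup_x\PA_x$, every $\PA$-proof lives in some $\PA_x$, so $\PA\nvdash\neg(L=\alpha)$ and $T(\alpha)$ is consistent.

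Finally, for the interpretability clause I must show $T_\beta\rhd T_\gamma$ whenever $\beta\preccurlyeq^*\gamma$. The reflexive case $\beta=\gamma$ is the trivial self-interpretation. For $\beta\prec\gamma$ in $K^*$ the claim $\PA\vdash(L=\beta)\rhd(L=\gamma)$ is exactly \Cref{Lem-Sol-fun1}(3) in the $\Pi_2$ reading of $\rhd$ fixed by the convention following \Cref{Theorem-Orey}; and since by the previous step both $T_\beta$ and $T_\gamma$ are consistent recursive extensions of $\PA$, \Cref{Theorem-Orey} turns this formalized statement into genuine interpretability, which is what the later application of \Cref{Theorem-Smorynski's general method of Kripke model construction} requires. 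The main obstacle, I expect, will be the consistency step: one must pass carefully from the purely formal $\Pi_2$ interpretability of \Cref{Lem-Sol-fun1}(3) to actual consistency of $T(\alpha)$, and one must confirm that deleting the $\R$-accessible (classical) nodes from the tree frame really does leave a tree with the correct root.
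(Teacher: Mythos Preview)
Your argument is correct and matches the paper's one-line proof, which simply cites \Cref{Theorem-Orey} together with items 2, 3 and 4 of \Cref{Lem-Sol-fun1}. One small correction: since $K^*\subseteq K$ by definition and $\alpha_1\notin K$, your case ``$\alpha=\alpha_1$'' is vacuous; every $\alpha\in K^*$ already satisfies $\alpha_1\prec'\alpha$, so your item~3 argument covers all of $K^*$ uniformly --- this is also why you never need item~2, which the paper presumably invokes with $\alpha_1\,\R'\,\alpha_0$ to obtain consistency of $T(\alpha_0)$ directly before propagating along $\prec$ via item~3, whereas you extract consistency straight from the $\Pi_2$ form of $\rhd$ evaluated in $\mathbb{N}$.
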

\begin{proof}
Use \Cref{Theorem-Orey} and the 
 items 2,3 and 4, of \Cref{Lem-Sol-fun1}.
 \end{proof}

\begin{lemma}\label{Lem-Solovay-3}
For every $\alpha\in K$ we have 
$\PA\vdash L=\alpha\to  \Box^+(\bigvee_{\alpha\R\beta}L=\beta)$.
\end{lemma}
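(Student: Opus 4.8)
The plan is to argue inside $\PA$, turning the hypothesis $L=\alpha$ into a provable assertion about the eventual behaviour of the Solovay function $F$, and then to locate the limit from the tree combinatorics. Since $\alpha\in K$ we have $\alpha\neq\alpha_1$, so item~5 of \Cref{Lem-Sol-fun1} is available: under the hypothesis $L=\alpha$ it yields the provable fact $\Box^+(L\neq\alpha\wedge\exists x\,F(x)=\alpha)$. Informally, $\PA$ proves that $F$ does reach $\alpha$ but does not come to rest there.

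Next I would necessitate item~1 of \Cref{Lem-Sol-fun1}. That item is a theorem of $\PA$, so its box holds as well, namely $\Box^+(\exists x\,F(x)=\alpha\to\bigvee_{\alpha(\preccurlyeq\cup\sqsubseteq)\beta}L=\beta)$. Working inside the single modality $\Box^+$, I combine this with the conclusion of the previous step and use the conjunct $L\neq\alpha$ to delete the disjunct $\beta=\alpha$. Since $\{\beta:\alpha(\preccurlyeq\cup\sqsubseteq)\beta\}$ consists of $\alpha$ itself together with its proper $\preccurlyeq$-successors and its $\R$-successors, this produces $\Box^+(\bigvee L=\beta)$ with $\beta$ ranging over the proper successors of $\alpha$, i.e. over those with $\alpha\prec\beta$ or $\alpha\R\beta$. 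This part of the argument is purely formal.

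It remains to discard the $\prec$-successor disjuncts so that only the $\R$-successors survive, and this is where the real content lies. For a classical node $\alpha$ there are no proper $\preccurlyeq$-successors at all, so the unwanted disjuncts are simply absent and the displayed formula is already $\Box^+(\bigvee_{\alpha\R\beta}L=\beta)$; this is exactly the situation in which the lemma is applied in \Cref{Theorem-Arith-Complete-iglphatsigma-direct}, where one reasons only at the classical nodes of $K^*$. For a general $\alpha$ the elimination cannot come from the interpretability facts, since item~3 of \Cref{Lem-Sol-fun1} says precisely that each $\prec$-successor $\beta$ is $\PA$-consistent with $L=\alpha$; it must instead be read off from the definition of $F$ itself, where a $\prec$-move to $\beta$ is licensed only when $F(r(\beta,n+1))=\alpha_1$ together with $r(\beta,n+1)<r(F(n),n+1)$, conditions that confine such moves to the regime in which $F$ is still near the root $\alpha_1$. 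I expect this confinement analysis — showing that once $F$ has permanently left $\alpha_1$ the limit can only climb along $\R$ — to be the main obstacle, and the classicality of the nodes actually used in the application is what lets the clean proof avoid it.
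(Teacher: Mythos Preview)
Your reduction is right: after invoking items~1 and~5 of \Cref{Lem-Sol-fun1} it suffices to show $\PA\vdash L=\alpha\to\Box^+(L\neq\beta)$ for each $\beta$ with $\alpha\prec\beta$ and $\alpha\not\R\beta$. But you then leave this open and argue that the application only ever instantiates the lemma at classical $\alpha$. That is incorrect. The lemma feeds into \Cref{Lem-Solovay-4}, which in the proof of \Cref{Theorem-Arith-Complete-iglphatsigma-direct} is invoked at every $\alpha\in K^*$. The set $K^*$ consists of the nodes \emph{not} in ${\sf Suc}$, and these are exactly the nodes that can have proper $\preccurlyeq$-successors; in particular the root $\alpha_0$ lies in $K^*$ and is typically non-classical (already a two-point $\preccurlyeq$-fork refuting $p\vee\neg p$ shows this). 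So the case you call ``the main obstacle'' is not avoidable, and your proposal does not establish the lemma as stated.

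The paper does carry out the missing step, and it is short. Fix $\beta\succ\alpha$ with $\beta\not\sqsupset\alpha$. Working in $\PA$ under $L=\alpha$, pick $x$ with $F(x)=\alpha$ and pass $F(\dot x)=\alpha$ inside $\Box^+$ by $\Sigma_1$-completeness. Inside $\Box^+$, assume $L=\beta$ and look at the first $x_0$ with $F(x_0)=\beta$. Because $\beta\not\sqsupset\alpha$ (and $F$ has already visited $\alpha$), the transition into $\beta$ must be the second clause of the definition of $F$, which forces $F(r)=\alpha_1$ for $r=r(\beta,x_0)$ together with $\Box^+_r(L\neq\beta)$. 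Since $F$ never returns to $\alpha_1$ once it has reached $\alpha$, $r\leq x$; hence $\Box^+_x(L\neq\beta)$, and the uniform reflection of \Cref{Lemma-Reflection} yields $L\neq\beta$ inside $\Box^+$, a contradiction. This is precisely the ``confinement analysis'' you anticipated but did not supply.
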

\begin{proof}
It is enough to show that $\PA\vdash L=\alpha\to \Box^+(L\neq\beta)$ for  every $\beta\succcurlyeq \alpha$ such that 
$\beta\not\sqsupset\alpha$, holds. Consider some $\beta\succcurlyeq\alpha$ with $\beta\not\sqsupset \beta$. If $\beta=\alpha$,  
by item 5 in \Cref{Lem-Sol-fun1} we have the desired result. So 
we may let $\beta\neq\alpha$. 
We reason inside $\PA$. Let 
$L=\alpha$. Hence for some $x$ we have $F(x)=\alpha$.
 Then we reason inside $\Box^+$.  By $\Sigma_1$-completeness of 
 $\PA$   (see\Cref{Lemma-bounded Sigma completeness}), 
 we have $F(x)=\alpha$. Assume that $L=\beta$.  Let $x_0$ be the first number such that $F(x_0)=\beta$. Hence for some $r$
 such that $\Box^+_r(L\neq\beta)$ holds, we have  $F(r)=\alpha_1$. 
 Then $r\leq x$ and hence by \Cref{Lemma-Reflection} we may deduce 
 $L\neq\beta$, in contradiction with $L=\beta$. 
\end{proof}

\begin{lemma}\label{Lem-Solovay-4}
For every $\alpha$ in $K$ and proposition $B$
we have 
\begin{equation}  
\kcal,\alpha\Vdash   \Box B \quad \Longrightarrow \quad 
\PA\vdash L=\alpha\to\sigma_\tinysub{\sf PA}( \Box  B)
\end{equation}
\end{lemma}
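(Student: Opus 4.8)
The plan is to formalize Solovay's embedding argument inside $\PA$. Since $\sigma_\tinysub{\sf PA}(\Box B)=\Box^+\sigma_\tinysub{\sf PA}(B)$ by definition of the substitution, the goal is to derive $\PA\vdash L=\alpha\to\Box^+\sigma_\tinysub{\sf PA}(B)$ from the semantic hypothesis $\kcal,\alpha\Vdash\Box B$. First I would reason inside $\PA$ and assume $L=\alpha$. By \Cref{Lem-Solovay-3} this immediately yields $\Box^+\!\left(\bigvee_{\alpha\R\beta}L=\beta\right)$, so that, working inside the provability predicate $\Box^+$, we may assume $L$ has settled on one of the finitely many $\R$-successors of $\alpha$.

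The next step treats each such successor. The forcing hypothesis $\kcal,\alpha\Vdash\Box B$ means that every $\beta$ with $\alpha\R\beta$ satisfies $\kcal,\beta\Vdash B$. Because the model is ${\sf Suc}$-classical, each such $\beta$ is a classical node, and at a classical node intuitionistic forcing coincides with classical forcing. The $\R$-cone above a classical node is an ordinary classical $\GL$-model, so the classical Solovay argument recorded in \cref{eq104} applies and gives $\PA\vdash L=\beta\to\sigma_\tinysub{\sf PA}(B)$ for every $\R$-successor $\beta$ of $\alpha$. As there are only finitely many successors, $\PA$ proves the combined implication $\left(\bigvee_{\alpha\R\beta}L=\beta\right)\to\sigma_\tinysub{\sf PA}(B)$.

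Finally I would assemble these facts under the provability predicate. Applying formalized necessitation (available in $\PA$) to the last theorem gives $\Box^+\!\left[\left(\bigvee_{\alpha\R\beta}L=\beta\right)\to\sigma_\tinysub{\sf PA}(B)\right]$, and combining this with the conclusion $\Box^+\!\left(\bigvee_{\alpha\R\beta}L=\beta\right)$ of the first step via the provable distributivity of $\Box^+$ over implication yields $\Box^+\sigma_\tinysub{\sf PA}(B)$. Discharging the assumption $L=\alpha$ then gives $\PA\vdash L=\alpha\to\Box^+\sigma_\tinysub{\sf PA}(B)$, as desired.

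The main obstacle is the second step: securing the classical Solovay property $\PA\vdash L=\beta\to\sigma_\tinysub{\sf PA}(B)$ at the classical $\R$-successors $\beta$. These nodes sit above $\alpha$ in the $\R$-direction rather than in the base $K^*$ on which the first-order model $\kcal^*$ was built, so one must be careful that it is exactly the classical $\GL$-Solovay argument behind \cref{eq104} that transfers to the classical $\R$-cone, using that $\Vdash$ and $\models_c$ agree at classical nodes. The rest is bookkeeping, namely the correct manipulation of the finite successor-disjunction under $\Box^+$, which is handled by formalized necessitation together with the distribution of $\Box^+$.
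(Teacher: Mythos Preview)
Your proposal is correct and follows essentially the same approach as the paper: use ${\sf Suc}$-classicality to apply the classical Solovay fact \cref{eq104} at each $\R$-successor $\beta$ of $\alpha$, combine into $\PA\vdash\bigl(\bigvee_{\alpha\R\beta}L=\beta\bigr)\to\sigma_\tinysub{\sf PA}(B)$, necessitate, and then invoke \Cref{Lem-Solovay-3}. The only difference is presentational (you invoke \Cref{Lem-Solovay-3} at the start and assemble under $\Box^+$, whereas the paper invokes it at the end), and your remark about the care needed for \cref{eq104} at nodes outside $K^*$ is well taken.
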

\begin{proof}
Let $\kcal,\alpha\Vdash \Box B$. Hence for every 
$\beta\sqsupset \alpha$ we have $\kcal,\beta\Vdash B$. 
Since every $\beta\sqsupset \alpha$ is classical, by \cref{eq104}  
we have $\PA\vdash \bigvee_{\alpha\R\beta}L=\beta\to \sigma_\tinysub{\sf PA}(B)$. Hence $\PA\vdash \Box^+(\bigvee_{\alpha\R\beta}L=\beta) \to \sigma_\tinysub{\sf PA}(\Box B)$. 
\Cref{Lem-Solovay-3} implies 
$\PA\vdash L=\alpha\to \sigma_\tinysub{\sf PA}(\Box B)$.
\end{proof}
\begin{lemma}\label{Lem-Solovay-5}
For every $\alpha$ in $K$ and proposition $B$
we have 
\begin{equation} 
\kcal,\alpha\nVdash   \Box B \quad \Longrightarrow \quad 
\PA\vdash L=\alpha\to\neg\sigma_\tinysub{\sf PA}( \Box  B)
\end{equation}
\end{lemma}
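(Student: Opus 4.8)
The plan is to dualise the proof of \Cref{Lem-Solovay-4}: where that lemma used the \emph{positive} Solovay clause of \cref{eq104} together with \Cref{Lem-Solovay-3}, here I will use the \emph{negative} clause of \cref{eq104} at a single witnessing successor together with item~2 of \Cref{Lem-Sol-fun1}. First I would unpack the hypothesis $\kcal,\alpha\nVdash\Box B$: by the forcing clause for $\Box$ there is some $\beta$ with $\alpha\R\beta$ and $\kcal,\beta\nVdash B$. Because the model is ${\sf Suc}$-classical, the $\R$-successor $\beta$ is a classical node, so the negative clause of \cref{eq104} applies and yields $\PA\vdash L=\beta\to\neg\,\sigma_\tinysub{\sf PA}(B)$.

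Next I would transport this fact under the provability box. Taking the contrapositive gives $\PA\vdash\sigma_\tinysub{\sf PA}(B)\to L\neq\beta$; applying formalised necessitation (if $\PA\vdash\varphi$ then $\PA\vdash\Box^+\varphi$) and the distribution law $\Box^+(\varphi\to\psi)\to(\Box^+\varphi\to\Box^+\psi)$ for the $\PA$-provability predicate, I obtain $\PA\vdash\Box^+\sigma_\tinysub{\sf PA}(B)\to\Box^+(L\neq\beta)$. Recalling that $\sigma_\tinysub{\sf PA}(\Box B)$ is by definition $\Box^+\sigma_\tinysub{\sf PA}(B)$, it then suffices to rule out $\Box^+(L\neq\beta)$ under the assumption $L=\alpha$. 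This is exactly item~2 of \Cref{Lem-Sol-fun1}, which gives $\PA\vdash L=\alpha\to\neg\,\Box^+(L\neq\beta)$ for every $\alpha\R\beta$. Chaining the two implications contrapositively delivers $\PA\vdash L=\alpha\to\neg\,\Box^+\sigma_\tinysub{\sf PA}(B)$, i.e.\ $\PA\vdash L=\alpha\to\neg\,\sigma_\tinysub{\sf PA}(\Box B)$, as required.

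The conceptual heart --- and the step I expect to be the main obstacle to state cleanly --- is the combination of the ``local'' consistency fact (item~2: from $L=\alpha$, $\PA$ cannot prove $L\neq\beta$ for an $\R$-successor $\beta$) with the classical Solovay fact at $\beta$. The subtlety is that the negative clause of \cref{eq104} lives at the node $\beta$, which is an $\R$-successor rather than one of the tree nodes $K^*$ over which the first-order Kripke model $\kcal^*$ is built; I must make sure this clause is genuinely available for classical $\R$-successors (as it is already invoked for them in \Cref{Lem-Solovay-4}), and that passing from the external $\PA$-derivability $L=\beta\to\neg\,\sigma_\tinysub{\sf PA}(B)$ to the internalised $\Box^+$-statement uses only the standard derivability conditions, so that item~2 applies verbatim. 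Everything else is routine propositional modal reasoning about $\Box^+$.
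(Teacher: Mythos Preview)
Your proposal is correct and follows essentially the same route as the paper's own proof: pick a single $\R$-successor $\beta$ witnessing $\kcal,\beta\nVdash B$, apply the negative clause of \cref{eq104} at the classical node $\beta$, push the resulting implication under $\Box^+$ via the derivability conditions, and then invoke item~2 of \Cref{Lem-Sol-fun1} to close. (Incidentally, the paper's proof contains a slip in the first sentence, writing ``for every $\beta\sqsupset\alpha$'' where it should say ``there exists some $\beta\sqsupset\alpha$''; your version states this correctly.)
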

\begin{proof}
Let $\kcal,\alpha\nVdash \Box B$. Hence for every 
$\beta\sqsupset \alpha$ we have $\kcal,\beta\nVdash B$. 
Since every $\beta\sqsupset \alpha$ is classical, by \cref{eq104}  
we have $\PA\vdash  L=\beta\to \neg\sigma_\tinysub{\sf PA}(B)$. 
Hence $\PA\vdash  \sigma_\tinysub{\sf PA}(B)\to L\neq \beta$ and 
then $\PA\vdash  \Box^+\sigma_\tinysub{\sf PA}(B)\to \Box^+L
\neq \beta$ and $\PA\vdash  
\neg\Box^+{L\neq \beta}\to 
\neg\Box^+\sigma_\tinysub{\sf PA}(B)$.  Hence item 2 of 
\Cref{Lem-Sol-fun1} implies 
 $\PA\vdash L=\alpha\to \neg\Box^+\sigma_\tinysub{\sf PA}(B)$.
\end{proof}

\subsection{Reductions}

\begin{lemma}\label{Lemma-reduction-ihatHsigma-GLsigma}
For every $A\in\lcalb$, if  
$\lles\vdash A^\dagger$ then $\iglphatsigma\vdash A$.
\end{lemma}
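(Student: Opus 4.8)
The plan is to prove the contrapositive, following the template of \Cref{Lemma-reduction-glssigma-ihatHSsigma} and the lemma preceding it, but replacing Solovay's theorems by the arithmetical completeness of $\PLS(\PA,\HA)$ established in \Cref{Theorem-Arith-Complete-iglphatsigma-direct}. So I would assume $\iglphatsigma\nvdash A$ and aim to produce a single $\Sigma_1$-substitution witnessing $\lles\nvdash A^\dagger$.

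First, since $\PLS(\PA,\HA)=\iglphatsigma$ by \Cref{Theorem-Arith-Complete-iglphatsigma-direct}, the assumption $\iglphatsigma\nvdash A$ yields, by arithmetical completeness, some $\Sigma_1$-substitution $\sigma$ with $\HA\nvdash\sigma_{_{\sf PA}}(A)$. Next I would transport this refutation across the $\dagger$-translation using \Cref{Lemma-HA-proves-dagger-eq}, which gives $\HA\vdash\sigma_{_{\sf HA}}(A^\dagger)\lr\sigma_{_{\sf PA}}(A)$ and hence $\HA\nvdash\sigma_{_{\sf HA}}(A^\dagger)$. Finally, since $\lles=\PLS(\HA,\HA)$ by \Cref{Theorem-sigma-provability-HA}, the arithmetical soundness of $\lles$ contraposes to yield $\lles\nvdash A^\dagger$, which is exactly what is required.

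It is worth noting that, unlike in \Cref{Lemma-reduction-glssigma-ihatHSsigma}, no detour through the classical equivalence $A\lr A^\dagger$ is needed here: because both sides of \Cref{Lemma-HA-proves-dagger-eq} are interpreted over the \emph{same} outer theory $\HA$, the completeness of $\iglphatsigma$ already delivers a refutation of $\sigma_{_{\sf PA}}(A)$ itself, and \Cref{Lemma-HA-proves-dagger-eq} couples this directly to $\sigma_{_{\sf HA}}(A^\dagger)$ in a single step. I do not expect a genuine obstacle: each step is a direct invocation of an already-proved completeness or soundness statement, and the only conceptual ingredient is the recognition that $\dagger$ is precisely the propositional device converting the $\HA$-interpretation of $A^\dagger$ into the $\PA$-interpretation of $A$, which is the content of \Cref{Lemma-HA-proves-dagger-eq}. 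The main thing to verify carefully is merely that the quantifier directions line up, namely that completeness is applied to $\iglphatsigma$ and soundness to $\lles$, so that the same $\sigma$ serves both ends of the chain.
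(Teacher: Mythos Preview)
Your proof is correct and follows essentially the same contrapositive route as the paper: arithmetical completeness of $\iglphatsigma=\PLS(\PA,\HA)$, then \Cref{Lemma-HA-proves-dagger-eq}, then arithmetical soundness of $\lles$. The only difference is that the paper first passes from $\iglphatsigma\nvdash A$ to $\iglphatsigma\nvdash A^\dagger$ via $A\lr A^\dagger$ in ${\sf iK4}+\Box\PEM$ before invoking completeness, whereas you apply completeness directly to $A$ and then \Cref{Lemma-HA-proves-dagger-eq} to $A$; your version is slightly more streamlined, though your side remark that the detour is genuinely needed in \Cref{Lemma-reduction-glssigma-ihatHSsigma} is not quite right---it is equally dispensable there.
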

\begin{proof}
Let $\iglphatsigma\nvdash A$. Since in 
${\sf iK4}+\Box\PEM$
 we have 
$A\lr A^\dagger$, then $\iglphatsigma\nvdash A^\dagger$.
Hence by $\acs{\iglphatsigma}{\PA}{\PA}$ from 
\Cref{Theorem-Arith-Complete-iglphatsigma-direct},
we have some $\Sigma_1$-substitution $\sigma$ such that 
$\HA\nvdash\sigma_{_{\sf PA}}(A^\dagger)$. Then 
\Cref{Lemma-HA-proves-dagger-eq} implies 
$\HA\nvdash \sigma_{_{\sf HA}}(A^\dagger)$, and hence 
by arithmetical soundness of $\lles$ (\Cref{Theorem-sigma-provability-HA}) we have 
$\lles\nvdash A^\dagger$, as desired.
\end{proof}

\begin{theorem}\label{Reduction-Sigma-(PA-to-HA)-HA}
$\iglphatsigma=\PLS(\PA,\HA)\leq \PLS(\HA,\HA)=\lles$.
\end{theorem}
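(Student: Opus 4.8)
The plan is to imitate verbatim the template already established for the $\nat$-relative case in \Cref{Reduction-Sigma-(PA-to-HA)-nat}, of which the present statement is the $\HA$-relative analogue. First I would unpack the compound notation: by \Cref{Theorem-Arith-Complete-iglphatsigma-direct} we have $\iglphatsigma=\PLS(\PA,\HA)$ and by \Cref{Theorem-sigma-provability-HA} we have $\lles=\PLS(\HA,\HA)$, so clauses (2) and (3) of the reduction notation are already in place. It therefore remains only to exhibit a reduction $\acs{\iglphatsigma}{\PA}{\HA}\leq_{f,\fbar[]}\acs{\lles}{\HA}{\HA}$, and this will be of the ``identity'' type from \Cref{Section-Reduction-tool}.

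Concretely, I would set $f(A):=A^\dagger$ and take $\fbar$ to be the identity on arithmetical substitutions. Condition R1 of \Cref{Definition-Reduction-PL} asks that $\lles\vdash A^\dagger$ imply $\iglphatsigma\vdash A$, which is exactly \Cref{Lemma-reduction-ihatHsigma-GLsigma} in its $\lles$/$\iglphatsigma$ form established just above. For R2 I would check that the identity sends $\wit{A^\dagger;\HA,\HA;\Sigma_1}$ into $\wit{A;\PA,\HA;\Sigma_1}$: given a $\Sigma_1$-substitution $\sigma$ with $\HA\nvdash\sigmaha{A^\dagger}$, \Cref{Lemma-HA-proves-dagger-eq} gives $\HA\vdash\sigmaha{A^\dagger}\lr\sigmapa{A}$, whence $\HA\nvdash\sigmapa{A}$, i.e.\ $\sigma\in\wit{A;\PA,\HA;\Sigma_1}$. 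The remaining side condition $\fbar(\sigma)=\sigma\in\wit{\sigma}$ is immediate, since every substitution lies in its own propositional closure.

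I do not expect any genuine obstacle at the level of this theorem itself; all the real work has been front-loaded into the two lemmas invoked. The decisive structural point, worth flagging explicitly, is that both the source and target witness sets are computed against the \emph{same} outer theory $\HA$ (the symbol to the right of the last semicolon being $\HA$ in each case), so that the $\HA$-provable equivalence of \Cref{Lemma-HA-proves-dagger-eq} applies directly to convert an $\HA$-refutation of $\sigmaha{A^\dagger}$ into an $\HA$-refutation of $\sigmapa{A}$. This is precisely what makes the plain identity an admissible $\fbar[]$, and it is the only place where the choice $\SFU=\SFU'=\HA$ is used; the companion \Cref{Lemma-reduction-ihatHsigma-GLsigma} in turn rests on the arithmetical completeness of $\iglphatsigma$ (\Cref{Theorem-Arith-Complete-iglphatsigma-direct}) and the arithmetical soundness of $\lles$ (\Cref{Theorem-sigma-provability-HA}), together with the equivalence $A\lr A^\dagger$ that holds in ${\sf iK4}+\Box\PEM$.
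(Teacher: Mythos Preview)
Your proposal is correct and matches the paper's own proof essentially verbatim: the paper also sets $f(A):=A^\dagger$ with $\fbar$ the identity, invokes the $\lles$/$\iglphatsigma$ version of \Cref{Lemma-reduction-ihatHsigma-GLsigma} for R1, and \Cref{Lemma-HA-proves-dagger-eq} for R2. The only cosmetic difference is that the paper states the soundness of $\iglphatsigma$ separately rather than citing \Cref{Theorem-Arith-Complete-iglphatsigma-direct} for the full equality, but this is immaterial since the cited lemma for R1 already relies on that theorem.
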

\begin{proof}
The soundness of $\iglphatsigma$ is straightforward
 and left to the reader.  Also by \Cref{Theorem-sigma-provability-HA}, we have $\PLS(\HA,\HA)=\lles$. 
 So, it is enough to show 
 $\acs{\iglphatsigma}{\PA}{\HA}\leq_{f,\fbar[]}\acs{\lles}{\HA}{\HA}$.
\\
Define $f(A):=A^\dagger$ and $\fbar$ as identity function.
\begin{itemize}
\item[R1.]  Use \Cref{Lemma-reduction-ihatHsigma-GLsigma}.
\item[R2.] Use \Cref{Lemma-HA-proves-dagger-eq}.\qedhere
\end{itemize}
\end{proof}

\begin{theorem}\label{Reduction-Sigma-(PA-to-HA)-PA}
${\GLV}=\PLS(\PA,\PA)\leq \PLS(\HA,\PA)=\ihatHsigma$.
\end{theorem}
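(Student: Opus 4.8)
The plan is to imitate the proof of \Cref{Reduction-Sigma-(PA-to-HA)-HA} almost word for word, merely replacing the relativizing theory $\HA$ by $\PA$ in the outer position. Both endpoints are already characterized: by \Cref{Solovey} we have $\PLS(\PA,\PA)={\GLV}$, and by \Cref{Theorem-PA-relative} we have $\PLS(\HA,\PA)=\ihatHsigma$. By the convention governing the notation $\leq$, it then remains only to produce a propositional reduction
$$\acs{{\GLV}}{\PA}{\PA}\leq_{f,\fbar[]}\acs{\ihatHsigma}{\HA}{\PA}.$$

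First I would take $f(A):=A^\dagger$ and let each $\bar{f}_{_A}$ be the identity map on arithmetical substitutions, so that R0 of \Cref{Definition-Reduction-PL} is immediate and the side requirement $\bar{f}_{_A}(\sigma)=\sigma\in\wit{\sigma}$ holds automatically. For R1 I must check that $\ihatHsigma\vdash A^\dagger$ implies ${\GLV}\vdash A$; this is exactly the reduction lemma established earlier asserting that $\ihatHsigma\vdash A^\dagger$ entails ${\GLV}\vdash A$. Its proof runs contrapositively: from ${\GLV}\nvdash A$ one gets ${\GLV}\nvdash A^\dagger$ by the classical equivalence $A\lr A^\dagger$, then the completeness half of $\PLS(\PA,\PA)={\GLV}$ yields a $\Sigma_1$-substitution $\sigma$ with $\PA\nvdash\sigma_{_{\sf PA}}(A^\dagger)$, whence $\PA\nvdash\sigma_{_{\sf HA}}(A^\dagger)$ by \Cref{Lemma-HA-proves-dagger-eq}, and finally the arithmetical soundness of $\ihatHsigma$ from \Cref{Theorem-PA-relative} delivers $\ihatHsigma\nvdash A^\dagger$.

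For R2, because $\bar{f}_{_A}$ is the identity I only need that every witness for $f(A)$ is a witness for $A$. Concretely, if $\sigma$ is a $\Sigma_1$-substitution with $\PA\nvdash\sigma_{_{\sf HA}}(A^\dagger)$, then \Cref{Lemma-HA-proves-dagger-eq} supplies $\HA\vdash\sigma_{_{\sf HA}}(A^\dagger)\lr\sigma_{_{\sf PA}}(A)$, hence a fortiori $\PA\vdash\sigma_{_{\sf HA}}(A^\dagger)\lr\sigma_{_{\sf PA}}(A)$, so that $\PA\nvdash\sigma_{_{\sf PA}}(A)$, exactly as demanded.

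Since the two characterizations, the lemma underlying R1, and \Cref{Lemma-HA-proves-dagger-eq} for R2 are all already in hand, I do not expect any genuine obstacle; the whole argument is a transcription of \Cref{Reduction-Sigma-(PA-to-HA)-HA} with the outer $\HA$ replaced by $\PA$. The only point worth double-checking is that the key equivalence $\sigma_{_{\sf HA}}(A^\dagger)\lr\sigma_{_{\sf PA}}(A)$ of \Cref{Lemma-HA-proves-dagger-eq} is proved inside $\HA$, so that it remains available over the stronger theory $\PA$ invoked in both R1 and R2; this is indeed the case, which is what makes the single translation $(.)^\dagger$ serve uniformly for the $\HA$-, $\PA$- and $\nat$-relative reductions.
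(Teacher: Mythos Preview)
Your proposal is correct and follows exactly the paper's own proof: define $f(A):=A^\dagger$ with $\bar f_{_A}$ the identity, invoke \Cref{Lemma-reduction-ihatHsigma-GLsigma} for R1, and \Cref{Lemma-HA-proves-dagger-eq} for R2. Your additional remark that the $\HA$-provable equivalence of \Cref{Lemma-HA-proves-dagger-eq} lifts to $\PA$ (needed for R2 here) is the only detail the paper leaves implicit, and you have it right.
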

\begin{proof}
We already have ${\GLV}=\PLS(\PA,\PA)$ and  $\PLS(\HA,\PA)=\ihatHsigma$ by \Cref{Theorem-PA-relative,Solovey}.
 So, it is enough to show 
 $\acs{{\GLV}}{\PA}{\PA}\leq_{f,\fbar[]}\acs{\ihatHsigma}{\HA}{\PA}$.
\\
Define $f(A):=A^\dagger$ and $\fbar$ as identity function.
\begin{itemize}
\item[R1.] Let $\ihatHsigma\vdash A^\dagger$.  By 
\Cref{Lemma-reduction-ihatHsigma-GLsigma} we have ${\GLV}\vdash A$.
\item[R2.] Use \Cref{Lemma-HA-proves-dagger-eq}.\qedhere
\end{itemize}
\end{proof}

\noindent
The arithmetical completeness of $\iglphat$ will be reduced to 
the  one for $\iglphatsigma$ via the following lemma. 
This argument is similar to the one 
explained in \cite{reduction}. One may use a direct proof for the arithmetical completeness of $\iglphat$, similar to what we 
do for $\iglphatsigma$. However this is not enough for our later use in\cref{section-PA*} of the arithmetical completeness of $\iglphat$.

\begin{lemma}\label{Lemma-Reduction-iglphat}
For every modal proposition $A$, $\iglphat\vdash A$ iff 
for every propositional modal $(.)^\Boxin$-substitution 
$\tau$ \uparan{\Cref{Definition-boxin-sub}}
we have $\iglphatsigma\vdash\tau(A)$. 
\end{lemma}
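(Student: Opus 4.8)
The plan is to mirror the classical reduction of $\GL$ to ${\GLV}$ recorded in \Cref{Remark-reduction-GL-GLS}, now using the Kripke semantics for $\iglphat$ and $\iglphatsigma$ established in \Cref{Theorem-Kripke-semantic-iglphat,Theorem-Kripke-semantic-iglphatsigma} in place of their classical counterparts. The two directions are of quite different character: one is a routine closure argument, the other requires manufacturing a tailored $(.)^\Boxin$-substitution out of a refuting Kripke model.

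For the direction from left to right, suppose $\iglphat\vdash A$. Every axiom schema of $\iglphat$ (namely $\underline{\sf i},\underline{\sf K},\underline{\sf 4},\underline{\sf L}$ and $\overline{\sf P}$) is closed under arbitrary substitution, and modus ponens commutes with substitution, so a derivation of $A$ transforms termwise into a derivation of $\tau(A)$; hence $\iglphat\vdash\tau(A)$ for every substitution $\tau$. Since $\iglphatsigma$ extends $\iglphat$, we obtain $\iglphatsigma\vdash\tau(A)$, in particular for every $(.)^\Boxin$-substitution $\tau$. Note that this step would fail if one tried to substitute into $\iglphatsigma$ directly, because an instance $p\to\Box p$ of ${\sf CP_a}$ need not map under $\tau$ to an instance of ${\sf CP_a}$; this is precisely why the statement restricts to $(.)^\Boxin$-substitutions.

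For the converse I would argue contrapositively. Assuming $\iglphat\nvdash A$, \Cref{Theorem-Kripke-semantic-iglphat} yields a semi-perfect, ${\sf Suc}$-classical, $\R$-branching Kripke model $\kcal$ with a node $\alpha$ such that $\kcal,\alpha\nVdash A$. The goal is to produce a $(.)^\Boxin$-substitution $\tau$ together with an atom-complete semi-perfect ${\sf Suc}$-classical model $\kcal'$ and a node refuting $\tau(A)$; soundness of $\iglphatsigma$ for such models (\Cref{Theorem-Kripke-semantic-iglphatsigma}) then gives $\iglphatsigma\nvdash\tau(A)$, as required. The substitution is built by replacing each atomic variable $p$ occurring in $A$ by a formula $\tau(p)$ that (i) has the shape demanded by \Cref{Definition-boxin-sub}, i.e.~${\sf iK4}+{\sf CP_a}\vdash\tau(p)\lr B^\Boxin$ for some $B$ with ${\sf iK4}\vdash\Boxdot B^\Boxin\lr B^\Box$, and (ii) encodes, through boxed ``node descriptors'' built from $\R$-depth together with fresh genuinely atom-complete variables, exactly the set of nodes of $\kcal$ forcing $p$. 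Once $\tau$ is fixed, \Cref{Lemma-Boxin-sub} lets me interchange $\tau$ with the translations $(.)^\Box$ and $(.)^\Boxin$ inside ${\sf iK4V}$, which is what propagates the forcing correspondence $\kcal,\beta\Vdash A \Leftrightarrow \kcal',\beta\Vdash\tau(A)$ up the complexity of $A$.

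The main obstacle is precisely the design and verification of $\tau$: the $\iglphatsigma$-models are forced to be atom-complete, so every atom must propagate along $\R$, whereas the atom $p$ in $\kcal$ generally does not. Reconciling these requires encoding a non-propagating valuation by boxed formulas while simultaneously keeping $\tau(p)$ within the $(.)^\Boxin$ class, and then checking that the encoding is faithful under forcing. This is the intuitionistic analogue of the delicate step in \cite{reduction} behind \Cref{Remark-reduction-GL-GLS}, and it is where the bulk of the work lies; the remaining bookkeeping (soundness, and the routine induction on $A$ using \Cref{Lemma-Boxin-sub}) is comparatively mechanical.
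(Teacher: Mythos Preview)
Your overall plan matches the paper's proof closely: the forward direction is exactly the closure-under-substitution argument, and for the converse the paper also starts from a refuting semi-perfect ${\sf Suc}$-classical $\R$-branching model $\kcal$ (via \Cref{Theorem-Kripke-semantic-iglphat}), introduces fresh atoms $p_\alpha$ for each node, defines node-describing formulas $A_\alpha$ recursively, sets $\tau(p):=\bigvee_{\kcal,\alpha\Vdash p}A_\alpha$, and passes to the model $\bar\kcal$ obtained by revaluing the atoms so that $p_\beta$ holds exactly above $\beta$ (in the $\preccurlyeq\cup\sqsubset$ sense), which makes $\bar\kcal$ atom-complete. The heart of the argument is \Cref{Lemma-1-reduction-iglphat}: $\bar\kcal,\alpha\Vdash A_\beta$ iff $\beta\preccurlyeq\alpha$; its proof is where the $\R$-branching hypothesis is actually used, so you should flag that dependence rather than leave it implicit.

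One correction: your appeal to \Cref{Lemma-Boxin-sub} to ``propagate the forcing correspondence up the complexity of $A$'' is misplaced. The equivalence $\kcal,\beta\Vdash B \Leftrightarrow \bar\kcal,\beta\Vdash\tau(B)$ is proved by a straightforward induction that uses nothing beyond (a) the atomic case supplied by \Cref{Lemma-1-reduction-iglphat} and (b) the facts that $\tau$ commutes with $\wedge,\vee,\to,\Box$ and that $\kcal,\bar\kcal$ share the same frame; no $(.)^\Box$ or $(.)^\Boxin$ translation enters. \Cref{Lemma-Boxin-sub} is needed later, in the \emph{applications} of this lemma (e.g.\ \Cref{Reduction-(-to-sigma)-PA*-HA}), not in its proof. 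Likewise, the paper does not use $\R$-depth in building $\tau$; the construction is purely in terms of the fresh $p_\alpha$ and the recursive $A_\alpha$. Finally, while you correctly insist that $\tau$ be a $(.)^\Boxin$-substitution, note that the paper's proof does not spell out this verification; it is true (each $\Box\neg\Boxdot p_\beta$ is ${\sf iK4}$-equivalent to $(\Box\neg p_\beta)^\Boxin$), but you would need to supply that check yourself.
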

\begin{proof}
One direction holds since $\iglphat$ is closed under 
substitutions and is included in $\iglphatsigma$. 
For the other way around, let $\iglphat\nvdash A$. 
By \Cref{Theorem-Kripke-semantic-iglphat}, 
there is some 
${\sf Suc}$-classical, semi-perfect $\R$-branching 
Kripke  model $\Kripke$ such that  $\kcal\nVdash A$. 
For every $\alpha\in K$, let $p_{_\alpha}$ be a fresh atomic variable such that 
for every 
$\alpha\neq\beta$ we have $p_{_\alpha}\neq p_{_\beta}$. 
For every $\alpha\in K$, define $A_\alpha$ via induction on the 
$\prec$-height of $\alpha$ (the maximum number $n$ such that a 
sequence $\alpha=\alpha_0\prec\ldots\prec\alpha_n$ exists). 
So as induction hypothesis,
let $A_\beta$ for every $\beta\succ\alpha$ is defined. 
$$
A_\alpha^+:=\bigvee_{\alpha\prec\beta}A_\beta
\quad \text{,}\quad
A_\alpha:=p_{_\alpha}\wedge 
\bigwedge_{\alpha\sqsubseteq \beta}
\Box\neg\Boxdot p_{_\beta}\to A_\alpha^+  
$$
Let 
$\bar{\kcal}=
(K,\preccurlyeq,\R,\bar{V})$, in which $\alpha\, \bar{V}\, p$
iff $p=p_{_\beta}$ for some 
$\beta(\preccurlyeq\cup\sqsubset) \alpha$.
Define $$\tau(p):= \bigvee_{\kcal,\alpha\Vdash p} A_\alpha$$
Then by induction on the complexity of the modal proposition 
$B$, we show
$$\kcal,\alpha\Vdash B \quad \Longleftrightarrow \quad 
\bar{\kcal},\alpha\Vdash \tau(B)$$
\begin{itemize}[leftmargin=*]
\item $B$ is atomic variable: For every $\alpha\in K$
such that $\kcal,\alpha\Vdash B$, by 
\Cref{Lemma-1-reduction-iglphat} we have 
$\bar{\kcal},\alpha\Vdash A_\alpha$ and hence 
$\bar{\kcal},\alpha\Vdash \tau(p)$. Also if 
$\bar{\kcal},\alpha\Vdash \tau(B)$, then for some 
$\beta\in K$   we have $\kcal,\beta\Vdash B$ and 
$\bar{K},\alpha\Vdash A_\beta$. Hence by 
\Cref{Lemma-1-reduction-iglphat} we have 
$\beta\preccurlyeq\alpha$, 
which implies $\kcal,\alpha\Vdash B$, as desired.
\item All the other cases are trivial and left to the reader.
\end{itemize}
Then we have  $\bar{\kcal}\nVdash \tau(A)$. 
Obviously the Kripke model $\bar{\kcal}$ inherits all 
properties from $\kcal$ and moreover it is atom-complete.
Hence by  soundness part of the 
\Cref{Theorem-Kripke-semantic-iglphatsigma}, 
$\iglphatsigma\nvdash\tau(A)$, as desired.
\end{proof}
\begin{lemma}\label{Lemma-1-reduction-iglphat}
Let $\bar{\kcal}$ and $A_\alpha$, as defined in the proof of 
\Cref{Lemma-Reduction-iglphat}.
For every $\alpha,\beta\in K$ we have 
$\bar{\kcal},\alpha\Vdash A_\beta$ iff 
$\alpha\succcurlyeq \beta$.
\end{lemma}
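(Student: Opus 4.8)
The plan is to prove the equivalence by induction on the $\prec$-height of $\beta$, using that the defining clause for $A_\beta$ refers back only to the formulas $A_\epsilon$ with $\beta\prec\epsilon$, each of strictly smaller $\prec$-height (prepending $\beta$ to a $\prec$-chain out of $\epsilon$ shows $\text{height}(\epsilon)<\text{height}(\beta)$). Hence the induction hypothesis applies to every disjunct of $A_\beta^+=\bigvee_{\beta\prec\epsilon}A_\epsilon$, and I would first record its consequence: for any node $\delta$, $\bar{\kcal},\delta\Vdash A_\beta^+$ holds iff $\bar{\kcal},\delta\Vdash A_\epsilon$ for some $\epsilon\succ\beta$, iff (by hypothesis) $\epsilon\preccurlyeq\delta$ for some $\epsilon$ with $\beta\prec\epsilon$, iff $\beta\prec\delta$. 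So the consequent of $A_\beta$ is forced exactly on the strict $\preccurlyeq$-cone of $\beta$.

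The next step is a semantic normalization of the antecedent. Since the valuation gives $\bar{\kcal},\zeta\Vdash p_\gamma$ iff $\gamma\preccurlyeq\zeta$ or $\gamma\R\zeta$, and since the frame is brilliant, satisfies $(\preccurlyeq\, ;\R)\subseteq\R$, and has $\R$ transitive, I would check that $\bar{\kcal},\zeta\Vdash\Boxdot p_\gamma$ iff $\bar{\kcal},\zeta\Vdash p_\gamma$: forcing $p_\gamma$ already forces $\Box p_\gamma$, because every $\R$-successor of $\zeta$ is again $\R$-above (or $\preccurlyeq$-above, collapsed via brilliance) $\gamma$. Consequently $\bar{\kcal},\delta\Vdash\Box\neg\Boxdot p_\gamma$ reduces to the purely combinatorial condition that no $\R$-successor $\zeta$ of $\delta$ satisfies $\gamma\preccurlyeq\zeta$ or $\gamma\R\zeta$. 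Conjoining over all $\gamma$ with $\beta\sqsubseteq\gamma$ and applying transitivity of $\R$ once more, this controls precisely the $\R$-successors of $\delta$ relative to the cone of $\beta$, which is what lets the conjuncts separate the $\preccurlyeq$-direction from the $\R$-direction.

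With these two reductions, the equivalence is proved in both directions. For $\beta\preccurlyeq\alpha$: if $\beta\prec\alpha$ then $A_\alpha$ is a disjunct of $A_\beta^+$, the induction hypothesis gives $\bar{\kcal},\alpha\Vdash A_\alpha$, hence $\bar{\kcal},\alpha\Vdash A_\beta^+$, and since forcing is persistent this already yields $\bar{\kcal},\alpha\Vdash A_\beta$ (the consequent is forced everywhere above $\alpha$, and $p_\beta$ holds there); the remaining diagonal case $\alpha=\beta$ is settled directly from the antecedent analysis. For $\beta\not\preccurlyeq\alpha$ I would argue contrapositively, exhibiting a node $\delta\succcurlyeq\alpha$ at which the antecedent is forced but which lies outside the strict $\preccurlyeq$-cone of $\beta$, so that $A_\beta$ fails at $\alpha$; the candidate $\delta$ is $\alpha$ itself or a suitable classical $\R$-successor, and verifying its forcing behaviour is where $\kcal$ being ${\sf Suc}$-classical, neat, and $\R$-branching is used.

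The main obstacle is exactly this last bookkeeping at the classical $\R$-accessible nodes. Because $p_\beta$ is forced both on the $\preccurlyeq$-cone and on the $\R$-cone of $\beta$, the atom alone cannot distinguish $\beta\preccurlyeq\alpha$ from $\beta\R\alpha$; the whole role of the conjuncts $\Box\neg\Boxdot p_\gamma$ is to break this symmetry by peering one $\R$-step further, and the delicate point is to show, using $\R$-branching together with ${\sf Suc}$-classicality, that these conjuncts behave correctly at the $\R$-maximal classical nodes and do not over-constrain the genuine $\preccurlyeq$-cone. Once the forcing of the antecedent is pinned down uniformly across all nodes, both inclusions drop out and the induction closes.
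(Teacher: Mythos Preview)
Your plan matches the paper's proof: induction on the $\prec$-height of $\beta$, reduce $A_\beta^+$ via the induction hypothesis to the condition $\beta\prec\delta$, and use ${\sf Suc}$-classicality together with $\R$-branching to separate the $\preccurlyeq$-cone of $\beta$ from its $\R$-cone. Your explicit normalization $\bar{\kcal},\zeta\Vdash\Boxdot p_\gamma\Leftrightarrow\bar{\kcal},\zeta\Vdash p_\gamma$ is a useful clarification of something the paper uses silently (it jumps from $\Box\neg\Boxdot p_\gamma$ to $\Box\neg p_\gamma$ without comment).

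One point needs care. Your repeated reference to ``the antecedent'' suggests you are reading $A_\beta$ as the single implication $\bigl(p_\beta\wedge\bigwedge_{\beta\sqsubseteq\gamma}\Box\neg\Boxdot p_\gamma\bigr)\to A_\beta^+$. The paper's proof, however, treats it as the conjunction $p_\beta\wedge\bigwedge_{\beta\sqsubseteq\gamma}\bigl(\Box\neg\Boxdot p_\gamma\to A_\beta^+\bigr)$: from $\bar{\kcal},\alpha\Vdash A_\beta$ it immediately extracts $\bar{\kcal},\alpha\Vdash p_\beta$. This matters. Under the single-implication reading the lemma is \emph{false}: if $\alpha$ and $\beta$ lie in different branches of the tree frame, then $p_\beta$ is forced nowhere $\succcurlyeq\alpha$, the antecedent fails everywhere above $\alpha$, and $A_\beta$ is vacuously forced at $\alpha$ despite $\beta\not\preccurlyeq\alpha$. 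So you must use the conjunction reading.

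With that reading, your backward direction should begin exactly as the paper does: if $\bar{\kcal},\alpha\nVdash p_\beta$ you are done, and otherwise $\beta(\preccurlyeq\cup\R)\alpha$, so the only case left is $\beta\R\alpha$ with $\beta\not\preccurlyeq\alpha$. Your sketch (``the candidate $\delta$ is $\alpha$ itself or a suitable classical $\R$-successor'', using $\R$-branching) then becomes the contrapositive of the paper's argument: pick, via $\R$-branching at $\beta$, a $\gamma\sqsupset\beta$ that is $\R$-incomparable with $\alpha$; then $\Box\neg\Boxdot p_\gamma$ is forced at $\alpha$ (no $\eta\sqsupset\alpha$ sees $\gamma$), while $A_\beta^+$ fails there, so the corresponding conjunct of $A_\beta$ fails at $\alpha$. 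This is exactly the paper's contradiction, phrased constructively.
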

\begin{proof}
 We use induction on the 
$\prec$-height of $\beta$.  As induction hypothesis,
let for every $\beta\succ\beta_0$ and $\alpha\in K$ we have 
$\bar{\kcal},\alpha\Vdash A_\beta$ 
iff $\beta\succcurlyeq\alpha$. Note that by induction 
hypothesis we have $\bar{\kcal},\beta\Vdash A^+_{\beta_0}$ iff
 $\beta\succ\beta_0$. 
 \begin{itemize}[leftmargin=*]
 \item ($\alpha\succcurlyeq\beta_0$ implies 
 $\bar{\kcal},\alpha\Vdash A_{\beta_0}$): It is enough to show that 
 $\bar{\kcal},\beta_0\Vdash A_{\beta_0}$. Then for evey $\alpha\succcurlyeq\beta_0$ we have $\bar{\kcal},\alpha\Vdash A_{\beta_0}$,
 as desired.   By definition of $\bar{\kcal}$,
 we have $\bar{\kcal},\beta_0\Vdash p_{_{\beta_0}}$. 
 Consider some $\gamma\sqsupseteq\beta_0$. 
 Again by definition of $\bar{\kcal}$,
 we have $\bar{\kcal},\beta_0\nVdash \Box\neg p_{_\gamma}$ and 
for every $\delta\succ\beta_0$ we have 
 $\bar{\kcal},\delta\Vdash A^+_{\beta_0}$. Hence 
 $\bar{\kcal},\beta_0\Vdash \Box\neg p_{_{\gamma}}\to A^+_{\beta_0}$.
 This argument shows that $\bar{\kcal},\beta_0\Vdash A_{\beta_0}$, as desired.
 \item ($\bar{\kcal},\alpha\Vdash A_{\beta_0}$ implies  $\alpha\succcurlyeq\beta_0$): Let $\bar{\kcal},\alpha\Vdash A_{\beta_0}$. 
 Since $\bar{\kcal},\alpha\Vdash p_{_{\beta_0}}$, we have 
 $\beta_0(\preccurlyeq\cup\R)\alpha$. If 
 $\beta_0\preccurlyeq\alpha$, we are done. So let 
 $\beta_0\not\preccurlyeq \alpha$ and $\beta_0\R\alpha$. 
 Hence for arbitrary $\gamma\sqsupseteq\beta_0$ we have 
 $\bar{\kcal},\alpha\Vdash \neg\Box\neg p_{_\gamma}$. This 
 by ${\sf Suc}$-classicality, 
 implies that there is some $\delta\sqsupset\alpha$ 
 such that 
 $\bar{\kcal},\delta\Vdash p_{_\gamma}$. Then we have 
 $\gamma(\preccurlyeq\cup\R)\delta$. 
 By ${\sf Suc}$-classicality, 
 we have $\gamma\sqsubseteq\delta$. Since $\bar{\kcal}$ is with tree
 frame, we have either $\alpha\sqsubseteq\gamma$ or 
 $\gamma\sqsubseteq\alpha$. On the other hand, 
 since $\bar{\kcal}$ is $\R$-branching, 
 there must be some $\gamma\sqsupset\beta_0$ which 
 is $\R$-incomparable with $\alpha$, a 
 contradiction with our previous argument. \qedhere
\end{itemize}
\end{proof}
\begin{theorem}\label{Reduction-PA-HA-to-Sigma-PA-HA}
$\iglphat=\PL(\PA,\HA)\leq \PLS(\PA,\HA)=\iglphatsigma$.
\end{theorem}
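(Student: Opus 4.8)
The plan is to reproduce, almost verbatim, the argument for $\PL(\PA,\mathbb{N})\leq\PLS(\PA,\mathbb{N})$ given in \Cref{Reduction-Sigma-PA-(PA-nat)}, replacing $\mathbb{N}$ by $\HA$ throughout and invoking \Cref{Lemma-Reduction-iglphat} wherever that proof used \Cref{Remark-reduction-GL-GLS}. Under the convention fixed for statements of this shape, the claim splits into three tasks: the identity $\iglphatsigma=\PLS(\PA,\HA)$, which is exactly \Cref{Theorem-Arith-Complete-iglphatsigma-direct}; the reduction $\ac{\iglphat}{\PA}{\HA}\leq\acs{\iglphatsigma}{\PA}{\HA}$; and the identity $\iglphat=\PL(\PA,\HA)$, whose completeness half I will read off from the first two tasks.

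First I would build the reduction $f,\bar f$. Given $A\in\lcalb$ with $\iglphat\nvdash A$, \Cref{Lemma-Reduction-iglphat} supplies a $(.)^\Boxin$-substitution $\tau_A$ with $\iglphatsigma\nvdash\tau_A(A)$; fixing one such $\tau_A$, set
\[
f(A):=\begin{cases}\tau_A(A)&\text{if }\iglphat\nvdash A,\\ A&\text{otherwise,}\end{cases}
\qquad
\bar f_A(\sigma):=\sigma_{_{\sf PA}}\!\circ\tau_A ,
\]
with $\bar f_A$ the identity in the trivial branch. Condition R1 of \Cref{Definition-Reduction-PL} is immediate, since in the nontrivial branch $\iglphatsigma\nvdash f(A)$ by the choice of $\tau_A$, which is precisely the contrapositive of R1. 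For R2 the one thing to verify is the composition identity $(\sigma_{_{\sf PA}}\!\circ\tau_A)_{_{\sf PA}}(A)=\sigma_{_{\sf PA}}(\tau_A(A))=\sigma_{_{\sf PA}}(f(A))$, an easy induction on $A$ using that modal substitutions commute with $\Box$. Granting it, any $\sigma\in\wit{f(A);\PA,\HA;\Sigma_1}$ satisfies $\HA\nvdash\sigma_{_{\sf PA}}(f(A))$, hence $\HA\nvdash(\bar f_A(\sigma))_{_{\sf PA}}(A)$, so $\bar f_A(\sigma)\in\wit{A;\PA,\HA;\text{all}}$; and $\bar f_A(\sigma)=\sigma_{_{\sf PA}}\!\circ\tau_A$ lies in $\wit{\sigma}$ by the closure clause defining that set (with theory $\PA$ and modal substitution $\tau_A$). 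Thus R0--R2 hold and the reduction is established.

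It then remains to identify $\iglphat$ with $\PL(\PA,\HA)$. The completeness direction, $A\in\PL(\PA,\HA)\Rightarrow\iglphat\vdash A$, is exactly $\ac{\iglphat}{\PA}{\HA}$, which I obtain from the reduction just built together with $\acs{\iglphatsigma}{\PA}{\HA}$ (the completeness half of \Cref{Theorem-Arith-Complete-iglphatsigma-direct}) via \Cref{Theorem-Reduction-1}. For the soundness direction I would argue directly, as anticipated in the remarks before \Cref{Lemma-Reduction-iglphat}: the axiom schemas of $\iglphat$ are those inherited from $\iGL$ together with $\overline{\sf P}=\Box(B\vee\neg B)$, and each is provable in $\HA$ under every interpretation $\sigma_{_{\sf PA}}$. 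The $\iGL$-schemas reduce to the Hilbert--Bernays derivability conditions and the formalized L\"ob theorem for $\PA$, all provable in $\HA$; and $\overline{\sf P}$ holds because $\Prv{PA}{\sigma_{_{\sf PA}}(B)\vee\neg\sigma_{_{\sf PA}}(B)}$ is a true $\Sigma_1$ sentence and is therefore provable in $\HA$ by \Cref{Lemma-bounded Sigma completeness}.

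The substantial work --- the Kripke completeness of $\iglphatsigma$ (\Cref{Theorem-Kripke-semantic-iglphatsigma}), the Solovay-style arithmetical completeness $\PLS(\PA,\HA)=\iglphatsigma$ (\Cref{Theorem-Arith-Complete-iglphatsigma-direct}), and the propositional transfer lemma \Cref{Lemma-Reduction-iglphat} --- is already in hand, so the only genuinely delicate point here is R2: checking that $\sigma_{_{\sf PA}}\!\circ\tau_A$ really witnesses a refutation of $A$ (through the composition identity) and stays inside $\wit{\sigma}$, and keeping straight that the $\iglphatsigma$-side ranges over $\Sigma_1$-substitutions whereas the $\iglphat$-side ranges over all arithmetical substitutions. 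This mismatch is exactly what the reduction absorbs: $f$ manufactures a $\Sigma_1$-refutation on the right, and $\bar f_A$ transports it to an unrestricted refutation on the left.
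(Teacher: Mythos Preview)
Your proposal is correct and follows essentially the same route as the paper: invoke \Cref{Lemma-Reduction-iglphat} to manufacture the substitution $\tau_A$, set $f(A):=\tau_A(A)$ in the nontrivial branch and $\bar f_A(\sigma):=\sigma_{_{\sf PA}}\!\circ\tau_A$, then appeal to \Cref{Theorem-Reduction-1} and the already-established $\PLS(\PA,\HA)=\iglphatsigma$. The paper is terser (it cites \Cref{Reduction-Sigma-(PA-to-HA)-HA} rather than \Cref{Theorem-Arith-Complete-iglphatsigma-direct} for the identity on the right, and simply asserts that the reduction conditions hold), whereas you spell out the composition identity and the $\wit{\sigma}$-membership for R2 explicitly; but the argument is the same.
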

\begin{proof}
The arithmetical soundness of $\iglphat$ is straightforward
and left to the reader.  Also by \Cref{Reduction-Sigma-(PA-to-HA)-HA}  we have $\PLS(\PA,\HA)=\iglphatsigma$. 
It remains to show 
$$\ac{\iglphat}{\PA}{\HA}\leq_{f,\fbar[]}\acs{\iglphat}{\PA}{\HA}$$
Let $A\in\lcalb$ 
such that $\iglphat\nvdash A$.
 Then  by \Cref{Lemma-Reduction-iglphat} 
 there is some substitution $\tau$ such that 
$
 \iglphatsigma\nvdash \tau(A) 
$.
Define the function $f$ as follows:
$$f(A):=\begin{cases}
\tau(A) &:\iglphat\nvdash A\\
\text{whatever you like} &:\text{otherwise}
\end{cases}$$
Also let $\fbar(\sigma):=\sigma_{_{\sf PA}}\circ\tau$.
Then one may easily observe that R0, R1 and R3 holds for this 
$f,\fbar[]$.
\end{proof}

\begin{lemma}\label{Lem-reduction-GLsigma-iglphat}
For $A\in\lcalb$, if  $\iglphatsigma\vdash A^\negout$ 
then  
${\GLV}\vdash A$%
.
\end{lemma}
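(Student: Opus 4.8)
The plan is to prove the contrapositive: assuming ${\GLV}\nvdash A$, I would produce a $\Sigma_1$-substitution refuting $A^\negout$ over $\HA$, which by the arithmetical soundness of $\iglphatsigma$ yields $\iglphatsigma\nvdash A^\negout$. This mirrors the strategy of \Cref{Lemma-reduction-glssigma-ihatHSsigma,Lemma-reduction-ihatHsigma-GLsigma}, except that here the bridge between the two provability logics is the interplay of $\PA$- and $\HA$-provability via the negative translation, rather than the classical/intuitionistic gap handled by $(.)^\dagger$.

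First, since ${\GLV}=\PLS(\PA,\PA)$ by \Cref{Solovey}, the hypothesis ${\GLV}\nvdash A$ yields, through the arithmetical completeness of $\PLS(\PA,\PA)$, some $\Sigma_1$-substitution $\sigma$ with $\PA\nvdash\sigma_{_{\sf PA}}(A)$. Next I would transport this nonderivability from $\PA$ to $\HA$: by \Cref{Lemma-HA-PA-neg-translation}, $\PA\vdash B$ is equivalent to $\HA\vdash B^\neg$, so $\PA\nvdash\sigma_{_{\sf PA}}(A)$ gives $\HA\nvdash(\sigma_{_{\sf PA}}(A))^\neg$.

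The crucial step is then \Cref{Lemma-neg-translate-1st-order} applied with $\SFT:=\PA$, which supplies the identity $\HA\vdash(\sigma_{_{\sf PA}}(A))^\neg\lr\sigma_{_{\sf PA}}(A^\negout)$; this lets me rewrite the previous line as $\HA\nvdash\sigma_{_{\sf PA}}(A^\negout)$. Finally, since $\iglphatsigma=\PLS(\PA,\HA)$ by \Cref{Theorem-Arith-Complete-iglphatsigma-direct}, its arithmetical soundness says that $\iglphatsigma\vdash A^\negout$ would force $\HA\vdash\sigma_{_{\sf PA}}(A^\negout)$ for every $\Sigma_1$-substitution $\sigma$. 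Having exhibited one $\sigma$ with $\HA\nvdash\sigma_{_{\sf PA}}(A^\negout)$, I conclude $\iglphatsigma\nvdash A^\negout$, which completes the contrapositive.

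Every ingredient is already established in the excerpt, so there is no genuine obstacle; the only point requiring care is the bookkeeping of the two translations — ensuring that the first-order negative translation $(.)^\neg$ of \Cref{Lemma-HA-PA-neg-translation} matches, on the atomic and boxed clauses, the modal $(.)^\negout$ occurring in the statement. This compatibility is exactly what \Cref{Lemma-neg-translate-1st-order} guarantees, so the argument reduces to chaining the four cited results in the order above.
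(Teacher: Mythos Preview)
Your argument is correct, but it takes a substantially heavier route than the paper. The paper's proof is a two-line purely propositional observation: since every axiom of $\iglphatsigma$ is already an axiom of $\GLV$ (the latter has full $\PEM$, not just $\overline{\sf P}$), the hypothesis $\iglphatsigma\vdash A^{\negout}$ immediately yields $\GLV\vdash A^{\negout}$; and because $A^{\negout}$ differs from $A$ only by inserted double negations, it is classically equivalent to $A$, so $\GLV\vdash A$. No arithmetic is touched.

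Your detour through arithmetical completeness of $\GLV$, the negative translation at the first-order level, and arithmetical soundness of $\iglphatsigma$ is logically sound and noncircular (both \Cref{Solovey} and \Cref{Theorem-Arith-Complete-iglphatsigma-direct} are established earlier). What it buys is a uniform template matching the other reduction lemmas you cite; what it costs is invoking deep results to prove something that follows from a trivial syntactic inclusion. The paper's approach is preferable here precisely because the lemma is meant to feed into a reduction whose arithmetical content lies elsewhere.
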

\begin{proof}
Let $\iglphatsigma\vdash A^\negout$. Then ${\GLV}\vdash A^\negout$ and since $A^\negout$ is classically equivalent to $A$ 
we have ${\GLV}\vdash A$. 
%
\end{proof}
\begin{theorem}\label{Reduction-Sigma-PA-(PA-to-HA)}
${\GLV}=\PLS(\PA,\PA)\leq\PLS(\PA,\HA)=\iglphatsigma$.
\end{theorem}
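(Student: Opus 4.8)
The plan is to run the reduction machinery of \Cref{Definition-Reduction-PL} with the negative translation $(.)^\negout$ as the propositional map, in exact parallel with the proof of \Cref{Reduction-Sigma-HA-(PA-to-HA)}. First I would record the two characterizations that are already available: $\PLS(\PA,\PA)={\GLV}$ is part of \Cref{Solovey}, and $\PLS(\PA,\HA)=\iglphatsigma$ is \Cref{Theorem-Arith-Complete-iglphatsigma-direct}. It therefore remains only to establish the reduction $\acs{{\GLV}}{\PA}{\PA}\leq_{f,\fbar[]}\acs{\iglphatsigma}{\PA}{\HA}$, and for this I take $f(A):=A^\negout$ together with $\fbar$ the identity family. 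With these choices R0 is immediate, so the work reduces to checking R1 and R2.

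For R1 I would appeal directly to \Cref{Lem-reduction-GLsigma-iglphat}: if $\iglphatsigma\vdash A^\negout$, then ${\GLV}\vdash A$. This is precisely the clause $\SFV'\vdash f(A)\Rightarrow\SFV\vdash A$, so nothing further is needed. For R2, since $\fbar_A$ is the identity, the required map $\wit{A^\negout;\PA,\HA;\Sigma_1}\longrightarrow\wit{A;\PA,\PA;\Sigma_1}$ amounts to the implication that $\HA\nvdash\sigma_{_{\sf PA}}(A^\negout)$ forces $\PA\nvdash\sigma_{_{\sf PA}}(A)$ for every $\Sigma_1$-substitution $\sigma$. I would argue the contrapositive: assuming $\PA\vdash\sigma_{_{\sf PA}}(A)$, apply \Cref{Lemma-HA-PA-neg-translation} to the arithmetical sentence $\sigma_{_{\sf PA}}(A)$ to obtain $\HA\vdash(\sigma_{_{\sf PA}}(A))^\neg$, and then use \Cref{Lemma-neg-translate-1st-order} with $\SFT:=\PA$ to identify $(\sigma_{_{\sf PA}}(A))^\neg$ with $\sigma_{_{\sf PA}}(A^\negout)$ over $\HA$, giving $\HA\vdash\sigma_{_{\sf PA}}(A^\negout)$. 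The side condition $\fbar_A(\sigma)=\sigma\in\wit{\sigma}$ holds trivially, since $\sigma$ is always a member of its own propositional closure.

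The substantive content is fully packaged inside the lemmas I am invoking: on the modal side, the classical soundness of $\iglphatsigma$ under the negative translation (\Cref{Lem-reduction-GLsigma-iglphat}), and on the arithmetical side, the fact that a $\Sigma_1$-interpretation over $\PA$ transports along the G\"odel--Gentzen negative translation to an $\HA$-interpretation of $A^\negout$ (\Cref{Lemma-neg-translate-1st-order,Lemma-HA-PA-neg-translation}). Once these are in place the reduction is purely formal, so I expect no real obstacle beyond bookkeeping; the single point where a slip could occur is conflating the negative translation $(.)^\neg$ on \emph{arithmetical} formulae with $(.)^\negout$ on \emph{modal} formulae, and this is exactly the commutation resolved by \Cref{Lemma-neg-translate-1st-order}.
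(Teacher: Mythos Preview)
Your proposal is correct and matches the paper's proof essentially line for line: the same choice $f(A):=A^\negout$ with $\fbar$ the identity, R1 via \Cref{Lem-reduction-GLsigma-iglphat}, and R2 via \Cref{Lemma-neg-translate-1st-order,Lemma-HA-PA-neg-translation}. The only cosmetic difference is that the paper cites \Cref{Reduction-Sigma-(PA-to-HA)-HA} rather than \Cref{Theorem-Arith-Complete-iglphatsigma-direct} for the characterization of $\PLS(\PA,\HA)$, but both references yield the same fact.
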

\begin{proof}
By  \Cref{Solovey,Reduction-Sigma-(PA-to-HA)-HA}
 we have ${\GLV}=\PLS(\PA,\PA)$ and 
$\PLS(\PA,\HA)=\iglphatsigma$.
We must show
$\acs{{\GLV}}{\PA}{\PA}\leq_{f,\fbar[]} \acs{\iglphatsigma}{\PA}{\HA}$.  
Given $A\in\lcalb$, define $f(A):=A^\negout$ and $\fbar$ as identity function.
\begin{itemize}
\item[R1.] If $\iglphatsigma\vdash A^\negout$, then 
by \Cref{Lem-reduction-GLsigma-iglphat} we have 
${\GLV}\vdash A$. 
\item[R2.] Holds by \Cref{Lemma-neg-translate-1st-order,Lemma-HA-PA-neg-translation}. \qedhere
\end{itemize}
\end{proof}


\begin{theorem}\label{Reduction-Sigma-PA-(HA-to-nat)}
$\iglphatsigma=\PLS(\PA,\HA)\leq\PLS(\PA,\nat)={\GLSV}$.
\end{theorem}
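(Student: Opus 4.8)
The plan is to verify the reduction required by \Cref{Definition-Reduction-PL} on the nose, relying on the two characterisations that are already in hand: $\iglphatsigma=\PLS(\PA,\HA)$ (\Cref{Theorem-Arith-Complete-iglphatsigma-direct}) and ${\GLSV}=\PLS(\PA,\nat)$ (\Cref{Solovey}). Thus it is enough to produce $f,\fbar[]$ with $\acs{\iglphatsigma}{\PA}{\HA}\leq_{f,\fbar[]}\acs{{\GLSV}}{\PA}{\nat}$, and I would take $f(A):=A^\Box$ together with the identity maps $\fbar$. The point of the box translation is that neither a bare substitution nor the translations $(.)^\dagger,(.)^\negout$ used in the other reductions can work in this direction: ${\GLSV}$ is classical whereas $\iglphatsigma$ is not, so since $\iglphatsigma\nvdash p\vee\neg p$, clause R1 compels $f(p\vee\neg p)$ to be a non-theorem of ${\GLSV}$. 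A substitution or $(.)^\dagger$ would leave $p\vee\neg p$ essentially untouched and hence ${\GLSV}$-provable, but $(p\vee\neg p)^\Box=\Boxdot p\vee\Boxdot\neg\Boxdot p$ is not ${\GLSV}$-provable; the box translation is exactly what shields the intuitionistic content.

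Everything then rests on a single arithmetical lemma, which I will call $(\star)$: for every $\Sigma_1$-substitution $\sigma$ and every $A\in\lcalb$,
\[
\nat\models\sigmapa{A^\Box}\quad\Longleftrightarrow\quad\HA\vdash\sigmapa{A}.
\]
Granting $(\star)$ the two clauses are immediate. For R1, ${\GLSV}\vdash A^\Box$ means $\nat\models\sigmapa{A^\Box}$ for all $\sigma$, so by $(\star)$ $\HA\vdash\sigmapa{A}$ for all $\sigma$, i.e.\ $A\in\PLS(\PA,\HA)=\iglphatsigma$. For R2 the identity $\fbar$ suffices, because $(\star)$ says precisely that $\sigma$ witnesses $\nat\not\models\sigmapa{A^\Box}$ iff it witnesses $\HA\nvdash\sigmapa{A}$, and trivially $\fbar(\sigma)=\sigma\in\wit{\sigma}$.

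To prove $(\star)$ I would induct on $A$. The atomic case is just that a true $\Sigma_1$ sentence is $\HA$-provable, $\PA$-provable and true all at once (\Cref{Lemma-bounded Sigma completeness}), applied to $\sigmapa{p^\Box}=\sigmapa{p}\wedge\Box^+\sigmapa{p}$. Conjunction is routine. Disjunction is the clean case, and the one where the intuitionistic side genuinely surfaces: because $\nat$ is classical, $\nat\models\sigmapa{B^\Box}\vee\sigmapa{C^\Box}$ splits into a disjunction, the induction hypothesis converts each disjunct, and the \emph{disjunction property of $\HA$} reassembles $\HA\vdash\sigmapa{B\vee C}$. For the implication and boxed cases I would first exploit that $A^\Box$ is self-proving, ${\sf iK4}\vdash A^\Box\lr\Boxdot A^\Box$ (\Cref{Lemma-BoxdotABox-ABox}); since ${\sf iK4}$ is arithmetically sound for $\PA$, this yields $\PA\vdash\sigmapa{A^\Box}\to\Box^+\sigmapa{A^\Box}$, whence $\nat\models\sigmapa{A^\Box}$ collapses to $\PA\vdash\sigmapa{A^\Box}$. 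The boxed case $A=\Box B$ thereby reduces to showing $\PA\vdash\sigmapa{B^\Box}\Leftrightarrow\PA\vdash\sigmapa{B}$, and the implication case $A=B\to C$ to showing $\PA\vdash\sigmapa{B^\Box}\to\sigmapa{C^\Box}$ iff $\HA\vdash\sigmapa{B}\to\sigmapa{C}$.

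I expect the implication case to be the real obstacle, since this is where the two metatheories genuinely diverge. The plan is to rewrite the left-hand side through the box-translation identities: $\HA\vdash\sigmapa{A^\Box}\lr(\sigmapas{A})^{\sf PA}$ (\Cref{Label-HA-HA*-Box-trans}) together with $\HA\vdash\sigmapa{A^\Boxin}\lr\sigmapas{A}$ (\Cref{Lemma-PA-Box-translate}) turn $\PA\vdash\sigmapa{B^\Box}\to\sigmapa{C^\Box}$ into a provability statement about the Beeson--Visser translates, i.e.\ into an implication between $\PA^*$-interpretations, which in turn is governed by $\PA$-provability of an implication between $\Sigma_1$ sentences. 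On that restricted shape the $\Pi_2$-conservativity of $\PA$ over $\HA$ (\Cref{Lemma-Conservativity of HA}) lets one descend from $\PA$ to $\HA$ and recover $\HA\vdash\sigmapa{B}\to\sigmapa{C}$; this conservativity is precisely the mechanism by which the classical metatheory $\nat$ is able to detect $\HA$-provability on formulas of the form $A^\Box$. Checking that the self-proving collapse and this $\Sigma_1$-normalisation remain compatible under nested boxes is the delicate bookkeeping that I anticipate being the crux of the argument.
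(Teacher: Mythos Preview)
Your lemma $(\star)$ is false, and with it both R1 and R2 collapse. The clean diagnosis is to unwind what $\nat\models\sigmapa{A^\Box}$ really says: by \Cref{Lemma-BoxdotABox-ABox} and soundness of $\PA$, it is equivalent to $\PA\vdash\sigmapa{A^\Box}$, which by \Cref{Label-HA-HA*-Box-trans} is $\PA\vdash(\sigmapas{A})^{\sf PA}$, i.e.\ $\PA^*\vdash\sigmapas{A}$. Hence ${\GLSV}\vdash A^\Box$ holds precisely when $A\in\PLS(\PA^*,\PA^*)=\iglct$, and your R1 is the assertion $\iglct\subseteq\iglphatsigma$. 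That is false: $\iglct$ has the full completeness principle $\underline{\sf C}$ while $\iglphatsigma$ has only ${\sf C_a}$. For $A=\neg p\to\Box\neg p$ one has $A^\Box=\Boxdot(\Boxdot\neg\Boxdot p\to\Box\Boxdot\neg\Boxdot p)$, which is already an ${\sf iK4}$-theorem (an instance of $\Boxdot B\to\Box\Boxdot B$, i.e.\ axiom~{\sf 4}), so ${\GLSV}\vdash A^\Box$; yet in the two-point atom-complete ${\sf Suc}$-classical model with $p$ false at the root and true at its sole $\sqsubset$-successor the root refutes $A$, so by \Cref{Theorem-Kripke-semantic-iglphatsigma} $\iglphatsigma\nvdash A$. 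For R2, take $A=\Box(p\vee\neg p)$ with $\sigma(p)=\Prv{PA}{\bot}$: then $\HA\vdash\sigmapa{A}$ (a true $\Sigma_1$ fact), but $\nat\models\sigmapa{A^\Box}$ would require $\PA\vdash\sigma(p)\vee\Prv{PA}{\neg\sigma(p)}$, which via the formalised second incompleteness theorem collapses to $\PA\vdash\neg{\sf Con}(\PA)$; so $\sigma\in\wit{A^\Box;\PA,\nat;\Sigma_1}\setminus\wit{A;\PA,\HA;\Sigma_1}$ and the identity $\fbar$ is not a map of the required type.

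Your implication-case plan also misidentifies the shape: $\sigmapa{B^\Box}$ is not $\Sigma_1$ in general (already $(\neg p)^\Box=\Boxdot\neg\Boxdot p$ carries a genuine $\Pi_1$ conjunct), so the $\Pi_2$-conservativity step from \Cref{Lemma-Conservativity of HA} is unavailable. For comparison, the paper does not use $(.)^\Box$ here; it takes $f(A)=\Box A$, for which R2 is immediate since $\nat\not\models\sigmapa{\Box A}$ means $\PA\nvdash\sigmapa{A}$ and hence $\HA\nvdash\sigmapa{A}$. Note, however, that the paper's printed R1 argument is a visible copy-paste from \Cref{Reduction-Sigma-HA-(HA-to-nat)} (it invokes $\ihatHSsigma$, $\sigma_{_{\sf HA}}$ and concludes via $\PLS(\HA,\HA)=\lles$), and if one repairs the references the claim becomes ``${\GLV}\vdash A$ implies $\iglphatsigma\vdash A$'', which fails on $A=p\vee\neg p$; so neither $\Box(.)$ nor $(.)^\Box$ yields a valid reduction here, and this theorem is in fact not used in any of the paper's diagrams.
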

\begin{proof}
By \Cref{Solovey,Reduction-PA-HA-to-Sigma-PA-HA} 
 we have 
$\PLS(\PA,\nat)={\GLSV}$ and $\iglphatsigma=\PLS(\PA,\HA)$.
We must show
$\acs{\iglphat}{\PA}{\HA}\leq_{f,\fbar[]} \acs{{\GLSV}}{\PA}{\nat}$.  
Given $A\in\lcalb$, define $f(A)=\Box A$ and $\fbar$ as identity function.  
\begin{itemize}
\item[R1.] Let ${\GLSV}\vdash \Box A$. By soundness
of $\ihatHSsigma=\PLS(\HA,\nat)$, for every $\Sigma_1$-substitution $\sigma$
we have $\nat\models \sigma_{_{\sf HA}}(\Box A)$ and hence 
$\HA\vdash \sigma_{_{\sf HA}}(A)$. Then by arithmetical 
completeness of $\PLS(\HA,\HA)$, we have $\lles\vdash A$.\\
One also may prove this item with a direct propositional argument. For simplicity reasons, we chose the indirect way.
\item[R2.] Let $\nat\not\models \sigma_{_{\sf HA}}(\Box A)$. 
Then $\HA\nvdash \sigma_{_{\sf HA}}(A)$, as desired. \qedhere
\end{itemize}
\end{proof}

\begin{theorem}\label{Reduction-Sigma-PA-(PA-to-nat)}
${\GLV}=\PLS(\PA,\PA)\leq\PLS(\PA,\nat)={\GLSV}$.
\end{theorem}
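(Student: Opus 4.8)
The plan is to mirror the proof of \Cref{Reduction-Sigma-HA-(HA-to-nat)} almost verbatim, replacing $\HA$ by $\PA$ throughout. By \Cref{Solovey} the two endpoints are already characterized, namely ${\GLV}=\PLS(\PA,\PA)$ and ${\GLSV}=\PLS(\PA,\nat)$, so it remains only to exhibit the reduction $\acs{{\GLV}}{\PA}{\PA}\leq_{f,\fbar[]}\acs{{\GLSV}}{\PA}{\nat}$. For this I take $f(A):=\Box A$ and let $\fbar$ be the identity function on $\Sigma_1$-substitutions.

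For condition R1 I must show that ${\GLSV}\vdash\Box A$ implies ${\GLV}\vdash A$. This is exactly the second item of \Cref{Lemma-GLS-GL-Boxed}, so no further work is needed. (Alternatively one can argue directly, as in \Cref{Reduction-Sigma-HA-(HA-to-nat)}: from ${\GLSV}\vdash\Box A$ the arithmetical soundness of ${\GLSV}=\PLS(\PA,\nat)$ yields $\nat\models\sigmapa{\Box A}$ for every $\Sigma_1$-substitution $\sigma$, whence $\PA\vdash\sigmapa{A}$, and then the arithmetical completeness of ${\GLV}=\PLS(\PA,\PA)$ gives ${\GLV}\vdash A$.)

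For condition R2 I must check that $\fbar$ maps $\wit{\Box A;\PA,\nat;\Sigma_1}$ into $\wit{A;\PA,\PA;\Sigma_1}$. So suppose $\nat\not\models\sigmapa{\Box A}$ for a $\Sigma_1$-substitution $\sigma$. Since $\sigmapa{\Box A}=\Prv{PA}{\sigmapa{A}}$, the failure of this $\Sigma_1$-sentence in the standard model is by definition precisely the assertion $\PA\nvdash\sigmapa{A}$; hence $\sigma\in\wit{A;\PA,\PA;\Sigma_1}$, as required. The remaining clause $\fbar(\sigma)\in\wit{\sigma}$ is immediate, since $\fbar$ is the identity and so $\fbar(\sigma)=\sigma\in\wit{\sigma}$.

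I expect no genuine obstacle here: the whole content sits in \Cref{Lemma-GLS-GL-Boxed} together with the elementary fact that truth of $\Prv{PA}{\cdot}$ in $\nat$ coincides with $\PA$-provability. The only point that warrants a moment's care is the directionality in R2, where one must use the reflected reading $\sigmapa{\Box A}=\Prv{PA}{\sigmapa{A}}$ to convert the semantic failure in $\nat$ into the syntactic non-derivability $\PA\nvdash\sigmapa{A}$.
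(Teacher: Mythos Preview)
Your proof is correct and matches the paper's argument essentially verbatim: the paper also takes $f(A):=\Box A$, $\fbar$ the identity, verifies R1 via the soundness/completeness detour (your ``alternative'' route), and verifies R2 exactly as you do. Your additional citation of \Cref{Lemma-GLS-GL-Boxed} for R1 is a mild improvement in presentation, since that lemma was proved precisely for this purpose.
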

\begin{proof}
By  \Cref{Solovey}
 we have ${\GLV}=\PLS(\PA,\PA)$ and 
$\PLS(\PA,\nat)={\GLSV}$.
We must show
$\acs{{\GLV}}{\PA}{\PA}\leq_{f,\fbar[]} \acs{{\GLSV}}{\PA}{\nat}$.  
Given $A\in\lcalb$, define $f(A)=\Box A$ and $\fbar$ as identity function.  
\begin{itemize}
\item[R1.] Let ${\GLSV}\vdash \Box A$. By soundness
of ${\GLSV}=\PLS(\PA,\nat)$, for every $\Sigma_1$-substitution $\sigma$
we have $\nat\models \sigma_{_{\sf PA}}(\Box A)$ and hence 
$\PA\vdash \sigma_{_{\sf PA}}(A)$. Then by arithmetical 
completeness of $\PLS(\PA,\PA)$, we have ${\GLV}\vdash A$.
\\
One also may prove this item with a direct propositional argument, using Kripke semantics. For simplicity reasons, we chose the indirect way.
\item[R2.] Let $\nat\not\models \sigma_{_{\sf PA}}(\Box A)$. 
Then $\PA\nvdash \sigma_{_{\sf PA}}(A)$, as desired. \qedhere
\end{itemize}
\end{proof}

\begin{theorem}\label{Reduction-PA-(PA-to-nat)}
$\GL=\PLS(\PA,\PA)\leq\PLS(\PA,\nat)=\GLS$.
\end{theorem}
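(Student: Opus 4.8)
The plan is to mirror the proof of \Cref{Reduction-Sigma-PA-(PA-to-nat)}, replacing the $\Sigma_1$-provability logics by their full (classical-substitution) counterparts. By \Cref{Solovey} we already have $\GL=\PL(\PA,\PA)$ and $\PL(\PA,\nat)=\GLS$, so the only task is to establish the reduction $\ac{\GL}{\PA}{\PA}\leq_{f,\fbar[]}\ac{\GLS}{\PA}{\nat}$. Following the boxing pattern used in the other ``$\nat$-to-$\PA$'' reductions, I take $f(A):=\Box A$ and let $\fbar$ be the identity on arithmetical substitutions.

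For condition R1 of \Cref{Definition-Reduction-PL}, suppose $\GLS\vdash\Box A$. By \Cref{Lemma-GLS-GL-Boxed} this is equivalent to $\GL\vdash A$, which is exactly the conclusion R1 demands. For condition R2, fix $\sigma\in\wit{\Box A;\PA,\nat;\Gamma}$, i.e.\ $\nat\not\models\sigma_{_{\sf PA}}(\Box A)$. Since $\sigma_{_{\sf PA}}(\Box A)=\Prv{PA}{\sigma_{_{\sf PA}}(A)}$, this says precisely $\PA\nvdash\sigma_{_{\sf PA}}(A)$, that is $\sigma\in\wit{A;\PA,\PA;\Gamma}$. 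Hence $\fbar=\mathrm{id}$ maps $\wit{\Box A;\PA,\nat;\Gamma}$ into $\wit{A;\PA,\PA;\Gamma}$, and trivially $\sigma\in\wit{\sigma}$, so R2 holds.

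The main point to notice is that there is no real obstacle: the only nontrivial ingredient, the implication $\GLS\vdash\Box A\Rightarrow\GL\vdash A$, has already been isolated as \Cref{Lemma-GLS-GL-Boxed}, where it was derived from the arithmetical soundness of $\GLS=\PL(\PA,\nat)$ together with the arithmetical completeness of $\GL=\PL(\PA,\PA)$. The verification of R2 is immediate from the $\Sigma_1$-definition of the provability predicate $\Prv{PA}{\cdot}$, so the whole argument runs verbatim as in the $\Sigma_1$ case of \Cref{Reduction-Sigma-PA-(PA-to-nat)}.
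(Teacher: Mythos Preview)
Your proof is correct and follows exactly the approach the paper intends: it is the non-$\Sigma_1$ analogue of \Cref{Reduction-Sigma-PA-(PA-to-nat)}, with $f(A)=\Box A$ and $\fbar$ the identity. The only cosmetic difference is that you invoke \Cref{Lemma-GLS-GL-Boxed} for R1, whereas the $\Sigma_1$ version spells out the soundness/completeness argument inline; but since the proof of \Cref{Lemma-GLS-GL-Boxed} is precisely that argument, the two are identical in substance.
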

\begin{proof}
Similar to the proof of \Cref{Reduction-Sigma-PA-(PA-to-nat)}
and left to the reader. 
\end{proof}

\section{Relative provability logics for ${\sf PA}^*$}\label{section-PA*}

In this section, we characterize several relative provability 
logics for $\PA^*$ via reductions.   
All reductions are shown at once in the diagram 
\ref{Diag-PA^*-reductions}. 
The head of arrow reduces to its tail, 
via some simple reduction (\cref{Section-Reduction-tool}). 
The translation $f$ in the 
reduction, is shown over the arrow lines and the number 
which appears under arrow, is the corresponding theorem.

\begin{figure}[h]
\[ \hspace*{-1cm}
\begin{tikzcd}[column sep=3em, row sep=4em] 
\tcboxmath{\PL(\PA^*,\HA)}
\arrow[d, "\ref{Reduction-(-to-sigma)-PA*-HA}", "\tau" ']
&
\tcboxmath{\PL(\PA^*,\PA)}
\arrow[d,"\ref{Reduction-(-to-sigma)-PA*-PA}", "\tau" ']
&
\tcboxmath{\PL(\PA^*,\PA^*)}
\arrow[r,"\ref{Reduction-PA*-(PA*-to-nat)}"',"\Box(.)"]
\arrow[d,"\ref{Reduction-(-to-sigma)-PA*-PA*}","\tau"']
&
\tcboxmath{\PL(\PA^*,\nat)}
\arrow[d,"\ref{Reduction-(-to-sigma)-PA*-nat}","\tau"']
\\
\tcboxmath{\PLS(\PA^*,\HA)}
\arrow[d, "\ref{Reduction-sigma-(PA*-to-PA)-HA}", "(.)^\Boxin" ']
&
\tcboxmath{\PLS(\PA^*,\PA)}
\arrow[l, "\ref{Reduction-sigma-PA*-(PA-to-HA)}", "(.)^\negout" ']
\arrow[d, "\ref{Reduction-sigma-(PA*-to-PA)-PA}", "(.)^\Boxin" ']
&
\tcboxmath{\PLS(\PA^*,\PA^*)}
\arrow[l, "\ref{Reduction-Sigma-PA*-(PA*-to-PA)}", "(.)^\Boxout" ']
\arrow[r,"\ref{Reduction-Sigma-PA*-(PA*-to-nat)}" ', " \Box(.)" ]
&
\tcboxmath{\PLS(\PA^*,\nat)}
\arrow[d,"\ref{Reduction-sigma-(PA*-to-PA)-nat}", "(.)^\Boxin" ']
\\
 \tcboxmath{\PLS(\PA,\HA) }
& 
 \tcboxmath{\PLS(\PA,\PA) }
 \arrow[rr,   "\Box (.)",  " \ref{Reduction-Sigma-PA-(PA-to-nat)}" ' ]
 \arrow[l,"(.)^\negout"  ' , "\ref{Reduction-Sigma-PA-(PA-to-HA)}" ] 
&  &
 \tcbhighmath{\PLS(\PA,\nat) }
\end{tikzcd}
\]
\caption[Diagram of  reductions for relative provability logics of $\PA^*$]{Reductions for relative provability logics of $\PA^*$ \label{Diag-PA^*-reductions}}
\end{figure}
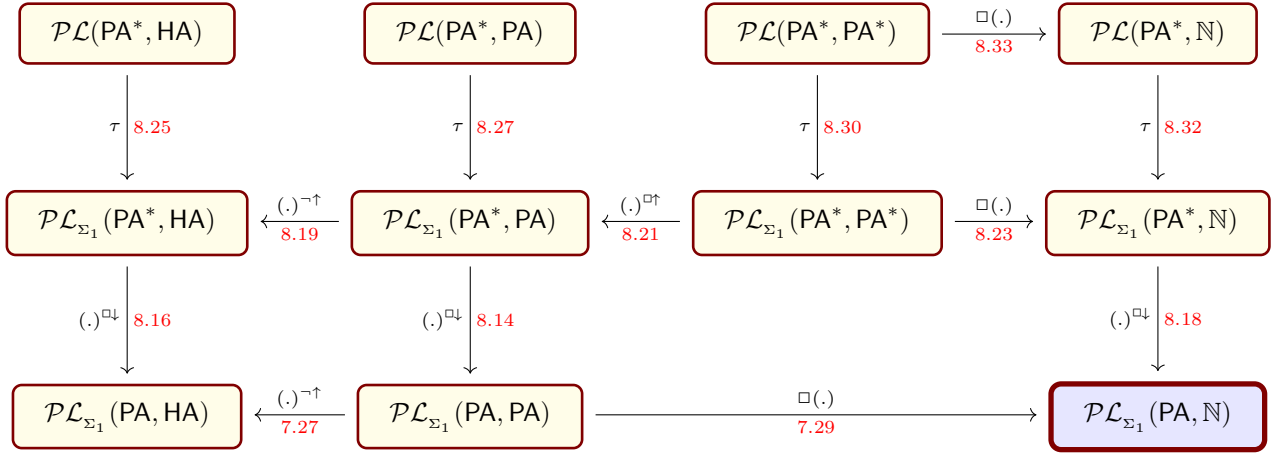

\subsection{Kripke Semantics}

In the following lemma, we will show that the axioms $\CP$ and $\TP$ are local over $\iGL$, i.e. whenever we can deduce some 
proposition $A$ from $\CP+\TP$ in $\iGL$, then we may deduce it 
by those instances of $\CP$ and $\TP$ which use the subformulas 
of $A$:  
\begin{lemma}\label{Lem-iglct-subformule}
For every $A$, if $\iglct\vdash A$ then 
\begin{equation*}
\igl\vdash 
\Boxdot\left[\bigwedge_{E\to F\in {\sf sub}(A)}\Box(E\to F)\to 
(E\vee (E\to F))
\wedge
\bigwedge_{E\in {\sf sub}(A)} (E\to \Box E)\right]
\to A
\end{equation*}
\end{lemma}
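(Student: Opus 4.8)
The plan is to follow the proof of \Cref{Lem-iglc-subformule} as a template, handling the $\underline{\sf CP}$-conjuncts exactly as there and supplying a new argument for the $\underline{\sf TP}$-conjuncts. Abbreviate the bracketed formula by
\[
\Theta \;:=\; \bigwedge_{E\to F\in {\sf sub}(A)}\bigl(\Box(E\to F)\to (E\vee (E\to F))\bigr)\;\wedge\; \bigwedge_{E\in {\sf sub}(A)} (E\to \Box E),
\]
so that the claimed antecedent is $\Boxdot\Theta$. The converse direction is immediate and not needed: each conjunct of $\Theta$ is an instance of the Completeness or Trace principle and the boxed instances are also theorems of $\iglct$ (via $\overline{\sf CP},\overline{\sf TP}$), so $\iglct\vdash\Boxdot\Theta$, whence $\igl\vdash\Boxdot\Theta\to A$ would already give $\iglct\vdash A$. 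I would prove the stated (contrapositive) direction: from $\igl\nvdash\Boxdot\Theta\to A$ I would produce a perfect quasi-classical Kripke model refuting $A$, so that $\iglct\nvdash A$ follows by the soundness half of \Cref{Theorem-Propositional Completeness LC}.

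First I would build the canonical model precisely as in \Cref{Lem-iglc-subformule}. Fix a finite adequate set $X$ containing $A$, the $\underline{\sf CP}$- and $\underline{\sf TP}$-instances occurring in $\Theta$, and all their boxes; let $K$ be the $X$-saturated sets with respect to $\iGL+\Boxdot\Theta$, let $\preccurlyeq$ be inclusion, and define $\alpha\R\beta$ iff ($\Box B\in\alpha$ implies $B\in\beta$ for every $\Box B\in X$) together with the existence of some $\Box C\in\beta\setminus\alpha$. The L\"ob-style truth lemma $B\in\alpha\Leftrightarrow\alpha\Vdash B$ (for $B\in X$) goes through verbatim, and \Cref{Lemma-saturation} yields a node refuting $A$; since every node contains $\Boxdot\Theta$, every node forces all conjuncts of $\Theta$ and their boxes. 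The $\underline{\sf CP}$-conjuncts force $\R\subseteq\prec$ by the identical computation used in \Cref{Lem-iglc-subformule}, and, after the standard tree unraveling of \Cref{Theorem-Kripke-semantic-iglphat} to secure the tree/neat conditions, the resulting model $\kcal$ is perfect.

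The new and decisive step is to pass from $\kcal$ to a quasi-classical model. The raw canonical model need not satisfy $\prec\subseteq\R$ (a node may acquire only a fresh atom, creating a $\prec$-successor that is not an $\R$-successor), so I would form $\kcal^{q}$ by collapsing the two relations onto a single one used both for $\Box$ and for $\to$; such a frame is automatically quasi-classical and perfect, and both $\underline{\sf CP}$ and $\underline{\sf TP}$ then hold by persistence, as in the frame-correspondence remark behind \Cref{Theorem-Propositional Completeness LC}. The substance is to check that the collapse preserves the truth value of every $B\in{\sf sub}(A)$ at every node, by induction on $B$. The atomic, conjunction and disjunction cases are routine, the boxed case is immediate since the modal relation is unchanged, and one implication-direction for $\to$ is easy because $\R\subseteq\preccurlyeq$ merely shrinks the range of the $\succcurlyeq$-quantifier.

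The hard part will be the remaining implication case $B=C\to D$: a $\prec$-successor $\beta$ of $\alpha$ with $\alpha\not\R\beta$ on which $C$ holds while $D$ fails could break $\kcal,\alpha\Vdash C\to D$ without affecting $\kcal^{q},\alpha\Vdash C\to D$. This is exactly where the $\underline{\sf TP}$-conjunct $\Box(C\to D)\to(C\vee(C\to D))$, together with the $\underline{\sf CP}$-conjunct $C\to\Box C$ — both available at $\alpha$ and persisting to $\beta$ since they lie under $\Boxdot\Theta$ — must be combined at the branching node to force $D$ at $\beta$ and thereby exclude such $\beta$; I expect this coordinated use of the Trace and Completeness instances on subformulas of $A$ to be the crux of the whole argument. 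Once the induction closes, $\kcal^{q}$ is a perfect quasi-classical model with $\kcal^{q}\nVdash A$, and \Cref{Theorem-Propositional Completeness LC} yields $\iglct\nvdash A$, completing the contrapositive.
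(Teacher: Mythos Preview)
Your overall strategy---prove the contrapositive by producing a perfect quasi-classical counter-model and invoking soundness of $\iglct$ (\Cref{Theorem-Propositional Completeness LC})---matches the paper's. But the way you manufacture the quasi-classical model is different from the paper and leaves the key step unverified.

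The paper does \emph{not} first build the \Cref{Lem-iglc-subformule} model and then collapse. It modifies the canonical construction so the model is quasi-classical from the start: with $X:=\{B,\Box B : B\in{\sf Sub}(A)\}$ and $K$ the $X$-saturated sets over $\igl+\Boxdot\Theta$, it redefines
\[
\alpha\R\beta \ \Longleftrightarrow\ \{D:\Box D\in\alpha\}\subseteq\beta\ \text{ and }\ \alpha\subsetneqq\beta,
\qquad
\alpha\preccurlyeq\beta\ \Longleftrightarrow\ \alpha\R\beta\ \text{or}\ \alpha=\beta,
\]
i.e.\ it \emph{declares} $\prec=\R$. The ``new boxed formula'' clause from \Cref{Lem-iglc-subformule} is dropped. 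The price is that the truth lemma now has \emph{two} nontrivial cases: besides $\Box C$ (handled by the usual L\"ob argument), one must also treat $B=E\to F$. From $E\to F\notin\alpha$ the paper argues: if $E\in\alpha$, done at $\alpha$; otherwise set $\beta_0:=\{D:\Box D\in\alpha\}$. If $\alpha\vdash\bigwedge\beta_0\to(E\to F)$ then $\alpha\vdash\Box(E\to F)$, and the $\underline{\sf TP}$-conjunct at $\alpha$ yields $E\in\alpha$ or $E\to F\in\alpha$, a contradiction. Hence $\alpha\nvdash\bigwedge\beta_0\to(E\to F)$, and saturation produces $\beta\supseteq\alpha\cup\beta_0\cup\{E\}$ with $F\notin\beta$; this $\beta$ is $\R$-accessible under the \emph{new} definition (it contains $\beta_0$ and properly extends $\alpha$), so $\alpha\prec\beta$ and $\alpha\nVdash E\to F$.

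Your collapse step is exactly where this $\underline{\sf TP}$ argument would have to live, and you only ``expect'' it to go through. With the \Cref{Lem-iglc-subformule} definition of $\R$ (which requires some $\Box C\in\beta\setminus\alpha$), a $\prec$-successor $\beta$ that gains no new boxed formula is not $\R$-accessible, and nothing in $\Boxdot\Theta$ at $\beta$ forces $D$ once $C$ holds there---you would need $\beta\Vdash\Box(C\to D)$ to invoke $\underline{\sf TP}$, which you do not have. So the preservation induction you sketch does not close as stated. The clean fix is the paper's: change the definition of $\R$, set $\prec=\R$, and discharge $\underline{\sf TP}$ inside the truth lemma for $\to$; then no post-hoc collapse or preservation argument is needed at all.
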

\begin{proof}
For the simplicity of notations, in this proof,
 let $$\varphi:=\Boxdot\bigwedge_{E\to F\in {\sf sub}(A)}
 \Box(E\to F)\to 
(E\vee (E\to F))\wedge 
\Boxdot\bigwedge_{E\in {\sf sub}(A)} (E\to \Box E)$$ and 
$\vdash$ indicates derivablity in $\iGL+\varphi$.\\
One side is trivial. For the other way around, assume that 
$ \igl\nvdash \varphi \to A$. 
We will construct some finite Kripke model 
$\kcal=(K,\R,\preccurlyeq,V)$  with $\sqsubset=\prec$
such that 
$\kcal,\alpha\nVdash A$, 
which by soundness of $\iglct$ for finite  Kripke models with 
$\prec=\sqsubset$, 
we have 
the desired result. The proof is almost identical to the proof 
of \Cref{Theorem-Propositional Completeness LC} in 
\cite[Theorem 4.26]{Sigma.Prov.HA}. To be self-contained, 
we elaborate  it here.

Let ${\sf Sub}(A)$ be the set of sub-formulae of
$A$. Then define  $$X:=\{B,\Box B \ | \ B\in {\sf Sub}(A)\}$$ It is
obvious that $X$ is a finite adequate set. We define
$\mathcal{K}=(K,\hlf{\preccurlyeq},\R,V)$ as follows. 
Define 
\begin{itemize}
\item $K$ as the set of
all $X$-saturated sets with respect to $\igl+\varphi$, 
\item $\alpha\R\beta$ iff $\{D: \Box D\in \alpha\}\subseteq \beta$
 and $\alpha\subsetneqq \beta$,
 \item $\alpha\preccurlyeq\beta$ iff  $\alpha\R\beta$ or $\alpha=\beta$,
 \item $\alpha V
p$ iff $p\in \alpha$,  for atomic $p$. 
\end{itemize}

$\mathcal{K}$ trivially satisfies all the properties of 
finite 
Kripke model with $\R\,=\,\prec$. So we must only show that 
$\kcal\nVdash A$. To this end,
we first show by
induction on $B\in X$ that $B\in \alpha$ iff 
$\alpha\Vdash B$, for each
$\alpha\in K$. 
The only non-trivial cases are 
 $B=\Box C$ and $B= E\to F$. 
\begin{itemize}
\item $B=\Box C$:
Let
$\Box C\not\in \alpha$. 
We must show $\alpha\nVdash \Box C$. The other
direction is easier to prove and we leave it to reader. Let
$\beta_0:=\{D\in X\ |\ \alpha\vdash\Box D\}$. 
If $\beta_0,\Box C\vdash
C$, since by definition of $\beta_0$, 
we have $\alpha\vdash\Box \beta_0$ and
hence by L\"{o}b's axiom,  $\alpha\vdash \Box C$, which is in contradiction with
$\Box C\not\in \alpha$. 
Hence $\beta_0,\Box C\nvdash C$ and so there
exists some $X$-saturated set $\beta$ such that 
$\beta\nvdash C$,
$\beta\supseteq \beta_0\cup\{\Box C\}$. 
Hence $\beta\in K$ and $\alpha\R \beta$.
Then by \hl{the induction hypothesis}, 
$\beta\nVdash C$ and hence $\alpha\nVdash
\Box C$.
\item 
 Let $E\to F\not\in \alpha$. Then $F\not\in \alpha$. 
 If $E\in \alpha$, by induction hypothesis we have 
 $\alpha\Vdash E$ and $\alpha\nVdash F$, and hence 
 $\alpha\nVdash E\to F$, as desired. 
 So we may let $E\not\in \alpha$.
Define 
 $\beta_0:=\{D: \Box D\in\alpha \} $. If $\alpha\vdash \bigwedge \beta_0\to (E\to F)$, then $\alpha\vdash \Box (E\to F)$ and hence by $\TP$, either we have $\alpha\vdash E$ or 
 $\alpha\vdash E\to F$, a contradiction.
 So we may let $\alpha\nvdash \bigwedge\beta_0\to (E\to F)$, 
 and use  \Cref{Lemma-saturation} to find
 $\beta\supseteq \beta_0\cup\alpha\cup\{E\}$ as some 
 $X$-saturated node in $\kcal$. Hence $\alpha\R \beta$ 
 which implies $\alpha\prec \beta$ and by 
 induction hypothesis $\beta\Vdash E$ and $\beta\nVdash F$, which implies $\alpha\nVdash E\to F$, as desired.

\end{itemize} 
 
\noindent Since $\igl+\varphi\nvdash A$, by \Cref{Lemma-saturation}, there exists some
$X$-saturated set $\alpha\in K$ such that $\alpha\nvdash A$, and hence by
the above argument we have $\alpha\nVdash A$.
\end{proof}

\begin{lemma}\label{Lem-60}
For arbitrary 
proposition $A$
\begin{center}
$\iglct\vdash A^\Box$ implies  
$\igl+ \Box \CP+\TP\vdash A^\Box$.
\end{center}
\end{lemma}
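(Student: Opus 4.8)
The plan is to mirror the proof of \Cref{Lemma-2}, replacing the completeness-only subformula lemma \Cref{Lem-iglc-subformule} by its trace-enriched analogue \Cref{Lem-iglct-subformule}. First I would assume $\iglct\vdash A^\Box$ and apply \Cref{Lem-iglct-subformule} to the formula $A^\Box$, obtaining $\igl\vdash\Boxdot\Phi\to A^\Box$, where $\Phi$ is the finite conjunction, over subformulas of $A^\Box$, of the trace instances $\Box(E\to F)\to(E\vee(E\to F))$ and the completeness instances $E\to\Box E$. Here it is useful to note that every implication subformula of a box translation $A^\Box$ has the shape $B^\Box\to C^\Box$, so each trace instance is in fact $\Box(B^\Box\to C^\Box)\to(B^\Box\vee(B^\Box\to C^\Box))$.

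Next I would push this implication through the box translation. By \Cref{Lemma-3}, $\igl\vdash(\Boxdot\Phi\to A^\Box)^\Boxout$, and since $(\Boxdot\Phi)^\Boxout=\Phi^\Boxout\wedge\Box\Phi$ while $(A^\Box)^\Boxout\leftrightarrow A^\Box$ holds in ${\sf iK4}$ (the identity already used in the proof of \Cref{Lemma-4-2}), this simplifies to $\igl\vdash(\Phi^\Boxout\wedge\Box\Phi)\to A^\Box$. It then suffices to derive $\Phi^\Boxout\wedge\Box\Phi$ inside $\igl+\Box\CP+\TP$ and conclude by modus ponens.

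For $\Box\Phi$ I would box each conjunct separately and recombine using ${\sf K4}$: the boxed completeness conjuncts $\Box(E\to\Box E)$ are supplied by $\overline{\sf C}=\Box\CP$, and the boxed trace conjuncts by $\overline{\sf T}$, the boxed half of $\TP$. For $\Phi^\Boxout$ the two kinds of conjuncts are handled differently. A completeness conjunct box-translates to $\Boxdot(E^\Boxout\to\Box E)$; but $E^\Boxout\to\Box E$ is already an ${\sf iK4}$-theorem for $E\in{\sf sub}(A^\Box)$ by \Cref{Lemma-4-2} (whose proof only exercises ${\sf iK4}$, the ${\sf CP_a}$ there being idle on subformulas of a box translation), hence an $\igl$-theorem, so—using that the theorem set of $\igl$ is closed under necessitation (the paper's convention for the celebrated systems)—both $E^\Boxout\to\Box E$ and its box, and therefore $\Boxdot(E^\Boxout\to\Box E)$, are derivable. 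A trace conjunct box-translates to $\Boxdot(\Box(B^\Box\to C^\Box)\to(B^\Box\vee\Boxdot(B^\Box\to C^\Box)))$; for its unboxed half I would start from the trace instance $\underline{\sf T}$ and observe that under its own antecedent $\Box(B^\Box\to C^\Box)$ the disjunct $B^\Box\to C^\Box$ upgrades to $\Boxdot(B^\Box\to C^\Box)=(B^\Box\to C^\Box)\wedge\Box(B^\Box\to C^\Box)$, and the boxed half then follows from $\overline{\sf T}$ together with the necessitation of this purely $\IPC_\Box$-valid upgrading implication.

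The main obstacle is precisely this last step: matching the box-out translation of the trace instance against what $\TP$ actually provides. The translation replaces the weak consequent $B^\Box\to C^\Box$ of the trace principle by the stronger $\Boxdot(B^\Box\to C^\Box)$, and the gap is closed only because the antecedent of the very same implication is $\Box(B^\Box\to C^\Box)$; keeping careful track of which $\Boxdot$'s get discharged by this antecedent, and of the fact that the auxiliary upgrading implications are $\IPC_\Box$-theorems and hence boxable within $\igl$, is where the care lies. Everything else is the routine box-translation bookkeeping already carried out in \Cref{Lemma-2}.
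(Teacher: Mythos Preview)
Your proposal is correct and follows exactly the paper's route: apply \Cref{Lem-iglct-subformule}, push through $(.)^\Boxout$ via \Cref{Lemma-3}, discharge the completeness conjuncts using \Cref{Lemma-4-2} (with your valid observation that ${\sf CP_a}$ is idle on subformulas of $A^\Box$, which the paper also tacitly uses when it writes ``${\sf iK4}\vdash G^\Boxout$''), and discharge the trace conjuncts by noting that their $(.)^\Boxout$-translation is ${\sf iK4}$-equivalent to an instance of $\TP$. You are in fact more explicit than the paper about supplying the $\Box\Phi$ piece (the paper's displayed implication omits the $\Box G,\Box H$ terms and only partially addresses them afterwards), but the argument is the same.
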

\begin{proof}
Let $\iglct\vdash A^\Box$. Hence by \Cref{Lem-iglct-subformule}
the following is derivable in $\igl$
$$\left[\Boxdot\overbrace{\bigwedge_{B\in {\sf sub}(A^\Box)} B\to\Box B}^{G}\wedge 
\Boxdot
\overbrace{\bigwedge_{E\to F\in {\sf sub}(A^\Box)}\Box(E\to F)\to 
(E\vee 
(E\to F))}^{H}\right]\to A^\Box
$$
Hence by \Cref{Lemma-3}
$\iGL\vdash G^\Boxout\wedge H^\Boxout\to A^\Box$. 
By \Cref{Lemma-4-2}, ${\sf iK4}\vdash  G^\Boxout$. Also $(\Box G)^\Boxout=\Box G$ which is an instance of $\Box\CP$.
Let us consider some arbitrary conjunct $\Box(E\to F)\to (E\vee 
(E\to F))$
in $H$. Since $E\to F$ is a subformula of $A^\Box$, we have 
$E=E_0^\Box$ and $F=F_0^\Box$. Hence inside ${\sf iK4}$, 
the $H^\Boxout$ is equivalent to some instance of $\TP$. 
Hence $\igl+\Box\CP+\TP\vdash A^\Box$.
\end{proof}

\begin{theorem}\label{Theorem-Kripke-completeness-iglchatthat}
 $\iglchatthat$ is sound and complete for 
semi-perfect ${\sf Suc}$-quasi-classical 
Kripke models.
\end{theorem}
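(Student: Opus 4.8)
The plan is to follow the pattern of \Cref{Theorem-Kripke-semantic-iglphat}: prove soundness by directly checking the extra axioms, and prove completeness by a canonical-model construction followed by a tree-unraveling, with the role that ${\sf Suc}$-classicality plays there now taken over by ${\sf Suc}$-quasi-classicality. The $\iGL$-fragment is sound on any semi-perfect frame and modus ponens preserves validity, so for soundness it suffices to force every instance of $\overline{\sf C}$ and $\overline{\sf T}$ at an arbitrary node $\alpha$ of a semi-perfect ${\sf Suc}$-quasi-classical model. Fix a successor $\beta$ with $\alpha\R\beta$ and let $\gamma\succcurlyeq\beta$; by brilliance $(\R\,;\preccurlyeq)\subseteq\R$ we get $\alpha\R\gamma$, so $\gamma\in{\sf Suc}$ and hence $\gamma$ is quasi-classical, i.e. $(\gamma\prec)=(\gamma\R)$. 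At such $\gamma$, if $\gamma\Vdash B$ then every $\R$-successor of $\gamma$, being a $\prec$-successor, forces $B$ by monotonicity, so $\gamma\Vdash\Box B$; this gives $\beta\Vdash B\to\Box B$ and thus $\alpha\Vdash\overline{\sf C}$. Similarly, if $\gamma\Vdash\Box(B\to C)$ and $\gamma\nVdash B\to C$, a witness $\delta\succcurlyeq\gamma$ with $\delta\Vdash B$, $\delta\nVdash C$ cannot be strictly above $\gamma$ (any such $\delta$ would satisfy $\gamma\R\delta$ and hence force $B\to C$), so $\delta=\gamma$ and $\gamma\Vdash B$; therefore $\beta\Vdash\Box(B\to C)\to(B\vee(B\to C))$ and $\alpha\Vdash\overline{\sf T}$.

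For completeness, assume $\iglchatthat\nvdash A$. Take $X$ to be a finite adequate set containing $\sub{A}$ together with the relevant $\CP$- and $\TP$-instances built from subformulas of $A$, let $K$ be the $X$-saturated sets with respect to $\iglchatthat$ ordered by $\preccurlyeq\,=\,\subseteq$, and set $\alpha\R\beta$ iff $\{B,\Box B:\Box B\in\alpha\}\subseteq\beta$ and $\beta\setminus\alpha$ contains a boxed formula. The truth lemma $\kcal,\alpha\Vdash B\Leftrightarrow B\in\alpha$ for $B\in X$ is then proved by induction exactly as in \Cref{Theorem-Kripke-semantic-iglphat} and \Cref{Lem-iglct-subformule}, the only delicate clause $B=\Box C$ being handled by Löb's axiom, and \Cref{Lemma-saturation} supplies a node refuting $A$.

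The decisive extra point, which distinguishes $\iglchatthat$ from $\iglct$, is that every $\R$-accessible node must come out quasi-classical. Because the instances of $\overline{\sf C}$ and $\overline{\sf T}$ are theorems, they lie in every saturated node, so the definition of $\R$ transports the corresponding \emph{un-boxed} instances of $\CP$ and $\TP$ into each successor $\beta$, and by brilliance into every node $\succcurlyeq\beta$; hence the subframe generated by $\beta$ validates $\iglct$, and by \Cref{Theorem-Propositional Completeness LC} together with \Cref{Corol-truth-forcing2} this subframe is perfect quasi-classical. Thus the whole model is ${\sf Suc}$-quasi-classical, while the chosen refuting node is only constrained by $\overline{\sf C},\overline{\sf T}$ and so need not itself be quasi-classical, which is exactly the slack required to refute formulas that $\iglct$ proves. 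To finish, I would apply the sequence-unraveling of \Cref{Theorem-Kripke-semantic-iglphat} verbatim, recording at each step whether it is a $\prec$-step or an $\R$-step; this turns the model into a semi-perfect one that is $\R$-branching, neat, and tree-framed with $\R$ irreflexive and transitive, while preserving both forcing and the ${\sf Suc}$-quasi-classicality identified above.

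The hard part will be precisely this interface. In \Cref{Theorem-Kripke-semantic-iglphat} ${\sf Suc}$-classicality is arranged by making successors maximal consistent sets, an intrinsic condition on a node; here quasi-classicality is a frame condition, so one must verify that on the $\iglct$-governed accessible part the $\prec$- and $\R$-successors genuinely coincide, and that the fresh, possibly non-quasi-classical root never $\R$-sees a non-quasi-classical node after unraveling. Checking, via \Cref{Lem-60}, that the accessible ($\iglct$-governed) part and the free root fit together without collapsing $\iglchatthat$ onto $\iglct$ is where the real work concentrates.
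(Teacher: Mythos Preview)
Your soundness argument is fine, and the overall plan (canonical model, then unraveling) is the right shape. The gap is in the completeness half, specifically in how you obtain ${\sf Suc}$-quasi-classicality.

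You define $\preccurlyeq\,=\,\subseteq$ and then argue: successors inherit the unboxed $\CP$- and $\TP$-instances, so the subframe generated by any successor ``validates $\iglct$'', and therefore, by \Cref{Theorem-Propositional Completeness LC} and \Cref{Corol-truth-forcing2}, that subframe is perfect quasi-classical. This inference is a non sequitur. Completeness of $\iglct$ for quasi-classical models says that every non-theorem fails on \emph{some} quasi-classical model; it does not say that every model on which $\iglct$ is valid has a quasi-classical frame. Quasi-classicality is the frame condition $(\beta\!\!\prec)=(\beta\!\!\R)$, and with your definitions it can easily fail: take a successor $\beta$ and $\gamma$ with $\beta\subsetneqq\gamma$; nothing forces $\gamma\setminus\beta$ to contain a boxed formula, nor does $\Box B\in\beta$ force $B\in\gamma$ (that would be reflection), so $\beta\prec\gamma$ need not imply $\beta\R\gamma$. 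Having $E\to\Box E\in\beta$ for the finitely many $E\in X$ does not rescue this.

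The paper handles the issue by building quasi-classicality into the definitions rather than deriving it after the fact. Two points differ from your sketch. First, $\R$-successors are required to be $X$-saturated with respect to the \emph{stronger} theory $\iglct$, not merely $\iglchatthat$; this is what makes the $\Box C$-clause go through (the saturation that produces a successor is taken over $\iglct$). Second, $\prec$ is \emph{not} set to $\subsetneqq$ globally: at nodes that are themselves successors one defines $\alpha\prec\beta$ to require $\alpha\R\beta$ (together with $\alpha\subsetneqq\beta$), so $(\alpha\!\!\prec)=(\alpha\!\!\R)$ holds by fiat there. The price is that the truth lemma for implications is no longer routine: at a successor $\alpha$ with $C\to D\notin\alpha$ you cannot simply saturate $\alpha\cup\{C\}$ and get a $\preccurlyeq$-extension, because $\prec$ now demands an $\R$-step. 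This is exactly where $\TP$ (available at successors since they are $\iglct$-saturated) is used to force either $C\in\alpha$ or the existence of a genuine $\R$-successor witnessing the failure. Your sketch declares the $\to$-clause ``exactly as in'' the earlier proofs, but with the correct definition of $\prec$ it is the nontrivial case; with your definition of $\prec$ the $\to$-clause is easy but the model is not ${\sf Suc}$-quasi-classical.
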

\begin{proof}
The soundness is easy and left to the reader. 
For the 
completeness, we first show the completeness for finite  
brilliant  irreflexive transitive  
${\sf Suc}$-quasi-classical Kripke models. 
Let $\iglchatthat\nvdash A$. Let
$$X:=\{ B,\Box B: B\in{\sf Sub}(A)\}$$
and define the 
Kripke model $\kcal=(K,\preccurlyeq,\R,V)$  as follows:
\begin{itemize}
\item $K$ is the family of all $X$-saturated sets 
with respect to $\iglchatthat$.
\item $\alpha\R\beta$ iff $\alpha\neq\beta$ and 
$\{B ,\Box B: \Box B\in \alpha\}
\subseteq\beta$ and  $\beta$ is $X$-saturated with respect to 
$\iglct$.
\item $\alpha\prec  \beta$  iff  $\alpha\subsetneqq\beta$ and 
either 
 $\alpha\sqsubset \beta$   or 
$\gamma\not\sqsubset\alpha$, for every $\gamma\in K$.
\item $\alpha\,V\,p$ iff $p\in \alpha$.
\end{itemize}

 It is straightforward to show that $\kcal$ 
is a finite brilliant irreflexive  transitive  
${\sf Suc}$-quasi-classical Kripke model. 
We leave them to the reader.
 We only show that $\kcal,\alpha\Vdash B$ iff 
$B\in \alpha$ for every $\alpha\in K$ and $B\in X$. 
Then by \Cref{Lemma-saturation} one may  find some 
$\alpha\in K$ such that $\kcal,\alpha\nVdash A$, as desired.

 Use induction on the complexity of $B\in X$. 
 All inductive steps are trivial, except for:
 \begin{itemize}[leftmargin=*]
 \item $B=\Box C$:  If $\Box C\in \alpha$ and $\alpha\R\beta$, then by definition, 
 $C\in \beta$ and hence by induction hypothesis 
 $\beta\Vdash C$.
 This implies $\alpha\Vdash \Box C$. For the other way around,
 let $ \Box C\not\in \alpha$. 
 Consider the set 
 $\Delta:=\{E,\Box E:\Box E\in \alpha\}$.  If 
 $\iglct\vdash\bigwedge\Delta\to(\Box C\to C)$, then 
 $\iglchatthat\vdash \Box(\bigwedge \Delta)\to \Box C$. Since
 ${\sf iK4}+\alpha\vdash \Box(\bigwedge \Delta)$, we may deduce 
 $\iglchatthat+\alpha\vdash\Box C$, a contradiction. 
 Hence $\iglct\nvdash (\bigwedge \Delta\wedge \Box C)\to C$. 
 By  \Cref{Lemma-saturation}, there exists some $X$-saturated 
 set $\beta\supseteq \Delta\cup\{\Box C\}$ with respect to 
 $\iglct$ such that $C\not\in \beta$. Hence $\beta\in K$ and 
 $\alpha\R\beta$ and $C\not\in \beta$. Induction hypothesis implies that 
 $\beta\nVdash C$ and hence $\alpha\nVdash\Box C$. 
\item $B=C\to D$: If $ C\to D\in \alpha$ and $\alpha
\preccurlyeq\beta$ and $\beta\Vdash C$, by induction hypothesis 
$C\in \beta$ and hence $D\in\beta$. Again by induction 
hypothesis we have $\beta\Vdash D$. This shows that $\alpha\Vdash C\to D$. 
For the other way around, let $C\to D\not\in \alpha$. We have two cases:
\begin{itemize}[leftmargin=*]
\item There is some $\gamma\R\alpha$: Hence 
$\alpha$ is $X$-sturated w.r.t~$\iglct$. 
Let $\Delta:=\{E:\Box E\in \alpha\}$. 
We have tow subcases:
\begin{itemize}[leftmargin=*]
\item If $\iglct+\Delta+\alpha\vdash C\to D$, then $\iglct+\Box \alpha+\Box \Delta\vdash \Box (C\to D)$. By the completeness principle, we have 
$\iglct+\alpha\vdash \Box (C\to D)$. By $\TP$ we have 
$\iglct+\alpha\vdash C\vee (C\to D)$. Since 
$\alpha$ is $X$-saturated with respect to $\iglct$, we have 
either $C\in\alpha$ or $C\to D\in\alpha$. The latter is 
impossible, hence $C\in\alpha$. Again by $X$-saturatedness
 of $\alpha$, we can deduce $D\not\in\alpha$. Hence 
by induction hypothesis we have $\alpha\Vdash C$ and 
$\alpha\nVdash D$, which implies $\alpha\nVdash C\to D$, as 
desired. 
\item If $\iglct+\Delta+\alpha\nvdash C\to D$, then by 
\Cref{Lemma-saturation} there exists some $X$-saturated 
$\beta\supseteq \alpha\cup\Delta\cup\{C\}$ w.r.t~$\iglct$ 
(and a fortiori $\iglchatthat$) such that $D\not\in\beta$. 
Induction hypothesis implies $\beta\Vdash C$ and 
$\beta\nVdash D$. One may 
observe that  $\alpha\prec \beta$ or $\alpha=\beta$, and hence 
$\alpha\nVdash C\to D$, as desired.
\end{itemize}
\item There is no $\gamma\R\alpha$: since $\iglchatthat+\alpha\nvdash C\to D$, by \Cref{Lemma-saturation}, there exists some
$X$-saturate set 
$\beta\supseteq \alpha\cup\{C\}$ with respect to $\iglchatthat$
such that $D\not\in \beta$. Hence by induction hypothesis 
$\beta\Vdash C$ and $\beta\nVdash D$. One may observe that 
$\beta\succcurlyeq\alpha$ and hence $\alpha\nVdash C\to D$. 
\end{itemize}
\end{itemize}

Next we use the construction method \cite{IemhoffT} to fulfil the other conditions: being neat and tree.  Let $\kcal_t:=(K_t,\preccurlyeq_t,\R_t,V_t)$
as follows:
\begin{itemize}
\item $K_t$ is the set of all finite sequences $r:=\langle\alpha_0,\ldots\alpha_n\rangle$ such that for any $i<n$  either we have $\alpha_i\prec\alpha_{i+1}$ or $\alpha_i\R\alpha_{i+1}$.  Let $f(r)$ indicates the final element of the sequence $r$.
\item $r\preccurlyeq_t s$ iff $r$ is an initial segment of $s$ and $f(r)\preccurlyeq f(s)$.
\item $r\R_t s$ iff $r$ is an initial segment of $s=\langle\alpha_0,\ldots\alpha_n\rangle$, e.g. $r=\langle\alpha_0,\ldots\alpha_k\rangle$  for some $k<n$ and $ \alpha_i\R\alpha_{i+1}$ for some $k\leq i<n$.
\item $r\, V_t \,p$ iff $f(r)\,V \,p$.
\end{itemize}
It is straightforward to show that $\kcal_t$ is semi-perfect 
${\sf Suc}$-quasi-classical Kripke model and for every $r\in K_t$ and formula $B$ we have 
\[\kcal_t,r\Vdash B \quad \quad \Longleftrightarrow \quad\quad 
\kcal, f(r)\Vdash B.\qedhere\]
\end{proof}

\begin{theorem}\label{Theorem-Kripke-completeness-iglchatthatsigma}
 $\iglchatthatsigma$ is sound and complete for 
semi-perfect ${\sf Suc}$-quasi-classical  atom-complete
Kripke models.
\end{theorem}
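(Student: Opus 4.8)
The plan is to mirror the proof of \Cref{Theorem-Kripke-completeness-iglchatthat} almost verbatim, isolating only the extra work needed to handle the atomic completeness schema ${\sf C_a}$ that distinguishes $\iglchatthatsigma$ from $\iglchatthat$. For soundness, since \Cref{Theorem-Kripke-completeness-iglchatthat} already yields the soundness of $\iglchatthat$ over the larger class of semi-perfect ${\sf Suc}$-quasi-classical models, it suffices to verify that both parts of ${\sf C_a}$ are valid once we restrict to atom-complete models. For $\underline{\sf C_a}=p\to\Box p$: given a node $\alpha$ and any $\gamma\succcurlyeq\alpha$ with $\gamma\Vdash p$, atom-completeness of $\gamma$ forces every $\R$-successor $\delta$ of $\gamma$ to satisfy $\delta\Vdash p$, whence $\gamma\Vdash\Box p$; thus $\alpha\Vdash p\to\Box p$. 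Since $p\to\Box p$ is then forced at every node, its boxed form $\overline{\sf C_a}=\Box(p\to\Box p)$ is forced everywhere as well.

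For completeness I would run the canonical-model construction of \Cref{Theorem-Kripke-completeness-iglchatthat} with the single change that the non-successor nodes are taken to be $X$-saturated with respect to $\iglchatthatsigma$ rather than $\iglchatthat$; the $\R$-successors stay $X$-saturated with respect to $\iglct$, which is unaffected because $\iglct$ already proves the full completeness principle and in particular $p\to\Box p$. The adequate set $X=\{B,\Box B\mid B\in{\sf Sub}(A)\}$ already contains $\Box p$ for each atomic subformula $p$ of $A$, so no enlargement of $X$ is required (unlike in the proof of \Cref{Theorem-Kripke-semantic-iglphatsigma}). Because $\iglchatthatsigma$ extends $\iglchatthat$, every case of the truth lemma $\alpha\Vdash B\iff B\in\alpha$ for $B\in X$ carries over unchanged, reading $\iglchatthatsigma$ for $\iglchatthat$ in the derivability steps, and the tree-unraveling $\kcal_t$ is built exactly as before. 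The new ingredient is verifying atom-completeness: if $\alpha\Vdash p$ for atomic $p$, then $p\in\alpha$, so $p\in{\sf Sub}(A)$ and $\Box p\in X$; since $p\to\Box p$ is an axiom of both $\iglchatthatsigma$ and $\iglct$ and $\alpha$ is saturated with respect to one of these two systems, $\Box p\in\alpha$, so $\alpha\Vdash\Box p$ and hence every $\R$-successor of $\alpha$ forces $p$.

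The main obstacle I anticipate is bookkeeping rather than conceptual: I must confirm that atom-completeness holds uniformly at \emph{both} kinds of nodes (the $\iglchatthatsigma$-saturated non-successors and the $\iglct$-saturated successors), which hinges on the observation that $\iglct$ already derives $p\to\Box p$, and that atom-completeness is preserved by the passage to the unraveled model $\kcal_t$, since forcing of atomic variables at $r$ agrees with that at $f(r)$ and $r\,\R_t\,s$ entails $f(r)\,\R\,f(s)$. Once these two points are checked, all remaining structural properties (semi-perfection and ${\sf Suc}$-quasi-classicality) transfer from \Cref{Theorem-Kripke-completeness-iglchatthat} without change, giving the desired soundness and completeness.
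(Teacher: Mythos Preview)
Your proposal is correct and follows exactly the approach the paper intends: the paper's proof reads in full ``Similar to the proof of \Cref{Theorem-Kripke-completeness-iglchatthat} and left to the reader,'' and you have faithfully filled in those details, including the observation that $X=\{B,\Box B:B\in{\sf Sub}(A)\}$ already contains $\Box p$ for atomic $p\in{\sf Sub}(A)$ (so no enlargement of $X$ is needed, unlike in \Cref{Theorem-Kripke-semantic-iglphatsigma}) and that $\iglct$ already derives $p\to\Box p$, so atom-completeness holds at every node of the canonical model and transfers to the unraveling $\kcal_t$.
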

\begin{proof}
Similar to the proof of \Cref{Theorem-Kripke-completeness-iglchatthat} and left to the reader.
\end{proof}
\begin{theorem}\label{Theorem-Kripke-completeness-ihatglchatt}
For every proposition $A$, we have 
$\ihatglchatt\vdash A$ iff for every quasi-classical 
perfect   Kripke model $\kcal$ and every boolean interpretation 
$I$  and arbitrary node $\alpha$ in $\kcal$ 
we have  $\kcal,\alpha,I\models A$.
\end{theorem}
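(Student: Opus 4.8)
The plan is to prove both directions in parallel with the treatment of $\ihatglchat$ in \Cref{Theorem-Propositional Completeness ihatglchat}, but now tracking the boolean interpretation $I$ and the trace axiom, reducing to the forcing-completeness of $\iglchatthat$ (\Cref{Theorem-Kripke-completeness-iglchatthat}) and the forcing-soundness of $\iglct$ over perfect quasi-classical models (\Cref{Theorem-Propositional Completeness LC}). Throughout I rely on two facts. (i) For an outer boxed subformula the local $I$-truth equals its forcing value and does not depend on $I$: $\kcal,\alpha,I\models\Box B$ iff $\kcal,\alpha\Vdash\Box B$, since the clause for $\Box$ only inspects the forcing $\Vdash$ at $\R$-successors. (ii) Local $I$-truth evaluates $\vee,\wedge,\to$ classically at $\alpha$, so any formula that is a classical tautology in its atoms and its outer boxed subformulas is locally $I$-true for every $I$.

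\textbf{Soundness.} Since $\to$ is read classically at each node, modus ponens preserves local $I$-truth, so it suffices to check the axioms. The unboxed schemas are immediate by (ii): $\underline{\sf P}$, every theorem of $\IPC_\Box$, and the unboxed trace $\underline{\sf T}$ (whose consequent $A\vee(A\to B)$ is classically $A\vee\neg A\vee B$, a tautology) are all classical tautologies over the skeleton. For the boxed schemas $\overline{\sf C},\overline{\sf T}$ and the $\sf K4$/Löb axioms I use that every perfect quasi-classical model is forcing-sound for $\iglct$ (\Cref{Theorem-Propositional Completeness LC}); an easy induction shows $\kcal,\alpha\Vdash\varphi$ implies $\kcal,\alpha\models\varphi$ whenever every subformula of $\varphi$ not in the scope of a $\Box$ is atomic or boxed (collapse each $\forall\beta\succcurlyeq\alpha$ to $\beta=\alpha$ and apply (i)). Each of these axioms has that shape and is forcing-valid, so each is locally $I$-true at every node, for every $I$.

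\textbf{Completeness.} I argue contrapositively. Because $\ihatglchatt$ contains $\underline{\sf P}$, it is classically complete over the atoms and boxed subformulas, so $\ihatglchatt\nvdash A$ is equivalent to $\ihatglchatt\nvdash A'$ for the classically equivalent conjunction of implications $A'=\bigwedge_i(E_i\to F_i)$ with each $E_i$ a conjunction and each $F_i$ a disjunction of atomic or boxed formulas; hence some conjunct $E\to F$ with $E=\bigwedge_k p_k\wedge\bigwedge_l\Box B_l$ and $F=\bigvee_m q_m\vee\bigvee_n\Box C_n$ is unprovable. No atom lies in both the $p_k$ and the $q_m$ (otherwise $E\to F$ is a tautology, hence provable), so I fix a boolean $I$ with $I\models p_k$ and $I\not\models q_m$ for all $k,m$. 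Moreover $\iglchatthat\subseteq\ihatglchatt$, and $\iglchatthat\vdash\bigwedge_l\Box B_l\to\bigvee_n\Box C_n$ would give $\ihatglchatt\vdash E\to F$; therefore $\iglchatthat\nvdash\bigwedge_l\Box B_l\to\bigvee_n\Box C_n$. By \Cref{Theorem-Kripke-completeness-iglchatthat} there are a semi-perfect ${\sf Suc}$-quasi-classical model $\kcal$ and a node $\delta$ with $\delta\Vdash\Box B_l$ for all $l$ and $\delta\nVdash\Box C_n$ for all $n$.

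It remains to transfer this refutation into a genuinely quasi-classical perfect model, which I expect to be the main technical obstacle, as \Cref{Theorem-Kripke-completeness-iglchatthat} only makes the $\R$-accessible part quasi-classical. The $\R$-cone $(\delta\R)$ is upward closed under $\preccurlyeq$ and $\R$ (brilliance and transitivity) and consists solely of quasi-classical nodes, so the submodel generated by $(\delta\R)$ is quasi-classical with finite tree frame, hence perfect, and forcing on it agrees with $\kcal$. Adjoining a fresh quasi-classical root $\alpha$ with $\alpha\prec\beta$ and $\alpha\R\beta$ for every $\beta\in(\delta\R)$ yields a quasi-classical perfect model $\kcal'$ whose $\R$-successors of $\alpha$ are exactly $(\delta\R)$, so by (i) we get $\kcal',\alpha\models\Box B_l$ and $\kcal',\alpha\not\models\Box C_n$. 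With the chosen $I$ this gives $\kcal',\alpha,I\models E$ and $\kcal',\alpha,I\not\models F$, whence $\kcal',\alpha,I\not\models E\to F$ and so $\kcal',\alpha,I\not\models A$, as required. The degenerate cases where $E$ has no boxed conjunct or $F$ no boxed disjunct are settled by a single-node model together with $I$ alone.
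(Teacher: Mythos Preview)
Your completeness argument is correct but follows a different path from the paper. The paper strips the outer atoms from $E\to F$ to get $\bar E\to\bar F$, then uses \Cref{Lem-60} to pass from $\igl+\TP+\Box\CP$ directly to $\iglct$, and invokes \Cref{Theorem-Propositional Completeness LC} to obtain a perfect quasi-classical model immediately; no model surgery is needed. You instead drop to the weaker $\iglchatthat$, invoke \Cref{Theorem-Kripke-completeness-iglchatthat} to get only a ${\sf Suc}$-quasi-classical model, and then repair it by extracting the $\R$-cone of $\delta$ and gluing a fresh quasi-classical root. Both routes work; the paper's is shorter because \Cref{Lem-60} absorbs the work, while yours avoids that lemma at the price of an explicit model construction (which you carry out correctly: the cone is $\preccurlyeq$- and $\R$-closed since $\sqsupset$-accessible nodes are quasi-classical, and the fresh root makes the frame a perfect quasi-classical tree).

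One imprecision in your soundness: the claimed ``easy induction'' that $\kcal,\alpha\Vdash\varphi$ implies $\kcal,\alpha\models\varphi$ whenever all outer leaves are atomic or boxed does not go through at the $\to$ step, since the induction only gives the forward direction and you would need the converse on the antecedent. A concrete failure is $\varphi=(\Box A\to\bot)\to\Box B$: one can have $\alpha\Vdash\varphi$ while $\alpha\nVdash\Box A$ and $\alpha\nVdash\Box B$, whence $\alpha\not\models\varphi$. What actually makes $\underline{\sf K},\underline{\sf 4},\underline{\sf L}$ locally $I$-true is their more restricted shape: every outer implication has a \emph{boxed} antecedent, and for boxed formulas $\Vdash$ and $\models$ coincide by your point (i), so specializing $\beta=\alpha$ is legitimate there. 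With that correction the soundness stands.
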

\begin{proof}
The soundness is easy and left to the reader. For the 
completeness part, let $\ihatglchatt\nvdash A$.  
Let $A'$ be 
a boolean equivalent of $A$ which is a conjunction of implications 
$E\to F $
in which $E$ is a conjunction of a set of atomics or boxed propositions 
and $F$ is a disjunction of atomics or boxed proposition. 
Evidently such $A'$ exists for every $A$.
Hence $\ihatglchatt\nvdash A'$. 
Then there must be some 
conjunct $E\to F$ of 
$A'$ such that $\ihatglchatt\nvdash E\to F$, $E$ is a conjunction of atomic and boxed propositions
 and $F$ is a disjunction of atomic and  
boxed propositions. 
Let $X_E$ be the set of atomic conjuncts in $E$ and $X_F$
the set of atomic disjuncts in $F$. Note here that $X_E$ and $X_F$ are disjoint sets. 
Define $\bar{E}$
and $\bar{F}$ as the replacement of $X_E$ and $X_F$ by 
$\top$ and $\bot$ in $E$ and $F$, respectively.
Hence $\bar{E}\to\bar{F}$, does not have any outer atomics and 
then $(\bar{E}\to\bar{F})^\Box$ is equivalent in 
$\igl+\Box \CP$ with $\bar{E}\to\bar{F}$. Then  
$\iGL+\TP+\Box\CP\nvdash (E\to F)^\Box$
 and by \Cref{Lem-60}
we have $\iglct\nvdash \bar{E}\to\bar{F}$. Then by 
\Cref{Theorem-Propositional Completeness LC}, there is some 
perfect, quasi-classical Kripke model $\kcal$ such that 
$\kcal,\alpha\nVdash \bar{E}\to\bar{F}$. Hence there is some 
$\beta\succcurlyeq \alpha$ such that $\kcal,\beta\Vdash \bar{E}
$ and $\kcal,\beta\nVdash \bar{F}$. Let the boolean 
interpretation $I$ defined such that:
$$
I(p):=
\begin{cases}
\text{true} \quad &:p\in X_E\\
\text{false} &: p\in X_F\\
\text{no matter, true or false} &:\text{otherwise}
\end{cases}
$$
One may observe that 
$\kcal,\beta,I\not\models E\to F$ and hence 
$\kcal,\beta,I\not\models A$. 
\end{proof}

\begin{theorem}\label{Theorem-Kripke-completeness-ihatglchattsigma}
For every proposition $A$, we have 
$\ihatglchattsigma\vdash A$ iff for every quasi-classical 
perfect   Kripke model $\kcal$ and arbitrary node $\alpha$ in $\kcal$ 
we have  $\kcal,\alpha\models A$.
\end{theorem}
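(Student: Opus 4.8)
The plan is to reduce the statement to \Cref{Theorem-Kripke-completeness-ihatglchatt}, exploiting the atomic completeness axiom ${\sf C_a}$ (which, by the convention, abbreviates $\underline{\sf C_a}\wedge\overline{\sf C_a}$, i.e.\ both $p\to\Box p$ and $\Box(p\to\Box p)$ for atomic $p$) to trade the arbitrary boolean interpretation $I$ of that theorem for the genuine valuation of the model. The guiding observation is that in local truth $\kcal,\alpha\models A$ the outer atoms are read off the valuation $V$, whereas the $I$-truth of \Cref{Theorem-Kripke-completeness-ihatglchatt} reads them off $I$; and the $I$-truth of the instances of ${\sf C_a}$ is exactly the assertion that $I$ is compatible with a monotone valuation, so it can be internalised.

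For soundness, I would first note that local truth is the special case of $I$-truth in which $I(p)$ records whether $\alpha Vp$; hence every non-${\sf C_a}$ axiom is locally valid by \Cref{Theorem-Kripke-completeness-ihatglchatt}. For ${\sf C_a}$ at a node $\alpha$ of a quasi-classical perfect model: if $\kcal,\alpha\models p$ then $\alpha Vp$, so $\kcal,\alpha\Vdash p$; since $\alpha$ is quasi-classical we have $(\alpha\prec)=(\alpha\R)$, and persistence of $\Vdash$ along $\preccurlyeq$ gives $\kcal,\beta\Vdash p$ for every $\alpha\R\beta$, that is $\kcal,\alpha\models\Box p$. The boxed instance is obtained the same way at each successor.

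For completeness, suppose $\ihatglchattsigma\nvdash A$ and let $p_1,\dots,p_k$ be the atoms of $A$. Since $\ihatglchattsigma=\ihatglchatt+{\sf C_a}$, and both systems contain $\underline{\sf i}$ and use only modus ponens, the deduction theorem is available; collecting the finitely many ${\sf C_a}$-instances used in a putative proof and substituting $\bot$ for every atom outside $A$ yields $\ihatglchatt\nvdash\psi\to A$, where $\psi:=\bigwedge_{i\le k}\Boxdot(p_i\to\Box p_i)$. By \Cref{Theorem-Kripke-completeness-ihatglchatt} there are a quasi-classical perfect model $\kcal$, an interpretation $I$ and a node $\alpha$ with $\kcal,\alpha,I\models\psi$ and $\kcal,\alpha,I\not\models A$. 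I would first pass to the submodel generated by $\alpha$ (all nodes reachable from $\alpha$ via $\preccurlyeq$ and $\R$); this leaves $I$-truth and forcing at $\alpha$ and above untouched, is again quasi-classical perfect, and makes $\alpha$ its root. Then define $\kcal'$ by setting $\alpha V'p_i\Longleftrightarrow I(p_i)$ and $V'=V$ elsewhere. As $\alpha$ is now the root there is no downward monotonicity constraint, and the $I$-truth of each conjunct $p_i\to\Box p_i$ together with quasi-classicality ($(\alpha\prec)=(\alpha\R)$) forces $p_i$ at every $\beta\succ\alpha$ whenever $I(p_i)$; hence $V'$ is a legitimate valuation and $\kcal'$ stays quasi-classical perfect. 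Boxed subformulas, being evaluated at successors where $V'=V$, are unchanged, while the outer atoms at $\alpha$ now match $I$; an induction on the subformulas $B$ of $A$ then gives $\kcal',\alpha\models B\Longleftrightarrow\kcal,\alpha,I\models B$, so in particular $\kcal',\alpha\not\models A$, as required.

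The main obstacle is precisely this internalisation step: one must verify that the arbitrary interpretation $I$ delivered by \Cref{Theorem-Kripke-completeness-ihatglchatt} can be realised as a genuine valuation without disturbing the forcing of the boxed parts of $A$. This works only because $\psi$ is $I$-true, and its $I$-truth is equivalent, under quasi-classicality, to the upward monotonicity constraint $\alpha V'p_i\Rightarrow\beta V'p_i$ for $\beta\succ\alpha$; passing to the generated submodel disposes of the complementary downward constraint. Thus ${\sf C_a}$ is exactly what compensates for dropping $I$. The remaining bookkeeping — the deduction-theorem reduction and the $\bot$-substitution eliminating atoms outside $A$ — is routine.
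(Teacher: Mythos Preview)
Your argument is correct, but it follows a genuinely different route from the paper's own proof.

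The paper does \emph{not} reduce to \Cref{Theorem-Kripke-completeness-ihatglchatt} as a black box; instead it reruns that proof's argument, short-circuiting the $X_E,X_F,\bar E,\bar F,I$ manipulation. Concretely: it writes $A$ in conjunctive normal form $A'=\bigwedge_i(E_i\to F_i)$ with $E_i,F_i$ built from atoms and boxed formulas, picks an underivable conjunct $E\to F$, observes that in ${\sf iK4}+{\sf CP_a}+\Box{\sf CP}$ the formula $E\to F$ is equivalent to $E^\Box\to F^\Box$ (since ${\sf CP_a}$ makes $p\leftrightarrow\Boxdot p$ for atoms), invokes \Cref{Lem-60} to drop down to $\iglct\nvdash E\to F$, and then applies \Cref{Theorem-Propositional Completeness LC} directly to obtain a perfect quasi-classical $\kcal$ with $\kcal,\beta\Vdash E$ and $\kcal,\beta\nVdash F$, whence $\kcal,\beta\not\models E\to F$ and so $\kcal,\beta\not\models A$.

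Your route is more modular: you black-box \Cref{Theorem-Kripke-completeness-ihatglchatt}, pull the finitely many ${\sf C_a}$-instances out via the deduction theorem (legitimate, since the paper's systems use only modus ponens), obtain an $I$-countermodel for $\psi\to A$, and then \emph{internalise} $I$ by passing to the generated submodel rooted at $\alpha$ and redefining $V$ at $\alpha$ to agree with $I$; the $I$-truth of $\psi$ is exactly what guarantees the modified valuation is still monotone in a quasi-classical frame. This is a clean and self-contained argument that avoids the CNF reduction and the detour through \Cref{Lem-60} and $\iglct$. The paper's approach, by contrast, stays closer to the underlying completeness machinery and reuses the infrastructure already built for \Cref{Theorem-Kripke-completeness-ihatglchatt}; it is less modular but requires no new model-surgery step. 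Either route yields the theorem.
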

\begin{proof}
The  proof is very similar to the one for 
\Cref{Theorem-Kripke-completeness-ihatglchatt}, except for the 
argument for $X_E$ and $X_F$ and $\bar{E}$ and $\bar{F}$ and the boolean interpretation $I$, which are unnecessary here with the presence of the ${\sf CP_a}$. For readability reasons, we bring the adapted proof here.

The soundness is straightforward and left to the reader. 
For the completeness, let $\ihatglchattsigma\nvdash A$.  
Let $A'$ be 
a boolean equivalent of $A$ which is a conjunction of implications 
$E\to F $
in which $E$ is a conjunction of a set of atomics or boxed propositions 
and $F$ is a disjunction of atomics or boxed proposition. 
Evidently such $A'$ exists for every $A$.
Hence $\ihatglchattsigma\nvdash A'$. 
Then there must be some 
conjunct $E\to F$ of 
$A'$ such that $\ihatglchatt\nvdash E\to F$, $E$ is a conjunction of atomic and boxed propositions
 and $F$ is a disjunction of atomic and  
boxed propositions. Hence
${E}^\Box\to {F}^\Box$ is equivalent in 
${\sf iK4}+{\sf CP_a}+\Box \CP$ with ${E}\to{F}$. Then  
$\iGL+\TP+\Box\CP+{\sf CP_a}\nvdash (E\to F)^\Box$
 and by \Cref{Lem-60}
we have $\iglct\nvdash {E}\to {F}$. Then by 
\Cref{Theorem-Propositional Completeness LC}, there is some 
perfect, quasi-classical Kripke model $\kcal$ such that 
$\kcal,\alpha\nVdash {E}\to{F}$. Hence there is some 
$\beta\succcurlyeq \alpha$ such that $\kcal,\beta\Vdash {E}
$ and $\kcal,\beta\nVdash {F}$. 
Then
$\kcal,\beta\not\models E\to F$ and hence 
$\kcal,\beta\not\models A$. 
\end{proof}

\begin{theorem}\label{Theorem-Kripke-completeness-ihatglchatts}
$\ihatglchatts\vdash A^\Boxin$ iff for every quasi-classical 
perfect  Kripke model $\kcal$ and every boolean interpretation 
$I$  and arbitrary $A^\Boxin$-sound node $\alpha$ in $\kcal$ 
we have  $\kcal,\alpha,I\models A^\Boxin$.
\end{theorem}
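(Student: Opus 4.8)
The plan is to follow the proof of \Cref{theorem-Kripke-completeness-ihatglchats} almost verbatim, with two changes: the base logic $\ihatglchat$ is replaced by $\ihatglchatt$ (so the trace principle ${\sf T}$ survives and, since ${\sf C_a}$ is now absent, a boolean interpretation $I$ must be carried along), and the reflection schema $\underline{\sf S}$ is replaced by the complete reflection schema $\underline{\sf S^*}$. As $\ihatglchatts=\ihatglchatt+\underline{\sf S^*}$, both directions are proved contrapositively and are driven by the Kripke completeness of $\ihatglchatt$ for local $I$-truth at quasi-classical nodes of perfect models (\Cref{Theorem-Kripke-completeness-ihatglchatt}). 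The decisive preliminary observation is that over $\ihatglchatt$ complete reflection collapses to ordinary reflection of box-translations: since $\overline{\sf C}=\Box\CP$ is present and ${\sf iK4}$ proves $\Box\Boxdot\CP\lr\Box\CP$, boxing the equivalence $C\lr C^\Box$ of \Cref{Lemma-A-ABoxin} gives $\Box C\lr\Box C^\Box$, so each instance $\Box C\to C^\Box$ of $\underline{\sf S^*}$ is $\ihatglchatt$-equivalent to $\Box C^\Box\to C^\Box$. For a boxed subformula $\Box E^\Box$ of $A^\Boxin$ this, together with the idempotency $(E^\Box)^\Box\lr E^\Box$ valid in ${\sf iK4}$ by \Cref{Lemma-BoxdotABox-ABox}, reduces the instance to $\Box E^\Box\to E^\Box$, i.e.\ to precisely the clauses of $A^\Boxin$-soundness.

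For the completeness part I argue contrapositively. If $\ihatglchatts\nvdash A^\Boxin$ then, because each $\Box B\to B$ with $\Box B\in{\sf Sub}(A^\Boxin)$ is an $\ihatglchatts$-theorem by the collapse above, we obtain
\[
\ihatglchatt\nvdash\Big(\bigwedge_{\Box B\in{\sf Sub}(A^\Boxin)}(\Box B\to B)\Big)\to A^\Boxin .
\]
\Cref{Theorem-Kripke-completeness-ihatglchatt} then yields a quasi-classical perfect model $\kcal$, a node $\alpha$, and a boolean $I$ with $\kcal,\alpha,I\models\bigwedge(\Box B\to B)$ and $\kcal,\alpha,I\not\models A^\Boxin$. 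It remains to upgrade this $I$-soundness to genuine $A^\Boxin$-soundness, which is phrased with $\models$ rather than with $I$: here I align the valuation at $\alpha$ with $I$, using \Cref{Corol-truth-forcing} and \Cref{Remark2} to identify the local $I$-truth of the box- and $\Boxin$-translated subformulas with ordinary local and classical truth in the resulting quasi-classical perfect model.

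For the soundness part I again argue contrapositively: given a quasi-classical perfect model $\kcal$ that is $A^\Boxin$-sound at a quasi-classical node $\alpha$ with $\kcal,\alpha,I\not\models A^\Boxin$, compactness and the collapse reduce the goal $\ihatglchatts\nvdash A^\Boxin$ to showing, for every finite set $\Gamma$ of box-translations, that $\ihatglchatt\nvdash\big(\bigwedge_{D\in\Gamma}(\Box D\to D)\big)\to A^\Boxin$. By \Cref{Theorem-Kripke-completeness-ihatglchatt} it suffices to exhibit, for each $\Gamma$, a quasi-classical node refuting this implication, and the Smory\'nski operation of \Cref{Lemma-10} provides it: iterating it yields $\kcal^{(i)}$ with quasi-classical $\alpha_i$ that, by \Cref{Lemma-10}(1)--(4), preserves both $A^\Boxin$-soundness and $\kcal^{(i)},\alpha_i,I\not\models A^\Boxin$, while the counting of $N_i=\{D\in\Gamma:\alpha_i\models D\wedge\Box D\}$ against $M_i=\{D\in\Gamma:\alpha_i\models\Box D\wedge\neg D\}$ runs the same induction as in \Cref{theorem-Kripke-completeness-ihatglchats}: each application deletes one reflection defect in $M_i$ without creating others once $N_i$ is controlled, so after finitely many steps every hypothesis $\Box D\to D$ holds at the current node while $A^\Boxin$ still fails. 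The good behaviour of the box-translations $D=C^\Box$ recorded in \Cref{Lem-truth-forcing} and \Cref{Corol-truth-forcing} makes their truth at quasi-classical nodes classical, which is exactly what lets the defect count behave as in the reflection-only case.

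The hardest part will be the simultaneous presence of complete reflection and the boolean interpretation, a combination absent from both \Cref{theorem-Kripke-completeness-ihatglchats} (which has ${\sf C_a}$ and hence no $I$) and \Cref{Theorem-Kripke-completeness-ihatglchatt} (which has no reflection). Concretely, the soundness clauses $\Box B\to B$ are read off the model through $\models$, whereas \Cref{Theorem-Kripke-completeness-ihatglchatt} only delivers their $I$-truth, and for the \emph{outer} atomic variables of $A^\Boxin$ the two can genuinely differ ($\alpha V p$ versus $I(p)$); the delicate step is to show that moving to the $I$-dictated valuation at the chosen node keeps the model quasi-classical, perfect, and $A^\Boxin$-sound. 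Verifying that the Smory\'nski counting is insensitive to $I$, so that each step still removes a defect, will be the second point demanding care.
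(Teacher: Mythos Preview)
Your proposal follows the paper's proof essentially verbatim: the same collapse of $\underline{\sf S^*}$ to reflection on box-translations over $\ihatglchatt$, the same contrapositive completeness argument restricting to boxed subformulas $\Box B^\Box\in{\sf Sub}(A^\Boxin)$ and invoking \Cref{Theorem-Kripke-completeness-ihatglchatt}, and the same Smory\'nski counting with the sets $N_i,M_i$ (now in $I$-truth) for soundness. The paper is in fact terse on exactly the two points you single out as delicate---its completeness direction ends with ``implies the desired result'' without addressing the passage from $I$-truth of the reflection hypotheses to genuine $A^\Boxin$-soundness in the $\models$ sense---so your proposal is at least as careful as the source.
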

\begin{proof}
Both directions are   proved contra-positively. 
For the soundness part, assume that 
$\kcal,\alpha,I\not\models A^\Boxin$ for some boolean interpretation $I$ and  quasi-classical
 perfect Kripke
model $\kcal:={(K,\preccurlyeq,\R,V)}$  
which  is  $A^\Boxin$-sound at  $\alpha\in K$.
Since derivability is finite,  
it is enough to show that for every finite set 
$\Gamma$ of modal propositions we have 
$$\ihatglchatt\nvdash \bigwedge_{B\in \Gamma}(\Box B^\Box\to B^\Box)\to A^\Boxin.$$
By \Cref{Theorem-Kripke-completeness-ihatglchatt} 
and \Cref{Lemma-10}, it is enough to
find some  number $i$ 
such that 
$$ \kcal^{(i)},\alpha_i,I\not\models \bigwedge_{B\in \Gamma}(\Box B^\Box\to B^\Box)\to A^\Boxin.$$
Let us define $n_i$ and 
$m_i$
as the number of
 propositions in the sets
 $N_i:=\{{B\in \Gamma}: \kcal^{(i)},\alpha_i,I\models B^\Box\wedge\Box B^\Box\}$ and 
 $M_i:=\{B\in \Gamma: 
 \kcal^{(i)},\alpha_i ,I\models \Box B^\Box\wedge \neg B^\Box\}$, respectively. 
We use induction  on $k$ and prove the following statement:
\begin{align}
\nonumber \varphi(k):=\text{for every $i$,} & \text{ if  $n_i<k$ then there is some
$0\leq j\leq 1+n_i$ such that } 
 \\ \label{eq-51}
 &\kcal^{(i+j)},\alpha_{i+j},I\models 
\bigwedge_{B\in \Gamma}(\Box B^\Box\to B^\Box)
\end{align}
Then by $\varphi(n_0+1)$, one may find some number $j$ such that 
$\kcal^{j},\alpha_j,I\models 
\bigwedge_{B\in \Gamma}(\Box B^\Box\to B^\Box) $, and by \Cref{Lemma-10}
we also have $\kcal^{j},\alpha_j,I\not \models 
A^\Boxin $, as desired.

$\varphi(0)$ trivially holds. As induction hypothesis, let 
$\varphi(k)$  holds and show that $\varphi(k+1)$ holds as follows. 
Let some number $i$ such that $n_i<k+1$. If $n_i<k$, by induction hypothesis we have the desired conclusion. So 
let $n_i=k$. If $m_i=0$, we may let $j=0$ and
we have \cref{eq-51}. 
So let 
$B\in \Gamma$ such that 
$\kcal^{(i)},\alpha_i,I\models \Box B^\Box\wedge \neg B^\Box$. We have two sub-cases:
\begin{itemize}
\item $m_{i+1}=0$: observe in this case that \cref{eq-51} holds for $j=1$. 
\item $m_{i+1}>0$: in this case we have $n_{i+1}<k$ and hence by 
application of the induction hypothesis with $i:=i+1$, we get some 
$0\leq j' \leq 1+n_{i+1}$ such that 
$\kcal^{(i+1+j')},\alpha_{i+1+j'}\models 
\bigwedge_{B\in \Gamma}(\Box B^\Box\to B^\Box) $.
Hence if we let $j:=j'+1$ we have $0\leq j\leq 1+ n_i$ and 
\cref{eq-51}, as desired.
\end{itemize}
For the completeness part, assume that 
$\ihatglchatts\nvdash A^\Boxin$. Hence 
$$\ihatglchatt\nvdash 
\left(\bigwedge_{\Box B^\Box\in{\sf Sub}(A^\Boxin)} (\Box B^\Box\to B^\Box)\right)\to A^\Boxin$$
Hence  \Cref{Theorem-Kripke-completeness-ihatglchatt}
implies the desired result.
\end{proof}
\begin{theorem}\label{Theorem-Kripke-completeness-ihatglchattssigma}
$\ihatglchattssigma\vdash A^\Boxin$ iff for every quasi-classical 
perfect  Kripke model $\kcal$ 
and arbitrary $A$-sound node $\alpha$ in $\kcal$ 
we have  $\kcal,\alpha\models A$.
\end{theorem}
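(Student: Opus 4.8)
The plan is to read \Cref{Theorem-Kripke-completeness-ihatglchattssigma} as the ${\sf CP_a}$-strengthening of \Cref{Theorem-Kripke-completeness-ihatglchatts}, exactly as \Cref{Theorem-Kripke-completeness-ihatglchattsigma} strengthens \Cref{Theorem-Kripke-completeness-ihatglchatt}. Since $\ihatglchattssigma$ arises from $\ihatglchattsigma$ by adjoining the complete reflection schema $\underline{\sf S^*}$, I would transcribe the proof of \Cref{Theorem-Kripke-completeness-ihatglchatts} almost verbatim, replacing the base completeness result \Cref{Theorem-Kripke-completeness-ihatglchatt} by \Cref{Theorem-Kripke-completeness-ihatglchattsigma}, and reading the right-hand side through $A^\Boxin$ (as in \Cref{Theorem-Kripke-completeness-ihatglchatts}). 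The one structural simplification is that, with ${\sf CP_a}$ available, every node is atom-complete, so the auxiliary boolean interpretation $I$ is redundant and each occurrence of the local $I$-truth $\kcal,\alpha,I\models(\cdot)$ collapses to the plain local truth $\kcal,\alpha\models(\cdot)$ on the subformulae of $A^\Boxin$ that are in play.

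Both directions are proved contrapositively. For soundness I would begin with a quasi-classical perfect model $\kcal$ that is $A^\Boxin$-sound at $\alpha$ with $\kcal,\alpha\not\models A^\Boxin$, and reduce, by finiteness of derivations, to producing for each finite $\Gamma$ a number $i$ with
$$\kcal^{(i)},\alpha_i\not\models \bigwedge_{B\in\Gamma}(\Box B^\Box\to B^\Box)\to A^\Boxin.$$
Feeding this into \Cref{Theorem-Kripke-completeness-ihatglchattsigma} together with the Smory\'nski operation \Cref{Lemma-10} gives $\ihatglchattsigma\nvdash \bigwedge_{B\in\Gamma}(\Box B^\Box\to B^\Box)\to A^\Boxin$, and hence $\ihatglchattssigma\nvdash A^\Boxin$. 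The index $i$ is located by the same induction on $k$ used in \Cref{Theorem-Kripke-completeness-ihatglchatts}, now tracking the sets $N_i:=\{B\in\Gamma:\kcal^{(i)},\alpha_i\models B^\Box\wedge\Box B^\Box\}$ and $M_i:=\{B\in\Gamma:\kcal^{(i)},\alpha_i\models\Box B^\Box\wedge\neg B^\Box\}$ defined through plain local truth; each Smory\'nski step either empties $M$ or forces $N$ below $k$ and so triggers the induction hypothesis, whence the recursion halts within $1+n_i$ steps.

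For completeness I would argue, again contrapositively, that $\ihatglchattssigma\nvdash A^\Boxin$ yields
$$\ihatglchattsigma\nvdash \left(\bigwedge_{\Box B^\Box\in{\sf sub}(A^\Boxin)}(\Box B^\Box\to B^\Box)\right)\to A^\Boxin,$$
because, modulo ${\sf iK4}$, the adjoined conjuncts are precisely the instances of $\underline{\sf S^*}$ attached to the boxed subformulae of $A^\Boxin$ (using $(B^\Box)^\Box\lr B^\Box$ in ${\sf iK4}$). Applying \Cref{Theorem-Kripke-completeness-ihatglchattsigma} then produces a quasi-classical perfect model with a node that locally forces every reflection conjunct yet refutes $A^\Boxin$; such a node is $A^\Boxin$-sound precisely by the conjuncts it forces, which is what the right-hand side of the statement demands.

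The main obstacle is bookkeeping rather than conceptual. One must check that \Cref{Lemma-10} transports both $A^\Boxin$-soundness and the relevant local-truth values across the Smory\'nski extension in the $I$-free setting (items~1 and~3 of that lemma), and that the $N_i,M_i$ counting argument reproduces the termination of \Cref{Theorem-Kripke-completeness-ihatglchatts} once $I$ is stripped out. The single point needing genuine care is the claim that discarding $I$ costs nothing: one must confirm that ${\sf CP_a}$ indeed forces atom-completeness at $\alpha$ and at all its Smory\'nski iterates, so that $\kcal,\alpha,I\models(\cdot)$ and $\kcal,\alpha\models(\cdot)$ agree on ${\sf sub}(A^\Boxin)$ throughout the construction.
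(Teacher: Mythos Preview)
Your proposal is correct and follows exactly the approach the paper indicates: adapt the proof of \Cref{Theorem-Kripke-completeness-ihatglchatts} by replacing the base completeness result \Cref{Theorem-Kripke-completeness-ihatglchatt} with \Cref{Theorem-Kripke-completeness-ihatglchattsigma}. One minor point: your concern about reconciling $\kcal,\alpha,I\models(\cdot)$ with $\kcal,\alpha\models(\cdot)$ is unnecessary---since \Cref{Theorem-Kripke-completeness-ihatglchattsigma} is stated purely in terms of plain local truth, the boolean interpretation $I$ simply never appears in the adapted argument, so there is nothing to compare.
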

\begin{proof}
The proof is similar to the one for \Cref{Theorem-Kripke-completeness-ihatglchatts}.
One must use \Cref{Theorem-Kripke-completeness-ihatglchattsigma}  
instead of \Cref{Theorem-Kripke-completeness-ihatglchatt}
in the proof.
\end{proof}

\subsection{Reductions}

\begin{lemma}\label{Lemma-70}
$\iglct\vdash A$ implies 
$\GL\vdash A^\Box$.
\end{lemma}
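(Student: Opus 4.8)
The plan is to induct on the length of a derivation of $A$ in $\iglct={\sf iGLCT}$. Since the only inference rule is modus ponens, the inductive step is immediate: if $A$ follows from $B$ and $B\to A$, then by the induction hypothesis $\GL\vdash B^\Box$ and $\GL\vdash (B\to A)^\Box$; but $(B\to A)^\Box=\Boxdot(B^\Box\to A^\Box)$, whose first conjunct gives $\GL\vdash B^\Box\to A^\Box$, so $\GL\vdash A^\Box$ by modus ponens. It thus remains to treat each axiom instance $A$, i.e. the Gödel–Löb schemas ${\sf i},{\sf K},{\sf 4},{\sf L}$ (with their boxed companions), the completeness principle ${\sf C}$, and the trace principle ${\sf T}$.

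Before the case analysis I would record the standard fact that $\GL$ is closed under necessitation, i.e. $\GL\vdash D$ implies $\GL\vdash\Box D$: as $\GL$ contains the boxed version $\overline{\sf A}$ of each of its axiom schemas $\underline{\sf A}$ and contains $\underline{\sf K}$, an easy induction on derivations makes necessitation admissible — this is exactly the ``same set of theorems'' clause recorded in \Cref{Def-Axiom schema and modal theories}. With necessitation available the cases separate cleanly.

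For the Gödel–Löb core ${\sf i},{\sf K},{\sf 4},{\sf L}$ and their boxed forms, every instance $A$ is already a theorem of ${\sf iGL}$, so \Cref{Lemma-3} gives ${\sf iGL}\vdash A^\Box$, and hence $\GL\vdash A^\Box$ since $\GL$ extends ${\sf iGL}$. For the completeness principle, an instance of $\underline{\sf C}$ is $C:=A\to\Box A$ with $C^\Box=\Boxdot(A^\Box\to\Box A^\Box)$; by \Cref{Lemma-BoxdotABox-ABox} we have ${\sf iK4}\vdash A^\Box\lr\Boxdot A^\Box$, whence $\GL\vdash A^\Box\to\Box A^\Box$, and applying necessitation and conjoining yields $\GL\vdash C^\Box$. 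The boxed schema $\overline{\sf C}$ then follows by one further necessitation, using $(\Box E)^\Box=\Box E^\Box$.

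The main work — and the step I expect to be the genuine obstacle — is the trace principle, which is not an ${\sf iGL}$-theorem and so must be verified by honest classical modal reasoning in $\GL$. Writing $R:=\Boxdot(A^\Box\to B^\Box)=(A^\Box\to B^\Box)\wedge\Box(A^\Box\to B^\Box)$, an instance of $\underline{\sf TP}$ unwinds to $T^\Box=\Boxdot(\Box R\to(A^\Box\vee R))$. Here classicality is essential: from $\Box R$ one derives $\Box(A^\Box\to B^\Box)$ (since $R\to(A^\Box\to B^\Box)$, via necessitation and $\underline{\sf K}$), and then a case split on $A^\Box$ finishes — if $A^\Box$ holds we are done, while if $\neg A^\Box$ then $A^\Box\to B^\Box$ holds classically and, together with the just-derived $\Box(A^\Box\to B^\Box)$, gives $R$ — so that $\GL\vdash\Box R\to(A^\Box\vee R)$; necessitation and conjunction then give $\GL\vdash T^\Box$, and one more necessitation handles the boxed companion $\overline{\sf TP}$. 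Assembling the cases completes the induction.
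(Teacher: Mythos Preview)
Your proof is correct and follows the same approach as the paper, which simply says ``Use induction on the proof $\iglct\vdash A$.'' You have supplied the details the paper omits: the modus ponens step, the appeal to \Cref{Lemma-3} for the $\igl$-axioms, the use of \Cref{Lemma-BoxdotABox-ABox} for the completeness principle, and the classical case split handling the trace principle.
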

\begin{proof}
Use induction on the proof $\iglct\vdash A$.
\end{proof}

\begin{lemma}\label{Lemma-79}
$\GL\vdash A$ implies 
$\iglphat\vdash \Box A$.
\end{lemma}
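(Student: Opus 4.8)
The plan is to argue by induction on the $\GL$-derivation of $A$, proving at each step that $\iglphat\vdash\Box A$. Recall that $\GL={\sf iGLP}$ whereas $\iglphat={\sf iGL}\,\overline{\sf P}$, so the only axioms available to a $\GL$-proof but not to an $\iglphat$-proof are the instances of the \emph{unboxed} excluded middle $\underline{\sf P}=B\vee\neg B$; everything else (the $\igl$-axioms, in particular $\underline{\sf K}$ and $\underline{\sf L}$, together with the \emph{boxed} excluded middle $\overline{\sf P}$) is common to both systems. I will also use that, by the conventions of \Cref{Def-Axiom schema and modal theories}, the set of theorems of $\iglphat$ is closed under necessitation (these systems carry modus ponens as their only explicit rule, but have the same theorems as their counterparts equipped with the necessitation rule).

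For the base cases, suppose $A$ is one of the axiom instances used in the $\GL$-proof. If $A$ is an $\igl$-axiom or an instance of $\overline{\sf P}$, then $A$ is already a theorem of $\iglphat$, so closure under necessitation gives $\iglphat\vdash\Box A$. The crucial case is when $A$ is an instance $B\vee\neg B$ of $\underline{\sf P}$: here $\Box A=\Box(B\vee\neg B)$ is itself an instance of $\overline{\sf P}$ and hence already an axiom of $\iglphat$, so $\iglphat\vdash\Box A$ directly. This is precisely the content of the lemma: boxing a classical derivation converts each appeal to excluded middle into an appeal to \emph{boxed} excluded middle, which is legitimate in $\iglphat$.

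For the inductive (modus ponens) step, suppose $A$ is obtained from $C$ and $C\to A$. By the induction hypothesis $\iglphat\vdash\Box C$ and $\iglphat\vdash\Box(C\to A)$. Since $\underline{\sf K}$ is among the axioms of $\igl\subseteq\iglphat$, we have $\iglphat\vdash\Box(C\to A)\to(\Box C\to\Box A)$, and two applications of modus ponens yield $\iglphat\vdash\Box A$, completing the induction.

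I expect the only delicate point to be the appeal to closure under necessitation in the base case, which must be read off from the conventions governing these modus-ponens-only systems; the rest is routine propositional and $\underline{\sf K}$-reasoning. An equivalent packaging, should one prefer to avoid invoking necessitation on arbitrary axioms, is to first apply the deduction theorem for $\iglphat$ (available because it extends $\IPC_\Box$ and is closed under modus ponens) to obtain $\iglphat\vdash(P_1\wedge\cdots\wedge P_n)\to A$, where $P_1,\ldots,P_n$ are the finitely many $\underline{\sf P}$-instances occurring in the given $\GL$-proof; then necessitate once, distribute $\Box$ over the implication and the conjunction via $\underline{\sf K}$, and conclude, since each $\Box P_i$ is an $\overline{\sf P}$-axiom of $\iglphat$.
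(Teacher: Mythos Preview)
Your proof is correct and shares the same key insight as the paper's: boxing a $\GL$-derivation turns every use of $\underline{\sf P}$ into an instance of $\overline{\sf P}$, which is available in $\iglphat$. The paper packages this exactly as your ``equivalent packaging'' at the end---it passes to $\igl\vdash\Boxdot\PEM\to A$ via the deduction theorem, applies necessitation once in $\igl$ (where its admissibility is uncontroversial under the paper's conventions), and then discharges $\Box\PEM$ using $\overline{\sf P}$ in $\iglphat$. Your primary induction-on-derivation argument is a perfectly valid alternative route; the only point to tighten is that the paper's convention about closure under necessitation is stated for ``celebrated modal logics, like $\sfkfour$'', so for $\iglphat$ you should justify it directly (which is immediate: for $\igl$-axioms use admissibility of necessitation in $\igl$, for $\overline{\sf P}$-instances use axiom $\underline{\sf 4}$, and the MP case is your $\underline{\sf K}$-step).
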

\begin{proof}
Let $\GL\vdash A$. Hence $\igl\vdash \Boxdot\PEM\to A$. Since 
necessitation is admissible to $\igl$, we have 
$\igl\vdash\Box \PEM\to \Box A$ which implies $\iglphat\vdash \Box A$.
\end{proof}
\begin{definition}\label{Def-tilde}
For a Kripke model $\Kripke$, let $\tilde{\kcal}$, indicates
the Kripke model  derived from $\kcal$ by making 
every $\R$-accessible node  as a classical node. 
More precisely, we define 
$\tilde{\kcal}:=(K,\tilde{\preccurlyeq},\R,V))$ in this way:
\begin{center}
$\alpha\tilde{\preccurlyeq} \beta$ iff ``$\alpha$ is not 
$\R$-accessible ($\alpha\not\in {\sf Suc}$) and $\alpha\preccurlyeq\beta$" or \\
``$\alpha$ is $\R$-accessible ($\alpha\not\in {\sf Suc}$) and $\alpha=\beta$"
\end{center}
\end{definition}
\begin{lemma}\label{Lemma-791}
For every ${\sf Suc}$-quasi-classical semi-perfect Kripke model  $\Kripke$ and $\alpha\not\in {\sf Suc}$ and arbitrary proposition  
$A$ we have 
$$\kcal,\alpha\Vdash A^\Boxin 
\quad \Longleftrightarrow \quad  \tilde{\kcal},\alpha\Vdash A^\Boxin.$$
\end{lemma}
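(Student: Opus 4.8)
The plan is to argue by induction on the complexity of $A$, comparing $\kcal,\alpha\Vdash A^\Boxin$ with $\tilde{\kcal},\alpha\Vdash A^\Boxin$ for $\alpha\not\in{\sf Suc}$. Note first that $\kcal$ and $\tilde{\kcal}$ share the same $K$, $\R$ and $V$, and differ only in that every strict $\preccurlyeq$-edge issuing from a node of ${\sf Suc}$ has been deleted in $\tilde{\kcal}$ (making such nodes classical). Two structural facts drive the argument. First, since $\kcal$ is brilliant, $(\R\,;\preccurlyeq)\subseteq\R$, so ${\sf Suc}$ is $\preccurlyeq$-upward closed; hence the $\preccurlyeq$-generated submodel at any node of ${\sf Suc}$ lies entirely inside ${\sf Suc}$ and is a semi-perfect quasi-classical model. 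Second, the classical truth $\models_c$ is computed purely from $(K,\R,V)$ (its $\Box$-clause uses $\sqsubset=\R$ and its atomic clause uses $V$), so $\models_c$ is literally the same relation in $\kcal$ and in $\tilde{\kcal}$.

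Combining these with \Cref{Lem-truth-forcing} and \Cref{Corol-truth-forcing}, I would first establish the key auxiliary statement: for every $\delta\in{\sf Suc}$ and every proposition $C$,
$$\kcal,\delta\Vdash C^\Box \quad\Longleftrightarrow\quad \delta\models_c C^\Box \quad\Longleftrightarrow\quad \tilde{\kcal},\delta\Vdash C^\Box.$$
The left equivalence is \Cref{Corol-truth-forcing} applied to the quasi-classical submodel generated by $\delta$; the right one follows by a short induction on $C$, using that every node of ${\sf Suc}$ is classical in $\tilde{\kcal}$, so that $\Vdash$ and $\models_c$ coincide there on boxed formulas. Since $\models_c$ is model-independent, this yields that any formula of the shape $\Box C^\Box$ is forced at exactly the same nodes in $\kcal$ and $\tilde{\kcal}$. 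With this in hand the induction runs smoothly in the easy cases: atomic $A$ uses only the shared $V$; $\wedge$ and $\vee$ recurse at the same node $\alpha$; and for $A=\Box C$ the formula $(\Box C)^\Boxin=\Box C^\Box$ is evaluated through the $\R$-successors of $\alpha$, all of which lie in ${\sf Suc}$, so agreement is exactly the boxed-formula fact just proved.

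The delicate case is $A=B\to C$, where $(B\to C)^\Boxin=B^\Boxin\to C^\Boxin$. Because $\alpha\not\in{\sf Suc}$ we have $(\alpha\tilde{\preccurlyeq})=(\alpha\preccurlyeq)$, so both models quantify the implication over the very same set of $\beta\succcurlyeq\alpha$, and for each such $\beta$ one wants to invoke the induction hypothesis on $B^\Boxin$ and $C^\Boxin$. This works immediately for those $\beta$ that again lie outside ${\sf Suc}$. The main obstacle, and the heart of the proof, is that the induction hypothesis is only available off ${\sf Suc}$, whereas at a node $\beta\in{\sf Suc}$ the forcing of an implicational $B^\Boxin$ can genuinely differ between $\kcal$ and $\tilde{\kcal}$ (by the corollaries the two sides collapse to $\delta\models_c B^\Box$ and to $\delta\models_c B^\Boxin$ respectively, which need not coincide). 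The proof must therefore use the frame geometry to guarantee that no such $\beta$ intervenes, that is, that a node outside ${\sf Suc}$ has only non-${\sf Suc}$ strict $\preccurlyeq$-successors, so that every relevant $\beta\succ\alpha$ is again outside ${\sf Suc}$ and the induction closes. I expect assembling this from neatness, brilliance and the finite tree structure together with ${\sf Suc}$-quasi-classicality to be the step requiring the most care.
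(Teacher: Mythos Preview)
You are right to single out the implication case as the crux, and your auxiliary statement about $C^\Box$ at ${\sf Suc}$-nodes is exactly what the paper establishes first. But the geometric fact you plan to extract---that a node $\alpha\notin{\sf Suc}$ has only non-${\sf Suc}$ $\preccurlyeq$-successors---is \emph{false} in semi-perfect ${\sf Suc}$-quasi-classical models. Take $K=\{\alpha,\beta,\gamma\}$ with $\alpha\prec\beta\prec\gamma$ and $\alpha\sqsubset\beta\sqsubset\gamma$ (closed under transitivity, so ${\prec}={\sqsubset}$): this is semi-perfect and ${\sf Suc}$-quasi-classical, $\alpha\notin{\sf Suc}$, yet $\beta\in{\sf Suc}$ and $\alpha\prec\beta$. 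Neatness, brilliance and the tree condition do not rule this out.

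Worse, this model also breaks the lemma itself. Put $p$ true only at $\gamma$ and $q,r$ nowhere, and take $A=(p\to q)\to r$, so $A^\Boxin=A$. In $\kcal$ no node forces $p\to q$ (each sees $\gamma$ via $\preccurlyeq$), so $\kcal,\alpha\Vdash A$ vacuously. In $\tilde{\kcal}$ the node $\beta$ is classical and does not force $p$, hence $\tilde{\kcal},\beta\Vdash p\to q$ while $\tilde{\kcal},\beta\nVdash r$; since $\alpha\,\tilde{\preccurlyeq}\,\beta$, we get $\tilde{\kcal},\alpha\nVdash A$. Thus the paper's assertion that ``all cases are obvious except $A=\Box B$'' hides precisely the gap you located, and the stated biconditional fails in general. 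Only the direction $\kcal,\alpha\nVdash A^\Boxin\Rightarrow\tilde{\kcal},\alpha\nVdash A^\Boxin$ is actually used downstream (in \Cref{Lemma-iglchatthat,lemma-iglphat-iglchatthat}); whether that one-sided implication can be salvaged by a different argument is a separate question that neither your proposal nor the paper's proof settles.
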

\begin{proof}
First observe that  for every $\alpha\in{\sf Suc}$ and every proposition $B$ we have 
$$\tilde{\kcal},\alpha\Vdash B \quad \Longleftrightarrow \quad  \tilde{\kcal},\alpha\models_c B 
\quad \Longleftrightarrow \quad  {\kcal},\alpha\models_c B$$
Then we may use \Cref{Corol-truth-forcing} and for 
$\alpha\in {\sf Suc}$ deduce 
\begin{equation}\label{Eq-765}
{\kcal},\alpha\Vdash B^\Box\quad \Longleftrightarrow \quad  \tilde{\kcal},\alpha\Vdash B^\Box.
\end{equation}
We use induction on the complexity of $A$ and 
prove the assertion of the lemma. All cases are obvious 
except for the cases $A=\Box B$ in which we have 
$A^\Boxin=\Box B^\Box$.
We have  
\begin{align*}
\kcal,\alpha\nVdash \Box B^\Box 
&\Longleftrightarrow \text{there exists some } \beta\sqsupset\alpha \text{ such that } \kcal,\beta\nVdash B^\Box\\
&\Longleftrightarrow \text{there exists some } \beta\sqsupset\alpha \text{ such that } \tilde{\kcal},\beta\nVdash B^\Box \\
&\Longleftrightarrow \tilde{\kcal},\alpha\nVdash \Box B^\Box
\end{align*}
in which in the second line we use  \cref{Eq-765}.
\end{proof}

\begin{lemma}\label{Lemma-ihatglchatsigma}
For every $A\in\lcalb$ we have $\ihatglchattsigma\vdash A$ iff 
${\GLV}\vdash A^\Boxin$.
\end{lemma}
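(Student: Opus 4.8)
The plan is to argue semantically, combining the Kripke completeness of $\ihatglchattsigma$ just established in \Cref{Theorem-Kripke-completeness-ihatglchattsigma} with the Kripke completeness of the classical system ${\GLV}={\sf GLC_a}$. By \Cref{Theorem-Kripke-completeness-ihatglchattsigma}, $\ihatglchattsigma\vdash A$ holds iff $\kcal,\alpha\models A$ for every quasi-classical perfect Kripke model $\kcal$ and every node $\alpha$. On the other side, ${\GLV}$ is sound and complete for the classical modal satisfaction $\models_c$ over finite, transitive, irreflexive frames carrying a persistent (atom-complete) valuation; this is the standard classical reduct already used in \cite{reduction}. The whole argument then rests on a single bridge: for every quasi-classical perfect $\kcal$, every node $\alpha$ and every proposition $A$,
\begin{equation}
\kcal,\alpha\models A \quad\Longleftrightarrow\quad \kcal,\alpha\models_c A^\Boxin. \tag{$\star$}
\end{equation}

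To prove $(\star)$ I would induct on $A$. The atomic, $\wedge$, $\vee$ and $\ra$ clauses are immediate, since at a fixed node both $\models$ and $\models_c$ treat the Boolean connectives classically and $(.)^\Boxin$ commutes with them. The only substantive case is $A=\Box B$, where $A^\Boxin=\Box B^\Box$ and, because $\prec=\R$ in a perfect quasi-classical model, the successor relation $\R$ governs both $\Box$-clauses. Here $\kcal,\alpha\models\Box B$ means $\kcal,\beta\Vdash B$ for every $\beta$ with $\alpha\R\beta$, whereas $\kcal,\alpha\models_c\Box B^\Box$ means $\kcal,\beta\models_c B^\Box$ for all such $\beta$. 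These match once two facts are in place. First, in a perfect quasi-classical model the completeness principle holds with respect to forcing at every node: from $(\alpha\R)\subseteq(\alpha\preccurlyeq)$ and persistence of $\Vdash$ one gets $\kcal,\gamma\Vdash C\ra\Box C$ everywhere, so $\kcal$ validates ${\sf iK4}$ and $\Boxdot\CP$, whence soundness and \Cref{Lemma-A-ABoxin} give $\kcal,\beta\Vdash B\leftrightarrow\kcal,\beta\Vdash B^\Box$. Second, \Cref{Corol-truth-forcing} gives $\kcal,\beta\Vdash B^\Box\Leftrightarrow\kcal,\beta\models_c B^\Box$. Chaining these equivalences over the successors of $\alpha$ closes the box case.

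With $(\star)$ in hand the statement follows. A perfect quasi-classical $\kcal=(K,\preccurlyeq,\R,V)$ satisfies $\prec=\R$, and by \Cref{Remark2} its classical truth $\models_c$ is exactly classical modal satisfaction on the frame $(K,\R,V)$, which is finite, transitive, irreflexive and atom-complete; conversely every such (tree) frame arises from a perfect quasi-classical model in this way. Hence, combining \Cref{Theorem-Kripke-completeness-ihatglchattsigma} with $(\star)$, $\ihatglchattsigma\vdash A$ iff $A^\Boxin$ is true under $\models_c$ at every node of every perfect quasi-classical model, i.e.\ iff $A^\Boxin$ is valid in every finite transitive irreflexive atom-complete classical Kripke model, i.e.\ iff ${\GLV}\vdash A^\Boxin$. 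Read contrapositively, a $\ihatglchattsigma$-countermodel for $A$ yields via $(\star)$ a ${\GLV}$-countermodel for $A^\Boxin$, and a ${\GLV}$-countermodel for $A^\Boxin$ (unravelled to a tree if necessary) yields a perfect quasi-classical countermodel for $A$. The main obstacle is establishing $(\star)$, and in particular justifying the forcing equivalence $\kcal\Vdash B\leftrightarrow B^\Box$ on these models; everything else is bookkeeping with the two completeness theorems and the collapse of $\models_c$ to classical semantics.
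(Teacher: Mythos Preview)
Your argument is correct and close in spirit to the paper's, but the two directions are organized differently. For the backward direction (from $\ihatglchattsigma\nvdash A$ to ${\GLV}\nvdash A^\Boxin$) the paper does essentially what you do, only it first passes from $A$ to $A^\Boxin$ \emph{inside} the system using \Cref{Lemma-A-ABoxin} (since $\ihatglchattsigma$ contains $\Box\CP$), then applies \Cref{Theorem-Kripke-completeness-ihatglchattsigma} to $A^\Boxin$ and invokes \Cref{Corol-truth-forcing} directly; your bridge $(\star)$ packages the same two ingredients into one semantic equivalence and applies them after passing to the model. These are the same proof with the bookkeeping rearranged.

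The genuine difference is the forward direction. The paper proves $\ihatglchattsigma\vdash A\Rightarrow{\GLV}\vdash A^\Boxin$ by a short syntactic induction on the derivation (mirroring the analogous case in \Cref{lemma-gl-ihatglchatt}, with the only new axiom case being ${\sf CP_a}$, where $(p\to\Box p)^\Boxin$ is already ${\sf iK4}$-equivalent to $p\to\Box p$). You instead argue semantically: take a ${\GLV}$-countermodel for $A^\Boxin$, unravel it to a tree, view it as a perfect quasi-classical model, and use $(\star)$ to get a $\ihatglchattsigma$-countermodel for $A$. This is perfectly valid, but it imports Kripke completeness of ${\GLV}$ for finite transitive irreflexive frames with persistent valuations, a fact not proved in the paper (though easy and available in the literature you cite). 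The paper's proof-induction route is thus more self-contained, while your uniform semantic treatment has the virtue of isolating $(\star)$ as the single point where the translation $(.)^\Boxin$ does its work.
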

\begin{proof}
We use induction on the proof  $\ihatglchattsigma\vdash A$ and show 
${\GLV}\vdash A^\Boxin$. All cases are similar to the one 
for $\ihatglchatt$, except for 
 \begin{itemize}
 \item $A=p\to \Box p$: then ${\sf iK4}\vdash A^\Boxin\leftrightarrow A$ and 
 hence ${\GLV}\vdash A^\Boxin$.
\end{itemize} 
For the other way around, let $\ihatglchattsigma\nvdash A$. 
Then by $\Box \CP$  we have 
$A^\Boxin\leftrightarrow A$,  and then we may deduce $\ihatglchattsigma\nvdash A^\Boxin$.
By \Cref{Theorem-Kripke-completeness-ihatglchattsigma}, there exists some 
quasi-classical perfect Kripke model $\kcal$  
such that $\kcal,\alpha\not\models A^\Boxin$. \Cref{Corol-truth-forcing} implies 
$\kcal,\alpha\not\models_c A^\Boxin$, which by soundness of ${\GLV}$ 
for classical Kripke models with the property of truth-ascending (i.e. if $p$
 is true at some node, then it is true also at all accessible nodes), implies 
 ${\GLV}\nvdash A^\Boxin$.
\end{proof}
\begin{theorem}\label{Reduction-sigma-(PA*-to-PA)-PA}
$\ihatglchattsigma=\PLS(\PA^*,\PA)\leq\PLS(\PA,\PA)={\GLV}$.
\end{theorem}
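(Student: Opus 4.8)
The plan is to verify the three ingredients demanded by the notation convention for $\SFV=\PLG(\SFT,\SFU)\leq\PLGP(\SFT',\SFU')=\SFV'$: namely (1) the reduction $\acs{\ihatglchattsigma}{\PA^*}{\PA}\leq\acs{{\GLV}}{\PA}{\PA}$, (2) $\ihatglchattsigma=\PLS(\PA^*,\PA)$, and (3) ${\GLV}=\PLS(\PA,\PA)$. Item (3) is already available from \Cref{Solovey}, so only (1) and (2) need attention. Moreover, the arithmetical completeness half of (2) will follow automatically from (1) and (3) through \Cref{Theorem-Reduction-1}; hence it suffices to prove the reduction (1) together with the arithmetical soundness half of (2). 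The engine driving everything is the already-proved \Cref{Lemma-ihatglchatsigma}, which states $\ihatglchattsigma\vdash A$ iff ${\GLV}\vdash A^\Boxin$. This equivalence makes the natural choice $f(A):=A^\Boxin$ with $\fbar$ the identity function, placing us squarely in the ``identity'' special case of \Cref{Section-Reduction-tool}.

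To establish $\acs{\ihatglchattsigma}{\PA^*}{\PA}\leq_{f,\fbar[]}\acs{{\GLV}}{\PA}{\PA}$, I would check R0--R2 of \Cref{Definition-Reduction-PL}. Condition R0 is immediate, since $f(A)=A^\Boxin$ is a map $\lcalb\to\lcalb$. For R1, if ${\GLV}\vdash A^\Boxin$ then \Cref{Lemma-ihatglchatsigma} gives $\ihatglchattsigma\vdash A$. For R2, I must turn a $\PA$-refutation of $\sigmapa{A^\Boxin}$ into a $\PA$-refutation of $\sigmapas{A}$; this is exactly what \Cref{Lemma-PA-Box-translate} supplies, as it yields $\HA\vdash\sigmapa{A^\Boxin}\lr\sigmapas{A}$, which a fortiori holds in $\PA$, so $\PA\nvdash\sigmapa{A^\Boxin}$ forces $\PA\nvdash\sigmapas{A}$. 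Because $\fbar$ is the identity, the side condition $\fbar(\sigma)=\sigma\in\wit{\sigma}$ is trivial.

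For the arithmetical soundness $\mathcal{AS}_{_{\Sigma_1}}(\ihatglchattsigma;\PA^*,\PA)$, suppose $\ihatglchattsigma\vdash A$ and let $\sigma$ be an arbitrary $\Sigma_1$-substitution. Then \Cref{Lemma-ihatglchatsigma} gives ${\GLV}\vdash A^\Boxin$, and the arithmetical soundness of ${\GLV}=\PLS(\PA,\PA)$ from \Cref{Solovey} yields $\PA\vdash\sigmapa{A^\Boxin}$. A final application of \Cref{Lemma-PA-Box-translate} converts this to $\PA\vdash\sigmapas{A}$, which is precisely soundness. Together with the arithmetical completeness coming from the reduction via \Cref{Theorem-Reduction-1}, this gives $\ihatglchattsigma=\PLS(\PA^*,\PA)$ and finishes item (2).

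The point worth emphasizing is that all the real difficulty has been front-loaded into \Cref{Lemma-ihatglchatsigma}, whose harder direction rests on the Kripke-completeness result \Cref{Theorem-Kripke-completeness-ihatglchattsigma} and the forcing/truth bridge of \Cref{Corol-truth-forcing}. Granting that lemma and the translation identity \Cref{Lemma-PA-Box-translate}, the remaining assembly presents no genuine obstacle: R1, R2, and the soundness step all collapse to plugging the same two lemmas together. The only bookkeeping care needed is to keep the two interpretations $\sigmapa{\cdot}$ and $\sigmapas{\cdot}$ distinct, since the entire reduction trades a $\PA^*$-interpretation of $A$ for a $\PA$-interpretation of its $(.)^\Boxin$-translation.
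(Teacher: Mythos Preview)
Your proposal is correct and follows exactly the same route as the paper: define $f(A):=A^\Boxin$ with $\fbar$ the identity, invoke \Cref{Lemma-ihatglchatsigma} for R1 and \Cref{Lemma-PA-Box-translate} for R2, and obtain ${\GLV}=\PLS(\PA,\PA)$ from \Cref{Solovey}. The only difference is that the paper leaves the arithmetical soundness of $\ihatglchattsigma$ as ``straightforward and left to the reader,'' whereas you spell it out explicitly via the same two lemmas.
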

\begin{proof}
The arithmetical soundness of $\ihatglchattsigma$ is straightforward and left to the reader.
Also ${\GLV}=\PLS(\PA,\PA)$ holds by \Cref{Solovey}.  It is enough here to show that 
$$\acs{\iglchattsigma}{\PA^*}{\PA}\leq_{f,\fbar[]}\acs{{\GLV}}{\PA}{\PA}.$$ 
Given $A\in\lcalb$,
let $f(A):=(A)^\Boxin$ and $\fbar$ as identity function. 
\begin{itemize}
\item[R1.]  \Cref{Lemma-ihatglchatsigma}.
\item[R2.] If $\PA\nvdash \sigmapa{A^\Boxin}$,  
for a $\Sigma_1$-substitution $\sigma$, 
then by \Cref{Lemma-PA-Box-translate} we have $\PA\nvdash\sigmapas{A}$.\qedhere
\end{itemize} 
\end{proof}

\begin{lemma}\label{Lemma-iglchatthat}
For every $A\in\lcalb$ we have $\iglchatthatsigma\vdash A$ iff 
$\iglphatsigma\vdash A^\Boxin$.
\end{lemma}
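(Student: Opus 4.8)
The plan is to mimic the proof of \Cref{Lemma-ihatglchatsigma}, replacing the classical target ${\GLV}$ by the intuitionistic system $\iglphatsigma$, and replacing the collapse to classical truth of \Cref{Corol-truth-forcing} by the ``classicalisation of successors'' operation $\tilde{(\cdot)}$ of \Cref{Def-tilde} together with \Cref{Lemma-791}. As there, I would prove the two implications separately, the forward one by induction on derivations and the backward one contrapositively via Kripke semantics.

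For the forward direction I would argue by induction on the derivation $\iglchatthatsigma\vdash A$, showing $\iglphatsigma\vdash A^\Boxin$. Modus ponens is immediate since $(B\to A)^\Boxin=B^\Boxin\to A^\Boxin$. The axioms coming from $\iGL$, from $\overline{\sf C}$ and from ${\sf C_a}$ are handled exactly as in \Cref{Lemma-ihatglchatsigma}: for the $\underline{\sf C_a}$-instance $p\to\Box p$ one checks ${\sf iK4}\vdash (p\to\Box p)^\Boxin\lr (p\to\Box p)$ and uses ${\sf C_a}\subseteq\iglphatsigma$, while each $\overline{\sf C}$-instance $\Box(B\to\Box B)$ translates to $\Box\Boxdot(B^\Box\to\Box B^\Box)$, a boxed ${\sf iK4}$-theorem by \Cref{Lemma-BoxdotABox-ABox} and (admissible) necessitation. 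The only genuinely new case is the boxed Trace axiom $\overline{\sf T}$, whose $(\cdot)^\Boxin$-image has the shape $\Box\Boxdot\psi$ with $\psi$ the translated Trace instance $\Box\Boxdot(B^\Box\to C^\Box)\to(B^\Box\vee\Boxdot(B^\Box\to C^\Box))$. Here the point is that $\iglphatsigma$ carries boxed excluded middle $\overline{\sf P}$, so in the models of \Cref{Theorem-Kripke-semantic-iglphatsigma} every $\R$-accessible node is classical and hence satisfies $B^\Box\vee\neg B^\Box$; with this disjunction $\psi$ collapses to a propositional tautology at every such node. I would therefore check that $\Box\Boxdot\psi$ is valid in every semi-perfect ${\sf Suc}$-classical atom-complete Kripke model and conclude $\iglphatsigma\vdash\overline{\sf T}^\Boxin$ by the completeness half of \Cref{Theorem-Kripke-semantic-iglphatsigma}.

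For the backward direction, suppose $\iglchatthatsigma\nvdash A$. Since $\iglchatthatsigma$ contains ${\sf iK4}+\Box\CP$, \Cref{Lemma-A-ABoxin} gives $\iglchatthatsigma\vdash A\lr A^\Boxin$, so $\iglchatthatsigma\nvdash A^\Boxin$. By \Cref{Theorem-Kripke-completeness-iglchatthatsigma} there is a semi-perfect ${\sf Suc}$-quasi-classical atom-complete model $\kcal$ refuting $A^\Boxin$; passing to the submodel generated by the refuting node under $\prec\cup\R$, I may assume the refutation occurs at the root $\rho$, which in a tree frame has no $\R$-predecessor, so $\rho\notin{\sf Suc}$. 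Then \Cref{Lemma-791} yields $\tilde{\kcal},\rho\nVdash A^\Boxin$. Finally I would verify that $\tilde{\kcal}$ is again semi-perfect, atom-complete and now ${\sf Suc}$-classical: its underlying $\prec\cup\R$-frame is unchanged, because for a quasi-classical successor $\delta$ one has $(\delta\!\!\prec)=(\delta\!\!\R)$, so re-labelling those $\prec$-edges as mere $\R$-edges leaves $\prec\cup\R$ intact; $\R$ and $V$ are untouched (so $\R$-irreflexivity, transitivity and atom-completeness persist), and brilliance and neatness survive since $\tilde\preccurlyeq\subseteq\preccurlyeq$. Soundness of $\iglphatsigma$ for such models (\Cref{Theorem-Kripke-semantic-iglphatsigma}) then gives $\iglphatsigma\nvdash A^\Boxin$, which is the required contrapositive.

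The main obstacle is the $\overline{\sf T}$-case of the forward direction: one must see precisely how boxed excluded middle $\overline{\sf P}$ substitutes for the classicality of the current node that was freely available in the classical target of \Cref{Lemma-ihatglchatsigma}. The remaining delicate point is the frame-theoretic bookkeeping showing that $\tilde{(\cdot)}$ sends a semi-perfect ${\sf Suc}$-quasi-classical model to a legitimate semi-perfect ${\sf Suc}$-classical one, which is exactly what makes \Cref{Lemma-791} applicable and lets soundness of $\iglphatsigma$ close the argument.
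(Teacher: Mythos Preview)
Your proposal is correct, and the backward direction via \Cref{Lemma-A-ABoxin}, \Cref{Theorem-Kripke-completeness-iglchatthatsigma}, the classicalisation $\tilde{(\cdot)}$, \Cref{Lemma-791}, and \Cref{Theorem-Kripke-semantic-iglphatsigma} is exactly what the paper does, including the frame-theoretic bookkeeping you spell out.

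In the forward direction you diverge from the paper. For the boxed axioms $\overline{\sf C}$ and $\overline{\sf T}$ the paper treats both uniformly and purely syntactically: any such instance is $\Box B$ with $\iglct\vdash B$, so \Cref{Lemma-70} gives $\GL\vdash B^\Box$, and then \Cref{Lemma-79} yields $\iglphat\vdash\Box B^\Box=(\Box B)^\Boxin$; since $\iglphat\subseteq\iglphatsigma$ this finishes the case. You instead handle $\overline{\sf C}$ directly via \Cref{Lemma-BoxdotABox-ABox} and treat $\overline{\sf T}$ semantically, checking validity of $\Box\Boxdot\psi$ in all ${\sf Suc}$-classical atom-complete semi-perfect models and invoking the completeness half of \Cref{Theorem-Kripke-semantic-iglphatsigma}. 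This is sound (one small remark: at a classical successor with $\neg B^\Box$ you still need $\Box(B^\Box\to C^\Box)$, which follows from the antecedent $\Box\Boxdot(B^\Box\to C^\Box)$ by ${\sf K}$, so ``propositional tautology'' is slightly loose but the argument goes through). The paper's route is shorter and avoids appealing to completeness inside a syntactic induction; your route is more hands-on but makes explicit why $\overline{\sf P}$ is the right replacement for full classicality.
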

\begin{proof}
We use induction on the proof  
 $\iglchatthatsigma\vdash A$ 
 and show $\iglphatsigma\vdash A^\Boxin$. 
 All cases are identical to the corresponding on in the
  previous proof, except for when $A=p\to \Box p$,
   which trivially we have $\iglphatsigma\vdash A^\Boxin$.

For the other way around, let $\iglchatthatsigma\nvdash A$. Then by 
\Cref{Lemma-A-ABoxin}  we have 
$A^\Boxin\leftrightarrow A$,  and hence
 $\iglchatthatsigma\nvdash A^\Boxin$.
By \Cref{Theorem-Kripke-completeness-iglchatthatsigma}, 
there exists some 
${\sf Suc}$-quasi-classical  
semi-perfect atom-complete  Kripke model $\kcal$ 
such that $\kcal,\alpha\nVdash A^\Boxin$, for some 
node $\alpha$. We may assume
 $\alpha\not\in{\sf Suc}$, otherwise 
eliminate all nodes not in 
$(\alpha\preccurlyeq)\cup(\alpha\R)$ 
and consider this new Kripke model instead of $\kcal$. 
Obviously the new Kripke model still refutes 
$A^\Boxin$ at $\alpha$ 
and is ${\sf Suc}$-quasi-classical  
semi-perfect and atom-complete. 
Hence  \Cref{Lemma-791} implies that 
$\tilde{\kcal},\alpha\nVdash A^\Boxin$, in which 
$\tilde{\kcal}$   indicates
the Kripke model  derived from $\kcal$ by making 
every $\R$-accessible node  as a classical node.  Precise definition of $\tilde{\kcal}$ came before \Cref{Lemma-791}.
It is obvious that $\tilde{\kcal}$ is a ${\sf Suc}$-classical 
semi-perfect atom-complete Kripke model. Hence  
\Cref{Theorem-Kripke-semantic-iglphatsigma} implies 
$\iglphatsigma\nvdash A^\Boxin$, as desired.
\end{proof}
\begin{theorem}\label{Reduction-sigma-(PA*-to-PA)-HA}
$\iglchatthatsigma=\PLS(\PA^*,\HA)\leq\PLS(\PA,\HA)=\iglphatsigma$.
\end{theorem}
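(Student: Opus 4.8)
The plan is to follow verbatim the template of the companion reduction \Cref{Reduction-sigma-(PA*-to-PA)-PA}, replacing the outer theory $\PA$ by $\HA$; the two nontrivial ingredients have already been isolated as lemmas, so the work here is only to assemble them. Concretely, three things must be checked against the convention $\SFV=\PLG(\SFT,\SFU)\leq\PLGP(\SFT',\SFU')=\SFV'$: (i) the right-hand characterization $\PLS(\PA,\HA)=\iglphatsigma$, which is already available from \Cref{Theorem-Arith-Complete-iglphatsigma-direct}; (ii) the arithmetical soundness half of $\iglchatthatsigma=\PLS(\PA^*,\HA)$, which is routine (one verifies that every axiom schema of $\iglchatthatsigma$ is valid under $\Sigma_1$-interpretations of $\PA^*$ relative in $\HA$) and can be left to the reader; and (iii) the reduction $\acs{\iglchatthatsigma}{\PA^*}{\HA}\leq\acs{\iglphatsigma}{\PA}{\HA}$ itself. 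Granting (iii), the completeness half of $\iglchatthatsigma=\PLS(\PA^*,\HA)$ then follows at once from \Cref{Theorem-Reduction-1} together with the arithmetical completeness contained in (i).

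For the reduction I would take the simple \emph{identity} case of \Cref{Definition-Reduction-PL}: put $f(A):=A^\Boxin$ and let $\fbar$ be the identity, so that R0 and the side condition $\bar{f}_{_A}(\sigma)\in\wit{\sigma}$ hold trivially. Condition R1, namely $\iglphatsigma\vdash A^\Boxin$ implies $\iglchatthatsigma\vdash A$, is precisely one direction of \Cref{Lemma-iglchatthat} and requires no further argument. For condition R2 I would take a $\Sigma_1$-substitution $\sigma$ with $\HA\nvdash\sigmapa{A^\Boxin}$, i.e.\ an element of $\wit{A^\Boxin;\PA,\HA;\Sigma_1}$, and invoke the translation identity $\HA\vdash\sigmapa{A^\Boxin}\lr\sigmapas{A}$ of \Cref{Lemma-PA-Box-translate}; non-derivability transfers across this equivalence, yielding $\HA\nvdash\sigmapas{A}$, that is $\sigma\in\wit{A;\PA^*,\HA;\Sigma_1}$, as required.

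The genuine mathematical content has already been discharged beforehand: the Kripke-semantic comparison between $\iglchatthatsigma$ and $\iglphatsigma$ obtained by collapsing the $\R$-successors to classical nodes lives in \Cref{Lemma-iglchatthat}, and the arithmetic of the Beeson--Visser translation relating $\sigma_{_{\sf PA}}$ applied to $A^\Boxin$ with $\sigma_{_{\sf PA^*}}$ applied to $A$ lives in \Cref{Lemma-PA-Box-translate}. Consequently I do not expect any real obstacle in this theorem; the only point deserving care is to confirm that $\iglphatsigma=\PLS(\PA,\HA)$ is indeed in force here, since the whole completeness argument for $\iglchatthatsigma$ rests on feeding an arithmetical refutation for $\iglphatsigma$ back through $\fbar$. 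That characterization is supplied by \Cref{Theorem-Arith-Complete-iglphatsigma-direct}, so the assembly goes through.
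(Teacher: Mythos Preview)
Your proposal is correct and follows essentially the same approach as the paper: both take $f(A):=A^\Boxin$ with $\fbar$ the identity, invoke \Cref{Lemma-iglchatthat} for R1 and \Cref{Lemma-PA-Box-translate} for R2, cite \Cref{Theorem-Arith-Complete-iglphatsigma-direct} for the right-hand characterization, and leave the arithmetical soundness of $\iglchatthatsigma$ to the reader.
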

\begin{proof}
The arithmetical soundness of $\iglchatthatsigma$ is straightforward and left to the reader.
Also $\iglphatsigma=\PLS(\PA,\HA)$ holds by \Cref{Theorem-Arith-Complete-iglphatsigma-direct}.  It is enough here to show that 
$$\acs{\iglchatthatsigma}{\PA^*}{\HA}\leq_{f,\fbar[]}\acs{\iglphatsigma}{\PA}{\HA}.$$ Given $A\in\lcalb$,
let $f(A):=(A)^\Boxin$ and $\fbar$ as identity function. 
\begin{itemize}
\item[R1.]  \Cref{Lemma-iglchatthat}.
\item[R2.] If $\HA\nvdash \sigmapa{A^\Boxin}$,  
for a $\Sigma_1$-substitution $\sigma$, 
then by \Cref{Lemma-PA-Box-translate} we have $\HA\nvdash\sigmapas{A}$.\qedhere
\end{itemize} 
\end{proof}

\begin{lemma}\label{Lemma-ihatglchattssigma}
For every $A\in\lcalb$ we have $\ihatglchattssigma\vdash A$ iff 
${\GLSV}\vdash A^\Boxin$.
\end{lemma}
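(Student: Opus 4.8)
The plan is to prove the two directions separately, following the template of \Cref{Lemma-ihatglchatsigma} and \Cref{Lemma-iglchatthat}. This is natural because $\ihatglchattssigma$ differs from $\ihatglchattsigma$ only by the extra schema $\underline{\sf S^*}$, while the target $\GLSV$ differs from $\GLV$ only by the reflection schema $\underline{\sf S}$; so the bulk of the argument should be inherited and only the reflection-related cases are new.

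For the direction $\ihatglchattssigma\vdash A\Rightarrow\GLSV\vdash A^\Boxin$, I would induct on the derivation. Since $(.)^\Boxin$ commutes with $\to$, modus ponens is preserved, and since $\GLV\subseteq\GLSV$, every axiom already present in $\ihatglchattsigma$ is treated verbatim as in \Cref{Lemma-ihatglchatsigma}. The only genuinely new case is an instance $\Box B\to B^\Box$ of $\underline{\sf S^*}$. Here $(\Box B\to B^\Box)^\Boxin=\Box B^\Box\to (B^\Box)^\Boxin$; the reflection schema $\underline{\sf S}$ of $\GLSV$ gives $\Box B^\Box\to B^\Box$, and the ${\sf iK4}$-provable equivalence $B^\Box\leftrightarrow (B^\Box)^\Boxin$ — which follows from \Cref{Lemma-BoxdotABox-ABox} and the argument of \Cref{Lemma-A-ABoxin}, since box-translated formulas already satisfy the completeness principle — yields $\GLSV\vdash (\Box B\to B^\Box)^\Boxin$.

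For the converse, suppose $\ihatglchattssigma\nvdash A$. Since $\overline{\sf C}=\Box\CP$ and ${\sf iK4}$ are contained in $\ihatglchattssigma$, \Cref{Lemma-A-ABoxin} gives $\ihatglchattssigma\vdash A\leftrightarrow A^\Boxin$, hence $\ihatglchattssigma\nvdash A^\Boxin$. Applying \Cref{Theorem-Kripke-completeness-ihatglchattssigma} then produces a quasi-classical perfect Kripke model $\kcal$ and an $A$-sound node $\alpha$ with $\kcal,\alpha\not\models A$. As $\ihatglchat$ is sound for local truth at quasi-classical nodes (\Cref{Theorem-Propositional Completeness ihatglchat}) and $\ihatglchat\vdash A\leftrightarrow A^\Boxin$, we obtain $\kcal,\alpha\not\models A^\Boxin$, and \Cref{Corol-truth-forcing} converts this into $\kcal,\alpha\not\models_c A^\Boxin$. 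Finally, since $\kcal$ is perfect its classical collapse $(K,\R,V)$ is a finite transitive irreflexive truth-ascending classical frame, and the $A$-soundness of $\alpha$ should make $\alpha$ an $A^\Boxin$-sound node; soundness of $\GLSV$ for classical truth-ascending models at such nodes then delivers $\GLSV\nvdash A^\Boxin$, as desired.

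The hard part will be the last transfer in the converse, namely matching the reflection condition. \Cref{Theorem-Kripke-completeness-ihatglchattssigma} gives $A$-soundness of $\alpha$ phrased in terms of the boxed subformulas of $A$ under intuitionistic local truth, whereas soundness of $\GLSV$ requires classical $A^\Boxin$-soundness at $\alpha$, phrased in terms of the boxed subformulas of $A^\Boxin$, which have the shape $\Box B^\Box$. Establishing $\kcal,\alpha\models_c\Box B^\Box\to B^\Box$ for each such subformula, starting from $\kcal,\alpha\models\Box B\to B$, is where the real bookkeeping lies; I expect to settle it with \Cref{Corol-truth-forcing} (which identifies $\Vdash B^\Box$, $\models B^\Box$ and $\models_c B^\Box$ on $\kcal$) together with the completeness-principle property of box-translations used already in the forward case.
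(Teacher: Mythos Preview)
Your forward direction is correct and amounts to the same induction the paper runs, just organized differently: the paper points to the case analysis for $\ihatglchatts$ (where $\underline{\sf S^*}$ is already handled) and adds only the ${\sf C_a}$ case, whereas you point to \Cref{Lemma-ihatglchatsigma} and add only the $\underline{\sf S^*}$ case. Your treatment of $\underline{\sf S^*}$ via the ${\sf iK4}$-equivalence $(B^\Box)^\Boxin\leftrightarrow B^\Box$ is exactly what the paper uses in the analogous \Cref{lemma-ihatglchatts-gls}.

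The backward direction, however, takes an unnecessary detour. The ``hard part'' you flag --- converting $A$-soundness of $\alpha$ into classical $A^\Boxin$-soundness --- is an artifact of a typo in the displayed statement of \Cref{Theorem-Kripke-completeness-ihatglchattssigma}: compare it with its non-$\sigma$ analogue \Cref{Theorem-Kripke-completeness-ihatglchatts}, and note that the paper's own proof of the present lemma invokes the theorem to obtain an \emph{$A^\Boxin$-sound} node with $\kcal,\alpha\not\models A^\Boxin$ directly. With that reading there is nothing to convert: one passes from $\not\models A^\Boxin$ to $\not\models_c A^\Boxin$ by \Cref{Corol-truth-forcing}, and $A^\Boxin$-soundness at $\alpha$ is precisely what the classical ${\GLSV}$-soundness argument requires. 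Incidentally, your description of the boxed subformulas of $A^\Boxin$ as all having shape $\Box B^\Box$ is incomplete: inside any $B^\Box$ one also meets $\Box p$ (from atomics) and $\Box(C^\Box\to D^\Box)$ (from implications), so had the conversion been genuinely required, your sketch would not yet cover all cases.
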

\begin{proof}
We use induction on the proof  $\ihatglchattssigma\vdash A$ and show 
${\GLSV}\vdash A^\Boxin$. All cases are similar to the one 
for $\ihatglchatts$, except for 
 \begin{itemize}
 \item $A=p\to \Box p$: then ${\sf iK4}\vdash A^\Boxin\leftrightarrow A$ and 
 hence ${\GLV}\vdash A^\Boxin$.
\end{itemize} 
For the other way around, let $\ihatglchattssigma\nvdash A$. 
Then by $\Box \CP$  we have 
$A^\Boxin\leftrightarrow A$,  and then we may deduce $\ihatglchattssigma\nvdash A^\Boxin$.
By \Cref{Theorem-Kripke-completeness-ihatglchattssigma}, there exists some 
quasi-classical perfect Kripke model $\kcal$  
such that $\kcal,\alpha\not\models A^\Boxin$ and $\kcal$ is $A^\Boxin$-sound at $\alpha$. \Cref{Corol-truth-forcing} implies 
$\kcal,\alpha\not\models_c A^\Boxin$, which by soundness of ${\GLSV}$ 
for classical Kripke models with the property of truth-ascending (i.e. if $p$
 is true at some node, then it is true also at all accessible nodes), implies 
 ${\GLSV}\nvdash A^\Boxin$.
\end{proof}
\begin{theorem}\label{Reduction-sigma-(PA*-to-PA)-nat}
$\ihatglchattssigma=\PLS(\PA^*,\nat)\leq\PLS(\PA,\nat)={\GLSV}$.
\end{theorem}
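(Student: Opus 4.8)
The plan is to mirror verbatim the reduction pattern already deployed for the two preceding theorems, \Cref{Reduction-sigma-(PA*-to-PA)-PA} and \Cref{Reduction-sigma-(PA*-to-PA)-HA}, since the present statement differs from them only in that the ambient theory $\SFU$ is now $\nat$ rather than $\PA$ or $\HA$. First I would dispose of the two equalities packaged in the abbreviated notation. The equality ${\GLSV}=\PLS(\PA,\nat)$ is supplied directly by \Cref{Solovey}. The equality $\ihatglchattssigma=\PLS(\PA^*,\nat)$ I would obtain by combining the arithmetical soundness of $\ihatglchattssigma$, a routine schema-by-schema verification that I would leave to the reader exactly as in the companion systems, with the completeness half, which follows from the reduction established below together with \Cref{Theorem-Reduction-1}.

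The heart of the argument is then to verify the reduction $\acs{\ihatglchattssigma}{\PA^*}{\nat}\leq_{f,\fbar[]}\acs{{\GLSV}}{\PA}{\nat}$, taking $f(A):=A^\Boxin$ and $\fbar$ to be the identity function. For condition R1 of \Cref{Definition-Reduction-PL} I would cite \Cref{Lemma-ihatglchattssigma}, which states precisely that ${\GLSV}\vdash A^\Boxin$ implies $\ihatglchattssigma\vdash A$. For condition R2, given a $\Sigma_1$-substitution $\sigma$ witnessing $\nat\not\models\sigmapa{A^\Boxin}$, I would invoke \Cref{Lemma-PA-Box-translate}, which yields $\HA\vdash\sigmapa{A^\Boxin}\lr\sigmapas{A}$; since $\HA$ is sound in the standard model this biconditional holds in $\nat$, whence $\nat\not\models\sigmapas{A}$. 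As $\fbar$ is the identity it visibly lands in $\wit{\sigma}$, so both clauses of R2 are met.

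I do not expect a genuine obstacle, as every ingredient is already in hand and the proof is essentially a transcription of \Cref{Reduction-sigma-(PA*-to-PA)-PA} with $\nat$ substituted for $\PA$. The single point demanding slight care is that, unlike the $\PA$- and $\HA$-relative versions where the transfer in R2 stays inside a formal theory, here it must pass through truth in $\nat$; but because \Cref{Lemma-PA-Box-translate} delivers a $\HA$-provable equivalence and $\HA$ is arithmetically sound, this truth-level transfer is immediate and no new work is required. The only component genuinely outside the reduction machinery is the standalone soundness check underpinning $\ihatglchattssigma=\PLS(\PA^*,\nat)$, which I would dispatch by the same verification used for $\ihatglchattsigma$ in \Cref{Reduction-sigma-(PA*-to-PA)-PA}.
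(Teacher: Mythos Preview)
Your proposal is correct and follows essentially the same approach as the paper: the paper also takes $f(A):=A^\Boxin$ with $\fbar$ the identity, cites \Cref{Lemma-ihatglchattssigma} for R1 and \Cref{Lemma-PA-Box-translate} for R2, and leaves the arithmetical soundness of $\ihatglchattssigma$ to the reader. Your additional remark that the $\HA$-provable equivalence in \Cref{Lemma-PA-Box-translate} passes to truth in $\nat$ via soundness is exactly the implicit step the paper is using in its one-line R2.
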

\begin{proof}
The arithmetical soundness of $\ihatglchattssigma$ is straightforward and left to the reader.
Also $\PLS(\PA,\nat)={\GLSV}$ holds by 
\Cref{Solovey}.  It is enough here to show that 
$$\acs{\ihatglchattssigma}{\PA^*}{\nat}
\leq_{f,\fbar[]}\acs{{\GLSV}}{\PA}{\nat}.$$ 
Given $A\in\lcalb$,
let $f(A):=(A)^\Boxin$ and $\fbar$ as identity function. 
\begin{itemize}
\item[R1.]  \Cref{Lemma-ihatglchattssigma}.
\item[R2.] If $\nat\not\models\sigmapa{A^\Boxin}$,  
for a $\Sigma_1$-substitution $\sigma$, 
then by \Cref{Lemma-PA-Box-translate} we have $\nat\not\models\sigmapas{A}$.\qedhere
\end{itemize} 
\end{proof}


\begin{theorem}\label{Reduction-sigma-PA*-(PA-to-HA)}
$\ihatglchattsigma=\PLS(\PA^*,\PA)\leq\PLS(\PA^*,\HA)=\iglchatthatsigma$.
\end{theorem}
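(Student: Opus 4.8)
The statement bundles three assertions (via the notational convention): the two arithmetical completenesses $\ihatglchattsigma=\PLS(\PA^*,\PA)$ and $\iglchatthatsigma=\PLS(\PA^*,\HA)$, together with the reduction $\acs{\ihatglchattsigma}{\PA^*}{\PA}\leq\acs{\iglchatthatsigma}{\PA^*}{\HA}$. The first two are already in hand: $\ihatglchattsigma=\PLS(\PA^*,\PA)$ comes from \Cref{Reduction-sigma-(PA*-to-PA)-PA} and $\iglchatthatsigma=\PLS(\PA^*,\HA)$ from \Cref{Reduction-sigma-(PA*-to-PA)-HA}, and neither of those proofs uses the present theorem, so no circularity arises. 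Thus the plan is to exhibit the reduction only, taking $f(A):=A^\negout$ and letting $\fbar[]$ be the identity on substitutions (so that $\fbar[](\sigma)=\sigma\in\wit{\sigma}$ holds trivially). This is the exact analogue, one level up in the $\Box$-hierarchy, of the pair $\PLS(\HA,\PA)\leq\PLS(\HA,\HA)$ treated in \Cref{Reduction-Sigma-HA-(PA-to-HA)}.

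For condition R1 I would argue purely on the modal side, as in the easy half of \Cref{Lemma-neg-translate-modal}. Reading off the axiom names, every axiom of $\iglchatthatsigma$ (the base $\iGL$, the boxed completeness schema $\overline{\sf C}$, the boxed trace schema $\overline{\sf T}$, and ${\sf C_a}$) is already present in $\ihatglchattsigma$, whence $\iglchatthatsigma\subseteq\ihatglchattsigma$. Moreover $A\lr A^\negout$ is a propositional classical validity, by an immediate induction on $A$ in which boxed subformulas are treated as atoms (legitimate since $(\Box B)^\negout=\neg\neg\Box B$ does not recurse under $\Box$); since $\ihatglchattsigma$ contains $\underline{\sf P}$, it proves every classical propositional tautology, so $\ihatglchattsigma\vdash A\lr A^\negout$. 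Combining the two observations, $\iglchatthatsigma\vdash A^\negout$ gives $\ihatglchattsigma\vdash A^\negout$ and hence $\ihatglchattsigma\vdash A$, which is exactly R1.

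For condition R2 I would pass to arithmetic. Suppose $\sigma\in\wit{A^\negout;\PA^*,\HA;\Sigma_1}$, i.e.\ $\HA\nvdash\sigmapas{A^\negout}$. By \Cref{Lemma-neg-translate-1st-order} applied with the theory $\PA^*$ we have $\HA\vdash(\sigmapas{A})^\neg\lr\sigmapas{A^\negout}$, so $\HA\nvdash(\sigmapas{A})^\neg$; then \Cref{Lemma-HA-PA-neg-translation} yields $\PA\nvdash\sigmapas{A}$, that is $\sigma\in\wit{A;\PA^*,\PA;\Sigma_1}$, which is R2 for the identity $\fbar[]$. The only point demanding care is the bookkeeping of the overline/underline axiom schemas in the inclusion $\iglchatthatsigma\subseteq\ihatglchattsigma$ and the scheme-level derivation of $A\lr A^\negout$ from $\underline{\sf P}$; I do not expect a genuine obstacle, since, once the endpoints are quoted, the whole argument is a transcription of the two negative-translation lemmas.
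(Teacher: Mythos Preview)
Your proposal is correct and follows essentially the same route as the paper: the paper also quotes \Cref{Reduction-sigma-(PA*-to-PA)-PA,Reduction-sigma-(PA*-to-PA)-HA} for the two endpoints, takes $f(A)=A^\negout$ with $\fbar$ the identity, argues R1 via the inclusion $\iglchatthatsigma\subseteq\ihatglchattsigma$ together with $\underline{\sf P}\vdash A\lr A^\negout$, and argues R2 via \Cref{Lemma-neg-translate-1st-order} (instantiated with $\SFT=\PA^*$) and \Cref{Lemma-HA-PA-neg-translation}. Your extra remark that $\overline{\sf T}$ is subsumed by ${\sf T}$ in $\ihatglchattsigma$ makes the axiom-level inclusion explicit; the paper leaves this implicit.
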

\begin{proof}
We already have 
 $\PLS(\PA^*,\PA)=\ihatglchattsigma$ and 
 $\ihatglchattsigma=\PLS(\PA^*,\PA)$ by 
\Cref{Reduction-sigma-(PA*-to-PA)-HA,Reduction-sigma-(PA*-to-PA)-PA}.  It is enough here to show that 
$\acs{\ihatglchattsigma}{\PA^*}{\PA}\leq_{f,\fbar[]}\acs{\iglchatthatsigma}{\PA^*}{\HA}$. Given $A\in\lcalb$,
let $f(A):=(A)^\negout$ and $\fbar$ as identity function. 
\begin{itemize}
\item[R1.] If  $\iglchatthatsigma\vdash A^\negout$ then  
$\ihatglchattsigma\vdash A^\negout$, and  since 
we have $\PEM$ in $\ihatglchattsigma$, we may conclude 
$\ihatglchattsigma\vdash A$.
\item[R2.] If $\HA\nvdash\sigmapas{A^\negout}$,  
for a $\Sigma_1$-substitution $\sigma$, 
then by  \Cref{Lemma-neg-translate-1st-order} we have $\HA\nvdash(\sigmapas{A})^\neg$. Hence by \Cref{Lemma-HA-PA-neg-translation} we have $\PA\nvdash \sigmapas{A}$. \qedhere
\end{itemize} 
\end{proof}

\begin{lemma}\label{Lemma-ihatglchattsigma-iglct}
For every $A\in\lcalb$, if  
 $\ihatglchattsigma\vdash A^\Boxout$, then $\iglct\vdash A$.
\end{lemma}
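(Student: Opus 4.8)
The plan is to prove the contrapositive: assuming $\iglct\nvdash A$, I would produce a perfect quasi-classical Kripke model that witnesses $\ihatglchattsigma\nvdash A^\Boxout$. The argument is entirely semantic and consists in aligning the two Kripke-completeness theorems already established, bridged by the forcing-versus-local-truth correspondence of \Cref{Corol-truth-forcing2}. No new construction is needed; the work is in checking that the same class of models serves all three cited results.

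First I would invoke the completeness half of \Cref{Theorem-Propositional Completeness LC}: since $\iglct\nvdash A$, there is a perfect quasi-classical Kripke model $\kcal$ and a node $\alpha$ with $\kcal,\alpha\nVdash A$. Such a model is in particular semi-perfect and quasi-classical, so \Cref{Corol-truth-forcing2} applies to it and gives $\kcal,\alpha\Vdash A \Longleftrightarrow \kcal,\alpha\models A^\Boxout$ at every node. From $\kcal,\alpha\nVdash A$ we therefore obtain $\kcal,\alpha\not\models A^\Boxout$.

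Finally I would apply \Cref{Theorem-Kripke-completeness-ihatglchattsigma}, which characterizes theoremhood in $\ihatglchattsigma$ by local truth $\models$ at every node of every perfect quasi-classical Kripke model. Reading its contrapositive with $B:=A^\Boxout$: since $A^\Boxout$ fails at the local truth $\models$ at the node $\alpha$ of the perfect quasi-classical model $\kcal$, we conclude $\ihatglchattsigma\nvdash A^\Boxout$. This establishes the contrapositive and hence the lemma.

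I do not expect a serious obstacle: the content lies in the bookkeeping of which semantic notion attaches to which logic, namely $\Vdash$ for $A$ under $\iglct$ and $\models$ for $A^\Boxout$ under $\ihatglchattsigma$. The one point requiring care is verifying that the class of models is literally the same on both sides, that is, perfect and quasi-classical, so that the single model obtained from $\iglct$-incompleteness is simultaneously admissible for the $\ihatglchattsigma$-characterization and for \Cref{Corol-truth-forcing2}; once this is observed, the three results chain together immediately. In fact the same chain yields the converse implication as well, so the full biconditional $\ihatglchattsigma\vdash A^\Boxout \Longleftrightarrow \iglct\vdash A$ holds, although only the stated direction is needed here.
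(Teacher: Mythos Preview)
Your proposal is correct and follows essentially the same route as the paper's own proof: contrapositive, completeness of $\iglct$ for perfect quasi-classical models (\Cref{Theorem-Propositional Completeness LC}), the bridge \Cref{Corol-truth-forcing2} from $\nVdash A$ to $\not\models A^\Boxout$, and then the soundness direction of \Cref{Theorem-Kripke-completeness-ihatglchattsigma}. Your observation that the converse also follows is correct and is recorded in the paper as \Cref{Corollary-iglct-ihatglchattsigma}.
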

\begin{proof}
Let $\iglct\nvdash A$. Hence
by \Cref{Theorem-Propositional Completeness LC}, there is some 
perfect quasi-classical Kripke model $\kcal$ such that
$\kcal,\alpha\nVdash A$. Then 
\Cref{Corol-truth-forcing2} implies 
$\kcal,\alpha\not\models A^\Boxout$, and hence by 
soundness of $\ihatglchattsigma$ (\Cref{Theorem-Kripke-completeness-ihatglchattsigma}) implies $\ihatglchattsigma\nvdash A^\Boxout$.
\end{proof}

\begin{theorem}\label{Reduction-Sigma-PA*-(PA*-to-PA)}
$\iglct=\PLS(\PA^*,\PA^*)\leq\PLS(\PA^*,\PA)=\ihatglchattsigma$.
\end{theorem}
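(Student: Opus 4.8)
The plan is to read the statement through the $\leq$-notation convention: it asserts (2) $\iglct=\PLS(\PA^*,\PA^*)$, (3) $\PLS(\PA^*,\PA)=\ihatglchattsigma$, together with (1) the reduction $\acs{\iglct}{\PA^*}{\PA^*}\leq\acs{\ihatglchattsigma}{\PA^*}{\PA}$ of the corresponding arithmetical completenesses. Item (2) is Visser's characterization recorded after \Cref{Definition-First-order-translation} ($\PL(\PA^*)=\PLS(\PA^*)=\iglct$), and item (3) was already proved in \Cref{Reduction-sigma-(PA*-to-PA)-PA}; so I would cite both and devote the actual work to the reduction (1).

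For (1) I would use the identity template of \Cref{Section-Reduction-tool}, putting $f(A):=A^\Boxout$ and letting every $\fbar$ be the identity map on $\Sigma_1$-substitutions. This immediately secures R0 and the side condition $\fbar(\sigma)\in\wit{\sigma}$. Condition R1 --- that $\ihatglchattsigma\vdash f(A)$ forces $\iglct\vdash A$ --- is exactly \Cref{Lemma-ihatglchattsigma-iglct}, since $f(A)=A^\Boxout$; so R1 costs nothing beyond a reference.

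The remaining point is R2, i.e.\ that the identity sends $\wit{A^\Boxout;\PA^*,\PA;\Sigma_1}$ into $\wit{A;\PA^*,\PA^*;\Sigma_1}$; equivalently, that $\PA\nvdash\sigmapas{A^\Boxout}$ implies $\PA^*\nvdash\sigmapas{A}$ for every $\Sigma_1$-substitution $\sigma$. I would obtain this from the equivalences
\[
\PA^*\vdash\sigmapas{A}
\;\Longleftrightarrow\;
\PA\vdash(\sigmapas{A})^{\sf PA}
\;\Longleftrightarrow\;
\PA\vdash\sigmapas{A^\Boxout},
\]
whose first step is the defining clause $B\in\PA^*\Leftrightarrow\PA\vdash B^{\sf PA}$ from \Cref{Definition-First-order-translation} and whose second step is the translation identity $\PA\vdash(\sigmapas{A})^{\sf PA}\leftrightarrow\sigmapas{A^\Boxout}$ of \Cref{Label-HA-HA*-Box-trans2}. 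Contraposing yields R2 verbatim.

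I do not expect a genuine obstacle here, because the two load-bearing facts are already in hand: \Cref{Lemma-ihatglchattsigma-iglct} (itself resting on the Kripke completeness \Cref{Theorem-Kripke-completeness-ihatglchattsigma} and the forcing collapse \Cref{Corol-truth-forcing2}) and the translation identity \Cref{Label-HA-HA*-Box-trans2}. The only place demanding care is keeping the modal $(.)^\Boxout$ aligned with the Beeson--Visser arithmetical $(.)^{\sf PA}$, and this alignment is precisely the content of \Cref{Label-HA-HA*-Box-trans2}, so the entire verification reduces to the bookkeeping displayed above.
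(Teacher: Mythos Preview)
Your proposal is correct and follows essentially the same route as the paper: $f(A):=A^{\Boxout}$ with identity $\fbar$, R1 via \Cref{Lemma-ihatglchattsigma-iglct}, and R2 via \Cref{Label-HA-HA*-Box-trans2} together with the definition of $\PA^*$. The only cosmetic difference is that you cite Visser's characterization directly for $\iglct=\PLS(\PA^*,\PA^*)$, whereas the paper states soundness and lets completeness fall out of the reduction; both are fine.
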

\begin{proof}
The soundness of $\iglct$ is straightforward and left to the reader. 
By \Cref{Reduction-sigma-(PA*-to-PA)-PA}
we have  
$\PLS(\PA^*,\PA)=\ihatglchattsigma$.
We must show
$\acs{\iglct}{\PA^*}{\PA^*}\leq_{f,\fbar[]} \acs{\ihatglchattsigma}{\PA^*}{\PA}$.  
Given $A\in\lcalb$, define $f(A)= A^\Boxout$ 
and $\fbar$ as identity function.  
\begin{itemize}
\item[R1.] \Cref{Lemma-ihatglchattsigma-iglct}.
\item[R2.] Let $\PA\nvdash \sigmapas{ A^\Boxout}$. 
Then by \Cref{Label-HA-HA*-Box-trans2}, 
$\PA\nvdash \sigmapas{A}^\PA$, and hence by definition of $\PA^*$, we have $\PA^*\nvdash \sigmapas{A}$. \qedhere
\end{itemize}
\end{proof}
\begin{corollary}\label{Corollary-iglct-ihatglchattsigma}
For every $A\in\lcalb$, we have  
$\iglct\vdash A$ iff  $\ihatglchattsigma\vdash A^\Boxout$.
\end{corollary}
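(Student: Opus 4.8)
The plan is to read the biconditional straight off the reduction already established in \Cref{Reduction-Sigma-PA*-(PA*-to-PA)}, by feeding it into \Cref{Corollary-Reduction}. Recall that, by our notational convention, the statement $\iglct=\PLS(\PA^*,\PA^*)\leq\PLS(\PA^*,\PA)=\ihatglchattsigma$ unpacks into three facts: the propositional reduction $\acs{\iglct}{\PA^*}{\PA^*}\leq_{f,\fbar[]}\acs{\ihatglchattsigma}{\PA^*}{\PA}$ carried out with $f(A):=A^\Boxout$, the characterization $\iglct=\PLS(\PA^*,\PA^*)$, and the characterization $\ihatglchattsigma=\PLS(\PA^*,\PA)$. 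In particular the last of these supplies the arithmetical completeness $\acs{\ihatglchattsigma}{\PA^*}{\PA}$, which is exactly the hypothesis needed in \Cref{Corollary-Reduction}.

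With the reduction and the completeness of the target theory both in hand, \Cref{Corollary-Reduction} applies verbatim and yields, for every $A\in\lcalb$,
\[
\iglct\vdash A \quad\Longleftrightarrow\quad \ihatglchattsigma\vdash A^\Boxout ,
\]
since here $f(A)=A^\Boxout$. This is precisely the assertion of the corollary, so no further work is required beyond invoking these two results.

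If one prefers an argument that keeps the two implications apart, the direction $\ihatglchattsigma\vdash A^\Boxout\Rightarrow\iglct\vdash A$ is nothing but \Cref{Lemma-ihatglchattsigma-iglct}, proved immediately above through the Kripke completeness of $\iglct$ and the soundness of $\ihatglchattsigma$. The converse $\iglct\vdash A\Rightarrow\ihatglchattsigma\vdash A^\Boxout$ follows by the same mechanism as in the proof of \Cref{Theorem-Reduction-1}: assuming $\ihatglchattsigma\nvdash A^\Boxout$, the completeness of $\ihatglchattsigma=\PLS(\PA^*,\PA)$ produces a $\Sigma_1$-substitution refuting $\sigmapas{A^\Boxout}$ over $\PA$, and then \Cref{Label-HA-HA*-Box-trans2} together with the definition of $\PA^*$ converts this into a refutation of $\sigmapas{A}$ over $\PA^*$, whence $\iglct\nvdash A$ by the soundness half of $\iglct=\PLS(\PA^*,\PA^*)$. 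I expect no genuine obstacle here: the statement is a direct corollary of results just proved, and the only point to check is the bookkeeping that the reduction function fixed in \Cref{Reduction-Sigma-PA*-(PA*-to-PA)} is indeed $(.)^\Boxout$ and that the target theory's arithmetical completeness is already available, both of which hold by construction.
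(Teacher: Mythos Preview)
Your proposal is correct and is exactly the paper's own proof: the paper simply writes ``Use \Cref{Corollary-Reduction,Reduction-Sigma-PA*-(PA*-to-PA)}'', and you have unpacked precisely how these two results combine, with the reduction function $f(A)=A^\Boxout$ from \Cref{Reduction-Sigma-PA*-(PA*-to-PA)} feeding into \Cref{Corollary-Reduction}. Your optional alternative argument separating the two implications is also sound but goes beyond what the paper records.
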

\begin{proof}
Use \Cref{Corollary-Reduction,Reduction-Sigma-PA*-(PA*-to-PA)}.
\end{proof}
\begin{theorem}\label{Reduction-Sigma-PA*-(PA*-to-nat)}
$\iglct=\PLS(\PA^*,\PA^*)\leq\PLS(\PA^*,\nat)=\ihatglchattssigma$.
\end{theorem}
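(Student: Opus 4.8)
The plan is to reuse verbatim the ``boxing'' reduction already carried out for the three companion statements \Cref{Reduction-Sigma-HA-(HA-to-nat),Reduction-Sigma-HA*-(HA*-to-nat),Reduction-Sigma-PA-(PA-to-nat)}. Unpacking the notation, the claim bundles three assertions, two of which are already at hand: the equality $\iglct=\PLS(\PA^*,\PA^*)$ is Visser's characterisation of the $\Sigma_1$-provability logic of $\PA^*$ recorded after \Cref{Definition-First-order-translation}, and the equality $\PLS(\PA^*,\nat)=\ihatglchattssigma$ is exactly \Cref{Reduction-sigma-(PA*-to-PA)-nat} (which supplies both arithmetical soundness and, via its reduction to $\GLSV$, arithmetical completeness). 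Hence all that remains is the reduction $\acs{\iglct}{\PA^*}{\PA^*}\leq_{f,\fbar[]}\acs{\ihatglchattssigma}{\PA^*}{\nat}$.

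First I would take $f(A):=\Box A$ and let $\fbar$ be the identity family, so that R0 of \Cref{Definition-Reduction-PL} is immediate and the side condition $\fbar(\sigma)=\sigma\in\wit{\sigma}$ holds automatically. For R1, assume $\ihatglchattssigma\vdash\Box A$. Applying the arithmetical soundness half of $\ihatglchattssigma=\PLS(\PA^*,\nat)$, for every $\Sigma_1$-substitution $\sigma$ we obtain $\nat\models\sigmapas{\Box A}$; since $\sigmapas{\Box A}$ is the sentence expressing $\PA^*$-provability of $\sigmapas{A}$, its holding in the standard model is equivalent to $\PA^*\vdash\sigmapas{A}$. As $\sigma$ ranges over all $\Sigma_1$-substitutions this says precisely $A\in\PLS(\PA^*,\PA^*)$, so the arithmetical completeness direction of $\iglct=\PLS(\PA^*,\PA^*)$ delivers $\iglct\vdash A$, which is R1.

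For R2 I would run the same equivalence in reverse: given $\sigma\in\wit{\Box A;\PA^*,\nat;\Sigma_1}$, i.e.\ $\nat\not\models\sigmapas{\Box A}$, the failure of that provability sentence in $\nat$ yields $\PA^*\nvdash\sigmapas{A}$, so $\sigma\in\wit{A;\PA^*,\PA^*;\Sigma_1}$; since $\fbar$ is the identity this is exactly the map required by R2, and $\fbar(\sigma)=\sigma\in\wit{\sigma}$ is trivial.

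I do not expect a genuine obstacle here: the whole argument collapses to the single equivalence ``$\nat\models\sigmapas{\Box A}$ iff $\PA^*\vdash\sigmapas{A}$'', which is just the $\Sigma_1$-soundness and $\Sigma_1$-completeness of the provability predicate of $\PA^*$ read off in the standard model. The one point that deserves a remark is that this presupposes $\PA^*$ to be recursively axiomatizable (so that the framework of \Cref{Definition-Reduction-PL} applies and $\sigmapas{\Box A}$ is a bona fide provability sentence); this holds because $A\mapsto A^{\sf PA}$ is computable and $\PA$-provability is r.e. As in the companion theorems, R1 admits an alternative purely propositional proof, but I would keep the shorter indirect route through soundness and completeness.
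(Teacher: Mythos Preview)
Your proposal is correct and matches the paper's proof essentially verbatim: the paper also takes $f(A):=\Box A$ with $\fbar$ the identity, verifies R1 via arithmetical soundness of $\ihatglchattssigma=\PLS(\PA^*,\nat)$ followed by arithmetical completeness of $\iglct=\PLS(\PA^*,\PA^*)$, and verifies R2 by the same one-line equivalence $\nat\not\models\sigmapas{\Box A}\Rightarrow\PA^*\nvdash\sigmapas{A}$. The paper likewise remarks that R1 admits a direct propositional argument but opts for the indirect route.
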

\begin{proof} 
By \Cref{Reduction-sigma-(PA*-to-PA)-nat,Reduction-Sigma-PA*-(PA*-to-PA)}
we have  
$\PLS(\PA^*,\nat)=\ihatglchattssigma$ and 
$\PLS(\PA^*,\PA^*)=\iglct$.
We must show
$\acs{\iglct}{\PA^*}{\PA^*}\leq_{f,\fbar[]} \acs{\ihatglchattssigma}{\PA^*}{\nat}$.  
Given $A\in\lcalb$, define $f(A)=\Box A$ and $\fbar$ as identity function.  
\begin{itemize}
\item[R1.] Let $\ihatglchattssigma\vdash \Box A$. By soundness
of $\ihatglchattssigma=\PLS(\PA^*,\nat)$, for every $\Sigma_1$-substitution $\sigma$
we have $\nat\models \sigmapas{\Box A}$ and hence 
$\PA^*\vdash \sigmapas{A}$. Then by arithmetical 
completeness of $\iglct=\PLS(\PA^*,\PA^*)$, we have $\iglct\vdash A$.
\\
One also may prove this item with a direct propositional argument, using Kripke semantics. For simplicity reasons, we chose the indirect way.
\item[R2.] Let $\nat\not\models \sigmapas{\Box A}$. 
Then $\PA^*\nvdash \sigmapas{A}$, as desired. \qedhere
\end{itemize}
\end{proof}

\begin{lemma}\label{lemma-iglphat-iglchatthat}
For every $A\in\lcalb$, we have 
$\iglchatthat\vdash A$ iff $\iglphat\vdash A^\Boxin$.
\end{lemma}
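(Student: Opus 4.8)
The plan is to mirror the proof of the $\Sigma_1$-variant \Cref{Lemma-iglchatthat}, replacing its atom-complete Kripke semantics by the plain semantics of \Cref{Theorem-Kripke-completeness-iglchatthat} and \Cref{Theorem-Kripke-semantic-iglphat}. For the direction ``$\iglchatthat\vdash A$ implies $\iglphat\vdash A^\Boxin$'' I would argue by induction on the derivation of $A$ in $\iglchatthat$. Modus ponens is immediate because $(B\to C)^\Boxin=B^\Boxin\to C^\Boxin$, and the axioms are handled as follows:
\begin{itemize}
\item If $A$ is an $\iGL$-theorem, then $\iGL\vdash A^\Boxin$ by \Cref{Lemma-3}, hence $\iglphat\vdash A^\Boxin$.
\item If $A$ is an instance $\Box(B\to\Box B)$ of $\overline{\sf C}$, then $B\to\Box B$ is an instance of $\underline{\sf C}$, so $\iglct\vdash B\to\Box B$; \Cref{Lemma-70} gives $\GL\vdash(B\to\Box B)^\Box$ and \Cref{Lemma-79} gives $\iglphat\vdash\Box(B\to\Box B)^\Box$, which is exactly $A^\Boxin$.
\item If $A$ is an instance $\Box T$ of $\overline{\sf T}$ with $T$ a $\underline{\sf TP}$-instance, then $\iglct\vdash T$, whence $\GL\vdash T^\Box$ by \Cref{Lemma-70} and $\iglphat\vdash\Box T^\Box=A^\Boxin$ by \Cref{Lemma-79}.
\end{itemize}

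For the converse I would argue contrapositively. Assume $\iglchatthat\nvdash A$. Since $\overline{\sf C}$ is exactly $\Box\CP$, \Cref{Lemma-A-ABoxin} gives $\iglchatthat\vdash A\lr A^\Boxin$, so $\iglchatthat\nvdash A^\Boxin$. By the completeness part of \Cref{Theorem-Kripke-completeness-iglchatthat} there is a semi-perfect ${\sf Suc}$-quasi-classical Kripke model $\kcal$ with $\kcal,\alpha\nVdash A^\Boxin$ for some $\alpha$. Restricting to the submodel generated by $\alpha$ (the nodes in $(\alpha\preccurlyeq)\cup(\alpha\R)$) preserves forcing, semi-perfection and ${\sf Suc}$-quasi-classicality, and since in a tree frame $\alpha$ has no $\R$-predecessor inside the submodel it generates, it makes $\alpha\notin{\sf Suc}$; so I may assume $\alpha\notin{\sf Suc}$.

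Then \Cref{Lemma-791} yields $\tilde\kcal,\alpha\nVdash A^\Boxin$, where $\tilde\kcal$ (the construction before \Cref{Lemma-791}, see \Cref{Def-tilde}) turns every $\R$-accessible node into a classical node, producing a semi-perfect ${\sf Suc}$-classical model. Every axiom of $\iglphat$ is valid at every node of such a model: the $\iGL$-axioms by semi-perfection, and $\overline{\sf P}=\Box(C\vee\neg C)$ because each $\R$-successor is classical and hence forces $C\vee\neg C$. Thus the soundness part of \Cref{Theorem-Kripke-semantic-iglphat} applies to $\tilde\kcal$ and gives $\iglphat\nvdash A^\Boxin$, as required.

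The main obstacle is this last step: I must justify invoking the soundness of $\iglphat$ on $\tilde\kcal$ even though $\tilde\kcal$ need not be $\R$-branching, by noting that the $\R$-branching hypothesis of \Cref{Theorem-Kripke-semantic-iglphat} is needed only for completeness while soundness holds for all semi-perfect ${\sf Suc}$-classical models; and I must secure $\alpha\notin{\sf Suc}$ so that \Cref{Lemma-791} is applicable. Both points are routine once the generated-submodel reduction and the classicality of $\R$-successors in $\tilde\kcal$ are spelled out.
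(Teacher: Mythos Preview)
Your proposal is correct and follows essentially the same argument as the paper: induction on the $\iglchatthat$-derivation using \Cref{Lemma-3}, \Cref{Lemma-70} and \Cref{Lemma-79} for the forward direction, and for the converse the contrapositive via \Cref{Lemma-A-ABoxin}, \Cref{Theorem-Kripke-completeness-iglchatthat}, the generated-submodel reduction to secure $\alpha\notin{\sf Suc}$, \Cref{Lemma-791}, and soundness of $\iglphat$ on the resulting ${\sf Suc}$-classical semi-perfect model. Your explicit remark that $\R$-branching is only needed for completeness (not soundness) in \Cref{Theorem-Kripke-semantic-iglphat} is a point the paper leaves implicit.
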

\begin{proof}
We use induction on the proof  $\iglchatthat\vdash A$ 
 and show $\iglphat\vdash A^\Boxin$:
 \begin{itemize}[leftmargin=*]
 \item $\igl\vdash A$:  by \Cref{Lemma-3} we have $\igl\vdash A^\Boxin$.
 \item $A$ is an axiom instance of $\Box\CP$ or $\Box \TP$:
 Then $A=\Box B$ and $\iglct\vdash B$ and by
  \Cref{Lemma-70} we have 
 $\GL\vdash B^\Box$. By \Cref{Lemma-79}
  we have $\iglphat\vdash \Box B^\Box$.
 \item $\iglchatthat\vdash B$ and $\iglchatthat\vdash B\to A$ 
 with lower proof length: by induction hypothesis we 
 have $\iglphat\vdash B^\Boxin$ and $\iglphat\vdash B^\Boxin\to A^\Boxin$, 
 which implies 
 $\iglphat\vdash A^\Boxin$, as desired.
\end{itemize} 
For the other way around, let $\iglchatthat\nvdash A$. Then by 
\Cref{Lemma-A-ABoxin}  we have 
$A^\Boxin\leftrightarrow A$,  and hence
 $\iglchatthat\nvdash A^\Boxin$.
By \Cref{Theorem-Kripke-completeness-iglchatthat}, 
there exists some 
${\sf Suc}$-quasi-classical  
semi-perfect Kripke model $\kcal$ 
such that $\kcal,\alpha\nVdash A^\Boxin$, for some 
node $\alpha$. We may let $\alpha\not\in{\sf Suc}$, otherwise 
eliminate all nodes not in 
$(\alpha\preccurlyeq)\cup(\alpha\R)$ 
and consider this new Kripke model instead of $\kcal$. 
Obviously the new Kripke model still refutes 
$A^\Boxin$ at $\alpha$ 
and is ${\sf Suc}$-quasi-classical  
semi-perfect. Hence  \Cref{Lemma-791} implies that 
$\tilde{\kcal},\alpha\nVdash A^\Boxin$, in which 
$\tilde{\kcal}$   indicates
the Kripke model  derived from $\kcal$ by making 
every $\R$-accessible node  as a classical node.  Precise definition of $\tilde{\kcal}$ came before \Cref{Lemma-791}.
It is obvious that $\tilde{\kcal}$ is a ${\sf Suc}$-classical
semi-perfect Kripke model. Hence  
\Cref{Theorem-Kripke-semantic-iglphat} implies $\iglphat\nvdash A^\Boxin$, as desired.
\end{proof}

\begin{theorem}\label{Reduction-(-to-sigma)-PA*-HA}
$\iglchatthat=\PL(\PA^*,\HA)\leq\PLS(\PA^*,\HA)=\iglchatthatsigma$.
\end{theorem}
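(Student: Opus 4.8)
The statement packages three claims in the sense of the convention for $\leq$ between characterised provability logics, and the plan is to settle them in turn. The equality $\PLS(\PA^*,\HA)=\iglchatthatsigma$ is already \Cref{Reduction-sigma-(PA*-to-PA)-HA}, and the arithmetical soundness of $\iglchatthat$ is routine (one checks, schema by schema, that each axiom instance is provable in $\HA$ under every $\PA^*$-interpretation, using the self-completeness of $\PA^*$, and soundness is preserved by modus ponens). Thus the real content is the reduction $\ac{\iglchatthat}{\PA^*}{\HA}\leq_{f,\fbar[]}\acs{\iglchatthatsigma}{\PA^*}{\HA}$; once it is established, the completeness half of $\PL(\PA^*,\HA)=\iglchatthat$ follows from \Cref{Theorem-Reduction-1} together with the already-known completeness of $\iglchatthatsigma$, and combined with soundness this yields $\iglchatthat=\PL(\PA^*,\HA)$. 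I will realise the reduction as a substitution reduction in the sense of \Cref{Section-Reduction-tool}.

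The pivot is a reduction lemma paralleling \Cref{Lemma-Reduction-iglphat}: for every $A\in\lcalb$, $\iglchatthat\vdash A$ if and only if $\iglchatthatsigma\vdash\tau(A)$ for every $(.)^\Boxin$-substitution $\tau$ (\Cref{Definition-boxin-sub}). The forward direction is immediate, since $\iglchatthatsigma=\iglchatthat+{\sf C_a}$ contains $\iglchatthat$ and is closed under substitution. For the converse I would argue contrapositively and relay the refutation through the box-in translations: from $\iglchatthat\nvdash A$, \Cref{lemma-iglphat-iglchatthat} gives $\iglphat\nvdash A^\Boxin$; \Cref{Lemma-Reduction-iglphat} then produces a $(.)^\Boxin$-substitution $\tau$ with $\iglphatsigma\nvdash\tau(A^\Boxin)$; \Cref{Lemma-Boxin-sub} supplies $\tau(A^\Boxin)\lr(\tau(A))^\Boxin$, so $\iglphatsigma\nvdash(\tau(A))^\Boxin$; and \Cref{Lemma-iglchatthat}, applied to the proposition $\tau(A)$, converts this into $\iglchatthatsigma\nvdash\tau(A)$, exhibiting the required witnessing substitution.

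With the lemma in place, the reduction is assembled exactly as in \Cref{Reduction-PA-HA-to-Sigma-PA-HA}. For $A$ with $\iglchatthat\nvdash A$ I set $f(A):=\tau(A)$ for the witnessing $\tau$ above (and $f(A):=A$ otherwise), and $\fbar(\sigma):=\sigma_{_{\sf PA^*}}\circ\tau$. Condition R1 of \Cref{Definition-Reduction-PL} is precisely (the contrapositive of) the reduction lemma. For R2, the $\PA^*$-interpretation commutes with substitution composition, so $(\sigma_{_{\sf PA^*}}\circ\tau)_{_{\sf PA^*}}(A)=\sigmapas{\tau(A)}$; hence any $\Sigma_1$-witness $\sigma$ with $\HA\nvdash\sigmapas{\tau(A)}$ is carried to $\sigma_{_{\sf PA^*}}\circ\tau\in\wit{A;\PA^*,\HA;\Sigma_1}$, and this composite lies in $\wit{\sigma}$ by the very definition of the propositional closure.

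The step I expect to be most delicate is the box-in transfer in the second paragraph: one must verify that the equivalence $\tau(A^\Boxin)\lr(\tau(A))^\Boxin$ of \Cref{Lemma-Boxin-sub}, proved there over ${\sf iK4V}$, is genuinely available inside $\iglphatsigma$, and that the $(.)^\Boxin$-substitution handed back by \Cref{Lemma-Reduction-iglphat} really meets the defining clauses of \Cref{Definition-boxin-sub} so that the lemma applies to it. The remaining bookkeeping — soundness of $\iglchatthat$ and the commutation identity underlying R2 — is standard.
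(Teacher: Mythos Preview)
Your proposal is correct and follows essentially the same approach as the paper: both route the R1 verification through \Cref{lemma-iglphat-iglchatthat}, \Cref{Lemma-Reduction-iglphat}, \Cref{Lemma-Boxin-sub}, and \Cref{Lemma-iglchatthat} in exactly that order, and both take $\fbar(\sigma):=\sigma_{_{\sf PA^*}}\circ\tau$. Your R2 is in fact slightly cleaner than the paper's --- you invoke the syntactic commutation $(\sigma_{_{\sf PA^*}}\circ\tau)_{_{\sf PA^*}}(A)=\sigmapas{\tau(A)}$ directly, whereas the paper detours through $\sigmapa{\,\cdot\,}$ and \Cref{Lemma-Properties of Box translation 2} --- but note a small slip: the target witness set in R2 is $\wit{A;\PA^*,\HA;\text{all sentences}}$, not the $\Sigma_1$ one, since $\PL(\PA^*,\HA)$ carries no restriction on substitutions (and indeed $\sigma_{_{\sf PA^*}}\circ\tau$ need not be $\Sigma_1$).
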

\begin{proof} 
The arithmetical soundness of $\iglchattsigma$ is 
straightforward and left to the reader. 
By  \Cref{Reduction-sigma-(PA*-to-PA)-HA}
we have  
$\PL(\PA^*,\HA)=\iglchatthatsigma$.
We must show
$$\ac{\iglchatthat}{\PA^*}{\HA}\leq_{f,\fbar[]} \acs{\iglchatthatsigma}{\PA^*}{\HA}.$$  
Given $A\in\lcalb$,   if $\iglchatthat\vdash A$, 
  define $f(A):=\top$. If $\iglchatthat\nvdash A$, 
by \Cref{lemma-iglphat-iglchatthat} we have 
$\iglphat\nvdash A^\Boxin$, and hence 
by \Cref{Lemma-Reduction-iglphat} 
there exists some propositional $(.)^\Boxin$-substitution
 $\tau$ such that 
$\iglphatsigma\nvdash \tau(A^\Boxin)$. 
Define $f(A):=\tau(A)$ and $\fbar (\sigma):=\sigma_{_{\sf PA^*}}\circ \tau$.
\begin{itemize}
\item[R1.] Let $\iglchatthat\nvdash A$. By 
\Cref{lemma-iglphat-iglchatthat} we have $\iglphat\nvdash A^\Boxin$ and then \Cref{Lemma-Reduction-iglphat} 
 implies $\iglphatsigma\nvdash \tau(A^\Boxin)$, in which $\tau$ is as used for the definition of $f(A)$. Since $\tau$  
 is a $(.)^\Boxin$-substitution, 
 by \Cref{Lemma-Boxin-sub}
 we have $\iglphatsigma\nvdash (\tau(A))^\Boxin$. Then 
 \Cref{Lemma-iglchatthat}  implies that 
 $\iglchatthatsigma\nvdash \tau(A)$, or in other words
 $\iglchatthatsigma\nvdash f(A)$.
\item[R2.] Let $\HA\nvdash \sigmapas{f(A)}$ for some 
$\Sigma_1$-substitution $\sigma$.  By definition of $f(A)$, we must have $\iglchatthat\nvdash A$, otherwise $f(A):=\top$, which contradicts $\HA\nvdash \sigmapas{f(A)}$. Hence 
$f(A)=\tau(A)$ for  some propositional $(.)^\Boxin$-substitution $\tau$. 
By 
\Cref{Lemma-Properties of Box translation 2} we have
$\HA\nvdash \sigmapa{\tau(A)^\Boxin}$. Since 
${\sf iK4}+{\sf CP_a}$ is included in 
$\PLS(\PA,\HA)=\iglphatsigma$ (\Cref{Reduction-Sigma-(PA-to-HA)-HA}), we have $\HA\nvdash \sigmapa{\tau(A^\Boxin)}$. 
This implies that $\HA\nvdash [\fbar(\sigma)]_{_{\sf PA}}(A^\Boxin)$ and again by \Cref{Lemma-Properties of Box translation 2} we have $\HA\nvdash [\fbar(\sigma)]_{_{\sf PA^*}}(A)$.
\qedhere
\end{itemize}
\end{proof}

\begin{lemma}\label{lemma-gl-ihatglchatt}
For every $A\in\lcalb$, we have 
$\ihatglchatt\vdash A$ iff $\GL\vdash A^\Boxin$.
\end{lemma}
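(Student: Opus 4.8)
The plan is to prove the two implications separately; the whole argument is the ${\sf C_a}$-free specialization of the proof of \Cref{Lemma-ihatglchatsigma}, with $\GLV$ replaced by $\GL$ and the atom-complete completeness theorem replaced by the boolean-interpretation one.

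For the forward implication I would induct on a derivation of $A$ in $\ihatglchatt$, maintaining $\GL\vdash A^\Boxin$ at each step. When $A$ is an ${\sf iGL}$-theorem, \Cref{Lemma-3} gives $\igl\vdash A^\Boxin$, hence $\GL\vdash A^\Boxin$. When $A$ is a boxed axiom, i.e.\ an instance of $\overline{\sf C}$ or $\overline{\sf TP}$, we have $A=\Box B$ with $\iglct\vdash B$ (the unboxed completeness and trace schemas are $\iglct$-theorems); then \Cref{Lemma-70} yields $\GL\vdash B^\Box$, and since $A^\Boxin=\Box B^\Box$ and necessitation is admissible in $\GL$, we obtain $\GL\vdash A^\Boxin$. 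When $A$ is an unboxed instance of the trace principle $\Box(B\to C)\to(B\vee(B\to C))$ or of $\underline{\sf P}$, its image under $(.)^\Boxin$ has the classically valid consequent $B^\Boxin\vee(B^\Boxin\to C^\Boxin)$, respectively $B^\Boxin\vee\neg B^\Boxin$, so $\GL\vdash A^\Boxin$ by classical propositional logic. The modus ponens step is immediate from the induction hypothesis together with $(B\to A)^\Boxin=B^\Boxin\to A^\Boxin$.

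For the converse, suppose $\ihatglchatt\nvdash A$. Since $\ihatglchatt$ extends ${\sf iK4}+\Box\CP$, \Cref{Lemma-A-ABoxin} gives $\ihatglchatt\vdash A\leftrightarrow A^\Boxin$, so $\ihatglchatt\nvdash A^\Boxin$. By \Cref{Theorem-Kripke-completeness-ihatglchatt} there are a quasi-classical perfect Kripke model $\kcal$, a boolean interpretation $I$, and a node $\alpha$ with $\kcal,\alpha,I\not\models A^\Boxin$. The $A^\Boxin$-clause of \Cref{Corol-truth-forcing} then gives $\kcal,\alpha,I\not\models_c A^\Boxin$, and by \Cref{Remark2} this is exactly $\kcal^I_c,\alpha\not\models A^\Boxin$ in the classical modal model $\kcal^I_c=(K,\sqsubset,V^I_\alpha)$. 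As $\kcal$ is perfect, the frame $(K,\sqsubset)$ is finite, irreflexive and transitive, hence a $\GL$-frame; soundness of $\GL$ for such models yields $\GL\nvdash A^\Boxin$.

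The main obstacle is the backward direction's passage from intuitionistic forcing to classical modal forcing: because $\ihatglchatt$ lacks ${\sf C_a}$, the refuting node need not be atom-complete, so one must work with the $I$-indexed classical truth $\models_c$ rather than the plain $\models$; this is precisely why \Cref{Theorem-Kripke-completeness-ihatglchatt} is stated with a boolean interpretation $I$. Checking that \Cref{Corol-truth-forcing} and \Cref{Remark2} legitimately transport the refutation of $A^\Boxin$ into $\kcal^I_c$, and that the resulting frame validates $\GL$ without any truth-ascending hypothesis, is the delicate point; the forward direction is routine once the boxed axioms are routed through \Cref{Lemma-70} and the unboxed ones collapse to classical tautologies.
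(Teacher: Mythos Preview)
Your proof is correct and follows essentially the same approach as the paper's: induction on derivations for the forward direction (routing boxed axioms through \Cref{Lemma-70} and observing that the $(.)^\Boxin$-images of $\underline{\sf T}$ and $\underline{\sf P}$ have classically valid consequents), and for the converse the chain \Cref{Lemma-A-ABoxin} $\to$ \Cref{Theorem-Kripke-completeness-ihatglchatt} $\to$ \Cref{Corol-truth-forcing} $\to$ soundness of $\GL$ on finite irreflexive transitive frames. Your explicit handling of the unboxed trace principle and your invocation of \Cref{Remark2} to justify the passage to $\kcal^I_c$ are slightly more detailed than the paper's presentation, but the underlying argument is identical.
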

\begin{proof}
We use induction on the proof  $\ihatglchatt\vdash A$ 
 and show $\GL\vdash A^\Boxin$:
 \begin{itemize}
 \item $A=\Box B$ and $\iglct\vdash B$: by \Cref{Lemma-70} we have 
 $\GL\vdash B^\Box$ and hence by necessitation $\GL\vdash \Box B^\Box$.
 \item $\igl\vdash A$:  by \Cref{Lemma-3} we have $\igl\vdash A^\Boxin$.
 \item $A=B\vee \neg B$: Then $A^\Boxin=B^\Boxin\vee\neg B^\Boxin$ 
 which is valid in $\GL$.
 \item $\ihatglchatt\vdash B$ and $\ihatglchatt\vdash B\to A$ 
 with lower proof length than the one for $A$: by induction hypothesis we 
 have $\GL\vdash B^\Boxin$ and $\GL\vdash B^\Boxin\to A^\Boxin$, 
 which implies 
 $\GL\vdash A^\Boxin$, as desired.
\end{itemize} 
For the other way around, let $\ihatglchatt\nvdash A$. Then by $\Box \CP$  we have 
$A^\Boxin\leftrightarrow A$,  and then 
we may deduce $\ihatglchatt\nvdash A^\Boxin$.
By \Cref{Theorem-Kripke-completeness-ihatglchatt}, there exists some 
quasi-classical perfect Kripke model $\kcal$ and some boolean interpretation $I$
such that $\kcal,\alpha,I\not\models A^\Boxin$. \Cref{Corol-truth-forcing} implies 
$\kcal,\alpha,I\not\models_c A^\Boxin$, which by soundness of $\GL$ for classical Kripke models, implies $\GL\nvdash A^\Boxin$.
\end{proof}

\begin{theorem}\label{Reduction-(-to-sigma)-PA*-PA}
$\ihatglchatt =\PL(\PA^*,\PA)\leq
\PLS(\PA^*,\PA)=\ihatglchattsigma$.
\end{theorem}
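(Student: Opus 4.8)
The plan is to unpack the claim through the notational convention for the symbol $\leq$: the assertion bundles three facts, namely (1) the reduction $\ac{\ihatglchatt}{\PA^*}{\PA}\leq\acs{\ihatglchattsigma}{\PA^*}{\PA}$, (2) $\ihatglchatt=\PL(\PA^*,\PA)$, and (3) $\PLS(\PA^*,\PA)=\ihatglchattsigma$. Item (3) is exactly \Cref{Reduction-sigma-(PA*-to-PA)-PA}, which in particular also hands us the arithmetical completeness $\acs{\ihatglchattsigma}{\PA^*}{\PA}$ for free. For item (2) the arithmetical soundness of $\ihatglchatt$ is routine (induction on derivations, exactly as for the sibling systems) and I would leave it to the reader; its arithmetical \emph{completeness} then follows from item (1) together with $\acs{\ihatglchattsigma}{\PA^*}{\PA}$ by \Cref{Theorem-Reduction-1}. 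Hence the only genuine work is to exhibit the reduction of item (1), and this will be a ``substitution'' reduction in the sense of \cref{Section-Reduction-tool}, modelled on \Cref{Reduction-(-to-sigma)-PA*-HA}.

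To define $f,\fbar$, I would set $f(A):=\top$ whenever $\ihatglchatt\vdash A$. When $\ihatglchatt\nvdash A$, \Cref{lemma-gl-ihatglchatt} gives $\GL\nvdash A^\Boxin$, and then \Cref{Remark-reduction-GL-GLS} supplies a propositional $(.)^\Boxin$-substitution $\tau$ with ${\GLV}\nvdash\tau(A^\Boxin)$; I put $f(A):=\tau(A)$ and $\fbar(\sigma):=\sigma_{_{\sf PA^*}}\circ\tau$. By the definition of the propositional closure $\wit{\sigma}$ (composition of $\sigma$ with an $\lcal_\Box$-substitution through the recursively axiomatizable theory $\PA^*$), we automatically get $\fbar(\sigma)\in\wit{\sigma}$.

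For R1 I would argue contrapositively: assuming $\ihatglchatt\nvdash A$, chain $\GL\nvdash A^\Boxin$ (\Cref{lemma-gl-ihatglchatt}) to ${\GLV}\nvdash\tau(A^\Boxin)$ (\Cref{Remark-reduction-GL-GLS}); since $\tau$ is a $(.)^\Boxin$-substitution, \Cref{Lemma-Boxin-sub} gives ${\sf iK4V}\vdash\tau(A^\Boxin)\lr(\tau(A))^\Boxin$ and hence ${\GLV}\nvdash(\tau(A))^\Boxin$; finally \Cref{Lemma-ihatglchatsigma} converts this into $\ihatglchattsigma\nvdash\tau(A)=f(A)$. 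For R2, suppose a $\Sigma_1$-substitution $\sigma$ satisfies $\PA\nvdash\sigma_{_{\sf PA^*}}(f(A))$. This forces $\ihatglchatt\nvdash A$ (otherwise $f(A)=\top$ and $\sigma_{_{\sf PA^*}}(\top)$ is trivially provable), so $f(A)=\tau(A)$; the substitution identity $[\sigma_{_{\sf PA^*}}\circ\tau]_{_{\sf PA^*}}(A)=\sigma_{_{\sf PA^*}}(\tau(A))=\sigma_{_{\sf PA^*}}(f(A))$ then shows $\PA\nvdash[\fbar(\sigma)]_{_{\sf PA^*}}(A)$, i.e.\ $\fbar(\sigma)\in\wit{A;\PA^*,\PA;\Gamma}$ with $\Gamma$ the set of all arithmetical sentences, so that $\fbar(\sigma)$ being a $\Gamma$-substitution is automatic.

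The comforting feature here, in contrast with the $\HA$-based \Cref{Reduction-(-to-sigma)-PA*-HA}, is that no change of base theory occurs: both provability logics live over $\PA^*$ and are measured in $\PA$, so none of the Beeson--Visser translation lemmas relating $\sigma_{_{\sf PA^*}}$ to $\sigma_{_{\sf PA}}$ are needed and R2 collapses to the bare substitution identity. I expect the single point demanding care to be the box-translation bookkeeping inside R1 — precisely the passage from $\tau(A^\Boxin)$ to $(\tau(A))^\Boxin$ via \Cref{Lemma-Boxin-sub}, which relies on ${\GLV}$ extending ${\sf iK4V}$, exactly as the companion theorem tacitly uses for $\iglphatsigma$. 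Everything else is routine verification of R0--R2.
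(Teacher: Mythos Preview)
Your proof is correct and follows the paper's overall strategy: the same definition of $f$ and $\fbar$ via \Cref{lemma-gl-ihatglchatt} and \Cref{Remark-reduction-GL-GLS}, and the same chain for R1 through \Cref{Lemma-Boxin-sub} and \Cref{Lemma-ihatglchatsigma}.

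The one genuine difference is in R2. The paper, apparently following the template of \Cref{Reduction-(-to-sigma)-PA*-HA}, detours through $\PA$ and the $(.)^\Boxin$ translation: it passes from $\PA\nvdash\sigma_{_{\sf PA^*}}(\tau(A))$ to $\PA\nvdash\sigma_{_{\sf PA}}(\tau(A)^\Boxin)$ via \Cref{Lemma-Properties of Box translation 2}, then to $\PA\nvdash\sigma_{_{\sf PA}}(\tau(A^\Boxin))$ via \Cref{Lemma-Boxin-sub}, and back again. You correctly observe that this detour is unnecessary here, because both source and target live over the \emph{same} pair $(\PA^*,\PA)$: the bare substitution identity $[\sigma_{_{\sf PA^*}}\circ\tau]_{_{\sf PA^*}}(A)=\sigma_{_{\sf PA^*}}(\tau(A))$ (an easy induction on $A$, since $\tau$ commutes with $\Box$ and both interpretations use $\Prv{PA^*}{\cdot}$ for boxes) already delivers R2 in one line. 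Your route is cleaner and avoids any appeal to the Beeson--Visser lemmas; the paper's route has the virtue of uniformity with its sibling theorems but is overkill in this particular case.
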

\begin{proof} 
The arithmetical soundness of $\ihatglchatt$ is 
straightforward and left to the reader. 
By  \Cref{Reduction-sigma-(PA*-to-PA)-PA}
we have  
$\PLS(\PA^*,\PA)=\ihatglchattsigma$.
We must show
$$\ac{\ihatglchatt}{\PA^*}{\PA}\leq_{f,\fbar[]} 
\acs{\ihatglchattsigma}{\PA^*}{\PA}.$$  
Given $A\in\lcalb$,   if $\ihatglchatt\vdash A$, 
  define $f(A):=\top$. If $\ihatglchatt\nvdash A$, 
by \Cref{lemma-gl-ihatglchatt} we have 
$\GL\nvdash A^\Boxin$, and hence 
by \Cref{Remark-reduction-GL-GLS}
there exists some propositional $(.)^\Boxin$-substitution
 $\tau$ such that 
${\GLV}\nvdash \tau(A^\Boxin)$. 
Define $f(A):=\tau(A)$ and $\fbar (\sigma):=\sigma_{_{\sf PA^*}}\circ \tau$.
\begin{itemize}
\item[R1.] Let $\ihatglchatt\nvdash A$. By 
\Cref{lemma-gl-ihatglchatt} we have $\GL\nvdash A^\Boxin$ and then 
\Cref{Remark-reduction-GL-GLS}
 implies ${\GLV}\nvdash \tau(A^\Boxin)$, 
 in which $\tau$ is as used for the definition of $f(A)$. 
 Since $\tau$   is a $(.)^\Boxin$-substitution, 
 by \Cref{Lemma-Boxin-sub}
 we have ${\GLV}\nvdash (\tau(A))^\Boxin$. Then 
\Cref{Lemma-ihatglchatsigma} implies that 
 $\ihatglchattsigma\nvdash \tau(A)$, or in other words
 $\ihatglchattsigma\nvdash f(A)$.
\item[R2.] Let $\PA\nvdash \sigmapas{f(A)}$ for some 
$\Sigma_1$-substitution $\sigma$.  By definition of $f(A)$, 
we must have $\ihatglchatt\nvdash A$, otherwise $f(A):=\top$,
which contradicts $\PA\nvdash \sigmapas{f(A)}$. Hence 
$f(A)=\tau(A)$ for  some propositional $(.)^\Boxin$-substitution 
$\tau$. By \Cref{Lemma-Properties of Box translation 2} we have
$\PA\nvdash \sigmapa{\tau(A)^\Boxin}$. Since 
${\sf iK4}+{\sf CP_a}$ is included in 
$\PLS(\PA,\PA)={\GLV}$ (\Cref{Solovey}), 
by \Cref{Lemma-Boxin-sub}
we have $\PA\nvdash \sigmapa{\tau(A^\Boxin)}$. 
This implies that $\PA\nvdash [\fbar(\sigma)]_{_{\sf PA}}(A^\Boxin)$ and again by \Cref{Lemma-Properties of Box translation 2} we have $\PA\nvdash [\fbar(\sigma)]_{_{\sf PA^*}}(A)$.
\qedhere
\end{itemize}
\end{proof}

\begin{lemma}\label{lemma-gl-iglct}
For every $A\in\lcalb$, we have 
$\iglct\vdash A$ iff $\GL\vdash A^\Box$.
\end{lemma}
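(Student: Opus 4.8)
The plan is to prove the two directions separately, noting that the left-to-right implication is already in hand: the direction $\iglct\vdash A\Rightarrow\GL\vdash A^\Box$ is exactly \Cref{Lemma-70}, so no further argument is needed there. All the work lies in the converse $\GL\vdash A^\Box\Rightarrow\iglct\vdash A$, which I would establish contrapositively by transporting a refuting intuitionistic Kripke model of $\iglct$ to its classical collapse.

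First I would record that $\iglct={\sf iGLCT}$ contains both the completeness principle $\CP$ and its boxed form, hence proves every instance of $\Boxdot\CP$; since it also extends ${\sf iK4}$, \Cref{Lemma-A-ABoxin} gives $\iglct\vdash A\leftrightarrow A^\Box$. Therefore, assuming $\iglct\nvdash A$ we also have $\iglct\nvdash A^\Box$. Applying the completeness half of \Cref{Theorem-Propositional Completeness LC}, I obtain a perfect quasi-classical Kripke model $\kcal$ and a node $\alpha$ with $\kcal,\alpha\nVdash A^\Box$.

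Next I would convert this intuitionistic refutation into a classical one. Because $\kcal$ is semi-perfect and quasi-classical, \Cref{Corol-truth-forcing} yields $\kcal,\alpha\Vdash A^\Box\Longleftrightarrow\kcal,\alpha\models_c A^\Box$, so $\kcal,\alpha\not\models_c A^\Box$. By \Cref{Remark2}, the classical truth $\models_c$ on $\kcal$ is ordinary classical modal forcing over the collapsed frame $(K,\R,V)$, and since $\kcal$ is perfect this frame is finite with $\R$ irreflexive and transitive, i.e.\ a finite converse well-founded transitive frame validating $\GL$. Soundness of $\GL$ for such frames then gives $\GL\nvdash A^\Box$, which is the desired contrapositive.

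The main obstacle is precisely the semantic bridge of the third step: one must know that the intuitionistic forcing $\Vdash$ and the classical relation $\models_c$ coincide on boxed-translated formulas $A^\Box$ at quasi-classical nodes, and that the collapse of a perfect model is a legitimate $\GL$-frame. The first point is supplied verbatim by \Cref{Corol-truth-forcing}, and the second is immediate from the definition of \emph{perfect}; the remaining manipulations, together with the free forward direction from \Cref{Lemma-70}, are routine.
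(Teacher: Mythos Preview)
Your proposal is correct and follows essentially the same route as the paper's own proof: the forward direction is \Cref{Lemma-70}, and for the converse you argue contrapositively via $\iglct\vdash A\leftrightarrow A^\Box$, the completeness of $\iglct$ for perfect quasi-classical models (\Cref{Theorem-Propositional Completeness LC}), the collapse of $\Vdash$ to $\models_c$ on $A^\Box$ given by \Cref{Corol-truth-forcing}, and finally soundness of $\GL$ for the resulting finite irreflexive transitive frame. The only cosmetic difference is that the paper invokes the equivalence $A\leftrightarrow A^\Box$ as holding already in $\iglc$, whereas you derive it from \Cref{Lemma-A-ABoxin}; this is the same fact.
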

\begin{proof}
One may use induction on the proof  $\iglct\vdash A$ to 
show that ${\sf GL}\vdash A^\Box$. 
For the other direction, we reason contrapositively. 
Let $\iglct\nvdash A$. Since in $\iglc$ we have $A\leftrightarrow A^\Box$, we have $\iglct\nvdash A^\Box$.
 Hence by \Cref{Theorem-Propositional Completeness LC} there is 
 some perfect quasi-classical model $\kcal$ such that 
 $\kcal,\alpha\nVdash A^\Box$. Hence by \Cref{Corol-truth-forcing} $\kcal,\alpha\not\models_c A^\Box$. Since 
 $\models_c$ is just a classical semantics for the modal logic 
 ${\sf GL}$, by the soundness  of ${\sf GL}$ for 
 finite irreflexive Kripke models \cite[Chapter 2.2]{Smorynski-Book},
 we may deduce ${\sf GL}\nvdash A^\Box$, as desired.
\end{proof}

\begin{lemma}\label{lemma-glsigma-iglct}
For every $A\in\lcalb$, we have 
$\iglct\vdash A$ iff ${\GLV}\vdash A^\Box$.
\end{lemma}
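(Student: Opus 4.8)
The plan is to reduce the statement to the already–proved \Cref{lemma-gl-iglct}, which gives $\iglct\vdash A$ iff $\GL\vdash A^\Box$, so that all that remains is to bridge the gap between $\GL$ and ${\GLV}$ on $\Box$-translated formulas.

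For the forward direction, suppose $\iglct\vdash A$. By \Cref{lemma-gl-iglct} we get $\GL\vdash A^\Box$, and since ${\GLV}={\sf GLC_a}$ extends $\GL$ by the atomic completeness instances, every theorem of $\GL$ is a theorem of ${\GLV}$; hence ${\GLV}\vdash A^\Box$. This direction is routine.

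The heart of the argument is the backward direction, which I would prove contrapositively. Assume $\iglct\nvdash A$. Since $\iglc$, and a fortiori $\iglct$, proves $A\lr A^\Box$ (this is \Cref{Lemma-A-ABoxin}, as $\iglc$ proves every instance of $\Boxdot\CP$), we obtain $\iglct\nvdash A^\Box$. By the completeness half of \Cref{Theorem-Propositional Completeness LC} there is a perfect quasi-classical Kripke model $\kcal$ and a node $\alpha$ with $\kcal,\alpha\nVdash A^\Box$, and then \Cref{Corol-truth-forcing} gives $\kcal,\alpha\not\models_c A^\Box$. The classical collapse $\kcal_c=(K,\R,V)$ is a finite, irreflexive, transitive classical Kripke model; the key observation is that, because $\kcal$ is quasi-classical, we have $\R=\prec\subseteq\preccurlyeq$, so the $\preccurlyeq$-persistence of $V$ forces $V$ to be $\R$-persistent as well — that is, $\kcal_c$ is truth-ascending (if an atom holds at a node it holds at all its $\R$-successors). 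Invoking the soundness of ${\GLV}$ for truth-ascending classical models (the same soundness already used in the proof of \Cref{Lemma-ihatglchatsigma}), we conclude ${\GLV}\nvdash A^\Box$, which is exactly the contrapositive of what we need.

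The step I expect to be the main obstacle is guaranteeing that the countermodel for $A^\Box$ can be taken truth-ascending: ${\GLV}$ is \emph{not} sound for arbitrary $\GL$-countermodels, so one must exploit that quasi-classicality makes the classical collapse automatically $\R$-persistent. Should one prefer to start instead from a bare $\GL$-countermodel $(K,\sqsubset,V)$ of $A^\Box$, the fallback is to replace $V$ by $V'$ given by $\alpha\,V'\,p$ iff $\alpha\,V\,p$ and $\beta\,V\,p$ for every $\beta\sqsupset\alpha$; this $V'$ is truth-ascending, and a short induction on the structure of $A$ shows it leaves the value of $A^\Box$ unchanged at every node, the only nontrivial point being the atomic base case, where one uses that atoms occur in $A^\Box$ only inside $\Boxdot(\cdot)$ together with the transitive-frame identity $\Boxdot\Boxdot p\equiv\Boxdot p$.
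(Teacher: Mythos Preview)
Your proof is correct. The paper explicitly acknowledges two routes and writes out the second one, while you have taken the first. The paper's written argument chains together \Cref{Corollary-iglct-ihatglchattsigma} ($\iglct\vdash A$ iff $\ihatglchattsigma\vdash A^\Boxout$) and \Cref{Lemma-ihatglchatsigma} ($\ihatglchattsigma\vdash B$ iff ${\GLV}\vdash B^\Boxin$), and then closes with the ${\sf iK4}$-identity $A^\Box\lr(A^\Boxout)^\Boxin$ from \Cref{Remark-10}. Your approach instead reruns the Kripke-semantic argument of \Cref{lemma-gl-iglct} and adds the one missing ingredient: that in a perfect quasi-classical model $\R=\prec$, so the classical collapse is automatically truth-ascending and therefore sound for ${\GLV}$. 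Your route is more self-contained and makes the model-theoretic content explicit; the paper's route is shorter because it recycles two previously established equivalences and avoids touching Kripke models again. Both the direct argument and your fallback (modifying $V$ to $V'$) are valid, though the fallback is unnecessary here precisely because the countermodel produced by \Cref{Theorem-Propositional Completeness LC} is already truth-ascending.
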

\begin{proof}
There are two options (atleast) for the proof. First is that one repeat 
a similar argument of the proof o for \Cref{lemma-gl-iglct}. 
Second proof follows:
By 
\Cref{Corollary-iglct-ihatglchattsigma}, $\iglct\vdash A$ iff 
$\ihatglchattsigma\vdash A^\Boxout$, and 
\Cref{Lemma-ihatglchatsigma} implies $\ihatglchattsigma\vdash A^\Boxout$ iff ${\GLV}\vdash (A^\Boxout)^\Boxin$. Since 
${\sf iK4}\vdash A^\Box\lr (A^\Boxout)^\Boxin$, we have the desired result.
\end{proof}

\begin{theorem}\label{Reduction-(-to-sigma)-PA*-PA*}
$\iglct =\PL(\PA^*,\PA^*)\leq
\PLS(\PA^*,\PA^*)=\iglct$.
\end{theorem}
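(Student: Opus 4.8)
The plan is to mirror the two preceding reductions \Cref{Reduction-(-to-sigma)-PA*-HA,Reduction-(-to-sigma)-PA*-PA}, now in the degenerate situation where the source and target logics are the very same system $\iglct$. Items (2) and (3) of the reduction notation come for free: by Visser's theorem (recalled in \Cref{sec-definitions}) we already have $\PL(\PA^*,\PA^*)=\PLS(\PA^*,\PA^*)=\iglct$. Hence the entire task is the propositional reduction $\ac{\iglct}{\PA^*}{\PA^*}\leq_{f,\fbar[]}\acs{\iglct}{\PA^*}{\PA^*}$, which converts the full arithmetical completeness into the $\Sigma_1$ one.

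I would define $f$ as a substitution reduction, following \Cref{Reduction-(-to-sigma)-PA*-PA} almost verbatim. If $\iglct\vdash A$ put $f(A):=\top$; otherwise $\iglct\nvdash A$, so \Cref{lemma-gl-iglct} gives $\GL\nvdash A^\Box$, and \Cref{Remark-reduction-GL-GLS} produces a $(.)^\Boxin$-substitution $\tau$ with ${\GLV}\nvdash\tau(A^\Box)$; set $f(A):=\tau(A)$ and $\fbar[](\sigma):=\sigma_{_{\sf PA^*}}\!\circ\tau$, which lies in $\wit{\sigma}$ by construction. For R1 I reason contrapositively in the nontrivial case: \Cref{Lemma-Boxin-sub} gives ${\sf iK4V}\vdash\tau(A^\Box)\lr(\tau(A))^\Box$, usable inside ${\GLV}$, so ${\GLV}\nvdash\tau(A^\Box)$ upgrades to ${\GLV}\nvdash(\tau(A))^\Box$, and then \Cref{lemma-glsigma-iglct} yields $\iglct\nvdash\tau(A)=f(A)$; thus $\iglct\vdash f(A)$ can only occur when $\iglct\vdash A$.

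The payoff of both theories being $\PA^*$ is that R2 is nearly automatic, unlike the laborious R2 of \Cref{Reduction-(-to-sigma)-PA*-PA} where a $\PA$-versus-$\PA^*$ mismatch had to be bridged by the Beeson--Visser translation. Given a $\Sigma_1$-substitution $\sigma$ with $\PA^*\nvdash\sigma_{_{\sf PA^*}}(f(A))$ (which forces $\iglct\nvdash A$, hence $f(A)=\tau(A)$), I take $\fbar[](\sigma)=\sigma_{_{\sf PA^*}}\!\circ\tau$ and use the substitution-commutation identity underlying the ``Substitution'' case of \Cref{Section-Reduction-tool}, namely $(\sigma_{_{\sf PA^*}}\!\circ\tau)_{_{\sf PA^*}}(A)=\sigma_{_{\sf PA^*}}(\tau(A))=\sigma_{_{\sf PA^*}}(f(A))$; this shows $\fbar[](\sigma)$ is an (unrestricted) $\PA^*$-substitution refuting $A$, i.e.\ a member of $\wit{A;\PA^*,\PA^*;\Gamma}$ for $\Gamma$ the set of all sentences.

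The only genuinely delicate step is the R1 chain: I must confirm that the $\tau$ from \Cref{Remark-reduction-GL-GLS} is truly a $(.)^\Boxin$-substitution and that the ${\sf iK4V}$-equivalence of \Cref{Lemma-Boxin-sub} holds in ${\GLV}$, so that \Cref{lemma-gl-iglct} and \Cref{lemma-glsigma-iglct} can be threaded across the two $(.)^\Box$ translations without a gap. I note that, since the two logics literally coincide, one may instead reduce by the identity ($f=\fbar[]=$ identity), every $\Sigma_1$-refutation being already an unrestricted refutation; I keep the $\tau$-route only to display this entry uniformly with its neighbours in \Cref{Diag-PA^*-reductions}.
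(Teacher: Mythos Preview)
Your proposal is correct and, for the definition of $f,\fbar[]$ and for R1, it coincides with the paper's proof verbatim: both use \Cref{lemma-gl-iglct}, \Cref{Remark-reduction-GL-GLS}, \Cref{Lemma-Boxin-sub} and \Cref{lemma-glsigma-iglct} in exactly the order you give.

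The difference lies in R2. The paper does \emph{not} use the direct commutation $(\sigma_{_{\sf PA^*}}\!\circ\tau)_{_{\sf PA^*}}(A)=\sigma_{_{\sf PA^*}}(\tau(A))$ that you invoke; instead it unwinds through $\PA$: from $\PA^*\nvdash\sigmapas{\tau(A)}$ it passes to $\PA\nvdash(\sigmapas{\tau(A)})^{\PA}$, applies \Cref{Label-HA-HA*-Box-trans} to rewrite this as $\PA\nvdash\sigmapa{\tau(A)^\Box}$, then uses \Cref{Lemma-Boxin-sub} inside $\PLS(\PA,\PA)={\GLV}$ to commute $\tau$ past $(.)^\Box$, and finally reverses the first two steps. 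Your route is shorter and fully legitimate---the commutation identity holds syntactically for any modal substitution $\tau$ and any interpretation $\alpha_{_\SFT}$, since both sides agree on atomics and commute with $\Box$ via the same provability predicate. The paper's longer path is presumably kept only for uniformity with the neighbouring \Cref{Reduction-(-to-sigma)-PA*-HA,Reduction-(-to-sigma)-PA*-PA,Reduction-(-to-sigma)-PA*-nat}, where a genuine $\PA/\PA^*$ mismatch forces the Beeson--Visser detour. Your final remark that the identity reduction already works (every $\Sigma_1$-refutation being \emph{a fortiori} an unrestricted refutation) is also correct and is not mentioned in the paper.
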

\begin{proof} 
The arithmetical soundness of $\iglct$  for 
general substitutions, i.e.~$\mathcal{AS}(\iglct;\PA^*,\PA^*)$,
is straightforward and left to the reader. 
By  \Cref{Reduction-Sigma-PA*-(PA*-to-PA)}
we have  
$\PLS(\PA^*,\PA^*)=\iglct$.
We must show
$$\ac{\iglct}{\PA^*}{\PA^*}\leq_{f,\fbar[]} 
\acs{\iglct}{\PA^*}{\PA^*}.$$  
Given $A\in\lcalb$,   if $\iglct\vdash A$, 
  define $f(A):=\top$. If $\iglct\nvdash A$, 
by \Cref{lemma-gl-iglct} we have 
$\GL\nvdash A^\Box$, and hence 
by \Cref{Remark-reduction-GL-GLS}
there exists some propositional $(.)^\Boxin$-substitution
 $\tau$ such that 
${\GLV}\nvdash \tau(A^\Box)$. 
Define $f(A):=\tau(A)$ and $\fbar (\sigma):=\sigma_{_{\sf PA^*}}\circ \tau$.
\begin{itemize}
\item[R1.] Let $\iglct\nvdash A$. By 
\Cref{lemma-gl-iglct} we have $\GL\nvdash A^\Box$ and then 
\Cref{Remark-reduction-GL-GLS}
 implies ${\GLV}\nvdash \tau(A^\Box)$, 
 in which $\tau$ is as used for the definition of $f(A)$. 
 Since $\tau$   is a $(.)^\Boxin$-substitution, 
 by \Cref{Lemma-Boxin-sub}
 we have ${\GLV}\nvdash \tau(A)^\Box$. Then 
\Cref{lemma-glsigma-iglct} implies that 
 $\iglct\nvdash \tau(A)$, or in other words
 $\iglct\nvdash f(A)$.
\item[R2.] Let $\PA^*\nvdash \sigmapas{f(A)}$ for some 
$\Sigma_1$-substitution $\sigma$.  By definition of $f(A)$, 
we must have $\iglct\nvdash A$, otherwise $f(A):=\top$,
which contradicts $\PA^*\nvdash \sigmapas{f(A)}$. Hence 
$f(A)=\tau(A)$ for  some propositional $(.)^\Boxin$-substitution 
$\tau$. Then we have $\PA\nvdash \sigmapas{\tau(A)}^\PA$ and 
by \Cref{Label-HA-HA*-Box-trans} we have
$\PA\nvdash \sigmapa{\tau(A)^\Box}$. Since 
${\sf iK4}+{\sf CP_a}$ is included in 
$\PLS(\PA,\PA)={\GLV}$ (\Cref{Solovey}), 
by \Cref{Lemma-Boxin-sub}
we have $\PA\nvdash \sigmapa{\tau(A^\Box)}$. 
This implies that $\PA\nvdash [\fbar(\sigma)]_{_{\sf PA}}(A^\Box)$ 
and again by \Cref{Label-HA-HA*-Box-trans} we have 
$\PA\nvdash ([\fbar(\sigma)]_{_{\sf PA^*}}(A))^\PA$.  Hence 
$\PA^*\nvdash [\fbar(\sigma)]_{_{\sf PA^*}}(A)$.
\qedhere
\end{itemize}
\end{proof}

\begin{lemma}\label{lemma-ihatglchatts-gls}
For every $A\in\lcalb$, 
we have 
$\ihatglchatts\vdash A$ iff 
$\GLS\vdash A^\Boxin$.
\end{lemma}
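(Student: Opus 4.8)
The plan is to prove the two implications separately, following the template already used for the $\PA$-relative analogue \Cref{lemma-gl-ihatglchatt} and its truth $\Sigma_1$ counterpart \Cref{Lemma-ihatglchattssigma}. The only features distinguishing $\ihatglchatts$ from $\ihatglchatt$ are the complete reflection schema $\underline{\sf S^*}$ on the intuitionistic side and, correspondingly, the reflection schema $\underline{\sf S}$ that upgrades $\GL$ to $\GLS$ on the classical side; every other axiom is shared, so I expect to reuse those cases essentially verbatim and only supply a fresh argument for the reflection axioms.

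For the direction from $\ihatglchatts\vdash A$ to $\GLS\vdash A^\Boxin$ I would induct on the length of the proof. When $A$ is an axiom of the form $\Box B$ coming from $\overline{\sf C}$ or $\overline{\sf T}$, we have $\iglct\vdash B$, so \Cref{Lemma-70} gives $\GL\vdash B^\Box$; since $(\Box B)^\Boxin=\Box B^\Box$, admissibility of necessitation in $\GL$ yields $\GL\vdash\Box B^\Box$, whence $\GLS\vdash A^\Boxin$. Theorems of $\igl$ are handled by \Cref{Lemma-3}; the instances of $\underline{\sf PEM}$ and of $\underline{\sf T}$ become classical tautologies under $(.)^\Boxin$ (their $(.)^\Boxin$-consequents have the shape $C\vee\neg C$ and $C\vee(C\to D)$), hence are provable in the classical system $\GLS$; and modus ponens is immediate because $(.)^\Boxin$ commutes with $\to$. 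The genuinely new case is $\underline{\sf S^*}$, i.e.\ $A=\Box B\to B^\Box$, where $A^\Boxin=\Box B^\Box\to(B^\Box)^\Boxin$. Here I would invoke the identity ${\sf iK4}\vdash(B^\Box)^\Boxin\lr B^\Box$ (a routine induction on $B$ in the spirit of \Cref{Remark-10} and \Cref{Lemma-BoxdotABox-ABox}) together with the reflection axiom $\underline{\sf S}$, which gives $\GLS\vdash\Box B^\Box\to B^\Box$; combining the two delivers $\GLS\vdash A^\Boxin$.

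For the converse I would reason contrapositively. Assuming $\ihatglchatts\nvdash A$, the presence of $\overline{\sf C}=\Box\CP$ lets me apply \Cref{Lemma-A-ABoxin} to obtain $A\lr A^\Boxin$ in $\ihatglchatts$, hence $\ihatglchatts\nvdash A^\Boxin$. Then \Cref{Theorem-Kripke-completeness-ihatglchatts} supplies a quasi-classical perfect Kripke model $\kcal$, a boolean interpretation $I$, and an $A^\Boxin$-sound node $\alpha$ with $\kcal,\alpha,I\not\models A^\Boxin$, and \Cref{Corol-truth-forcing} converts this into $\kcal,\alpha,I\not\models_c A^\Boxin$. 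Finally, viewing $\kcal$ at $\alpha$ as the classical $\GL$-model of \Cref{Remark2}, the $A^\Boxin$-soundness of $\alpha$ validates exactly the reflection instances $\Box B\to B$ for boxed subformulas of $A^\Boxin$; so if $\GLS\vdash A^\Boxin$ held, the soundness of $\GLS$ in its subformula-reflection form (classical soundness of $\GL\vdash\bigwedge_{\Box B\in\sub{A^\Boxin}}(\Box B\to B)\to A^\Boxin$) would force $A^\Boxin$ to be true at $\alpha$, contradicting the refutation. Hence $\GLS\nvdash A^\Boxin$.

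The main obstacle I anticipate lies in the reflection case. Making $\GLS\vdash\Box B^\Box\to(B^\Box)^\Boxin$ airtight depends on the identity ${\sf iK4}\vdash(B^\Box)^\Boxin\lr B^\Box$, and in the backward direction one must be careful that $A^\Boxin$-soundness at $\alpha$ genuinely suffices for $\GLS$-soundness rather than merely for $\GL$-soundness; this is precisely where the reduction of full reflection to reflection on subformulas (the classical analogue of \Cref{theorem-Kripke-completeness-ihatglchats}) is essential.
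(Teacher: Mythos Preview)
Your proposal is correct and follows essentially the same route as the paper: forward by induction on the $\ihatglchatts$-proof (reusing the cases from \Cref{lemma-gl-ihatglchatt} and adding the $\underline{\sf S^*}$ case via ${\sf iK4}\vdash A^\Boxin\lr(\Box B^\Box\to B^\Box)$), and backward contrapositively via \Cref{Lemma-A-ABoxin}, \Cref{Theorem-Kripke-completeness-ihatglchatts}, \Cref{Corol-truth-forcing}, and subformula-restricted $\GLS$-soundness at an $A^\Boxin$-sound node. Your explicit treatment of the $\underline{\sf T}$ instances (as classical tautologies under $(.)^\Boxin$) is a small clarification the paper leaves implicit.
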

\begin{proof}
We use induction on the proof  $\ihatglchatts\vdash A$ 
 and show $\GLS\vdash A^\Boxin$. 
 All cases are similar to the one for item 3 above, except for 
 \begin{itemize}
 \item $A=\Box B\to B^\Box$:  
 since  ${\sf iK4}\vdash A^\Boxin\leftrightarrow (\Box B^\Box \to B^\Box)$, 
 we may deduce $\GLS\vdash A^\Boxin$.
\end{itemize} 
For the other way around, let $\ihatglchatts\nvdash A$. 
Then by $\Box \CP$  we have 
$A^\Boxin\leftrightarrow A$,  and then 
we may deduce $\ihatglchatts\nvdash A^\Boxin$.
By \Cref{Theorem-Kripke-completeness-ihatglchatts}, there exists some 
quasi-classical perfect Kripke model $\kcal$ and some boolean interpretation $I$
such that $\kcal,\alpha,I\not\models A^\Boxin$
and $\kcal$ is $A^\Boxin$-sound at $\alpha$. \Cref{Corol-truth-forcing} implies 
$\kcal,\alpha,I\not\models_c A^\Boxin$, which by soundness of $\GLS$ (restricted to sub-formulas of $A^\Boxin$)
 for  $A^\Boxin$-sound classical Kripke models, implies $\GLS\nvdash A^\Boxin$.
\end{proof}

\begin{theorem}\label{Reduction-(-to-sigma)-PA*-nat}
$\ihatglchatts =\PL(\PA^*,\nat)\leq
\PLS(\PA^*,\nat)=\ihatglchattssigma$.
\end{theorem}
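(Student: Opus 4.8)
The statement packages three claims, following the convention of \Cref{sec-definitions}: that $\PL(\PA^*,\nat)=\ihatglchatts$, that $\PLS(\PA^*,\nat)=\ihatglchattssigma$, and that the arithmetical completeness $\ac{\ihatglchatts}{\PA^*}{\nat}$ reduces to $\acs{\ihatglchattssigma}{\PA^*}{\nat}$. The second claim is already available from \Cref{Reduction-sigma-(PA*-to-PA)-nat}, so the plan is to establish the reduction and then invoke \Cref{Theorem-Reduction-1}: once the reduction and the right-hand characterization are in hand, arithmetical completeness of $\ihatglchatts$ follows, and combined with its (routine) arithmetical soundness this yields $\PL(\PA^*,\nat)=\ihatglchatts$. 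The whole argument is the exact $\nat$-analogue of \Cref{Reduction-(-to-sigma)-PA*-PA}, with $\PA$ replaced by $\nat$, $\GL$ by $\GLS$, ${\GLV}$ by ${\GLSV}$, and the supporting \Cref{lemma-gl-ihatglchatt,Lemma-ihatglchatsigma} replaced by their truth-versions \Cref{lemma-ihatglchatts-gls,Lemma-ihatglchattssigma}.

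For the reduction I would use the ``substitution'' case of \Cref{Section-Reduction-tool}. Define $f(A):=\top$ when $\ihatglchatts\vdash A$; otherwise \Cref{lemma-ihatglchatts-gls} gives $\GLS\nvdash A^\Boxin$, so \Cref{Remark-reduction-GL-GLS} supplies a propositional $(.)^\Boxin$-substitution $\tau$ with ${\GLSV}\nvdash\tau(A^\Boxin)$, and I set $f(A):=\tau(A)$ and $\fbar(\sigma):=\sigma_{_{\sf PA^*}}\circ\tau$. Condition R1 then chains \Cref{lemma-ihatglchatts-gls}, \Cref{Remark-reduction-GL-GLS}, the commutation ${\GLSV}\nvdash(\tau(A))^\Boxin$ from \Cref{Lemma-Boxin-sub}, and finally \Cref{Lemma-ihatglchattssigma} to conclude $\ihatglchattssigma\nvdash f(A)$. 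Note that R1 is purely a matter of derivability and needs no arithmetical input.

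The delicate part will be R2, where I must transport a $\Sigma_1$ refutation of $f(A)$ back to one of $A$. Assuming $\nat\not\models\sigmapas{f(A)}$ forces $\ihatglchatts\nvdash A$ (otherwise $f(A)=\top$), so $f(A)=\tau(A)$; then \Cref{Lemma-Properties of Box translation 2} gives $\nat\not\models\sigmapa{(\tau(A))^\Boxin}$, the inclusion of ${\sf iK4}+{\sf CP_a}$ in $\PLS(\PA,\nat)={\GLSV}$ (via \Cref{Solovey}) together with \Cref{Lemma-Boxin-sub} lets me commute $(.)^\Boxin$ past $\tau$ to obtain $\nat\not\models\sigmapa{\tau(A^\Boxin)}$, i.e.~$\nat\not\models[\fbar(\sigma)]_{_{\sf PA}}(A^\Boxin)$, and a second application of \Cref{Lemma-Properties of Box translation 2} yields $\nat\not\models[\fbar(\sigma)]_{_{\sf PA^*}}(A)$, which is exactly what R2 demands. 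The main obstacle is purely bookkeeping: keeping straight the modal translation $(.)^\Boxin$ and the Beeson--Visser translation implicit in the $\PA^*$-interpretation, and checking that each invocation of \Cref{Lemma-Properties of Box translation 2} and \Cref{Lemma-Boxin-sub} is applied in the correct direction. No genuinely new idea beyond those already isolated in \Cref{lemma-ihatglchatts-gls,Lemma-ihatglchattssigma} is required.
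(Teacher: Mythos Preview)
Your proposal is correct and follows essentially the same approach as the paper: the same definition of $f$ and $\fbar$, the same chain \Cref{lemma-ihatglchatts-gls} $\to$ \Cref{Remark-reduction-GL-GLS} $\to$ \Cref{Lemma-Boxin-sub} $\to$ \Cref{Lemma-ihatglchattssigma} for R1, and the same two applications of \Cref{Lemma-Properties of Box translation 2} bracketing \Cref{Lemma-Boxin-sub} for R2. Your identification of this as the $\nat$-analogue of \Cref{Reduction-(-to-sigma)-PA*-PA} is exactly how the paper proceeds.
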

\begin{proof} 
The arithmetical soundness of $\ihatglchatts$  
is straightforward and left to the reader. 
By  \Cref{Reduction-sigma-(PA*-to-PA)-nat}
we have  
$\PLS(\PA^*,\nat)=\ihatglchattssigma$.
We must show
$$\ac{\ihatglchatts}{\PA^*}{\nat}\leq_{f,\fbar[]} 
\acs{\ihatglchattssigma}{\PA^*}{\nat}.$$  
Given $A\in\lcalb$,   if $\ihatglchatts\vdash A$, 
define $f(A):=\top$. If $\ihatglchatts\nvdash A$, 
by \Cref{lemma-ihatglchatts-gls} we have 
$\GLS\nvdash A^\Boxin$, and hence 
by \Cref{Remark-reduction-GL-GLS}
there exists some propositional $(.)^\Boxin$-substitution
 $\tau$ such that 
${\GLSV}\nvdash \tau(A^\Boxin)$. 
Define $f(A):=\tau(A)$ and $\fbar (\sigma):=\sigma_{_{\sf PA^*}}\circ \tau$.
\begin{itemize}
\item[R1.] Let $\ihatglchatts\nvdash A$. By 
\Cref{lemma-ihatglchatts-gls} we have $\GLS\nvdash A^\Boxin$
 and then \Cref{Remark-reduction-GL-GLS}
 implies ${\GLSV}\nvdash \tau(A^\Boxin)$, 
 in which $\tau$ is as used for the definition of $f(A)$. 
 Since $\tau$   is a $(.)^\Boxin$-substitution, 
 by \Cref{Lemma-Boxin-sub}
 we have ${\GLSV}\nvdash \tau(A)^\Boxin$. Then 
\Cref{Lemma-ihatglchattssigma}  implies that 
 $\ihatglchattssigma\nvdash \tau(A)$, or in other words
 $\ihatglchattssigma\nvdash f(A)$.
\item[R2.] Let $\nat\not\models \sigmapas{f(A)}$ for some 
$\Sigma_1$-substitution $\sigma$.  
By definition of $f(A)$,  we must have 
$\ihatglchatts\nvdash A$, otherwise $f(A):=\top$,
which contradicts $\nat\not\models\sigmapas{f(A)}$. Hence 
$f(A)=\tau(A)$ for  some propositional $(.)^\Boxin$-substitution 
$\tau$. We have $\nat\not\models\sigmapas{\tau(A)}$ and 
by  \Cref{Lemma-Properties of Box translation 2}we have
$\nat\not\models\sigmapa{\tau(A)^\Boxin}$. Since 
${\sf iK4}+{\sf CP_a}$ is included in 
$\PLS(\PA,\nat)={\GLSV}$ (\Cref{Solovey}), 
by \Cref{Lemma-Boxin-sub}
we have $\nat\not\models\sigmapa{\tau(A^\Boxin)}$. 
This implies that $\nat\not\models[\fbar(\sigma)]_{_{\sf PA}}(A^\Boxin)$ 
and again by  \Cref{Lemma-Properties of Box translation 2} we have 
$\nat\\not\models [\fbar(\sigma)]_{_{\sf PA^*}}(A)$.  
\qedhere
\end{itemize}
\end{proof}

\begin{theorem}\label{Reduction-PA*-(PA*-to-nat)}
$\iglct=\PL(\PA^*,\PA^*)\leq\PL(\PA^*,\nat)=\ihatglchatts$.
\end{theorem}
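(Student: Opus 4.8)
The plan is to carry out the non-$\Sigma_1$ twin of the already-established \Cref{Reduction-Sigma-PA*-(PA*-to-nat)}, with arbitrary arithmetical substitutions in place of $\Sigma_1$-substitutions. First I would record the two characterizations that are already in hand, which are exactly items (2) and (3) hidden in the notation of the statement: $\PL(\PA^*,\PA^*)=\iglct$ is Visser's theorem recalled just after \Cref{Definition-First-order-translation}, and $\PL(\PA^*,\nat)=\ihatglchatts$ is \Cref{Reduction-(-to-sigma)-PA*-nat}. Granting these (which also supply, for free, the arithmetical soundness $\mathcal{AC}$-side of $\iglct=\PL(\PA^*,\PA^*)$), the remaining content is the single reduction $\ac{\iglct}{\PA^*}{\PA^*}\leq_{f,\fbar[]}\ac{\ihatglchatts}{\PA^*}{\nat}$ of arithmetical completenesses, exactly as in \Cref{Definition-Reduction-PL}.

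For the reduction itself I would take $f(A):=\Box A$ and $\fbar$ the identity function, so that R0 is immediate. For R1, suppose $\ihatglchatts\vdash\Box A$. By the arithmetical soundness half of $\ihatglchatts=\PL(\PA^*,\nat)$, every arithmetical substitution $\sigma$ satisfies $\nat\models\sigmapas{\Box A}$; but by \Cref{Definition-Arithmetical substitutions} the sentence $\sigmapas{\Box A}$ is precisely $\Prv{\PA^*}{\sigmapas{A}}$, and since $\PA^*$ is recursively enumerable its provability predicate is correct, so $\nat\models\Prv{\PA^*}{\sigmapas{A}}$ yields $\PA^*\vdash\sigmapas{A}$. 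As $\sigma$ was arbitrary this says $A\in\PL(\PA^*,\PA^*)$, and the arithmetical completeness of $\iglct=\PL(\PA^*,\PA^*)$ then gives $\iglct\vdash A$, which is exactly R1.

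For R2, since $\fbar$ is the identity I only need to turn a refutation of $f(A)=\Box A$ over $\PL(\PA^*,\nat)$ into one of $A$ over $\PL(\PA^*,\PA^*)$: if $\nat\not\models\sigmapas{\Box A}=\Prv{\PA^*}{\sigmapas{A}}$ for some $\sigma$, then correctness of the provability predicate again gives $\PA^*\nvdash\sigmapas{A}$, which is the required refutation, and $\fbar[](\sigma)=\sigma\in\wit{\sigma}$ holds trivially. I do not expect a genuine obstacle here, the whole argument being the non-$\Sigma_1$ mirror of a reduction already performed; the only point worth isolating is the equivalence $\nat\models\Prv{\PA^*}{\phi}\Longleftrightarrow\PA^*\vdash\phi$, which is merely the soundness and $\Sigma_1$-completeness of the provability predicate of the r.e. theory $\PA^*$. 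As the parallel proofs note, R1 also admits a direct propositional argument through the Kripke completeness of \Cref{Theorem-Kripke-completeness-ihatglchatts}, which one could substitute for the indirect route if an arithmetic-free treatment were preferred.
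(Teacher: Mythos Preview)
Your proposal is correct and matches the paper's own proof essentially line for line: the paper also defines $f(A):=\Box A$ with $\fbar$ the identity, verifies R1 via the soundness of $\ihatglchatts=\PL(\PA^*,\nat)$ followed by the arithmetical completeness of $\iglct=\PL(\PA^*,\PA^*)$, and handles R2 by the same unwinding of $\sigmapas{\Box A}$. The only cosmetic difference is that the paper cites \Cref{Reduction-(-to-sigma)-PA*-PA*} rather than Visser's original result for the characterization $\PL(\PA^*,\PA^*)=\iglct$, and leaves the correctness of the $\PA^*$-provability predicate implicit where you spell it out.
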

\begin{proof} 
By \Cref{Reduction-(-to-sigma)-PA*-nat,Reduction-(-to-sigma)-PA*-PA*}
we have  $\PL(\PA^*,\PA^*)=\iglct$ and 
$\PL(\PA^*,\nat)=\ihatglchatts$.
We must show
$\ac{\iglct}{\PA^*}{\PA^*}\leq_{f,\fbar[]} \ac{\ihatglchatts}{\PA^*}{\nat}$.  
Given $A\in\lcalb$, define $f(A)=\Box A$ and $\fbar$ as identity function.  
\begin{itemize}
\item[R1.] Let $\ihatglchatts\vdash \Box A$. By soundness
of $\ihatglchatts=\PL(\PA^*,\nat)$, for every substitution $\sigma$
we have $\nat\models \sigmapas{\Box A}$ and hence 
$\PA^*\vdash \sigmapas{A}$. Then by arithmetical 
completeness of $\iglct=\PL(\PA^*,\PA^*)$, we have $\iglct\vdash A$.
\item[R2.] Let $\nat\not\models \sigmapas{\Box A}$. 
Then $\PA^*\nvdash \sigmapas{A}$, as desired. \qedhere
\end{itemize}
\end{proof}

\begin{theorem}\label{Reduction-PA*-(PA-to-HA)}
$\ihatglchatt=\PL(\PA^*,\PA)\leq\PL(\PA^*,\HA)=\iglchatthat$.
\end{theorem}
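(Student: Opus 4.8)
The plan is to observe first that the two characterisations demanded by the notation, namely $\ihatglchatt=\PL(\PA^*,\PA)$ and $\iglchatthat=\PL(\PA^*,\HA)$, are already in hand from \Cref{Reduction-(-to-sigma)-PA*-PA} and \Cref{Reduction-(-to-sigma)-PA*-HA}; so the whole task reduces to exhibiting $f,\bar f$ with $\ac{\ihatglchatt}{\PA^*}{\PA}\leq_{f,\bar f}\ac{\iglchatthat}{\PA^*}{\HA}$. Since the only change is the outer theory $\PA\rightsquigarrow\HA$, and that passage is governed by the negative translation (\Cref{Lemma-HA-PA-neg-translation}: $\PA\vdash B$ iff $\HA\vdash B^\neg$), I would take $f(A):=A^\negout$, exactly as in the $\Sigma_1$ sibling \Cref{Reduction-sigma-PA*-(PA-to-HA)}.

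For R1 I would copy that sibling verbatim: because $\HA\subseteq\PA$ the characterisations give $\iglchatthat\subseteq\ihatglchatt$, so $\iglchatthat\vdash A^\negout$ forces $\ihatglchatt\vdash A^\negout$; and as $\ihatglchatt$ contains $\underline{\sf P}$ and $A^\negout$ is classically equivalent to $A$, we conclude $\ihatglchatt\vdash A$.

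R2 is where the real work lies. The first step is a general-substitution identity $\HA\vdash\sigmapas{A^\negout}\lr\neg\neg\,\sigmapas{A}$, proved by a short induction on $A$; crucially $(.)^\negout$ only prefixes a double negation at atoms and at boxed subformulas and never recurses into them, so---unlike \Cref{Lemma-neg-translate-1st-order}---this needs no $\Sigma_1$-hypothesis on $\sigma$. Thus $\sigma$ refutes $A^\negout$ in $\HA$ exactly when $\HA\nvdash\neg\neg\,\sigmapas{A}$. The delicate point, which I expect to be the main obstacle, is that $\bar f_A:=\mathrm{id}$ is \emph{not} admissible here: the double-negation shift is underivable in $\HA$, so there exist general $\sigma$ with $\HA\nvdash\neg\neg\,\sigmapas{A}$ while nonetheless $\PA\vdash\sigmapas{A}$---already for $A=p$ with $\sigma(p)$ a classically valid decidability statement $\forall x(\theta(x)\vee\neg\theta(x))$ for a $\Sigma_1$ predicate $\theta$. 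Hence $\bar f_A$ must truly transform $\sigma$, yet stay inside the propositional closure $\wit\sigma$.

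The escape, and the step I expect to demand the most care, is to route the refutation through provability. From $\ihatglchatt\nvdash A$ one gets $\GL\nvdash A^\Boxin$ (\Cref{lemma-gl-ihatglchatt}) and then, by \Cref{Remark-reduction-GL-GLS} and \Cref{Lemma-Boxin-sub}, a fixed $(.)^\Boxin$-substitution witnessing this in $\GLV$; composing $\sigma$ with a boxing substitution and an $\HA$-interpretation replaces each atom of the relevant refuter by a $\Sigma_1$ sentence such as $\Prv{HA}{\sigma(p)}$, on which the negative translation does commute (\Cref{Lemma-Sigma-neg-translation}, \Cref{Lemma-neg-translate-1st-order}) and for which the transfer $\HA\nvdash(\cdot)\Rightarrow\PA\nvdash(\cdot)$ is valid. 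Defining $\bar f_A(\sigma)$ as this composite---an element of $\wit\sigma$ by construction---should then produce a genuine $\PA^*/\PA$-refuter of $A$. Verifying that this composite simultaneously refutes $A$ and is correctly matched, via the equivalence above, to the given $\HA$-refuter of $A^\negout$ is the heart of the argument.
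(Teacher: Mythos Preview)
Your diagnosis of R2 is correct and, remarkably, sharper than the paper's own proof. The paper establishes this theorem by precisely the route you reject: it sets $f(A):=A^{\negout}$ with $\bar f_A$ the identity, and for R2 invokes \Cref{Lemma-neg-translate-1st-order} on an arbitrary substitution~$\sigma$---but that lemma is stated, and its proof via \Cref{Lemma-Sigma-neg-translation} only goes through, for $\Sigma_1$-substitutions. Your counterexample is to the point: taking $A=p$ and $\sigma(p)=\forall x\,(\theta(x)\lor\neg\theta(x))$ for the halting predicate~$\theta$, one has $\PA\vdash\sigma(p)$ trivially, while $\HA\nvdash\neg\neg\,\sigma(p)$ because this sentence is outright refuted in realizability extensions of $\HA$. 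Thus $\sigma$ refutes $A^{\negout}$ over $(\PA^*,\HA)$ yet fails to refute $A$ over $(\PA^*,\PA)$, and the identity map does not satisfy R2. You have located a genuine gap in the paper's argument; note also that this reduction does not appear in the paper's summary diagrams and is not used elsewhere.

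Your proposed repair, however, does not yet close the gap. The $(.)^{\Boxin}$-substitution $\tau$ supplied by \Cref{lemma-gl-ihatglchatt} and \Cref{Remark-reduction-GL-GLS} depends only on $A$ and certifies that $\tau(A)$ admits some $\Sigma_1$-refuter; it carries no information about the particular $\sigma$ that R2 hands you. Replacing atoms by $\Prv{HA}{\sigma(p)}$ does produce a $\Sigma_1$-substitution $\sigma':=\sigma_{_{\HA}}\!\circ(p\mapsto\Box p)\in\wit\sigma$, but nothing connects the hypothesis $\HA\nvdash\sigmapas{A^{\negout}}$ to the desired conclusion $\PA\nvdash\sigma'_{_{\PA^*}}(A)$: these two arithmetical sentences are simply unrelated. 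The constraint $\bar f_A(\sigma)\in\wit\sigma$ is precisely what forbids discarding $\sigma$ in favour of a refuter whose existence is already guaranteed by $\ihatglchatt=\PL(\PA^*,\PA)$. As it stands, neither the paper's proof nor your sketch establishes the claimed reduction; a correct argument, if one exists, will require a different choice of $f$ or a genuinely new idea for~$\bar f$.
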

\begin{proof}
We already have 
 $\iglchatthat=\PL (\PA^*,\HA)$ and 
  $\PL(\PA^*,\PA)=\ihatglchatt$  by 
\Cref{Reduction-(-to-sigma)-PA*-PA,Reduction-(-to-sigma)-PA*-HA}.  It is enough here to show that 
$\ac{\ihatglchatt}{\PA^*}{\PA}\leq_{f,\fbar[]}\ac{\iglchatthat}{\PA^*}{\HA}$. Given $A\in\lcalb$,
let $f(A):=(A)^\negout$ and $\fbar$ as identity function. 
\begin{itemize}
\item[R1.] If  $\iglchatthat\vdash A^\negout$ then  
$\ihatglchatt\vdash A^\negout$, and  since 
we have $\PEM$ in $\ihatglchattsigma$, we may conclude 
$\ihatglchattsigma\vdash A$.
\item[R2.] If $\HA\nvdash\sigmapas{A^\negout}$,  
for a  substitution $\sigma$, 
then by  \Cref{Lemma-neg-translate-1st-order} we have $\HA\nvdash(\sigmapas{A})^\neg$. Hence by \Cref{Lemma-HA-PA-neg-translation} we have $\PA\nvdash \sigmapas{A}$. \qedhere
\end{itemize} 
\end{proof}

\section{Conclusion}
From Diagram \ref{Diagram-full}, it turns out that the truth 
$\Sigma_1$-provability logic of $\HA$,
 is the hardest provability logic  among 
 the provability logics in  Table \ref{Table-Theories}.
  Closer inspection in the reductions provided in previous sections, 
  reveals that all propositional reductions, i.e.~the functions $f$,
are computable. Hence by decidability of $\PLS(\HA,\nat)$ 
(\Cref{Corollary-Decidability-ihatglchats}) and  
\Cref{Theorem-decidability-Reduction}, 
we have the decidability of all  provability logics in  Table 
\ref{Table-Theories}:
\begin{corollary}
All provability logics in the Table \ref{Table-Theories}, are decidable.
\end{corollary}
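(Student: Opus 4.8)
The plan is to route everything through the reduction calculus of Section~4. The key engine is \Cref{Theorem-decidability-Reduction}: whenever $\PLG(\SFT,\SFU)\leq_{f,\fbar[]}\PLGP(\SFT',\SFU')$ with $f$ computable and the right-hand logic decidable, the left-hand logic is decidable. Because the reduction relation is transitive (\Cref{Theorem-reduction-transitive}, where the composite translation is $h=g\circ f$) and, by Diagram~\ref{Diagram-full} together with the $\HA^*$- and $\PA^*$-diagrams, every entry of Table~\ref{Table-Theories} reduces to the highlighted node $\PLS(\HA,\nat)$, it suffices to verify two things: that each reduction arrow in those diagrams is witnessed by a \emph{computable} $f$ (composites of computable maps are again computable), and that $\PLS(\HA,\nat)$ is decidable, which is \Cref{Corollary-Decidability-ihatglchats}. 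Granting these, \Cref{Theorem-decidability-Reduction} applied along each reduction path yields the decidability of every listed provability logic.

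First I would classify the translations $f$ using the dichotomy of \Cref{Section-Reduction-tool}. The identity-type reductions take $f$ to be one of the structural translations $(.)^\Box,(.)^\Boxin,(.)^\Boxout,(.)^\negout$ or $\Box(.)$; each is defined by a plain recursion on the build-up of a modal proposition and is therefore evidently computable. This already disposes of all the $\HA$- and $\HA^*$-arrows (\Cref{Reduction-Sigma-HA-(PA-to-HA),Reduction-Sigma-HA-(HA-to-nat),Reduction-Sigma-HA*-nat-to-HA-nat,Reduction-Sigma-HA*-PA-to-HA-PA,Reduction-Sigma-HA*-HA-to-HA-HA,Reduction-Sigma-HA*-(HA*-to-HA),Reduction-Sigma-HA*-PA-to-HA*-HA,Reduction-Sigma-HA*-(HA*-to-nat)}) and the $\dagger$- and box-arrows into $\PLS(\HA,\nat)$.

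The main obstacle will be the substitution-type reductions, those where $f$ is defined by a case split of the shape $f(A)=\tau(A)$ if $\SFV\nvdash A$ and $f(A)=\top$ otherwise (for instance \Cref{Reduction-Sigma-PA-(PA-nat),Reduction-PA-HA-to-Sigma-PA-HA,Reduction-(-to-sigma)-PA*-HA,Reduction-(-to-sigma)-PA*-PA,Reduction-(-to-sigma)-PA*-PA*,Reduction-(-to-sigma)-PA*-nat}). Here computability of $f$ is not automatic: it demands both that the case distinction ``$\SFV\vdash A$?'' be decidable and that, in the refuting case, the witnessing $(.)^\Boxin$-substitution $\tau$ be produced effectively from $A$. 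Crucially, in every such reduction the source system is one of $\GL,\GLS,\iglphat,\iglct,\ihatglchatt,\iglchatthat$ or $\ihatglchatts$, and each of these enjoys an independently proved finite-model Kripke completeness theorem (\Cref{Theorem-Kripke-semantic-iglphat,Theorem-Kripke-completeness-iglchatthat,Theorem-Kripke-completeness-ihatglchatt,Theorem-Kripke-completeness-ihatglchatts} and \Cref{Corollary-decidability of iglc}, plus the classical decidability of $\GL$ and $\GLS$); so the case split is decidable by a procedure that does \emph{not} pass through the reduction being built, and no circularity arises. The substitution $\tau$ is likewise effective, since it is read off a finite refuting model through the explicit assignments $\tau(p):=\bigvee_{\kcal,\alpha\Vdash p}A_\alpha$ of \Cref{Lemma-Reduction-iglphat} (and \Cref{Remark-reduction-GL-GLS} for $\GL,\GLS$), and the model itself is found by exhaustive search over the finitely many $X$-saturated sets. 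With every $f$ thus seen to be computable, transitivity furnishes, for each logic in Table~\ref{Table-Theories}, a computable reduction to $\PLS(\HA,\nat)$, and \Cref{Theorem-decidability-Reduction} combined with \Cref{Corollary-Decidability-ihatglchats} completes the argument.
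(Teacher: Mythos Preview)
Your proposal is correct and follows exactly the paper's approach: reduce every logic in Table~\ref{Table-Theories} to $\PLS(\HA,\nat)$ along the diagram, verify that each $f$ is computable, and invoke \Cref{Theorem-decidability-Reduction} together with \Cref{Corollary-Decidability-ihatglchats}. You are considerably more explicit than the paper (which only says ``closer inspection \ldots\ reveals that all propositional reductions \ldots\ are computable''), and in particular your handling of the substitution-type reductions---noting that the case split ``$\SFV\vdash A$?'' is decidable via independent finite-model completeness theorems for $\GL,\GLS,\iglphat,\iglct,\iglchatthat,\ihatglchatt,\ihatglchatts$, so no circularity arises---fills in a point the paper leaves entirely to the reader.
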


So far, we have seen  many reductions of provability logics. The reductions, helped out
 to prove new arithmetical completeness results, have a more general view of all 
 provability logics and intuitively say which provability logic is 
 \emph{harder}. The reader  may 
wonder what other
 reductions hold, beyond the transitive closure of the Diagram 
\ref{Diagram-full}.  
However it seems  more likely that no other reductions holds,  at the moment we can not say anything more than that. 
This question calls for more work.

\begin{conjecture}
We conjecture that the following characterizations and reductions holds:
\begin{enumerate}
\item ${\sf iH} =\PL(\HA,\HA)\leq \PLS(\HA,\HA)=\lles$. 
\item ${\sf iH} =\PL(\HA,\HA)\leq \PL(\HA,\nat)={\sf iH\underline{SP}}$.
\item ${\sf iH\underline{P}} =\PL(\HA,\PA)\leq \PLS(\HA,\PA)={\sf iH_\sigma\underline{P}}$.
\item ${\sf iH\underline{SP}}=\PL(\HA,\nat)\leq \PLS(\HA,\nat)=\ihatHSsigma$.
\item ${\sf iH}^{*}=\PL(\HA^*,\HA^*)\leq \PLS(\HA^*,\HA^*)=\llessstar$. 
\end{enumerate}
Moreover, all reductions are computable and hence all provability 
logics are conjectured to be decidable.  In which 
\begin{itemize}
\item ${\sf iH}$ is as defined in \cite{IemhoffT}, 
\item ${\sf iH}^{*}$  as defined in \cite{Sigma.Prov.HA*}, 
\item  ${\sf iH\underline{P}} $  is  ${\sf iH}$ plus $\underline{\sf P}$, 
\item  ${\sf iH\underline{SP}} $  is  ${\sf iH}$ plus $\underline{\sf S}$ and $\underline{\sf P}$, 
\end{itemize}
\end{conjecture}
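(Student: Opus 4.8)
The plan is to treat the conjecture in the spirit of the rest of the paper: rather than proving each characterization of a full provability logic in isolation, I would first establish the five reductions as relations between \emph{arithmetical-completeness assertions} in the sense of \Cref{Definition-Reduction-PL}, and only then read off the characterizations from those reductions together with the already-proven arithmetical completeness of the targets (ultimately $\PLS(\HA,\nat)=\ihatHSsigma$, \Cref{Theorem-truth-provability}) via \Cref{Theorem-Reduction-1}. Concretely, all five reductions fall under the two templates of \cref{Section-Reduction-tool}: item~2 is an \emph{identity} reduction with $f(A):=\Box A$, while items~1, 3, 4 and~5 are \emph{substitution} reductions driven by $(.)^\Boxin$-substitutions, exactly as in \Cref{Reduction-PA-HA-to-Sigma-PA-HA} and the $\PA^*$-reductions of \cref{section-PA*}.

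For the four $(.)^\Boxin$-reductions I would isolate the key bridge lemma, an intuitionistic analogue of \Cref{Remark-reduction-GL-GLS,Lemma-Reduction-iglphat}: for the relevant full logic ${\sf L}\in\{{\sf iH},{\sf iH\underline{P}},{\sf iH\underline{SP}},{\sf iH}^{*}\}$ and its $\Sigma_1$-companion ${\sf L}_\sigma\in\{\lles,\ihatHsigma,\ihatHSsigma,\llessstar\}$, one shows ${\sf L}\vdash A$ iff ${\sf L}_\sigma\vdash\tau(A)$ for every propositional $(.)^\Boxin$-substitution $\tau$ (\Cref{Definition-boxin-sub}). Granting this, the reduction is immediate: given ${\sf L}\nvdash A$, pick a witnessing $\tau$ with ${\sf L}_\sigma\nvdash\tau(A)$, set $f(A):=\tau(A)$ and $\bar{f}_{_A}(\sigma):=\sigma_{_\SFT}\circ\tau\in\wit{\sigma}$; then R1 holds by the bridge lemma together with \Cref{Lemma-Boxin-sub}, and R2 holds because $(.)^\Boxin$-substitutions commute with arithmetical interpretation up to provable equivalence (\Cref{Lemma-Properties of Box translation 2} for the $\HA$/$\nat$ cases and \Cref{Lemma-PA-Box-translate} for the $\PA$-relative case). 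The bridge lemma itself I would prove by importing a refuting Kripke model for ${\sf L}$ (using the modal completeness of Iemhoff's systems) and encoding its nodes by fresh boxed variables, mirroring the $A_\alpha$-construction of \Cref{Lemma-Reduction-iglphat}, so that the encoding substitution is a genuine $(.)^\Boxin$-substitution and refutes $\tau(A)$ in ${\sf L}_\sigma$.

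For item~2 I would prove the $\Box$-stripping lemma ${\sf iH\underline{SP}}\vdash\Box A$ iff ${\sf iH}\vdash A$, the intuitionistic counterpart of \Cref{Lemma-GLS-GL-Boxed}; a direct Smory\'nski-operation argument should work, or, once the characterizations are in hand, the indirect arithmetical route of \Cref{Reduction-Sigma-HA-(HA-to-nat)}. With $f(A):=\Box A$ and $\bar f$ the identity, R1 follows from arithmetical soundness of ${\sf iH\underline{SP}}$ plus arithmetical completeness of $\PL(\HA,\HA)={\sf iH}$, and R2 is trivial. The characterizations $\PL(\HA,\HA)={\sf iH}$, $\PL(\HA,\PA)={\sf iH\underline{P}}$, $\PL(\HA,\nat)={\sf iH\underline{SP}}$ and $\PL(\HA^*,\HA^*)={\sf iH}^{*}$ then split as usual: arithmetical \emph{soundness} is the routine direction (each Iemhoff schema is $\HA$- resp.\ $\HA^*$-valid), while arithmetical \emph{completeness} is obtained by chaining the reductions down into the decidable $\ihatHSsigma$ and applying \Cref{Theorem-Reduction-1}; computability of every $f$ together with \Cref{Theorem-decidability-Reduction} yields the claimed decidability.

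The hard part will be the bridge lemma for the full intuitionistic logics. In the classical ($\GL$) and semi-classical ($\iglphat$) settings the encoding substitution $\tau$ exploits a very rigid frame structure---finite tree, brilliant, $\R$-branching, ${\sf Suc}$-(quasi-)classical---supplied by the completeness theorems of this paper, and the formulas $A_\alpha$ pin down each node exactly. For ${\sf iH}$ the admissible frames are markedly richer, the fragment machinery ($\TNNIL^\Box$, $(.)^+$) that drives the characterization of $\lles$ no longer covers the whole language, and it is unclear that the nodes admit an analogous $(.)^\Boxin$-definable encoding. Indeed item~1 is essentially Iemhoff's long-standing conjecture that ${\sf iH}$ is the provability logic of $\HA$; the present reduction program reframes it as \textbf{``}$\mathcal{AC}$ for ${\sf iH}$ reduces to the decidable $\ihatHSsigma$\textbf{''}, but discharging R1 still demands the missing modal-to-arithmetic encoding for the full propositional modal language, which is exactly where the obstruction sits.
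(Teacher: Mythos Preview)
The statement is a \emph{conjecture} in the paper; there is no proof to compare against. The paper explicitly leaves all five items open, and item~1 in particular subsumes Iemhoff's long-standing conjecture that ${\sf iH}=\PL(\HA,\HA)$ (cf.\ the discussion of \cite{IemhoffT} in the introduction).

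Your proposal is not a proof but a program, and as such it is well-conceived and faithful to the paper's methodology: the substitution template for items~1, 3, 4, 5 and the $\Box(.)$ template for item~2 are exactly the two tools of \cref{Section-Reduction-tool}, and the analogy with \Cref{Lemma-Reduction-iglphat} and \Cref{Remark-reduction-GL-GLS} is the natural one. You are also right that R2 is essentially free in all five cases---indeed even simpler than you suggest: since $\SFT=\SFT'$ and $\SFU=\SFU'$ in each of items~1, 3, 4, 5, the identity $\sigma_{_\SFT}(\tau(A))=(\sigma_{_\SFT}\!\circ\tau)_{_\SFT}(A)$ is immediate by induction on $A$, and the box-translation lemmas you cite are not needed. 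The genuine content lies entirely in R1, i.e.\ in your bridge lemma, and you correctly diagnose the obstruction: the $A_\alpha$-encoding of \Cref{Lemma-Reduction-iglphat,Lemma-1-reduction-iglphat} depends on ${\sf Suc}$-classicality and $\R$-branching to pin down nodes by modal formulas, and no analogous $(.)^\Boxin$-definable encoding is known for the richer frames adequate for ${\sf iH}$. Your final paragraph is therefore the accurate summary: the reduction framework reformulates the conjecture cleanly but does not discharge it, and the missing step is exactly the open problem.
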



\providecommand{\bysame}{\leavevmode\hbox to3em{\hrulefill}\thinspace}
\providecommand{\MR}{\relax\ifhmode\unskip\space\fi MR }
\providecommand{\MRhref}[2]{%
  \href{http://www.ams.org/mathscinet-getitem?mr=#1}{#2}
}
\providecommand{\href}[2]{#2}

%
\newpage
\begin{appendices}
\renewcommand{\figurename}{Table}
\begin{figure}[h]
\bgroup
\def\arraystretch{2}
\begin{center}
\begin{tabular}{|c|c||c|c|}
    \hline
    \textbf{ Name(s)} 
    & \textbf{Axiom Scheme}
    & \textbf{Name(s)} 
    & \textbf{Axiom Scheme}
\\ \hline\hline 
    \underline{\sfk} 
    & $\Box(A\to B)\to (\Box A\to \Box B)$ 
    & \underline{\sf 4}
    & $\Box A\to\Box\Box A$
\\ \hline
    \underline{\sf L\"ob}, \underline{\sf L}
    & $\Box(\Box A\to A)\to \Box A$ 
    & \underline{\sf CP}, \underline{\sf C}
    & $  A\to \Box A$
\\ \hline
    \underline{\sf S}
    & $\Box A \to  A$ 
    &  $\underline{\sf CP_a}$, $\underline{\sf C_a}$
    & \bgroup \def\arraystretch{1}\begin{tabular}{c}
    $p\to \Box p$ \\
    for atomic variable $p$
        \end{tabular} \egroup 
\\ \hline
	$\underline{\sf S^*}$
	& $\Box A\to A^\Box$
	& \underline{\sf PEM}, \underline{\sf P}
	& $A\vee\neg A$
\\ \hline
	$\underline{\sf Le}$
	& $\Box (A\vee B) \to \Box(\Boxdot A\vee\Boxdot B) $
	& $\underline{\sf Le^+}$
	& $\Box A\to \Box A^l$
\\ \hline
	$\underline{\sf TP}$, $\underline{\sf T}$
	& $\Box (A\to B)\to (A\vee (A\to B))$
	&  \underline{\sf i}
	&  		All theorems of $\IPC_\Box$
\\ \hline
	
	& 
	& \underline{\sf V}
	& $A\lr A^-$
\\ \hline
\multicolumn{4}{|c|}{For an axiom scheme \underline{\sf A}, let $\overline{\sf A}$ indicates $\Box\, \underline{\sf A}$ and ${\sf A}$ indicates  $\overline{\sf A}\wedge \underline{\sf A}$}
\\ \hline
    \end{tabular}
\caption[Table  of axiom schemas]{List  of axiom schemas\label{Table-Axioms}}
\end{center}
\egroup
\end{figure}

\begin{figure}
\bgroup\def\arraystretch{2}
\begin{center}
   \begin{tabular}{|c|c|c|c|}
    \hline
    \textbf{Theory}
    &\textbf{Axioms}
    &\textbf{Provability Logic(s)}
    &\textbf{Reference}
    \\ \hline\hline
    \ikfour
    & {\sf i},{\sf K},{\sf 4}
    &
    &
    \\ \hline
    $\igl$
    & {\sf iK4},{\sf L}
    &
    &
    \\ \hline
    \GL
    & $\igl$,{\sf P}
    & $\PL(\PA,\PA)$
    & \cite{Solovay}
     \\ \hline
    $\GLV$
    & $\GL$,${\sf CP_a}$
    & $\PLS(\PA,\PA)$
    & \cite{Visser82}
     \\ \hline
	\GLS
    & $\GL$,\underline{\sf S}
    & $\PL(\PA,\nat)$
    & \cite{Solovay}
     \\ \hline    
	$\GLSV$
    & $\GLV$,\underline{\sf S}
    & $\PLS(\PA,\nat)$
    & \cite{Visser82}
     \\ \hline       
	$\iglct$
    & $\igl$,{\sf C},{\sf T}
    & \bgroup\def\arraystretch{1.2}\begin{tabular}{c}
		    $\PL(\PA^*,\PA^*)$\\
		    $\PLS(\PA^*,\PA^*)$    
    \end{tabular}\egroup
    & \cite{Visser82} 
     \\ \hline
    $\lles$
    & $\igl$,{\sf V}, ${\sf Le}^+$
    & $\PLS(\HA,\HA)$
    & \cite{Sigma.Prov.HA,Jetze-Visser}
     \\ \hline
    $\llessstar$
    & $\{A: \lles\vdash A^\Box\}$
    & $\PLS(\HA^*,\HA^*)$
    & \cite{Sigma.Prov.HA*}
     \\ \hline   
    $\ihatHsigma$
    & $\lles$,\underline{\sf P}
    & $\PLS(\HA,\PA)$
    & \Cref{Theorem-PA-relative}
    \\ \hline
    $\ihatHSsigma$ 
    & $\lles$,\underline{\sf S},\underline{\sf P}
    & $\PLS(\HA,\nat)$
    & \Cref{Theorem-truth-provability}
    \\ \hline 
    $\ihatHSsigmastar$
    & $\{ A: \ihatHSsigma\vdash A^\Boxin\}$
    & $\PLS(\HA^*,\nat)$
    & \Cref{Reduction-Sigma-HA*-nat-to-HA-nat}
    \\ \hline
    $\ihatHsigmastar$
    & $\{ A: \ihatHsigma\vdash A^\Boxin\}$
    & $\PLS(\HA^*,\PA)$
    & \Cref{Reduction-Sigma-HA*-PA-to-HA-PA}
    \\ \hline
    $\llesstar$
    & $\{ A: \lles\vdash A^\Boxin\}$
    & $\PLS(\HA^*,\HA)$
    & \Cref{Reduction-Sigma-HA*-HA-to-HA-HA}
    \\ \hline
	$\iglphatsigma$    
	& $\igl$,$\overline{\sf P}$,${\sf C_a}$
	& $\PLS(\PA,\HA)$
    & \Cref{Reduction-Sigma-(PA-to-HA)-HA}
    \\ \hline
	$\iglphat$    
	& $\igl$,$\overline{\sf P}$
	& $\PL(\PA,\HA)$
    & \Cref{Reduction-PA-HA-to-Sigma-PA-HA}
    \\ \hline    
	$\ihatglchattsigma$
	& $\igl$,$\overline{\sf C}$,${\sf T}$,\underline{P},${\sf C_a}$
	& $\PLS(\PA^*,\PA)$ 
    & \Cref{Reduction-sigma-(PA*-to-PA)-PA}
    \\ \hline
	$\iglchatthatsigma$
	& $\igl$,$\overline{\sf C}$,$\overline{\sf T}$,${\sf C_a}$
	& $\PLS(\PA^*,\HA)$ 
    & \Cref{Reduction-sigma-(PA*-to-PA)-HA}
    \\ \hline
	$\ihatglchattssigma$
	&  $\igl$,$\overline{\sf C}$,${\sf T}$,$\underline{\sf S^*}$,\underline{P},${\sf C_a}$
	& $\PLS(\PA^*,\nat)$ 
    & \Cref{Reduction-sigma-(PA*-to-PA)-nat}
    \\ \hline
	$\ihatglchatt$
	& $\igl$,$\overline{\sf C}$,${\sf T}$,\underline{P}
	& $\PL(\PA^*,\PA)$ 
    & \Cref{Reduction-(-to-sigma)-PA*-PA}
    \\ \hline
	$\iglchatthat$
	& $\igl$,$\overline{\sf C}$,$\overline{\sf T}$
	& $\PL(\PA^*,\HA)$ 
    & \Cref{Reduction-(-to-sigma)-PA*-HA}
    \\ \hline
	$\ihatglchatts$
	&  $\igl$,$\overline{\sf C}$,${\sf T}$,$\underline{\sf S^*}$,\underline{P}
	& $\PL(\PA^*,\nat)$ 
    & \Cref{Reduction-(-to-sigma)-PA*-nat}
    \\ \hline    
    \end{tabular} 
    \caption[Table of all provability logics ]{List of all provability logics \label{Table-Theories}}
\end{center}
\egroup
\end{figure}
\renewcommand{\figurename}{Diagram}

\begin{sidewaysfigure}[ht]
\[ \begin{tikzcd}[column sep=5em, row sep=4em] 
 \tcboxmath{\PL(\PA,\HA)} 
\arrow[ddd, bend right=50, "\ref{Reduction-PA-HA-to-Sigma-PA-HA}", "\tau" ']
& 
\tcboxmath{\PL(\PA,\PA)}
\arrow[ddd,bend left=50, "\tau" ' near start , "\text{\ref{Reduction-Sigma-PA-(PA-nat)}}.2" near start]
\arrow[rr, "\Box (.)", " \ref{Reduction-PA-(PA-to-nat)}" ']
& &
\tcboxmath{ \PL(\PA,\nat) }
\arrow[ddd, bend left=50, "\tau" ', "\text{\ref{Reduction-Sigma-PA-(PA-nat)}}.1"]
\\
\tcboxmath{\PL(\PA^*,\HA)}
\arrow[d, "\ref{Reduction-(-to-sigma)-PA*-HA}", "\tau" ']
&
\tcboxmath{\PL(\PA^*,\PA)}
\arrow[d,"\ref{Reduction-(-to-sigma)-PA*-PA}", "\tau" ']
&
\tcboxmath{\PL(\PA^*,\PA^*)}
\arrow[r,"\ref{Reduction-PA*-(PA*-to-nat)}"',"\Box(.)"]
\arrow[d,"\ref{Reduction-(-to-sigma)-PA*-PA*}","\tau"']
&
\tcboxmath{\PL(\PA^*,\nat)}
\arrow[d,"\ref{Reduction-(-to-sigma)-PA*-nat}","\tau"']
\\
\tcboxmath{\PLS(\PA^*,\HA)}
\arrow[d, "\ref{Reduction-sigma-(PA*-to-PA)-HA}", "(.)^\Boxin" ']
&
\tcboxmath{\PLS(\PA^*,\PA)}
\arrow[l, "\ref{Reduction-sigma-PA*-(PA-to-HA)}", "(.)^\negout" ']
\arrow[d, "\ref{Reduction-sigma-(PA*-to-PA)-PA}", "(.)^\Boxin" ']
&
\tcboxmath{\PLS(\PA^*,\PA^*)}
\arrow[l, "\ref{Reduction-Sigma-PA*-(PA*-to-PA)}", "(.)^\Boxout" ']
\arrow[r,"\ref{Reduction-Sigma-PA*-(PA*-to-nat)}" ', " \Box(.)" ]
&
\tcboxmath{\PLS(\PA^*,\nat)}
\arrow[d,"\ref{Reduction-sigma-(PA*-to-PA)-nat}", "(.)^\Boxin" ']
\\
\tcboxmath{ \PLS(\PA,\HA) }
\arrow[dr, "\ref{Reduction-Sigma-(PA-to-HA)-HA} "  near end, "(.)^\dagger" ' near end]
& 
 \tcboxmath{\PLS(\PA,\PA) }
 \arrow[rr,   "\Box (.)",  " \ref{Reduction-Sigma-PA-(PA-to-nat)}" ' ]
 \arrow[l,"(.)^\negout" ' , "\ref{Reduction-Sigma-PA-(PA-to-HA)}"] 
\arrow[dl, "\ref{Reduction-Sigma-(PA-to-HA)-PA}" ' near end, "(.)^\dagger"  near end]
&  &
 \tcboxmath{\PLS(\PA,\nat) }
\arrow[d, "\text{\ref{Reduction-Sigma-(PA-to-HA)-nat}}" , "(.)^\dagger" ']
\\
\tcboxmath{\PLS(\HA,\PA) }
\arrow[r , "(.)^\negout", "\text{\ref{Reduction-Sigma-HA-(PA-to-HA)}}" ']  
&\tcboxmath{\PLS(\HA,\HA) }
\arrow[rr , "\Box(.)", "\text{\ref{Reduction-Sigma-HA-(HA-to-nat)}}" '] 
&  & \tcbhighmath{\PLS(\HA,\nat)}
\\
 \tcboxmath{\PLS(\HA^*,\PA) }
\arrow[r, "(.)^\negout", "\text{\ref{Reduction-Sigma-HA*-PA-to-HA*-HA}}" '] 
\arrow[u,"(.)^\Boxin", "\text{\ref{Reduction-Sigma-HA*-PA-to-HA-PA}}" ']
&  \tcboxmath{\PLS(\HA^*,\HA) }
\arrow[u,"(.)^\Boxin", "\text{\ref{Reduction-Sigma-HA*-HA-to-HA-HA}}" ']
 & 
  \tcboxmath{\PLS(\HA^*,\HA^*) }
 \arrow[r,"\Box(.)", "\text{\ref{Reduction-Sigma-HA*-(HA*-to-nat)}}" '] 
 \arrow[l,"(.)^\Boxout"', "\text{\ref{Reduction-Sigma-HA*-(HA*-to-HA)}}"]
 &
   \tcboxmath{\PLS(\HA^*,\nat) }
\arrow[u,"(.)^\Boxin", "\text{\ref{Reduction-Sigma-HA*-nat-to-HA-nat}}" ']
\end{tikzcd}
\]
\caption[Diagram of all reductions of  provability logics]{\label{Diagram-full} Reductions of all provability logics.
Arrows  indicate a reduction of the completeness of the left hand side to the right one. The propositional reduction is shown over the arrow line and the theorem number proving this, is shown under arrow line. }
\end{sidewaysfigure}
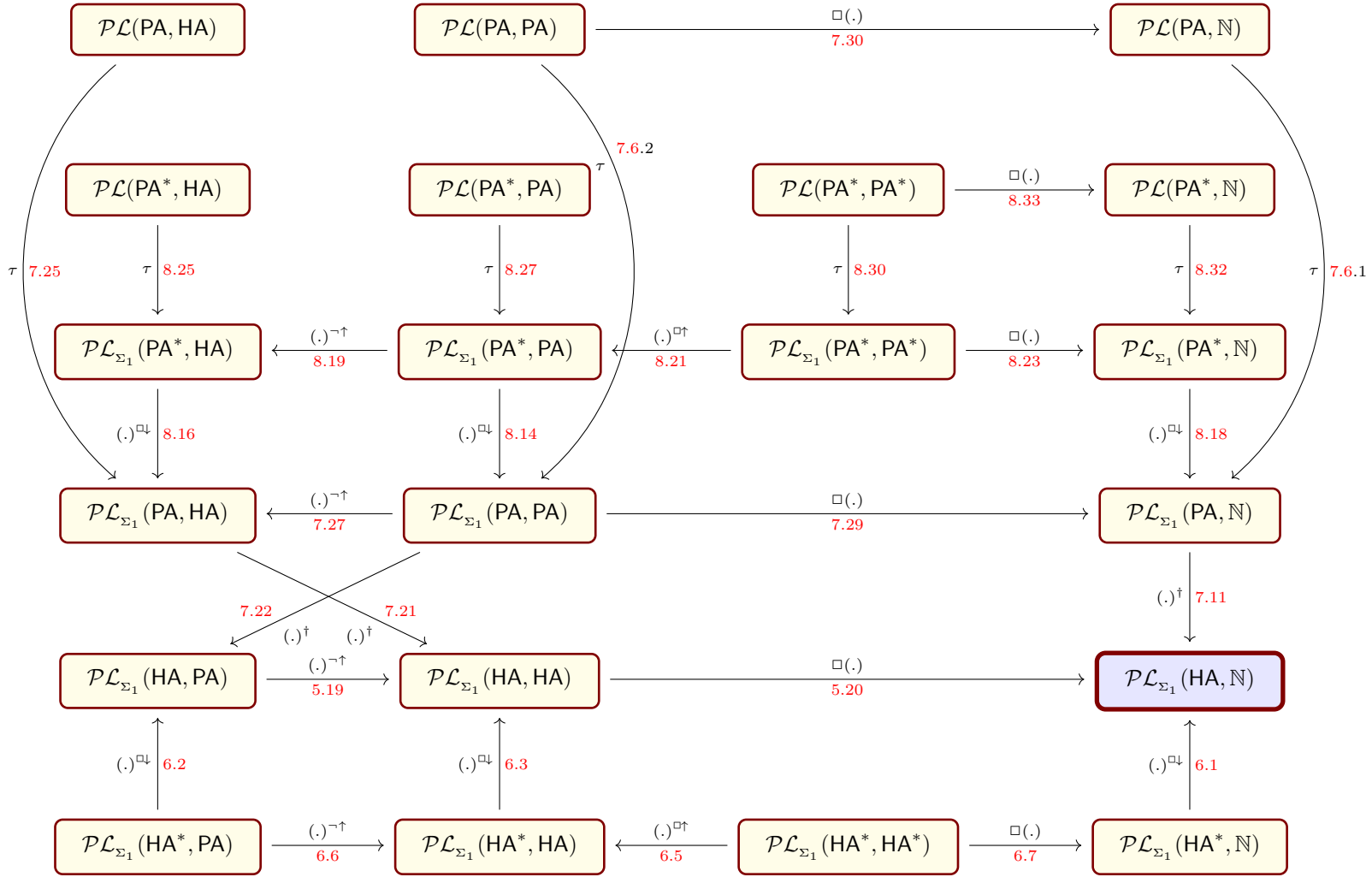
\end{appendices}
\end{document}